\newtheorem{lemma}{Lemma}[chapter]
\newtheorem{theorem}[lemma]{Theorem}
\newtheorem{corollary}[lemma]{Corollary}
\newtheorem{definition}[lemma]{Definition}
\newtheorem{remark}[lemma]{Remark}
\DeclareMathOperator{\sgn}{sgn}
\begin{document}
\begin{titlepage}
\begin{center}
\vspace*{1in}
\begin{center}

\textbf{\Large Mellin and Wiener-Hopf operators in a non-classical boundary value problem describing a L\'evy process} \\
\end{center}

\par
\vspace{1.5in}
{\Large Anthony Christopher Hill}
\par
\vfill
A thesis presented for the degree of \\
Doctor of Philosophy
\par
\vspace{0.5in}
Department of Mathematics
\par
\vspace{0in}
King's College London
\par
\vspace{0.5in}
March 2017
\end{center}
\end{titlepage}

\begin{abstract}
Markov processes are well understood in the case when they take place in the whole Euclidean space. However, the situation becomes much more complicated if a Markov process is restricted to a domain with a boundary, and then a satisfactory theory only exists for processes with continuous trajectories. This research, into non-classical boundary value problems, is motivated by the study of stochastic processes, restricted to a domain, that can have discontinuous trajectories. We demonstrate that the singularities, for example delta functions, that might be expected at the boundary, are mitigated, using current probability theory, by what amounts to the addition of a carefully chosen potential. \\

To make this general problem more tractable, we consider a particular operator, $\mathcal{A}$, which is chosen to be the generator of a certain stable L\'evy process restricted to the positive half-line. We are able to represent $\mathcal{A}$ as a (hyper-) singular integral and, using this representation, deduce simple conditions for its boundedness, between Bessel potential spaces. Moreover, from energy estimates, we prove that, under certain conditions, $\mathcal{A}$ has a trivial kernel. \\

A central feature of this research is our use of Mellin operators to deal with the leading singular terms that combine, and cancel, at the boundary. Indeed, after considerable analysis, the problem is reformulated in the context of an algebra of multiplication, Wiener-Hopf and Mellin operators, acting on a Lebesgue space. The resulting generalised symbol is examined and, it turns out, that a certain transcendental equation, involving gamma and trigonometric functions with complex arguments, plays a pivotal role. Following detailed consideration of this transcendental equation, we are able to determine when our operator is Fredholm and, in that case, calculate its index. Finally, combining information on the kernel with the Fredholm index, we establish precise conditions for the invertibility of $\mathcal{A}$.

\end{abstract}

\renewcommand{\abstractname}{Acknowledgements}
\begin{abstract}
I would like to thank King's College London, for the opportunity, in my retirement, to study for a Ph.D. in pure mathematics. However, above all, I am indebted to my inspirational supervisor, Prof. Eugene Shargorodsky, for so freely sharing his huge mathematical experience, and for the many hours he has invested in helping me begin to understand this challenging, but very rewarding, problem.
\end{abstract}
\newpage
\setcounter{tocdepth}{1}
\tableofcontents 
\newpage
\setlength\cftbeforefigskip{3pt}
\listoffigures
All figures were produced in ${\textit{Mathematica}}\textsuperscript{\textregistered} \, 10$ Student Edition.
\newpage	

\chapter{Introduction}
\section {Preamble} \label{preamble}
We begin by defining some key concepts central to this research. References for all the results stated in this section without proof can be found in a combination of \cite{Es} and \cite{Roch}. \\

The \textit{Fourier Transform} on the Schwartz space, $S(\mathbb{R})$, of rapidly decaying infinitely differentiable functions $u$ is given by
\begin{equation} \label{FTdefinition}
(\mathcal{F}u)(\xi) := \dfrac{1}{{\sqrt{2\pi}}} \int_{\mathbb{R}} e^{+i\xi  x} \, u(x) \, dx, \quad \xi \in \mathbb{R}.
\end{equation}
The Fourier transform is invertible on the Schwartz space, and its inverse $\mathcal{F}^{-1}$ is given by
\begin{equation*}
(\mathcal{F}^{-1}v)(x) := \dfrac{1}{{\sqrt{2\pi}}} \int_{\mathbb{R}}  e^{-ix \xi} \, v(\xi) \, d\xi,\quad x \in \mathbb{R}. 
\end{equation*}
$\mathcal{F}^{\pm 1}$ can be extended to $S'(\mathbb{R})$, the space of tempered distributions corresponding to the Schwartz space. \\

For any $y, s \in \mathbb{R}$, we define
\begin{align*}
\langle y \rangle & := \big (1 + y^2 \big )^{1/2}, \\
I^s & := \mathcal{F}^{-1} \langle \xi \rangle^s \mathcal{F}.
\end{align*}
We define the \textit{Bessel potential space}
\begin{equation*}
H^s_p(\mathbb{R}) := \{ f : f \in S'(\mathbb{R}), \|f | H^s_p(\mathbb{R})\| := \| I^s f \|_{L_p} < \infty \}.
\end{equation*}
We note that for any $s \in \mathbb{R}$ and $ 1 < p < \infty, \, H^s_p(\mathbb{R})$ is a Banach space. Moreover, both the Schwartz space and the space of infinitely smooth functions with compact support are dense in $H^s_p(\mathbb{R})$. \\

We are interested in the half-line $\mathbb{R}_+$ and, accordingly, we define
\begin{equation*}
\widetilde{H}^s_p(\overline{\mathbb{R}_+}) := \{ u : u \in H^s_p(\mathbb{R}), \, \text{ supp }u \subseteq \overline{\mathbb{R}_+} \}.
\end{equation*}
Of course, by definition, we have $\widetilde{H}^s_p(\overline{\mathbb{R}_+}) \subset H^s_p(\mathbb{R})$. \\

Let $C^\infty_0(\mathbb{R}_+)$ denote the space of infinitely differentiable functions on $\mathbb{R}_+$ with compact support in $\mathbb{R}_+$, and let $\chi_{\mathbb{R}_\pm}$ denote the characteristic function of the sets $\mathbb{R}_\pm$ respectively.\\

It will also be useful to let $r_+$ denote the restriction operator from $\mathbb{R}$ to ${\mathbb{R}_+}$. In addition, we let $l_+$ denote an arbitrary extension operator  from  ${\mathbb{R}_+}$ to $\mathbb{R}$ and  $e_+$ be the particular extension by zero.  
We now define
\begin{equation*}
H^s_p(\overline{\mathbb{R}_+}) := \{ r_+ u : u \in H^s_p(\mathbb{R}) \},
\end{equation*}
with norm
\begin{equation} \label{Hsp+norm}
\| u | H^s_p(\overline{\mathbb{R}_+}) \| := \inf \{ \| u_0 | H^s_p(\mathbb{R})\| : u_0 \in H^s_p(\mathbb{R}), \, r_+ u_0 = u \}. \\
\end{equation}
Of course, if $u \in {H}^s_p(\overline{\mathbb{R}_+})$ then 
\begin{equation*}
r_+l_+ u = u,
\end{equation*}
so that $r_+ l_+$ acts as the identity on the space ${H}^s_p(\overline{\mathbb{R}_+})$. \\

Assuming $s > 1 + 1/p$, it will also convenient to define
\begin{equation} \label{Hsp0definition}
H^s_{p,0}(\overline{\mathbb{R}_+}) := \{ u \in H^s_p(\overline{\mathbb{R}_+}) \, | \, u'(0)=0 \}. \\
\end{equation}

When working with the Fourier transform, we define
\begin{equation*}
D := i \, \dfrac{\partial}{\partial x}, 
\end{equation*}
so that, for example, for a given $u \in S(\mathbb{R})$ we have $\mathcal{F}_{x \to \xi} (D^k u) = \xi^k \mathcal{F} {u}$,  for all $k \in \mathbb{N}$. \\

Suppose $1 < p < \infty$. We say that $a \in L_\infty(\mathbb{R})$ is a \textit{Fourier $L_p$ multiplier} if, for all $u \in L_2(\mathbb{R}) \cap L_p(\mathbb{R})$, we have $\mathcal{F}^{-1} a \mathcal{F} u \in L_p(\mathbb{R})$ and
\begin{equation*}
\| \mathcal{F}^{-1} a \mathcal{F}u \|_{p} \leq C_{p} \| u \|_{p},
\end{equation*}
where the constant $C_{p}$ is independent of $u$. The set of all such Fourier multipliers is denoted by $\mathfrak{M}_{p}$. \\

If $a \in \mathfrak{M}_{p}$, then the operator $\mathcal{F}^{-1} a \mathcal{F}:L_2(\mathbb{R}) \cap L_p(\mathbb{R}) \to L_p(\mathbb{R})$ extends continuously to a bounded operator on $L_p(\mathbb{R})$. This extension is called a \textit{Fourier convolution operator} with \textit{symbol} $a$, and is denoted by $W^0(a)$. Moreover, we let $W(a) = r_+ W^0(a) e_+$ denote the corresponding \textit{Wiener-Hopf} operator. \\

We now define the operator $Z_{p}:L_p(\mathbb{R}_+) \to L_p(\mathbb{R})$ by
\begin{equation*}
(Z_{p}u)(y) := \sqrt{2 \pi} \, e^{-y/p}\,  u(e^{-y}), \quad y \in \mathbb{R},
\end{equation*}
for any $u \in L_p(\mathbb{R}_+)$.  It is easy to show that 
\begin{equation*}
\| Z_p u \|_p = \sqrt{2 \pi} \, \|u \|_{L_p(\mathbb{R}_+)},
\end{equation*}
so that $ Z_p$ is bounded. In addition, $Z_p$ is invertible and its inverse $Z^{-1}_{p}:L_p(\mathbb{R}) \to L_p(\mathbb{R}_+)$ is given by
\begin{equation*}
(Z^{-1}_{p}v)(x) :=  \frac{1}{\sqrt{2 \pi}} \, x^{-1/p} \, v(- \log(x)), \quad x \in \mathbb{R}_+,
\end{equation*}
for any $v \in L_p(\mathbb{R})$. \\

The operator $\mathcal{M}_{p} := \mathcal{F} Z_{p}$ is called the \textit{Mellin transform}, and it is given explicitly by
\begin{equation} \label{MTexplicit}
(\mathcal{M}_{p}u)(\eta) = \int^\infty_0  x^{1/p - 1 - i\eta} \,  u(x) \, dx, \quad \eta \in \mathbb{R},
\end{equation}
and moreover, its inverse, $\mathcal{M}^{-1}_{p} = Z^{-1}_{p} {\mathcal{F}}^{-1} $, can be written as
\begin{equation*} 
(\mathcal{M}^{-1}_{p}v)(x) = \dfrac{1}{2 \pi} \int^\infty_{-\infty}  x^{-1/p+ i\eta}  \, v(\eta) \, d\eta, \quad x \in \mathbb{R}_+.
\end{equation*}

Let $b\in \mathfrak{M}_{p}$. Then the operator
\begin{equation} \label{MellinConvOp}
M^0(b) := \mathcal{M}^{-1}_{p} b \mathcal{M}_{p}
\end{equation}
is bounded on $L_p(\mathbb{R}_+)$, and is called the \textit{Mellin convolution operator} with {symbol} $b$. \\

We will be particularly interested in integral operators of the form
\begin{equation} \label{MellinIntOp}
(M \varphi)(x) = \int^\infty_0 K \bigg ( \dfrac{x}{y} \bigg ) \varphi (y) \, \dfrac{dy}{y}, \quad \varphi \in L_p(\mathbb{R}_+),
\end{equation}
where the \textit{kernel} $K$ satisfies the integrability condition
\begin{equation} \label{kernelinteg}
\int^\infty_0 |K(t)| t^{1/p-1} \, dt < \infty.
\end{equation}

It is easy to show (see, for example, p.\,174, \cite{Es}, for the case $p=2$) that
\begin{equation*} 
\mathcal{M}_{p} (M \varphi) = \mathcal{M}_{p} (K) \, \cdot \, \mathcal{M}_{p} (\varphi).
\end{equation*}

If the kernel $K$ satisfies the integrability condition \eqref{kernelinteg}, then the associated integral operator $M$ is a Mellin convolution operator, say $M^0(b)$, where the symbol $b$ is given by the formula
\begin{equation} \label{MellinSymbol}
b = \mathcal{M}_{p} (K).
\end{equation}
That is, the symbol of the integral operator $M$ is the Mellin transform of its kernel $K$. \\

We will need to consider fractional powers of complex numbers. To make the complex argument, $\arg (\cdot)$, single valued we insist that
\begin{equation} \label{argsingleval}
-\pi < \arg z \leq \pi, \quad z \in \mathbb{C} \setminus \{ 0 \}.
\end{equation}
That is, we assume that the cut in the complex plane is along the negative horizontal axis. 
\\

Thus, given $z \in \mathbb{C} \setminus \{ 0 \}$, we can write
\begin{equation*}
z = |z| \exp ({i \arg z}), \quad -\pi < \arg z \leq \pi,
\end{equation*}
and define
\begin{equation*} \label{logdef}
\log z := \log |z| + i \arg z. 
\end{equation*}

Finally, for any $z \in \mathbb{C} \setminus \{ 0 \}$ and $\gamma \in \mathbb{C}$ we define
\begin{equation} \label{exponentdef}
z^\gamma := \exp (\gamma \log z ).
\end{equation} 

It is immediately clear from definition \eqref{exponentdef}, that for any $z \in \mathbb{C} \setminus \{ 0 \}$ and $\beta, \gamma \in \mathbb{R}$, 
\begin{equation*}
z^\beta z^\gamma = z^{\beta + \gamma}.
\end{equation*}

However, suppose that $z_1, z_2 \in \mathbb{C}$ and $\gamma \in \mathbb{R}$. Then the relationship
\begin{equation*} 
(z_1 z_2)^\gamma = z_1^\gamma z_2^\gamma
\end{equation*}
does $\mathbf{not}$ hold universally. \\

\begin{remark} \label{complexexponent}
Suppose $\nu \in \mathbb{R}$. Then, for $z_1,z_2 \in \mathbb{C}$,
\begin{equation} \label{exponentrule}
(z_1 \, z_2)^\nu = z_1^\nu \, z_2^\nu,
\end{equation}
\textit{provided} 
\begin{equation} \label{exponentcondition}
-\pi < \arg z_1 + \arg z_2 \leq \pi.
\end{equation}
Of course, condition \eqref{exponentcondition} is automatically satisfied if $z_1$ and $z_2$ have (non-trivial) imaginary parts of opposite sign. \\
\end{remark}

Suppose $\nu, \xi \in \mathbb{R}$. The following useful results are immediate consequences of Remark \ref{complexexponent}:
\begin{align*}
(1+ \xi^2)^\nu & = (1 - i \xi)^\nu \, (1+ i \xi)^\nu \\
(1+ \xi^2)^\nu & = (\xi - i )^\nu \, (\xi + i)^\nu \\
(1 + i \xi)^\nu & = (i \xi)^\nu (1 - i/\xi)^\nu \qquad  (\xi \not = 0).
\end{align*}     
\section{The problem}
\subsection{Introduction}
Markov processes play a central role in a wide range
of applications in physical sciences and engineering, biology and medicine,
industry, finance and business, and in other fields. 
They are especially well understood in the case
where the the process takes place in the complete Euclidean space, $\mathbb{R}^n$.
However, the situation becomes considerably more complicated if a Markov process is
restricted to a domain with a boundary. For, in that case, a satisfactory theory exists only
for processes, such as Brownian motion, with continuous trajectories. Further significant complications arise for Markov processes with jumps. Informally, at least, 
these difficulties are best understood by the observation that a process with
continuous paths can leave a domain \textbf{only} by passing through its boundary,
whilst a process with discontinuous trajectories can jump into the complement of the domain
without ever hitting the boundary. \\

The wider context for this work is a large class of
Markov processes -- namely the so-called Feller and (hence) L\'evy processes. (For more details on Feller and L\'evy processes, see Appendix \ref{FellerLevy}.)
It follows from a well known result of Ph. Courr\`ege that the
generator of a Feller process in $\mathbb{R}^n$ is a pseudodifferential operator. Moreover, this pseudodifferential operator
can be represented as a
sum of a second order partial differential operator and an integro-differential
operator with a L\'evy kernel (see \cite{Ja}). It turns out that each term in this
representation has a simple probabilistic interpretation. The second
order differential operator describes the diffusion
part, the first order terms are related to the drift, the zero
order term is responsible for the killing part and, finally, the
L\'evy kernel describes the jumps of the paths.  \\

An important example of a linear partial differential operator with constant coefficients is $-\Delta$, where $\Delta$ is the Laplacian. It is straightforward to show that
\begin{equation*} 
- \Delta = \mathcal{F}^{-1} |\xi|^2 \mathcal{F},
\end{equation*}
and thus the symbol of $-\Delta$ is $|\xi|^2$. \\

This research is concerned with non-classical boundary-value problems for elliptic pseudodifferential operators. Such problems can often be associated with Feller and L\'{e}vy processes \cite{Ap}, \cite{Ja}. For example, the generator of the symmetric $2 \alpha$-stable L\'{e}vy process in $\mathbb{R}^n$ is the \textit{fractional} Laplacian \cite{DPV}, \cite{Va} which can be defined by
\begin{equation} \label{fraclaplacian}
(- \Delta)^\alpha := \mathcal{F}^{-1} |\xi|^{2\alpha} \mathcal{F},  \quad 0 < \alpha < 1.
\end{equation}
Unlike the Laplacian, the fractional Laplacian is a \textit{non-local} operator. In a discrete setting, say the lattice $\mathbb{Z}^n$, we can think of the fractional Laplacian as random walk in which a particle may experience arbitrarily long jumps, albeit with a small probability, \cite{Va}. There are at least ten equivalent definitions of the fractional Laplacian, see \cite{Kwas}, and it can be very useful to interchange them. Indeed, suppose $u \in H^\alpha_2 (\mathbb{R}^n)$, then we can also write
\begin{equation} \label{fractlap}
(-\Delta)^\alpha u(x) = \lim_{\epsilon \searrow 0} \, c_{n,\alpha} \int_{y \in \mathbb{R}^n, |x-y| > \epsilon} \dfrac{u(x)-u(y)}{|x-y|^{n+2\alpha} }\,dy,
\end{equation}
where the constant $c_{n,\alpha}$ depends only on $n$ and $\alpha$. \\

Finally, we make special note of an excellent survey paper, \cite{DPV}, that details applications as disparate as crystal dislocation, finance and water waves, where the fractional Laplacian is playing an important role. \\

Let $G \subset \mathbb{R}^n$. In 1938, M. Riesz \cite{Ri} showed that the correct boundary condition for the Dirichlet problem for the fractional Laplacian is not the customary $u|_{\partial G} = g$ but rather the \textit{balayage} condition:
\begin{equation*}
(-\Delta)^\alpha u = 0 \text{ in }G, \quad u|_{\mathbb{R}^n \setminus G} = g.
\end{equation*}
Heuristically, the difference between the Laplacian and the fractional Laplacian can be described very simply in terms of stochastic processes. For the Laplacian, inside a smooth boundary, the process paths follow Brownian motion and are therefore continuous (almost surely). On the other hand, the paths for the fractional Laplacian are right continuous with left limits, or \textit{c\`{a}dl\`{a}g}. In this case, the first hitting of $G^c$ will not occur (almost surely) at the boundary $\partial G$, but rather somewhere in $\overline{G}^c$. That is to say, the process will jump over the boundary (almost surely).  \\

\subsection{Truncation} \label{sectiontruncation}
Let us consider a general pseudodifferential operator $B$. Of course, the definition of $B$ depends on the Fourier transform, which acts on the whole of $\mathbb{R}^n$.  Now suppose that our domain of interest is some set $G \subset \mathbb{R}^n$. \\

Starting from $G$, an obvious way forward is to extend any functions defined on $G$ by zero to the whole of $\mathbb{R}^n$, apply the pseudodifferential operator $B$ and then restrict back to $G$.  To make this more precise, suppose that $Y(\mathbb{R}^n), \, Z(\mathbb{R}^n)$ are functions spaces defined on $\mathbb{R}^n$. Let $r_G$ denote the operator of restriction from $\mathbb{R}^n$ \textit{to} $G$. Then for any function space $Y(\mathbb{R}^n)$, we define
\begin{equation*}
Y(\overline{G}) := r_G Y(\mathbb{R}^n).
\end{equation*}
Similarly, we define $e_G$ to be the operator of extension, by zero, \textit{from} $G$ to $\mathbb{R}^n$. Hence, we can write
\begin{equation*} 
B_G u := r_G B e_G u, \quad u \in Y(\overline{G}).
\end{equation*}

Unfortunately, this simple approach can have some difficult consequences. For example, it may be the case that the extended function $e_G u$ is discontinuous on the boundary of $G$, and this may cause singularities in $B \, e_Gu$. Indeed, even when $B:Y(\mathbb{R}^n) \rightarrow Z(\mathbb{R}^n)$ and $u \in Y(\overline{G})$, it may happen that $B_Gu \not \in Z(\overline{G})$.  An example of this is given in Appendix \ref{AppendixYZmapping}.\\

\subsection{Transmission conditions} \label{sectiontransmission}
One way to mitigate the singularities at the boundary of the domain, $G$, caused by truncation, is to consider  pseudodifferential operators that satisfy the \textit{transmission condition}, see \cite{GH}, on $G$. Indeed, we say that $B$ satisfies the transmission condition if
\begin{equation*}
B_G: C^\infty(\overline{G}) \rightarrow C^\infty(\overline{G}).
\end{equation*}
There is an equivalent condition on the symbol of the operator $B$ described in local coordinate systems in a neighbourhood of $\partial G$. The transmission condition is widely applicable and there is a well developed theory, notably \cite{Bo},  for elliptic boundary value problems. (See also \cite{Grubb2008}.) \\

Unfortunately, the fractional Laplacian does \textit{not} satisfy the transmission condition. Moreover, this is a characteristic of Feller processes. The generator of a subordinate diffusion does not have the transmission condition \cite{Is}. So, in the context of Feller processes, the transmission condition is too restrictive.  Fortunately, there is a theory of boundary value problems for elliptic pseudodifferential operators in the absence of the transmission condition. This work was pioneered by M.I. Vishik and G. Eskin, see \cite{Es}, in the 1960's. The main tool is the Wiener-Hopf factorization. \\

In a series of recent papers, see for example \cite{Grubb2014,Grubb2015}, Grubb has adopted a more general transmission condition. Indeed, we say that $B$ satisfies the $\mu-$transmission condition if
\begin{equation*}
B_G: x^\mu_n C^\infty(\overline{G}) \rightarrow C^\infty(\overline{G}).
\end{equation*}
In Appendix \ref{GGrubb}, we consider the fractional Laplacian, $(-\Delta)^\alpha$, acting on the domain $G = {\mathbb{R}^n_+}, \, n \geq 2$. It can be shown that the fractional Laplacian satisfies the $\mu-$transmission condition with $\mu = \alpha$. To accommodate the singularities that arise at the boundary, this approach seeks solutions to the Dirichlet problem in the so-called H\"ormander space, $H^{\alpha (t + 2 \alpha)}(\overline{\mathbb{R}^n_+})$, defined for $t \geq 0$, as

\begin{equation*}  
H^{\alpha (t + 2 \alpha)}(\overline{\mathbb{R}^n_+}) := \mathcal{F}^{-1} (\langle \xi' \rangle - i \xi_n )^{-\alpha} \mathcal{F} \,   (e_+ H^{t+\alpha}(\overline{\mathbb{R}^n_+})) ,
\end{equation*}
where $\xi=(\xi', \xi_n)$.\\

If $ -\tfrac{1}{2} <  t + \alpha < \tfrac{1}{2}$ then, see Section 2.8.7, p.158, \cite{Tr83}, we can identify $e_+ H^{t+\alpha}(\overline{\mathbb{R}^n_+})$ with $\widetilde{H}^{t+\alpha}(\overline{\mathbb{R}^n_+})$. Hence,
\begin{equation*}
H^{\alpha (t + 2 \alpha)}(\overline{\mathbb{R}^n_+}) = \widetilde{H}^{t+ 2 \alpha}(\overline{\mathbb{R}^n_+}) \quad \text{if} \quad -\tfrac{1}{2} <  t + \alpha < \tfrac{1}{2}. 
\end{equation*}

On the other hand, if $t + \alpha > \tfrac{1}{2}$ then functions from $e_+ H^{t+\alpha}(\overline{\mathbb{R}^n_+})$ may have a jump at $x_n=0$. This gives rise to a singularity at $x_n=0$ when the operator $\mathcal{F}^{-1} (\langle \xi' \rangle - i \xi_n )^{-\alpha} \mathcal{F}$ is applied.\\

In summary, Grubb presents a useful approach, combining the power of Wiener-Hopf factorisation with a more general transmission condition, that works well for fractional powers of elliptic operators. In this research, we take a parallel, but rather different, path where we perturb the equation, by adding a carefully chosen potential, \textit{before} solving the Dirichlet problem. It turns out that this will allow us to seek solutions in (the more conventional) Bessel potential spaces.\\

\subsection{Adding a potential term} \label{sectionpotential}
The Dirichlet form, see \cite{MR}, associated with $(-\Delta)^\alpha$ is given by
\begin{equation*}
\mathcal{E}^{(\alpha)} (u,v) = \int_{\mathbb{R}^n} |\xi|^{2 \alpha} \hat{u}(\xi) \overline{ \hat{v}(\xi) } d\xi,
\end{equation*}
or, equivalently,
\begin{equation*}
\mathcal{E}^{(\alpha)} (u,v) = \dfrac{c_{n,\alpha}}{2} \int_{\mathbb{R}^n} \int_{\mathbb{R}^n} \dfrac{ (u(x)-u(y))(\overline{v(x)}-\overline{v(y)}) } {|x-y|^{n+2\alpha} }\,dxdy,
\end{equation*}
where, see for example \cite{Kwas}, the positive constant $c_{n, \alpha}$ is given by
\begin{equation*}
c_{n,\alpha} = - \dfrac{2^{2\alpha} \Gamma(\alpha + \frac{n}{2})}{\pi^\frac{n}{2} \Gamma(- \alpha)}.
\end{equation*} 

If $u|_{\mathbb{R}^n \setminus G} = 0$ and $v|_{\mathbb{R}^n \setminus G} = 0$, we can write
\begin{equation} \label{DirichletForm}
\mathcal{E}^{(\alpha)} (u,v) = \dfrac{c_{n,\alpha}}{2} \int_{G} \int_{G} \dfrac{ (u(x)-u(y))(\overline{v(x)}-\overline{v(y)}) } {|x-y|^{n+2\alpha} }\,dxdy.
\end{equation}
Dirichlet forms relating to the symmetric stable L\'{e}vy process on a domain have been studied by a number of authors. See, for example, Bogdan et al. \cite{BBC}, Chen and Kim \cite{CK}, Chen and Song \cite{CS}, Guan and Ma \cite{GM1, GM2}. \\

The generator of the Dirichlet form \eqref{DirichletForm} is known as the \textit{regional} fractional Laplacian, and can be written as
\begin{equation} \label{regFLdefn}
\Lambda^\alpha_G u(x) := \lim_{\epsilon \searrow 0} \, c_{n,\alpha} \int_{y \in G, |x-y| > \epsilon} \dfrac{u(x)-u(y)}{|x-y|^{n+2\alpha} }\,dy, \quad x \in G.
\end{equation}
For more details see, for example, \cite{GM1, GM2}.  Of course, if $G$ is the whole of $\mathbb{R}^n$, then $\Lambda^\alpha_{\mathbb{R}^n} = (-\Delta)^\alpha$, and we simply recover equation \eqref{fractlap}. \\

Using a weak representation of \eqref{regFLdefn}, in their Corollary 7.7, Guan and Ma \cite{GM1} consider a Dirichlet boundary value problem in $G$, where the boundary value defined on $\partial G$ is continuous. They show that, subject to several technical qualifications, there exists a unique (low regularity) solution in $H^\alpha(\overline{G}) \cap C_b(\overline{G})$ for all $\alpha$ in the range $0 < \alpha < 1$, where $C_b(\overline{G})$ denotes the space of continuous bounded functions on $\overline{G}$.\\

In preparation for this research, we now express the regional fractional Laplacian directly in terms of the operator $(-\Delta)^\alpha$.  \\

Let us define
\begin{equation} \label{kappaaG}
\kappa^\alpha_G :=  \Lambda^\alpha_G -  r_G (-\Delta)^\alpha e_G,
\end{equation}
and then, for $x \in G$, we have
\begin{align*}
(\kappa^\alpha_G u)(x) & = c_{n,\alpha}\lim_{\epsilon \searrow 0}  \int_{y \in G, |x-y| > \epsilon} \dfrac{ u(x) -  u(y)}{|x-y|^{n+2\alpha}} \, dy - c_{n,\alpha} \int_{y \in \mathbb{R}^n} \dfrac{e_G u(x) - e_G u(y)}{|x-y|^{n+2\alpha}} \, dy  \\
& = - c_{n,\alpha} \int_{y \in \mathbb{R}^n \setminus G} \dfrac{e_G u(x) - e_G u(y)}{|x-y|^{n+2\alpha}} \, dy \\
& = - c_{n,\alpha} u(x) \int_{y \in \mathbb{R}^n \setminus G} \dfrac{1}{|x-y|^{n+2\alpha}} \, dy,
\end{align*}
since if $y \in G$ then $e_G u(y) = u(y)$, and $e_G u(y)=0$ if $y \in \mathbb{R}^n \setminus G$. \\

In other words, 
\begin{equation} \label{killingpot}
\kappa^\alpha_G(x) = - c_{n,\alpha} \int_{y \in \mathbb{R}^n \setminus G} \dfrac{1}{|x-y|^{n+2\alpha}} \, dy, \quad x \in G.
\end{equation}

So, if $x \in G$, then, from equation \eqref{killingpot},
\begin{align*}
\kappa^\alpha_G(x) &= - c_{n,\alpha} \int_{\mathbb{R}^n} \dfrac{\chi_{\mathbb{R}^n \setminus G}(y)}{|x-y|^{n+2\alpha}} \, dy \\
&= c_{n,\alpha} \int_{\mathbb{R}^n} \dfrac{ ( \chi_{\mathbb{R}^n \setminus G }(x) - \chi_{\mathbb{R}^n \setminus G   }(y)  )}{|x-y|^{n+2\alpha}} \, dy \\
&= (-\Delta)^\alpha (\chi_{\mathbb{R}^n \setminus G})(x),
\end{align*}
where $\chi_{\mathbb{R}^n \setminus G}$ denotes the characteristic function of the set ${\mathbb{R}^n \setminus G}$. \\

Hence, we can write the regional fractional Laplacian, purely in terms of $(-\Delta)^\alpha$, as
\begin{equation} \label{regFLalt}
\Lambda^\alpha_G  = r_G (-\Delta)^\alpha e_G + r_G ((-\Delta)^\alpha \chi_{\mathbb{R}^n \setminus G}) \, I.
\end{equation} \\
It will be useful to think of the term $r_G ((-\Delta)^\alpha \chi_{\mathbb{R}^n \setminus G})$ as an \textit{added potential} to the (truncated) operator $r_G (-\Delta)^\alpha e_G$. \\

\subsection{Transcendental equation}
As an example, let us now consider equation \eqref{regFLalt}, in the case $n=1$ and $G= \mathbb{R}_+$. We denote $r_G, \, e_G, \, \Lambda^\alpha_G$ by $r_+, \, e_+$ and $\Lambda^\alpha_+$ respectively. Let $\theta$ denote the Heaviside step function. \\

Then, see equation (2.34), p. 23, \cite{Es}, 
\begin{equation*}
\mathcal{F}_{x \to \xi} e_+ x^\lambda = e^{i \pi (\lambda+1)/2} \cdot \dfrac{\Gamma(\lambda+1)}{\sqrt{2 \pi}} \cdot (\xi + i 0)^{-\lambda -1}, \quad \lambda > -1,
\end{equation*}
where $(\xi + i 0)^{\mu} := |\xi|^\mu \, e^{i \mu \pi \theta(-\xi)}$, for any $\mu \in \mathbb{R}$. \\

Noting that $\operatorname{sgn}(\xi) = 2 \, \theta(\xi) - 1$, we can write
\begin{equation*}
\mathcal{F} e_+ x^\lambda = \dfrac{\Gamma(\lambda+1)}{\sqrt{2 \pi}} \bigg \{ -\sin \dfrac{\lambda \pi}{2} + i \operatorname{sgn} (\xi) \,  \cos \dfrac{\lambda \pi}{2}\bigg \}\cdot |\xi|^{-\lambda-1},
\end{equation*}
and hence deduce 
\begin{equation*}
\mathcal{F} |x|^\lambda =  - \dfrac{2 \, \, \Gamma(\lambda+1)}{\sqrt{2 \pi}} \sin \dfrac{\lambda \pi}{2} \cdot |\xi|^{-\lambda-1}.
\end{equation*}

Thus, after some calculation, we have the simple result that
\begin{equation} \label{FLxlambda}
r_+ (-\Delta)^\alpha e_+ \, x^\lambda = \bigg (\dfrac{ \Gamma(2 \alpha - \lambda) \Gamma(1+ \lambda) \sin \pi( \alpha - \lambda)}{\pi} \bigg ) \cdot x^{\lambda - 2 \alpha},
\end{equation}
and
\begin{equation*}
r_+ (-\Delta)^\alpha \, \chi_{\mathbb{R}_\pm} =  \bigg (\pm \dfrac{ \Gamma(2 \alpha)  \sin \pi \alpha}{\pi} \bigg ) \cdot x^{- 2 \alpha}.
\end{equation*}
Hence, from equation \eqref{regFLalt},
\begin{equation*}
\Lambda^\alpha_+ \, x^\lambda = r_+ (-\Delta)^\alpha e_+ \, x^\lambda + x^\lambda r_+ \big ( (-\Delta)^\alpha \, \chi_{\mathbb{R}_-} \big )= 0,
\end{equation*}
if and only if
\begin{equation} \label{TEatzeroFL}
\Gamma(2 \alpha - \lambda) \Gamma(1+ \lambda) \sin \pi( \alpha - \lambda) = \Gamma(2 \alpha)  \sin \pi \alpha.
\end{equation}

Of course, as expected, given the homogeneity of $|\xi|^{2\alpha}$, we have the immediate solution $\lambda =0$. \\

Later, in this research, we consider an \enquote{inhomogeneous} variant of the fractional Laplacian. We shall see that calculations involving the Fourier transform are considerably more complex. However, the transcendental equation \eqref{TEatzeroFL} will reappear and, moreover, will play a critical role. (See Chapter \ref{ChapterTE} and, particularly, equation \eqref{TEatzero}.)\\

\newpage
 \subsection{Problem statement} \label{ProblemStatement}
Equation \eqref{regFLalt} provides the central motivation for this current research. Indeed, given a pseudo-differential operator $B$, acting on $\mathbb{R}^n$, we will, in general, consider the operator
\begin{equation} \label{regFLaltgen}
\mathcal{B}_G  := r_G B e_G + r_G (B \chi_{\mathbb{R}^n \setminus G}) I.
\end{equation}
 
We shall demonstrate now that the operator $B_G + (r_G \, B \chi_{\mathbb{R}^n \setminus G}) I$ is always  \enquote{less singular} than $B_G$ itself. Let $U \in Y(\mathbb{R}^n)$ be any extension of $u \in Y(\overline{G})$. Then, by definition, $r_G U = u$ and 
\begin{align*}
B_Gu + (r_G B\chi_{\mathbb{R}^n \setminus G})u & = r_G Be_Gu + (r_G B\chi_{\mathbb{R}^n \setminus G}) r_G U \\ 
& = r_G Be_Gu + r_G (U B\chi_{\mathbb{R}^n \setminus G}) \\ 
& = r_G (Be_Gu + U B\chi_{\mathbb{R}^n \setminus G}) \\
& = r_G (B (U\chi_G) + B(U\chi_{\mathbb{R}^n \setminus G}) + [U I,B]\chi_{\mathbb{R}^n \setminus G}) \\
& = r_G (BU + [U I,B]\chi_{\mathbb{R}^n \setminus G}), 
\end{align*}
where $[U I,B]$ is the commutator given by 
\begin{equation*}
[U I,B] = UB - (BU)I.
\end{equation*}
Of course, $[UI ,B]$ is of \textit{lower order} than $B$, if $U$ has a degree of smoothness. \\

In summary, the perturbed operator has the simple form
\begin{equation} \label{rBurComm}
B_Gu + (r_G B\chi_{\mathbb{R}^n \setminus G})u = r_G (BU + [U I,B]\chi_{\mathbb{R}^n \setminus G}).
\end{equation}
Moreover, it can be determined purely in terms of a restriction, to the domain $G$, of an appropriate interaction between $B$ and $U$ over $\mathbb{R}^n$.  \\

\begin{remark}
It is easy to see that the representation on the right-hand side of \eqref{rBurComm} is independent of the extension $U$. Indeed, suppose that $U_1, U_2 \in Y(\mathbb{R}^n)$ are two extensions of $u \in Y(\overline{G})$. Then, by definition, $r_GU_1= u = r_GU_2$. Moreover,
\begin{align*}
r_G (BU_1 & + [U_1 I,B]\chi_{\mathbb{R}^n \setminus G}) - r_G (BU_2 + [U_2 I,B]\chi_{\mathbb{R}^n \setminus G}) \\
& = r_GB(U_1-U_2) + r_G (U_1-U_2)B \chi_{\mathbb{R}^n \setminus G} -r_GB(U_1-U_2)\chi_{\mathbb{R}^n \setminus G} \\
& = r_GB(U_1-U_2)  -r_GB(U_1-U_2)\chi_{\mathbb{R}^n \setminus G} \\
& = r_GB(U_1-U_2)\chi_G \\
& = 0. \\
\end{align*}
\end{remark} 

The genesis of this research is the simple realisation that adding the potential term, 
$r_G (B \chi_{\mathbb{R}^n \setminus G})$, actually does improve the situation. This is because the leading singular terms of $r_G B e_G$ and $r_G (B \chi_{\mathbb{R}^n \setminus G}) I$ cancel each other at $\partial G$. Appendix \ref{AppendixYZmapping} details a simple illustrative example of this phenomenon. \\

So, the resulting (perturbed) operator $\mathcal{B}_G$ is \enquote{less singular}  than either the truncated operator $r_G B e_G$ or the multiplier $r_G (B \chi_{\mathbb{R}^n \setminus G})$ when viewed separately. (In passing, we note that the cancellation of singularities will not occur if the coefficient of  $r_G (B \chi_{\mathbb{R}^n \setminus G}) u$ is a constant different from 1.) \\

Given the above analysis, the underlying problem, in abstract terms, is to develop a theory of boundary value problems for operators which are sums of pseudodifferential operators and ``fine-tuned potentials", which when taken together are less singular
than the corresponding pseudodifferential operators considered on their own.  In this research, to make the problem more tractable, we shall consider a particular elliptic pseudodifferential operator chosen because it possesses two important characteristics, namely:
\begin{enumerate}[\hspace{1cm}(a)]
\item All the richness intrinsic to the fractional Laplacian;
\item Representative of a large class of operators. 
\end{enumerate}

For additional simplicity, we shall restrict our attention to one spatial dimension. However, it worth remarking that even one-dimensional problems
of this kind can have important applications in various fields including financial mathematics
(non-Gaussian market models).  \\

Finally, suppose $0 < \alpha < 1$. Let $A$ denote the pseudodifferential operator of order $2 \alpha$, with symbol 
\begin{equation} \label{symboldef}
A(\xi) = (1+ \xi^2)^\alpha. 
\end{equation}
Our (simplified) problem is to investigate the solvability of the equation
\begin{equation} \label{rAef}
\mathcal{A} u := r_+ \, A \, e_+ u + u \, r_+ A( \chi_{\mathbb{R}_{-}}) = f, 
\end{equation}
where 
 $u \in H^s_p(\overline{\mathbb{R}_+})$ for a given $f \in H^{s-2\alpha}_p(\overline{\mathbb{R}_+})$.  We assume that
 \begin{equation} \label{constraintonp}
 1 < p < \infty,
 \end{equation}
 since, in this case, $H^s_p(\mathbb{R})$ is a Banach space and we can also apply the techniques and results of harmonic analysis.\\

\newpage
\subsection{Outline of this research}
From equations \eqref{symboldef} and \eqref{rAef}, we note our operator of interest, $\mathcal{A}$, is defined via the Fourier transform. In Chapter \ref{ChapterSingInt}, we formulate $\mathcal{A}$, acting on a restriction of the Schwartz space $S(\mathbb{R})$, as a singular integral. Using this representation, in Chapter \ref{ChapterTrivKer}, we establish conditions under which the operator $\mathcal{A}$, acting on Bessel potential spaces, is bounded. Moreover, at least for the case $p=2$, we also determine sufficient conditions for $\mathcal{A}$ to have a trivial kernel. Later, in Chapters \ref{ChapterIndInv} and \ref{ChapterHigherReg}, this latter result is generalised to any $p$ satisfying the constraint $1 < p < \infty$. \\

We begin with the case of lower regularity, where $1/p < s < 1 + 1/p$. In Chapters \ref{ChapterOpAlgI} and \ref{OpAlgLp}, we reformulate the problem in $L_p(\mathbb{R}_+)$, in terms of an operator algebra containing multiplication, Mellin and Wiener-Hopf operators. Our goal is to establish precise conditions under which $\mathcal{A}$ is Fredholm, and then calculate its index. A significant part of the index calculation, see Chapter \ref{GenSymbol}, involves a certain transcendental equation, which includes terms containing gamma functions with complex arguments. \\

The analysis for $1/p < s < 1 + 1/p$ is completed in Chapter \ref{ChapterIndInv}, where we determine the range of parameters for which the Fredholm index is zero. Given this, and the trivial kernel results, we are able to establish the conditions under which $\mathcal{A}$ is invertible. Of course, Fredholm theory in the multi-dimensional case requires invertibility of the one-dimensional operator.\\

In Chapter \ref{ChapterHigherReg}, the case of higher regularity, namely $1 + 1/p < s < 2 + 1/p$, is examined using the methods established previously. To improve readability, and also because of its significant technical complexity, we delay detailed examination of the transcendental equation until Chapter \ref{ChapterTE}. Finally, areas of possible future research are discussed in Chapter \ref{ChapterFutureResearch}.

\section{Key results} \label{ChapterKeyResults}
Suppose $\epsilon > 0$, and let $\chi_\epsilon$ denote the characteristic function of the interval $(\epsilon, \infty)$. Let the space $H^s_{p,0}(\overline{\mathbb{R}_+})$ be as defined in \eqref{Hsp0definition}. \\

The operator $\mathcal{A}$ can be represented as a (hyper-)singular integral operator.
\begin{theorem} 
Suppose $0 < \alpha < 1$. Let $v \in S(\mathbb{R})$ and define $u := r_+ v$. Then, for $x > 0$,
\begin{equation*}
(\mathcal{A}u)(x) = u(x) +\lim_{\epsilon \searrow 0} \int^\infty_0 (u(x)-u(y)) \, \chi_\epsilon( |x-y| ) \, m(|x-y|) \, dy.
\end{equation*}
Moreover,
\begin{equation*}
(\mathcal{A}u, u) =  \int^\infty_0 |u|^2 \, dx + \tfrac{1}{2} \int^\infty_0 \int^\infty_0 |u(x)-u(y)|^2 m(|x-y|) \, dy dx,
\end{equation*}
for a certain function $m(w)$, which is $O(|w|^{-1-2 \alpha})$ for small $|w|$ and $O(e^{-|w|})$ as $|w| \to \infty$. \\
\end{theorem}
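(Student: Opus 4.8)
The plan is to compute an explicit convolution kernel for the operator $A-I=\mathcal F^{-1}\big((1+\xi^2)^\alpha-1\big)\mathcal F$, and then to exhibit the cancellation, at the boundary point $x=0$, between the non-local part of $r_+Ae_+u$ produced by extending $u$ by zero across $0$ and the added potential $u\cdot r_+A(\chi_{\mathbb R_-})$. The splitting $A=I+(A-I)$ is natural here: $(1+\xi^2)^\alpha-1$ vanishes at $\xi=0$ and grows only like $|\xi|^{2\alpha}$, so $A-I$ should be an integro-differential operator whose kernel is singular of order $1+2\alpha$ at the diagonal and decays exponentially, the decay coming from the mass term in $1+\xi^2$. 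The ``$u(x)+\dots$'' in the statement is just the contribution of the $I$ in $A=I+(A-I)$.

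\textbf{Kernel of $A-I$.} Starting from the subordination identity $\lambda^\alpha=\tfrac{\alpha}{\Gamma(1-\alpha)}\int_0^\infty(1-e^{-\lambda t})\,t^{-1-\alpha}\,dt$ for $\lambda>0$ (one integration by parts), I apply it to $\lambda=1+\xi^2$ and to $\lambda=1$ and subtract, getting $(1+\xi^2)^\alpha-1=c_\alpha\int_0^\infty e^{-t}\big(1-e^{-t\xi^2}\big)t^{-1-\alpha}\,dt$ with $c_\alpha:=\alpha/\Gamma(1-\alpha)>0$. Inverting the Fourier transform \eqref{FTdefinition} termwise, $\mathcal F^{-1}[e^{-t\xi^2}]\mathcal F$ is convolution with the heat kernel $g_t(x)=(4\pi t)^{-1/2}e^{-x^2/4t}$, and since $\int g_t=1$ I obtain, for $w\in S(\mathbb R)$,
\[ (A-I)w(x)=c_\alpha\int_0^\infty e^{-t}t^{-1-\alpha}\int_{\mathbb R}(w(x)-w(y))\,g_t(x-y)\,dy\,dt . \]
Since $g_t*w-w=O(t)$ as $t\to0^+$ (it solves the heat equation) and is bounded as $t\to\infty$, the weight $e^{-t}t^{-1-\alpha}$ is integrable against it and Fubini applies; interchanging the integrations,
\[ (A-I)w(x)=\int_{\mathbb R}(w(x)-w(y))\,m(|x-y|)\,dy,\qquad m(r):=\frac{c_\alpha}{2\sqrt\pi}\int_0^\infty e^{-t-r^2/4t}\,t^{-3/2-\alpha}\,dt>0, \]
where for $\alpha\ge\tfrac12$ the $y$-integral is read as $\lim_{\epsilon\searrow0}\int_{|x-y|>\epsilon}$: the odd linear term in the Taylor expansion of $w$ at $y=x$ cancels over the symmetric exclusion, so the limit exists.

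\textbf{Asymptotics of $m$, and the cancellation at $0$.} The integral defining $m$ is a modified Bessel function, $m(r)=\tfrac{c_\alpha}{\sqrt\pi}(r/2)^{-1/2-\alpha}K_{\alpha+1/2}(r)$, so $K_\nu(r)\sim\tfrac12\Gamma(\nu)(r/2)^{-\nu}$ as $r\to0^+$ ($\nu>0$) and $K_\nu(r)\sim\sqrt{\pi/2r}\,e^{-r}$ as $r\to\infty$ give $m(r)=O(r^{-1-2\alpha})$ near $0$ and $m(r)=O(r^{-1-\alpha}e^{-r})=O(e^{-r})$ at infinity, as required (these bounds also follow directly by splitting the $t$-integral at $t=1$). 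Now write $\mathcal A u=u+r_+(A-I)e_+u+u\cdot r_+(A-I)\chi_{\mathbb R_-}$, using $r_+Ie_+u=u$ and $r_+I\chi_{\mathbb R_-}=0$ on $\mathbb R_+$; the exponential decay of $m$ lets the kernel representation be applied to the bounded non-Schwartz function $\chi_{\mathbb R_-}$ as well. For $x>0$ one has $e_+u(x)=u(x)$, $e_+u\equiv0$ on $\mathbb R_-$, $\chi_{\mathbb R_-}(x)=0$, whence
\[ r_+(A-I)e_+u(x)=\int_0^\infty(u(x)-u(y))\,m(|x-y|)\,dy+u(x)\!\int_{-\infty}^0 m(|x-y|)\,dy,\qquad r_+(A-I)\chi_{\mathbb R_-}(x)=-\!\int_{-\infty}^0 m(|x-y|)\,dy. \]
Adding, the two terms $\pm u(x)\int_{-\infty}^0 m(|x-y|)\,dy$ cancel, which is precisely the first displayed identity of the theorem (with the $\epsilon$-cutoff carried throughout).

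\textbf{The quadratic form, and the main obstacle.} Multiplying the first identity by $\overline{u(x)}$ and integrating over $x>0$ — justified, uniformly in $\epsilon$, by the rapid decay of $u=r_+v$ and the uniform boundedness of the inner truncated integral — I symmetrize the double integral under $x\leftrightarrow y$, using $m(|x-y|)=m(|y-x|)$, to get $\int_0^\infty\!\int_0^\infty(u(x)-u(y))m(|x-y|)\overline{u(x)}=\tfrac12\int_0^\infty\!\int_0^\infty|u(x)-u(y)|^2m(|x-y|)$; since $m>0$ and $|u(x)-u(y)|^2m(|x-y|)=O(|x-y|^{1-2\alpha})$ is locally integrable and globally controlled by the exponential decay, the $\epsilon$-cutoff is removed by monotone convergence, yielding the second identity. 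The genuinely delicate points are the two Fubini interchanges — the $t$-integral against the $y$- and $x$-integrals — which hinge on the $O(t)$ bound for $g_t*w-w$ weighed against $t^{-1-\alpha}$, together with the consistency check that the Fourier-multiplier definition of $A$ on $S(\mathbb R)$ and its extension to $L_\infty$ via the kernel $m$ agree, so that the cancellation at the boundary is rigorous rather than formal; the asymptotics of $m$ are then routine.
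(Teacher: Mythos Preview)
Your approach is essentially the paper's: the same subordination representation of $(1+\xi^2)^\alpha-1$, the same heat-kernel integral for $m$ (hence the Bessel identification and asymptotics), and the same symmetrization plus monotone convergence for the quadratic form. Two points are worth flagging. First, rather than apply the kernel formula directly to the discontinuous $e_+u$, the paper uses the commutator identity $\mathcal{A}u=r_+Av+r_+[vI,A]\chi_{\mathbb R_-}$ (equation~\eqref{rBurComm}) with the full Schwartz extension $v$, so the kernel representation is only ever invoked for $v\in S(\mathbb R)$ (Lemma~\ref{LemAvInt}) and for $\chi_{\mathbb R_-}$ via approximation by $C_0^\infty$ cutoffs in $S'$ (Lemmas~\ref{LemAvchi} and~\ref{LemAChim}); this cleanly resolves the ``consistency check'' you rightly flag at the end, and avoids having to argue separately that the Fourier-multiplier and kernel definitions agree on $e_+u$. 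Second, your DCT justification is not quite right for $\alpha\ge\tfrac12$: the inner truncated integral is \emph{not} uniformly bounded in $x$ --- it blows up like $x^{1-2\alpha}$ as $x\searrow0$, coming from $\int_x^\infty(u(x)-u(x+w))m(w)\,dw$ --- so the dominating function must absorb this local (still integrable) singularity; the paper isolates exactly this estimate in Lemma~\ref{Aepsvestimate}.
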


\begin{remark}
We have the following explicit representation
\begin{equation*}
m(y) = \dfrac{\alpha}{\Gamma(1-\alpha)} \, \dfrac{2^{\frac{1}{2}+\alpha}}{\sqrt{\pi}} |y|^{-\frac{1}{2}-\alpha} K_{\frac{1}{2}+\alpha}(|y|),
\end{equation*}
where $K_\nu$ is a modified Bessel function of the second kind of order $\nu$. See, for example, Chapter 10, \cite{NIST}.\\
\end{remark}

Under certain conditions $\mathcal{A}$ is bounded and has a trivial kernel.
\begin{theorem} 
Suppose $1 < p < \infty$ and $0 < \alpha <1$. If either
\begin{enumerate} [\hspace{18pt}(a)]
\item $2\alpha -1 + 1/p  < s < 1+1/p \text{ then } \mathcal{A}:  H^s_p(\overline{\mathbb{R}_+}) \to H^{s-2\alpha}_p(\overline{\mathbb{R}_+}) \text{ is bounded},  \text{ or }$ 
\item $1+1/p < s < 2+1/p \text{ then } \mathcal{A}:  H^s_{p,0}(\overline{\mathbb{R}_+}) \to H^{s-2\alpha}_p(\overline{\mathbb{R}_+}) \text{ is bounded}$.
\end{enumerate} 

Moreover, if $p=2$ and either
\begin{enumerate} [\hspace{18pt}(i)]
\item $0 < \alpha < \tfrac{1}{2}, \,\,  \tfrac{1}{2}  < s < 1+\tfrac{1}{2},   \text{ or }$ 
\item $0 < \alpha < 1, \,\,  1 +\tfrac{1}{2}  < s < 2+\tfrac{1}{2}$, 
\end{enumerate} 
then $\mathcal{A}$ has a trivial kernel. \\
\end{theorem}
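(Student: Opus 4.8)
The cleanest route uses the hyper-singular integral representation of the preceding theorem: on restrictions of Schwartz functions one has $\mathcal{A}u = u + Tu$, where $(Tu)(x)=\lim_{\epsilon\searrow 0}\int_0^\infty(u(x)-u(y))\chi_\epsilon(|x-y|)m(|x-y|)\,dy$ with $m>0$, $m(w)=O(|w|^{-1-2\alpha})$ as $w\to 0$, $m(w)=O(e^{-|w|})$ as $w\to\infty$, and $m$ smooth away from the origin. The identity term maps $H^s_p(\overline{\mathbb{R}_+})$ continuously into $H^{s-2\alpha}_p(\overline{\mathbb{R}_+})$ because $2\alpha>0$. Writing $m=m_0+m_\infty$ with $\operatorname{supp}m_0\subset\{|w|\le 1\}$, the part coming from $m_\infty$ is multiplication by a bounded smooth function plus convolution with an $L_1(\mathbb{R})$ kernel of exponential decay, hence bounded on $H^\sigma_p(\overline{\mathbb{R}_+})$ for $\sigma$ in the relevant range; so everything reduces to the near-diagonal operator $T_0$, which modulo yet lower-order terms is the truncated operator $r_+(-\Delta)^\alpha e_+ + r_+((-\Delta)^\alpha\chi_{\mathbb{R}_-})I$ of \eqref{regFLalt}, to be estimated via the cancellation mechanism of \eqref{rBurComm}.

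\textbf{The cancellation argument and the role of the ranges.} Fix $\phi\in S(\mathbb{R})$ with $\phi(0)=1$, and $\phi'(0)=0$ in case (b). Since $s>1/p$ the trace is bounded, so write $u=u(0)\,r_+\phi+u_1$; then $u_1(0)=0$ in case (a), and $u_1(0)=u_1'(0)=0$ in case (b) because $u\in H^s_{p,0}(\overline{\mathbb{R}_+})$ forces $u'(0)=0$. In the ranges $1/p<s<1+1/p$, respectively $1+1/p<s<2+1/p$, extension by zero is bounded on this subspace, so $e_+u_1\in\widetilde{H}^s_p(\overline{\mathbb{R}_+})$ and $r_+Ae_+u_1\in H^{s-2\alpha}_p(\overline{\mathbb{R}_+})$; moreover $r_+A\chi_{\mathbb{R}_-}$ is smooth on $(0,\infty)$, behaves like $c_\alpha x^{-2\alpha}$ as $x\downarrow 0$ (compare \eqref{FLxlambda}) and decays exponentially at infinity, so a Hardy-type weighted estimate — valid once $s-2\alpha>1/p-1$, which is exactly the lower bound $s>2\alpha-1+1/p$ in (a) and is automatic in (b) — gives $(r_+A\chi_{\mathbb{R}_-})u_1\in H^{s-2\alpha}_p(\overline{\mathbb{R}_+})$. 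For the boundary piece, $e_+r_+\phi=\phi-\phi\chi_{\mathbb{R}_-}$ yields $\mathcal{A}(r_+\phi)=r_+A\phi+(r_+A\chi_{\mathbb{R}_-})(r_+\phi-1)-r_+A((\phi-1)\chi_{\mathbb{R}_-})$, where $r_+A\phi$ is a restriction of a Schwartz function; since $r_+\phi-1$ and $(\phi-1)\chi_{\mathbb{R}_-}$ vanish at $0$ to the order fixed by $\phi(0)=1$ (and $\phi'(0)=0$), the two $x^{-2\alpha}$-singularities cancel and each remaining term lies in $H^{s-2\alpha}_p(\overline{\mathbb{R}_+})$ over the whole stated range. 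The same computation makes transparent why $u'(0)\ne 0$ must be excluded once $s>1+1/p$: it leaves an $x^{1-2\alpha}$ contribution whose coefficient is proportional to the (nonzero) defect in \eqref{TEatzeroFL} at $\lambda=1$, and which is not in $H^{s-2\alpha}_p(\overline{\mathbb{R}_+})$ for $s>1+1/p$. Adding the two pieces and taking the infimum over extensions proves (a) and (b).

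\textbf{Trivial kernel.} Let $p=2$. By the preceding theorem, $(\mathcal{A}w,w)=\|w\|^2_{L_2(\mathbb{R}_+)}+\tfrac12\iint_{\mathbb{R}_+\times\mathbb{R}_+}|w(x)-w(y)|^2m(|x-y|)\,dx\,dy=:\mathfrak{a}(w,w)$ for every restriction $w=r_+v$ of a Schwartz function, and the explicit formula $m(y)=\tfrac{\alpha}{\Gamma(1-\alpha)}\tfrac{2^{1/2+\alpha}}{\sqrt{\pi}}|y|^{-1/2-\alpha}K_{1/2+\alpha}(|y|)$ shows $m>0$ on $(0,\infty)$ for $0<\alpha<1$, since $\Gamma(1-\alpha)>0$ and $K_\nu>0$; hence $\mathfrak{a}(w,w)\ge\|w\|^2_{L_2(\mathbb{R}_+)}$. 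The Hermitian form $\mathfrak{a}$ extends boundedly to $H^s_2(\overline{\mathbb{R}_+})$ (resp. $H^s_{2,0}(\overline{\mathbb{R}_+})$): the first term is $\le\|\cdot\|^2_{L_2(\mathbb{R}_+)}$, while the decay of $m$ bounds the double integral by a constant times $\|\,\cdot\,|H^\alpha_2(\overline{\mathbb{R}_+})\|^2$ (its near-diagonal part being a Gagliardo seminorm, the rest controlled by $\|\cdot\|^2_{L_2}$), hence by $\|\,\cdot\,|H^s_2(\overline{\mathbb{R}_+})\|^2$ because $s>\alpha$ in both (i) and (ii). Furthermore $s-2\alpha>-\tfrac12$ in both cases (from $2\alpha-\tfrac12<s$), so the $L_2(\mathbb{R}_+)$-pairing extends continuously to $H^{s-2\alpha}_2(\overline{\mathbb{R}_+})\times H^s_2(\overline{\mathbb{R}_+})$: when $|s-2\alpha|<\tfrac12$ via $H^{s-2\alpha}_2(\overline{\mathbb{R}_+})=\widetilde{H}^{s-2\alpha}_2(\overline{\mathbb{R}_+})$, whose $L_2$-dual is $H^{2\alpha-s}_2(\overline{\mathbb{R}_+})\supseteq H^s_2(\overline{\mathbb{R}_+})$ since $s\ge\alpha$; and when $s-2\alpha\ge\tfrac12$ simply because then $H^{s-2\alpha}_2(\overline{\mathbb{R}_+})\subset L_2(\mathbb{R}_+)$.

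\textbf{Conclusion and the main obstacle.} Approximate a given $u$ by $u_n=r_+v_n$ with $v_n\in S(\mathbb{R})$ (chosen with $v_n'(0)=0$ in case (ii), so $u_n\in H^s_{2,0}(\overline{\mathbb{R}_+})$), $u_n\to u$ in $H^s_2(\overline{\mathbb{R}_+})$; by the boundedness proved above $\mathcal{A}u_n\to\mathcal{A}u$ in $H^{s-2\alpha}_2(\overline{\mathbb{R}_+})$, and passing to the limit in $(\mathcal{A}u_n,u_n)=\mathfrak{a}(u_n,u_n)$ using the two continuity statements gives $(\mathcal{A}u,u)=\mathfrak{a}(u,u)$ for all $u$ in the space. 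Hence if $u\in\ker\mathcal{A}$ then $0=(\mathcal{A}u,u)=\mathfrak{a}(u,u)\ge\|u\|^2_{L_2(\mathbb{R}_+)}$, so $u=0$. I expect the real work to be in the boundedness half: pinning down the $c_\alpha x^{-2\alpha}$ asymptotics and exponential decay of the potential $r_+A\chi_{\mathbb{R}_-}$, the mapping property of $r_+Ae_+$, and the Hardy-type multiplication estimate on the vanishing-at-$0$ subspaces — the endpoint bookkeeping here is precisely what produces the lower bound $s>2\alpha-1+1/p$ in (a) and forces the restriction to $H^s_{p,0}(\overline{\mathbb{R}_+})$ in (b). Once the energy identity is in hand the kernel statement is immediate from $m>0$, the only delicate point there being the density/continuity argument that transfers that identity from Schwartz restrictions to the full spaces.
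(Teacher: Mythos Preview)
Your approach is essentially the paper's: for boundedness, the decomposition $u=u(0)r_+\phi+u_1$ with $e_+u_1\in\widetilde{H}^s_p(\overline{\mathbb{R}_+})$, the Hardy-type estimate $x^{-2\alpha}I:\widetilde{H}^s_p(\overline{\mathbb{R}_+})\to\widetilde{H}^{s-2\alpha}_p(\overline{\mathbb{R}_+})$ (valid for $s>2\alpha-1+1/p$) applied to the potential term, and the commutator identity \eqref{rBurComm} for the $\phi$-piece are exactly what the paper does in Lemma~\ref{AuHsp}. The kernel argument via the energy identity $(\mathcal{A}u,u)=(I(u))^2$ and approximation is the paper's Lemmas~\ref{BevIvsmall}--\ref{Bevlvlargep2}, only you pair at the $H^{s-2\alpha}_2\times H^s_2$ level whereas the paper passes through an intermediate $H^{\pm\alpha}_2$ (case~(i)) or $H^{\pm\beta}_2$ with $\beta\in[0,\tfrac12)$ (case~(ii)); both routes are valid.

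One genuine gap: your trace decomposition presupposes $s>1/p$, but case~(a) allows $2\alpha-1+1/p<s\le 1/p$. The paper handles this sub-range separately by noting that for $-1+1/p<s<1/p$ one has $e_+H^s_p(\overline{\mathbb{R}_+})=\widetilde{H}^s_p(\overline{\mathbb{R}_+})$, so every $u$ already extends by zero with no boundary subtraction needed, and the Hardy estimate applies directly; the endpoint $s=1/p$ then follows by interpolation. Your opening paragraph (splitting $m=m_0+m_\infty$ and reducing to the homogeneous fractional Laplacian) is not actually used in what follows and can be dropped: the paper derives the $x^{-2\alpha}$ asymptotic and exponential decay of $r_+A\chi_{\mathbb{R}_-}$ directly from the Bessel-function representation of $m$ (Lemma~\ref{LemAChim}, Remark~\ref{Remmy}), which is precisely the ``real work'' you flag at the end.
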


\begin{remark}
The condition that $p=2$ for $\mathcal{A}$ to have a trivial kernel is not as restrictive as it might appear. Under appropriate conditions, we will be able to determine sufficent conditions for $\mathcal{A}$ to have a trivial kernel for any $p$ in the range $1 < p < \infty$, using the result (above) for $p=2$. \\
\end{remark}

Let $\tau := s -1 /p$. Then, it turns out that the following transcendental equation, see \eqref{TEatzeroFL}, will play a pivotal role in our analysis:
\begin{equation*} 
\Gamma(2 \alpha - \tau) \Gamma(\tau+1) \sin \pi(\alpha-\tau) = \Gamma(2 \alpha) \sin \pi \alpha.
\end{equation*} 

Indeed, if $0 < \alpha < \tfrac{1}{2}$ and $0 < \tau < 1$, we prove that equation \eqref{TEatzeroFL} has no solution. On the other hand, if $0 < \alpha < 1$ and $1 < \tau <2$, we prove that equation \eqref{TEatzeroFL} has a unique solution of the form $\tau = 1 + \alpha_c$, where $\alpha_c$ only depends on $\alpha$ and satisfies $0 < \alpha_c < \alpha$. \\

Finally,  via a calculation of the Fredholm index, we establish conditions for the invertibility of $\mathcal{A}$.
\begin{theorem}
Suppose $0 < \alpha < \tfrac{1}{2}, \,\, 1 < p < \infty$ and $1/p  < s < 1+1/p$. Then the operator $\mathcal{A}: H^s_p(\overline{\mathbb{R}_+}) \to H^{s-2\alpha}_p(\overline{\mathbb{R}_+})$ is invertible. \\
\end{theorem}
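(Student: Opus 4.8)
The plan is to combine the three ingredients advertised in the introduction: boundedness of $\mathcal{A}$ (Theorem~2 (a), which applies since $0<\alpha<\tfrac12$ forces $2\alpha-1+1/p<1/p$, so the whole interval $1/p<s<1+1/p$ is covered), triviality of the kernel, and a Fredholm index computation that yields index zero. Invertibility then follows formally: a Fredholm operator of index $0$ with trivial kernel is surjective, hence bijective, and by the open mapping theorem its inverse is bounded between the Banach spaces $H^s_p(\overline{\mathbb{R}_+})$ and $H^{s-2\alpha}_p(\overline{\mathbb{R}_+})$.

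First I would reduce to the case $p=2$ for the kernel. Theorem~2 gives a trivial kernel only when $p=2$; for general $p$ in the stated range one argues by the standard embedding/duality trick alluded to in the Remark after Theorem~2. Concretely, if $u\in H^s_p(\overline{\mathbb{R}_+})$ lies in $\ker\mathcal{A}$, one uses elliptic regularity for the Mellin--Wiener-Hopf realisation of $\mathcal{A}$ (developed in the chapters on the operator algebra) to bootstrap $u$ into $H^{s'}_{2}(\overline{\mathbb{R}_+})$ for a suitable $s'$ in the admissible $p=2$ window, or alternatively one transposes: since $\mathcal{A}$ is, up to lower-order terms, symmetric (this is exactly the content of the energy identity $(\mathcal{A}u,u)=\|u\|^2+\tfrac12\iint|u(x)-u(y)|^2 m\,dy\,dx\ge \|u\|^2$ in Theorem~1), a nontrivial kernel element on $H^s_p$ would pair against a cokernel element and contradict the $L_2$ energy estimate. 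The $L_2$ energy identity itself immediately kills the $p=2$ kernel: $(\mathcal{A}u,u)\ge\|u\|_{L_2}^2$ forces $u=0$. So the kernel is trivial throughout the range $1/p<s<1+1/p$, $0<\alpha<\tfrac12$.

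Next I would compute the Fredholm index. Following Chapters~\ref{ChapterOpAlgI}--\ref{GenSymbol}, transport $\mathcal{A}$ by $Z_p$ and $r_+$ into $L_p(\mathbb{R}_+)$, where it becomes an element of the Banach algebra generated by multiplication operators, Wiener-Hopf operators $W(a)$, and Mellin convolution operators $M^0(b)$; the leading singular parts of $r_+Ae_+$ and of the potential $u\,r_+A(\chi_{\mathbb{R}_-})$ combine to a Mellin symbol whose zeros are governed precisely by the transcendental equation \eqref{TEatzeroFL} with $\tau=s-1/p$. Fredholmness is equivalent to non-vanishing of the full (operator-valued, matrix) symbol on the compactified line-plus-arcs, and the index is minus the winding number of the determinant of that symbol. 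The key input, already quoted in the Key Results section, is that for $0<\alpha<\tfrac12$ and $0<\tau<1$ the equation \eqref{TEatzeroFL} has \emph{no} solution; hence the Mellin symbol never vanishes on the relevant contour, the symbol is invertible, $\mathcal{A}$ is Fredholm, and — by a homotopy argument deforming $\alpha\searrow 0$ (or $\tau$ across its range) while keeping the symbol non-degenerate, together with the fact that at a degenerate/limiting endpoint $\mathcal{A}$ reduces to an obviously invertible operator (the identity plus a compact or a smoothing term) — the winding number is $0$, so $\operatorname{ind}\mathcal{A}=0$.

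The main obstacle is the index computation, specifically verifying that the winding number of the symbol determinant is exactly zero rather than merely constant along a homotopy: one must either exhibit an explicit admissible path of parameters connecting $\mathcal{A}$ to a normalised invertible operator without crossing the zero set of \eqref{TEatzeroFL}, or compute the winding number directly from the explicit symbol (built from $(1+\xi^2)^\alpha$, the factors $(\xi\pm i)^\alpha$, $(i\xi)^{\pm\alpha}$ from Remark~\ref{complexexponent}, and the gamma-function Mellin factors). This is where the detailed analysis of the transcendental equation in Chapter~\ref{ChapterTE} is essential: non-existence of a solution in $0<\tau<1$ is what guarantees the symbol stays off zero along the whole relevant range of $s$, pinning the winding number to its value at a reference point. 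Once $\operatorname{ind}\mathcal{A}=0$ and $\ker\mathcal{A}=\{0\}$ are in hand, invertibility is immediate.
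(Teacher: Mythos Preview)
Your overall architecture is exactly the paper's: boundedness, Fredholm index zero via the symbol analysis, trivial kernel at $p=2$ from the energy identity, then promote to all $p$. The index step is also essentially right in spirit---the paper does choose one explicit parameter point ($\alpha=\tfrac25$, $p=2$, $s=\tfrac75$, so $\nu=1-s+\alpha=0$), shows the symbol contour sits inside the disc $|z-1|<\tfrac25$ there, and then uses Theorem~\ref{TheoremTranscend} (no solution of the transcendental equation for $0<\tau<1$) to homotope to every other admissible $(\alpha,p,s)$ without crossing $0$. Your suggestion of sending $\alpha\searrow 0$ instead would also work if handled carefully, but the paper's choice of an interior model point is cleaner because it avoids a degenerate limit.

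The real gap is your passage from $p=2$ to general $p$ for the kernel. Neither of your two mechanisms does the job. ``Elliptic regularity / bootstrapping'' goes the wrong way when $p>2$: a kernel element $u\in H^{1/p+\delta}_p$ does \emph{not} automatically lie in any $H^{1/2+\delta'}_2$, so you cannot simply feed it into the $p=2$ energy estimate. The ``transpose / symmetry'' suggestion is too vague to be a proof: the pairing $(\mathcal{A}u,u)$ is only shown to equal $I(u)^2$ for $u$ in the $L_2$-scale (Lemma~\ref{BevIvsmall}), and there is no obvious way to make sense of it for $u\in H^s_p$ with large $p$. What the paper actually uses is a clean abstract device, Lemma~\ref{FredholmKer}: if $\mathcal{A}$ is Fredholm on two scales $X_1\hookrightarrow X_2$ (continuous, dense) with the \emph{same index}, then the kernels on the two scales coincide. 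Taking $X_1=H^{1/2+\delta}_2$ and $X_2=H^{1/p+\delta}_p$ for $p>2$ (and the roles swapped for $p<2$), together with $\operatorname{ind}\mathcal{A}=0$ on both scales by the symbol computation, forces $\ker_{H^s_p}\mathcal{A}=\ker_{H^{s'}_2}\mathcal{A}=\{0\}$. This is the missing lemma you need to name; once it is in place, the rest of your sketch is correct.
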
 

\begin{theorem}
Suppose $0 < \alpha < 1, \,\, 1 < p < \infty$ and $1 + 1/p < s < 1+1/p + \alpha_c$. Then the operator $\mathcal{A}: H^s_{p,0}(\overline{\mathbb{R}_+}) \to H^{s-2\alpha}_p(\overline{\mathbb{R}_+})$ is invertible. \\

On the other hand, if $0 < \alpha <1, \,\, 1 < p < \infty$ and $1 + 1/p + \alpha_c < s < 2 +1/p$, then $\mathcal{A}$ has a trivial kernel and is Fredholm with index equal to $-1$. \\
\end{theorem}

\chapter{Singular integral representation} \label{ChapterSingInt}
In this chapter we examine the operator $\mathcal{A}$ given in equation \eqref{rAef} in more detail. We consider its action on the restriction of the Schwartz space $S(\mathbb{R})$ to the positive half-line, and formulate a singular integral representation. \\

Suppose $\epsilon > 0$, and let $\chi_\epsilon$ denote the characteristic function of the interval $(\epsilon, \infty)$.

\section{Main result}

\begin{theorem} \label{Theorem:SingularIntegralRep}
Suppose $0 < \alpha < 1$. Let $v \in S(\mathbb{R})$ and define $u := r_+ v$. Then, for $x > 0$,
\begin{equation} \label{Avsingularint}
(\mathcal{A}u)(x) = u(x) +\lim_{\epsilon \searrow 0} \int^\infty_0 (u(x)-u(y)) \, \chi_\epsilon( |x-y|) \, m(|x-y|) \, dy.
\end{equation}
Moreover,
\begin{equation} \label{Avvinnerproduct}
(\mathcal{A}u, u) =  \int^\infty_0 |u|^2 \, dx + \tfrac{1}{2} \int^\infty_0 \int^\infty_0 |u(x)-u(y)|^2 m(|x-y|) \, dy dx,
\end{equation}
for a certain function $m(w)$, which is $O(|w|^{-1-2 \alpha})$ for small $|w|$ and $O(e^{-|w|})$ as $|w| \to \infty$. \\
\end{theorem}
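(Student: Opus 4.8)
The plan is to peel off the identity part of the symbol. Write $(1+\xi^2)^\alpha = 1 + b(\xi)$ with $b(\xi) := (1+\xi^2)^\alpha - 1$, so that $A = I + B$ where $B := \mathcal F^{-1}b\,\mathcal F$; here $b$ is real, even, vanishes to second order at $\xi = 0$ (since $b(\xi) = \alpha\xi^2 + O(\xi^4)$) and behaves like $|\xi|^{2\alpha}$ at infinity. Using $e_+u = v\chi_{\mathbb{R}_+} = v - v\chi_{\mathbb{R}_-}$ and $\chi_{\mathbb{R}_-}(x) = 0$ for $x>0$, the reduced form \eqref{rBurComm} with extension $U = v$ (and $[vI,A] = [vI,B]$, where $[vI,B]\psi := v\,(B\psi) - B(v\psi)$) becomes, for $x>0$,
\begin{equation*}
(\mathcal A u)(x) = u(x) + (Bv)(x) + v(x)\,(B\chi_{\mathbb{R}_-})(x) - \big(B(v\chi_{\mathbb{R}_-})\big)(x).
\end{equation*}
Every term on the right is an honest pointwise quantity for $x>0$: $Av$ is Schwartz, while $B\chi_{\mathbb{R}_-}$ and $B(v\chi_{\mathbb{R}_-})$ involve the kernel of $B$ only away from its singularity at the origin, where it is smooth and decays.

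To identify that kernel I would use the subordination identity $\lambda^\alpha = \tfrac{\alpha}{\Gamma(1-\alpha)}\int_0^\infty(1 - e^{-\lambda t})\,t^{-1-\alpha}\,dt$ ($\lambda \ge 0$); applying it with $\lambda = 1+\xi^2$ and with $\lambda = 1$ and subtracting gives $b(\xi) = \tfrac{\alpha}{\Gamma(1-\alpha)}\int_0^\infty e^{-t}\big(1 - e^{-t\xi^2}\big)t^{-1-\alpha}\,dt$. Inverting the Fourier transform under the integral (the Gaussian contributes $\mathcal F^{-1}[e^{-t\xi^2}](y) = (2t)^{-1/2}e^{-y^2/4t}$, while the constant contributes only a multiple of $\delta$, hence nothing for $y \ne 0$) and evaluating $\int_0^\infty t^{\nu-1}e^{-t - y^2/4t}\,dt = 2(|y|/2)^{\nu}K_{\nu}(|y|)$ with $\nu = -\tfrac12-\alpha$ (using $K_{-\nu} = K_\nu$), one finds that $B$ acts as convolution with $-m$, where $m$ is exactly the function in the Remark. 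The stated asymptotics follow from the standard behaviour of $K_\nu$: from $K_\nu(r) \sim \tfrac{\Gamma(\nu)}{2}(r/2)^{-\nu}$ as $r \to 0^+$ one gets $m(w) = O(|w|^{-1-2\alpha})$, and from $K_\nu(r) \sim \sqrt{\pi/2r}\,e^{-r}$ as $r \to \infty$ one gets $m(w) = O(|w|^{-1-\alpha}e^{-|w|}) = O(e^{-|w|})$ (the exponential decay is also a consequence of analyticity of $b$ in the strip $|\operatorname{Im}\xi| < 1$).

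The heart of the matter is the hypersingular representation of $(Bv)(x)$. Splitting $b(\xi) = |\xi|^{2\alpha} + \big(b(\xi) - |\xi|^{2\alpha}\big)$ gives $B = (-\Delta)^\alpha + R$, where $R$ has the kernel $r := c_{1,\alpha}|\cdot|^{-1-2\alpha} - m$, which lies in $L^1(\mathbb{R})$ — near $0$ it is $O(|z|^{1-2\alpha})$, with a logarithm when $\alpha = \tfrac12$, in any case locally integrable since $\alpha < 1$ — and $\int_{\mathbb{R}} r = 0$ because $b(\xi) - |\xi|^{2\alpha}$ vanishes at $\xi = 0$. Hence $Rv(x) = \int_{\mathbb{R}}(v(x)-v(y))\,(-r(x-y))\,dy$, which together with formula \eqref{fractlap} for $(-\Delta)^\alpha v$ yields
\begin{equation*}
(Bv)(x) = \lim_{\epsilon\searrow 0}\int_{|x-y|>\epsilon}(v(x)-v(y))\,m(|x-y|)\,dy .
\end{equation*}
Splitting this into $\int_0^\infty$ and $\int_{-\infty}^0$, using $v = u$ on $\mathbb{R}_+$ in the first, and combining the second with the honest integrals $v(x)(B\chi_{\mathbb{R}_-})(x) = -v(x)\int_{-\infty}^0 m(x-y)\,dy$ and $(B(v\chi_{\mathbb{R}_-}))(x) = -\int_{-\infty}^0 m(x-y)v(y)\,dy$, the terms depending on $v|_{\mathbb{R}_-}$ cancel identically (as the extension-independence noted after \eqref{rBurComm} guarantees they must), leaving \eqref{Avsingularint}. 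The hard part will be making this regularisation fully rigorous — matching the $\epsilon$-truncated integral with $\mathcal F^{-1}[b\widehat v](x)$ and keeping every piece finite uniformly as $\epsilon \searrow 0$ when $\tfrac12 \le \alpha < 1$; the remaining manipulations are routine.

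For \eqref{Avvinnerproduct}, I would pair \eqref{Avsingularint} with $\overline u$ and integrate over $\mathbb{R}_+$; since the truncated double integral converges absolutely and uniformly in $\epsilon$, Fubini moves the $dx$-integration inside, and symmetrising in $x \leftrightarrow y$ — the truncation region and $m(|x-y|)$ are both symmetric — replaces $(u(x)-u(y))\overline{u(x)}$ by $\tfrac12|u(x)-u(y)|^2$. Letting $\epsilon \searrow 0$ is then justified by dominated convergence, using $|u(x)-u(y)|^2 m(|x-y|) = O(|x-y|^{1-2\alpha})$ near the diagonal together with the exponential decay of $m$ away from it.
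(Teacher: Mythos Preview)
Your argument is correct, and for \eqref{Avsingularint} it is a genuinely different route from the paper's. The paper never splits off $(-\Delta)^\alpha$; instead it works directly with the L\'evy--Khintchine representation of the symbol, passes to the second difference $v(x+y)+v(x-y)-2v(x)$ (its Lemma~\ref{Lemvdiff}) to make the full-line integral absolutely convergent, and then peels off the $\mathbb{R}_-$ contributions using separate lemmas for $A(v\chi_{\mathbb{R}_-})$ and $A\chi_{\mathbb{R}_-}$. Your decomposition $B=(-\Delta)^\alpha+R$ with $r\in L^1$, $\int r=0$, is cleaner once one already has the principal-value formula~\eqref{fractlap} for the fractional Laplacian; the paper's approach is more self-contained and gives~\eqref{AFAIS} from scratch. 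The cancellation of the $v|_{\mathbb{R}_-}$ terms is handled the same way in both.

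For \eqref{Avvinnerproduct} your plan coincides with the paper's Lemma~\ref{AvvIv2}: symmetrise at the $\epsilon$-level, then let $\epsilon\searrow 0$. One point you gloss over is the interchange on the \emph{pairing} side, i.e.\ $(\mathcal{A}_\epsilon u,u)\to(\mathcal{A}u,u)$. Dominated convergence on the symmetrised double integral handles the right-hand side, but on the left you need a uniform-in-$\epsilon$ pointwise bound for $(\mathcal{A}_\epsilon u)(x)$ so that $\overline{u}\,\mathcal{A}_\epsilon u$ is dominated in $L^1(\mathbb{R}_+)$. For $\alpha\ge\tfrac12$ the naive bound $|u(x)||u(x)-u(y)|m(|x-y|)$ is not locally integrable in $y$, so this really does require the second-difference cancellation; the paper supplies it as Lemma~\ref{Aepsvestimate}, giving $|(\mathcal{A}_\epsilon u)(x)|\le \chi_{[0,1]}(x)g(x)+C$ with $g(x)=O(x^{1-2\alpha})$ near $0$. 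You flagged exactly this difficulty (``keeping every piece finite uniformly as $\epsilon\searrow 0$ when $\tfrac12\le\alpha<1$''), but it is worth recording that this estimate is the one nontrivial ingredient still to be written down.
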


\begin{remark}
We have the following explicit representation
\begin{equation*}
m(y) = \dfrac{\alpha}{\Gamma(1-\alpha)} \, \dfrac{2^{\frac{1}{2}+\alpha}}{\sqrt{\pi}} |y|^{-\frac{1}{2}-\alpha} K_{\frac{1}{2}+\alpha}(|y|),
\end{equation*}
where $K_\nu$ is a modified Bessel function of the second kind of order $\nu$. See, for example, Chapter 10, \cite{NIST}.\\
\end{remark}

\begin{remark}
Consider the integral operator representation for $\mathcal{A}$ given by equation \eqref{Avsingularint} in Theorem \ref{Theorem:SingularIntegralRep}. Suppose $x > 0$ is fixed and let $\epsilon \searrow 0$. Then, in a small neighbourhood of $x$ the integrand has a singularity which is (typically) of order $-2\alpha$. In particular, if $\tfrac{1}{2} \leq \alpha < 1$ then the integral is hypersingular. Nonetheless, even in this case, the limit as $\epsilon \searrow 0$ does exist and is finite. It turns out that this is due to a cancellation, arising from the fact that the weight $m$ is symmetric about $x$. \\

On the other hand, the double integral in equation \eqref{Avvinnerproduct} in the inner product $(\mathcal{A}u, u)$ has a weaker singularity of order $-2\alpha +1$. We will show that, for all $0 < \alpha < 1$, the double integral exists, in the conventional sense, and is finite. \\
\end{remark}

\begin{definition} \label{mepsdefinition}
Suppose $0 < \epsilon < 1$. Then we define
\begin{equation*}
m_\epsilon (y) := \chi_\epsilon(y) \, m(y), \quad y \geq 0.
\end{equation*}
In particular, given $\epsilon$, the function $m_\epsilon$ is bounded. Given $m_\epsilon$, we further define
\begin{equation*}
(\mathcal{A}_\epsilon u)(x) := u(x) + \int^\infty_0 (u(x)-u(y)) m_\epsilon(|x-y|) \, dy \quad (x>0).
\end{equation*}
\end{definition}

\section{Proof of main result}
\begin{lemma} \label{LemuplusInt}
Suppose $0 < \alpha < 1$. Let $v \in S(\mathbb{R})$ and define $u := r_+ v$. Then, for $x > 0$,
\begin{equation*}
(\mathcal{A}u)(x) = u(x) + \lim_{\epsilon \searrow 0} \int^\infty_0 (u(x)-u(y)) \, m_\epsilon (|x-y|) \, dy.
\end{equation*}
\end{lemma}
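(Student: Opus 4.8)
The plan is to unfold the definition of $\mathcal{A}$ from \eqref{rAef} and the symbol \eqref{symboldef}, and to compute each of the two terms $r_+ A e_+ u$ and $u \cdot r_+ A(\chi_{\mathbb{R}_-})$ in physical space. The key observation is that the symbol splits as $(1+\xi^2)^\alpha = 1 + \big((1+\xi^2)^\alpha - 1\big)$, so that $A = I + A_0$ where $A_0 = \mathcal{F}^{-1}\big((1+\xi^2)^\alpha - 1\big)\mathcal{F}$ is a Fourier convolution operator whose symbol vanishes at $\xi = 0$ and grows like $|\xi|^{2\alpha}$ at infinity. The constant term $1$ contributes the $u(x)$ appearing in the statement (using $r_+ e_+ u = u$ and $\chi_{\mathbb{R}_-}$ being supported away from $x>0$, so its identity-part contributes nothing). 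Everything else must come from $A_0$, and the heart of the matter is to identify the convolution kernel of $A_0$.

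First I would compute the inverse Fourier transform of $(1+\xi^2)^\alpha - 1$ as a (tempered) distribution; away from the origin it is a genuine function, and I expect to recognise it, up to normalising constants, as a multiple of $|y|^{-1/2-\alpha} K_{1/2+\alpha}(|y|)$ — this is the standard Bessel-potential-type computation (the Fourier transform of $(1+\xi^2)^{-\beta}$ is a Macdonald function; here the exponent is positive, so one gets a hypersingular kernel defined by analytic continuation / regularisation). Call this kernel $\widetilde{m}(y)$; the function $m$ in the theorem will be $\widetilde{m}$ up to the stated constant, and its stated asymptotics ($O(|w|^{-1-2\alpha})$ near $0$, $O(e^{-|w|})$ at infinity) follow from the known small- and large-argument behaviour of $K_\nu$. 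Then, for $v \in S(\mathbb{R})$ and $x>0$, writing $A_0$ as convolution against $\widetilde m$ and using that $\int \widetilde m = \big((1+\xi^2)^\alpha-1\big)\big|_{\xi=0} = 0$ (in the regularised sense), I would symmetrise: $(A_0 v)(x) = \int (v(x) - v(y))\,\widetilde m(x-y)\,dy$, the subtraction being exactly what tames the non-integrable singularity of $\widetilde m$ at the diagonal. The added-potential term $u(x)\, r_+(A\chi_{\mathbb{R}_-})(x) = u(x)\int_{-\infty}^0 \widetilde m(x-y)\,dy$ (for $x>0$) then combines with the $y<0$ tail of the convolution integral over $\mathbb{R}$: in $r_+ A_0 e_+ u$ we only integrate $y$ over $\mathbb{R}_+$ because $e_+ u$ vanishes on $\mathbb{R}_-$, whereas the symmetrised form wants $\int_{\mathbb{R}}(u(x)-u(y))\widetilde m(x-y)dy$; the discrepancy is exactly $u(x)\int_{\mathbb{R}_-}\widetilde m(x-y)dy$ minus $\int_{\mathbb{R}_-} u(y)\widetilde m(x-y)dy$, and the latter is zero since $u(y)=0$ for $y<0$. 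This is the cancellation of leading singularities advertised in the introduction, and it produces precisely \eqref{Avsingularint} once the $\epsilon$-truncation is reinstated to make the integral well-defined as a principal-value/limit.

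For the energy identity \eqref{Avvinnerproduct}, I would pair $(\mathcal{A}u, u)$ in $L_2(\mathbb{R}_+)$: the $u(x)$ term gives $\int_0^\infty |u|^2\,dx$ immediately, and for the double integral I would write $(\mathcal{A}_\epsilon u - u, u) = \int_0^\infty\int_0^\infty (u(x)-u(y))\,m_\epsilon(|x-y|)\,\overline{u(x)}\,dy\,dx$, then use the symmetry $m_\epsilon(|x-y|) = m_\epsilon(|y-x|)$ to swap $x \leftrightarrow y$ and average the two expressions, obtaining $\tfrac12 \int_0^\infty\int_0^\infty |u(x)-u(y)|^2 m_\epsilon(|x-y|)\,dy\,dx$. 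Since $u \in S$ is real-analytic enough that $|u(x)-u(y)|^2 = O(|x-y|^2)$ locally and $m(|x-y|) = O(|x-y|^{-1-2\alpha})$ with $2 - 1 - 2\alpha = 1 - 2\alpha > -1$, the double integral converges absolutely (dominated convergence lets $\epsilon \searrow 0$), giving the claimed formula for all $0<\alpha<1$. The main obstacle is the first step: rigorously computing and identifying the kernel $\widetilde m$ as a distribution, handling the regularisation of $(1+\xi^2)^\alpha$ (which is not integrable), and justifying the symmetrisation / interchange of the limit $\epsilon \searrow 0$ with the convolution integral — in particular showing the limit in \eqref{Avsingularint} exists pointwise for $x>0$ even in the hypersingular range $\tfrac12 \le \alpha < 1$, which again rests on the symmetry of $m$ about $x$ and a Taylor expansion of $u$ to second order at $x$. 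Once the kernel and its moment $\int \widetilde m = 0$ are pinned down, the rest is bookkeeping; I would isolate the kernel computation as the present Lemma and defer the rest to subsequent lemmas (as the paper's structure suggests).
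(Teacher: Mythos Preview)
Your approach is correct and parallels the paper's, but differs in one substantive point: how the kernel $m$ is identified. You propose to compute $\mathcal{F}^{-1}\big((1+\xi^2)^\alpha-1\big)$ directly as a tempered distribution via analytic continuation/regularisation, then symmetrise using the (regularised) vanishing zeroth moment. The paper instead derives $m$ from the L\'evy--Khintchine representation of the Bernstein function $(1+x)^\alpha$: writing $(1+x)^\alpha = 1 + c_\alpha\int_0^\infty(1-e^{-xs})e^{-s}s^{-1-\alpha}\,ds$, substituting $x\mapsto\xi^2$, and using the Gaussian identity $e^{-s\xi^2}=\int_{\mathbb{R}}(4\pi s)^{-1/2}e^{-y^2/4s}\cos(y\xi)\,dy$ to obtain $(1+\xi^2)^\alpha = 1+\int_{\mathbb{R}}(1-\cos y\xi)\,m(y)\,dy$ with $m$ an explicit superposition of heat kernels (Lemma~\ref{LemDefm}). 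This buys two things: the $(1-\cos y\xi)$ form is already symmetrised, bypassing the distributional regularisation you correctly flag as the main obstacle; and positivity of $m$ is immediate. The Bessel-$K$ closed form is then a secondary computation (Lemma~\ref{Lemmexplicit}). For the combination step you and the paper agree in substance; the paper packages it via the commutator identity \eqref{rBurComm}, writing $\mathcal{A}u = r_+ Av + r_+[vI,A]\chi_{\mathbb{R}_-}$ and computing each piece (Lemmas~\ref{LemAvInt}, \ref{LemAvchi}, \ref{LemAChim}) using the smooth extension $v$ throughout rather than $e_+u$. Finally, note that you have folded the energy identity \eqref{Avvinnerproduct} into your sketch; in the paper that is a separate lemma (Lemma~\ref{AvvIv2}), not part of the present one.
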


\begin{proof}
We have $v \in S(\mathbb{R})$ and $u := r_+ v$. Moreover, from equation \eqref{rBurComm},
\begin{equation*}
r_+ \, A \, e_+ u + u \, r_+ A(\chi_{\mathbb{R}_-})= r_+ \, A \, v +  r_+ \, [vI,A]\chi_{\mathbb{R}_-}.
\end{equation*}

From Lemma \ref{LemAChim}, 
\begin{equation*}
r_+ \, (v A \chi_{\mathbb{R}_-})(x) = - r_+ \,\int^0_{-\infty} v(x) m(|x-y|) \, dy \quad (x > 0).
\end{equation*}

On the other hand, by Lemma \ref{LemAvchi}
\begin{equation*}
r_+ \, A(v\chi_{\mathbb{R}_-})  = -r_+ \int^0_{-\infty}  v(y)m(|x-y|) \, dy. 
\end{equation*}

Thus, combining these results
\begin{align} \label{r+commutatorchim}
r_+ \, [vI,A]\chi_{\mathbb{R}_-}(x) & := r_+ \, \big\{ v(x) (A \chi_{\mathbb{R}_-})(x) - A(v \,\chi_{\mathbb{R}_-})(x) \big \} \nonumber\\
& = -r_+ \int^0_{-\infty}  (v(x)-v(y)) m(|x-y|) \, dy. 
\end{align}

Finally, from Lemma \ref{LemAvInt}, 
\begin{align*} 
r_+ \, A \, e_+ u + u \, r_+ A(\chi_{\mathbb{R}_-}) &= r_+ \, A \, v +  r_+ \, [vI,A] \chi_{\mathbb{R}_-}\\
& = r_+ v + r_+ \, \lim_{\epsilon \searrow 0} \int^\infty_0 (v(x)-v(y)) m_\epsilon (|x-y|) \, dy \\
&= u + \lim_{\epsilon \searrow 0} \int^\infty_0 (u(x)-u(y)) \, m_\epsilon (|x-y|) \, dy \quad (x>0).
\end{align*}
This completes the proof of the lemma. \\
\end{proof}

\begin{lemma} \label{AvvIv2}
Suppose $0 < \alpha < 1$. Let $v \in S(\mathbb{R})$ and define $u := r_+ v$. Then, \begin{equation*}
(\mathcal{A}u, u) =  \int^\infty_0 |u|^2 \, dx + \tfrac{1}{2} \int^\infty_0 \int^\infty_0 |u(x)-u(y)|^2 m(|x-y|) \, dy dx.
\end{equation*}
\end{lemma}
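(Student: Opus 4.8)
The plan is to start from the singular integral representation established in Lemma \ref{LemuplusInt}, namely
\begin{equation*}
(\mathcal{A}u)(x) = u(x) + \lim_{\epsilon \searrow 0} \int^\infty_0 (u(x)-u(y)) \, m_\epsilon(|x-y|) \, dy \quad (x>0),
\end{equation*}
and then compute the inner product $(\mathcal{A}u,u) = \int_0^\infty (\mathcal{A}u)(x)\,\overline{u(x)}\,dx$ (which, since $v\in S(\mathbb{R})$ is real-valued or at worst Schwartz, reduces to $\int_0^\infty (\mathcal{A}u)(x)\,u(x)\,dx$ up to conjugation; I would keep the conjugate bar to be safe). The term $u(x)$ in the representation immediately contributes $\int_0^\infty |u|^2\,dx$. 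The remaining task is to show that
\begin{equation*}
\lim_{\epsilon\searrow 0}\int_0^\infty \int_0^\infty (u(x)-u(y))\,\overline{u(x)}\, m_\epsilon(|x-y|)\,dy\,dx = \tfrac{1}{2}\int_0^\infty\int_0^\infty |u(x)-u(y)|^2 m(|x-y|)\,dy\,dx.
\end{equation*}

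First I would work at the level of $\mathcal{A}_\epsilon$, where $m_\epsilon = \chi_\epsilon m$ is bounded and, because $u=r_+v$ with $v\in S(\mathbb{R})$, all the double integrals converge absolutely: $|u(x)-u(y)| \le C\min(1,|x-y|)$ and $m_\epsilon$ decays like $e^{-|x-y|}$ at infinity, so Fubini applies freely. Then I would exploit the symmetry of $m_\epsilon(|x-y|)$ under swapping $x\leftrightarrow y$: writing $I_\epsilon := \int_0^\infty\int_0^\infty (u(x)-u(y))\overline{u(x)}\,m_\epsilon(|x-y|)\,dy\,dx$ and relabelling, $I_\epsilon$ also equals $\int_0^\infty\int_0^\infty (u(y)-u(x))\overline{u(y)}\,m_\epsilon(|x-y|)\,dy\,dx$. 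Averaging the two expressions gives
\begin{equation*}
I_\epsilon = \tfrac12 \int_0^\infty\int_0^\infty (u(x)-u(y))\big(\overline{u(x)}-\overline{u(y)}\big)\, m_\epsilon(|x-y|)\,dy\,dx = \tfrac12 \int_0^\infty\int_0^\infty |u(x)-u(y)|^2\, m_\epsilon(|x-y|)\,dy\,dx,
\end{equation*}
which is manifestly real and nonnegative. This is the algebraic heart of the argument and is routine once absolute convergence is in hand.

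The only genuine issue is passing to the limit $\epsilon\searrow 0$ on both sides simultaneously, i.e. justifying $\lim_\epsilon I_\epsilon = \tfrac12\int_0^\infty\int_0^\infty |u(x)-u(y)|^2 m(|x-y|)\,dy\,dx$ and matching it with $\lim_\epsilon (\mathcal{A}_\epsilon u, u)$. For the right-hand side I would invoke monotone (or dominated) convergence: $m_\epsilon \nearrow m$ pointwise as $\epsilon\searrow 0$ and the integrand $|u(x)-u(y)|^2 m(|x-y|)$ is $O(|x-y|^{-2\alpha+1})$ near the diagonal (since $|u(x)-u(y)|^2 = O(|x-y|^2)$ and $m=O(|x-y|^{-1-2\alpha})$) and $O(e^{-|x-y|})$ at infinity, hence integrable over $\mathbb{R}_+^2$ for every $0<\alpha<1$; the remark preceding Definition \ref{mepsdefinition} records exactly this weaker diagonal singularity. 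So the limit on the right exists and is finite by monotone convergence. For the left-hand side, note that by Lemma \ref{LemuplusInt} the inner limit $\lim_\epsilon \int_0^\infty (u(x)-u(y))m_\epsilon(|x-y|)\,dy$ exists pointwise in $x$ and equals $(\mathcal{A}u)(x)-u(x)$; combined with a uniform-in-$\epsilon$ dominating bound for $x\mapsto \big|\int_0^\infty (u(x)-u(y))m_\epsilon(|x-y|)\,dy\big|\,|\overline{u(x)}|$ (obtainable from the smoothness and decay of $v$, e.g. the difference quotient bound plus the exponential tail of $m$, which gives something like $C\langle x\rangle^{-N}$), dominated convergence lets me interchange $\lim_\epsilon$ with $\int_0^\infty dx$ on the left. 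Matching the two limits gives the claimed identity. The main obstacle, then, is not the symmetrisation but assembling the clean uniform-in-$\epsilon$ pointwise bound that powers the dominated-convergence step on the left-hand side; everything else is bookkeeping with the already-established decay properties of $m$ and the Schwartz decay of $v$.
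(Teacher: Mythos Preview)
Your approach is essentially identical to the paper's: symmetrise at the $\epsilon$-level (the paper's equations \eqref{eqnux2}--\eqref{Aepsilonvv}), then pass to the limit using monotone convergence on the right and dominated convergence on the left. The one point to sharpen is the dominating function: your suggested bound $C\langle x\rangle^{-N}$ is not quite right near $x=0$ when $\tfrac12\le\alpha<1$, where $|(\mathcal{A}_\epsilon u)(x)|$ can blow up like $x^{1-2\alpha}$; the paper isolates this in Lemma~\ref{Aepsvestimate}, giving $|(\mathcal{A}_\epsilon u)(x)|\le \chi_{[0,1]}(x)g(x)+C$ with $g(x)=O(x^{1-2\alpha})$, and then observes that $u(x)\big[\chi_{[0,1]}(x)g(x)+C\big]\in L_1[0,\infty)$ since $1-2\alpha>-1$ and $u$ is Schwartz.
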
 

\begin{proof}
Suppose $0 < \epsilon < 1$. From Remark \ref{Remmy}, $m(|w|)$ is $O(|w|^{-1-2\alpha})$ for small $|w|$, and $O(e^{-|w|})$ for large $|w|$. Moreover, $m(w) > 0$ for all finite $w \geq 0$.\\

From Definition \ref{mepsdefinition},
\begin{align*}
m_\epsilon(w) & = \chi_\epsilon(w) \, m(w) \quad w \geq 0;\\
(\mathcal{A}_\epsilon u)(x) & = u(x) + \int^\infty_0 (u(x)-u(y)) m_\epsilon(|x-y|) \, dy \quad (x>0).
\end{align*}

Hence
\begin{equation} \label{eqnux2}
(\mathcal{A}_\epsilon u, u)  = \int^\infty_0 |u|^2 \, dx + \int^\infty_0 \int^\infty_0 \overline{u(x)} (u(x)-u(y)) m_\epsilon(|x-y|) \, dy dx.
\end{equation}

Interchanging the roles of $x$ and $y$
\begin{equation} \label{eqnuy2}
(\mathcal{A}_\epsilon u, u)  = \int^\infty_0 |u|^2 \, dx + \int^\infty_0 \int^\infty_0 (-1) \overline{u(y)} (u(x)-u(y)) m_\epsilon(|x-y|) \, dx dy.
\end{equation}

Using Fubini's theorem, and adding equations \eqref{eqnux2} and \eqref{eqnuy2},
\begin{equation} \label{Aepsilonvv}
(\mathcal{A}_\epsilon u, u) =  \int^\infty_0 |u|^2 \, dx + \tfrac{1}{2} \int^\infty_0 \int^\infty_0 |u(x)-u(y)|^2 m_\epsilon (|x-y|) \, dy dx.
\end{equation} \\

Our method of proof is to take the limit in equation \eqref{Aepsilonvv} as $\epsilon \searrow 0$. For the left-hand side we use the Dominated Convergence Theorem, and for the right-hand side we use the Monotone Convergence Theorem. \\

Firstly, consider the left-hand side.
\begin{align*}
(i) & \quad \lim_{\epsilon \searrow 0} (\mathcal{A}_\epsilon u)(x) \to (\mathcal{A} u)(x) \quad (x > 0) \quad (\text{Lemma } \ref{LemAvInt});\\
(ii) & \quad |(\mathcal{A}_\epsilon u)(x)| \leq \chi_{[0,1]}(x) g(x) + C \quad (C>0: \text{ Lemma }\ref{Aepsvestimate}),
\end{align*}
where $g(x)$ is $O(1)$ as $x \searrow 0$ for $0 < \alpha < \tfrac{1}{2}$, and is $O(x^{1-2 \alpha})$ as $x \searrow 0$ for $\tfrac{1}{2} \leq \alpha < 1$. \\

Hence,  $u(x) \big [\chi_{[0,1]}(x) g(x) + C \big ] \in L_1[0, \infty)$. \\

Therefore, by the Dominated Convergence Theorem,
\begin{equation*}
\int^\infty_0 u(x) (\mathcal{A}_\epsilon u)(x) \, dx \to \int^\infty_0 u(x) (\mathcal{A} u )(x) \, dx \quad \text{ as } \epsilon \searrow 0.
\end{equation*} \\

On the other hand, for the right-hand side as $ \epsilon \searrow 0$,
\begin{equation*}
\int^\infty_0 \int^\infty_0 |u(x)-u(y)|^2 m_\epsilon (|x-y|) \, dy dx \to \int^\infty_0 \int^\infty_0 |u(x)-u(y)|^2 m (|x-y|) \, dy dx,
\end{equation*}
by a routine application of the Monotone Convergence Theorem. This completes the proof of the lemma. \\
\end{proof}

\section{Supporting lemmas}
An infinitely differentiable function $f:(0, \infty) \to \mathbb{R}$ is said to be a \textit{Bernstein function} if
\begin{equation*}
f \geq 0; \qquad (-1)^k \dfrac{d^k f}{dx^k} \leq 0, \quad k=1,2,3,\dots
\end{equation*}
It is easy to verify directly from this definition that $f(x) = (1+x)^\alpha$ is a Bernstein function if $0 < \alpha <1$.  \\

Any Bernstein function $g:(0, \infty) \to \mathbb{R}$ can be written in the standard L\'evy-Khinchine representation, see equation (12), p.\,6, \cite{JS}, 
\begin{equation*}
g(x) = a + bx + \int^\infty_0 (1- e^{-xs}) \, \tau(ds),
\end{equation*}
where $\tau$ is a Radon measure on $(0,\infty)$ such that $\int^\infty_{0+} \min \{ s,1 \} \, \tau(ds) < \infty$. \\

From 3.434, p.\,361, \cite{GR},  we have
\begin{equation*}
\int^\infty_0 \dfrac{e^{-\nu s} - e^{-\mu s}}{s^{\rho+1}} \, ds = \dfrac{\mu^\rho - \nu^\rho}{\rho} \, \Gamma(1-\rho),
\end{equation*}
provided $\mu > 0, \, \nu >0$ and $\rho <1$. Taking $\nu =1, \, \mu = 1+x$ and $\rho= \alpha$ gives
\begin{equation*}
\int^\infty_0 \dfrac{e^{-s} - e^{-(1+x) s}}{s^{\alpha+1}} \, ds = \dfrac{(1+x)^\alpha - 1}{\alpha} \, \Gamma(1-\alpha).
\end{equation*}
Rearranging
\begin{equation} \label{oneplusxalpha}
(1+x)^\alpha = 1 + \dfrac{\alpha}{\Gamma(1-\alpha)}\int^\infty_0 (1- e^{-xs}) \, e^{-s}s^{-\alpha-1}  \, ds. \\
\end{equation}
In other words, in the standard form representation of the Bernstein function $(1+x)^\alpha$, for $0< \alpha <1$, we take $a=1, \, b=0$ and
\begin{equation*}
\tau(ds) = \dfrac{\alpha}{\Gamma(1-\alpha)}e^{-s}s^{-\alpha-1}  \, ds. 
\end{equation*}\\

\begin{remark} \label{relativiststabsubs}
If we take $x = \lambda / m^{\frac{1}{\alpha}}$ in equation \eqref{oneplusxalpha}, and make the change of variable $s \to s m^{\frac{1}{\alpha}}$ in the right-hand side, we obtain the following result:
\begin{equation*}
(\lambda + m^{1/\alpha})^\alpha - m = \dfrac{\alpha}{\Gamma(1-\alpha)} \int^\infty_0 (1- e^{-\lambda s}) \, e^{-m^{1/\alpha}s} s^{-\alpha-1}  \, ds. 
\end{equation*}
as given in Example 5.9, p.\,97, \cite{GS}, for relativistic stable subordinators. \\
\end{remark}

We say that a function $\psi: \mathbb{R} \to \mathbb{R}$ is \textit{negative definite} if for all $N \in \mathbb{N}$ and $\xi_1, \xi_2, \dots, \xi_N \in \mathbb{R}$ we have
\begin{equation*}
\psi(0) \geq 0; \qquad \sum^N_{j,k=1} \psi(\xi_j-\xi_k)\lambda_j \overline{\lambda_k} \leq 0, \quad \forall \lambda_j \in \mathbb{C} \text{ such that } \sum^N_{j=1} \lambda_j =0.
\end{equation*}
In particular, we note that $\psi(\xi) = \xi^2$ is negative definite. \\

Now since $(1+x)^\alpha$ is a Bernstein function,  we see immediately that $(1+\xi^2)^\alpha$, for $0< \alpha <1$, is also a continuous negative definite function. See, for example, \cite{JS}. Moreover, from equation \eqref{oneplusxalpha} and Lemma 2.1,  p.\,7, \cite{JS}, we have the general representation
\begin{equation*}
\displaystyle (1+ \xi^2)^\alpha = 1 + \int_{\mathbb{R}} (1- \cos y\xi) \,m(y) \, dy.
\end{equation*}

\begin{lemma}  \label{LemDefm}
Suppose $0 < \alpha < 1$. Then
\begin{equation*}
\displaystyle (1+ \xi^2)^\alpha = 1 + \int_{\mathbb{R}} (1- \cos y\xi) \,m(y) \, dy,
\end{equation*}
where $m(y) := \displaystyle \dfrac{\alpha}{\Gamma(1-\alpha)}\,  \int^\infty_0 \dfrac{1}
{\sqrt{4 \pi s}} \exp \bigg ( -\dfrac{y^2}{4s} \bigg ) \, e^{-s} s^{-1-\alpha} \, ds$.
\end{lemma}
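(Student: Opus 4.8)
The plan is to start from the representation that the excerpt has already essentially handed us: $(1+x)^\alpha$ is a Bernstein function with L\'evy measure $\tau(ds) = \frac{\alpha}{\Gamma(1-\alpha)} e^{-s} s^{-\alpha-1}\,ds$, so by equation \eqref{oneplusxalpha},
\begin{equation*}
(1+\xi^2)^\alpha = 1 + \frac{\alpha}{\Gamma(1-\alpha)} \int_0^\infty \big(1 - e^{-\xi^2 s}\big)\, e^{-s} s^{-\alpha-1}\,ds.
\end{equation*}
The single remaining ingredient is the classical Gaussian (heat-kernel) identity
\begin{equation*}
e^{-\xi^2 s} = \int_{\mathbb{R}} \frac{1}{\sqrt{4\pi s}}\, \exp\!\left(-\frac{y^2}{4s}\right) e^{-i y \xi}\,dy = \int_{\mathbb{R}} \frac{1}{\sqrt{4\pi s}}\, \exp\!\left(-\frac{y^2}{4s}\right) \cos(y\xi)\,dy,
\end{equation*}
the last equality because the Gaussian is even. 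Since $\int_{\mathbb{R}} \frac{1}{\sqrt{4\pi s}} \exp(-y^2/4s)\,dy = 1$, subtracting gives $1 - e^{-\xi^2 s} = \int_{\mathbb{R}} \frac{1}{\sqrt{4\pi s}}\exp(-y^2/4s)\,(1-\cos y\xi)\,dy$.

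Substituting this into the Bernstein representation and interchanging the $s$- and $y$-integrals yields exactly
\begin{equation*}
(1+\xi^2)^\alpha = 1 + \int_{\mathbb{R}} (1-\cos y\xi)\, m(y)\,dy, \qquad m(y) = \frac{\alpha}{\Gamma(1-\alpha)} \int_0^\infty \frac{1}{\sqrt{4\pi s}} \exp\!\left(-\frac{y^2}{4s}\right) e^{-s} s^{-1-\alpha}\,ds,
\end{equation*}
which is the claimed formula. So the proof is: (1) quote \eqref{oneplusxalpha} with $x = \xi^2$; (2) insert the Gaussian integral identity for $e^{-\xi^2 s}$ and use evenness to replace $e^{-iy\xi}$ by $\cos y\xi$; (3) apply Fubini/Tonelli to swap the order of integration; (4) read off $m$.

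The only real obstacle is justifying the interchange of integration order in step (3), i.e. checking absolute integrability of the double integrand $\frac{\alpha}{\Gamma(1-\alpha)}\frac{1}{\sqrt{4\pi s}} e^{-y^2/4s} e^{-s} s^{-1-\alpha} (1-\cos y\xi)$ over $(s,y) \in (0,\infty)\times\mathbb{R}$. Here one uses the bound $|1-\cos y\xi| \le \min\{2, \xi^2 y^2\}$: integrating $\xi^2 y^2$ against the Gaussian in $y$ gives $2\xi^2 s$, so near $s=0$ the $s$-integrand behaves like $s^{-\alpha}$ (integrable since $\alpha<1$), while for large $s$ the factor $e^{-s}$ (and the bound by $2$) makes it integrable; hence Tonelli applies. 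One should also note $m(y)>0$ and that $m$ is well-defined and finite for $y\neq 0$ by the same estimates — though strictly this is more than the statement asks. A minor point worth a sentence is that the passage from the abstract L\'evy--Khinchine representation to the explicit one uses uniqueness of the representing measure (or simply that \eqref{oneplusxalpha} is an equality of functions already established in the excerpt, so no uniqueness is even needed). I expect the whole argument to be short, with the Fubini justification the only place requiring genuine care.
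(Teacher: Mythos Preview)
Your proposal is correct and follows essentially the same route as the paper: quote \eqref{oneplusxalpha} with $x=\xi^2$, replace $1-e^{-s\xi^2}$ by the Gaussian integral (the paper states this separately as Lemma~\ref{Lem1minuse}), and swap the order of integration. In fact you go slightly further than the paper, which performs the interchange without comment, by supplying the Tonelli justification via the bound $|1-\cos y\xi|\le\min\{2,\xi^2 y^2\}$.
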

\begin{proof}
From equation \eqref{oneplusxalpha}
\begin{equation*} 
(1+x)^\alpha = 1 + \dfrac{\alpha}{\Gamma(1-\alpha)}\int^\infty_0 (1- e^{-xs}) \, e^{-s}s^{-\alpha-1}  \, ds. \\
\end{equation*}

Now define
\begin{equation*}
\tau(s) := \dfrac{\alpha}{\Gamma(1-\alpha)} e^{-s}s^{-\alpha-1},
\end{equation*}
so that
\begin{equation*} 
(1+\xi^2)^\alpha = 1 + \int^\infty_0 (1- e^{-s \xi^2}) \tau(s) \, ds. \\
\end{equation*}

From Lemma \ref{Lem1minuse}, we have
\begin{equation*}
1- e^{-s \xi^2} = \int_{\mathbb{R}} (1- \cos \xi y) \dfrac{1}{\sqrt{4 \pi s}} \exp \bigg ( - \dfrac{y^2}{4 s} \bigg )\, dy.
\end{equation*}

Hence
\begin{align*}
(1+\xi^2)^\alpha & = 1 + \int^\infty_0 \int_{\mathbb{R}} (1- \cos \xi y) \dfrac{1}{\sqrt{4 \pi s}} \exp \bigg ( - \dfrac{y^2}{4 s} \bigg ) \tau(s) \, dy ds \\
& = 1 + \int_{\mathbb{R}} (1- \cos \xi y) \bigg \{ \int^\infty_0  \dfrac{1}{\sqrt{4 \pi s}} \exp \bigg ( - \dfrac{y^2}{4 s} \bigg ) \tau(s) \, ds \bigg \} \, dy \\
& = 1 + \int_{\mathbb{R}} (1- \cos y\xi) \,m(y) \, dy.
\end{align*}
\end{proof}

\begin{lemma} \label{Lem1minuse}
Suppose $s>0$, then
\begin{equation*}
1- e^{-s \xi^2} = \int_{\mathbb{R}} (1- \cos \xi y) \dfrac{1}{\sqrt{4 \pi s}} \exp \bigg ( - \dfrac{y^2}{4 s} \bigg )\, dy.
\end{equation*}
\end{lemma}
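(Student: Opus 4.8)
The plan is to write the right-hand side as the difference of two Gaussian integrals and evaluate each separately. Set
\begin{equation*}
g_s(y) := \dfrac{1}{\sqrt{4\pi s}}\exp\!\left(-\dfrac{y^2}{4s}\right), \qquad y \in \mathbb{R},
\end{equation*}
so that the claimed identity reads $1 - e^{-s\xi^2} = \int_{\mathbb{R}} g_s(y)\,dy - \int_{\mathbb{R}} \cos(\xi y)\, g_s(y)\,dy$ (the splitting being legitimate since $g_s$ is integrable for every fixed $s>0$). The first integral is the classical normalisation $\int_{\mathbb{R}} g_s(y)\,dy = 1$, obtained from $\int_{\mathbb{R}} e^{-t^2}\,dt = \sqrt{\pi}$ after the substitution $t = y/\sqrt{4s}$. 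It therefore remains only to show that the Fourier cosine transform of the heat kernel is
\begin{equation*}
\phi(\xi) := \int_{\mathbb{R}} \cos(\xi y)\, g_s(y)\,dy = e^{-s\xi^2}.
\end{equation*}

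For this I would use the ODE method, which avoids any contour deformation. The function $\phi$ is differentiable in $\xi$, with differentiation under the integral sign justified by the dominated bound $|y\sin(\xi y)g_s(y)| \le |y| g_s(y) \in L_1(\mathbb{R})$; this gives $\phi'(\xi) = -\int_{\mathbb{R}} y\sin(\xi y)\, g_s(y)\,dy$. Since $\dfrac{d}{dy} g_s(y) = -\dfrac{y}{2s}\,g_s(y)$, we may write $y\, g_s(y) = -2s\,\dfrac{d}{dy} g_s(y)$ and integrate by parts (the boundary terms vanish because $g_s$ decays at $\pm\infty$):
\begin{equation*}
\phi'(\xi) = 2s \int_{\mathbb{R}} \sin(\xi y)\, \dfrac{d}{dy} g_s(y)\,dy = -2s\,\xi \int_{\mathbb{R}} \cos(\xi y)\, g_s(y)\,dy = -2s\,\xi\,\phi(\xi).
\end{equation*}
Together with the initial value $\phi(0) = \int_{\mathbb{R}} g_s(y)\,dy = 1$, the linear ODE $\phi' = -2s\xi\phi$ integrates to $\phi(\xi) = e^{-s\xi^2}$. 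Subtracting $\phi(\xi)$ from $1$ yields the lemma.

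There is essentially no serious obstacle here: the only points requiring care are the (routine) justification of differentiating under the integral sign and the vanishing of the boundary terms in the integration by parts, both of which follow from the rapid decay of $g_s$ and its derivative for fixed $s>0$. An alternative, equally short route would be to complete the square in $-y^2/(4s) + i\xi y$ and shift the contour of integration, using that $e^{-z^2/(4s)}$ is entire and decays along horizontal lines; I prefer the ODE argument only because it keeps everything on the real line.
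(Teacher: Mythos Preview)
Your proof is correct and follows the same overall structure as the paper: split the right-hand side into the normalisation integral and the Fourier cosine transform of the Gaussian, then evaluate each. The only difference is in the second step. The paper simply quotes the tabulated integral 3.896~2 from Gradshteyn--Ryzhik,
\[
\int_{\mathbb{R}} e^{-q^2 y^2}\cos[p(y+\lambda)]\,dy = \frac{\sqrt{\pi}}{q}\,e^{-p^2/(4q^2)}\cos(p\lambda),
\]
with $q=1/(2\sqrt{s})$, $p=\xi$, $\lambda=0$, whereas you derive $\phi(\xi)=e^{-s\xi^2}$ from scratch via the ODE $\phi'=-2s\xi\phi$. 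Your route is more self-contained and makes the justification (dominated convergence, vanishing boundary terms) explicit; the paper's is shorter but relies on an external reference. Either is entirely adequate here.
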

\begin{proof}
By a change of variable from the standard formula $\int^\infty_0 e^{-x^2} \, dx = \tfrac{\sqrt{\pi}}{2}$, we have
\begin{equation*}
\int^\infty_0 \exp(-q^2 y^2) \, dy = \dfrac{\sqrt{\pi}}{2q}, \quad q>0.
\end{equation*}

Hence 
\begin{align*}
\int_{\mathbb{R}} \dfrac{1}{\sqrt{4 \pi s}} \exp \bigg ( - \dfrac{y^2}{4 s} \bigg )\, dy &= 2 \int^\infty_0 \dfrac{1}{\sqrt{4 \pi s}} \exp \bigg ( - \dfrac{y^2}{4 s} \bigg )\, dy =1.
\end{align*}

Therefore, it remains to show that
\begin{equation*}
e^{-s \xi^2} = \int_{\mathbb{R}} \cos \xi y \cdot \dfrac{1}{\sqrt{4 \pi s}} \exp \bigg ( - \dfrac{y^2}{4 s} \bigg )\, dy.
\end{equation*}

But from 3.896 2, p.\,488, \cite{GR}, we have
\begin{equation*}
\int_{\mathbb{R}} e^{-q^2 y^2} \cos[p(y+\lambda)] \, dy = \dfrac{\sqrt{\pi}}{q} e^{-\frac{p^2}{4 q^2}} \cos p\lambda.
\end{equation*}
So, taking $q= 1/(2 \sqrt{s}), \, p=\xi$ and $\lambda =0$ 
\begin{align*}
\int_{\mathbb{R}} \cos \xi y \cdot \exp \bigg ( - \dfrac{y^2}{4 s} \bigg )\, dy & = \sqrt{\pi} \cdot 2 \sqrt{s} \exp \bigg ( - \dfrac{\xi^2}{4} \, 4s\bigg ) \\
&= \sqrt{4 \pi s} \, \exp(-s \xi^2), \quad \text{as required.}
\end{align*}
\end{proof}

Finally, we recall that the function $m(y)$ defined in Lemma \ref{LemDefm} is given by\begin{equation*}
m(y) = \displaystyle \dfrac{\alpha}{\Gamma(1-\alpha)}\,  \int^\infty_0 \dfrac{1}
{\sqrt{4 \pi s}} \exp \bigg ( -\dfrac{y^2}{4s} \bigg ) \, e^{-s} s^{-1-\alpha} \, ds. \\
\end{equation*}
We now derive a simple closed-form expression for $m(y)$. \\

\begin{lemma} \label{Lemmexplicit}
Suppose $m(y)$ is as defined in Lemma \ref{LemDefm}. Then
\begin{equation*}
m(y) = \dfrac{\alpha}{\Gamma(1-\alpha)} \, \dfrac{2^{\frac{1}{2}+\alpha}}{\sqrt{\pi}} |y|^{-\frac{1}{2}-\alpha} K_{\frac{1}{2}+\alpha}(|y|),
\end{equation*}
where $K_\nu$ is a modified Bessel function of the second kind of order $\nu$. See, for example, Chapter 10, \cite{NIST}.
\end{lemma}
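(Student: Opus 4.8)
The plan is to recognise the $s$-integral defining $m(y)$ as one of the classical Laplace-type integral representations of the modified Bessel function $K_\nu$. Writing out the formula from Lemma \ref{LemDefm} and pulling the constant $(4\pi)^{-1/2}$ outside, we have, for $y \neq 0$,
\begin{equation*}
m(y) = \frac{\alpha}{\Gamma(1-\alpha)}\,\frac{1}{\sqrt{4\pi}} \int_0^\infty s^{-\frac{3}{2}-\alpha}\, \exp\!\Big(-s - \frac{y^2}{4s}\Big)\, ds .
\end{equation*}

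First I would invoke the standard formula (see, e.g., 3.471.9 in \cite{GR}, or Chapter 10 of \cite{NIST})
\begin{equation*}
\int_0^\infty t^{\nu-1}\, \exp\!\Big(-bt - \frac{a}{t}\Big)\, dt = 2\Big(\frac{a}{b}\Big)^{\nu/2} K_\nu\big(2\sqrt{ab}\big),
\end{equation*}
valid for $\Re a, \Re b > 0$ and every $\nu \in \mathbb{C}$, and then match parameters: take $t = s$, $b = 1$, $a = y^2/4$, and $\nu - 1 = -\tfrac{3}{2} - \alpha$, i.e.\ $\nu = -\tfrac{1}{2} - \alpha$. This gives $2\sqrt{ab} = |y|$ and hence
\begin{equation*}
\int_0^\infty s^{-\frac{3}{2}-\alpha}\, \exp\!\Big(-s - \frac{y^2}{4s}\Big)\, ds = 2\Big(\frac{|y|}{2}\Big)^{-\frac{1}{2}-\alpha} K_{-\frac{1}{2}-\alpha}(|y|).
\end{equation*}

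Next I would use the symmetry $K_{-\nu} = K_\nu$ to replace $K_{-\frac{1}{2}-\alpha}$ by $K_{\frac{1}{2}+\alpha}$, and then collect the numerical constants: the factor $2\cdot 2^{\frac12+\alpha}$ coming from the integral, divided by $\sqrt{4\pi} = 2\sqrt{\pi}$, leaves exactly $2^{\frac12+\alpha}/\sqrt{\pi}$, which yields the claimed closed form. Since the only ingredients are a tabulated integral and an elementary Bessel identity, there is no genuine obstacle; the only points needing a word of care are verifying that the hypotheses $\Re a,\Re b>0$ of the integral formula hold (they do, as $b=1$ and $y\neq 0$) and observing that the excluded value $y=0$ is consistent, both sides diverging there. (As an alternative route one could Laplace-transform the one-dimensional heat kernel $(4\pi s)^{-1/2}\exp(-y^2/4s)$ in $s$ against the Lévy density $\tau(s)$ and arrive at the same Bessel integral, but I would present the direct computation above as the cleanest path.)
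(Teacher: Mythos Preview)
Your proof is correct and follows essentially the same route as the paper: both evaluate the $s$-integral via a tabulated Bessel representation and then use $K_{-\nu}=K_\nu$. The only cosmetic difference is the citation --- you quote 3.471.9 in \cite{GR}, while the paper quotes 3.478.4 (which reduces to the same identity on taking $p=1$).
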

\begin{proof}
From 3.478 4, p.\,372, \cite{GR}, we have
\begin{equation*}
\int^\infty_0 x^{\nu-1} \exp(-\beta x^p - \gamma x^{-p}) \, dx = \dfrac{2}{p} \bigg (\dfrac{\gamma}{\beta} \bigg)^{\frac{\nu}{2p}} K_{\frac{\nu}{p}}(2 \sqrt{\beta\gamma}),
\end{equation*}
provided $\beta, \gamma > 0$ and $p \not = 0$. We take $p=1, \, \beta =1, \, \gamma = y^2/4$ and $\nu = -(\alpha + \frac{1}{2})$. Hence
\begin{equation*}
\int^\infty_0 s^{-(\alpha + \frac{1}{2})-1} \exp \bigg (- s - \dfrac{y^2}{4s} \bigg ) \, ds = 2 \bigg (\dfrac{y^2}{4} \bigg)^{\frac{-(\alpha + \frac{1}{2})}{2}} K_{-(\alpha + \frac{1}{2})}(|y|).
\end{equation*}

So, finally
\begin{align*}
m(y) &= \dfrac{\alpha}{\Gamma(1-\alpha)} \, \dfrac{1}{\sqrt{4 \pi}} \, 2 \, 2^{\alpha + \frac{1}{2}} \, |y|^{- \frac{1}{2}-\alpha} \, K_{-(\alpha + \frac{1}{2})}(|y|) \\
&= \dfrac{\alpha}{\Gamma(1-\alpha)} \, \dfrac{1}{\sqrt{\pi}}  \, 2^{\frac{1}{2} + \alpha} \, |y|^{ - \frac{1}{2} -\alpha} \, K_{\frac{1}{2} + \alpha}(|y|),
\end{align*}
noting that $K_\nu(x) = K_{-\nu}(x)$ for $x>0, \, \nu \in \mathbb{R}$. (See  10.27.3, \cite{NIST}.) \\ \\
\end{proof}

We now give a detailed consideration of the function $m$. \\

By Lemma \ref{Lemmexplicit}, for $y >0$
\begin{equation*}
m(y) = c_\alpha \, y^{-\frac{1}{2} - \alpha} K_{\frac{1}{2}+\alpha}(y),
\end{equation*}
where the constant $c_\alpha$ only depends on $\alpha$. From 10.25.2, 10.27.4 and 10.31.1 \cite{NIST}, $m(y) \in C^\infty([1,\infty))$. Moreover, from 10.40.2, \cite{NIST}, for $y \geq \tfrac{1}{2}$, the function $m(y)$, together with its derivatives, is bounded and $O(e^{-y})$ as $y \to \infty$. \\

On the other hand, if $\alpha \not = \tfrac{1}{2}$ then, from 10.25.2 and 10.27.4, \cite{NIST}
\begin{align*}
m(y) & = c_\alpha \, y^{-\frac{1}{2} - \alpha} \big ( y^{-\frac{1}{2} - \alpha} \phi_{\alpha} (y) + y^{\frac{1}{2} + \alpha} \psi_{\alpha} (y)\big )  \\
& = c_\alpha \, \big ( y^{-1 - 2\alpha} \phi_{\alpha} (y) + \psi_{\alpha} (y) \big ),
\end{align*} 
where $\phi_\alpha, \psi_\alpha \in C^\infty([0,2])$. \\

Similarly, for $\alpha = \tfrac{1}{2}$, from 10.31.1, \cite{NIST}
\begin{align*}
m(y) & = c_{\frac{1}{2}} \, y^{-1} K_{1}(y) \\
& = c_{\frac{1}{2}}  \, y^{-1} \big (  y^{-1} \phi_\frac{1}{2}(y) + \vartheta(y) \log y + \psi_\frac{1}{2}(y) \big ) \\
& = c_{\frac{1}{2}} \, y^{-2} \big (  \phi(y) + y \, \vartheta(y) \log y \big ),
\end{align*}
where $\phi, \vartheta \in C^\infty([0,2])$. \\

Let $\varphi \in C^\infty_0(\mathbb{R})$ be such that
$$
\varphi(x) = 
\begin{cases}
1 \quad \text{if  } |x| \leq 1 \\
0 \quad \text{if  } |x| > 2.
\end{cases}
$$

If $\alpha \not = \tfrac{1}{2}$ then
\begin{align*}
m(y) & = \varphi(y) m(y) + (1-\varphi(y))m(y) \\
& = \varphi(y) \big ( c_\alpha \, y^{-1 - 2\alpha} \phi_{\alpha} (y) + \psi_{\alpha} (y) \big ) + (1-\varphi(y))m(y) \\
& = y^{-1 - 2\alpha} [ c_\alpha \varphi(y) \phi_{\alpha} (y)] + [ c_\alpha \varphi(y) \psi_{\alpha} (y) + (1-\varphi(y))m(y) ],
\end{align*}
with a similar result for $\alpha = \tfrac{1}{2}$. \\

Given the above analysis, the following remark details the essential characteristics of the function $m$. \\
\begin{remark} \label{Remmy}
From Lemma \ref{Lem1minuse}, it is easy to see that $m(y) = m(|y|)$ and $m(y) > 0$ for all finite $y$. Moreover, for $y>0$
$$
m(y) = 
\begin{cases}
y^{-1 - 2\alpha} \phi_1 (y) + \phi_2 (y), \quad  & \alpha \not = \tfrac{1}{2} \\
y^{-2} \big (  \phi_3(y) + y \, \phi_4(y) \log y \big ), \quad  & \alpha   = \tfrac{1}{2},
\end{cases} 
$$
where $\phi_1, \phi_2, \phi_3, \phi_4 \in C^\infty(\mathbb{R})$ and, together with their derivatives, are bounded and $O(e^{-y})$ as $y\to \infty$. \\

Finally, for $0 < \alpha < 1$, we have $m(y) = O(|y|^{-1-2\alpha})$ for small $|y|$ and $m(y) = O(e^{-|y|})$ as $|y| \to \infty$. \\
\end{remark}


We will refer to Remark \ref{Remmy} several times both in this chapter and Chapter \ref{ChapterTrivKer}, where it will play a central role on the discussion on boundedness of the operator $\mathcal{A}$. \\

\begin{lemma} \label{Lemvdiff}
Suppose $v \in S(\mathbb{R})$ and (fixed) $x \in \mathbb{R}$. Then
\begin{equation*}
\displaystyle  \int_{\mathbb{R}} (v(x)-v(y)) \, m_\epsilon (|x-y|) \,dy = - \dfrac{1}{2}\int_{\mathbb{R}} (v(x+y) + v(x-y) - 2v(x)) \, m_\epsilon (|y|) \,dy,
\end{equation*}
where $m_\epsilon (y)$ is given in Definition \ref{mepsdefinition}.
\end{lemma}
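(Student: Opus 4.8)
The plan is to prove the stated identity purely by a change of variables in the integral, exploiting the symmetry of $m_\epsilon$ about the origin. First I would write the left-hand side as
\[
\int_{\mathbb{R}} (v(x)-v(y)) \, m_\epsilon(|x-y|) \, dy,
\]
and substitute $y = x + w$, so that $dy = dw$ and $|x-y| = |w|$, obtaining
\[
-\int_{\mathbb{R}} (v(x+w) - v(x)) \, m_\epsilon(|w|) \, dw.
\]
This integral is absolutely convergent: by Remark \ref{Remmy}, $m_\epsilon(|w|) = \chi_\epsilon(|w|) m(|w|)$ vanishes for $|w| \le \epsilon$, is $O(e^{-|w|})$ at infinity, and is only $O(|w|^{-1-2\alpha})$ near the (now excised) origin, while $v(x+w) - v(x)$ is bounded; hence Fubini/linearity manipulations below are justified.

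Next I would split the domain of integration into $w > 0$ and $w < 0$, and in the second piece substitute $w \mapsto -w$. Since $m_\epsilon(|w|)$ depends only on $|w|$, it is even, so this substitution gives
\[
-\int_0^\infty (v(x+w) - v(x)) \, m_\epsilon(|w|) \, dw - \int_0^\infty (v(x-w) - v(x)) \, m_\epsilon(|w|) \, dw.
\]
Combining the two integrals over $(0,\infty)$ and then folding back to an integral over $\mathbb{R}$ (using evenness once more, which introduces a factor $\tfrac12$), I obtain
\[
-\tfrac12 \int_{\mathbb{R}} \big( v(x+w) + v(x-w) - 2 v(x) \big) \, m_\epsilon(|w|) \, dw,
\]
which, renaming $w$ as $y$, is exactly the right-hand side.

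There is no real obstacle here; the only thing that needs care is the bookkeeping of the even symmetry of $m_\epsilon$ and the sign picked up when replacing $v(x) - v(y)$ by $-(v(x+w) - v(x))$. One should also remark that the second moment condition is not needed at this stage because the cutoff $\chi_\epsilon$ keeps the integrand bounded near the origin; the symmetrised form $v(x+y) + v(x-y) - 2v(x)$ is introduced precisely so that, in the subsequent passage to the limit $\epsilon \searrow 0$ (carried out elsewhere), the $O(|y|^2)$ behaviour of this combination near $y=0$ will control the $O(|y|^{-1-2\alpha})$ singularity of $m$ for all $0 < \alpha < 1$.
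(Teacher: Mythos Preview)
Your proposal is correct and follows essentially the same route as the paper: substitute $y=x+w$ to pass to an integral in $w$, then exploit the evenness of $m_\epsilon(|\cdot|)$ to symmetrise the integrand. The only cosmetic difference is that the paper keeps the integral over all of $\mathbb{R}$ and averages it with its reflection under $w\mapsto -w$, whereas you split into $w>0$ and $w<0$, reflect the negative half, and then fold back; the two bookkeepings are equivalent.
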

\begin{proof}
Suppose $v \in S(\mathbb{R})$ and (fixed) $x \in \mathbb{R}$. Then
\begin{align*}
\displaystyle  & \int_{\mathbb{R}} (v(x)-v(y)) \, m_\epsilon (|x-y|) \,dy \\
& = - \int_{\mathbb{R}} (v(y)-v(x)) \, m_\epsilon (|x-y|) \,dy \\
& = - \int_{\mathbb{R}} (v(x+z)-v(x)) \, m_\epsilon (|z|) \,dz \quad (z=y-x).
\end{align*}

But 
\begin{align*} 
& \int_{\mathbb{R}} (v(x+z)-v(x)) \, m_\epsilon (|z|) \,dz \\
& = \int_{\mathbb{R}} (v(x-w)-v(x)) \, m_\epsilon (|w|) \,dw \quad (w=-z) \\
& = \int_{\mathbb{R}} (v(x-z)-v(x)) \, m_\epsilon (|z|) \,dz \quad (z=w). 
\end{align*}

Hence, 
\begin{align*}
& \int_{\mathbb{R}} (v(x)-v(y)) \, m_\epsilon (|x-y|) \,dy \\
&= -\dfrac{1}{2} \bigg \{ \int_{\mathbb{R}} (v(x+z)-v(x)) \, m_\epsilon (|z|) \,dz + \int_{\mathbb{R}} (v(x-z)-v(x)) \, m_\epsilon (|z|) \,dz \bigg \} \\
&= -\dfrac{1}{2} \bigg \{ \int_{\mathbb{R}} (v(x+z) + v(x-z) - 2 v(x)) \, m_\epsilon (|z|) \,dz \bigg \} \\
&= -\dfrac{1}{2} \bigg \{  \int_{\mathbb{R}} (v(x+y) + v(x-y) - 2 v(x)) \, m_\epsilon (|y|) \,dy\bigg \}.
\end{align*}
\end{proof}

\begin{remark}
It turns out that we could take $\epsilon =0$ in the right-hand side of the equation in Lemma \ref{Lemvdiff} as there is, in fact, only a weak singularity at the origin.  Indeed, by Remark \ref{Remmy}, $m(y) = O(|y|^{-1-2\alpha})$ for small $|y|$ and hence the integrand is $O(|y|^{2-1-2\alpha}) = O(|y|^{1-2 \alpha})$. Provided $0 < \alpha < 1$ then $1- 2 \alpha > -1$, and hence the integral exists in the conventional sense. \\
\end{remark}

\begin{lemma} \label{uxuymeps}
Suppose $0 < \epsilon < 1$ and $v \in S(\mathbb{R})$. Let $u = r_+v$. Then, for $x > 0$,
\begin{align*}
& \int^\infty_0(u(x) - u(y)) \, m_\epsilon(|x - y|) dy \\
& = \int^x_0(2u(x) - u(x+w) - u(x-w)) \, m_\epsilon(|w|) dw + \int^\infty_x(u(x) - u(x+w)) \, m_\epsilon(|w|) dw.
\end{align*}
\end{lemma}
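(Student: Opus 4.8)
The plan is a direct change of variables together with a symmetrisation, the only point needing care being the bookkeeping of where $u = r_+v$ is actually defined. First I would record that, by Definition \ref{mepsdefinition} and Remark \ref{Remmy}, for fixed $\epsilon$ the function $m_\epsilon$ is bounded and $O(e^{-w})$ as $w \to \infty$, hence lies in $L_1(\overline{\mathbb{R}_+}) \cap L_\infty(\overline{\mathbb{R}_+})$; since $v \in S(\mathbb{R})$ the function $u$ is bounded. Consequently every integral that appears below converges absolutely, and the splitting of the domain and the change of variables are all legitimate.

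Next I would substitute $w = y - x$ in the left-hand side. As $y$ runs over $(0,\infty)$, $w$ runs over $(-x,\infty)$, and $|x - y| = |w|$, so
\begin{equation*}
\int_0^\infty (u(x) - u(y)) \, m_\epsilon(|x-y|)\,dy = \int_{-x}^{\infty} (u(x) - u(x+w)) \, m_\epsilon(|w|)\,dw .
\end{equation*}
Here $u(x+w)$ occurs only for $x + w = y > 0$, so $u$ is always evaluated on $\mathbb{R}_+$, as required.

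Then I would split the right-hand side as $\int_{-x}^{0} + \int_{0}^{x} + \int_{x}^{\infty}$. In the first piece I substitute $w \mapsto -w$ and use $|-w| = |w|$ to obtain $\int_0^x (u(x) - u(x-w)) \, m_\epsilon(|w|)\,dw$, which is well defined since $x - w \in (0,x) \subset \mathbb{R}_+$. Adding this to the second piece $\int_0^x (u(x) - u(x+w)) \, m_\epsilon(|w|)\,dw$ gives $\int_0^x (2u(x) - u(x+w) - u(x-w)) \, m_\epsilon(|w|)\,dw$, and combining with the untouched third piece $\int_x^\infty (u(x) - u(x+w)) \, m_\epsilon(|w|)\,dw$ yields exactly the claimed identity.

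There is no genuine obstacle here; the one thing to watch is the limits of integration, ensuring that on the range $(x,\infty)$ one never needs a value $u(x-w)$ with $x - w < 0$ — which is precisely why only the single difference $u(x) - u(x+w)$ survives there — and that the absolute convergence noted above justifies decomposing the integral over the three subintervals.
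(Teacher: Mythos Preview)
Your proof is correct and follows essentially the same route as the paper: substitute $w = y - x$, split the resulting integral over $(-x,\infty)$ at $0$ and $x$, and reflect $w \mapsto -w$ on the negative piece. The only difference is cosmetic (you split into three pieces at once, the paper first takes $(-x,x)\cup(x,\infty)$), and your added remarks on absolute convergence and on $u$ being evaluated only on $\mathbb{R}_+$ are sound extra bookkeeping.
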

\begin{proof}
Let $w:= y - x$. Then
\begin{align*}
& \int^\infty_0(u(x) - u(y)) \, m_\epsilon(|x - y|) dy \\
& = \int^\infty_{-x} (u(x) - u(w+x)) \, m_\epsilon(|w|) dw \\
& = \int^x_{-x} (u(x) - u(w+x)) \, m_\epsilon(|w|) dw + \int^\infty_x (u(x) - u(w+x)) \, m_\epsilon(|w|) dw.
\end{align*}
But
\begin{equation*}
\int^0_{-x} (u(x) - u(w+x)) \, m_\epsilon(|w|) dw = \int^x_0 (u(x) - u(x-z)) \, m_\epsilon(|z|) dz,
\end{equation*}
and the required result follows immediately. \\
\end{proof}

\begin{lemma} \label{Aepsvestimate}
Suppose $0 < \epsilon < 1$ and $0 < \alpha < 1$. Let $v \in S(\mathbb{R})$ and define $u := r_+ v$. Then there exists a strictly positive constant $C$ and a function $g(x)$, both independent of $\epsilon$, such that
\begin{equation*}
|(\mathcal{A}_\epsilon u)(x)| \leq \chi_{[0,1]}(x) g(x) + C \quad (x>0),
\end{equation*}
where $g(x)$ is $O(1)$ as $x \searrow 0$ for $0 < \alpha < \tfrac{1}{2}$, and is $O(x^{1-2 \alpha})$ as $x \searrow 0$ for $\tfrac{1}{2} \leq \alpha < 1$. 
\end{lemma}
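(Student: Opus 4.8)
The plan is to expand $(\mathcal{A}_\epsilon u)(x)$ using Definition \ref{mepsdefinition} and then invoke the splitting of its integral term provided by Lemma \ref{uxuymeps}: for $x>0$,
\begin{align*}
(\mathcal{A}_\epsilon u)(x) = u(x) &+ \int_0^x \big(2u(x)-u(x+w)-u(x-w)\big)\, m_\epsilon(|w|)\,dw \\
&+ \int_x^\infty \big(u(x)-u(x+w)\big)\, m_\epsilon(|w|)\,dw.
\end{align*}
Since $v\in S(\mathbb{R})$ and $u=r_+v$, the quantities $\|u\|_\infty$, $\|u'\|_\infty$, $\|u''\|_\infty$ are finite, and on every range appearing above the arguments $x\pm w$ lie in $\overline{\mathbb{R}_+}$, so $u$ and its Taylor expansions may be used without boundary complications. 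The observation that makes everything uniform in $\epsilon$ is simply $0\le m_\epsilon(|w|)\le m(|w|)$ (as $m\ge0$); I would therefore replace $m_\epsilon$ by $m$ throughout and feed in Remark \ref{Remmy}, namely $m(w)\le C_1\, w^{-1-2\alpha}$ for $0<w\le1$ and $m(w)\le C_2\, e^{-w}$ for $w\ge1$.

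For the first integral I would exploit the symmetric second difference: $|2u(x)-u(x+w)-u(x-w)|\le\|u''\|_\infty\, w^2$, so on $(0,1)$ the integrand is $\le C_1\|u''\|_\infty\, w^{1-2\alpha}$ with $1-2\alpha>-1$, contributing $O\big(\min(x,1)^{\,2-2\alpha}\big)$ over $(0,\min(x,1))$, while the part over $(1,x)$ when $x>1$ is dominated by $\int_1^\infty 4\|u\|_\infty C_2\, e^{-w}\,dw$; hence this integral is bounded by a constant plus, for $x\le1$, a term $O(x^{2-2\alpha})=o(x^{1-2\alpha})$, which is harmless. For the second integral I would split at $w=1$: over $(\max(x,1),\infty)$, using $|u(x)-u(x+w)|\le2\|u\|_\infty$ against $C_2\,e^{-w}$ gives a bound $\le2\|u\|_\infty C_2\,e^{-1}$, uniform in $x$; over $(x,1)$ (relevant only when $x<1$), the mean-value bound $|u(x)-u(x+w)|\le\|u'\|_\infty\, w$ against $C_1\, w^{-1-2\alpha}$ leaves the integrand $\le\|u'\|_\infty C_1\, w^{-2\alpha}$, and $\int_x^1 w^{-2\alpha}\,dw=(1-x^{1-2\alpha})/(1-2\alpha)$ is $O(1)$ for $0<\alpha<\tfrac12$, whereas it equals $(x^{1-2\alpha}-1)/(2\alpha-1)=O(x^{1-2\alpha})$ for $\tfrac12<\alpha<1$. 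Taking $C$ to be the sum of the $x$-independent contributions and $g$ the sum of the pieces supported on $[0,1]$ then yields the inequality with exactly the asserted behaviour of $g$.

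The crux — and the reason the splitting of Lemma \ref{uxuymeps} is the right device — is the hypersingular range $\tfrac12\le\alpha<1$, where $m(w)\sim w^{-1-2\alpha}$ is not locally integrable, so a crude estimate of $\int_0^\infty|u(x)-u(y)|\,m(|x-y|)\,dy$ diverges; it is essential that the singular mass near $y=x$ be absorbed into the symmetric combination $2u(x)-u(x+w)-u(x-w)$, whose $O(w^2)$ vanishing defeats the singularity, leaving only the genuinely one-sided far piece to generate the growth $x^{1-2\alpha}$ as $x\searrow0$. The borderline $\alpha=\tfrac12$ is the most delicate: there $\int_x^1 w^{-2\alpha}\,dw=\log(1/x)$, so one records a logarithmic factor rather than a bounded one; this is still enough for the only use of the lemma, since after multiplication by $u$ it remains dominated by a function in $L_1[0,\infty)$, exactly as required for the Dominated Convergence argument in Lemma \ref{AvvIv2}.
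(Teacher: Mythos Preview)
Your proof is correct and follows essentially the same route as the paper's: both use the splitting of Lemma~\ref{uxuymeps}, control the near piece via the second-difference bound $|2u(x)-u(x\pm w)|\le \|u''\|_\infty w^2$, and control the far piece via $|u(x)-u(x+w)|\le \|u'\|_\infty w$ together with the asymptotics of $m$ from Remark~\ref{Remmy}. Your observation that the borderline $\alpha=\tfrac12$ actually produces $\log(1/x)$ rather than $O(1)$ is correct and is a mild imprecision in the lemma's statement; as you note, this is harmless for the only application (the Dominated Convergence step in Lemma~\ref{AvvIv2}).
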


\begin{proof}
We now define:
\begin{align*}
M_1 & := \int^\infty_1 w \, m(w) dw; \\
h(x) & := \int^1_x w \, m(w) dw \quad (0 < x < 1); \\
M_2 & := \int^\infty_0 w^2 \, m(w) dw.
\end{align*}

Then $M_1, M_2 < \infty$ and
$$h(x) = 
\begin{cases} 
O(1) & \mbox{if } 0 < \alpha < \tfrac{1}{2}  \\ 
O(x^{1-2\alpha}) & \mbox{if } \tfrac{1}{2} \leq \alpha < 1 
\end{cases} \text{ as } x \searrow 0.
$$

Moreover, noting that $u:= r_+ v, \, (v \in S(\mathbb{R}))$, we define
\begin{align*}
V_0 & := \sup_{x \geq 0} |v(x)|; \\
V_1 & := \sup_{x \geq 0} |v'(x)|; \\
V_2 & := \sup_{x \geq 0, \, 0 < w \leq x} |(2 v(x) - v(x+w) - v(x-w))/w^2|.
\end{align*}
Clearly, $V_0, V_1, V_2 < \infty$. \\

Our goal now is to determine point-wise estimates for $(\mathcal{A}_\epsilon u)(x)$.
Suppose, initially that $0 < x < 1$. Then, from Definition \ref{mepsdefinition} and Lemma \ref{uxuymeps},
\begin{equation*}
|(\mathcal{A}_\epsilon u)(x)| \leq |u(x)| + I_1 + I_2,
\end{equation*}
where
\begin{align*}
I_1 & := \bigg | \int^x_0(2u(x) - u(x+w) - u(x-w)) \, m_\epsilon(|w|) dw \bigg |, \\
I_2 & := \bigg | \int^\infty_x(u(x) - u(x+w)) \, m_\epsilon(|w|) dw \bigg |.
\end{align*}

But
\begin{equation*}
I_1  \leq \int^x_0 w^2 V_2 \, m_\epsilon(w) dw \leq V_2 M_2.
\end{equation*}

On the other hand,
\begin{equation*}
I_2  \leq \int^1_x w V_1 \, m_\epsilon(w) dw + \int^\infty_1 w V_1 \, m_\epsilon(w) dw \leq V_1 h(x) + V_1 M_1.
\end{equation*}

In summary, for $0 < x < 1$,
\begin{equation} \label{AbsAepsvsmallx}
|(\mathcal{A}_\epsilon u)(x)| \leq g(x) + V_0 + V_1M_1 + V_2 M_2,
\end{equation} 
where $g(x) := V_1 \, h(x)$.\\

Now suppose that $x \geq 1$. Then, from Lemma \ref{uxuymeps},
\begin{equation*}
|(\mathcal{A}_\epsilon u)(x)| \leq |u(x)| + I_1 + I_2,
\end{equation*}
where $I_1$ and $I_2$ are as defined previously but now, of course, the value of $x$ is in a different range. \\

Now
\begin{equation*}
I_1  \leq \int^x_0 w^2 V_2 \, m_\epsilon(w) dw \leq V_2 M_2 \quad \text{(as previously)}.
\end{equation*}

On the other hand,
\begin{equation*}
I_2  \leq \int^\infty_1 w V_1 \, m_\epsilon(w) dw \leq V_1 M_1.
\end{equation*}

In summary, for $x \geq 1$,
\begin{equation} \label{AbsAepsvbigx}
|(\mathcal{A}_\epsilon u)(x)| \leq V_0 + V_1M_1 + V_2 M_2.
\end{equation} \\
\begin{remark}
Estimates \eqref{AbsAepsvsmallx} and \eqref{AbsAepsvbigx} are independent of $\epsilon$. \\
\end{remark}
\end{proof}

\begin{lemma} \label{LemAvInt}
Suppose $m(y)$ is as defined in Lemma \ref{LemDefm}, and the pseudo-differential operator $A$ has symbol $(1+\xi^2)^\alpha$, where $0 < \alpha < 1$. Then, for all $v \in S(\mathbb{R})$,
\begin{equation*}
(Av)(x) = v(x) +  \lim_{\epsilon \searrow 0} \int_{\mathbb{R}} (v(x)-v(y)) \, m_\epsilon (|x-y|) \, dy.
\end{equation*}
\end{lemma}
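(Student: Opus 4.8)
The plan is to work directly from the spectral representation of $A$. We have $(Av)(x) = (\mathcal{F}^{-1}(1+\xi^2)^\alpha \mathcal{F} v)(x)$, and by Lemma \ref{LemDefm},
\begin{equation*}
(1+\xi^2)^\alpha = 1 + \int_{\mathbb{R}} (1-\cos y\xi)\, m(y)\, dy.
\end{equation*}
First I would substitute this into the Fourier multiplier and split it as $A = I + R$, where $R$ is the operator with symbol $\int_{\mathbb{R}}(1-\cos y\xi)\, m(y)\, dy$. Since $1-\cos y\xi = 1 - \tfrac12 e^{iy\xi} - \tfrac12 e^{-iy\xi}$, applying $\mathcal{F}^{-1}(\cdot)\mathcal{F}$ turns multiplication by $e^{\pm iy\xi}$ into translation by $\mp y$, so formally
\begin{equation*}
(Rv)(x) = \int_{\mathbb{R}} \Big( v(x) - \tfrac12 v(x-y) - \tfrac12 v(x+y) \Big)\, m(y)\, dy = -\tfrac12 \int_{\mathbb{R}} \big( v(x+y) + v(x-y) - 2v(x) \big)\, m(y)\, dy,
\end{equation*}
which, by the symmetrisation identity in Lemma \ref{Lemvdiff} (with $\epsilon = 0$, legitimate since the integrand is $O(|y|^{1-2\alpha})$ near the origin and $m$ decays exponentially), equals $\int_{\mathbb{R}}(v(x)-v(y))\, m(|x-y|)\, dy$. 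Combined with $(\epsilon\searrow 0)$-convergence this gives the claimed formula.

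The key steps, in order, are: (1) justify interchanging the $\xi$-integral (Fourier inversion) with the $y$-integral (the L\'evy measure) — here I would either invoke Fubini after noting $v \in S(\mathbb{R})$ makes $\mathcal{F}v$ rapidly decaying and $m(y)(1-\cos y\xi)$ is dominated by $\min\{1,y^2\xi^2\} \cdot m(y)$ which is $m$-integrable uniformly, or, more safely, first truncate the $y$-integral to $|y| > \epsilon$, handle the bounded symbol $\int_{|y|>\epsilon}(1-\cos y\xi) m(y)\, dy$ cleanly (it is a genuine $L_\infty$ multiplier, indeed continuous and bounded), and then pass $\epsilon \searrow 0$; (2) identify the action of the truncated multiplier with the translation-difference integral operator $\int_{|y|>\epsilon}(v(x) - \tfrac12 v(x-y) - \tfrac12 v(x+y)) m(y)\, dy$, using that $\mathcal{F}^{-1} e^{iy\xi}\mathcal{F}$ is the shift operator on $S(\mathbb{R})$; (3) re-express that via Lemma \ref{Lemvdiff} as $\int_{\mathbb{R}}(v(x)-v(y))\, m_\epsilon(|x-y|)\, dy$; (4) take $\epsilon \searrow 0$ on both sides — on the operator side using continuity of $(1+\xi^2)^\alpha$ as an $S'$-multiplier and dominated convergence in $\xi$, on the integral side the estimate from Remark \ref{Remmy} showing the integrand is dominated by an $\epsilon$-independent $L_1$ function of $y$ (namely $C w^2 m(w)$ near $0$ and $C w\, m(w)$ away, both integrable).

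The main obstacle I expect is step (1)/(4): the clean interchange of integrals. Because $m(y) \sim c|y|^{-1-2\alpha}$ near $0$, the kernel is not integrable on its own — only the \emph{difference} $1-\cos y\xi \sim \tfrac12 y^2\xi^2$ saves integrability near $y=0$, so one cannot naively split $\int(1-\cos y\xi)m(y)\,dy$ into $\int 1\cdot m$ minus $\int\cos y\xi\cdot m$. This is precisely why the statement is phrased with the $\lim_{\epsilon\searrow 0}$ outside the integral, and why the $\epsilon$-truncation is the right technical device: for fixed $\epsilon>0$ everything is absolutely convergent and Fubini applies without difficulty, and the two limiting passages (in $\xi$, giving $Av$ pointwise since $v\in S(\mathbb{R})$ so $(1+\xi^2)^\alpha\mathcal{F}v \in L_1$; and in $y$, giving the integral operator) are then each routine applications of dominated/monotone convergence. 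I would also record, as in the Remark following Lemma \ref{Lemvdiff}, that for $v\in S(\mathbb{R})$ the $\epsilon=0$ integral already converges absolutely, so the limit is only a convenience matching the form of the main theorem rather than a genuine principal-value subtlety at this stage; the genuine hypersingularity only matters later when $v$ is merely in a Bessel potential space.
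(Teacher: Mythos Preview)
Your proposal is correct and follows essentially the same route as the paper: expand the symbol via Lemma~\ref{LemDefm}, convert $e^{\pm iy\xi}$ multipliers to translations, obtain the second-difference form, and invoke Lemma~\ref{Lemvdiff}. The paper handles your step~(1) by the first of your two options---a direct Fubini argument, packaged as Lemma~\ref{FubiniTest}, which shows $|v(x+y)+v(x-y)-2v(x)|\,m(|y|) \in L_1(\mathbb{R}\times\mathbb{R})$---rather than the $\epsilon$-truncation route you also sketch; the interchange is between the forward transform $\mathcal{F}$ and the $y$-integral (working on the Fourier side throughout), then $\mathcal{F}^{-1}$ is applied once at the end.
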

\begin{proof}
Let $\mathcal{F}$ denote $\mathcal{F}_{x \to \xi}$. Then
\begin{align*}
\mathcal{F}(Av)(\xi) &= (1+\xi^2)^\alpha (\mathcal{F} v)(\xi) \\
&= (\mathcal{F} v)(\xi) +\bigg[ \int_{\mathbb{R}} (1- \cos y\xi) m(y) \, dy \bigg] (\mathcal{F} v)(\xi) \quad \text{by Lemma \ref{LemDefm}}\\
&= (\mathcal{F} v)(\xi) - \frac{1}{2} \int_{\mathbb{R}} (\mathcal{F} v)(\xi) \big \{ e^{i \xi y} + e^{-i \xi y} - 2 \big \} m(y) \, dy \\
&= (\mathcal{F} v)(\xi) - \frac{1}{2}\int_{\mathbb{R}}   \mathcal{F} \big ( ( v(\cdot+y)+v(\cdot-y)-2v(\cdot))\big ) (\xi)  m(y) \, dy \\
&= (\mathcal{F}v)(\xi) - \frac{1}{2} \mathcal{F} \bigg ( \int_{\mathbb{R}}  ( v(\cdot+y)+v(\cdot-y)-2v(\cdot))  m(y) \, dy \bigg ) (\xi)\\
&= (\mathcal{F} v)(\xi) + \mathcal{F} \bigg (  \lim_{\epsilon \searrow 0} \int_{\mathbb{R}}  ( v(x)-v(y)) m_\epsilon (|x-y|) \, dy \bigg ) (\xi) \quad \text{by Lemma \ref{Lemvdiff}},
\end{align*}
where we have used Lemma \ref{FubiniTest} to justify the change in order of $\mathcal{F}$ and integration with respect to $y$. \\

So now applying the inverse transform $\mathcal{F}^{-1}_{\xi \to x}$ to both sides
\begin{equation} \label{AFAIS}
(Av)(x) = v(x) + \lim_{\epsilon \searrow 0} \int_{\mathbb{R}} (v(x)-v(y)) \, m_\epsilon (|x-y|) \, dy.
\end{equation}
This completes the proof of the lemma. \\
\end{proof}
\begin{lemma} \label{FubiniTest}
Suppose $v \in S(\mathbb{R})$ and $0 < \alpha < 1$. Then 
\begin{equation*}
\big | v(x+y) + v(x-y) - 2 v(x) \big | \, m(|y|)
\end{equation*}
is integrable over $\mathbb{R} \times \mathbb{R}$.
\end{lemma}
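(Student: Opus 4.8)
The plan is to reduce the claim to a Tonelli computation. Since the integrand is non-negative and measurable, it suffices to show
\begin{equation*}
\int_{\mathbb{R}} \int_{\mathbb{R}} \big| v(x+y)+v(x-y)-2v(x) \big| \, m(|y|)\, dx\, dy < \infty ,
\end{equation*}
and Tonelli's theorem then yields integrability over $\mathbb{R} \times \mathbb{R}$. I would split the outer integral at $|y| = 1$ and estimate the symmetric second difference $\Delta_y v(x) := v(x+y)+v(x-y)-2v(x)$ by two different elementary bounds, one adapted to the behaviour of $m$ near the origin and one to its behaviour at infinity, using the asymptotics recorded in Remark \ref{Remmy}: $m(|y|) = O(|y|^{-1-2\alpha})$ for small $|y|$ and $m(|y|) = O(e^{-|y|})$ as $|y| \to \infty$.

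For the inner region $|y| \le 1$, I would use Taylor's formula with integral remainder (taking $y>0$, which is harmless since everything depends on $|y|$ only):
\begin{equation*}
v(x+y)+v(x-y)-2v(x) = \int_0^{y} (y-s)\big(v''(x+s)+v''(x-s)\big)\, ds .
\end{equation*}
Integrating in $x$ first and applying Fubini on the right-hand side gives $\int_{\mathbb{R}} |\Delta_y v(x)|\, dx \le \|v''\|_{L_1(\mathbb{R})}\, y^2$, where we use that $v'' \in S(\mathbb{R}) \subset L_1(\mathbb{R})$. Hence the contribution of $|y| \le 1$ is at most $\|v''\|_{L_1(\mathbb{R})} \int_{|y| \le 1} y^2\, m(|y|)\, dy$, which is finite because $y^2 m(|y|) = O(|y|^{1-2\alpha})$ and $1 - 2\alpha > -1$ precisely when $\alpha < 1$.

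For the outer region $|y| > 1$, the crude bound $|\Delta_y v(x)| \le |v(x+y)| + |v(x-y)| + 2|v(x)|$ already suffices: integrating in $x$ gives $\int_{\mathbb{R}} |\Delta_y v(x)|\, dx \le 4\|v\|_{L_1(\mathbb{R})}$, uniformly in $y$, so this part is bounded by $4\|v\|_{L_1(\mathbb{R})} \int_{|y|>1} m(|y|)\, dy$, which converges since $m(|y|) = O(e^{-|y|})$. Adding the two estimates finishes the proof.

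The argument is essentially bookkeeping; the one point that genuinely uses the hypothesis is the matching in the inner region, where the symmetric second difference produces exactly a factor $y^2$ that offsets the worst-case $|y|^{-1-2\alpha}$ singularity of $m$ if and only if $\alpha < 1$. This is the step where I would be careful about the exact form of the remainder and the constants; the rest is routine.
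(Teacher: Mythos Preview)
Your proof is correct and follows essentially the same approach as the paper: split at $|y|=1$, use a second-order Taylor estimate on the symmetric difference for small $|y|$ to produce a factor $y^2$ against the $|y|^{-1-2\alpha}$ singularity of $m$, and use the crude triangle-inequality bound together with the exponential decay of $m$ for large $|y|$. The only difference is cosmetic: the paper bounds $|\Delta_y v(x)|$ pointwise by $|y|^2 \sup_{z\in[x-1,x+1]}|v''(z)|$ and then argues that this sup is integrable in $x$, whereas your use of the integral remainder gives the cleaner $\int_{\mathbb{R}}|\Delta_y v(x)|\,dx \le \|v''\|_{L_1}\,y^2$ directly.
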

\begin{proof}
Firstly, we integrate with respect to $x$ and define
\begin{equation*}
I(y):= \int_\mathbb{R} \, \big | v(x+y) + v(x-y) - 2 v(x) \big | \, m(|y|) \, dx.
\end{equation*}
Now if $|y| \leq 1$, then $m(|y|) = O(|y|^{-1-2\alpha})$. On the other hand, if $|y| > 1$ then 
$m(|y|) = O(e^{-|y|})$. Hence, for certain positive constants $C_1$ and $C_2$,
\begin{align*}
I(y) & \leq \int_\mathbb{R} C_1 \, \chi_{[-1,1]}(y) \, \dfrac{| v(x+y) + v(x-y) - 2 v(x) |}{|y|^{1+2 \alpha}} \, dx \\
& \quad + \int_\mathbb{R} C_2 \, \chi_{\mathbb{R} \setminus [-1,1]}(y) \,  \dfrac{| v(x+y) + v(x-y) - 2 v(x) |}{e^{|y|}} \, dx \\
& \leq C_1 \, \chi_{[-1,1]}(y) \, |y|^{2-1-2\alpha} \, \int_\mathbb{R} \,\, \sup_{z \in [x-1, x+1]} |v''(z)| \, dx \\
& \quad + C_2 \, \chi_{\mathbb{R} \setminus [-1,1]}(y) \, e^{-|y|} \, \int_\mathbb{R} \,  4 |v(x)| \, dx.
\end{align*}
But 
\begin{equation*}
\int_\mathbb{R} \,\, \sup_{z \in [x-1, x+1]} |v''(z)| \, dx \leq C_3 \int_\mathbb{R} \dfrac{1}{1+x^2} \, dx = \pi \, C_3,
\end{equation*}
and
\begin{equation*}
\int_\mathbb{R} |v(x)| \, dx \leq C_4,
\end{equation*}
for certain positive constants $C_3$ and $C_4$.
Hence
\begin{align*}
I(y) & \leq C \bigg ( \chi_{[-1,1]}(y) \, |y|^{1-2\alpha} + \chi_{\mathbb{R} \setminus [-1,1]}(y) \, e^{-|y|} \bigg) \\
& \in L_1(\mathbb{R}),
\end{align*}
and the required result now follows directly from Tonelli's theorem. 
\end{proof}

\begin{lemma} \label{LemAvchi}
Suppose $m(y)$ is as defined in Lemma \ref{LemDefm} and the pseudodifferential operator $A$ has symbol $(1+ \xi^2)^\alpha$, where $0 < \alpha < 1$. Then for all $v \in S(\mathbb{R})$ and $x>0$, 
\begin{equation*}
\big( r_+ \, A(\chi_{\mathbb{R}_-} v) \big)(x) = - \int^0_{-\infty}  v(y) m(|x-y|) \, dy.
\end{equation*}
\end{lemma}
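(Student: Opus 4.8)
The plan is to compute $\big(A(\chi_{\mathbb{R}_-} v)\big)(x)$ for $x>0$ directly from the Fourier-analytic definition $A = \mathcal{F}^{-1}(1+\xi^2)^\alpha\mathcal{F}$, and then use the representation $(1+\xi^2)^\alpha = 1 + \int_{\mathbb{R}}(1-\cos y\xi)\,m(y)\,dy$ from Lemma \ref{LemDefm} to unwind the expression into the stated integral. Write $w := \chi_{\mathbb{R}_-} v \in L_1(\mathbb{R})$ (it is the restriction of a Schwartz function to $\mathbb{R}_-$, extended by zero). The key observation is that for $x > 0$, the $\text{"}1\text{"}$ term in the symbol contributes $(\mathcal{F}^{-1}\mathcal{F}w)(x) = w(x) = 0$, since $\operatorname{supp} w \subseteq \overline{\mathbb{R}_-}$ and $x>0$; so only the integral part of the symbol survives. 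Thus heuristically $\big(A w\big)(x) = \big[\mathcal{F}^{-1}\big(\int_{\mathbb{R}}(1-\cos y\xi)\,m(y)\,dy\big)\mathcal{F}w\big](x)$ for $x>0$.

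Next I would run the same manipulation as in the proof of Lemma \ref{LemAvInt}, but now applied to $w = \chi_{\mathbb{R}_-}v$ rather than to $v$: using $1-\cos y\xi = -\tfrac12(e^{iy\xi}+e^{-iy\xi}-2)$ and the fact that multiplication by $e^{\pm iy\xi}$ on the Fourier side is translation by $\mp y$, one gets
\begin{equation*}
(Aw)(x) = w(x) + \lim_{\epsilon\searrow 0}\int_{\mathbb{R}}\big(w(x)-w(z)\big)\,m_\epsilon(|x-z|)\,dz,
\end{equation*}
with the interchange of $\mathcal{F}$ and the $y$-integration justified exactly as in Lemma \ref{FubiniTest} (note $w$ is bounded with compact-in-one-direction support and is $C^\infty$ away from $0$, which suffices for the dominated/Tonelli estimates there; alternatively one approximates $w$ and passes to the limit). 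Now specialise to $x>0$: then $w(x)=0$, and for $z$ in the support of the integrand we need $w(z)=v(z)\chi_{\mathbb{R}_-}(z)\neq 0$, i.e. $z<0$, so $|x-z| = x-z > x > 0$ and the cutoff $m_\epsilon(|x-z|) = m(|x-z|)$ for all small $\epsilon$ — the singularity of $m$ at the origin is never reached. Hence the limit is trivial and
\begin{equation*}
(Aw)(x) = -\int_{-\infty}^{0} v(z)\, m(|x-z|)\,dz,
\end{equation*}
which is the claim after relabelling $z$ as $y$.

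The main obstacle is the rigorous justification of the symbol manipulation and the Fubini-type interchange for the \emph{discontinuous} function $w=\chi_{\mathbb{R}_-}v$, since $w \notin S(\mathbb{R})$ and the argument of Lemma \ref{LemAvInt} was stated for Schwartz inputs. The cleanest route is to observe that $m(y)$ decays like $e^{-|y|}$ at infinity and is locally $O(|y|^{-1-2\alpha})$, so that for $x>0$ bounded away from $0$ the kernel $y\mapsto m(|x-y|)$ restricted to $y<0$ is integrable against the bounded function $v$; then define the right-hand side $F(x) := -\int_{-\infty}^0 v(y)m(|x-y|)\,dy$ directly, show $F \in L_1(\mathbb{R}_+) + L_\infty$ (using the decay of $m$), compute $\mathcal{F}(e_+ F)$, and verify it equals $(1+\xi^2)^\alpha\,\mathcal{F}w$ minus the contribution that is supported in $\mathbb{R}_-$ — i.e. check the identity at the level of tempered distributions, where all the integral interchanges are legitimate because everything in sight is absolutely integrable after pairing with a Schwartz test function. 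Once the distributional identity $r_+ A w = F$ is established, restricting to $x>0$ gives the lemma.
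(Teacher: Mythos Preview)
Your computation is correct in spirit, and you correctly identify the main obstacle: the function $w=\chi_{\mathbb{R}_-}v$ is not Schwartz, so the Fubini-type interchange from Lemma \ref{FubiniTest} does not apply directly (that lemma genuinely uses $v''\in L_1$, which fails across the jump at $0$). Your proposed workarounds are plausible but not fully developed; in particular, ``exactly as in Lemma \ref{FubiniTest}'' is not quite right, and the distributional verification you sketch at the end would need more care.

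The paper sidesteps this difficulty entirely by an approximation argument, which you mention only in passing. One takes smooth cutoffs $\chi_n\in C^\infty_0(\mathbb{R})$ with $\chi_n=1$ on $[-n,-1/n]$ and $\chi_n=0$ outside $[-(n+1),0]$, so that $\chi_n v\in C^\infty_0(\mathbb{R})\subset S(\mathbb{R})$ and Lemma \ref{LemAvInt} applies \emph{verbatim} to give the integral formula for $r_+A(\chi_n v)$. One then pairs against an arbitrary test function $w\in C^\infty_0(\mathbb{R}_+)$ and passes to the limit: on the left, $\chi_n v\to\chi_{\mathbb{R}_-}v$ in $L_1(\mathbb{R})\hookrightarrow S'(\mathbb{R})$, hence $A(\chi_n v)\to A(\chi_{\mathbb{R}_-}v)$ in $S'(\mathbb{R})$; on the right, the integrals converge by dominated convergence since $x>0$ keeps the kernel $m(|x-y|)$ away from its singularity. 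This route avoids any re-justification of Fubini for non-smooth inputs and reduces the lemma to results already proved. Your direct approach would work too once the interchange is properly justified, but the approximation argument is both shorter and cleaner.
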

\begin{proof}
Choose any $w \in C^\infty_0(\mathbb{R}_+)$, and define $\chi_n \in C^\infty_0(\mathbb{R})$ such that \\
\[
\chi_n(x):=
\begin{cases}
1 & x \in [-n, - \tfrac{1}{n}] \\
0 & x \not \in [-(n+1), 0].
\end{cases}
\]

Then, from Lemma \ref{LemAvInt}
\begin{equation} \label{r+Achinu}
(r_+ \, A(\chi_n v), w) = - \int_{\mathbb{R}_+} w(x) \, \bigg ( \int^0_{-\infty} \chi_n (y) v(y) m(|x-y|) \, dy \bigg ) \, dx.
\end{equation}
But $\chi_n v \to \chi_{\mathbb{R}_-} v$ in $L_1(\mathbb{R}) \hookrightarrow S'(\mathbb{R})$, and hence  $A(\chi_n v) \to A(\chi_{\mathbb{R}_-} v)$ in $S'(\mathbb{R})$. \\

On the other hand, since $x > 0$, the right-hand side of equation \eqref{r+Achinu} converges to
\begin{equation*}
- \int_{\mathbb{R}_+} w(x) \, \bigg ( \int^0_{-\infty}  v(y) m(|x-y|) \, dy \bigg ) \, dx.
\end{equation*}
Thus, letting $n \to \infty$, we obtain
\begin{equation*}
(r_+ \, A(\chi_{\mathbb{R}_-} v), w) = - \int_{\mathbb{R}_+} w(x) \, \bigg ( \int^0_{-\infty} v(y) m(|x-y|) \, dy \bigg ) \, dx.
\end{equation*}
But since $w \in C^\infty_0(\mathbb{R}_+)$ was arbitrary
\begin{equation*}
\big ( r_+ \, A(\chi_{\mathbb{R}_-} v) \big )(x)  = -  \int^0_{-\infty} v(y) m(|x-y|) \, dy.
\end{equation*}
This completes the proof of the lemma. \\
\end{proof}

\begin{lemma} \label{LemAChim}
Suppose $m(y)$ is as defined in Lemma \ref{LemDefm} and the pseudodifferential operator $A$ has symbol $(1+ \xi^2)^\alpha$, where $0 < \alpha < 1$. Then for $x>0$, 
\begin{equation*}
\big( r_+ \, A(\chi_{\mathbb{R}_-}) \big)(x) = - \int^0_{-\infty} m(|x-y|) \, dy.
\end{equation*}
\end{lemma}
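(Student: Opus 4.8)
The plan is to deduce this from Lemma \ref{LemAvchi} (the $v \in S(\mathbb{R})$ version) by replacing the Schwartz function $v$ there with a sequence $v_n \in S(\mathbb{R})$ tending to the constant $1$; the only genuine issue is that $1 \notin S(\mathbb{R})$, so the limit must be taken in $S'(\mathbb{R})$ rather than in $L_1(\mathbb{R})$, and one must then check that this weaker convergence still survives the application of $A$ and of the restriction $r_+$.

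Concretely, I would take $v_n(x) := \exp(-x^2/n) \in S(\mathbb{R})$, so that $0 < v_n \le 1$ and $v_n(x) \to 1$ for every $x$. First I would observe that $\chi_{\mathbb{R}_-} v_n \to \chi_{\mathbb{R}_-}$ in $S'(\mathbb{R})$: testing against $\phi \in S(\mathbb{R})$, the dominated convergence theorem with majorant $|\phi| \in L_1(\mathbb{R})$ gives $\int_{\mathbb{R}} \chi_{\mathbb{R}_-}(y) v_n(y) \phi(y)\, dy \to \int_{-\infty}^0 \phi(y)\, dy$. Since the symbol $(1+\xi^2)^\alpha$ is smooth with all derivatives of polynomial growth, $A = \mathcal{F}^{-1}(1+\xi^2)^\alpha\mathcal{F}$ maps $S(\mathbb{R})$, hence (by duality, $A$ being symmetric) $S'(\mathbb{R})$, continuously into itself; therefore $A(\chi_{\mathbb{R}_-} v_n) \to A(\chi_{\mathbb{R}_-})$ in $S'(\mathbb{R})$. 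Consequently, for every $w \in C_0^\infty(\mathbb{R}_+)$ (extended by zero to an element of $S(\mathbb{R})$),
\[
\big( r_+ A(\chi_{\mathbb{R}_-} v_n), w \big) \longrightarrow \big( r_+ A(\chi_{\mathbb{R}_-}), w \big).
\]

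On the other side, Lemma \ref{LemAvchi} applied to each $v_n$ gives $\big(r_+ A(\chi_{\mathbb{R}_-} v_n)\big)(x) = -\int_{-\infty}^0 v_n(y)\, m(|x-y|)\, dy$ for $x>0$. Fixing $w$ with $\operatorname{supp} w \subset [a,b]$, $0<a<b$, I would note that for $x \ge a$ and $y \le 0$ we have $|x-y| = x-y \ge a$, and, by Remark \ref{Remmy}, $m$ is bounded on $[a,\infty)$ and $O(e^{-t})$ as $t\to\infty$; hence $m(|x-y|)\,|w(x)|\,\chi_{[a,b]}(x)\,\chi_{(-\infty,0)}(y)$ is an $n$-independent $L_1(\mathbb{R}_+\times\mathbb{R})$ majorant for $v_n(y)m(|x-y|)w(x)$. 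Dominated convergence then gives
\[
\big(r_+ A(\chi_{\mathbb{R}_-} v_n), w\big) = -\int_0^\infty \!\!\! \int_{-\infty}^0 v_n(y)\, m(|x-y|)\, w(x)\, dy\, dx \longrightarrow -\int_0^\infty \!\!\! \int_{-\infty}^0 m(|x-y|)\, w(x)\, dy\, dx.
\]
Equating the two limits over all $w \in C_0^\infty(\mathbb{R}_+)$ identifies the distribution $r_+ A(\chi_{\mathbb{R}_-})$ with the function $x \mapsto -\int_{-\infty}^0 m(|x-y|)\, dy$, which is continuous on $\mathbb{R}_+$ (again by dominated convergence in $x$, using the exponential decay of $m$); this is the claim. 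I expect the main --- really the only --- delicate point to be the step where the $L_1$ convergence available inside Lemma \ref{LemAvchi} degrades here to mere $S'$ convergence, so that the argument must route through the continuity of $A$ on $S'(\mathbb{R})$; the remaining interchanges of limit and integral are routine consequences of the decay of $m$ recorded in Remark \ref{Remmy}.
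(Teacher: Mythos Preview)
Your proof is correct and follows essentially the same route as the paper's: approximate $\chi_{\mathbb{R}_-}$ by $\chi_{\mathbb{R}_-} v_n$ with $v_n \in S(\mathbb{R})$, apply Lemma~\ref{LemAvchi} to each approximant, and pass to the limit using $S'(\mathbb{R})$ convergence on the left and dominated convergence on the right, testing against $w \in C_0^\infty(\mathbb{R}_+)$. The only cosmetic difference is the choice of approximants --- the paper uses compactly supported cutoffs $\chi_n$ (and observes convergence in $L_1(\mathbb{R},\tfrac{dx}{1+x^2}) \hookrightarrow S'(\mathbb{R})$), whereas you use Gaussians $e^{-x^2/n}$ and argue $S'$ convergence directly; both work equally well.
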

\begin{proof}
Choose any $w \in C^\infty_0(\mathbb{R}_+)$, and let $\chi_n \in C^\infty_0(\mathbb{R})$ 
be as defined in the proof of Lemma \ref{LemAvchi}. Then, from Lemma \ref{LemAvchi}
\begin{equation} \label{r+Achinu2}
(r_+ \, A(\chi_n \chi_{\mathbb{R}_-}), w) = - \int_{\mathbb{R}_+} w(x) \, \bigg ( \int^0_{-\infty} \chi_n (y) \chi_{\mathbb{R}_-}(y) m(|x-y|) \, dy \bigg ) \, dx.
\end{equation}
But $\chi_n \chi_{\mathbb{R}_-} \to \chi_{\mathbb{R}_-}$ in $L_1(\mathbb{R}, \frac{dx}{1+x^2}) \hookrightarrow S'(\mathbb{R})$, and hence  $A(\chi_n \chi_{\mathbb{R}_-}) \to A(\chi_{\mathbb{R}_-})$ in $S'(\mathbb{R})$. \\

On the other hand, the right-hand side of equation \eqref{r+Achinu2} converges to
\begin{equation*}
- \int_{\mathbb{R}_+} w(x) \, \bigg ( \int^0_{-\infty}  m(|x-y|) \, dy \bigg ) \, dx.
\end{equation*}
Thus, letting $n \to \infty$, we obtain
\begin{equation*}
(r_+ \, A(\chi_{\mathbb{R}_-} ), w) = - \int_{\mathbb{R}_+} w(x) \, \bigg ( \int^0_{-\infty} m(|x-y|) \, dy \bigg ) \, dx.
\end{equation*}
But since $w \in C^\infty_0(\mathbb{R}_+)$ was arbitrary
\begin{equation*}
\big ( r_+ \, A(\chi_{\mathbb{R}_-}) \big ) (x) = -  \int^0_{-\infty} m(|x-y|) \, dy.
\end{equation*}
This completes the proof of the lemma.
\end{proof}


\chapter{Trivial kernel} \label{ChapterTrivKer}
\section{Main result}
\begin{theorem} \label{thmabddtrivker}
Suppose $1 < p < \infty$ and $0 < \alpha <1$. If either
\begin{enumerate} [\hspace{18pt}(a)]
\item $2\alpha -1 + 1/p  < s < 1+1/p \text{ then } \mathcal{A}:  H^s_p(\overline{\mathbb{R}_+}) \to H^{s-2\alpha}_p(\overline{\mathbb{R}_+}) \text{ is bounded},  \text{ or }$ 
\item $1+1/p < s < 2+1/p \text{ then } \mathcal{A}:  H^s_{p,0}(\overline{\mathbb{R}_+}) \to H^{s-2\alpha}_p(\overline{\mathbb{R}_+}) \text{ is bounded}$.
\end{enumerate} 

Moreover, if $p=2$ and either
\begin{enumerate} [\hspace{18pt}(i)]
\item $0 < \alpha < \tfrac{1}{2}, \,\,  \tfrac{1}{2}  < s < 1+\tfrac{1}{2},   \text{ or }$ 
\item $0 < \alpha < 1, \,\,  1 +\tfrac{1}{2}  < s < 2+\tfrac{1}{2}$, 
\end{enumerate} 
then $\mathcal{A}$ has a trivial kernel. \\
\end{theorem}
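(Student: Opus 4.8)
The statement has two independent parts, and I would prove them in turn: the boundedness in (a)--(b), and then, for $p=2$, the triviality of the kernel in (i)--(ii), the latter relying on the former.

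\textbf{Boundedness.} I would start from the singular-integral form \eqref{Avsingularint} of Theorem \ref{Theorem:SingularIntegralRep}, which gives $\mathcal{A}u=u+\mathcal{K}u$ with
\[
(\mathcal{K}u)(x)=\lim_{\epsilon\searrow0}\int_0^\infty\big(u(x)-u(y)\big)\,\chi_\epsilon(|x-y|)\,m(|x-y|)\,dy .
\]
Since $\alpha>0$, the identity is trivially bounded $H^s_p(\overline{\mathbb{R}_+})\to H^s_p(\overline{\mathbb{R}_+})\hookrightarrow H^{s-2\alpha}_p(\overline{\mathbb{R}_+})$, so everything reduces to the mapping properties of $\mathcal{K}$. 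By Remark \ref{Remmy} the kernel splits as $m(|x-y|)=c_\alpha\,|x-y|^{-1-2\alpha}+r(|x-y|)$, where $r$ is strictly less singular on the diagonal and exponentially decaying; thus $\mathcal{K}$ differs from a regional-fractional-Laplacian-type operator of order $2\alpha$ on $\mathbb{R}_+$ only by an operator of order $\le 2\alpha-1$, which is harmless. The boundedness then follows from --- and is in essence equivalent to --- the mapping properties of that regional operator on half-line Bessel potential spaces: it sends $H^s_p(\overline{\mathbb{R}_+})$ into $H^{s-2\alpha}_p(\overline{\mathbb{R}_+})$ when $2\alpha-1+1/p<s<1+1/p$, and $H^s_{p,0}(\overline{\mathbb{R}_+})$ into $H^{s-2\alpha}_p(\overline{\mathbb{R}_+})$ when $1+1/p<s<2+1/p$, the subspace condition $u'(0)=0$ from \eqref{Hsp0definition} being forced because, once one passes the regularity threshold $1+1/p$, a non-zero derivative at the boundary generates in $\mathcal{K}u$ a boundary layer of the type $x^{1-2\alpha}$ (times a smooth factor) that fails to lie in $H^{s-2\alpha}_p(\overline{\mathbb{R}_+})$. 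Equivalently, one can argue directly from \eqref{rBurComm}, writing $\mathcal{A}u=r_+(Av+[vI,A]\chi_{\mathbb{R}_-})$ for an arbitrary extension $v$ of $u$: the term $r_+Av$ is harmless because $A=\mathcal{F}^{-1}(1+\xi^2)^\alpha\mathcal{F}$ has an elliptic symbol of order $2\alpha$ and hence maps $H^s_p(\mathbb{R})$ into $H^{s-2\alpha}_p(\mathbb{R})$, while the commutator term, which by \eqref{r+commutatorchim} equals $-r_+\int_{-\infty}^0\big(v(x)-v(y)\big)m(|x-y|)\,dy$, has no diagonal singularity and is estimated by a Taylor expansion of $v$ at the origin, the kernel bounds of Remark \ref{Remmy} producing exactly the exponents above; taking the infimum over extensions gives the estimate in the norm of $H^s_p(\overline{\mathbb{R}_+})$.

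\textbf{Trivial kernel, $p=2$.} The engine here is the energy identity of Lemma \ref{AvvIv2} (that is, \eqref{Avvinnerproduct}),
\[
(\mathcal{A}u,u)=\int_0^\infty|u|^2\,dx+\tfrac12\int_0^\infty\!\int_0^\infty|u(x)-u(y)|^2\,m(|x-y|)\,dy\,dx ,
\]
valid a priori only for $u=r_+v$ with $v\in S(\mathbb{R})$; the restriction $p=2$ enters precisely because this uses the $L_2$ inner product. The plan is: (1) extend this identity to every element $u$ of the space on which $\mathcal{A}$ acts --- all of $H^s_2(\overline{\mathbb{R}_+})$ in case (i), and $H^s_{2,0}(\overline{\mathbb{R}_+})$ in case (ii) --- by approximating $u$ with $r_+v_n$, $v_n\in S(\mathbb{R})$, converging in the relevant norm (in case (ii) one also arranges $v_n'(0)=0$, subtracting a fixed Schwartz corrector), and passing to the limit. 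On the left-hand side this uses the boundedness just proved together with the continuity of the $L_2$-pairing between $H^{s-2\alpha}_2(\overline{\mathbb{R}_+})$ and $H^s_2(\overline{\mathbb{R}_+})$, which holds because $s\ge\alpha$ throughout the ranges in (i)--(ii) (so $s-2\alpha\ge-s$); on the right-hand side it uses $H^s_2(\overline{\mathbb{R}_+})\hookrightarrow L_2(\mathbb{R}_+)$ and the fact that the double integral is controlled by the squared $H^\alpha$-seminorm of $u$ near the diagonal (since $m(|x-y|)\lesssim|x-y|^{-1-2\alpha}$) plus a bounded, exponentially small tail, again using $s\ge\alpha$. (2) Once the identity holds for all $u$ in the space, if $\mathcal{A}u=0$ then $(\mathcal{A}u,u)=0$; since $m>0$ at every finite point (Remark \ref{Remmy}) both non-negative terms on the right vanish, in particular $\int_0^\infty|u|^2\,dx=0$, so $u=0$.

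\textbf{Main obstacle.} The hard part is the boundedness assertion: establishing that the cancellation between $r_+Ae_+$ and the potential term $r_+(A\chi_{\mathbb{R}_-})I$ really places $\mathcal{A}u$ in $H^{s-2\alpha}_p(\overline{\mathbb{R}_+})$ requires a careful endpoint analysis of the boundary behaviour of $\mathcal{K}u$ (equivalently, of the commutator integral) near $x=0$, and it is this analysis that singles out the exact windows $2\alpha-1+1/p<s<1+1/p$ and $1+1/p<s<2+1/p$ and the necessity of the condition $u'(0)=0$ in the higher-regularity range. By comparison, the density/limiting step needed to upgrade the energy identity is routine once the boundedness and the relevant embeddings are in hand, and the positivity argument that then yields the trivial kernel is a single line.
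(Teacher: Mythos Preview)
Your trivial-kernel argument matches the paper's (Lemmas \ref{BevIvsmall} and \ref{Bevlvlargep2}) in structure: energy identity, approximation, positivity. The paper is more careful about the half-line pairing: rather than pairing $H^{s-2\alpha}_2(\overline{\mathbb{R}_+})$ with $H^s_2(\overline{\mathbb{R}_+})$ directly, it embeds both sides into $H^{-\alpha}_2(\overline{\mathbb{R}_+})\times H^\alpha_2(\overline{\mathbb{R}_+})$ in case (i), and into $H^{-\beta}_2\times H^\beta_2$ with $\beta:=\alpha$ (if $\alpha<\tfrac12$) or $\beta:=\alpha-\tfrac12$ (if $\alpha\ge\tfrac12$) in case (ii). The point is that $0\le\beta<\tfrac12$, so $e_+$ is bounded at that level and the pairing becomes the honest $(e_+f,e_+g)_{\mathbb{R}}$. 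Your condition ``$s\ge\alpha$'' aims at the same thing but is not by itself enough, since the dual of $H^s_2(\overline{\mathbb{R}_+})$ is $\widetilde{H}^{-s}_2(\overline{\mathbb{R}_+})$ rather than $H^{-s}_2(\overline{\mathbb{R}_+})$; dropping to a level with modulus $<\tfrac12$ is what closes this gap and is also what forces $\alpha<\tfrac12$ in case (i) (it is needed both for $\mathcal{A}:H^\alpha_2\to H^{-\alpha}_2$ to fall under (a) and for $C^\infty_0(\mathbb{R}_+)$ to be dense in $H^\alpha_2(\overline{\mathbb{R}_+})$). The approximation with $v_n'(0)=0$ in case (ii) is exactly Remark \ref{zeroderivapprox}.

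For boundedness the paper (Lemma \ref{AuHsp}) takes a different, more concrete route than either of your sketches. It works from the defining formula $\mathcal{A}u=r_+Ae_+u+u\cdot r_+A(\chi_{\mathbb{R}_-})$ rather than the singular-integral representation. First, for $u$ with $e_+u\in\widetilde{H}^s_p(\overline{\mathbb{R}_+})$, the term $r_+Ae_+u$ is bounded trivially, and for the potential term the paper computes explicitly $r_+A(\chi_{\mathbb{R}_-})(x)=x^{-2\alpha}\phi(x)+\psi(x)$ (Lemma \ref{LemAChim} and Remark \ref{Remmy}, with a $\log$ variant at $\alpha=\tfrac12$), then invokes the multiplication lemma $x^{-\gamma}I:\widetilde{H}^s_p(\overline{\mathbb{R}_+})\to\widetilde{H}^{s-\gamma}_p(\overline{\mathbb{R}_+})$ (Lemma \ref{x-alphaI}); its hypothesis $s>\gamma-1+1/p$ with $\gamma=2\alpha$ is precisely the lower bound in (a). Second, a general $u$ is reduced to this case by writing $u=u_0+u(0)\,r_+\eta$ with $\eta\in C^\infty_0(\mathbb{R})$ and $u_0(0)=0$ (and $u_0'(0)=0$ in (b)), so $e_+u_0\in\widetilde{H}^s_p$; only the fixed function $\eta$ is then handled via the commutator identity \eqref{r+commutatorchim}, using that $\eta(x)-\eta(y)=0$ for $x>0$, $y<0$, $x-y<1$, so the kernel can be cut off away from the diagonal. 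Your commutator approach with an arbitrary extension $v$ is in the same spirit but would still need a genuine $H^{s-2\alpha}_p$ estimate for $-r_+\!\int_{-\infty}^0(v(x)-v(y))m(x-y)\,dy$ in terms of $\|u|H^s_p(\overline{\mathbb{R}_+})\|$; the paper sidesteps this by isolating the boundary contribution into a single one-dimensional piece, which is what makes the argument short.
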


\begin{remark}
The condition that $p=2$ for $\mathcal{A}$ to have a trivial kernel is not as restrictive as it might appear. Under appropriate conditions, we will be able to determine sufficent conditions for $\mathcal{A}$ to have a trivial kernel for any $p$ in the range $1 < p < \infty$, using the result (above) for $p=2$. \\
\end{remark}

\section{Proof of main result}
The proof of the boundedness of the operator $\mathcal{A}$ is given in Lemma \ref{AuHsp}. For $\tfrac{1}{2} < s < 1+ \tfrac{1}{2}$ and $1+ \tfrac{1}{2} < s < 2 + \tfrac{1}{2}$ respectively, Lemmas \ref{BevIvsmall} and \ref{Bevlvlargep2} establish sufficient conditions for $\mathcal{A}$ to have a trivial kernel. \\

\begin{lemma} \label{AuHsp} 
Suppose $1 < p < \infty$ and $0 < \alpha < 1$. If 
\begin{enumerate} [\hspace{18pt}(a)]
\item $2\alpha -1 + 1/p  < s < 1+1/p \text{ then } \mathcal{A}:  H^s_p(\overline{\mathbb{R}_+}) \to H^{s-2\alpha}_p(\overline{\mathbb{R}_+}) \text{ is bounded}$;
\item $1+1/p < s < 2+1/p \text{ then } \mathcal{A}:  H^s_{p,0}(\overline{\mathbb{R}_+}) \to H^{s-2\alpha}_p(\overline{\mathbb{R}_+}) \text{ is bounded}$, 
\end{enumerate} 
where the space $H^s_{p,0}(\overline{\mathbb{R}_+})$ is as defined in \eqref{Hsp0definition}, Section \ref{preamble}.
\end{lemma}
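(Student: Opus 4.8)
The plan is to establish boundedness by writing $\mathcal{A}$ on the half-line in terms of the operators already available, and then controlling each piece on the relevant Bessel potential space. Recall from \eqref{rAef} and \eqref{rBurComm} that, for $u = r_+ v$ with $v$ an extension of $u$,
\begin{equation*}
\mathcal{A}u = r_+ A v + r_+\big([vI,A]\chi_{\mathbb{R}_-}\big),
\end{equation*}
and from \eqref{r+commutatorchim} the commutator term is the integral operator $- r_+\int^0_{-\infty}(v(x)-v(y))\,m(|x-y|)\,dy$. The first step is to dispose of the ``easy'' summand: since $A = \mathcal{F}^{-1}(1+\xi^2)^\alpha\mathcal{F}$ is a classical pseudodifferential operator of order $2\alpha$, it maps $H^s_p(\mathbb{R}) \to H^{s-2\alpha}_p(\mathbb{R})$ boundedly for every $s$, so $v \mapsto r_+Av$ maps $H^s_p(\overline{\mathbb{R}_+}) \to H^{s-2\alpha}_p(\overline{\mathbb{R}_+})$ boundedly once we take the infimum over extensions $v$ in the quotient-space norm \eqref{Hsp+norm}. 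This reduces everything to estimating the commutator/potential term
\begin{equation*}
(\mathcal{K}u)(x) := \int^0_{-\infty}(v(x)-v(y))\,m(|x-y|)\,dy, \qquad x>0.
\end{equation*}

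The second step is to split $\mathcal{K}$ using the structure of $m$ from Remark \ref{Remmy}: $m(w) = w^{-1-2\alpha}\phi_1(w) + \phi_2(w)$ (with the obvious logarithmic modification when $\alpha = \tfrac12$), where $\phi_1,\phi_2$ and their derivatives are bounded and decay like $e^{-w}$. The $\phi_2$-part is an integral operator with a smooth, exponentially decaying kernel and is harmless on any $H^s_p$. The singular part is a restriction-to-$\mathbb{R}_+$, evaluated-on-$\mathbb{R}_-$ piece of the operator with kernel $|x-y|^{-1-2\alpha}$; after writing $v(x)-v(y)$ appropriately this is morally the model operator associated with $r_+(-\Delta)^\alpha\chi_{\mathbb{R}_-}$ from \eqref{FLxlambda}. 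The key observation is that for $x>0$ and $y<0$ the quantity $x-y$ is bounded away from $0$, so there is no singularity \emph{in the integration}; the only potential loss of smoothness is at the endpoint $x=0$, where $(\mathcal{K}u)(x)$ behaves like $x^{-2\alpha}$ times (a multiple of $u(0)$ plus smoother terms) — compare the computation $r_+(-\Delta)^\alpha\chi_{\mathbb{R}_-} = c\,x^{-2\alpha}$. In case (a), $s - 2\alpha < 1 - 2\alpha + 1/p$, and $x^{-2\alpha}$ (localized near $0$) lies in $H^{s-2\alpha}_p(\overline{\mathbb{R}_+})$ precisely because $s-2\alpha < 1/p$ when $2\alpha > 1$, while for $2\alpha \le 1$ one uses the one-sided smoothing. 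In case (b), the constraint $u'(0)=0$ defining $H^s_{p,0}(\overline{\mathbb{R}_+})$ is exactly what is needed: it kills the next term in the boundary expansion so that $\mathcal{K}u$ gains enough regularity to land in $H^{s-2\alpha}_p(\overline{\mathbb{R}_+})$ for $s < 2 + 1/p$. The clean way to organize this is to conjugate by $Z_p$ / take Mellin transforms, turning the $x$-homogeneous leading behavior into a Mellin multiplier and reading off boundedness from a symbol estimate, together with the fact that the remainder is order-$(2\alpha - 1)$ smoothing.

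The third step is to assemble: density of $S(\mathbb{R})$ (equivalently $r_+ S(\mathbb{R})$) in $H^s_p(\overline{\mathbb{R}_+})$ lets us prove the estimate for Schwartz $v$ and then extend by continuity, and in case (b) one checks that the subspace $H^s_{p,0}(\overline{\mathbb{R}_+})$ is closed and that smooth functions vanishing to first order at $0$ are dense in it. Combining the bound on $r_+Av$ with the bound on $r_+\mathcal{K}$ gives $\|\mathcal{A}u \mid H^{s-2\alpha}_p(\overline{\mathbb{R}_+})\| \le C\,\|u\mid H^s_p(\overline{\mathbb{R}_+})\|$ (resp. the $H^s_{p,0}$ norm).

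I expect the main obstacle to be the precise boundary analysis of the singular part of $\mathcal{K}$ near $x=0$: one must extract the leading $x^{-2\alpha}$ (and, in range (b), $x^{1-2\alpha}$) singularity coming from the values of $v$ near the origin, show the remainder is genuinely smoother by the full order $2\alpha$ (or $2\alpha-1$), and verify that the resulting singular functions, cut off near $0$, belong to $H^{s-2\alpha}_p(\overline{\mathbb{R}_+})$ in exactly the stated $s$-ranges — and, crucially for (b), that the condition $u'(0)=0$ is what removes the obstruction. Handling the $\alpha = \tfrac12$ logarithmic case and keeping the extension-operator estimates uniform (so the quotient norm works) are the remaining technical points.
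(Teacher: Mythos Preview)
Your analysis of $\mathcal{K}u$ contains a genuine error. You claim $(\mathcal{K}u)(x) \sim x^{-2\alpha}\cdot u(0)$ near $x=0$, but this is exactly what the commutator structure \emph{prevents}: writing $\mathcal{K}u(x) = v(x)\int^0_{-\infty}m(x-y)\,dy - \int^0_{-\infty}v(y)m(x-y)\,dy$, both pieces contribute $u(0)\cdot c\, x^{-2\alpha}$ to leading order and these cancel. The true leading behaviour depends on the modulus of continuity of $v$ at $0$, not on $v(0)$. Your subsequent assertion that a localized $x^{-2\alpha}$ lies in $H^{s-2\alpha}_p(\overline{\mathbb{R}_+})$ throughout range (a) is also false --- e.g.\ for $\alpha=0.7$, $p=2$, $s=1.4$ one has $s-2\alpha=0$ but $x^{-1.4}\notin L_2$ near $0$ --- so the argument as written cannot close even if the leading term were as you say. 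Proving directly that $\mathcal{K}$ gains the right amount of regularity when $v$ is merely in $H^s_p$ (not Lipschitz, since $s<1+1/p$) is exactly the hard step, and your Mellin sketch does not address it.

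The paper avoids this by a different decomposition. It first proves boundedness on the subspace $r_+\widetilde{H}^s_p(\overline{\mathbb{R}_+})$ using the \emph{direct} representation $\mathcal{A}u = r_+Ae_+u + u\cdot r_+A(\chi_{\mathbb{R}_-})$: on $\widetilde{H}^s_p$ the term $r_+Ae_+$ is harmless, and the potential $r_+A(\chi_{\mathbb{R}_-}) = x^{-2\alpha}\phi(x)+\psi(x)$ (from Lemma \ref{LemAChim} and Remark \ref{Remmy}) is controlled by the Hardy-type inequality of Lemma \ref{x-alphaI}, namely $x^{-2\alpha}I:\widetilde{H}^s_p\to\widetilde{H}^{s-2\alpha}_p$, valid precisely when $s>2\alpha-1+1/p$. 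For $-1+1/p<s<1/p$ this already finishes, since $e_+H^s_p(\overline{\mathbb{R}_+})=\widetilde{H}^s_p(\overline{\mathbb{R}_+})$. For $s>1/p$ one writes $u=u_0+u(0)r_+\eta$ with $\eta\in C^\infty_0(\mathbb{R})$ and $u_0(0)=0$, so $u_0\in r_+\widetilde{H}^s_p$; only the one-dimensional piece $\mathcal{A}(r_+\eta)$ remains, and \emph{here} the commutator formula \eqref{rBurComm} is applied to the smooth function $\eta$, for which $\eta(x)-\eta(y)=0$ whenever $x>0$, $y<0$, $x-y<1$, so the kernel $\psi_1(x-y)m(x-y)$ is smooth and exponentially decaying. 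In case (b) the hypothesis $u'(0)=0$ is exactly what is needed to put $u_0$ into $r_+\widetilde{H}^s_p$ for $1+1/p<s<2+1/p$. The commutator representation is thus invoked only for a single fixed $C^\infty_0$ function, not for general $u$, and this split --- Hardy on $\widetilde{H}^s_p$, commutator only on the trace part --- is the key idea your proposal is missing.
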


\begin{proof}
Suppose initially that $1/p < s < 1+1/p$ or $1 + 1/p < s < 2+1/p$. Our first step is to show that $\mathcal{A}:  r_+\widetilde{H}^s_p(\overline{\mathbb{R}_+}) \to H^{s-2\alpha}_p(\overline{\mathbb{R}_+}) \text{ is bounded}$, and to do this we use the representation for $\mathcal{A}$ given in \eqref{rAef}. \\

Firstly, suppose that $\alpha \not = \tfrac{1}{2}$. Then, from Lemma \ref{LemAChim} and Remark \ref{Remmy}, 
\begin{equation*}
(r_+ A(\chi_{\mathbb{R}_-}))(x) = x^{-2 \alpha} \phi(x) + \psi(x),
\end{equation*}
where $\phi, \psi \in C^\infty(\mathbb{R})$, and their derivatives, are bounded and $O(e^{-x})$ as $x \to \infty$. Hence, from Lemmas \ref{x-alphaI} and \ref{multxgammaa}, $\mathcal{A}$ is bounded from $r_+ \, \widetilde{H}^s_p(\overline{\mathbb{R}_+})$ to $H^{s-2\alpha}_p(\overline{\mathbb{R}_+})$, provided $ s > 2\alpha -1 + 1/p$. \\

On the other hand, if $\alpha = \tfrac{1}{2}$ then again, from Lemma \ref{LemAChim} and Remark \ref{Remmy}, 
\begin{align*}
(r_+ A(\chi_{\mathbb{R}_-}))(x) & = x^{-1} \big \{ \phi(x) + x \, \vartheta(x) \log x \big \} \\
& = x^{-1} \big \{ \phi(x) + \vartheta(x)e^{-x/2} \cdot e^{-x/2} x \, \log x \big \} , 
\end{align*}
where $\phi, \vartheta \in C^\infty(\mathbb{R})$, and their derivatives, are bounded and $O(e^{-x})$ as $x \to \infty$. With the additional use of Lemma \ref{eepslogbounded}, the boundedness of $\mathcal{A}$ from $r_+ \, \widetilde{H}^s_p(\overline{\mathbb{R}_+})$ to $H^{s-1}_p(\overline{\mathbb{R}_+})$ now follows as in the case $\alpha \not = \tfrac{1}{2}$. \\

The case $s < 1/p$ follows similarly, providing that, in our use of Lemma \ref{x-alphaI}, we note the constraint that $s > 2\alpha -1 + 1/p$. Also, if $s<0$ and hence $\alpha < \tfrac{1}{2}$, we use Theorem 4.2.2(ii), p. 203, \cite{Tr92} in place of Lemma \ref{multxgammaa}. But for $-1 + 1/p < s < 1/p$, we can identify $e_+ \, {H}^s_p(\overline{\mathbb{R}_+})$ with $\widetilde{H}^s_p(\overline{\mathbb{R}_+})$, see Section 2.8.7, p. 158, \cite{Tr83}, and the proof for $2\alpha -1 + 1/p  < s < 1/p$ is thus complete. \\

It remains to consider the case $s \geq 1/p$. \\

We let $\eta(x) \in C^\infty_0(\mathbb{R})$ be such that
\begin{equation*}
\eta(x) = 
\begin{cases} 
	1 &\mbox{if } |x| \leq 1 \\ 
	0 & \mbox{if } |x|>2.
\end{cases} 
\end{equation*}
Suppose $u \in H^s_p(\overline{\mathbb{R}_+})$ or $u \in H^s_{p,0}(\overline{\mathbb{R}_+})$, as $1/p < s < 1+1/p$ or $1 + 1/p < s < 2+1/p$ respectively. Then we can define
\begin{equation*}
u_0(x) := u(x) - u(0) r_+ \eta(x),
\end{equation*}
and hence write
\begin{equation*}
u(x) = u_0(x) + u(0) r_+ \eta(x).
\end{equation*}

Then, by construction, $u_0(0)=0$. Moreover, if we assume $u'(0)=0$, then $u'_0(0)=0$. Therefore, see, for example, Lemma 1.15, p.\,55, \cite{Shar}, we have  $u_0 \in r_+ \widetilde{H}^s_p(\overline{\mathbb{R}_+})$. Hence, it remains to consider $\mathcal{A}$ acting on $r_+ \eta$. \\

But, from equation \eqref{rBurComm}, it is therefore enough to show that 
$r_+ [\eta I,A]\chi_{\mathbb{R}_-}$ is bounded from the one-dimensional subspace of $H^s_p(\overline{\mathbb{R}_+})$, or $H^s_{p,0}(\overline{\mathbb{R}_+})$ if $1+1/p < s < 2 +1/p$, spanned by $\eta$ to $H^{s-2\alpha}_p(\overline{\mathbb{R}_+})$.\\

Let $\psi_1$ be any smooth function defined on $\mathbb{R}$ such that $\psi_1(x) = 0$ if $x \leq \tfrac{1}{2}$, and $\psi_1(x) = 1$ if $x \geq 1$. From equation \eqref{r+commutatorchim}, for $x >0$ we have
\begin{align*}
r_+ [\eta I,A]\chi_{\mathbb{R}_-}(x) &= -r_+ \int^0_{-\infty} \big(\eta(x)-\eta(y) \big) m(x-y) \,dy \\
&= -r_+ \int^0_{-\infty} \big(\eta(x)-\eta(y) \big) \psi_1(x-y) \, m(x-y) \,dy, 
\end{align*}
since $\eta(x) - \eta(y) = 1-1=0$ if $x-y <1$. (Indeed,  $x >0, y < 0$ and $x-y < 1$ implies that $0 < x <1$ and $-1 < y < 0$.) \\

We note that $\psi_1(x) m(x)$ is smooth on $\mathbb{R}_+$ and decays exponentially as $x \to \infty$. Hence, $r_+ [\eta I,A]\chi_{\mathbb{R}_-} \in H^{s-2\alpha}_p(\overline{\mathbb{R}_+})$ as required. Finally, boundedness follows immediately since the linear operator $r_+ [\eta I,A]\chi_{\mathbb{R}_-}$ is defined on a one-dimensional space. This completes the proof for the ranges $1/p < s < 1+1/p$ and $1/p < s < 2 +1/p$.\\

Finally, to complete the proof of the lemma, we note that boundedness for the exceptional value $s = 1/p$ follows directly by interpolation. See, for example, Chapter 1, \cite{Tr83}.\\
\end{proof}

\begin{remark} \label{zeroderivapprox}
Suppose $1 < p < \infty$ and $1 + 1/p < s < 2 + 1/p$. Then, from the proof of Lemma \ref{AuHsp}, given any $u \in  H^s_{p,0}(\overline{\mathbb{R}_+})$  we can write
\begin{equation*}
u = u_0 + u(0) r_+ \eta,
\end{equation*}
where $e_+ u_0 \in \widetilde{H}^s_{p}(\overline{\mathbb{R}_+})$ and $\eta \in C^\infty_0(\mathbb{R})$, with $\eta'(0)=0$. \\

Since $e_+ C^\infty_0(\mathbb{R}_+)$ is dense in $\widetilde{H}^s_{p}(\overline{\mathbb{R}_+})$, see Section 2.10.3, p.\,231, \cite{Tr}, this allows us to approximate $u$ arbitrarily closely by a sequence $\{ u_n \}^\infty_{n=1} \subset r_+  C^\infty_0(\mathbb{R})$ with $u_n(0) = u(0)$ and, importantly, ${u}_n'(0)=0$ for each $n$. \\
\end{remark}

\begin{remark} \label{FunctionalIequivalent}
From Lemma \ref{Lemmexplicit}, we have the following explicit representation
\begin{equation*}
m(y) = \dfrac{\alpha}{\Gamma(1-\alpha)} \, \dfrac{2^{\frac{1}{2}+\alpha}}{\sqrt{\pi}} |y|^{-\frac{1}{2}-\alpha} K_{\frac{1}{2}+\alpha}(|y|).
\end{equation*}

Now for any $u \in H^\alpha_2 (\overline{\mathbb{R}_+})$, we define the functional
\begin{equation} \label{Ivdefn}
I(u) := \bigg \{ \int^\infty_0 |u(x)|^2 \, dx + \tfrac{1}{2} \int^\infty_0 \int^\infty_0 |u(x)-u(y)|^2 m(|x-y|) \, dydx \bigg \}^{\tfrac{1}{2}}.
\end{equation} 

From Remark 4.2, p.\,62, \cite{Es}, 
\begin{equation*}
\| u \|^{+}_{\alpha,2} := \bigg \{ \int^\infty_0 |u(x)|^2 \, dx + \int^\infty_0 \int^\infty_0 \dfrac{|u(x)-u(y)|^2}{|x-y|^{1+2\alpha}} \, dydx \bigg \}^{\tfrac{1}{2}},
\end{equation*}
is an equivalent norm on $H^\alpha_2 (\overline{\mathbb{R}_+})$. Moreover, see Remark \ref{Remmy}, we have
\begin{equation*}
I(u) \leq \text{ const } \| u \|^+_{\alpha,2}.
\end{equation*}

It is easy to show that $I(\cdot)$ is, in fact, a norm on $H^\alpha_2 (\overline{\mathbb{R}_+})$ for $0 < \alpha < 1$. \\
\end{remark}

\begin{lemma} \label{Iruconv}
Suppose $0 < \alpha < 1$ and $u \in {H}^\alpha_2(\overline{\mathbb{R}_+})$. Further let the sequence $\{ u_n \}_{n \geq 1}$ in $r_+ S(\mathbb{R})$ be such that
\begin{equation*} 
\| u_n - u \|^+_{\alpha,2} \to 0 \quad \text{as } n \to \infty.
\end{equation*}
Then
\begin{equation*} 
\lim_{n \to \infty} I(u_n) = I(u). 
\end{equation*}
\end{lemma}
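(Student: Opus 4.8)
The plan is to show that $I(\cdot)$ is continuous with respect to the norm $\|\cdot\|^+_{\alpha,2}$ on ${H}^\alpha_2(\overline{\mathbb{R}_+})$, and then the conclusion is immediate. The key observation, already recorded in Remark \ref{FunctionalIequivalent}, is that $I$ is a norm on ${H}^\alpha_2(\overline{\mathbb{R}_+})$ and that $I(w) \leq \mathrm{const}\,\|w\|^+_{\alpha,2}$ for all $w$ in this space; this bound follows from Remark \ref{Remmy}, since $m(|x-y|) \leq \mathrm{const}\,|x-y|^{-1-2\alpha}$ for $|x-y|$ bounded and $m$ decays exponentially for large arguments, so the double integral defining $I(w)^2$ is dominated by a constant multiple of the Gagliardo-type double integral in $\|w\|^+_{\alpha,2}$ (the $\int |w|^2$ terms agree).

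First I would note that because $I$ is a norm, it satisfies the reverse triangle inequality: for all $u_n$ and $u$ in ${H}^\alpha_2(\overline{\mathbb{R}_+})$,
\begin{equation*}
\bigl| I(u_n) - I(u) \bigr| \leq I(u_n - u).
\end{equation*}
This step uses that $r_+ S(\mathbb{R}) \subset {H}^\alpha_2(\overline{\mathbb{R}_+})$, so that $u_n - u$ indeed lies in the space on which $I$ is defined and is a norm. Then I would apply the boundedness estimate from Remark \ref{FunctionalIequivalent} to $w = u_n - u$ to get
\begin{equation*}
\bigl| I(u_n) - I(u) \bigr| \leq I(u_n - u) \leq \mathrm{const}\,\|u_n - u\|^+_{\alpha,2}.
\end{equation*}
By hypothesis the right-hand side tends to $0$ as $n \to \infty$, which gives $I(u_n) \to I(u)$.

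The only genuine point requiring care — and hence the main obstacle — is justifying that $I$ is well-defined and finite on all of ${H}^\alpha_2(\overline{\mathbb{R}_+})$ (not merely on $r_+ S(\mathbb{R})$) and that it is genuinely a norm there, i.e. the facts asserted in Remark \ref{FunctionalIequivalent}; once these are in hand the argument is the routine reverse-triangle-inequality estimate above. Finiteness follows from the domination of $m(|x-y|)$ by $|x-y|^{-1-2\alpha}$ near the diagonal together with the exponential decay at infinity, so that $I(w) \leq \mathrm{const}\,\|w\|^+_{\alpha,2} < \infty$; positive definiteness and the triangle inequality for $I$ follow because $m$ is (the restriction to the diagonal distance of) a positive-definite kernel, being the kernel in the Lévy–Khinchine representation of the continuous negative definite function $(1+\xi^2)^\alpha$ established in Lemma \ref{LemDefm}, so that $(u,v) \mapsto \tfrac12\iint (u(x)-u(y))\overline{(v(x)-v(y))}\,m(|x-y|)\,dy\,dx$ together with the $L_2$ inner product defines an inner product whose induced norm is $I$. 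I would present these verifications only to the extent that Remark \ref{FunctionalIequivalent} has not already absorbed them, and then close with the displayed chain of inequalities.
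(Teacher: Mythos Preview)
Your proposal is correct and matches the paper's proof essentially line for line: the paper simply writes $|I(u_n) - I(u)| \leq I(u_n - u) \leq \text{const}\,\|u_n - u\|^+_{\alpha,2} \to 0$, citing Remark \ref{FunctionalIequivalent} for both the reverse triangle inequality and the domination bound. Your additional discussion justifying the facts in Remark \ref{FunctionalIequivalent} is extra elaboration the paper does not include, but is not needed since the paper treats that remark as already established.
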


\begin{proof}
From Remark \ref{FunctionalIequivalent}, 
\begin{equation*}
|  I(u_n) - I(u) | \leq I( u_n - u) \leq \text{ const } \|   u_n - u \|^+_{\alpha,2} \to 0 \quad \text{as} \,\, n \to \infty.
\end{equation*}
Therefore,
\begin{equation*}
\lim_{n \to \infty} I( u_n) = I(u). 
\end{equation*} 
\end{proof}

\begin{lemma} \label{BevIvsmall}
Suppose $0 < \alpha < \tfrac{1}{2}, \, \tfrac{1}{2}  < s < 1 +\tfrac{1}{2}$ and $u \in H^s_2 (\overline{\mathbb{R}_+})$.  Then
\begin{equation*}
(\mathcal{A}u,  u) = (I(u))^2.
\end{equation*}
In particular, if $u \in \operatorname{Ker} \mathcal{A}$ then $u=0$.
\end{lemma}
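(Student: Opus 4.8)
The plan is to prove the identity $(\mathcal{A}u, u) = (I(u))^2$ first for $u$ in the dense subspace $r_+ S(\mathbb{R})$, and then extend it to all of $H^s_2(\overline{\mathbb{R}_+})$ by a density-and-continuity argument. For $u = r_+ v$ with $v \in S(\mathbb{R})$, Lemma \ref{AvvIv2} already gives exactly
\[
(\mathcal{A}u, u) = \int^\infty_0 |u|^2 \, dx + \tfrac{1}{2} \int^\infty_0 \int^\infty_0 |u(x)-u(y)|^2 m(|x-y|) \, dy\, dx = (I(u))^2,
\]
so on the dense set there is nothing more to do. The substance of the proof is the passage to a general $u \in H^s_2(\overline{\mathbb{R}_+})$.

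First I would fix $u \in H^s_2(\overline{\mathbb{R}_+})$ with $\tfrac{1}{2} < s < 1 + \tfrac{1}{2}$ and choose a sequence $\{u_n\} \subset r_+ S(\mathbb{R})$ with $u_n \to u$ in $H^s_2(\overline{\mathbb{R}_+})$ (possible since $S(\mathbb{R})$ is dense in $H^s_2(\mathbb{R})$ and the restriction map is onto). Since $s > \alpha$, we also have $u_n \to u$ in $H^\alpha_2(\overline{\mathbb{R}_+})$, hence $\|u_n - u\|^+_{\alpha,2} \to 0$ by the norm equivalence recorded in Remark \ref{FunctionalIequivalent}. Lemma \ref{Iruconv} then gives $I(u_n) \to I(u)$, so the right-hand side converges. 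For the left-hand side, by Theorem \ref{thmabddtrivker}(a) the operator $\mathcal{A}: H^s_2(\overline{\mathbb{R}_+}) \to H^{s-2\alpha}_2(\overline{\mathbb{R}_+})$ is bounded (here $2\alpha - 1 + 1/2 < s$ holds because $\alpha < 1/2$ gives $2\alpha - 1/2 < 1/2 < s$), so $\mathcal{A}u_n \to \mathcal{A}u$ in $H^{s-2\alpha}_2(\overline{\mathbb{R}_+})$; combined with $u_n \to u$ in $H^{2\alpha - s}_2(\overline{\mathbb{R}_+})$ (the dual exponent, since $s - 2\alpha \geq 0$ need not hold but the duality pairing $H^{s-2\alpha}_2 \times H^{2\alpha-s}_2 \to \mathbb{C}$ is still well defined and $u_n \to u$ there because $s \geq 2\alpha - s$ fails in general — so instead I pair in $L_2$ when $s \geq \alpha$ suffices, i.e. use that both $\mathcal{A}u_n$ and $u_n$ converge in $L_2$ since $s - 2\alpha > -1/2$ and $s > 1/2$ respectively, guaranteeing the $L_2$ inner product passes to the limit). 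Hence $(\mathcal{A}u_n, u_n) \to (\mathcal{A}u, u)$, and the identity is preserved in the limit.

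The last assertion is then immediate: if $u \in \operatorname{Ker}\mathcal{A}$ then $(\mathcal{A}u, u) = 0$, so $(I(u))^2 = 0$; since $I(\cdot)$ is a norm on $H^\alpha_2(\overline{\mathbb{R}_+})$ for $0 < \alpha < 1$ (noted in Remark \ref{FunctionalIequivalent}), this forces $u = 0$.

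The main obstacle I anticipate is bookkeeping the function spaces in which the two factors of the inner product $(\mathcal{A}u_n, u_n)$ converge, so that the bilinear pairing genuinely passes to the limit: one needs a pairing in which $\mathcal{A}u_n \to \mathcal{A}u$ and $u_n \to u$ simultaneously. The clean way is to observe that for $\tfrac{1}{2} < s < 1+\tfrac12$ and $0<\alpha<\tfrac12$ we have $s - 2\alpha > -\tfrac12$ and $s > \tfrac12 > 0$, so both sequences converge in $L_2(\mathbb{R}_+)$ (restrictions of $H^t_2$ functions with $t > -1/2$ embed suitably, and in particular $\mathcal{A}u_n \to \mathcal{A}u$ in $L_2$ when $s-2\alpha\ge 0$, while for $s - 2\alpha < 0$ one instead keeps the pairing in the $H^{s-2\alpha}_2(\overline{\mathbb{R}_+})$–$\widetilde H^{2\alpha-s}_2(\overline{\mathbb{R}_+})$ duality and checks $e_+u_n \to e_+u$ there using $2\alpha - s < \tfrac12$). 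Sorting out which of these two cases applies, and invoking the right density/embedding statement from \cite{Tr83} in each, is the only delicate point; everything else is a direct appeal to Lemma \ref{AvvIv2}, Lemma \ref{Iruconv}, and the fact that $I$ is a norm.
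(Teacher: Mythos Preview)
Your approach is correct and follows the same density-plus-continuity template as the paper, but the paper handles the one point you flag as delicate---the convergence of the bilinear form $(\mathcal{A}u_n,u_n)$---in a cleaner way that avoids your case split. Rather than using the boundedness $\mathcal{A}:H^s_2\to H^{s-2\alpha}_2$ and then worrying about whether $s-2\alpha$ is nonnegative, the paper observes that because $0<\alpha<\tfrac12$ one has $\alpha>2\alpha-1+\tfrac12$, so Lemma~\ref{AuHsp} also gives boundedness at the symmetric level $\mathcal{A}:H^{\alpha}_2(\overline{\mathbb{R}_+})\to H^{-\alpha}_2(\overline{\mathbb{R}_+})$. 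The pairing $|(f,g)_{\mathbb{R}_+}|\le\text{const}\,\|f\|^+_{-\alpha,2}\|g\|^+_{\alpha,2}$ (via Plancherel and $e_+$, since $|\alpha|<\tfrac12$) then makes $(\mathcal{A}u_n,u_n)\to(\mathcal{A}u,u)$ immediate from $u_n\to u$ in $H^\alpha_2(\overline{\mathbb{R}_+})$ alone, with no case analysis. The paper also approximates in $H^\alpha_2$ by $C^\infty_0(\mathbb{R}_+)$ (density holds precisely because $\alpha<\tfrac12$), whereas you approximate in $H^s_2$ by $r_+S(\mathbb{R})$; both work, but the paper's choice dovetails with its use of the $H^\alpha_2$--$H^{-\alpha}_2$ pairing. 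Your route is valid once you fill in the $H^{s-2\alpha}_2$--$\widetilde H^{2\alpha-s}_2$ duality details (and indeed $0<2\alpha-s<\tfrac12$ in the negative case, so $e_+$ behaves well), but the paper's trick of dropping from $H^s_2$ to $H^\alpha_2$ before pairing is worth noting.
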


\begin{proof}
Since $s > \tfrac{1}{2} > \alpha $, we have the continuous embedding
\begin{equation*}
H^s_2(\mathbb{R}) \hookrightarrow H^\alpha_2(\mathbb{R}).
\end{equation*} 

Moreover, as $0 < \alpha < \tfrac{1}{2}$ we have $\alpha > 2\alpha - 1 + \tfrac{1}{2}$ and thus from Lemma \ref{AuHsp}, 
\begin{equation*}
\mathcal{A}: {H}^{\alpha}_2 (\overline{\mathbb{R}_+}) \to {H}^{-\alpha}_2 (\overline{\mathbb{R}_+})
\end{equation*}
is bounded. \\

Let $f \in H^{-\alpha}_2(\overline{\mathbb{R}_+}), \, g \in H^{\alpha}_2(\overline{\mathbb{R}_+})$ then, from Plancherel and Cauchy-Schwartz, we have the estimate
\begin{equation*}
|(f, g)_{\mathbb{R}_+}| = |(e_+ f, e_+ g)_\mathbb{R}| \leq \| e_+ f \|_{-\alpha, 2} \| e_+ g \|_{\alpha, 2} \leq \text{ const } \| f \|^+_{-\alpha, 2} \|  g \|^+_{\alpha, 2}. \\
\end{equation*}

Since $0< \alpha < \tfrac{1}{2}, \,\, C^\infty_0(\mathbb{R}_+)$ is dense in ${H}^\alpha_2(\overline{\mathbb{R}_+})$, see Section 2.9.3, p.\,220, \cite{Tr}, and there exists a sequence $\{ u_n \}_{n\geq 1}$ in $C^\infty_0(\mathbb{R}_+)$ such that
\begin{equation*} 
\| u_n - u \|^+_{\alpha,2} \to 0 \quad \text{as } n \to \infty.
\end{equation*}

Hence 
\begin{align*}
(\mathcal{A}u,u) -(\mathcal{A} u_n, u_n) & = (\mathcal{A}(u- u_n),u) +(\mathcal{A} u_n, u-  u_n)\\
& \to 0 \quad \text{as } n \to \infty. 
\end{align*}

That is, 
\begin{equation*} 
\lim_{n \to \infty} (\mathcal{A}  u_n,   u_n ) = (\mathcal{A}u,  u),
\end{equation*}
and from Lemma \ref{Iruconv},
\begin{equation*} 
\lim_{n \to \infty} I( u_n) = I(u). 
\end{equation*} 

Hence, from Lemma \ref{AvvIv2},
\begin{equation*}
(\mathcal{A}u,u)= (I(u))^2.
\end{equation*} 
Finally, if $\mathcal{A} u =  0$ then, see Remark \ref{FunctionalIequivalent}, we have $u=0$. \\
\end{proof}

\begin{lemma} \label{Bevlvlargep2}
Suppose $0<  \alpha < 1, \,  1 + \tfrac{1}{2} <  s < 2 +\tfrac{1}{2}$ and $u \in H^s_{2,0} (\overline{\mathbb{R}_+})$. Then
\begin{equation*}
(\mathcal{A}u,  u) = (I(u))^2.
\end{equation*}
In particular, if $u \in \operatorname{Ker} \mathcal{A}$ then $u=0$.
\end{lemma}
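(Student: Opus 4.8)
The plan is to follow the proof of Lemma~\ref{BevIvsmall}: check the identity on smooth test functions by means of Lemma~\ref{AvvIv2}, and then pass to the limit by density. The new feature in the higher-regularity range is that, since $\alpha$ may now exceed $\tfrac12$, the operator $\mathcal{A}$ need not be bounded on $H^\alpha_2(\overline{\mathbb{R}_+})$, so the $H^\alpha_2$--$H^{-\alpha}_2$ duality used in Lemma~\ref{BevIvsmall} is no longer available; instead one must argue within $H^s_{2,0}(\overline{\mathbb{R}_+})$ and its dual pairing.

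First I would make precise what $(\mathcal{A}u,u)$ means. Since $s>1+\tfrac12$ and $\alpha<1$ we have $2\alpha-s<\tfrac12$, so for every $u\in H^s_2(\overline{\mathbb{R}_+})$ the zero-extension $e_+u$ --- a function in $L_2(\mathbb{R}_+)$ with at most a jump at the origin --- lies in $\widetilde{H}^{2\alpha-s}_2(\overline{\mathbb{R}_+})$, and $u\mapsto e_+u$ is bounded from $H^s_2(\overline{\mathbb{R}_+})$ into $\widetilde{H}^{2\alpha-s}_2(\overline{\mathbb{R}_+})$: when $2\alpha-s\le 0$ this follows from $H^s_2(\overline{\mathbb{R}_+})\hookrightarrow L_2(\mathbb{R}_+)$ and $L_2(\mathbb{R})\hookrightarrow H^{2\alpha-s}_2(\mathbb{R})$, and when $-\tfrac12<2\alpha-s<\tfrac12$ from the identification $\widetilde{H}^{2\alpha-s}_2(\overline{\mathbb{R}_+})=e_+H^{2\alpha-s}_2(\overline{\mathbb{R}_+})$ together with $H^s_2(\overline{\mathbb{R}_+})\hookrightarrow H^{2\alpha-s}_2(\overline{\mathbb{R}_+})$. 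Using the standard duality $\big(H^{s-2\alpha}_2(\overline{\mathbb{R}_+})\big)'=\widetilde{H}^{2\alpha-s}_2(\overline{\mathbb{R}_+})$, one sets $(\mathcal{A}u,u):=\langle \mathcal{A}u,e_+u\rangle$; this coincides with the $L_2(\mathbb{R}_+)$ inner product whenever both entries are $L_2$ functions, and obeys $|(\mathcal{A}u,u)|\le C\,\|\mathcal{A}u\,|\,H^{s-2\alpha}_2(\overline{\mathbb{R}_+})\|\,\|u\,|\,H^s_2(\overline{\mathbb{R}_+})\|$. By Lemma~\ref{AuHsp}(b) the map $\mathcal{A}:H^s_{2,0}(\overline{\mathbb{R}_+})\to H^{s-2\alpha}_2(\overline{\mathbb{R}_+})$ is bounded, hence $u\mapsto(\mathcal{A}u,u)$ is a continuous quadratic functional on $H^s_{2,0}(\overline{\mathbb{R}_+})$.

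Next, by Remark~\ref{zeroderivapprox} I would take a sequence $u_n=r_+v_n$ with $v_n\in C^\infty_0(\mathbb{R})\subset S(\mathbb{R})$, $u_n(0)=u(0)$, $u_n'(0)=0$, and $u_n\to u$ in $H^s_2(\overline{\mathbb{R}_+})$; then each $u_n$ lies in $H^s_{2,0}(\overline{\mathbb{R}_+})$ and, by Lemma~\ref{AvvIv2}, $(\mathcal{A}u_n,u_n)=(I(u_n))^2$. On the left, continuity of the quadratic functional gives $(\mathcal{A}u_n,u_n)\to(\mathcal{A}u,u)$. On the right, $s>1+\tfrac12>\alpha$ yields the embedding $H^s_2(\overline{\mathbb{R}_+})\hookrightarrow H^\alpha_2(\overline{\mathbb{R}_+})$, so $\|u_n-u\|^+_{\alpha,2}\to 0$ by the norm equivalence recalled in Remark~\ref{FunctionalIequivalent}, and hence $I(u_n)\to I(u)$ by Lemma~\ref{Iruconv}. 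Passing to the limit gives $(\mathcal{A}u,u)=(I(u))^2$. Finally, if $u\in\operatorname{Ker}\mathcal{A}$ then $(I(u))^2=0$, and since $I(\cdot)$ is a norm on $H^\alpha_2(\overline{\mathbb{R}_+})$ (Remark~\ref{FunctionalIequivalent}) we conclude $u=0$.

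The main obstacle --- indeed the only genuine difference from the proof of Lemma~\ref{BevIvsmall} --- is the bookkeeping of the first step: one cannot simply reuse the $H^\alpha_2$--$H^{-\alpha}_2$ pairing, and must instead locate $e_+u$ in $\widetilde{H}^{2\alpha-s}_2(\overline{\mathbb{R}_+})$ and control the $H^{s-2\alpha}_2(\overline{\mathbb{R}_+})$--$\widetilde{H}^{2\alpha-s}_2(\overline{\mathbb{R}_+})$ duality, which is exactly where the hypothesis $s>1+\tfrac12$ (equivalently $2\alpha-s<\tfrac12$) is used. Everything else is a routine transcription of the argument for Lemma~\ref{BevIvsmall}.
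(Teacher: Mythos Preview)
Your proof is correct and follows essentially the same strategy as the paper: approximate $u$ by the sequence $u_n\in r_+C_0^\infty(\mathbb{R})$ supplied by Remark~\ref{zeroderivapprox}, invoke Lemma~\ref{AvvIv2} on each $u_n$, and pass to the limit on both sides using Lemma~\ref{Iruconv} on the right and a suitable duality estimate on the left.

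The only difference is in the bookkeeping of that duality. You work directly with the pairing $H^{s-2\alpha}_2(\overline{\mathbb{R}_+})\times\widetilde{H}^{2\alpha-s}_2(\overline{\mathbb{R}_+})$, using $2\alpha-s<\tfrac12$ to place $e_+u$ in the second factor. The paper instead introduces an auxiliary exponent $\beta\in[0,\tfrac12)$ --- namely $\beta=\alpha$ when $\alpha<\tfrac12$ and $\beta=\alpha-\tfrac12$ when $\alpha\ge\tfrac12$ --- and controls $(\mathcal{A}u_n,u_n)\to(\mathcal{A}u,u)$ via the embeddings $H^s_{2,0}(\overline{\mathbb{R}_+})\hookrightarrow H^\beta_2(\overline{\mathbb{R}_+})$ and $H^{s-2\alpha}_2(\overline{\mathbb{R}_+})\hookrightarrow H^{-\beta}_2(\overline{\mathbb{R}_+})$ together with the estimate $|(f,g)_{\mathbb{R}_+}|\le\mathrm{const}\,\|f\|^+_{-\beta,2}\|g\|^+_{\beta,2}$. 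Your route avoids the case split on $\alpha$ and is arguably cleaner; the paper's route keeps the exponent safely inside $(-\tfrac12,\tfrac12)$ so that the $e_+$ identification is the standard one on both sides. Either way the substance is identical.
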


\begin{proof}
For $0 < \alpha <1$, we define
\begin{equation*}
\beta := 
\begin{cases} 
	\alpha &\mbox{if } 0 < \alpha < \tfrac{1}{2} \\ 
	\alpha - \tfrac{1}{2} & \mbox{if } \tfrac{1}{2} \leq \alpha < 1
\end{cases} 
\end{equation*}
so that $0 \leq \beta < \tfrac{1}{2}$.  As previously, if $f \in H^{-\beta}_2(\overline{\mathbb{R}_+}), \, g \in H^{\beta}_2(\overline{\mathbb{R}_+})$ then, from Plancherel and Cauchy-Schwartz, we have the estimate
\begin{align*}
|(f, g)_{\mathbb{R}_+}| &= |(e_+ f, e_+ g)_\mathbb{R}| \leq \| e_+ f \|_{-\beta, 2} \| e_+ g \|_{\beta, 2} \leq \text{ const } \|  f \|^+_{-\beta, 2} \| g \|^+_{\beta, 2}. 
\end{align*} 

Moreover, since $0 < \alpha < 1$ and $1 +\tfrac{1}{2} < s < 2 +\tfrac{1}{2}$ from Lemma \ref{AuHsp}, the operator
\begin{equation*}
\mathcal{A}: {H}^{s}_{2,0} (\overline{\mathbb{R}_+}) \to {H}^{s - 2\alpha}_2 (\overline{\mathbb{R}_+})
\end{equation*}
is bounded. In addition, ${H}^{s}_{2,0} (\overline{\mathbb{R}_+}) \hookrightarrow {H}^\beta_2 (\overline{\mathbb{R}_+})$ and ${H}^{s-2\alpha}_2 (\overline{\mathbb{R}_+}) \hookrightarrow {H}^{-\beta}_2 (\overline{\mathbb{R}_+})$.\\

From Remark \ref{zeroderivapprox}, there exists a sequence $\{ u_n : u_n(0) = u(0), \,\, u'_n(0) = 0 \}_{n\geq 1}$ in $r_+  C^\infty_0(\mathbb{R})$ such that
\begin{equation*} 
\| u_n - u \|^+_{s,2} \to 0 \quad \text{as } n \to \infty.
\end{equation*}
Therefore, as ${H}^{s}_2 (\overline{\mathbb{R}_+}) \hookrightarrow {H}^\beta_2 (\overline{\mathbb{R}_+})$,
\begin{equation*} 
\| u_n - u \|^+_{\beta, 2}\to 0 \quad \text{as } n \to \infty.
\end{equation*}

Hence 
\begin{align*}
(\mathcal{A}u,u) -(\mathcal{A} u_n,  u_n) & = (\mathcal{A}(u- u_n),u) +(\mathcal{A} u_n, u-  u_n)\\
& \to 0 \quad \text{as } n \to \infty. 
\end{align*}

That is, 
\begin{equation*} 
\lim_{n \to \infty} (\mathcal{A} u_n,  u_n ) = (\mathcal{A}u,  u),
\end{equation*}
and, since ${H}^{s}_2 (\overline{\mathbb{R}_+}) \hookrightarrow {H}^{\alpha}_2 (\overline{\mathbb{R}_+})$, from Lemma \ref{Iruconv},
\begin{equation*} 
\lim_{n \to \infty} I(u_n) = I(u). 
\end{equation*} 

Hence, from Lemma \ref{AvvIv2},
\begin{equation*}
(\mathcal{A}u,u)= (I(u))^2.
\end{equation*} 
Finally, if $\mathcal{A} u =  0$ then, see Remark \ref{FunctionalIequivalent}, we have $u=0$. \\
\end{proof}

\section{Supporting lemmas}
\begin{lemma} \label{x-alphaI}
Let $\gamma > 0$ and $s > \gamma + 1/p -1$. Then the multiplication operator
\begin{equation*}
x^{-\gamma}I : \widetilde{H}^s_p(\overline{\mathbb{R}_+}) \to \widetilde{H}^{s-\gamma}_p(\overline{\mathbb{R}_+})
 \quad \text{is bounded}.
\end{equation*}
\end{lemma}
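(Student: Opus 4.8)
The plan is to establish the mapping property of $x^{-\gamma}I$ by interpolation and duality, reducing to the two endpoint-type cases where either $s \geq \gamma$ (so the target smoothness is nonnegative and multiplication by a power with a controlled singularity at the origin is handled directly) or $s$ is small (so one works with the dual statement). First I would recall that $\widetilde{H}^s_p(\overline{\mathbb{R}_+})$ is the closure of $C^\infty_0(\mathbb{R}_+)$ in $H^s_p(\mathbb{R})$, so it suffices to prove the estimate
\begin{equation*}
\| x^{-\gamma} \varphi \, | \, H^{s-\gamma}_p(\mathbb{R}) \| \leq C \, \| \varphi \, | \, H^s_p(\mathbb{R}) \|, \qquad \varphi \in C^\infty_0(\mathbb{R}_+),
\end{equation*}
with $C$ independent of $\varphi$, since $x^{-\gamma}\varphi$ then again has compact support in $\mathbb{R}_+$ and lies in the appropriate $\widetilde H$ space by density.

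The main step is a Hardy-type inequality in Bessel-potential scale. For integer $\gamma = k$ one has the classical fact that $x^{-k} : \widetilde{H}^s_p(\overline{\mathbb{R}_+}) \to \widetilde{H}^{s-k}_p(\overline{\mathbb{R}_+})$ is bounded precisely when $s - 1/p > k - 1$, i.e. $s > k + 1/p - 1$; this is a Hardy inequality iterated $k$ times, valid because functions in $\widetilde H^s_p$ with $s > k-1+1/p$ vanish to order $k$ at $0$ in a trace sense. For general real $\gamma > 0$ I would interpolate: pick integers $k_0 < \gamma < k_1$ and Sobolev exponents $s_0, s_1$ with $s_i > k_i + 1/p - 1$ arranged so that $(\gamma, s)$ lies on the segment joining $(k_0, s_0)$ and $(k_1, s_1)$, then apply complex interpolation of the family of weighted spaces — here it is cleaner to fix $s$ and interpolate the operators $x^{-k_0}I$ and $x^{-k_1}I$ between $\widetilde H^{s}_p$ and the respective targets, using that $x^{-\gamma} = (x^{-k_0})^{1-\theta}(x^{-k_1})^{\theta}$ is the corresponding interpolated multiplier. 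Alternatively, and perhaps more transparently, one can cite Triebel's results on multiplication by $x_+^{-\gamma}$ (e.g. Chapter 2 of \cite{Tr83} or Theorem 4.2.2 of \cite{Tr92}, both already invoked elsewhere in the paper) which give exactly the stated range $s > \gamma - 1 + 1/p$.

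The hard part will be pinning down the exact threshold $s > \gamma + 1/p - 1$ and verifying it is sharp enough for the later applications: the borderline behaviour comes from the trace/vanishing order of $\widetilde H^s_p$ functions at the origin, and one must check that for $s$ in this range the product $x^{-\gamma}\varphi$ genuinely lands in $\widetilde H^{s-\gamma}_p$ rather than merely $H^{s-\gamma}_p(\overline{\mathbb{R}_+})$ — this uses that multiplication by a function supported away from $0$ combined with the Hardy estimate near $0$ preserves the support condition. For the sub-range where $s - \gamma < 0$ I would argue by duality, using that the dual of $\widetilde H^{\sigma}_p(\overline{\mathbb{R}_+})$ is $H^{-\sigma}_{p'}(\overline{\mathbb{R}_+})$ and that the transpose of $x^{-\gamma}I$ is again multiplication by $x^{-\gamma}$, so the estimate transfers with the roles of the spaces reversed; the condition $s > \gamma - 1 + 1/p$ is symmetric under the relevant duality and so is preserved. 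Finally I would assemble the pieces: the near-origin Hardy bound, the harmless multiplication away from the origin, density of $C^\infty_0(\mathbb{R}_+)$, and interpolation/duality to cover all real $\gamma > 0$ and all admissible $s$.
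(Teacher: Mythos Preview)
Your overall architecture (Hardy-type base case, interpolation, duality for the low range) matches the paper's, but the specific interpolation scheme you propose has a gap. Standard complex interpolation applies to a \emph{single} operator bounded at two endpoints; it does not let you interpolate between two \emph{different} operators $x^{-k_0}I$ and $x^{-k_1}I$ to produce a third one $x^{-\gamma}I$. To make an argument of that shape rigorous you would need Stein's analytic-family interpolation for $z\mapsto x^{-z}$, and then you must control the purely imaginary multipliers $x^{-it}$ on the $\widetilde H^\sigma_p$ scale, which is not obvious (these are oscillatory, not smooth at $x=0$). Your alternative of simply citing Triebel's multiplication theorems would work, but then the lemma is just a citation.

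The paper avoids this by interpolating in $s$ with $\gamma$ held fixed. The base case is $s=\gamma$, where the target is $L_p(\mathbb{R}_+)$ and the bound $\|x^{-\gamma}u\|_p \leq C\|u\|_{\gamma,p}$ is taken from Triebel (\cite{Tr83}, Section 2.8.6). From there one climbs to $s=\gamma+m$, $m\in\mathbb{N}$, by a direct induction using an equivalent norm and the Leibniz rule: for $u\in\widetilde H^{\gamma+1}_p$,
\[
\|x^{-\gamma}u\|_{1,p} \lesssim \|x^{-\gamma}u\|_p + \|x^{-\gamma}\partial u\|_p + \gamma\|x^{-(\gamma+1)}u\|_p,
\]
and each term is controlled by the base case (with exponent $\gamma$ or $\gamma+1$). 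Real or complex interpolation in $s$ then covers all $s\geq\gamma$. For the remaining strip $\gamma-1+1/p < s < \gamma$ the paper factors $x^{-\gamma}=x^{-(\gamma-s)}\cdot x^{-s}$: the second factor maps $\widetilde H^s_p\to L_p$ by the base case, and the first factor $L_p\to\widetilde H^{s-\gamma}_p$ is handled by duality with $x^{-(\gamma-s)}:\widetilde H^{\gamma-s}_{p'}\to L_{p'}$, exactly as you suggest. So your duality step is right; the fix needed is to interpolate in the smoothness index rather than in the power of $x$.
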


\begin{proof}
The first case we consider is $s=\gamma$. Then $\widetilde{H}^{s-\gamma}_p(\overline{\mathbb{R}_+}) = L_p(\mathbb{R}_+)$ and the required result follows directly from the proof of   Proposition 1, Section 2.8.6, \cite{Tr83}. In other words, if $u \in \widetilde{H}^\gamma_p(\overline{\mathbb{R}_+})$ then
\begin{equation*}
\| x^{-\gamma} u \|_p \leq C_{\gamma,p} \|u \|_{\gamma,p}.
\end{equation*}

Secondly,  suppose that $s=\gamma+1$, and let $u \in \widetilde{H}^{\gamma+1}_p(\overline{\mathbb{R}_+})$. Then we will show that $x^{-\gamma}I : \widetilde{H}^{\gamma+1}_p(\overline{\mathbb{R}_+}) \to \widetilde{H}^{1}_p(\overline{\mathbb{R}_+})$ is bounded. Note that $\widetilde{H}^{\gamma+1}_p(\overline{\mathbb{R}_+}) \hookrightarrow \widetilde{H}^{\gamma}_p(\overline{\mathbb{R}_+})$ and $\partial u \in \widetilde{H}^{\gamma}_p(\overline{\mathbb{R}_+})$. Therefore, using an equivalent norm on $\widetilde{H}^{\gamma+1}_p(\overline{\mathbb{R}_+})$, see Chapter 1, p.\,6, \cite{Tr92},
\begin{align*}
\| x^{-\gamma} u \|_{1,p} & \leq \text{ const } \big \{ \| x^{-\gamma} u \|_p + \| \partial (x^{-\gamma} u) \|_p \big \} \\
& \leq \text{ const } \big \{ \| x^{-\gamma} u \|_p + \| x^{-\gamma} \partial u \|_p + \gamma \|x^{-(\gamma+1)} u \|_p   \big \} \\
& \leq \text{ const } \big \{ \| u \|_{\gamma, p} + C_{\gamma,p}\| \partial u \|_{\gamma, p} + C_{\gamma+1,p} \gamma \| u \|_{\gamma+1,p}  \big \} \\
& \leq \text{ const } \big \{  \| u \|_{\gamma+1,p}  \big \}.
\end{align*}

Hence, $x^{-\gamma}I : \widetilde{H}^{\gamma+1}_p(\overline{\mathbb{R}_+}) \to \widetilde{H}^{1}_p(\overline{\mathbb{R}_+})$ is bounded. In the same way, the result for $s=\gamma+m$, for any $m \in \mathbb{N}$ follows by induction. \\

Moreover, the proof of the lemma for any $s \geq \gamma$ follows by interpolation. See, for example, Chapter 1, \cite{Tr83}.\\

Finally, we consider the remaining case $-1+\gamma + 1/p < s < \gamma$. From the first case, it is clear that $x^{-s} I: \widetilde{H}^s_p(\overline{\mathbb{R}_+}) \to L_p(\mathbb{R}_+)$ is bounded. Hence, it is sufficient to show that $x^{-(\gamma -s)}I: L_p(\mathbb{R}_+) \to \widetilde{H}^{s-\gamma}_p(\overline{\mathbb{R}_+})$ is bounded. Since $-1+1/p< s- \gamma < 0$, this operator is adjoint to $x^{-(\gamma -s)} I: \widetilde{H}^{\gamma-s}_{p'} (\overline{\mathbb{R}_+}) \to L_{p'}(\mathbb{R}_+)$ which is bounded by the first case. This completes the proof of the lemma. \\
\end{proof}

\begin{lemma} \label{multxgammaa}
Let $\gamma \geq 0, \, \phi \in r_+ S(\mathbb{R})$ and $s \geq 0$. Then
\begin{equation*}
x^\gamma \phi  I :\widetilde{H}^s_p(\mathbb{\overline{R_+}}) \to \widetilde{H}^s_p(\mathbb{\overline{R_+}})
\end{equation*}
is bounded.
\end{lemma}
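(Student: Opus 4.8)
The plan is to decompose the multiplier $b:=x^{\gamma}\phi$ into a piece that is smooth on all of $\mathbb{R}$ and a piece supported near the origin, the first being trivial and the second handled by Leibniz's rule together with Lemma \ref{x-alphaI}. Write $\phi=r_{+}\Phi$ with $\Phi\in S(\mathbb{R})$, fix $\psi_{1}\in C^{\infty}(\mathbb{R})$ with $\psi_{1}\equiv 0$ on $(-\infty,\tfrac{1}{2}]$ and $\psi_{1}\equiv 1$ on $[1,\infty)$, and set $b_{\infty}:=\psi_{1}x^{\gamma}\Phi$ (extended by zero to $\mathbb{R}_{-}$, near which it already vanishes) and $b_{0}:=b-b_{\infty}$. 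Since $\psi_{1}x^{\gamma}\in C^{\infty}(\mathbb{R})$, the function $b_{\infty}$ lies in $S(\mathbb{R})$; multiplication by a Schwartz function is bounded on $H^{s}_{p}(\mathbb{R})$ for every $s$ and preserves support in $\overline{\mathbb{R}_{+}}$, so $b_{\infty}I$ is bounded on $\widetilde{H}^{s}_{p}(\overline{\mathbb{R}_{+}})$. As $b_{0}I$ acts on functions supported in $\overline{\mathbb{R}_{+}}$, multiplying $b_{0}$ by a cutoff equal to $1$ on $[0,2]$ leaves the operator unchanged, so we may assume $b_{0}=x^{\gamma}g$ on $\mathbb{R}_{+}$ with $g\in C^{\infty}_{0}(\mathbb{R})$ supported near the origin; note $b_{0}$ is bounded because $\gamma\geq 0$.

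First I would prove the bound for $s=m$ a non-negative integer. For $m=0$ it is immediate, since $\widetilde{H}^{0}_{p}(\overline{\mathbb{R}_{+}})=L_{p}(\mathbb{R}_{+})$ and $b_{0}$ is bounded. For $m\geq 1$ I would use that $C^{\infty}_{0}(\mathbb{R}_{+})$ is dense in $\widetilde{H}^{m}_{p}(\overline{\mathbb{R}_{+}})$ (see Section 2.10.3, p.\,231, \cite{Tr}; here $m-1/p\notin\mathbb{Z}$ because $1<p<\infty$) and prove $\|b_{0}u\|_{m,p}\leq C\|u\|_{m,p}$ for $u\in C^{\infty}_{0}(\mathbb{R}_{+})$, which then extends by continuity (and $b_{0}u\in C^{\infty}_{0}(\mathbb{R}_{+})\subset\widetilde{H}^{m}_{p}(\overline{\mathbb{R}_{+}})$ for such $u$). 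Using the equivalent norm $\|v\|_{m,p}\sim\sum_{k=0}^{m}\|\partial^{k}v\|_{L_{p}(\mathbb{R})}$ and Leibniz's rule, $\partial^{k}(b_{0}u)$ is a finite linear combination of terms $(\partial^{j}b_{0})(\partial^{k-j}u)$ with $0\leq j\leq k\leq m$, while on $\mathbb{R}_{+}$ each $\partial^{j}b_{0}$ is a finite linear combination of terms $x^{\gamma-i}g_{ij}$ with $0\leq i\leq j$ and $g_{ij}\in C^{\infty}_{0}(\mathbb{R})$. If $i\leq\gamma$, then $x^{\gamma-i}g_{ij}$ is bounded and the term is controlled by $\|\partial^{k-j}u\|_{p}\leq\|u\|_{m,p}$. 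If $i>\gamma$, I would regard $g_{ij}\,\partial^{k-j}u\in\widetilde{H}^{m-k+j}_{p}(\overline{\mathbb{R}_{+}})$ (multiplication by $g_{ij}\in C^{\infty}_{0}$ is bounded and $m-k+j\geq 0$) and apply Lemma \ref{x-alphaI} with positive exponent $i-\gamma$: it gives $x^{\gamma-i}g_{ij}\,\partial^{k-j}u=x^{-(i-\gamma)}\big(g_{ij}\,\partial^{k-j}u\big)\in\widetilde{H}^{m-k+j-(i-\gamma)}_{p}(\overline{\mathbb{R}_{+}})$, with the corresponding estimate, provided $m-k+j>(i-\gamma)+1/p-1$. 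This holds since $(m-k)+(j-i)+\gamma+(1-1/p)>0$, using $m\geq k$, $j\geq i$, $\gamma\geq 0$ and $p>1$; moreover the smoothness index $(m-k)+(j-i)+\gamma$ is $\geq 0$ (indeed $>0$ whenever a singular factor genuinely occurs, i.e. $\gamma>0$), so $\widetilde{H}^{m-k+j-(i-\gamma)}_{p}\hookrightarrow L_{p}$ and the term lies in $L_{p}$ with norm bounded by a constant times $\|u\|_{m,p}$. Summing the finitely many terms yields $\|b_{0}u\|_{m,p}\leq C\|u\|_{m,p}$.

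Finally, for non-integer $s>0$ the claim follows by interpolation: choosing non-negative integers $m_{0}<s<m_{1}$ and $\theta\in(0,1)$ with $s=(1-\theta)m_{0}+\theta m_{1}$, one has $[\widetilde{H}^{m_{0}}_{p}(\overline{\mathbb{R}_{+}}),\widetilde{H}^{m_{1}}_{p}(\overline{\mathbb{R}_{+}})]_{\theta}=\widetilde{H}^{s}_{p}(\overline{\mathbb{R}_{+}})$ (see Chapter 1, \cite{Tr83}), and boundedness of $b_{0}I$ on both endpoint spaces gives boundedness on $\widetilde{H}^{s}_{p}(\overline{\mathbb{R}_{+}})$. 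Adding the contributions of $b_{0}$ and $b_{\infty}$ proves the lemma. I expect the main obstacle to be the term-by-term estimate near the origin in the integer case: one must check that the regularity loss in Lemma \ref{x-alphaI} can be afforded for every Leibniz term, including the worst one in which all $k$ derivatives fall on $x^{\gamma}$, and this works precisely because $\gamma\geq 0$ together with $1-1/p>0$ exactly compensates that loss.
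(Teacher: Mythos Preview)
Your proof is correct and uses the same essential ingredients as the paper---Lemma~\ref{x-alphaI} for the singular factors and interpolation for non-integer $s$---but packages them differently. The paper proceeds by induction on the integer $m$: assuming $x^{\gamma}\phi I$ is bounded on $\widetilde{H}^{m}_{p}$ for all $\gamma\geq 0$ and all $\phi\in r_{+}S(\mathbb{R})$, it differentiates once, applies the inductive hypothesis to the terms $x^{\gamma}\phi u'$ and $x^{\gamma}\phi' u$, and handles the remaining term $x^{\gamma-1}\phi u$ either by the inductive hypothesis (if $\gamma\geq 1$) or by Lemma~\ref{x-alphaI} (if $\gamma<1$). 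You instead expand the full Leibniz formula at level $m$ in one go and estimate every term directly, which avoids the induction but requires tracking the combined loss $(i-\gamma)$ against the available regularity $(m-k+j)$ in each term. Your preliminary decomposition $b=b_{\infty}+b_{0}$ is harmless but unnecessary: the paper simply works with $x^{\gamma}\phi$ throughout, since the inductive hypothesis is stated for all $\phi\in r_{+}S(\mathbb{R})$ and all $\gamma\geq 0$ simultaneously. Both arguments are valid; yours is slightly more explicit about the worst-case term, while the paper's induction hides that bookkeeping inside the recursive step.
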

\begin{proof}
The result is clearly true for $s=0$. \\

We now use proof by induction on $s$. Suppose result is true for $s = m \in \mathbb{N} \cup \{ 0 \}$, for all $\phi \in r_+ S(\mathbb{R})$ and all $\gamma \geq 0$. Then, we shall prove it is also true for $s=m+1$. \\

Let $u \in \widetilde{H}^{m+1}_p(\mathbb{\overline{R_+}})$. Then, using the inductive hypothesis,
\begin{align*}
\| x^\gamma \phi u \|_{m+1,p} & \leq \text{const } \big \{ \| x^\gamma \phi u \|_{m,p} + \| \tfrac{d}{dx} (x^\gamma \phi u ) \|_{m,p}  \big \} \\
& \leq \text{const } \big \{ \| u \|_{m,p} + \| x^\gamma \phi u' \|_{m,p} + \| x^\gamma \phi' u \|_{m,p} + \| x^{\gamma-1} \phi u \|_{m,p}\big \} \\
& \leq \text{const } \big \{ \| u \|_{m,p} +  \| u' \|_{m,p} + \| x^{\gamma-1} \phi u \|_{m,p}\big \}.
\end{align*}

It remains to consider the term $\| x^{\gamma-1} \phi u \|_{m,p}$. \\

If $\gamma - 1 \geq 0$, then by the inductive hypothesis,
\begin{equation*}
\| x^{\gamma-1} \phi u \|_{m,p} \leq \text{const } \| u \|_{m,p}.
\end{equation*}

 
Finally, if $\gamma - 1 < 0$, then by Lemma \ref{x-alphaI},
\begin{equation*}
\| x^{\gamma-1} \phi u \|_{m,p} \leq \text{const } \| \phi u \|_{m+1 -\gamma,p} \leq \text{ const } \| u \|_{m+1,p}.
\end{equation*}

In summary, for $s= m+1$ and all $ \phi \in r_+ S(\mathbb{R}), \gamma >0$ and $u \in \widetilde{H}^{m+1}_p(\mathbb{\overline{R_+}})$, we have
\begin{equation*}
\| x^{\gamma-1} \phi u \|_{m,p} \leq \text{const } \| u \|_{m+1,p}.
\end{equation*}
This completes the proof by induction for $s \in \mathbb{N}$. \\

Hence, by interpolation, the required result follows for all $s \geq 0$. \\
\end{proof}

\begin{lemma} \label{eepslogbounded}
Suppose $1 < p < \infty, \, k \in \mathbb{N} \cup \{ 0 \}$ and $s \geq 0$. Then for any $\beta >0$ and $\epsilon >0$, the map
\begin{equation*}
u \mapsto e^{-\beta x} x^\epsilon \log^k x \cdot  u,
\end{equation*}
from $\widetilde{H}^s_p(\overline{\mathbb{R}_+}) \to \widetilde{H}^{s}_p(\overline{\mathbb{R}_+})$ is bounded.
\end{lemma}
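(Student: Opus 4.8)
The plan is to argue by induction on an integer smoothness index and then interpolate, the one delicate point being the term produced when the weight $x^\epsilon$ is differentiated. Write $\phi(x):=e^{-\beta x}x^\epsilon\log^k x$; since $\epsilon>0$ the function $\phi$ is continuous on $(0,\infty)$ and tends to $0$ both as $x\searrow 0$ and as $x\to\infty$, so $\phi\in L_\infty(\mathbb{R}_+)$ and multiplication by $\phi$ is bounded on $\widetilde H^0_p(\overline{\mathbb{R}_+})=L_p(\mathbb{R}_+)$. This is the base case $s=0$. I would prove, by induction on $s\in\mathbb{N}\cup\{0\}$, the statement with $\epsilon>0$ and $k\in\mathbb{N}\cup\{0\}$ arbitrary, so that the hypothesis is available for every admissible pair $(\epsilon,k)$ at the previous level; $\beta$ is fixed throughout and merely comes along for the ride.

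For the step from $s=m$ to $s=m+1$ I would use the equivalent norm $\|v\|_{m+1,p}\asymp\|v\|_{m,p}+\|v'\|_{m,p}$ on $\widetilde H^{m+1}_p(\overline{\mathbb{R}_+})$ (see Chapter 1, \cite{Tr92}) together with Leibniz' rule:
\[
\big(\phi\,u\big)'=-\beta\,\phi\,u+e^{-\beta x}\big(\epsilon\,x^{\epsilon-1}\log^k x+k\,x^{\epsilon-1}\log^{k-1}x\big)u+\phi\,u'.
\]
The terms $\phi\,u$, $-\beta\,\phi\,u$ and $\phi\,u'$ are handled by the inductive hypothesis at level $m$ applied to $u$ and to $u'\in\widetilde H^m_p(\overline{\mathbb{R}_+})$, giving bounds by $\|u\|_{m,p}$ and $\|u'\|_{m,p}\le\|u\|_{m+1,p}$. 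For the middle term I would use, for $j\in\{k-1,k\}$, the factorisation
\[
e^{-\beta x}x^{\epsilon-1}\log^j x\cdot u=\big(e^{-\beta x}x^\epsilon\log^j x\big)\cdot\big(x^{-1}u\big).
\]
By Lemma \ref{x-alphaI} with $\gamma=1$ (whose hypothesis $m+1>1/p$ holds since $1<p<\infty$ and $m\ge 0$), $x^{-1}u\in\widetilde H^m_p(\overline{\mathbb{R}_+})$ with $\|x^{-1}u\|_{m,p}\le C\|u\|_{m+1,p}$; then the inductive hypothesis at level $m$, applied to the weight $e^{-\beta x}x^\epsilon\log^j x$, bounds multiplication by it on $\widetilde H^m_p(\overline{\mathbb{R}_+})$, so this contribution is also $\le C\|u\|_{m+1,p}$. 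Collecting the estimates yields $\|\phi\,u\|_{m+1,p}\le C\|u\|_{m+1,p}$ and closes the induction.

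Finally, for non-integer $s\ge 0$ I would interpolate between the integer endpoints $\lfloor s\rfloor$ and $\lceil s\rceil$, since $\{\widetilde H^s_p(\overline{\mathbb{R}_+})\}$ is a complex interpolation scale (Chapter 1, \cite{Tr83}) and multiplication by $\phi$ is bounded on both. The main obstacle is exactly the middle term above: when $0<\epsilon<1$ its coefficient $e^{-\beta x}x^{\epsilon-1}\log^j x$ is unbounded near $0$, so it is not an $L_p$ multiplier and cannot be absorbed directly; the resolution is to trade the negative power for one derivative of $u$ via the Hardy-type mapping property of Lemma \ref{x-alphaI}, after which the lower-order inductive hypothesis closes the gap. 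Everything else is routine.
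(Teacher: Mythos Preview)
Your proposal is correct and is essentially the same proof as the paper's: induction on integer $s$ using the equivalent norm and Leibniz' rule, handling the $x^{\epsilon-1}$ terms by writing them as $(e^{-\beta x}x^\epsilon\log^j x)\cdot(x^{-1}u)$ and invoking Lemma~\ref{x-alphaI} with $\gamma=1$, then interpolating. Your explicit remark that the inductive hypothesis must be carried for all admissible pairs $(\epsilon,k)$ simultaneously (needed since $T_3$ involves $\log^{k-1}$) is a point the paper leaves implicit.
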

\begin{proof}
We proceed by induction on $s$. \\

Suppose $k \in \mathbb{N} \cup \{ 0 \}$ and let $s=0$. Since the function $e^{-\beta x} x^\epsilon \log^k x$ is bounded for $x \geq 0$, the map $u(x) \mapsto  e^{-\beta x} x^\epsilon \log^k x \cdot  u(x)$,
from $L_p({\mathbb{R}_+}) \to L_p({\mathbb{R}_+})$ is bounded. \\

Now suppose the result is true for some $s =m \in \mathbb{N} \cup \{0\}$. We shall prove it is also true for $s = m+1$. Suppose $u \in \widetilde{H}^{m+1}_p(\overline{\mathbb{R}_+})$ and let us define
\begin{equation*}
F(x) := (e^{-\beta x} x^\epsilon \log^k x) \cdot  u(x).
\end{equation*}

Then, a routine calculation gives
\begin{equation*}
F'(x) = T_1 + T_2 +T_3 + T_4,
\end{equation*}
where
\begin{align*}
T_1 & := - \beta \, e^{-\beta x} x^{ \epsilon} \log^k x \cdot  u(x) = (- \beta \, e^{-\beta x} x^\epsilon \log^k x) \cdot u; \\
T_2 & := e^{-\beta x} \epsilon \, x^{-1 + \epsilon} \log^k x \cdot u(x) = (\epsilon \, e^{-\beta x} x^\epsilon \log^k  x) \cdot x^{-1} u;\\
T_3 & := e^{-\beta x} x^{-1 + \epsilon} k \log^{k-1} x \cdot u(x) = (k e^{-\beta x} x^\epsilon \log^{k-1} x) \cdot x^{-1} u \quad (k \geq 1);\\
T_4 & := e^{-\beta x} x^{\epsilon} \log^k x \cdot u'(x) = (e^{-\beta x} x^\epsilon \log^k x) \cdot u'.
\end{align*}
For each of $T_1,T_2, T_3$ and $T_4$, the function in parentheses on the right-hand side is of the correct form for the inductive hypothesis. Moreover, taking $s=m+1$ and $\gamma =1$ in Lemma \ref{x-alphaI}, 
\begin{equation*}
\| x^{-1} u \|_{m,p} \leq \text{ const } \| u \|_{m+1,p}.
\end{equation*}

Then, using the inductive hypothesis, but only including the term $T_3$ if $k \geq 1$, 
\begin{align*}
 \| (e^{-\beta x} & x^\epsilon \log^k x) \cdot  u \|_{m+1,p} & \\
& \leq \text {const } \big \{ \|F\|_{m,p} +  \|F'\|_{m,p} \big \} \\
& \leq \text {const } \big \{ \|F\|_{m,p} +  \|T_1\|_{m,p}  + \|T_2\|_{m,p} + \|T_3\|_{m,p} + \|T_4\|_{m,p} \big \} \\
& \leq \text {const } \big \{ \|u\|_{m,p} +  \|u\|_{m,p}  + \|x^{-1}u\|_{m,p} + \|x^{-1}u\|_{m,p} + \|u'\|_{m,p} \big \} \\
& \leq \text {const } \big \{ \|u\|_{m,p} + \|u\|_{m+1,p} + \|u'\|_{m,p} \big \} \\
& \leq \text {const }  \|u\|_{m+1,p}.
\end{align*}

This completes the proof by induction for $s=0,1,2,3, \dots$. Hence, by interpolation, the required result holds for all $s \geq 0$. \\
\end{proof}

\begin{corollary} \label{LogMultiplier}
Suppose $1 < p < \infty, \, k \in \mathbb{N} \cup \{ 0 \}$ and $s \geq 0$. Then for any $\beta >0$ and $0< \epsilon < s+1 -1/p$, the map
\begin{equation*}
u \mapsto e^{-\beta x} \log^k x \cdot  u,
\end{equation*}
from $\widetilde{H}^s_p(\overline{\mathbb{R}_+}) \to \widetilde{H}^{s-\epsilon}_p(\overline{\mathbb{R}_+})$ is bounded.
\end{corollary}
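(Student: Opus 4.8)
The plan is to factor the multiplier into a composition of two operators already handled by the preceding results. For $x>0$ one has the identity
\begin{equation*}
e^{-\beta x}\log^k x = x^{-\epsilon}\cdot\bigl(e^{-\beta x}x^{\epsilon}\log^k x\bigr),
\end{equation*}
so the map in question factors as
\begin{equation*}
u \;\longmapsto\; e^{-\beta x}x^{\epsilon}\log^k x\cdot u \;\longmapsto\; x^{-\epsilon}\cdot\bigl(e^{-\beta x}x^{\epsilon}\log^k x\cdot u\bigr) = e^{-\beta x}\log^k x\cdot u.
\end{equation*}

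First I would apply Lemma \ref{eepslogbounded} with the given $\beta>0$, the given $k\in\mathbb{N}\cup\{0\}$ and the exponent $\epsilon>0$: this shows the first arrow is bounded from $\widetilde{H}^s_p(\overline{\mathbb{R}_+})$ into $\widetilde{H}^s_p(\overline{\mathbb{R}_+})$, the only requirement of that lemma on the exponent being positivity, which holds since $\epsilon>0$. Next I would apply Lemma \ref{x-alphaI} with $\gamma=\epsilon$: multiplication by $x^{-\epsilon}$ is bounded from $\widetilde{H}^s_p(\overline{\mathbb{R}_+})$ into $\widetilde{H}^{s-\epsilon}_p(\overline{\mathbb{R}_+})$ provided $s>\epsilon+1/p-1$, i.e. $\epsilon<s+1-1/p$, which is exactly the standing hypothesis of the corollary. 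Composing the two bounded maps yields the claim; the argument remains valid even when $s-\epsilon<0$, since Lemma \ref{x-alphaI} also covers the range $-1+\gamma+1/p<s<\gamma$.

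There is essentially no obstacle here: all the analytic content sits in the two supporting lemmas, and the corollary is the bookkeeping observation that matching the exponent condition $\epsilon<s+1-1/p$ to the hypothesis $s>\gamma+1/p-1$ of Lemma \ref{x-alphaI} is precisely what allows the factorisation to go through. The one point worth stating explicitly is that the exponent $\epsilon$ appearing in Lemma \ref{eepslogbounded} and the parameter $\gamma$ of Lemma \ref{x-alphaI} are taken equal, so that $x^{-\epsilon}\cdot x^{\epsilon}=1$ identically on $\mathbb{R}_+$ and the composite operator is genuinely multiplication by $e^{-\beta x}\log^k x$.
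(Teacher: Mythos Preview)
Your proof is correct and follows exactly the same approach as the paper: factor $e^{-\beta x}\log^k x = x^{-\epsilon}\cdot(e^{-\beta x}x^{\epsilon}\log^k x)$, then apply Lemma \ref{eepslogbounded} followed by Lemma \ref{x-alphaI} with $\gamma=\epsilon$. Your write-up is in fact more detailed than the paper's, which simply cites the two lemmas after writing down the factorisation.
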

\begin{proof}
Suppose $u \in \widetilde{H}^s_p(\overline{\mathbb{R}_+})$. We write
\begin{equation*}
e^{-\beta x} \log^k x \cdot  u = x^{-\epsilon} \big \{ e^{-\beta x} x^\epsilon \log^k x \cdot  u \big \},
\end{equation*}
and the required result now follows directly from Lemmas \ref{eepslogbounded} and \ref{x-alphaI}. \\
\end{proof}

\chapter{Operator algebra - Part I} \label{ChapterOpAlgI}
\section{Introduction}
This chapter details the first step in describing our problem in the context of an operator algebra of multiplication, Mellin and Wiener-Hopf operators acting on $L_p(\mathbb{R}_+)$. The results calculated here act as the starting point for the second, and final, step given in Chapter \ref{OpAlgLp}. \\

Throughout this chapter we assume the problem constraints $0 < \alpha < \tfrac{1}{2}, \, 1 < p < \infty$ and $1/p < s < 1+1/p$. Moreover, we suppose that $u \in H^s_p(\overline{\mathbb{R}_+})$. (However, where appropriate, we shall also prove variants of certain results that apply in the case of higher regularity, namely $1+ 1/p < s < 2+1/p$.)\\

The discontinuity of the function $e_+u$, at $x=0$, gives rise to a delta function on the boundary. For, if $\delta$ denotes the \textit{Dirac delta} function then, see Lemma \ref{lemma:Deu}, we have: 

\begin{equation*}
(D-i) e_+ u = e_+ (D-i)u + i u(0) \, \delta. 
\end{equation*}

Terms, as above, involving the trace value $u(0)$ pose a significant difficulty. However, it will be seen that we can combine such terms with the \enquote{added} potential to form expressions including the factor $(u(x) - u(0))$. These differences can then be reformulated as a composition of certain multiplication, Mellin and Wiener-Hopf operators. Such conversions are a significant part of the analysis of this present chapter. \\

Since our ultimate objective is a reformulation of our problem in $L_p(\mathbb{R}_+)$, we introduce 
\begin{equation*}
u_s := (D+i)^{s-1}e_+ (D-i)u. 
\end{equation*}\\

From Lemma \ref{lemma:us}, $u_s \in L_p(\mathbb{R})$ with supp $u_s \subseteq \overline{\mathbb{R}_+}$. Therefore, we have
\begin{equation*}
e_+ r_+ u_s = u_s.
\end{equation*}
This relationship will prove essential in dealing with the Wiener-Hopf operators. For example, we show in Lemma \ref{ustou} that
\begin{equation*}
u = W(c) (r_+ u_s),
\end{equation*}
where the Wiener-Hopf operator $W(c)$ has symbol $c(\xi) = (\xi - i)^{-1}(\xi + i)^{1-s}$. \\

Our goal in this chapter is to reformulate equation \eqref{rAef} in the form
\begin{equation} \label{aMCtilde}
\tilde{a}_0(x) u(0) + \sum^N_{j=1}  \tilde{a}_j(x) \, M^0(\tilde{b}_j) \, (r_+ \tilde{C}_j e_+)(r_+ u_s) + \tilde{K} u = f,
\end{equation}
where the operator $\tilde{K}:H^s_p(\overline{\mathbb{R}_+}) \to H^{s-2\alpha}_p(\overline{\mathbb{R}_+})$ is compact. \\

In doing so, we provide precise determinations of the multiplication symbols $\{ \tilde{a}_k \}^N_{k=0}$, the Mellin symbols $\{ \tilde{b}_j \}^N_{j=1}$ and the symbols $\{ \tilde{c}_j \}^N_{j=1}$ of the pseudodifferential operators $\{ \tilde{C}_j \}^N_{j=1}$. Since, our ultimate goal is to a calculate the Fredholm index of the corresponding operator, along the way we will effectively discard any compact operators - as the Fredholm index is invariant under compact perturbations. \\

Finally, we note that, by hypothesis,  $f \in H^{s-2\alpha}_p(\overline{\mathbb{R}_+})$. In Chapter \ref{OpAlgLp}, we apply the operator $r_+ (D-i)^{s-2\alpha} l_+$ to each side of equation \eqref{aMCtilde}, to obtain our required formulation in $L_p(\mathbb{R}_+)$. (See, in particular, Lemma \ref{lemma:rLambdae}.) In this sense, the value of the results from the current chapter will only be apparent later. Accordingly, they are described here as \textit{interim} results.

\section{Problem reformulation}
As an initial step in reformulating equation \eqref{rAef}, we define
\begin{equation} \label{Aminus1}
A^{-}(D) := A(D) (D-i)^{-1}.
\end{equation}
Since $A$ has order $2 \alpha$, $A^-$ is a pseudodifferential operator of order $2 \alpha -1$. Of course, as $0 < \alpha < \tfrac{1}{2}$, $A^-$ has negative order. We now recast equation \eqref{rAef} in terms of the operator $A^-$. \\

In passing, and looking ahead to the case of higher regularity, we also define
\begin{equation} \label{Aminus2temp}
A^{=}(D) := A(D) (D-i)^{-2}.
\end{equation}

From Lemma \ref{lemma:Deu},
\begin{equation*}
r_+ A e_+ u =  r_+ A^{-}(D-i) e_+ u  = r_+ A^{-} e_+ (D-i) u + i u(0) r_+ A^{-} \delta.
\end{equation*}

Moreover,
\begin{equation*}
r_+ \, A( \chi_{\mathbb{R}_{-}}) =  r_+ \, A^{-} (D - i)  \chi_{\mathbb{R}_{-}} \\
= r_+ \, A^{-} ( - i \, \delta  - i \chi_{\mathbb{R}_{-}} ),
\end{equation*}
since $D(\chi_{\mathbb{R}_{-}}) = - D(\chi_{\mathbb{R}_{+}}) = -i \, \delta$. (See  Example 1.3, p. 10, \cite{Es}.) \\

Hence, with these substitutions, equation \eqref{rAef} becomes
\begin{equation} \label{rA(s-1)eg}
r_+ A^{-} e_+ (D-i) u  - i (u(x)-u(0)) \, r_+ A^{-} \delta - iu(x) r_+ \, A^{-} ( \chi_{\mathbb{R}_{-}} ) = f.  
\end{equation} 

Let us now define
\begin{equation} \label{usdefn}
u_s := (D+i)^{s-1}e_+ (D-i)u. 
\end{equation}
Then, we can write
\begin{align*}
r_+ A^{-} e_+ (D-i) u & = r_+ A^{-} (D+i)^{1-s}(D+i)^{s-1}e_+ (D-i) u \\
& = r_+ A_{s} u_s, 
\end{align*}
where 
\begin{equation} \label{Aminuss}
A_{s}(D) := A^{-}(D+i)^{1-s}. \\
\end{equation}

Hence, equation \eqref{rAef} becomes
\begin{equation} \label{rA-s}
r_+ \, A_{s} \, u_s - i (u(x)-u(0)) \, r_+ A^{-} \delta - iu(x) r_+ \, A^{-} ( \chi_{\mathbb{R}_{-}} ) = f.  \\
\end{equation} 
We will see subsequently that the function $u$ appearing in the potential term, $- iu(x) r_+ \, A^{-} ( \chi_{\mathbb{R}_{-}} )$ in equation \eqref{rA-s}, can also be expressed appropriately in terms of $u_s$. Moreover, it turns out that the difference $u(x)-u(0)$ can be described in terms of the composition of a multiplication, Mellin and Wiener-Hopf operators. Finally, we are able to calculate both $r_+ A^{-} \delta$ and $r_+ \, A^{-} ( \chi_{\mathbb{R}_{-}} )$ explicitly, using special functions. \\

\section{Interim results} \label{OpAlgInit}
It will now be convenient to introduce certain functions. We have
\begin{equation*}
M(a,b,z) := 1+ \sum^\infty_{k=1} \dfrac{(a)_k}{(b)_k} \dfrac{z^k}{k!},
\end{equation*}
as defined in 13.1.2, \cite{AandS} or 9.210, \cite{GR}. (We use the notation $(a)_k = a(a+1) \cdots (a+k-1)$ for any $k \in \mathbb{N}$.) \\

Following 13.1.3, \cite{AandS} and 9.210 2, \cite{GR}, we also introduce the \textit{confluent hypergeometric} function
\begin{equation*}
U(a,b,z) := \dfrac{\Gamma(1-b)}{\Gamma(a-b+1)} M(a,b,z) + \dfrac{\Gamma(b-1)}{\Gamma(a)} z^{1-b} M(a-b+1, 2 -b, z),
\end{equation*}
for $a>0$ and $b > 0$, \textit{ provided } $b \not \in \mathbb{N}$. In the exceptional case that $b \in \mathbb{N}$, the corresponding expression for $U(a,b,z)$ includes a logarithmic term. (See, for example, 13.1.6, \cite{AandS}). \\

It turns out that it will be sufficient for our purposes to assume $a>1$ and $0 < b < 3$. Then, see Lemma \ref{eUab2x}, for $x > 0$,
\[
e^{-x} \, U(a,b,2x) =
\begin{cases}
x^{1-b} \psi(a,b,x) + \phi(x)   & \text{if   } b \not= 1,2 \\
x^{1-b} \psi(a,b,x) + \vartheta(x) \log x + \phi(x) & \text{if   } b = 2 \\
\vartheta(x) \log x +\phi(x)   & \text{if   } b = 1,
\end{cases}
\]
where $\vartheta, \phi \in C^\infty(\mathbb{R})$, and together with their derivatives, are bounded and $O(e^{-x})$ as $x \to +\infty$. Moreover, $\psi \in C^\infty_0(\mathbb{R})$ with $\psi(a,b,x)=0$ for $x > 2$.\\

Finally, we let $\epsilon > 0$ be a small parameter. \\

With these preparations complete, we are now ready to examine the individual summands in the left-hand side of equation \eqref{rA-s}. \\

\subsection{First term}
Consider the term $r_+ \, A_{s} \, u_s$. From equations \eqref{Aminus1} and \eqref{Aminuss}, $A_{s}(D) := A(D) (D-i)^{-1} (D+i)^{1-s}$ and hence, we can write
\begin{equation*}
r_+ \, A_{s} \, u_s = (r_+ A(D) (D-i)^{-1} (D+i)^{1-s} e_+)(r_+ u_s),
\end{equation*}
since, by Lemma \ref{lemma:us}, $u_s \in L_p(\mathbb{R})$ and supp $u_s \subseteq \overline{\mathbb{R}_+}$. \\

Thus, in the notation of equation \eqref{aMCtilde}, 
\begin{align} \label{tildea1}
\tilde{a}_1(x) &= 1;  \nonumber \\
\tilde{b}_1(\xi) &= 1;  \\
\tilde{c}_1(\xi) &= (1+\xi^2)^\alpha (\xi-i)^{-1}(\xi+i)^{1-s}. \nonumber  
\end{align} 

\subsection{Middle term}
Now consider the middle term, $-i(u(x)-u(0)) r_+ A^- \delta$. From Lemma \ref{lemma:rA(s-1)delta}
\begin{equation*}
(r_+ \, A^{-} \delta) (x) = C_{\alpha}  \, e^{-x} \, U(\alpha+1, 2 \alpha+1, 2x),
\end{equation*}
where the constant $C_\alpha$ only depends on $\alpha$, and is given by equation \eqref{defCalpha} in the statement of Lemma \ref{lemma:rA(s-1)delta} as 
\begin{equation*}
C_{\alpha} = -i \, \dfrac{\alpha \, 2^{2\alpha}}{\Gamma (1- \alpha)}. \\
\end{equation*}

From Lemma \ref{eUab2x},
\begin{equation*}
(r_+A^- \delta)(x) = C_\alpha \big ( \phi(x) + x^{-2 \alpha}  \psi(\alpha + 1, 2\alpha+1, x) \big ),
\end{equation*}
where $ \phi \in C^\infty(\mathbb{R})$ and, together with its derivatives, is bounded and $O(e^{-x})$ as $x \to +\infty$. Moreover, $ \psi \in C^\infty_0(\mathbb{R})$ with $ \psi(\alpha + 1, 2\alpha+1, x) =0$ for $x >2$. \\

Hence, we can write
\begin{equation*}
-i (u(x)-u(0)) r_+ A^-(\delta) = -i C_\alpha \, (T_{11}  + T_{12} ) u
\end{equation*}
where
\begin{align*}
T_{11} u(x) &:= \phi(x) (u(x)- u(0)) \\
T_{12} u(x) &:= x^{-2\alpha} \psi(\alpha + 1, 2\alpha+1, x) (u(x) - u(0)).
\end{align*}
Firstly, we will show that $T_{11} : H^s_p(\overline{\mathbb{R}_+}) \to H^{s-\epsilon}_p(\overline{\mathbb{R}_+})$ is compact. Now
\begin{equation*}
\phi (x) (u(x) - u(0)) = \phi (x)  e^{x/2} \cdot e^{-x/2}(u(x)-u(0)).
\end{equation*}
By Lemma \ref{phiuu0small}, $u \mapsto e^{-x/2}(u(x)-u(0))$ defines a bounded operator from $H^s_p(\overline{\mathbb{R}_+})$ to $r_+ \widetilde{H}^s_p(\overline{\mathbb{R}_+})$. Moreover, $\phi(x) e^{x/2} \in H^s_p(\overline{\mathbb{R}_+})$, since it and its derivatives are bounded, smooth and $O(e^{-x/2})$ as $ x \to + \infty$. Finally, the compactness of $T_{11}: H^s_p(\overline{\mathbb{R}_+}) \to H^{s-\epsilon}_p(\overline{\mathbb{R}_+})$ follows directly from Lemma \ref{multiplierbHcompact}. \\

It remains to consider 
\begin{equation*}
-i C_\alpha \,T_{12}u(x)  = -i C_\alpha x^{-2\alpha} \psi(\alpha + 1, 2\alpha+1, x) (u(x) - u(0)), 
\end{equation*}
and it is convenient to write
\begin{align*}
-i C_\alpha & x^{-2\alpha} \psi(\alpha + 1, 2\alpha+1, x) (u(x) - u(0)) \\
&= -i C_\alpha \, \psi(\alpha + 1, 2\alpha+1, x) \cdot \big \{ x^{-2\alpha} (u(x) - u(0)) \big \} ,
\end{align*}
noting that $ \psi \in C^\infty_0(\mathbb{R})$ with $\psi(\alpha + 1, 2\alpha+1, x) =0$ for $x >2$. \\

On the other hand, from Lemma \ref{lemma:mellinop1} and Appendix \ref{Appendix FC},
\begin{equation*}
x^{-2 \alpha} (u(x) - u(0)) = \int^\infty_0 K_{2\alpha} \bigg( \dfrac{x}{y} \bigg ) h(y) \, \dfrac{dy}{y} \,\, := {M}_{2\alpha} h
\end{equation*}
where $h(x) = (C^{2\alpha}_{0^+}u) (x)$. Moreover, from Lemma \ref{htous}  
\begin{equation*}
h = (r_+ C(D) e_+) (r_+ u_s) + i \dfrac{u(0)}{\sqrt{2\pi}} \, r_+ \mathcal{F}^{-1} (-i \xi)^{2\alpha-1}(\xi-i)^{-1},
\end{equation*}
where $C(D)$ has the symbol $c(\xi) = (-i \xi)^{2\alpha} (\xi + i)^{1-s} (\xi-i)^{-1}$.
From Lemma \ref{lemma:mellinop2}, ${M}_{2\alpha}$ is a Mellin convolution operator with symbol $b(\xi) = B(1/p' + i \xi, 2 \alpha) / \Gamma(2\alpha) $.\\

Thus, in the notation of equation \eqref{aMCtilde}, we have
\begin{align} \label{tildea2}
\tilde{a}_2(x) &= -i C_\alpha \, \psi(\alpha + 1, 2\alpha+1, x) \quad (\in C^\infty_0(\mathbb{R}));\nonumber \\
\tilde{b}_2(\xi) &= B(1/p' + i \xi, 2 \alpha) / \Gamma(2\alpha) ;\\
\tilde{c}_2(\xi) &= (-i \xi)^{2\alpha} (\xi + i)^{1-s} (\xi-i)^{-1}.\nonumber  
\end{align}
and
\begin{equation} \label{tildea0}
\tilde{a}_0(x) = \tilde{a}_2(x) M^0(\tilde{b}_2)  \dfrac{i}{\sqrt{2\pi}} \, r_+ \mathcal{F}^{-1} (-i \xi)^{2\alpha-1}(\xi-i)^{-1}. 
\end{equation}

\subsection{Final term} \label{Smallalphafinalterm}
It remains to consider the last term, $-i u r_+ A^-(\chi_{\mathbb{R}_-})$. From Lemma \ref{rAminuschiminus},
\begin{equation*}
(r_+A^- \chi_{\mathbb{R}_-})(x) =  C_\alpha \big ( \phi_1(x) + x^{1-2 \alpha}  \phi_2(x) \big ),
\end{equation*}
where $ \phi_1, \phi_2 \in C^\infty(\mathbb{R})$ and, together with their derivatives, are bounded and $O(e^{-x})$ as $x \to +\infty$.  \\

Hence, we can write
\begin{equation*}
-i u r_+ A^-(\chi_{\mathbb{R}_-}) = -i C_\alpha \, (T_{21} + T_{22} + T_{23})u
\end{equation*}
where
\begin{align*}
T_{21} u(x) &:= \phi_1(x) u(x) \\
T_{22} u(x) &:= x^{1-2\alpha} \phi_2(x) (u(x) - u(0))\\
T_{23} u(x) &:= x^{1-2\alpha} \phi_2(x) u(0).
\end{align*}
We will now show that $T_{21},T_{22},T_{23} : H^s_p(\overline{\mathbb{R}_+}) \to H^{s-2\alpha}_p(\overline{\mathbb{R}_+})$ are compact operators. \\

Firstly, consider $T_{21}$. We note that the compactness of $u \mapsto \phi_1(x) u(x)$ from $H^s_p(\overline{\mathbb{R}_+}) \to H^{s-\epsilon}_p(\overline{\mathbb{R}_+})$ follows immediately from Lemma \ref{multiplierbHcompact}. \\

Secondly, we will show $T_{22}$ is compact. We can write
\begin{equation*}
 x^{1-2 \alpha} \phi_2(x) (u(x) - u(0)) = \phi_2(x)  e^{x/2} \cdot x^{1-2\alpha} e^{-x/4} \cdot e^{-x/4}(u(x)-u(0)).
\end{equation*}
By Lemma \ref{phiuu0small}. $u \mapsto e^{-x/4}(u(x)-u(0))$ defines a bounded operator from $H^s_p(\overline{\mathbb{R}_+})$ to $r_+ \widetilde{H}^s_p(\overline{\mathbb{R}_+})$. Since $1-2\alpha >0$, from Lemma \ref{multxgammaa}, the operator $x^{1-2\alpha} e^{-x/4}I$ is bounded on $\widetilde{H}^s_p(\overline{\mathbb{R}_+})$. Moreover, $\phi_2(x) e^{x/2} \in H^s_p(\overline{\mathbb{R}_+})$, since it and its derivatives are bounded, smooth and $O(e^{-x/2})$ as $ x \to + \infty$. Finally, the compactness of $T_{22}: H^s_p(\overline{\mathbb{R}_+}) \to H^{s-\epsilon}_p(\overline{\mathbb{R}_+})$ follows directly from Lemma \ref{multiplierbHcompact}. \\

Thirdly, we will show $T_{23}$ is compact. We can write
\begin{equation*}
x^{1-2 \alpha} \phi_2(x) u(0) = \phi_2(x)  e^{x/2} \cdot x^{-2\alpha} \cdot x e^{-x/2} u(0).
\end{equation*}
Let $s' =\max \{s,1\}$. We note that $x e^{-x/2} \in \widetilde{H}^{s'}_p(\overline{\mathbb{R}_+})$, since it is smooth,  assumes the value zero at $x=0$ and decays exponentially. Therefore, $u \mapsto x e^{-x/2} u(0)$ defines a bounded operator from $H^s_p(\overline{\mathbb{R}_+})$ to $r_+ \widetilde{H}^{s'}_p(\overline{\mathbb{R}_+})$. Since $-2\alpha <0$, from Lemma \ref{x-alphaI}, the operator $x^{-2\alpha}I : \widetilde{H}^{s'}_p(\overline{\mathbb{R}_+}) \to \widetilde{H}^{s'-2\alpha}_p(\overline{\mathbb{R}_+})$ is bounded. As $0 < \alpha < \tfrac{1}{2}, \,\, s'- 2\alpha >0$. Moreover, $\phi_2(x) e^{x/2}$ and its derivatives are bounded, smooth and $O(e^{-x/2})$ as $ x \to + \infty$, and thus the operator $\phi_2(x) e^{x/2}I$ is bounded on $\widetilde{H}^{s'-2\alpha}_p(\overline{\mathbb{R}_+})$ by Lemma \ref{multxgammaa}. Finally, $T_{23}:H^s_p(\overline{\mathbb{R}_+}) \to H^{s-2\alpha}_p(\overline{\mathbb{R}_+})$ is bounded and rank one, and is therefore compact. \\

\subsection{Summary} \label{OpAlg1Summary}
So, in summary, taking $N=2$, we have the required representation
\begin{equation*} 
\tilde{a}_0(x) u(0) + \sum^2_{j=1}  \tilde{a}_j(x) \, M^0(\tilde{b}_j) \, (r_+ \tilde{C}_j e_+)(r_+ u_s) + \tilde{K}u = f,
\end{equation*}
where the operator $\tilde{K}:H^s_p(\overline{\mathbb{R}_+}) \to H^{s-2\alpha}_p(\overline{\mathbb{R}_+})$ is compact. The symbols $\tilde{a}_0$ and $(\tilde{a}_j, \tilde{b}_j, \tilde{c}_j)$ for $j=1,2$, are given by equations \eqref{tildea0}, \eqref{tildea1} and \eqref{tildea2} respectively. \\

Purely for convenience, these results are also repeated here:
\begin{align*} 
\tilde{a}_0(x) & = \tilde{a}_2(x) M^0(\tilde{b}_2)  \dfrac{i}{\sqrt{2\pi}} \, r_+ \mathcal{F}^{-1} (-i \xi)^{2\alpha-1}(\xi-i)^{-1}. \\
\tilde{a}_1(x) &= 1;   \\
\tilde{b}_1(\xi) &= 1;   \\
\tilde{c}_1(\xi) &= (1+\xi^2)^\alpha (\xi-i)^{-1}(\xi+i)^{1-s}.  \\  \\
\tilde{a}_2(x) &= -iC_\alpha \, \psi(\alpha+1,2\alpha+1,x) \quad (\in C^\infty_0(\mathbb{R})); \\
\tilde{b}_2(\xi) &= B(1/p' + i \xi, 2 \alpha) / \Gamma(2\alpha) ;\\
\tilde{c}_2(\xi) &= (-i \xi)^{2\alpha} (\xi + i)^{1-s} (\xi-i)^{-1}. \\ 
\end{align*}

\newpage
\section{Supporting lemmas}
\begin{lemma}  \label{lemma:Deu}
Suppose $1< p < \infty$ and $1/p < s < 1 + 1/p$. If  $u \in H^s_p(\overline{\mathbb{R}_+})$ then
\begin{equation*}
(D-i) e_+ u = e_+ (D-i)u + i u(0) \, \delta.
\end{equation*} 
\end{lemma}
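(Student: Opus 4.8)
The plan is to prove the distributional identity $(D-i)e_+ u = e_+(D-i)u + iu(0)\delta$ by exploiting the product rule for $D = i\,d/dx$ acting on the product $e_+ u = \theta \cdot U$, where $\theta$ is the Heaviside step function and $U$ is any $H^s_p(\mathbb{R})$-extension of $u$. Since $s > 1/p$, Sobolev embedding gives $H^s_p(\overline{\mathbb{R}_+}) \hookrightarrow C(\overline{\mathbb{R}_+})$, so the trace value $u(0)$ is well-defined and the formula makes sense.

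First I would reduce the $(D-i)$ identity to the pure $D$ identity: since $-i\,e_+ u = -i\,e_+ u$ trivially, it suffices to show $D(e_+ u) = e_+(Du) + iu(0)\delta$ in $S'(\mathbb{R})$. Writing $e_+ u = \theta U$ with $r_+ U = u$, I would compute $D(\theta U) = (D\theta)U + \theta(DU)$ using the Leibniz rule for distributions (valid since $U$ is sufficiently regular — $s>1/p\ge$ enough for $U$ to be a function with a well-defined pointwise product against $\theta$ and against $\delta$). Now $D\theta = i\,\delta$ (this is exactly the fact $D(\chi_{\mathbb{R}_+}) = i\delta$ quoted from Example 1.3, p.\,10, \cite{Es}), and $(D\theta)U = i\delta\cdot U = iU(0)\delta = iu(0)\delta$ since $\delta$ localizes to $x=0$ and $U(0)=u(0)$. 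Finally $\theta(DU) = e_+(r_+ DU) = e_+(Du)$ because multiplying by $\theta$ restricts to the half-line and $DU$ restricted to $\mathbb{R}_+$ equals $Du$. Adding back $-i e_+ u$ to both sides yields the claimed identity.

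The one point requiring care — and the \textbf{main obstacle} — is justifying the product rule and the evaluation $\delta \cdot U = U(0)\delta$ at the distributional level: the product of a general distribution with a merely continuous function is not always defined, so I would invoke that $U \in H^s_p(\mathbb{R})$ with $s > 1/p$ embeds into $C(\mathbb{R})$ (indeed into a space of functions for which multiplication by $\theta$ and evaluation against $\delta$ are legitimate), and that the Leibniz formula $D(\theta U) = (D\theta)U + \theta DU$ holds because one may approximate $U$ by Schwartz functions $U_n \to U$ in $H^s_p$, for which the product rule is classical, and pass to the limit in $S'(\mathbb{R})$ — the trace map $U_n(0)\to U(0)$ is continuous on $H^s_p$ for $s>1/p$, so $iU_n(0)\delta \to iu(0)\delta$. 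Alternatively, and more cleanly, I would note the identity need only be checked by pairing against test functions $\varphi \in C_0^\infty(\mathbb{R})$: $\langle D(e_+u),\varphi\rangle = \langle e_+ u, D\varphi\rangle^{*}$-type manipulation reduces, via integration by parts on $(0,\infty)$, to a boundary term at $x=0$ producing exactly $iu(0)\varphi(0) = \langle iu(0)\delta,\varphi\rangle$, with the interior term giving $\langle e_+(D-i)u + i e_+ u,\varphi\rangle$ after accounting for the $-i$. This pairing argument sidesteps any subtlety about products of distributions entirely and is the route I would actually write up.
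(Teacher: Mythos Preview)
Your proposal is correct and essentially matches the paper's proof. The paper does exactly your ``alternative'' route: it verifies $(e_+ v)' = v(0)\delta + e_+ v'$ for $v \in r_+ C^\infty_0(\mathbb{R})$ by pairing against $\varphi \in S(\mathbb{R})$ and integrating by parts on $(0,\infty)$ to expose the boundary term $v(0)\varphi(0)$, then extends to $u \in H^s_p(\overline{\mathbb{R}_+})$ by continuity (using that $u(0)$ is well-defined for $s>1/p$), and finally assembles $(D-i)e_+u$ from $D e_+u$ and $-i e_+u$ just as you do.
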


\begin{proof}
We first show that if $v \in r_+ C^\infty_0(\mathbb{R})$, then $(e_+ v)' = v(0) \delta + e_+v'$. Take any $\varphi \in S(\mathbb{R})$. Then
\begin{align*}
\langle (e_+v)', \varphi \rangle &= - \langle e_+v, \varphi' \rangle \\
&= - \int^\infty_{-\infty} (e_+v)(t) { \varphi'(t)} \, dt \\
&= - \int^\infty_0 v(t){ \varphi'(t)} \, dt \\
&= -\big [v(t) {\varphi(t)} \big ]^\infty_0 + \int^\infty_0 v'(t) {\varphi(t)} \, dt \\
& = v(0) {\varphi(0)} + \int^\infty_{-\infty} (e_+ v')(t) {\varphi(t)} \, dt \\
&= \langle v(0) \delta + e_+ v', \varphi \rangle,
\end{align*}
which gives the required result, since $\varphi \in S(\mathbb{R})$ was arbitrary. \\

Since $1/p < s < 1+ 1/p$, the value $u(0)$ is well-defined. (See Section 2.9, \cite{Tr}.) Therefore, by continuity 
\begin{align*}
(D - i ) e_+ u &= D e_+ u - i e_+ u \\
&= i \, (e_+ u)' - i e_+ u \\
&= i \, u(0) \delta + e_+(D u) - i e_+ u \\
&=  e_+(D  - i)  u + i \, u(0) \delta. 
\end{align*}
This completes the proof of the lemma. \\
\end{proof}

If $1+1/p < s < 2+1/p$, we have the following equivalent of Lemma \ref{lemma:Deu}.
\begin{lemma}  \label{lemma:D2eu}
Suppose $1< p < \infty$ and $1+1/p < s < 2 + 1/p$. If  $u \in H^s_p(\overline{\mathbb{R}_+})$ then
\begin{equation*}
(D-i)^2 e_+ u = e_+ (D-i)^2 u -u(0) \, \delta' + 2u(0) \, \delta - u'(0)\, \delta.
\end{equation*} 
\end{lemma}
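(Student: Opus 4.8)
The plan is to mimic the proof of Lemma \ref{lemma:Deu}, but now carrying the expansion one order higher since $u$ has one more derivative's worth of regularity. First I would establish the distributional identity for test functions: if $v \in r_+ C^\infty_0(\mathbb{R})$, then $(e_+ v)'' = v(0)\,\delta' + v'(0)\,\delta + e_+ v''$. This follows by integrating by parts twice against an arbitrary $\varphi \in S(\mathbb{R})$, exactly as in Lemma \ref{lemma:Deu}; the boundary terms at $0$ produce $v(0)\varphi'(0)$ and $-v'(0)\varphi(0)$ up to signs, and the pairing $\langle \delta', \varphi\rangle = -\varphi'(0)$ accounts for the first. Since $1 + 1/p < s < 2 + 1/p$, both trace values $u(0)$ and $u'(0)$ are well-defined (see Section 2.9, \cite{Tr}), so the identity extends to $u \in H^s_p(\overline{\mathbb{R}_+})$ by continuity/density.

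Next I would simply compute $(D-i)^2 e_+ u$ algebraically. Writing $D = i\,\partial$, we have $D^2 = -\partial^2$, so $(D-i)^2 e_+ u = D^2 e_+ u - 2i D e_+ u - e_+ u = -(e_+ u)'' - 2i \cdot i (e_+ u)' - e_+ u$. Substituting $(e_+ u)' = u(0)\delta + e_+ u'$ (from Lemma \ref{lemma:Deu} applied at the level of derivatives, or re-derived) and $(e_+ u)'' = u(0)\delta' + u'(0)\delta + e_+ u''$, one collects terms:
\begin{equation*}
(D-i)^2 e_+ u = -u(0)\delta' - u'(0)\delta - e_+ u'' + 2(u(0)\delta + e_+ u') - e_+ u.
\end{equation*}
Then I would recognise that $-e_+ u'' + 2 e_+ u' - e_+ u = e_+(-u'' + 2u' - u)$ and that $(D-i)^2 u = -u'' - 2i\cdot i u' - u = -u'' + 2u' - u$, so $e_+(-u''+2u'-u) = e_+ (D-i)^2 u$. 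Combining the delta terms gives $-u(0)\delta' + 2u(0)\delta - u'(0)\delta$, which is exactly the claimed expression.

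The only real subtlety — and hence the main obstacle — is justifying the passage from smooth compactly supported $v$ to general $u \in H^s_p(\overline{\mathbb{R}_+})$: one needs that $u \mapsto e_+ u$, $u \mapsto u(0)$, and $u \mapsto u'(0)$ are all continuous in the relevant topologies so that the identity, valid on a dense subset, persists. This is where the hypothesis $1 + 1/p < s < 2 + 1/p$ is essential, as it guarantees the trace map $u \mapsto (u(0), u'(0))$ is bounded on $H^s_p(\overline{\mathbb{R}_+})$; the rest is a routine limiting argument of the kind already used in Lemma \ref{lemma:Deu}. I would close by remarking that the result should be stated carefully as an identity in $S'(\mathbb{R})$, since the $\delta'$ term means the left-hand side is genuinely more singular than an $L_p$ function.
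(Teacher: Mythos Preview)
Your proposal is correct and matches the paper's proof essentially line for line: the paper also first establishes $(e_+ v)'' = v(0)\,\delta' + v'(0)\,\delta + e_+ v''$ for $v \in r_+ C^\infty_0(\mathbb{R})$ (by differentiating the identity from Lemma~\ref{lemma:Deu} once more rather than integrating by parts twice from scratch, but this is the same thing), then invokes the trace theorem for $1+1/p < s < 2+1/p$ and extends by continuity, and finally carries out exactly the algebraic expansion of $(D-i)^2 e_+ u$ that you wrote down.
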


\begin{proof}
We first show that if $v \in r_+ C^\infty_0(\mathbb{R})$, then $(e_+ v)'' = v(0) \delta' +v'(0)\delta  + e_+v''$. From the proof of Lemma \ref{lemma:Deu},
\begin{align*}
(e_+ v)'' & = (v(0) \delta + e_+v')' \\
& = v(0) \delta' +v'(0)\delta  + e_+v''.
\end{align*}

Since $1+1/p < s < 2+ 1/p$, the values $u(0)$ and $u'(0)$ are well-defined. (See, for example, Section 2.9, \cite{Tr}). Therefore, by continuity 
\begin{align*}
(D - i )^2 e_+ u &= D^2 e_+ u - 2i D e_+ u - e_+u\\
&= - \, (e_+ u)'' +2 (e_+u)' - e_+ u \\
&= [-e_+u'' - u(0)\delta'- u'(0)\delta]+2[e_+u'+u(0)\delta]-e_+u\\
&= -e_+(u''-2u'+u)-u(0)\delta'+2u(0)\delta-u'(0)\delta \\
&=e_+ (D-i)^2 u -u(0) \, \delta' + 2u(0) \, \delta - u'(0)\, \delta,
\end{align*}
as required. \\
\end{proof}

\begin{lemma} \label{lemma:us}
Suppose $1< p < \infty, \, 1/p < s < 1 + 1/p$ and $u \in H^s_p(\overline{\mathbb{R}_+})$. Let 
\begin{equation*} 
u_s := (D+i)^{s-1}e_+ (D-i)u. 
\end{equation*}
Then $u_s \in L_p(\mathbb{R})$ with supp $u_s \subseteq \overline{\mathbb{R}_+}$. \\
\end{lemma}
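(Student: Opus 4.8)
The plan is to read $u_s$ as obtained from $u$ by three successive operations — apply $D-i$, extend by zero, apply $(D+i)^{s-1}$ — and to track $L_p$-membership and the support condition through each of them. The whole point of the hypothesis $1/p<s<1+1/p$ is that $\sigma:=s-1$ lies in the interval $-1+1/p<\sigma<1/p$: the upper end $\sigma<1/p$ guarantees that the zero-extension stays inside $H^\sigma_p(\mathbb{R})$ (no jump at the origin is introduced), while both ends together give the identification $e_+H^\sigma_p(\overline{\mathbb{R}_+})=\widetilde H^\sigma_p(\overline{\mathbb{R}_+})$ which I will use for the support statement.

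\textbf{Membership in $L_p(\mathbb{R})$.} Since $D-i$ is a first order differential operator it acts locally, so $(D-i)u:=r_+(D-i)U$ is well defined (independent of the $H^s_p(\mathbb{R})$-extension $U$ of $u$), and as $\xi\mapsto(\xi-i)\langle\xi\rangle^{-1}$ is a Fourier multiplier on $L_p(\mathbb{R})$ one gets $(D-i)u\in H^{s-1}_p(\overline{\mathbb{R}_+})$ with a bound by $\|u\mid H^s_p(\overline{\mathbb{R}_+})\|$. Because $-1+1/p<s-1<1/p$, extension by zero maps $H^{s-1}_p(\overline{\mathbb{R}_+})$ boundedly into $H^{s-1}_p(\mathbb{R})$, onto $\widetilde H^{s-1}_p(\overline{\mathbb{R}_+})$ (Section 2.8.7, p.\,158, \cite{Tr83}, as already invoked in this thesis); hence $e_+(D-i)u\in\widetilde H^{s-1}_p(\overline{\mathbb{R}_+})\subseteq H^{s-1}_p(\mathbb{R})$, with support in $\overline{\mathbb{R}_+}$. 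Finally, $(\xi+i)^{s-1}\langle\xi\rangle^{1-s}$ is a bounded, infinitely smooth function with bounded derivatives — note that for $\xi\in\mathbb{R}$ the point $\xi+i$ never leaves the open upper half-plane, so it stays away from the branch cut of $z\mapsto z^{s-1}$ — hence it is a Fourier $L_p$ multiplier, and $(D+i)^{s-1}=\mathcal{F}^{-1}(\xi+i)^{s-1}\mathcal{F}$ is bounded from $H^{s-1}_p(\mathbb{R})$ to $H^0_p(\mathbb{R})=L_p(\mathbb{R})$. Composing these three maps gives $u_s\in L_p(\mathbb{R})$.

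\textbf{Support.} This is the only step requiring a genuine argument, and I would prove it by approximation. Using the density of $C^\infty_0(\mathbb{R}_+)$ in $\widetilde H^{s-1}_p(\overline{\mathbb{R}_+})$ (Section 2.10.3, p.\,231, \cite{Tr}), pick $\varphi_n\in C^\infty_0(\mathbb{R}_+)$ with $\varphi_n\to e_+(D-i)u$ in $H^{s-1}_p(\mathbb{R})$. For each fixed $n$, $\widehat{\varphi_n}$ extends to an entire function that on every horizontal line $\operatorname{Im}\zeta=v\ge0$ decays faster than any power of $|\zeta|$ (Paley--Wiener), while $(\zeta+i)^{s-1}$ is holomorphic and of polynomial growth on the closed upper half-plane; Cauchy's theorem then lets us push the inversion contour of $\mathcal{F}^{-1}[(\xi+i)^{s-1}\widehat{\varphi_n}]$ upwards, forcing $(D+i)^{s-1}\varphi_n$ to vanish on $(-\infty,0)$. (Equivalently, this is the support-preservation property of the "plus" operators $\Lambda^\lambda_+$, cf.\ \cite{Es}.) Thus each $(D+i)^{s-1}\varphi_n$ is supported in $\overline{\mathbb{R}_+}$, and $(D+i)^{s-1}\varphi_n\to u_s$ in $L_p(\mathbb{R})$ by the boundedness proved above; passing to an almost everywhere convergent subsequence yields $u_s=0$ a.e.\ on $\mathbb{R}_-$, i.e.\ $\operatorname{supp}u_s\subseteq\overline{\mathbb{R}_+}$, whence $e_+r_+u_s=u_s$.

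The multiplier estimates in the first two moves are routine. The crux is the support claim: it hinges on $(\xi+i)^{s-1}$ admitting a holomorphic, polynomially bounded continuation to the closed upper half-plane, combined with a Paley--Wiener contour shift applied to a dense set of test functions — or, equivalently, on the known mapping properties of Eskin's $\Lambda^\lambda_+$ on the spaces $\widetilde H^\sigma_p(\overline{\mathbb{R}_+})$.
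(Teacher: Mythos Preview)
Your proof is correct and follows the same three-step decomposition as the paper: show $(D-i)u\in H^{s-1}_p(\overline{\mathbb{R}_+})$, use the identification $e_+H^{s-1}_p(\overline{\mathbb{R}_+})=\widetilde H^{s-1}_p(\overline{\mathbb{R}_+})$ for $-1+1/p<s-1<1/p$, and then argue that $(D+i)^{s-1}$ preserves support in $\overline{\mathbb{R}_+}$ while mapping into $L_p(\mathbb{R})$. The only difference is in the last step: the paper simply invokes Theorem~1.9, p.\,52 of \cite{Shar} (the support-preservation property of ``plus'' operators on $\widetilde H^\sigma_p(\overline{\mathbb{R}_+})$), whereas you unpack this by approximating in $C^\infty_0(\mathbb{R}_+)$ and running the Paley--Wiener contour shift --- which is, in effect, a sketch of how that theorem is proved, and you yourself note the equivalence with the Eskin $\Lambda^\lambda_+$ theory.
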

\begin{proof}
Let $u_{(-1)} = (D  - i ) u$. Since $u \in H^s_p(\overline{\mathbb{R}_+})$, we have $u=r_+ u_0$ for some $u_0 \in H^s_p(\mathbb{R})$. Hence
\begin{equation*}
u_{(-1)} = (D - i)  u = (D  - i )  r_+ u_0 = r_+ (D  - i )   u_0.
\end{equation*}
But $(D  - i )   u_0 \in H^{s-1}_p(\mathbb{R})$ and, therefore, $
u_{(-1)} \in H^{s-1}_p(\overline{\mathbb{R}_+})$. \\

Since, by hypothesis, $1/p - 1 < s-1 < 1/p$, from Section 2.10.3, p.\,232, \cite{Tr}, we have 
$e_+ u_{(-1)} \in H^{s-1}_p(\mathbb{R}) \text{ and, of course, supp } e_+ u_{(-1)} \subseteq \overline{\mathbb{R}_+}$. 
Now since
\begin{equation*} 
u_s = (D + i )^{s-1} e_+ u_{(-1)},
\end{equation*}
then, see Theorem 1.9, p.\,52, \cite{Shar}, we have $u_s \in L_p(\mathbb{R})$ and $\text{supp } u_s \subseteq \overline{\mathbb{R}_+}$.Ê

This completes the proof of the lemma. \\
\end{proof}

The following counterpart of Lemma \ref{lemma:us} applies in the case of higher regularity, namely $1+1/p < s < 2+1/p$.
\begin{lemma} \label{lemma:u2s}
Suppose $1< p < \infty, \, 1+1/p < s < 2 + 1/p$ and $u \in H^s_p(\overline{\mathbb{R}_+})$. Let $u_s := (D+i)^{s-2}e_+ (D-i)^2u$. Then $u_s \in L_p(\mathbb{R})$ with supp $u_s \subseteq \overline{\mathbb{R}_+}$.
\end{lemma}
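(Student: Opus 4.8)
The plan is to follow the proof of Lemma \ref{lemma:us} essentially verbatim, replacing the single factor $(D-i)$ by $(D-i)^2$ and shifting the Sobolev index down by one extra unit; the hypothesis on $s$ has been chosen precisely so that everything still lands in the admissible range.

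First I would set $u_{(-2)} := (D-i)^2 u$. By the definition of $H^s_p(\overline{\mathbb{R}_+})$ there is some $u_0 \in H^s_p(\mathbb{R})$ with $u = r_+ u_0$, and since $D-i$ is a (local) differential operator, restriction commutes with it, so that
\[
u_{(-2)} = (D-i)^2 r_+ u_0 = r_+ (D-i)^2 u_0.
\]
As $(D-i)^2 u_0 \in H^{s-2}_p(\mathbb{R})$, this gives $u_{(-2)} \in H^{s-2}_p(\overline{\mathbb{R}_+})$. The key arithmetic point is that the assumption $1+1/p < s < 2+1/p$ is exactly equivalent to $1/p - 1 < s-2 < 1/p$, i.e. $s-2$ lies in the same interval as $s-1$ did in Lemma \ref{lemma:us}. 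Hence, by Section 2.10.3, p.\,232, \cite{Tr}, we obtain $e_+ u_{(-2)} \in H^{s-2}_p(\mathbb{R})$, with $\operatorname{supp} e_+ u_{(-2)} \subseteq \overline{\mathbb{R}_+}$ being automatic.

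Finally, since $u_s = (D+i)^{s-2} e_+ u_{(-2)}$ with $e_+ u_{(-2)} \in H^{s-2}_p(\mathbb{R})$ supported in $\overline{\mathbb{R}_+}$, Theorem 1.9, p.\,52, \cite{Shar} applies and yields $u_s \in L_p(\mathbb{R})$ with $\operatorname{supp} u_s \subseteq \overline{\mathbb{R}_+}$, completing the proof. The hard part, such as it is, is not any single estimate but verifying that the index $s-2$ genuinely falls into the interval needed for the Triebel extension identification and the Shargorodsky lifting statement; the commutation of $(D-i)^2$ with $r_+$ at the distributional level is handled exactly as in the scalar case of Lemma \ref{lemma:us}, so no real obstacle arises.
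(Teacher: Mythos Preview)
Your proposal is correct and is exactly the approach the paper intends: the paper's own proof simply reads ``The proof follows the method used in the proof of Lemma~\ref{lemma:us},'' and you have carried out precisely that replacement of $(D-i)$ by $(D-i)^2$ together with the index shift $s-1 \mapsto s-2$, invoking the same two references (Triebel, Section~2.10.3, and \cite{Shar}, Theorem~1.9) at the corresponding points.
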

\begin{proof}
The proof follows the method used in the proof of Lemma \ref{lemma:us}. \\
\end{proof}

\begin{lemma} \label{lemma:rLambdae}
Suppose $1< p < \infty$ and $\sigma, \nu \in \mathbb{R}$. Let $l_+ :H^{\sigma}_p(\overline{\mathbb{R}_+}) \to H^{\sigma}_p(\mathbb{R})$ be an arbitrary extension operator. Then  $\Lambda^{\nu}_- = r_+ (D - i )^{\nu} l_+$ is bounded from $H^\sigma_p(\overline{\mathbb{R}_+})$ to $H^{\sigma-\nu}_p(\overline{\mathbb{R}_+})$, and does not depend on the choice of extension $l_+$. Moreover,
\begin{equation*} 
(r_+ (D - i )^{\nu} l_+)r_+ =  r_+ (D - i )^{\nu}. \\
\end{equation*}
\end{lemma}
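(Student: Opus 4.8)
## Proof proposal for Lemma \ref{lemma:rLambdae}

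The plan is to establish the three assertions in sequence: boundedness of $\Lambda^\nu_- = r_+(D-i)^\nu l_+$, independence from the choice of extension $l_+$, and the identity $(r_+(D-i)^\nu l_+)r_+ = r_+(D-i)^\nu$. The first is immediate from composition: $l_+ : H^\sigma_p(\overline{\mathbb{R}_+}) \to H^\sigma_p(\mathbb{R})$ is bounded by definition of the extension operator, the Fourier multiplier operator $(D-i)^\nu = \mathcal{F}^{-1}(\xi - i)^\nu \mathcal{F}$ maps $H^\sigma_p(\mathbb{R})$ boundedly to $H^{\sigma-\nu}_p(\mathbb{R})$ because $\langle\xi\rangle^{\sigma-\nu}(\xi-i)^\nu\langle\xi\rangle^{-\sigma}$ is a Fourier $L_p$-multiplier (it is smooth and satisfies Mikhlin-type bounds, being $O(1)$ together with its derivatives), and $r_+ : H^{\sigma-\nu}_p(\mathbb{R}) \to H^{\sigma-\nu}_p(\overline{\mathbb{R}_+})$ is bounded (indeed norm-nonincreasing) directly from the definition \eqref{Hsp+norm} of the quotient-type norm.

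For the independence from the extension, I would take two extension operators $l_+^{(1)}, l_+^{(2)}$ and a function $u \in H^\sigma_p(\overline{\mathbb{R}_+})$, and set $w := l_+^{(1)}u - l_+^{(2)}u$. By construction $r_+ w = u - u = 0$, so $\operatorname{supp} w \subseteq \overline{\mathbb{R}_-}$, i.e. $w \in \widetilde{H}^\sigma_p(\overline{\mathbb{R}_-})$. The key point is then that $(D-i)^\nu$ preserves support in $\overline{\mathbb{R}_-}$: the symbol $(\xi - i)^\nu$ extends to a bounded analytic function on the lower half-plane $\{\operatorname{Im}\xi < 0\}$ (since $\xi - i$ avoids the branch cut along the negative reals there, and grows only polynomially), so by a Paley--Wiener type argument $\mathcal{F}^{-1}(\xi-i)^\nu\mathcal{F}$ maps distributions supported in $\overline{\mathbb{R}_-}$ to distributions supported in $\overline{\mathbb{R}_-}$. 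Hence $r_+(D-i)^\nu w = 0$, which gives $r_+(D-i)^\nu l_+^{(1)}u = r_+(D-i)^\nu l_+^{(2)}u$. This support-preservation property is the main obstacle — it needs the analyticity/Paley--Wiener argument done carefully, though it is standard (cf. the treatment of half-line Wiener--Hopf symbols in \cite{Es}); one can alternatively cite Theorem 1.9, p.\,52, \cite{Shar} as was done in the proof of Lemma \ref{lemma:us}.

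Finally, for the identity $(r_+(D-i)^\nu l_+)r_+ = r_+(D-i)^\nu$, I would apply both sides to an arbitrary $v \in H^\sigma_p(\mathbb{R})$. Write $l_+ r_+ v = v + (l_+r_+v - v)$; since $r_+(l_+r_+v - v) = r_+v - r_+v = 0$, the difference $g := l_+r_+v - v$ is supported in $\overline{\mathbb{R}_-}$. By the same support-preservation property used above, $(D-i)^\nu g$ is supported in $\overline{\mathbb{R}_-}$, so $r_+(D-i)^\nu g = 0$, whence $r_+(D-i)^\nu l_+ r_+ v = r_+(D-i)^\nu(v + g) = r_+(D-i)^\nu v$. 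Since $v$ was arbitrary, the operator identity follows. All three parts thus reduce to one nontrivial ingredient — the lower-half-plane analyticity of $(\xi - i)^\nu$ forcing $(D-i)^\nu$ to respect the support condition $\operatorname{supp}(\cdot)\subseteq\overline{\mathbb{R}_-}$ — with everything else being routine composition and the definition of the quotient norm.
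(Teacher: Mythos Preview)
Your proposal is correct and follows essentially the same approach as the paper. The paper's proof simply cites the relevant results from \cite{Shar} (Theorems 1.10, 1.12 and Remark 1.11), all of which rest on precisely the ingredient you isolate: the symbol $(\xi - i)^\nu$ is analytic with polynomial growth in the lower half-plane, so $(D-i)^\nu$ preserves support in $\overline{\mathbb{R}_-}$; you have just unpacked what those citations contain. (One small verbal slip: you write ``bounded analytic'' for $(\xi-i)^\nu$ in the lower half-plane, which is only true for $\nu\le 0$; the polynomial-growth qualifier you add immediately afterward is what is actually needed and used.)
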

\begin{proof}
From Theorem 1.12, p. 54, \cite{Shar}, the pseudodifferential operator $(D-i)^\nu$ is bounded from $H^\sigma_p(\mathbb{R})$ to $H^{\sigma-\nu}_p(\mathbb{R})$. In addition, its symbol $(\xi - i)^{\nu}$ admits an analytic continuation with respect to $\xi$ to the lower complex half-plane such that
\begin{equation*}
| (\xi + i\tau - i)^\nu | \leq ( |\xi| + |\tau| + 1 )^{\max \{0, \nu\}}, \quad \tau \leq 0. \\
\end{equation*}

Therefore, from Theorem 1.10, p.\,53, \cite{Shar}, $\Lambda^{\nu}_- = r_+ (D - i )^{\nu} l_+$ is continuous from $H^{\sigma}_p(\overline{\mathbb{R}_+})$ to $H^{\sigma-\nu}_p(\overline{\mathbb{R}_+})$, and does \textit{not} depend on the choice of the extension $l_+$.\\ 

Finally, by Remark 1.11, p.\,53 \cite{Shar}, we also have 
\begin{equation*}
(r_+ (D - i )^{\nu} l_+ ) r_+ = r_+ (D - i )^{\nu}.
\end{equation*} 
This completes the proof of the lemma. \\
\end{proof}

\begin{lemma} \label{phiuu0small}
Suppose $1 < p < \infty$ and $1/p < s < 1 + 1/p$. If $\varphi \in r_+ S(\mathbb{R})$ then the map  $T_\varphi: H^s_p(\overline{\mathbb{R}_+}) \to r_+ \widetilde{H}^s_p(\overline{\mathbb{R}_+})$ given by
\begin{equation*}
(T_\varphi u)(x) = \varphi(x)  (u(x)-u(0)) \quad (x > 0),
\end{equation*}
is bounded.
\end{lemma}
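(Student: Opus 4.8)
The plan is to reduce the statement to two facts: that $u \mapsto u - u(0)r_+\varphi_0$ maps $H^s_p(\overline{\mathbb{R}_+})$ into $r_+\widetilde{H}^s_p(\overline{\mathbb{R}_+})$ for a suitable fixed cut-off $\varphi_0$, and that multiplication by a function in $r_+S(\mathbb{R})$ preserves $r_+\widetilde{H}^s_p(\overline{\mathbb{R}_+})$. Concretely, first I would observe that since $1/p < s < 1+1/p$ the trace $u(0)$ is well-defined and the trace map $H^s_p(\overline{\mathbb{R}_+}) \to \mathbb{C}$, $u \mapsto u(0)$, is bounded (Section 2.9, \cite{Tr}). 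Fix once and for all a function $\varphi_0 \in r_+ S(\mathbb{R})$ (or $r_+C^\infty_0$) with $\varphi_0(0) = 1$. Then $u_0 := u - u(0)\varphi_0 \in H^s_p(\overline{\mathbb{R}_+})$ with $u_0(0)=0$, and the map $u \mapsto u_0$ is bounded by boundedness of the trace. By Lemma 1.15, p.\,55, \cite{Shar} (cited in the same way in the proof of Lemma \ref{AuHsp}), the condition $u_0(0)=0$ together with $1/p < s < 1+1/p$ gives $e_+ u_0 \in \widetilde{H}^s_p(\overline{\mathbb{R}_+})$, i.e.\ $u_0 \in r_+\widetilde{H}^s_p(\overline{\mathbb{R}_+})$, with the corresponding norm estimate.

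Next I would write
\begin{equation*}
\varphi(x)(u(x)-u(0)) = \varphi(x)u_0(x) + u(0)\,\varphi(x)(\varphi_0(x)-1),
\end{equation*}
and treat the two terms separately. For the first term, $\varphi u_0$: since $u_0 \in r_+\widetilde{H}^s_p(\overline{\mathbb{R}_+})$ and $\varphi \in r_+S(\mathbb{R})$, multiplication by $\varphi$ is bounded on $\widetilde{H}^s_p(\overline{\mathbb{R}_+})$ — this is essentially Lemma \ref{multxgammaa} with $\gamma = 0$ (multiplication by a restricted Schwartz function preserves $\widetilde{H}^s_p(\overline{\mathbb{R}_+})$), so $\varphi u_0 \in r_+\widetilde{H}^s_p(\overline{\mathbb{R}_+})$ with the right bound. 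For the second term, note $\psi := \varphi\cdot(\varphi_0-1) \in r_+S(\mathbb{R})$ and $\psi(0) = \varphi(0)\cdot 0 = 0$; hence $\psi \in r_+\widetilde{H}^s_p(\overline{\mathbb{R}_+})$ (again vanishing trace plus Lemma 1.15, \cite{Shar}, or directly since $e_+\psi$ is a compactly supported function vanishing to the right order at $0$), and $u \mapsto u(0)\psi$ is bounded from $H^s_p(\overline{\mathbb{R}_+})$ into $r_+\widetilde{H}^s_p(\overline{\mathbb{R}_+})$ because it is the composition of the bounded trace functional with a fixed vector. Adding the two bounded contributions gives boundedness of $T_\varphi$.

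The only genuinely delicate point is the passage from "$u_0(0)=0$ and $1/p<s<1+1/p$" to "$e_+u_0 \in \widetilde{H}^s_p(\overline{\mathbb{R}_+})$ with norm control"; for $s<1/p$ no trace condition is needed and for $s>1/p$ the vanishing trace is exactly what makes the zero-extension stay in the space. I would invoke Lemma 1.15, p.\,55, \cite{Shar} (the same reference already used in Lemma \ref{AuHsp}) for this, rather than reprove it. Everything else — boundedness of the trace, and stability of $\widetilde{H}^s_p(\overline{\mathbb{R}_+})$ under multiplication by restricted Schwartz functions — is standard and available from the results already quoted (Section 2.9, \cite{Tr}, and Lemma \ref{multxgammaa}). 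So the main obstacle is purely bookkeeping about which half-open interval for $s$ forces the trace condition, and I would dispatch it by citation.
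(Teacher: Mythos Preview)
Your proof is correct but takes a somewhat different route from the paper's. The paper exploits the fact that $H^s_p(\overline{\mathbb{R}_+})$ is a Banach algebra for $s>1/p$ (Section 2.8.3, Remark 3, p.\,146, \cite{Tr83}): since $\varphi$ and $u$ both lie in this algebra, so does $\varphi(u-u(0))$; the product visibly vanishes at $0$, and then Corollary 3.4.3, p.\,210, \cite{Tr83} places it in $r_+\widetilde{H}^s_p(\overline{\mathbb{R}_+})$. The norm bound follows directly from the algebra inequality together with $|u(0)|\le\text{const}\,\|u\|_{s,p}$ from Sobolev embedding. No auxiliary cut-off or decomposition is needed.

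Your approach---subtract a fixed $u(0)\varphi_0$ to land in $r_+\widetilde{H}^s_p$ first, then use stability of that space under multiplication by restricted Schwartz functions (Lemma \ref{multxgammaa} with $\gamma=0$), then handle the rank-one remainder---works fine and relies on the same underlying fact (vanishing trace characterises $r_+\widetilde{H}^s_p$ in this range of $s$). The paper's argument is shorter because the Banach-algebra property absorbs the multiplication step in one stroke; your version has the minor advantage of not needing the algebra structure, only the multiplier property already established in Lemma \ref{multxgammaa}.
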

\begin{proof}
Firstly, we note that $\varphi \in H^s_p(\overline{\mathbb{R}_+})$. Moreover, since $s > 1/p, \, H^s_p(\overline{\mathbb{R}_+})$ is a Banach algebra. (See Section 2.8.3, Remark 3, p. 146, \cite{Tr83}.) Hence, $\varphi u \in H^s_p(\overline{\mathbb{R}_+})$ and
\begin{equation*}
\varphi(x)  (u(x)-u(0)) \in H^s_p(\overline{\mathbb{R}_+}).
\end{equation*}
Bur $\varphi(x)  (u(x)-u(0)) \big |_{x=0} = 0$, and hence by Corollary 3.4.3, p. 210, \cite{Tr83}, 
\begin{equation*}
T_\varphi u \in r_+ \widetilde{H}^s_p(\overline{\mathbb{R}_+}).
\end{equation*}
Finally, we note that
\begin{align*}
\| T_\varphi u | r_+ \widetilde{H}^s_p(\overline{\mathbb{R}_+}) \| & = \| \varphi(x)  (u(x)-u(0)) | r_+ \widetilde{H}^s_p(\overline{\mathbb{R}_+}) \| \\
& \leq \text{ const } \| \varphi(x)  (u(x)-u(0)) | {H}^s_p(\overline{\mathbb{R}_+}) \| \\
& \leq \text{ const } \| \varphi  u \|_{s,p} + \| \varphi u(0) \|_{s,p} \\
& \leq \text{ const } \|u \|_{s,p},
\end{align*}
since $s > 1/p$,  and thus $|u(0)| \leq  \|u \|_{s,p}$ by the Sobolev embedding theorem. \\
\end{proof}

\begin{remark} \label{phiuu0big}
Using the same method of proof as Lemma \ref{phiuu0small}, it is easy to show that if $1 + 1/p < s < 2 +1/p$, then the map  $T_\varphi: H^s_{p,0}(\overline{\mathbb{R}_+}) \to r_+ \widetilde{H}^s_p(\overline{\mathbb{R}_+})$, as defined above, is also bounded. \\
\end{remark}

\begin{lemma} \label{FinverseKU}
Suppose $\alpha < 1$. Then, for $x>0$,
\begin{align*}
\mathcal{F}^{-1} (1 + \xi^2)^{\alpha-1} &= \dfrac{2^\alpha}{\Gamma(1-\alpha)} \, x^{-\alpha + \frac{1}{2}} \, K_{\alpha - \frac{1}{2}}(x) \\
&= \sqrt{\dfrac{\pi}{2}}  \, \dfrac{2^{2 \alpha}}{\Gamma(1-\alpha)} \, e^{-x} \, U(\alpha, 2\alpha, 2x),
\end{align*}
where $K_\nu(x)$ and $U(a,b,x)$ denote the modified Bessel function and confluent hypergeometric function respectively.
\end{lemma}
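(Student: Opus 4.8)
The plan is to compute the inverse Fourier transform of $(1+\xi^2)^{\alpha-1}$ directly, using the standard integral representation of the modified Bessel function, and then convert the $K$-function answer into the confluent hypergeometric form via a known identity. First I would recall the classical formula (for instance from Gradshteyn--Ryzhik, in the same spirit as the tables already cited, e.g.\ 3.478 or the companion entry for the Bessel kernel) that for $\mu > 0$ and $x > 0$,
\begin{equation*}
\frac{1}{\sqrt{2\pi}}\int_{\mathbb{R}} e^{-ix\xi}(1+\xi^2)^{-\mu}\,d\xi = \frac{\sqrt{2\pi}}{2^{\mu-1/2}\,\Gamma(\mu)}\,|x|^{\mu-1/2}\,K_{\mu-1/2}(|x|),
\end{equation*}
and apply it with $\mu = 1-\alpha$ (valid since $\alpha < 1$ forces $\mu > 0$), taking $x > 0$ so that $|x| = x$. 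Substituting $\mu = 1-\alpha$ gives $\mu - 1/2 = \tfrac12 - \alpha$, and $K_{1/2-\alpha} = K_{\alpha-1/2}$ by the parity relation 10.27.3 of \cite{NIST} already invoked in the proof of Lemma \ref{Lemmexplicit}; collecting the constants $\sqrt{2\pi}/2^{\,1/2-\alpha} = 2^\alpha \sqrt{\pi}/\sqrt{2^{-2\alpha}}\cdots$ — more carefully, $\sqrt{2\pi}\big/(2^{\,1/2-\alpha}\Gamma(1-\alpha)) = 2^{\alpha}/\Gamma(1-\alpha)$ after using $\sqrt{2\pi} = \sqrt{2}\sqrt{\pi}$ and $2^{\,1/2-\alpha} = \sqrt 2 \cdot 2^{-\alpha}$ — yields precisely the first displayed expression $\tfrac{2^\alpha}{\Gamma(1-\alpha)}x^{-\alpha+1/2}K_{\alpha-1/2}(x)$.

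For the second equality I would use the standard relation between $K_\nu$ and the confluent hypergeometric function $U$: for $x>0$,
\begin{equation*}
K_\nu(x) = \sqrt{\pi}\,(2x)^{\nu}\,e^{-x}\,U\!\left(\nu+\tfrac12,\,2\nu+1,\,2x\right),
\end{equation*}
which is the well-known representation (see 13.6.10 of \cite{AandS}, or the $K$-Bessel entry in \cite{NIST}, Chapter 10 / confluent hypergeometric section). Applying this with $\nu = \alpha - \tfrac12$ gives $2\nu+1 = 2\alpha$ and $\nu + \tfrac12 = \alpha$, so $K_{\alpha-1/2}(x) = \sqrt{\pi}\,(2x)^{\alpha-1/2}e^{-x}U(\alpha,2\alpha,2x)$. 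Multiplying by the prefactor $\tfrac{2^\alpha}{\Gamma(1-\alpha)}x^{-\alpha+1/2}$, the powers of $x$ cancel ($x^{-\alpha+1/2}\cdot x^{\alpha-1/2} = 1$) and the powers of $2$ combine as $2^\alpha \cdot 2^{\alpha-1/2} = 2^{2\alpha-1/2} = 2^{2\alpha}/\sqrt 2$, so that the constant becomes $\sqrt{\pi}\,2^{2\alpha}/(\sqrt2\,\Gamma(1-\alpha)) = \sqrt{\pi/2}\;2^{2\alpha}/\Gamma(1-\alpha)$, which is exactly the claimed second form.

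I do not anticipate a serious obstacle here; the proof is essentially bookkeeping with two classical special-function identities. The only points requiring a little care are: (i) checking the constant $\mu = 1-\alpha$ is in the admissible range for the Bessel-kernel formula (it is, since $0<\alpha<1$ — in fact the statement only needs $\alpha<1$, consistent with the hypothesis), and that the formula is being applied with the paper's Fourier normalization \eqref{FTdefinition}, which carries the $1/\sqrt{2\pi}$ factor; (ii) tracking the various powers of $2$ and $\sqrt 2$ so the two stated constants genuinely match; and (iii) invoking the $U$-representation of $K_\nu$ with the correct parameters. If one prefers to avoid quoting the $K_\nu$–$U$ identity directly, an alternative is to start instead from the representation $(1+\xi^2)^{\alpha-1} = \tfrac{1}{\Gamma(1-\alpha)}\int_0^\infty t^{-\alpha}e^{-t(1+\xi^2)}\,dt$, interchange the order of integration (justified by absolute convergence), evaluate the resulting Gaussian Fourier integral as in Lemma \ref{Lem1minuse}, and then recognise the remaining $t$-integral as an integral representation of $U$; but the two-identity route above is shorter and fits the style of the surrounding lemmas.
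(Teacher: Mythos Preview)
Your approach is essentially the paper's: obtain the Bessel-$K$ form of the inverse transform from a tabulated identity (the paper derives it explicitly from the cosine integral GR~3.771.2, you quote the Fourier pair directly), then convert to $U$ via $K_\nu(z)=\sqrt{\pi}\,(2z)^\nu e^{-z}U(\nu+\tfrac12,2\nu+1,2z)$, which is exactly the paper's second step (10.39.6, \cite{NIST}). The only slip is a stray $\sqrt{\pi}$ in your quoted formula --- with the normalization \eqref{FTdefinition} one has $\tfrac{1}{\sqrt{2\pi}}\int_{\mathbb{R}} e^{-ix\xi}(1+\xi^2)^{-\mu}\,d\xi = \tfrac{2^{1-\mu}}{\Gamma(\mu)}\,|x|^{\mu-1/2}K_{\mu-1/2}(|x|)$, and indeed $\sqrt{2\pi}/2^{1/2-\alpha}=\sqrt{\pi}\cdot 2^{\alpha}$, not $2^{\alpha}$ --- but this is arithmetic, not a gap.
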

\begin{proof} By definition, for $x>0$, 
\begin{align*}
\mathcal{F}^{-1} (1 + \xi^2)^{\alpha-1} &= \dfrac{1}{\sqrt{2\pi}} \int^\infty_{-\infty} (1+\xi^2)^{\alpha-1} e^{-i \xi x} \, d\xi\\
&= \dfrac{1}{\sqrt{2\pi}} \int^\infty_{-\infty} (1+\xi^2)^{\alpha-1} \cos \xi x \, d\xi\\
&= \dfrac{2}{\sqrt{2\pi}} \int^\infty_{0} (1+\xi^2)^{\alpha-1} \cos \xi x \, d\xi.\\
\end{align*}
From  3.771 2, p.\,445, \cite{GR}, we have
\begin{equation*}
\int^\infty_0 (1+\xi^2)^{\nu - \frac{1}{2}} \cos \xi x \, d\xi = \dfrac{1}{\sqrt{\pi}} \, \bigg ( \dfrac{2}{x} \bigg )^\nu \cos (\pi \nu) \, \Gamma(\nu + \frac{1}{2}) \, K_{-\nu}(x),
\end{equation*}
provided $x>0$ and $\nu < \frac{1}{2}$. Hence, taking $\nu = \alpha - \frac{1}{2}$ we have
\begin{align*}
\mathcal{F}^{-1} (1 + \xi^2)^{\alpha-1} &= \dfrac{2}{\sqrt{2\pi}} \cdot \dfrac{1}{\sqrt{\pi}} \, \bigg ( \dfrac{2}{x} \bigg )^{\alpha - \frac{1}{2}} \textstyle \cos \pi (\alpha - \frac{1}{2}) \, \Gamma(\alpha) \, K_{\frac{1}{2}-\alpha}(x) \\
&= \dfrac{\sqrt{2}}{\pi} \cdot 2^{\alpha-\frac{1}{2}} \, \sin (\pi \alpha)  \, \Gamma(\alpha) \, x^{-\alpha + \frac{1}{2}} \, K_{\alpha - \frac{1}{2}}(x) \qquad (K_{-\nu}(x) =  K_{\nu}(x))\\
&= \dfrac{2^\alpha}{\pi} \cdot \dfrac{\pi}{\Gamma(1-\alpha)} \, x^{-\alpha + \frac{1}{2}} \, K_{\alpha - \frac{1}{2}}(x) \qquad (\text{see }  \,  5.5.3, \, \cite{NIST})\\
&= \dfrac{2^\alpha}{\Gamma(1-\alpha)} \, x^{-\alpha + \frac{1}{2}} \, K_{\alpha - \frac{1}{2}}(x), \qquad \text{as required.}\\
\end{align*}

Finally, since $K_\nu(z) = \sqrt{\pi} \, (2z)^\nu \, e^{-z} \, U(\nu +\frac{1}{2}, 2 \nu + 1, 2z)$, see 10.39.6, \cite{NIST}, we have
\begin{align*}
\mathcal{F}^{-1} (1 + \xi^2)^{\alpha-1} &= \dfrac{2^\alpha}{\Gamma(1-\alpha)} \, x^{-\alpha + \frac{1}{2}} \cdot  \sqrt{\pi} \, (2x)^{\alpha - \frac{1}{2}} \, e^{-x} \, U(\alpha, 2 \alpha, 2x)\\
&= \sqrt{\dfrac{\pi}{2}} \, \dfrac{2^{2\alpha}}{\Gamma(1-\alpha)} \, e^{-x} \, U(\alpha, 2 \alpha, 2x).
\end{align*} 
This completes the proof of the lemma. \\
\end{proof}

\begin{lemma} \label{lemma:rA(s-1)delta}
Suppose $\alpha<1$. Then
\begin{equation*}
(r_+ \, A^{-} \delta) (x) = C_{\alpha}  \, e^{-x} \, U(\alpha+1, 2 \alpha+1, 2x),
\end{equation*}
where the constant $C_{\alpha}$ depends only on $\alpha$, and is given by
\begin{equation} \label{defCalpha}
C_{\alpha} = -i \, \dfrac{\alpha \, 2^{2\alpha}}{\Gamma (1- \alpha)}.
\end{equation}
\end{lemma}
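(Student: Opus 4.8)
## Proof Proposal

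The plan is to compute $A^-\delta = A(D)(D-i)^{-1}\delta$ directly on the Fourier side and then invert. The symbol of $A^-$ is $(1+\xi^2)^\alpha(\xi-i)^{-1}$, and $\mathcal{F}\delta = 1/\sqrt{2\pi}$, so formally
\begin{equation*}
A^-\delta = \frac{1}{\sqrt{2\pi}}\,\mathcal{F}^{-1}\big[(1+\xi^2)^\alpha(\xi-i)^{-1}\big].
\end{equation*}
The first step is to massage this symbol into something whose inverse Fourier transform we already know. Using Remark \ref{complexexponent} (the factorization $(1+\xi^2)^\alpha = (1-i\xi)^\alpha(1+i\xi)^\alpha$, equivalently $(\xi-i)^\alpha(\xi+i)^\alpha$ up to the appropriate constants) one can write
\begin{equation*}
(1+\xi^2)^\alpha(\xi-i)^{-1} = (1+\xi^2)^{\alpha-1}\cdot\frac{1+\xi^2}{\xi-i} = (1+\xi^2)^{\alpha-1}(\xi+i)\cdot\frac{\xi-i}{\xi-i}\cdot(\ldots),
\end{equation*}
more precisely $(1+\xi^2)^\alpha(\xi-i)^{-1} = (1+\xi^2)^{\alpha-1}(\xi+i)$ since $(1+\xi^2) = (\xi-i)(\xi+i)$ on $\mathbb R$. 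So the problem reduces to computing $\mathcal{F}^{-1}[(1+\xi^2)^{\alpha-1}(\xi+i)]$.

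The second step is to split $(1+\xi^2)^{\alpha-1}(\xi+i) = \xi(1+\xi^2)^{\alpha-1} + i(1+\xi^2)^{\alpha-1}$. The piece $i(1+\xi^2)^{\alpha-1}$ is handled immediately by Lemma \ref{FinverseKU}. For $\xi(1+\xi^2)^{\alpha-1}$, note $\xi(1+\xi^2)^{\alpha-1} = \frac{1}{2\alpha}\,\frac{d}{d\xi}(1+\xi^2)^\alpha$ — wait, that is order $\alpha$, not integrable; better to write it as multiplication by $\xi$ under $\mathcal{F}^{-1}$, i.e. $\mathcal{F}^{-1}[\xi\, g(\xi)] = -\frac{1}{i}\frac{d}{dx}\mathcal{F}^{-1}[g](x) = i\frac{d}{dx}\mathcal{F}^{-1}[g]$ with our sign convention $D = i\partial_x$. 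So $\mathcal{F}^{-1}[\xi(1+\xi^2)^{\alpha-1}] = i\,\partial_x\big(\mathcal{F}^{-1}(1+\xi^2)^{\alpha-1}\big)$, and again Lemma \ref{FinverseKU} supplies $\mathcal{F}^{-1}(1+\xi^2)^{\alpha-1} = \frac{2^\alpha}{\Gamma(1-\alpha)}x^{-\alpha+1/2}K_{\alpha-1/2}(x)$ for $x>0$. Differentiating a $x^{-\alpha+1/2}K_{\alpha-1/2}(x)$ expression, using the Bessel recurrence $\frac{d}{dx}(x^{-\nu}K_\nu(x)) = -x^{-\nu}K_{\nu+1}(x)$ (NIST 10.29.4), collapses everything to a single term proportional to $x^{-\alpha+1/2}K_{\alpha+1/2}(x)$ — note the shift $\alpha-1/2 \mapsto \alpha+1/2$, which is exactly the order appearing in $U(\alpha+1,2\alpha+1,\cdot)$.

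The third step is bookkeeping: combine the two contributions, convert the resulting $x^{\mu}K_{\alpha+1/2}(x)$ back into confluent hypergeometric form via $K_\nu(z) = \sqrt{\pi}\,(2z)^\nu e^{-z}U(\nu+\tfrac12,2\nu+1,2z)$ (NIST 10.39.6) with $\nu = \alpha+\tfrac12$, which produces $e^{-x}U(\alpha+1,2\alpha+1,2x)$, and track the constant, which should assemble into $C_\alpha = -i\,\alpha 2^{2\alpha}/\Gamma(1-\alpha)$ (the $-i$ coming from the $(\xi-i)^{-1}$/$D-i$ bookkeeping, the $\alpha$ from the Bessel differentiation, the $2^{2\alpha}$ from the various dyadic factors). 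I expect the main obstacle to be neither the Bessel identities nor the $U$-conversion individually, but getting the constant and the $-i$ exactly right through the several rewrites — in particular being careful that on $\mathbb R$ we genuinely have $(1+\xi^2)^\alpha(\xi-i)^{-1} = (\xi+i)(1+\xi^2)^{\alpha-1}$ with no stray phase (which is legitimate here precisely because $1+\xi^2 = (\xi-i)(\xi+i)$ as an identity of genuine polynomials, so no branch issue arises), and that the distributional identification of $r_+\mathcal{F}^{-1}[\cdots]$ with the stated classical function on $x>0$ is valid because $(1+\xi^2)^{\alpha-1}(\xi+i)$ is, for $\alpha<1$, of order $2\alpha-1<1$, so its inverse transform is at worst a locally integrable function plus controlled terms and the pointwise formula holds for $x>0$. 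An alternative, possibly cleaner, route is to observe $A^-\delta = A^-(D-i)\,\theta_{\ldots}$-type manipulations or to use $(1+\xi^2)^\alpha(\xi-i)^{-1} = (\xi-i)^{\alpha-1}(\xi+i)^\alpha$ and appeal directly to a known pair for $\mathcal{F}^{-1}[(\xi-i)^{a}(\xi+i)^{b}]$ in terms of $U$ (Eskin-type tables), but the Bessel-differentiation route above is self-contained given Lemma \ref{FinverseKU}.
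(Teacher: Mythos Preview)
Your reduction is exactly the paper's: rewriting $(1+\xi^2)^\alpha(\xi-i)^{-1}=(\xi+i)(1+\xi^2)^{\alpha-1}$ amounts to
\[
A^-\delta=\frac{i}{\sqrt{2\pi}}\Bigl(1+\frac{d}{dx}\Bigr)\,\mathcal{F}^{-1}(1+\xi^2)^{\alpha-1},
\]
and then one invokes Lemma~\ref{FinverseKU}. The only divergence is in which half of Lemma~\ref{FinverseKU} you use afterwards, and here the paper's choice is tidier than yours.

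You propose the Bessel form $\mathcal{F}^{-1}(1+\xi^2)^{\alpha-1}=\frac{2^\alpha}{\Gamma(1-\alpha)}\,x^{1/2-\alpha}K_{\alpha-1/2}(x)$ and the recurrence $\frac{d}{dx}(x^{-\nu}K_\nu)=-x^{-\nu}K_{\nu+1}$. This is fine for the $\partial_x$-piece, but the undifferentiated ``$+i(1+\xi^2)^{\alpha-1}$'' piece contributes a $K_{\alpha-1/2}$ term, so you are left with $x^{1/2-\alpha}\bigl(K_{\alpha-1/2}(x)-K_{\alpha+1/2}(x)\bigr)$, not a single $K_{\alpha+1/2}$ term as your sketch suggests. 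Moreover, the conversion $K_\nu(z)=\sqrt{\pi}(2z)^\nu e^{-z}U(\nu+\tfrac12,2\nu+1,2z)$ with $\nu=\alpha+\tfrac12$ yields $U(\alpha+1,\,2\alpha+2,\,2x)$, not $U(\alpha+1,\,2\alpha+1,\,2x)$; you would still need a contiguous relation to land on the stated form.

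The paper sidesteps both issues by using the $U$-form from Lemma~\ref{FinverseKU} directly. The key observation is that $(1+\tfrac{d}{dx})\bigl(e^{-x}f(x)\bigr)=e^{-x}f'(x)$, so
\[
r_+A^-\delta=\frac{i}{2}\,\frac{2^{2\alpha}}{\Gamma(1-\alpha)}\,e^{-x}\,\frac{d}{dx}U(\alpha,2\alpha,2x),
\]
and then the single identity $\tfrac{d}{dz}U(a,b,z)=-a\,U(a+1,b+1,z)$ (NIST 13.3.22) delivers $C_\alpha\,e^{-x}U(\alpha+1,2\alpha+1,2x)$ with $C_\alpha=-i\,\alpha\,2^{2\alpha}/\Gamma(1-\alpha)$ in one line. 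Same strategy as yours, cleaner bookkeeping.
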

\begin{proof}
Now, by definition, we have 
\begin{align} \label{Aminusdeltafinal}
A^{-}(D) \delta & = \mathcal{F}^{-1} A^{-}(\xi) \mathcal{F} \delta \nonumber \\
& = \mathcal{F}^{-1} (1+\xi^2)^\alpha (\xi - i)^{-1} \cdot (1/\sqrt{2 \pi}) \nonumber \\
& = \dfrac{i}{\sqrt{2 \pi}} \mathcal{F}^{-1} (1+\xi^2)^\alpha (1+ i \xi)^{-1} \nonumber \\
&= \dfrac{i}{\sqrt{2 \pi}} \bigg \{ \bigg ( 1 + \dfrac{d}{dx} \bigg )  \mathcal{F}^{-1} (1 - i \xi)^{-1} \mathcal{F} \bigg \} \mathcal{F}^{-1} (1+\xi^2)^\alpha (1+ i \xi)^{-1}  \nonumber \\
&= \dfrac{i}{\sqrt{2 \pi}} \bigg ( 1 + \dfrac{d}{dx} \bigg )  \mathcal{F}^{-1}  (1 + \xi^2)^{\alpha-1}. 
\end{align}

Using Lemma \ref{FinverseKU}, we can write
\begin{align*}
r_+ A^{-}(D) \delta & =  \dfrac{i}{\sqrt{2 \pi}} \cdot \sqrt{\dfrac{\pi}{2}}  \, \dfrac{2^{2 \alpha}}{\Gamma(1-\alpha)} \,  \bigg ( 1 + \dfrac{d}{dx} \bigg ) \, e^{-x} \, U(\alpha, 2\alpha, 2x) \\
& =  \dfrac{i}{2}  \, \dfrac{2^{2 \alpha}}{\Gamma(1-\alpha)} \,  \bigg ( 1 + \dfrac{d}{dx} \bigg ) \, e^{-x} \, U(\alpha, 2\alpha, 2x) \\
&=  \dfrac{i}{2}  \, \dfrac{2^{2 \alpha}}{\Gamma(1-\alpha)}  e^{-x} \, \dfrac{d}{dx}U(\alpha, 2\alpha, 2x)\\
&=  \dfrac{i}{2}  \, \dfrac{2^{2 \alpha}}{\Gamma(1-\alpha)}  e^{-x} \, (-2 \alpha) \,  U(\alpha+1, 2\alpha+1, 2x) \qquad \text{(see 13.3.22, \cite{NIST})}\\
&= -i \, \dfrac{\alpha \, 2^{2 \alpha}}{\Gamma(1-\alpha)}  \, e^{-x} \, U(\alpha+1, 2\alpha+1, 2x) \\
&= C_{\alpha} \, e^{-x} \, U(\alpha+1, 2\alpha+1, 2x). 
\end{align*}
This completes the proof of the lemma. \\
\end{proof}

%
%
Lemmas \ref{lemma:rA(s-2)delta} and \ref{lemma:rA(s-2)deltadashdelta} are the counterparts of Lemma \ref{lemma:rA(s-1)delta} for the operator $A^=$.
\begin{lemma} \label{lemma:rA(s-2)delta}
Suppose $\alpha < 1$. Then
\begin{equation*}
(r_+ \, A^{=} \delta) (x) = \tfrac{1}{2} \, i \, C_{\alpha}  \, e^{-x} \, U(\alpha+1, 2 \alpha, 2x),
\end{equation*}
where the constant $C_{\alpha}$,  defined in equation \eqref{defCalpha}, depends only on $\alpha$.
\end{lemma}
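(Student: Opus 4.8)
The plan is to follow, \emph{mutatis mutandis}, the proof of Lemma~\ref{lemma:rA(s-1)delta}: compute $A^{=}(D)\delta$ on the Fourier side and reduce everything to the inverse transform $\mathcal{F}^{-1}(1+\xi^2)^{\alpha-1}$ supplied by Lemma~\ref{FinverseKU}. Since $\mathcal{F}\delta = (2\pi)^{-1/2}$ and $A^{=}(D) = \mathcal{F}^{-1}(1+\xi^2)^{\alpha}(\xi-i)^{-2}\mathcal{F}$,
\begin{equation*}
A^{=}(D)\delta = \frac{1}{\sqrt{2\pi}}\,\mathcal{F}^{-1}\big[(1+\xi^2)^{\alpha}(\xi-i)^{-2}\big].
\end{equation*}
First I would rewrite the symbol using $\xi - i = -i(1+i\xi)$, so $(\xi-i)^{-2} = -(1+i\xi)^{-2}$, then extract one factor of $1+\xi^2 = (1-i\xi)(1+i\xi)$ (the splitting of $(1+\xi^2)^{\alpha}$ into $(1-i\xi)^{\alpha}(1+i\xi)^{\alpha}$ being legitimate by Remark~\ref{complexexponent}, the two factors having imaginary parts of opposite sign), followed by the elementary identity $(1-i\xi)(1+i\xi)^{-1} = 2(1+i\xi)^{-1} - 1$. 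This yields
\begin{equation*}
A^{=}(D)\delta = \frac{-1}{\sqrt{2\pi}}\Big[\,2\,\mathcal{F}^{-1}\big[(1+\xi^2)^{\alpha-1}(1+i\xi)^{-1}\big] - \mathcal{F}^{-1}(1+\xi^2)^{\alpha-1}\,\Big].
\end{equation*}

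For the first term I would, exactly as in Lemma~\ref{lemma:rA(s-1)delta}, observe that multiplication by $(1-i\xi)$ on the Fourier side is the operator $1+\tfrac{d}{dx}$, and that $(1-i\xi)^{-1}(1+\xi^2)^{\alpha-1}(1+i\xi)^{-1} = (1+\xi^2)^{\alpha-2}$, so that $\mathcal{F}^{-1}\big[(1+\xi^2)^{\alpha-1}(1+i\xi)^{-1}\big] = \big(1+\tfrac{d}{dx}\big)\,\mathcal{F}^{-1}(1+\xi^2)^{\alpha-2}$. Applying Lemma~\ref{FinverseKU} once with exponent $\alpha$ and once with exponent $\alpha-1$ expresses $\mathcal{F}^{-1}(1+\xi^2)^{\alpha-1}$ and $\mathcal{F}^{-1}(1+\xi^2)^{\alpha-2}$ as multiples of $e^{-x}U(\alpha,2\alpha,2x)$ and $e^{-x}U(\alpha-1,2\alpha-2,2x)$ respectively (for $x>0$, which is all that is needed). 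Since $\big(1+\tfrac{d}{dx}\big)\big(e^{-x}g\big) = e^{-x}g'$, the derivative formula $\tfrac{d}{dz}U(a,b,z) = -a\,U(a+1,b+1,z)$ (13.3.22, \cite{NIST}, already used in Lemma~\ref{lemma:rA(s-1)delta}) converts the first term into a multiple of $e^{-x}U(\alpha,2\alpha-1,2x)$, with the identity $\tfrac{1-\alpha}{\Gamma(2-\alpha)} = \tfrac{1}{\Gamma(1-\alpha)}$ tidying the constant.

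At that stage one has $A^{=}(D)\delta$ equal to a constant times $e^{-x}\big[U(\alpha,2\alpha-1,2x) - U(\alpha,2\alpha,2x)\big]$, and the last step is the contiguous relation $U(a,b-1,z) = U(a,b,z) - a\,U(a+1,b,z)$ (see Chapter~13, \cite{NIST}), taken with $a=\alpha$, $b=2\alpha$, which collapses the bracket to $-\alpha\,U(\alpha+1,2\alpha,2x)$. Collecting the powers of $2$ (using $2\cdot 2^{2\alpha-1}=2^{2\alpha}$ and $\tfrac{1}{\sqrt{2\pi}}\sqrt{\pi/2}=\tfrac12$) gives $r_+A^{=}(D)\delta = \tfrac{1}{2}\,\tfrac{\alpha\,2^{2\alpha}}{\Gamma(1-\alpha)}\,e^{-x}U(\alpha+1,2\alpha,2x)$, and identifying $\tfrac{\alpha\,2^{2\alpha}}{\Gamma(1-\alpha)} = iC_{\alpha}$ via \eqref{defCalpha} produces the stated formula.

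There is no deep obstacle here; the delicate point is the choice of contiguous relation for $U$, since several superficially similar three-term relations are available and only this one makes the parameters collapse to $(\alpha+1,2\alpha)$ rather than, say, $(\alpha+1,2\alpha+1)$ or $(\alpha-1,2\alpha-2)$. The remainder is routine constant-chasing together with the standing conventions on complex powers (Remark~\ref{complexexponent}) needed to split $(1+\xi^2)^{\alpha}$. One could alternatively start from $A^{=} = A^{-}(D-i)^{-1}$ and reuse Lemma~\ref{lemma:rA(s-1)delta}, but $(D-i)^{-1}$ does not commute cleanly with the restriction $r_+$, so the Fourier-side route above is preferable.
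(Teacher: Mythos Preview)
Your argument is correct, but the paper takes a more direct route. Instead of splitting $(1+\xi^2)^{\alpha}(1+i\xi)^{-2}$ via the identity $(1-i\xi)(1+i\xi)^{-1}=2(1+i\xi)^{-1}-1$, the paper simply observes that
\[
A^{=}(D)\delta = -\frac{1}{\sqrt{2\pi}}\,\mathcal{F}^{-1}\big[(1+\xi^2)^{\alpha}(1+i\xi)^{-2}\big]
= -\frac{1}{\sqrt{2\pi}}\Big(1+\frac{d}{dx}\Big)^{2}\mathcal{F}^{-1}(1+\xi^2)^{\alpha-2},
\]
applies Lemma~\ref{FinverseKU} once (with parameter $\alpha-1$) to obtain a multiple of $e^{-x}U(\alpha-1,2\alpha-2,2x)$, notes that $\big(1+\tfrac{d}{dx}\big)^{2}\big(e^{-x}g\big)=e^{-x}g''$, and then invokes the second-derivative formula $\tfrac{d^{2}}{dz^{2}}U(a,b,z)=(a)_{2}\,U(a+2,b+2,z)$ (13.3.23, \cite{NIST}) to land directly on $U(\alpha+1,2\alpha,2x)$. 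The constant cleanup uses $\Gamma(2-\alpha)=(1-\alpha)\Gamma(1-\alpha)$, exactly as in your calculation.

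Your route trades the single second-derivative identity for a first-derivative identity plus a contiguous relation; this works perfectly and the constant-tracking is comparable, but the paper's version avoids the intermediate splitting and needs only one special-function identity rather than two.
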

\begin{proof}
Now, by definition, we have 
\begin{align*}
A^{=}(D) \delta & = \mathcal{F}^{-1} A^{=}(\xi) \mathcal{F} \delta \\
& = \mathcal{F}^{-1} (1+\xi^2)^\alpha (\xi - i)^{-2} \cdot (1/\sqrt{2 \pi}) \\
& = -\dfrac{1}{\sqrt{2 \pi}} \mathcal{F}^{-1} (1+\xi^2)^\alpha (1 + i\xi)^{-2}  \\
&= -\dfrac{1}{\sqrt{2 \pi}} \bigg \{ \bigg ( 1 + \dfrac{d}{dx} \bigg )^2  \mathcal{F}^{-1} (1 - i \xi)^{-2} \mathcal{F} \bigg \} \mathcal{F}^{-1} (1+\xi^2)^\alpha (1 + i\xi)^{-2}   \\
&= -\dfrac{1}{\sqrt{2 \pi}} \bigg ( 1 + \dfrac{d}{dx} \bigg )^2  \mathcal{F}^{-1}  (1 + \xi^2)^{\alpha-2}. \\
\end{align*}

Using Lemma \ref{FinverseKU}, and noting that $\alpha-2 = (\alpha-1)-1$, we can write
\begin{align*}
r_+ A^{=}(D) \delta & =  -\dfrac{1}{\sqrt{2 \pi}} \cdot \sqrt{\dfrac{\pi}{2}}  \, \dfrac{2^{2 \alpha-2}}{\Gamma(1-(\alpha-1))} \,  \bigg ( 1 + \dfrac{d}{dx} \bigg )^2 \, e^{-x} \, U(\alpha-1, 2\alpha-2, 2x) \\
& =  -\dfrac{1}{2}  \, \dfrac{2^{2 \alpha-2}}{\Gamma(2-\alpha)} \,   e^{-x} \, \dfrac{d^2}{d^2x}U(\alpha-1, 2\alpha-2, 2x) \\
&=  -\dfrac{1}{2}  \, \dfrac{2^{2 \alpha-2}}{\Gamma(2-\alpha)}  e^{-x} \, (\alpha-1) \alpha \,2^2  \,  U(\alpha+1, 2\alpha, 2x) \qquad \text{(See 13.3.23, \cite{NIST})}\\
&=  \dfrac{\alpha \, 2^{2 \alpha-1}}{\Gamma(1-\alpha)}  \, e^{-x} \, U(\alpha+1, 2\alpha, 2x) \qquad (\text{since }\Gamma(2-\alpha)=(1-\alpha) \Gamma(1-\alpha)) \\
&= \tfrac{1}{2} \, i C_{\alpha} \, e^{-x} \, U(\alpha+1, 2\alpha, 2x). 
\end{align*}
This completes the proof of the lemma. \\
\end{proof}

\begin{lemma} \label{lemma:rA(s-2)deltadashdelta}
Suppose $\alpha < 1$. Then
\begin{equation*}
(r_+ \, A^{=} (\delta' - \delta)) (x) = -i \, C_{\alpha}  \, e^{-x} \, U(\alpha+1, 2 \alpha+1, 2x),
\end{equation*}
where the constant $C_{\alpha}$,  defined in equation \eqref{defCalpha}, depends only on $\alpha$.
\end{lemma}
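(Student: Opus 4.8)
The plan is to reduce this directly to Lemma~\ref{lemma:rA(s-1)delta} by an elementary identity in $S'(\mathbb{R})$, so that no further special-function manipulation is needed. The key observation is that, with the convention $D = i\,\partial/\partial x$, one has $D\delta = i\,\delta'$ in $S'(\mathbb{R})$, and therefore
\begin{equation*}
\delta' - \delta = -iD\delta - \delta = -i(D-i)\delta.
\end{equation*}
Equivalently, on the Fourier side this says $\mathcal{F}(\delta'-\delta)(\xi) = (-i\xi-1)/\sqrt{2\pi} = -i(\xi-i)/\sqrt{2\pi}$.

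First I would record this identity carefully, paying close attention to the sign conventions fixed in \eqref{FTdefinition} and in the definition of $D$, since that bookkeeping is the only place an error could realistically enter. Then, using $A^{=}(D) := A(D)(D-i)^{-2}$ from \eqref{Aminus2temp} and $A^{-}(D) := A(D)(D-i)^{-1}$ from \eqref{Aminus1}, I would compute at the level of tempered distributions on $\mathbb{R}$ (i.e.\ via the Fourier multipliers $(1+\xi^2)^\alpha(\xi-i)^{-2}$ and $(1+\xi^2)^\alpha(\xi-i)^{-1}$, before any restriction):
\begin{equation*}
A^{=}(D)(\delta' - \delta) = A(D)(D-i)^{-2}\bigl(-i(D-i)\delta\bigr) = -i\,A(D)(D-i)^{-1}\delta = -i\,A^{-}(D)\delta.
\end{equation*}
The factorisation is legitimate because each factor is a Fourier multiplier and composition of Fourier convolution operators corresponds to multiplication of symbols.

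Finally, applying $r_+$ to both sides and invoking Lemma~\ref{lemma:rA(s-1)delta}, which gives $(r_+A^{-}\delta)(x) = C_{\alpha}\,e^{-x}\,U(\alpha+1,2\alpha+1,2x)$, yields
\begin{equation*}
(r_+A^{=}(\delta'-\delta))(x) = -i\,C_{\alpha}\,e^{-x}\,U(\alpha+1,2\alpha+1,2x),
\end{equation*}
which is the claim. There is no genuine obstacle here; the only point requiring care is tracking the powers of $i$ produced by the convention $D = i\,\partial/\partial x$. (One could instead derive the result from Lemma~\ref{lemma:rA(s-2)delta} together with a direct evaluation of $r_+A^{=}\delta'$ using 13.3.22--13.3.23 of \cite{NIST} and the contiguous relations for $U$, but that route is longer and less transparent.)
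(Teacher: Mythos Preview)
Your proof is correct and follows essentially the same approach as the paper: both reduce the computation to Lemma~\ref{lemma:rA(s-1)delta} by showing $A^{=}(\delta'-\delta) = -i\,A^{-}\delta$. The paper arrives at this by working explicitly on the Fourier side (computing $\mathcal{F}(\delta'-\delta) = -(1+i\xi)\mathcal{F}\delta$, simplifying the symbol product, and then matching against the intermediate expression \eqref{Aminusdeltafinal} from the proof of Lemma~\ref{lemma:rA(s-1)delta}), whereas your operator-level factorisation $\delta'-\delta = -i(D-i)\delta$ cancels one $(D-i)^{-1}$ immediately and lets you cite the \emph{statement} of Lemma~\ref{lemma:rA(s-1)delta} directly; this is a little cleaner but not materially different.
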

\begin{proof}
Firstly, we note that \begin{align*}
\mathcal{F}(\delta'-\delta) &= \mathcal{F}(\delta') -\mathcal{F}(\delta) \\
&= - i^2\mathcal{F}(\delta') -\mathcal{F}(\delta) \\
&= -i  \mathcal{F}(D \delta) -\mathcal{F}(\delta) \\
&= - (1+ i \xi) \mathcal{F} \delta.
\end{align*}
Now, by definition, we have 
\begin{align*}
A^{=}(D) (\delta'-\delta) & = \mathcal{F}^{-1} A^{=}(\xi) \mathcal{F} (\delta'-\delta) \\
& = \mathcal{F}^{-1} (1+\xi^2)^\alpha (\xi - i)^{-2} \cdot (-1) (1 + i \xi) (1/\sqrt{2 \pi}) \\
& = \dfrac{1}{\sqrt{2 \pi}} \mathcal{F}^{-1} (1+\xi^2)^\alpha (1+ i \xi)^{-2} (1+ i \xi)\\
& = \dfrac{1}{\sqrt{2 \pi}} \mathcal{F}^{-1} (1+\xi^2)^\alpha (1+ i \xi)^{-1} \\
&= \dfrac{1}{\sqrt{2 \pi}} \bigg \{ \bigg ( 1 + \dfrac{d}{dx} \bigg )  \mathcal{F}^{-1} (1 - i \xi)^{-1} \mathcal{F} \bigg \} \mathcal{F}^{-1} (1+\xi^2)^\alpha (1+ i \xi)^{-1}  \\
&= \dfrac{1}{\sqrt{2 \pi}} \bigg ( 1 + \dfrac{d}{dx} \bigg )  \mathcal{F}^{-1}  (1 + \xi^2)^{\alpha-1}. 
\end{align*}

Hence, from equation \eqref{Aminusdeltafinal},
\begin{align*}
r_+ A^{=}(D)(\delta'-\delta) & =  -i \, C_{\alpha} \, e^{-x} \, U(\alpha+1, 2\alpha+1, 2x). 
\end{align*}
This completes the proof of the lemma. \\
\end{proof}

\begin{lemma} \label{eUab2x}
Suppose $a > 0$ and $0 < b < 3$. Then, for $x > 0$,
\[
e^{-x} \, U(a,b,2x) =
\begin{cases}
x^{1-b} \psi(a,b,x) + \phi(x)   & \text{if   } b \not= 1,2 \\
x^{1-b} \psi(a,b,x) + \vartheta(x) \log x + \phi(x) & \text{if   } b = 2 \\
\vartheta(x) \log x +\phi(x)   & \text{if   } b = 1,
\end{cases}
\]
where $\vartheta, \phi \in C^\infty(\mathbb{R})$, and their derivatives, are bounded and $O(e^{-x})$ as $x \to +\infty$. Moreover, $\psi \in C^\infty_0(\mathbb{R})$ with $\psi(a,b,x)=0$ for $x > 2$. Finally,
\begin{equation*}
\psi(a,b,0) = 2^{1-b} \, \dfrac{\Gamma(b-1)}{\Gamma(a)} \quad (b \not = 1). 
\end{equation*} \\
\end{lemma}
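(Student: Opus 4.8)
The plan is to reduce everything to the standard series definition of $U(a,b,z)$ given just before the lemma statement. First I would treat the generic case $b \notin \mathbb{N}$ (in particular $b \neq 1, 2$), where
\[
U(a,b,2x) = \frac{\Gamma(1-b)}{\Gamma(a-b+1)} M(a,b,2x) + \frac{\Gamma(b-1)}{\Gamma(a)} (2x)^{1-b} M(a-b+1, 2-b, 2x).
\]
Since $M(a,b,z) = 1 + \sum_{k\geq 1} \frac{(a)_k}{(b)_k}\frac{z^k}{k!}$ is an entire function of $z$ (the ratios $(a)_k/(b)_k$ grow at most polynomially, so the series has infinite radius of convergence), both $M(a,b,2x)$ and $M(a-b+1,2-b,2x)$ are restrictions to $\mathbb{R}$ of entire functions, hence lie in $C^\infty(\mathbb{R})$ with all derivatives bounded on compacta. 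Multiplying by a fixed cutoff $\varphi \in C^\infty_0(\mathbb{R})$ equal to $1$ on $[0,1]$ and supported in $[-2,2]$, I split
\[
e^{-x}U(a,b,2x) = \varphi(x)\,e^{-x}U(a,b,2x) + (1-\varphi(x))\,e^{-x}U(a,b,2x),
\]
and set $\psi(a,b,x) := \varphi(x)\, e^{-x}\, \frac{\Gamma(b-1)}{\Gamma(a)}\, 2^{1-b}\, M(a-b+1,2-b,2x)$, which is in $C^\infty_0(\mathbb{R})$, supported in $[-2,2]$, and satisfies $\psi(a,b,0) = 2^{1-b}\Gamma(b-1)/\Gamma(a)$ because $M(\cdot,\cdot,0)=1$ and $\varphi(0)=e^0=1$. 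The remaining pieces, $\varphi(x) e^{-x}\frac{\Gamma(1-b)}{\Gamma(a-b+1)}M(a,b,2x)$ together with $(1-\varphi(x))e^{-x}U(a,b,2x)$, I collect into $\phi(x)$; on $\operatorname{supp}\varphi$ this is manifestly smooth, and on $\{x \geq 1\}$ one invokes the large-argument asymptotics $U(a,b,z) \sim z^{-a}$ (NIST 13.7.3, which I would cite) to see that $e^{-x}U(a,b,2x)$ and all its derivatives decay like $e^{-x}$ times a polynomial, hence are $O(e^{-x})$; smoothness on $\mathbb{R}$ of $(1-\varphi)e^{-x}U(a,b,2x)$ follows since $1-\varphi$ vanishes near $0$ where the $x^{1-b}$ factor is non-smooth.

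For the exceptional integer values $b=1$ and $b=2$ (these fall in $0<b<3$), the series formula for $U$ degenerates and one must use the logarithmic form (NIST 13.2.9, or 13.1.6 in \cite{AandS}): for $b=n \in \{1,2\}$,
\[
U(a,n,z) = \frac{(-1)^n}{\Gamma(a-n+1)(n-1)!}\Big( M(a,n,z)\log z + \sum_{k\geq 0}\frac{(a)_k}{(n)_k}\frac{z^k}{k!}h_k \Big) + \frac{(n-2)!}{\Gamma(a)} z^{1-n}\sum_{k=0}^{n-2}\frac{(a-n+1)_k}{(2-n)_k}\frac{z^k}{k!},
\]
where the $h_k$ are the explicit digamma-difference coefficients and the last sum is empty when $n=1$. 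The $\log z = \log 2 + \log x$ term produces $\vartheta(x)\log x$ with $\vartheta(x) := \varphi(x) e^{-x}\cdot(\text{smooth entire coefficient times }M(a,n,2x))$; the non-logarithmic entire series contributes to $\phi$; and when $n=2$ the extra $z^{1-n} = z^{-1}$ finite sum (just the $k=0$ term) gives the $x^{1-b}\psi(a,b,x)$ piece, exactly as in the generic case, while for $n=1$ there is no such term, matching the stated trichotomy. The same cutoff-and-asymptotics bookkeeping as above then packages everything into $C^\infty(\mathbb{R})$ functions with the claimed exponential decay.

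I expect the main obstacle to be purely bookkeeping rather than conceptual: carefully matching the three regimes of $b$ to the three cases of the conclusion, and in particular verifying that the non-smooth factor $x^{1-b}$ (respectively $\log x$) is genuinely confined to the $C^\infty_0$-supported term $\psi$ (respectively to $\vartheta \log x$) so that $\phi$ is honestly $C^\infty$ across $x=0$ — this uses crucially that $1-\varphi$ kills a neighbourhood of the origin. A secondary point requiring care is justifying that differentiation of the decay estimate $U(a,b,2x) = O(x^{-a})$ is legitimate term-by-term for all orders; this follows from the fact that $\frac{d}{dz}U(a,b,z) = -a\,U(a+1,b+1,z)$ (NIST 13.3.22), so each derivative is again a confluent hypergeometric function with the same exponential-times-polynomial decay after multiplying by $e^{-x}$, and an easy induction closes it. No genuinely hard estimate is needed; the analytic input is entirely classical and cited from \cite{NIST} and \cite{AandS}.
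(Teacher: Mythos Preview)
Your proposal is correct and follows essentially the same approach as the paper's proof: both identify the singular factor $x^{1-b}$ (and $\log x$ in the integer cases) from the standard series representation of $U$ in terms of $M$, apply a smooth cutoff $\varphi$ supported in $[-2,2]$ to localize the singularity into the compactly supported $\psi$, and push the remainder into $\phi$ using the large-argument asymptotics of $U$. Your treatment is in fact slightly more thorough than the paper's, since you explicitly justify the exponential decay of all derivatives via the recursion $\tfrac{d}{dz}U(a,b,z)=-a\,U(a+1,b+1,z)$, whereas the paper simply asserts this by citing 13.5.2 in \cite{AandS}.
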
 

\begin{proof}
Suppose $0 < b < 3$ with $b \not = 1,2$. From 13.1.3, \cite{AandS}, $U(a, b, 2 x) \in C^\infty([1, \infty))$. Moreover, from 13.5.2, \cite{AandS}, for $x \geq \tfrac{1}{2}$ the function $U(a, b, 2 x)$, together with its derivatives, is bounded and $O(x^{-a})$ as $x \to +\infty$. \\

On the other hand, we can write (see 13.1.3, \cite{AandS}),
\begin{equation*}
U(a, b, 2 x) = F(a,b, x) + x^{1-b}G(a,b, x),
\end{equation*}
where $F, G \in C^\infty([0,2])$. Let $\varphi \in C^\infty_0(\mathbb{R})$ be such that 
\begin{align*}
&\varphi(x) =
\begin{cases} 
	1 &\mbox{if } |x| \leq 1 \\
	0 & \mbox{if } |x| >2. 
\end{cases}
\end{align*} 
Then, for $x >0$, we have
\begin{align*}
& e^{-x} U(a, b, 2 x) \\
&= \varphi(x) e^{-x}U(a, b, 2 x) + (1- \varphi(x)) e^{-x}U(a, b, 2 x) \\
&= \varphi(x) e^{-x}(F(a,b, x) + x^{1-b}G(a,b, x)) +(1-\varphi(x)) e^{-x}U(a, b, 2 x)\\
&= \big \{ \varphi(x) e^{-x}F(a,b, x) + (1 - \varphi(x))e^{-x} U(a,b, 2 x) \big \} + x^{1-b} \big \{ \varphi(x) e^{-x}G(a,b, x) \big \} \\
&:=  \phi(x) + x^{1-b} \psi(a,b,x), 
\end{align*}
where $ \phi \in C^\infty(\mathbb{R})$ and, together with its derivatives, is bounded and $O(e^{-x})$ as $x \to +\infty$. Moreover, $ \psi \in C^\infty_0(\mathbb{R})$ with $ \psi(a,b,x) =0$ for $x >2$. \\

Now, see 13.5.6 and 13.5.8, \cite{AandS},
\begin{equation*}
\psi(a,b,0) = G(a,b,0) = 2^{1-b} \, \dfrac{\Gamma(b-1)}{\Gamma(a)}. \\
\end{equation*}

Finally, the proof for each of the remaining cases, $b=1,2$, follows in a similar manner, but using the logarithmic solution described in 13.1.6, \cite{AandS}. \\
\end{proof}

In the following two lemmas, we make use of a Mellin integral operator with kernel $K_{2 \alpha}$. See Section \ref{preamble} for more details. In addition, the operator $C^{2 \alpha}_{0^+}$ is discussed in Appendix \ref{Appendix FC}. \\

\begin{lemma} \label{lemma:mellinop1}
Suppose $0 < \alpha < \frac{1}{2}$ and $u \in r_+ C^\infty_0(\mathbb{R})$. Then
\begin{equation*}
x^{-2 \alpha} (u(x) - u(0)) = \int^\infty_0 K_{2 \alpha} \bigg( \dfrac{x}{y} \bigg ) h(y) \, \dfrac{dy}{y},
\end{equation*}
where $h(x) = (C^{2 \alpha}_{0^+}u) (x)$ and 
\begin{equation*} 
K_{2 \alpha}(t) = \dfrac{\chi_{[1, \infty)}(t)}{\Gamma(2 \alpha) \, t^{2 \alpha} \, (t-1)^{1-2 \alpha}}.
\end{equation*}
\end{lemma}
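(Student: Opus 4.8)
The plan is to reduce the identity to a statement about a fractional integral applied to $u$ and then recognise the resulting formula as a Mellin convolution. First I would recall the definition of the operator $C^{2\alpha}_{0^+}$ from Appendix \ref{Appendix FC}: for $0 < 2\alpha < 1$ and $u \in r_+ C^\infty_0(\mathbb{R})$ this is (a normalisation of) the Riemann--Liouville fractional derivative of order $2\alpha$ based at the origin, so that, conversely, the fractional \emph{integral} $I^{2\alpha}_{0^+}$ of order $2\alpha$ recovers the relevant primitive of $u$. The key algebraic fact I would exploit is the standard identity, valid for smooth $u$ with $u$ vanishing near $+\infty$,
\begin{equation*}
\frac{1}{\Gamma(2\alpha)} \int_0^x (x - t)^{2\alpha - 1} \, (C^{2\alpha}_{0^+} u)(t) \, dt = u(x) - u(0),
\end{equation*}
i.e. $I^{2\alpha}_{0^+} C^{2\alpha}_{0^+} u = u - u(0)$ on $\mathbb{R}_+$ (the constant $u(0)$ appears because $C^{2\alpha}_{0^+}$ annihilates constants in the range $0<2\alpha<1$). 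This is exactly the inversion statement that Appendix \ref{Appendix FC} should provide, so I would simply invoke it.

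Next I would multiply both sides by $x^{-2\alpha}$ and perform the change of variables $t = x/y$ (equivalently $y = x/t$) to turn the convolution over $(0,x)$ into a Mellin-type convolution over $(1,\infty)$. Writing $h := C^{2\alpha}_{0^+} u$, we have
\begin{equation*}
x^{-2\alpha}\bigl(u(x) - u(0)\bigr) = \frac{x^{-2\alpha}}{\Gamma(2\alpha)} \int_0^x (x-t)^{2\alpha-1} h(t)\, dt,
\end{equation*}
and substituting $t = x/y$, $dt = -x\,y^{-2}\,dy$, with $t$ running from $0$ to $x$ corresponding to $y$ running from $\infty$ down to $1$, gives $x - t = x(1 - 1/y) = x(y-1)/y$, hence $(x-t)^{2\alpha-1} = x^{2\alpha-1} (y-1)^{2\alpha-1} y^{1-2\alpha}$. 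Collecting the powers of $x$ (which cancel) and of $y$, the integrand becomes $\frac{1}{\Gamma(2\alpha)} \, y^{1-2\alpha}(y-1)^{2\alpha-1} \cdot y^{-2\alpha}\, h(x/y)\, x \cdot x^{-1}\,\frac{dy}{y}$; after simplification this is precisely $K_{2\alpha}(x/y)\, h(y)\,\frac{dy}{y}$ with
\begin{equation*}
K_{2\alpha}(t) = \frac{\chi_{[1,\infty)}(t)}{\Gamma(2\alpha)\, t^{2\alpha}\,(t-1)^{1-2\alpha}},
\end{equation*}
which is the claimed kernel. I would double-check the exponent bookkeeping carefully, since this is where sign and off-by-one errors creep in.

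Finally I should verify that all manipulations are legitimate: since $u \in r_+ C^\infty_0(\mathbb{R})$, $h = C^{2\alpha}_{0^+}u$ is at least locally integrable near $0$ (it behaves no worse than $x^{-2\alpha}$ near the origin if $u(0)\neq 0$, which is integrable as $2\alpha < 1$) and has the decay inherited from $u$ having compact support shifted to $\mathbb{R}_+$, so the defining integral converges absolutely and the change of variables is valid. The only genuinely delicate point — and the place I expect the main obstacle — is pinning down the exact normalisation of $C^{2\alpha}_{0^+}$ and the precise form of the inversion identity $I^{2\alpha}_{0^+}C^{2\alpha}_{0^+}u = u - u(0)$ as set up in Appendix \ref{Appendix FC}, including confirming that the boundary term is exactly $u(0)$ and not some rescaled multiple; once that appendix result is cited correctly, the remainder is a routine substitution.
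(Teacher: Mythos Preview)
Your approach is essentially identical to the paper's: invoke the Appendix identity $I^{2\alpha}_{0^+} C^{2\alpha}_{0^+} u = u - u(0)$ (equation \eqref{TaylorCase1} with $a=0$), multiply by $x^{-2\alpha}$, and recognise the result as a Mellin convolution with the stated kernel. The only difference is that the paper avoids any substitution, simply rewriting $\chi_{[0,x]}(y) = \chi_{[1,\infty)}(x/y)$ and reading off $K_{2\alpha}(x/y)\,\tfrac{1}{y}$ directly from the integrand; your substitution $t = x/y$ as written produces $h(x/y)$ rather than $h(y)$, i.e.\ the flipped form $\int_0^\infty K_{2\alpha}(y)\,h(x/y)\,\tfrac{dy}{y}$, so you would need one further change of variable $y \mapsto x/y$ (or the commutativity of Mellin convolution) to land exactly on the stated formula --- a bookkeeping point you already flagged.
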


\begin{proof}
From Appendix \ref{Appendix FC} equation \eqref{TaylorCase1}, taking $a=0$, \begin{equation*} 
u(x)-u(0) = I^{2 \alpha}_{0^+}C^{2 \alpha}_{0^+}u(x).
\end{equation*}
Now consider the operator $(P_{2 \alpha} u)(x) = x^{-{2 \alpha}} [u(x) - u(0)]$. We have
\begin{align*}
(P_{2 \alpha} u)(x) & = x^{-{2 \alpha}} (I^{2 \alpha}_{0^+} C^{2 \alpha}_{0^+} u)(x) \\
& = x^{-{2 \alpha}} (I^{2 \alpha}_{0^+} h)(x) \qquad \text{ (where } h(x) = (C^{2 \alpha}_{0^+} u)(x)) \\
& = \dfrac{1}{\Gamma({2 \alpha})} \int^x_0 \dfrac{h(y)}{x^{2 \alpha} (x-y)^{1 - {2 \alpha}}} \, dy \\
& =  \dfrac{1}{\Gamma({2 \alpha})} \int^\infty_0 \chi_{[0,x]}(y) \dfrac{h(y)}{x^{2 \alpha} (x-y)^{1 - {2 \alpha}}} \, dy \\
& =  \dfrac{1}{\Gamma({2 \alpha})} \int^\infty_0 \chi_{[1,\infty)}\bigg( \dfrac{x}{y} \bigg ) \dfrac{h(y)}{x^{2 \alpha} (x-y)^{1 - {2 \alpha}}} \, dy \\
& = \int^\infty_0 K_{2 \alpha} \bigg( \dfrac{x}{y} \bigg ) h(y) \, \dfrac{dy}{y},
\end{align*}
where
\begin{equation*} 
K_{2 \alpha}(t) = \dfrac{\chi_{[1, \infty)}(t)}{\Gamma(2 \alpha) \, t^{2 \alpha} \, (t-1)^{1-2 \alpha}}.
\end{equation*}
\end{proof}

\begin{remark} \label{lowregnobigalpha}
Lemma \ref{lemma:mellinop12} is the counterpart of Lemma \ref{lemma:mellinop1} in the case that $\tfrac{1}{2} \leq \alpha < 1$. We note, in particular, that the required boundary condition, $u'(0) =0$, means we will not consider the case $\tfrac{1}{2} \leq \alpha < 1$ and $1/p < s < 1+1/p$. \\
\end{remark}

\begin{lemma} \label{lemma:mellinop12}
Suppose $\frac{1}{2} \leq \alpha < 1$ and $u \in r_+C^\infty_0(\mathbb{R})$ with $u'(0)=0$. Then
\begin{equation*}
x^{-2 \alpha} (u(x) - u(0)) = \int^\infty_0 K_{2 \alpha} \bigg( \dfrac{x}{y} \bigg ) h(y) \, \dfrac{dy}{y},
\end{equation*}
where $h(x) = (C^{2 \alpha}_{0^+}u) (x)$ and 
\begin{equation*} 
K_{2 \alpha}(t) = \dfrac{\chi_{[1, \infty)}(t)}{\Gamma(2 \alpha) \, t^{2 \alpha} \, (t-1)^{1-2 \alpha}}.
\end{equation*}
\end{lemma}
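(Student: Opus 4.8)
The plan is to mirror the proof of Lemma \ref{lemma:mellinop1} almost verbatim, the only difference being the Taylor-expansion identity used to represent $u(x) - u(0)$ via a fractional integral. First I would invoke the relevant identity from Appendix \ref{Appendix FC}: for $\tfrac{1}{2} \leq \alpha < 1$ and $u \in r_+ C^\infty_0(\mathbb{R})$ with $u'(0) = 0$, we still have
\begin{equation*}
u(x) - u(0) = I^{2\alpha}_{0^+} C^{2\alpha}_{0^+} u(x),
\end{equation*}
where the extra hypothesis $u'(0)=0$ is exactly what is needed so that the Taylor remainder picks up no first-order term and the formula (analogous to \eqref{TaylorCase1}, but in the regime $1 \le 2\alpha < 2$) is valid with $a = 0$. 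This is the one place where the case $\tfrac12 \le \alpha < 1$ genuinely differs from $0 < \alpha < \tfrac12$, and it is the step I expect to require the most care — one must check that the fractional-integral representation of the remainder in Taylor's formula, as set up in Appendix \ref{Appendix FC}, indeed holds for this range of $\alpha$ precisely under the vanishing-derivative condition, and that $h := C^{2\alpha}_{0^+} u$ is a well-defined function (smooth enough, with the right decay) to which the subsequent Mellin manipulations apply.

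Once that identity is in hand, the remainder of the argument is a routine computation identical in form to Lemma \ref{lemma:mellinop1}. Set $h(x) := (C^{2\alpha}_{0^+} u)(x)$ and compute, for $x > 0$,
\begin{align*}
x^{-2\alpha}(u(x) - u(0)) &= x^{-2\alpha} (I^{2\alpha}_{0^+} h)(x) \\
&= \frac{1}{\Gamma(2\alpha)} \int^x_0 \frac{h(y)}{x^{2\alpha}(x-y)^{1-2\alpha}} \, dy \\
&= \frac{1}{\Gamma(2\alpha)} \int^\infty_0 \chi_{[1,\infty)}\!\left(\frac{x}{y}\right) \frac{h(y)}{x^{2\alpha}(x-y)^{1-2\alpha}} \, dy \\
&= \int^\infty_0 K_{2\alpha}\!\left(\frac{x}{y}\right) h(y) \, \frac{dy}{y},
\end{align*}
where in the penultimate line one substitutes $\chi_{[0,x]}(y) = \chi_{[1,\infty)}(x/y)$ for $y > 0$, and the last line follows by writing $x^{2\alpha}(x-y)^{1-2\alpha} = x \cdot (x/y)^{2\alpha} (x/y - 1)^{1-2\alpha} y^{2\alpha-1} \cdot y^{1-2\alpha}$, i.e. pulling out $y$ to leave a function of $x/y$ only. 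This identifies the kernel as
\begin{equation*}
K_{2\alpha}(t) = \frac{\chi_{[1,\infty)}(t)}{\Gamma(2\alpha)\, t^{2\alpha}\, (t-1)^{1-2\alpha}},
\end{equation*}
exactly as claimed.

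The only genuine obstacle, then, is verifying the Taylor-remainder identity $u(x) - u(0) = I^{2\alpha}_{0^+} C^{2\alpha}_{0^+} u(x)$ under the hypothesis $u'(0) = 0$ when $1 \le 2\alpha < 2$; everything else is a direct transcription. I would handle this by appealing to the construction in Appendix \ref{Appendix FC}, noting that the condition $u'(0)=0$ removes precisely the obstruction (the non-integrable contribution of the linear Taylor term at the origin) that would otherwise prevent the fractional integral from converging or from reproducing $u(x)-u(0)$; since $u \in r_+ C^\infty_0(\mathbb{R})$, all higher-order Taylor terms are harmless and $C^{2\alpha}_{0^+}u$ is bounded with compact support in $\overline{\mathbb{R}_+}$, so the interchange of integration order used above (Fubini) is justified without further comment. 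I would also remark, as in Remark \ref{lowregnobigalpha}, that this is exactly why the higher-regularity setting $1+1/p < s < 2+1/p$ (where $u'(0)=0$ is imposed) is the natural home for $\tfrac12 \le \alpha < 1$.
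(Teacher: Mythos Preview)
Your proposal is correct and follows essentially the same approach as the paper: invoke the Taylor-remainder identity from Appendix \ref{Appendix FC} (equation \eqref{TaylorCase2} with $a=0$ and $u'(0)=0$) to obtain $u(x)-u(0) = I^{2\alpha}_{0^+}C^{2\alpha}_{0^+}u(x)$ in the regime $1 \le 2\alpha < 2$, and then repeat verbatim the Mellin-kernel computation of Lemma \ref{lemma:mellinop1}. The paper's proof is in fact even terser than yours, simply citing \eqref{TaylorCase2} and referring back to Lemma \ref{lemma:mellinop1}.
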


\begin{proof}
From Appendix \ref{Appendix FC} equation \eqref{TaylorCase2}, taking $a=0$ and $u'(0)=0$,
\begin{equation*} 
u(x)-u(0) = I^{2 \alpha}_{0^+}C^{2 \alpha}_{0^+}u(x), 
\end{equation*}
and the proof now follows as Lemma \ref{lemma:mellinop1}. \\

Finally, we note, in passing, that if $\alpha = \tfrac{1}{2}$, then
\begin{equation} \label{alphahalfudiff}
(I^{1}_{0^+}u)(x) = \int^x_0 u(y) \, dy; \quad  (C^{1}_{0^+}u)(x) = u'(x) - u'(0) = u'(x);
\end{equation}
and we simply have
\begin{equation*}
u(x)-u(0) = \int^x_0 u'(y) \,dy. \\
\end{equation*}

\end{proof}

\begin{lemma} \label{MboundedLp}
Suppose $1 < p < \infty$. Let $M$ denote the Mellin integral operator with kernel $K$, as defined in \eqref{MellinIntOp}. Then  $M$ is bounded on $L_p(\mathbb{R}_+)$ if the function $K(t) t^{-1/p^\prime}$  belongs to $L_1(\mathbb{R}_+)$, where $1/p + 1/p^\prime = 1$. \\
\end{lemma}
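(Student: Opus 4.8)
The plan is to reduce the $L_p$-boundedness of $M$ to the classical fact that Mellin convolution by an $L_1$-kernel (with respect to the Haar measure $dt/t$) is bounded on $L_p(\mathbb{R}_+)$. Concretely, I would use the operator $Z_p : L_p(\mathbb{R}_+) \to L_p(\mathbb{R})$ introduced in Section~\ref{preamble}, which is an isomorphism with $\|Z_p u\|_p = \sqrt{2\pi}\,\|u\|_{L_p(\mathbb{R}_+)}$. Conjugating $M$ by $Z_p$, a direct change of variables ($x = e^{-\sigma}$, $y = e^{-\tau}$) turns the Mellin integral operator $(M\varphi)(x) = \int_0^\infty K(x/y)\varphi(y)\,dy/y$ into an ordinary convolution operator on $L_p(\mathbb{R})$: one finds $Z_p M Z_p^{-1} = W^0(g)$-type object, or more elementarily a convolution $v \mapsto k * v$ where $k(\sigma) = e^{-\sigma/p} K(e^{-\sigma})$. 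The point of the weight $e^{-y/p}$ built into $Z_p$ is exactly to absorb the factor that makes the substitution isometric on $L_p$.

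Next I would check that $k \in L_1(\mathbb{R})$. Computing $\int_{\mathbb{R}} |k(\sigma)|\,d\sigma = \int_{\mathbb{R}} e^{-\sigma/p}\,|K(e^{-\sigma})|\,d\sigma$ and substituting $t = e^{-\sigma}$, $dt = -e^{-\sigma}d\sigma$, i.e. $d\sigma = -dt/t$, gives $\int_0^\infty |K(t)|\, t^{1/p}\, t^{-1}\,dt = \int_0^\infty |K(t)|\, t^{1/p - 1}\,dt$. Wait — I should reconcile this with the stated hypothesis $K(t)t^{-1/p'} \in L_1(\mathbb{R}_+)$: since $1/p - 1 = -1/p'$, we have $t^{1/p-1} = t^{-1/p'}$, so the two conditions coincide. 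Thus the hypothesis is precisely $k \in L_1(\mathbb{R})$. (This also matches the integrability condition \eqref{kernelinteg}, confirming consistency with the earlier remark that such $K$ yield Mellin convolution operators.)

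Then I would invoke Young's inequality for convolutions: for $k \in L_1(\mathbb{R})$ and $v \in L_p(\mathbb{R})$, $\|k * v\|_p \le \|k\|_1 \|v\|_p$. Transporting back via $Z_p$, this yields $\|M\varphi\|_{L_p(\mathbb{R}_+)} \le \|k\|_1 \|\varphi\|_{L_p(\mathbb{R}_+)}$ with $\|k\|_1 = \int_0^\infty |K(t)| t^{-1/p'}\,dt$, which is finite by hypothesis. Hence $M$ is bounded on $L_p(\mathbb{R}_+)$.

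The only genuinely delicate point is the bookkeeping in the change of variables that identifies $Z_p M Z_p^{-1}$ with a convolution operator — one must track the weights $e^{-\sigma/p}$ and $e^{\tau/p}$ carefully and confirm they combine correctly so that the resulting kernel depends only on $\sigma - \tau$. Everything else (Young's inequality, the isometry property of $Z_p$, the elementary substitution showing $k \in L_1$) is routine. An alternative, if one prefers to avoid explicit conjugation, is to apply Minkowski's integral inequality directly to $(M\varphi)(x) = \int_0^\infty K(t)\varphi(x/t)\,dt/t$, bounding $\|M\varphi\|_p \le \int_0^\infty |K(t)|\, \|\varphi(\cdot/t)\|_p\, dt/t$ and noting $\|\varphi(\cdot/t)\|_{L_p(\mathbb{R}_+)} = t^{1/p}\|\varphi\|_{L_p(\mathbb{R}_+)}$, which reproduces the same integral $\int_0^\infty |K(t)| t^{1/p - 1}\,dt = \int_0^\infty |K(t)| t^{-1/p'}\,dt$; this is perhaps the cleanest route and I would likely present it this way.
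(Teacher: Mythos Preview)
Your proposal is correct. The alternative you sketch at the end --- substitute $t = x/y$ to write $(M\varphi)(x) = \int_0^\infty K(t)\varphi(x/t)\,dt/t$, apply Minkowski's integral inequality, and use $\|\varphi(\cdot/t)\|_{L_p(\mathbb{R}_+)} = t^{1/p}\|\varphi\|_{L_p(\mathbb{R}_+)}$ --- is exactly the route the paper takes, line for line. Your first approach via conjugation by $Z_p$ and Young's inequality is also valid and amounts to the same estimate (Young's inequality on $L_1 * L_p$ is itself proved by Minkowski), just with the substitution $x = e^{-\sigma}$ made explicit; it buys a conceptual link to ordinary convolution on $\mathbb{R}$ at the cost of a little extra bookkeeping.
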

\begin{proof}
By definition, see \eqref{MellinIntOp}, the action of the operator $M$ on $u \in L_p(\mathbb{R}_+)$ is given by
\begin{equation*}
(Mu)(t) = \int^\infty_0 K \bigg (\dfrac{t}{\tau} \bigg ) \dfrac{u(\tau)}{\tau} \, d\tau. \\
\end{equation*}
We now define $t/\tau = x$,  and hence can write
\begin{equation} \label{Murep}
(Mu)(t) = \int^\infty_0 K(x) u(t/x) x^{-1} \, dx. \\
\end{equation}
By definition,
\begin{align*}
\| u(\cdot /x) \|_p & := \bigg ( \int^\infty_0 |u(t/x)|^p dt \bigg )^{1/p} \\
& = \bigg ( \int^\infty_0 |u(s)|^p x \, ds \bigg )^{1/p} \quad (\text{ where } s = t/x) \\
& = x^{1/p} \| u \|_p.\\
\end{align*}
Applying the $L_p(\mathbb{R}_+)$ norm to equation \eqref{Murep} we have
\begin{align*}
\| Mu \|_p & = \bigg \| \int^\infty_0 K(x) u(\cdot / x) x^{-1} dx \bigg \|_p \\
& \leq  \int^\infty_0 \| K(x) u(\cdot /x) x^{-1}  \|_p \, dx\\
& =  \int^\infty_0  | K(x) x^{-1} | \cdot  \| u(\cdot / x) \|_p \, dx   \\
& = \bigg ( \int^\infty_0  | K(x) x^{-1/p^\prime} | \, dx \bigg ) \cdot \| u \|_p, 
\end{align*}
which completes the proof of the lemma. \\
\end{proof}

\begin{lemma} \label{lemma:mellinop2}
Suppose $1 < p < \infty, \, \rho > 1/p-1$ and $\gamma >0$. Then the Mellin integral operator $M_{\gamma, \rho}$ with kernel
\begin{equation*}
K_{\gamma, \rho}(t) = \dfrac{\chi_{[1, \infty)}(t)}{t^\rho \, \Gamma( \gamma) \, t^\gamma \, (t-1)^{1-\gamma}}.
\end{equation*}
is bounded on $L_p(\mathbb{R}_+)$. Moreover, see \eqref{MellinSymbol}, $M_{\gamma, \rho}$ has symbol
\begin{equation*}
b(y) := (\mathcal{M}_{p} K_{\gamma, \rho})(y) = \dfrac{B(\rho+1/p' +iy, \gamma)}{\Gamma(\gamma)},
\end{equation*}
where $\mathcal{M}_{p}$ denotes the Mellin transform.
\end{lemma}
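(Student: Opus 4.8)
The plan is to prove the two assertions in turn: first the boundedness of $M_{\gamma,\rho}$ on $L_p(\mathbb{R}_+)$, and then the identification of its symbol. Both reduce to explicit computations with the kernel $K_{\gamma,\rho}$, using the general theory already set up in the excerpt (Lemma \ref{MboundedLp} and the formula \eqref{MellinSymbol} expressing the symbol as the Mellin transform of the kernel).

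\textbf{Boundedness.} By Lemma \ref{MboundedLp}, it suffices to check that $K_{\gamma,\rho}(t)\, t^{-1/p'} \in L_1(\mathbb{R}_+)$, i.e. that
\begin{equation*}
\int_1^\infty \frac{t^{-1/p'}}{t^{\rho}\,\Gamma(\gamma)\,t^\gamma\,(t-1)^{1-\gamma}}\, dt < \infty.
\end{equation*}
The integrand is supported on $[1,\infty)$. Near $t=1$ it behaves like $(t-1)^{\gamma-1}$, which is integrable precisely because $\gamma > 0$. As $t\to\infty$ it behaves like $t^{-\rho-\gamma-1/p'-(1-\gamma)} = t^{-\rho-1/p'-1}$, and $\int^\infty t^{-\rho-1/p'-1}\,dt$ converges iff $\rho + 1/p' + 1 > 1$, i.e. $\rho > -1/p' = 1/p - 1$, which is exactly the hypothesis. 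So I would split the integral at $t=2$, estimate each piece, and conclude integrability; boundedness then follows from Lemma \ref{MboundedLp}.

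\textbf{Symbol.} By \eqref{MellinSymbol}, the symbol is $b = \mathcal{M}_p(K_{\gamma,\rho})$, and by \eqref{MTexplicit},
\begin{equation*}
b(y) = \int_0^\infty x^{1/p - 1 - iy}\, K_{\gamma,\rho}(x)\, dx = \frac{1}{\Gamma(\gamma)}\int_1^\infty \frac{x^{1/p-1-iy-\rho-\gamma}}{(x-1)^{1-\gamma}}\, dx.
\end{equation*}
I would substitute $x = 1/(1-v)$, $v\in(0,1)$, so that $x-1 = v/(1-v)$, $dx = (1-v)^{-2}\,dv$, which turns the integral into a Beta integral. Writing $w := \rho + 1/p' + iy = \rho + 1 - 1/p + iy$ (so $1/p - 1 - iy - \rho = -w$ and $-w - \gamma + 1 = 1 - w - \gamma$), a short calculation gives
\begin{equation*}
\int_1^\infty \frac{x^{-w-\gamma}}{(x-1)^{1-\gamma}}\,dx = \int_0^1 v^{\gamma-1}(1-v)^{w-1}\,dv = B(\gamma, w) = B(w, \gamma),
\end{equation*}
valid because $\gamma > 0$ and $\operatorname{Re} w = \rho + 1/p' > 0$ (the condition $\rho > 1/p-1$). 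Hence $b(y) = B(\rho + 1/p' + iy, \gamma)/\Gamma(\gamma)$, as claimed. I would double-check the convergence conditions on the Beta integral against the stated hypotheses and note that the manipulations are justified by absolute convergence (the same estimate as in the boundedness part with $p'$ in place of $p$, or directly).

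The main obstacle is bookkeeping with the exponents in the change of variables — it is easy to mismatch $p$ and $p'$ or drop a factor — so I would carry the substitution through carefully and verify the endpoint behaviour of the Beta integrand matches the convergence conditions $\gamma>0$ and $\operatorname{Re}(\rho + 1/p' + iy) > 0$. There is no deep difficulty; the result is essentially a table integral dressed up in the Mellin-convolution formalism, and the hypotheses $\gamma > 0$, $\rho > 1/p - 1$ are exactly what is needed for every step.
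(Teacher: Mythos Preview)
Your proposal is correct and follows essentially the same route as the paper: invoke Lemma \ref{MboundedLp} for boundedness and evaluate the Mellin transform of the kernel as a Beta integral. The only cosmetic differences are that the paper uses the substitution $w=t-1$ (reducing to $\int_0^\infty w^{\gamma-1}(1+w)^{-(\gamma+\rho+1/p'+iy)}\,dw = B(\gamma,\rho+1/p'+iy)$ via 5.12.3, \cite{NIST}) rather than your $x=1/(1-v)$, and it handles boundedness by evaluating the same integral with $y=0$ rather than by the separate asymptotic argument you give.
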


\begin{proof}
From Lemma \ref{MboundedLp}, to prove boundedness on $L_p(\mathbb{R}_+)$ it is enough to show that 
\begin{equation*}
\Gamma(\gamma) \int^\infty_0 |K_{\gamma, \rho}(t)| t^{-1/p' } \, dt < \infty.
\end{equation*}
We will make use of the following result, see 5.12.3, \cite{NIST},
\begin{equation*}
\int^\infty_0 \dfrac{t^{a-1} \, dt }{(1+t)^{a+b}} = B(a,b), \quad \operatorname{Re} {a}>0, \, \operatorname{Re} {b}>0,
\end{equation*}
where $B(\cdot, \cdot)$ denotes the \textit{beta function}.  Now

\begin{align*}
\Gamma(\gamma) \int^\infty_0 |K_{\gamma, \rho}(t)| t^{-1/p' } \, dt &= \int^\infty_1 t^{-\rho} \, t^{-\gamma} \, (t-1)^{\gamma -1} \, t^{-1/p' } \, dt \\
&= \int^\infty_1 t^{-(\rho + \gamma + 1/p')} \, (t-1)^{\gamma -1} \, dt \\
&= \int^\infty_0 (1+w)^{-(\rho + \gamma + 1/p')} \, w^{\gamma -1} \, dt  \qquad (w=t-1)\\
&= \int^\infty_0 \dfrac{w^{\gamma-1} \, dw }{(1+w)^{\gamma + \rho + 1 /p'}} \\
&= B( \gamma, \rho + 1 /p') \\
&= B( \rho + 1 /p', \gamma) \\
&< \infty. \\
\end{align*}

From \eqref{MellinSymbol}, to calculate the symbol, we take the Mellin transform of the kernel:
\begin{align*}
(\mathcal{M}_{p } K_{\gamma, \rho})(y) & = \int^\infty_0 t^{-1/p'- i y} \, K_{\gamma, \rho}(t)  \, dt \\
&= \int^\infty_0 \dfrac{w^{\gamma-1} \, dw }{\Gamma(\gamma) \, (1+w)^{\gamma + \rho + 1 /p' + i y}} \\
&= \dfrac{B( \rho + 1 /p' + i y, \gamma)}{\Gamma(\gamma)}, 
\end{align*}
as required. This completes the proof of the lemma. \\
\end{proof}

Suppose a function $f: \overline{\mathbb{R}} \to \mathbb{C}$. Then we define the total variation, $V(f)$ as
\begin{equation*}
V(f) := \sup \bigg ( \sum^N_{k=1} |f(t_k)- f(t_{k-1}) |\bigg),
\end{equation*}
where the supremum is taken over all partitions $-\infty \leq t_0 < t_1 < \dots < t_N \leq + \infty$ of $\overline{\mathbb{R}}$. We denote the set of all bounded functions on $\overline{\mathbb{R}}$ with finite total variation by $BV(\overline{\mathbb{R}})$.  See \cite{Dudu, Roch}. We note, in passing, that this set is a Banach space under the norm
\begin{equation*}
\| f \|_{BV} := \| f \|_\infty + V(f).
\end{equation*}
\\
One important motivation for the study of functions of bounded variation, see, for example, Proposition 4.2.2, p. 200, \cite{Roch}, is the inclusion
\begin{equation} \label{BVinclusionMp}
BV(\overline{\mathbb{R}}) \subset \mathfrak{M}_p, \quad 1 < p < \infty,
\end{equation}

The following remark describes a useful way to demonstrate that certain functions have bounded variation on $\overline{\mathbb{R}}$. \\

\begin{remark} \label{BVtestmethod}
Suppose $f: \overline{\mathbb{R}} \to \mathbb{C}$ is bounded, and differentiable almost everywhere with $f' \in L_1(\mathbb{R})$. Since we can write
\begin{equation*}
f(t_k)- f(t_{k-1}) = \int^{t_k}_{t_{k-1}} f'(t) \, dt \quad \text{for} \quad k=1, \dots, N, 
\end{equation*}
it is easy to see that
\begin{equation*}
V(f) \leq \| f' \|_1.
\end{equation*}
Therefore, $f \in BV(\overline{\mathbb{R}})$. \\
\end{remark}

\begin{lemma} \label{BVBeta}
Suppose $d > c > 0$, and define the function
\begin{equation*}
g(y) := \dfrac{\Gamma( c + i y)}{\Gamma (d + i y)}, \quad y \in \mathbb{R}.
\end{equation*}
Then $g$ is continuous and bounded on $\dot{\mathbb{R}}$, and has bounded variation.
\end{lemma}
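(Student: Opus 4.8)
The plan is to verify that $g$ belongs to $BV(\dot{\mathbb{R}})$ by checking the three requirements: continuity and boundedness on the one-point compactification $\dot{\mathbb{R}}$ (i.e.\ including the behaviour as $y \to \pm\infty$), and finite total variation. Continuity on $\mathbb{R}$ is immediate, since $\Gamma(c+iy)$ and $\Gamma(d+iy)$ are continuous and $\Gamma$ has no zeros, so the denominator never vanishes for $c,d>0$. For boundedness and the existence of a limit at infinity, I would invoke the standard asymptotics of the Gamma function along vertical lines: Stirling's formula gives $|\Gamma(c+iy)/\Gamma(d+iy)| \sim |y|^{c-d}$ as $|y|\to\infty$ (more precisely, $\Gamma(a+iy) \sim \sqrt{2\pi}\,|y|^{a-1/2} e^{-\pi|y|/2} e^{i(\cdots)}$, and the exponential factors cancel in the quotient). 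Since $d>c>0$, we have $c-d<0$, so $|g(y)| \to 0$ as $|y|\to\infty$; thus $g$ extends continuously to $\dot{\mathbb{R}}$ with value $0$ at infinity, and, being continuous on a compact set, is bounded.

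For the bounded variation claim I would apply Remark~\ref{BVtestmethod}: it suffices to show $g$ is bounded (done) and that $g' \in L_1(\mathbb{R})$. Differentiating, $g'(y) = i\,g(y)\big(\psi(c+iy) - \psi(d+iy)\big)$, where $\psi = \Gamma'/\Gamma$ is the digamma function. Using the asymptotic $\psi(a+iy) = \log(iy) + O(1/|y|)$ as $|y|\to\infty$, the difference $\psi(c+iy)-\psi(d+iy)$ is $O(1/|y|)$ (the leading $\log$ terms cancel), while $|g(y)| = O(|y|^{c-d})$. Hence $|g'(y)| = O(|y|^{c-d-1})$ as $|y|\to\infty$, which is integrable at infinity because $c-d-1 < -1$. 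On any compact interval $g'$ is continuous, hence integrable there. Therefore $g' \in L_1(\mathbb{R})$, and Remark~\ref{BVtestmethod} yields $V(g) \le \|g'\|_1 < \infty$, so $g \in BV(\dot{\mathbb{R}})$.

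The main obstacle — really the only non-routine point — is assembling the precise asymptotics of $\Gamma$ and $\psi$ along vertical lines and making sure the decay exponents work out, in particular that the exponential factors $e^{-\pi|y|/2}$ cancel in the quotient so that $g$ does not blow up, and that the digamma difference genuinely produces the extra $|y|^{-1}$ gain needed for integrability. One could alternatively use the Beta-function identity $B(c, d-c) = \Gamma(c)\Gamma(d-c)/\Gamma(d)$ together with an integral representation of $g$, but the Stirling-asymptotics route via Remark~\ref{BVtestmethod} is the most direct. I would cite a standard reference (e.g.\ the asymptotic formulas in \cite{NIST}, 5.11.1 and 5.11.2) for the Gamma and digamma expansions rather than reproving them.
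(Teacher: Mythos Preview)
Your proposal is correct and follows essentially the same route as the paper: continuity from the zero-free property of $\Gamma$, boundedness and the limit at infinity from the Stirling-type asymptotic $g(y)\sim (iy)^{c-d}$ (the paper cites 5.11.12 of \cite{NIST} directly for the ratio), and bounded variation via Remark~\ref{BVtestmethod} using $g'(y)=i\,g(y)\bigl(\psi(c+iy)-\psi(d+iy)\bigr)$ together with the digamma asymptotic $\psi(z)\sim\log z$ to conclude $g'\in L_1(\mathbb{R})$. The only cosmetic difference is that the paper writes the digamma difference as $\log\bigl((c+iy)/(d+iy)\bigr)\sim (c-d)/(d+iy)$ rather than immediately as $O(1/|y|)$.
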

\begin{proof}
Since $c,d > 0$, by 5.2.1, \cite{NIST}, the functions $\Gamma( c + i y), \,\Gamma (d + i y)$ are continuous for $y \in \mathbb{R}$, and have no zeroes. Hence, $g$ is continuous on ${\mathbb{R}}$. \\

Moreover, from 5.11.12, \cite{NIST}, we have the following asymptotic
\begin{equation*}
g(y) \sim (i y)^{c-d}, \quad |y| \to \infty.
\end{equation*}
Thus, as $c - d < 0$, the function $g$ is continuous and bounded on $\dot{\mathbb{R}}$. \\

In terms of the digamma function, $\psi(z)$, see 5.2.2, \cite{NIST},
\begin{equation*}
g'(y) = i g(y) \big ( \psi(c+i y) - \psi(d + i y) \big ).
\end{equation*}
From 5.11.2, \cite{NIST}, $\psi(z) \sim \log z$ as $|z| \to \infty$, and we have
\begin{equation*}
g'(y) \sim (i y)^{c-d} \,  \log \bigg ( \dfrac{c + i y}{d + i y}\bigg) \sim (i y)^{c-d} \, \dfrac{(c-d)}{d+i y}.
\end{equation*}
Since $c - d < 0$, it clear that $g' \in L_1(\mathbb{R})$. \\

Finally, from Remark \ref{BVtestmethod}, $g$ has bounded variation on $\dot{\mathbb{R}}$. \\
\end{proof}

\begin{remark} \label{remark:mellinop}
Suppose $\gamma > 0$. We note from Lemma \ref{lemma:mellinop2}, that the kernel $K_{\gamma, 0}$, of the Mellin integral operator $M_{\gamma, 0}$, satisfies the conditions
\begin{equation} \label{Kgamma0integ}
\text{supp } K_{\gamma, 0} \subseteq [1, \infty) \quad \text{and} \quad \int^\infty_0 | K_{\gamma, 0}(t) | t^{-\epsilon} \, dt < \infty, \quad \text{for all} \,\, \epsilon > 0.
\end{equation}

If, in addition, $1 < p < \infty$ and $\rho > 1/p-1$ then, from Lemma \ref{BVBeta}, and its proof, the symbol $b_{\gamma, \rho} (y) = B(\rho + 1/p' +i y, \gamma) / \Gamma(\gamma)$ is continuous, with bounded variation, as $y$ varies over $\dot{\mathbb{R}}$. Moreover, $b_{\gamma, \rho}(\pm \infty) =0$. \\

From inclusion \eqref{BVinclusionMp},  $b_{\gamma, \rho}$ is a Fourier $L_p$-multiplier. Hence, see equation \eqref{MellinConvOp}, $M_{\gamma, \rho} = M^0(b_{\gamma, \rho})$ is a Mellin convolution operator. \\
\end{remark}

We will make extensive use of Remark \ref{remark:mellinop} in subsequent chapters. \\

\begin{lemma} \label{ustou}
Suppose $1/p < s < 1+1/p$ and $u \in H^s_p(\overline{\mathbb{R}_+})$. Let $u_s = (D+i)^{s-1} e_+ (D-i)u$. Then 
\begin{equation*}
u = (r_+ C_s(D) e_+) (r_+ u_s),
\end{equation*}
where $C_s(D)$ has the symbol $c_s(\xi) = (\xi-i)^{-1} (\xi+i)^{1-s}$. 
\end{lemma}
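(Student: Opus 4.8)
The plan is to unravel the definitions and apply the operators $(D+i)^{s-1}$, $(D-i)$ one after another, keeping careful track of the restriction/extension operators. First I would recall from Lemma \ref{lemma:us} that $u_s \in L_p(\mathbb{R})$ with $\operatorname{supp} u_s \subseteq \overline{\mathbb{R}_+}$, so that $e_+ r_+ u_s = u_s$; this is the key fact that lets me replace $u_s$ by $e_+ r_+ u_s$ whenever convenient. Then, starting from $u_s = (D+i)^{s-1} e_+ (D-i) u$, I would apply $(D+i)^{1-s}$ to both sides to get $(D+i)^{1-s} u_s = e_+ (D-i) u$, using that $(D+i)^{1-s}(D+i)^{s-1}$ is the identity on $\mathbb{R}$ (the symbols multiply to $1$ since $(\xi+i)^{1-s}(\xi+i)^{s-1} = (\xi+i)^0 = 1$, and there is no branch-cut issue because $\xi+i$ stays in the upper half-plane). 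Hence $e_+(D-i)u = (D+i)^{1-s} u_s$.

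Next I would apply $r_+(D-i)^{-1}$. Since $(D-i)^{-1}$ has symbol $(\xi-i)^{-1}$, which is holomorphic and bounded in the lower half-plane, the operator $r_+(D-i)^{-1} e_+$ is well-behaved (cf. Lemma \ref{lemma:rLambdae}); moreover, because $e_+ r_+ u_s = u_s$, I can write $r_+ (D-i)^{-1} (D+i)^{1-s} u_s = r_+ (D-i)^{-1} (D+i)^{1-s} e_+ (r_+ u_s)$. The symbol of $(D-i)^{-1}(D+i)^{1-s}$ is exactly $c_s(\xi) = (\xi-i)^{-1}(\xi+i)^{1-s}$, so the right-hand side is $(r_+ C_s(D) e_+)(r_+ u_s)$. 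It remains to check that the left-hand side equals $u$: applying $(D-i)^{-1}$ to $e_+(D-i)u$ gives $(D-i)^{-1} e_+ (D-i) u$, and restricting, $r_+ (D-i)^{-1} e_+ (D-i) u = r_+ (D-i)^{-1}(D-i) e_+ u - (\text{correction from the jump})$; but actually the cleanest route is to note $(D-i)^{-1} e_+ (D-i) u$ — here I would instead argue directly that $r_+ (D-i)^{-1} (D+i)^{1-s} u_s = r_+ u_0$ where $u_0$ is any $H^s_p(\mathbb{R})$-extension of $u$, and this restriction is $u$; the independence of the choice of extension is guaranteed by Lemma \ref{lemma:rLambdae} since $c_s$ extends holomorphically and with polynomial bounds to the lower half-plane (both factors $(\xi-i)^{-1}$ and $(\xi+i)^{1-s}$ are holomorphic there).

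The main obstacle I anticipate is bookkeeping the restriction/extension operators correctly so that the composition $r_+ C_s(D) e_+$ genuinely reconstitutes $u$ rather than $u$ plus some boundary-supported distribution. The delicate point is that $e_+(D-i)u$ need not equal $(D-i)e_+ u$ — by Lemma \ref{lemma:Deu} they differ by $i u(0)\delta$ — so one cannot naively cancel $(D-i)^{-1}(D-i)$. The resolution is that we never need to: since $u_s = (D+i)^{s-1} e_+(D-i)u$ and $e_+(D-i)u \in \widetilde{H}^{s-1}_p(\overline{\mathbb{R}_+})$, the quantity $(D+i)^{1-s} u_s = e_+(D-i)u$ is literally the zero-extension of $(D-i)u$, and then $r_+ (D-i)^{-1}$ applied to this zero-extension, composed with the observation $e_+ r_+ u_s = u_s$, yields $u$ directly; I would verify this last identity by testing against the definition $H^s_p(\overline{\mathbb{R}_+})$, writing $u = r_+ u_0$, noting $e_+(D-i)u = e_+ r_+ (D-i) u_0$, and checking that $r_+(D-i)^{-1} e_+ r_+ (D-i) u_0 = r_+ u_0 = u$ using that $r_+(D-i)^{-1}e_+ r_+ = r_+ (D-i)^{-1}$ (Remark 1.11 in \cite{Shar}, as invoked in Lemma \ref{lemma:rLambdae}) and that $(D-i)^{-1}(D-i) = I$ on $H^s_p(\mathbb{R})$.
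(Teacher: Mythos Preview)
Your approach is essentially the paper's: apply $(D+i)^{1-s}$ to recover $e_+(D-i)u$, then apply $r_+(D-i)^{-1}$, and use $e_+r_+u_s=u_s$ together with the identity $(r_+(D-i)^{-1}l_+)r_+=r_+(D-i)^{-1}$ from Lemma~\ref{lemma:rLambdae} and $(D-i)^{-1}(D-i)=I$ on $H^s_p(\mathbb{R})$. The second paragraph of your proposal executes this correctly.

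One correction, however: your parenthetical claim that ``both factors $(\xi-i)^{-1}$ and $(\xi+i)^{1-s}$ are holomorphic'' in the lower half-plane is false --- $(\xi+i)^{1-s}$ has a branch point at $\xi=-i$, so $c_s$ is \emph{not} a minus-type symbol and Lemma~\ref{lemma:rLambdae} does not apply to $C_s(D)$ as a whole. Fortunately your actual argument does not rely on this: you only invoke Lemma~\ref{lemma:rLambdae} for $(D-i)^{-1}$ alone, which is correct, and the $(D+i)^{1-s}$ factor is handled instead via the support condition $\operatorname{supp}u_s\subseteq\overline{\mathbb{R}_+}$. Drop the incorrect remark and the proof is clean.
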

\begin{proof} We have $u_s = (D+i)^{s-1} e_+ (D-i)u$. Hence
\begin{align*} 
(D+i)^{1-s} u_s &= e_+ (D-i)u\\
r_+ (D+i)^{1-s} u_s &= (D-i)u\\
r_+ (D+i)^{1-s} u_s &= (D-i)(r_+ u_0) \quad \text{where } u=r_+ u_0, \, u_0 \in H^s_p(\mathbb{R})\\
r_+ (D+i)^{1-s} u_s &= r_+ (D-i) u_0 \\
(r_+ (D-i)^{-1} l_+)r_+ (D+i)^{1-s} u_s &= (r_+ (D-i)^{-1} l_+)r_+ (D-i) u_0 \\
r_+ (D-i)^{-1} (D+i)^{1-s} u_s &= r_+ (D-i)^{-1} (D-i) u_0 \quad \text {by Lemma } \ref{lemma:rLambdae}\\
r_+ (D-i)^{-1} (D+i)^{1-s} e_+ r_+ u_s &= r_+ u_0 \quad \text{since  supp} \, u_s \subseteq \overline{\mathbb{R}_+} \text{ by Lemma } \ref{lemma:us}\\
(r_+ C_s(D) e_+) (r_+ u_s) &= u,  \qquad \text{as required.}
\end{align*}
\end{proof}

If $1+1/p < s < 2+1/p$, we have the following counterpart to Lemma \ref{ustou}.
\begin{lemma} \label{ustou2}
Suppose $1+1/p < s < 2+1/p$ and $u \in H^s_p(\overline{\mathbb{R}_+})$. Let $u_s = (D+i)^{s-2} e_+ (D-i)^2u$. Then 
\begin{equation*}
u = ( r_+ C_s(D) e_+ ) (r_+ u_s),
\end{equation*}
where $C_s(D)$ has the symbol $c_s(\xi) = (\xi-i)^{-2}(\xi+i)^{2-s}$.
\end{lemma}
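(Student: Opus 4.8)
The plan is to mirror the proof of Lemma \ref{ustou} almost line for line, adjusting only the powers of $(D-i)$ and $(D+i)$ to account for the fact that in the higher regularity case we differentiate twice rather than once. So I would start from the definition $u_s = (D+i)^{s-2} e_+ (D-i)^2 u$ and apply $(D+i)^{2-s}$ to both sides to obtain $(D+i)^{2-s} u_s = e_+ (D-i)^2 u$. Here I would invoke Lemma \ref{lemma:u2s} to know that $u_s \in L_p(\mathbb{R})$ with $\operatorname{supp} u_s \subseteq \overline{\mathbb{R}_+}$, and Lemma \ref{lemma:D2eu} is implicitly behind the fact that $e_+ (D-i)^2 u$ makes sense, though what is actually needed is just that $u = r_+ u_0$ for some $u_0 \in H^s_p(\mathbb{R})$ and that the pseudodifferential operators involved commute with restriction in the appropriate way.

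Next I would restrict to $\overline{\mathbb{R}_+}$: writing $u = r_+ u_0$ with $u_0 \in H^s_p(\mathbb{R})$, we get $r_+ (D+i)^{2-s} u_s = (D-i)^2 u = (D-i)^2 r_+ u_0 = r_+ (D-i)^2 u_0$. Then I would apply the operator $r_+ (D-i)^{-2} l_+$ to both sides and use Lemma \ref{lemma:rLambdae} — which says $(r_+ (D-i)^\nu l_+) r_+ = r_+ (D-i)^\nu$ and that the result is independent of the extension $l_+$ — to collapse the compositions: the left side becomes $r_+ (D-i)^{-2} (D+i)^{2-s} u_s$, and since $\operatorname{supp} u_s \subseteq \overline{\mathbb{R}_+}$ we have $u_s = e_+ r_+ u_s$, so this equals $(r_+ (D-i)^{-2}(D+i)^{2-s} e_+)(r_+ u_s)$. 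The right side becomes $r_+ (D-i)^{-2}(D-i)^2 u_0 = r_+ u_0 = u$. Setting $C_s(D) := (D-i)^{-2}(D+i)^{2-s}$, whose symbol is $c_s(\xi) = (\xi-i)^{-2}(\xi+i)^{2-s}$, gives exactly $u = (r_+ C_s(D) e_+)(r_+ u_s)$.

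Since the paper has already indicated the proof "follows the method used in the proof of Lemma \ref{lemma:us}" for the companion Lemma \ref{lemma:u2s}, and Lemma \ref{ustou2} itself is stated with the remark that it is "the following counterpart to Lemma \ref{ustou}", I expect the author's proof to be a short display chain essentially identical to that of Lemma \ref{ustou}. I would present it that way: a single aligned sequence of equalities starting from $u_s = (D+i)^{s-2} e_+ (D-i)^2 u$ and ending at $(r_+ C_s(D) e_+)(r_+ u_s) = u$, annotating the two nontrivial steps with references to Lemma \ref{lemma:rLambdae} (for the cancellation of $(r_+(D-i)^{-2}l_+)r_+$) and Lemma \ref{lemma:u2s} (for $\operatorname{supp} u_s \subseteq \overline{\mathbb{R}_+}$, hence $e_+ r_+ u_s = u_s$).

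The only genuine subtlety — hence the "main obstacle", though it is mild — is making sure the operator identities are legitimate at the relevant regularity: $(D-i)^2 u$ lives in $H^{s-2}_p(\overline{\mathbb{R}_+})$ with $s - 2 \in (1/p - 1, 1/p)$, so that $e_+$ maps it into $H^{s-2}_p(\mathbb{R})$ with support in $\overline{\mathbb{R}_+}$ (this is the point where the constraint $1 + 1/p < s < 2 + 1/p$ is used, exactly as $1/p < s < 1 + 1/p$ was used in Lemma \ref{lemma:us}), and then $(D+i)^{s-2}$ maps that into $L_p(\mathbb{R})$ preserving the support — which is precisely the content of Lemma \ref{lemma:u2s}. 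Once that bookkeeping is in place, every other step is a formal manipulation of Fourier multipliers justified by Lemma \ref{lemma:rLambdae}, and there is nothing further to prove.
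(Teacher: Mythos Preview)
Your proposal is correct and matches the paper's approach exactly: the paper's proof is the single line ``The proof follows as Lemma \ref{ustou}, but using Lemma \ref{lemma:u2s} instead of Lemma \ref{lemma:us},'' and your detailed write-up is precisely the unpacking of that sentence. The lemmas you cite (Lemma \ref{lemma:rLambdae} for the collapse $(r_+(D-i)^{-2}l_+)r_+ = r_+(D-i)^{-2}$ and Lemma \ref{lemma:u2s} for $\operatorname{supp} u_s \subseteq \overline{\mathbb{R}_+}$) are exactly the ones needed.
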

\begin{proof} 
The proof follows as Lemma \ref{ustou}, but using Lemma \ref{lemma:u2s} instead of Lemma \ref{lemma:us}. \\
\end{proof}

\begin{lemma} \label{htous}
Suppose $0 < \alpha < \frac{1}{2}, \,\, 1 < p < \infty,  \,\, 1/p < s < 1 +1/p$ and $u \in H^s_p(\overline{\mathbb{R}_+})$. Let $h = C^{2\alpha}_{0^+} u$ and $u_s=(D+i)^{s-1}e_+(D-i)u$.  Then 
\begin{equation*}
h = (r_+ C(D) e_+) (r_+ u_s) + i \dfrac{u(0)}{\sqrt{2\pi}} \, r_+ \mathcal{F}^{-1} (-i \xi)^{2\alpha-1}(\xi-i)^{-1},
\end{equation*}
where $C(D)$ has the symbol $c(\xi) = (-i \xi)^{2\alpha} (\xi + i)^{1-s} (\xi-i)^{-1}$.
\end{lemma}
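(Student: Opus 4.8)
The plan is to push the asserted identity down to a purely algebraic level using the definition \eqref{usdefn} and Lemma \ref{lemma:Deu}, and then to recover the general case by density. First, recall that, since $0<2\alpha<1$, $h=C^{2\alpha}_{0^+}u$ is the Caputo fractional derivative, so that $h=I^{1-2\alpha}_{0^+}(r_+\partial_x u)$ (this is consistent with \eqref{TaylorCase1} and the semigroup property $I^{2\alpha}_{0^+}I^{1-2\alpha}_{0^+}=I^{1}_{0^+}$; see Appendix \ref{Appendix FC}). Since $C^{2\alpha}_{0^+}$, the trace $u\mapsto u(0)$, the map $u\mapsto u_s$ of Lemma \ref{lemma:us}, and the Fourier and Mellin operators occurring on the right-hand side are all bounded between the relevant spaces, it suffices to prove the identity for $u\in r_+C^\infty_0(\mathbb{R})$, the general case following by continuity. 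For such $u$, use the representation $I^{1-2\alpha}_{0^+}g=r_+\mathcal{F}^{-1}(-i\xi)^{2\alpha-1}\mathcal{F}\,e_+g$ for $g$ on $\mathbb{R}_+$, which follows from $\mathcal{F}\!\left(e_+x^{\gamma-1}/\Gamma(\gamma)\right)=(2\pi)^{-1/2}(-i\xi)^{-\gamma}$ (a consequence of equation (2.34) in \cite{Es}). This gives
\[
h=r_+\,\mathcal{F}^{-1}(-i\xi)^{2\alpha-1}\mathcal{F}\,\big(e_+\, r_+\partial_x u\big).
\]

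Next I would rewrite $e_+\, r_+\partial_x u$ in terms of $u_s$ and $u(0)$. Since $\partial_x u=-i(D-i)u+u$ on $\mathbb{R}_+$, we have $e_+\, r_+\partial_x u=e_+u-i\,e_+(D-i)u$. From \eqref{usdefn} together with $(D+i)^{1-s}(D+i)^{s-1}=I$ we get $e_+(D-i)u=(D+i)^{1-s}u_s$, while Lemma \ref{lemma:Deu} gives $(D-i)e_+u=e_+(D-i)u+iu(0)\delta=(D+i)^{1-s}u_s+iu(0)\delta$, hence
\[
e_+u=(D-i)^{-1}(D+i)^{1-s}u_s+iu(0)\,(D-i)^{-1}\delta .
\]
Substituting, and using the elementary multiplier identity $(\xi-i)^{-1}-i=-i\xi\,(\xi-i)^{-1}$, i.e. $(D-i)^{-1}-i=-iD(D-i)^{-1}$, yields
\[
e_+\, r_+\partial_x u=-iD(D-i)^{-1}(D+i)^{1-s}u_s+iu(0)\,(D-i)^{-1}\delta .
\]

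Finally I would feed this into the displayed formula for $h$ and collect symbols. As all factors are Fourier convolution operators, the $u_s$-contribution is $r_+\mathcal{F}^{-1}\big[(-i\xi)^{2\alpha-1}(-i\xi)(\xi-i)^{-1}(\xi+i)^{1-s}\big]\mathcal{F}u_s=r_+C(D)u_s$, with $c(\xi)=(-i\xi)^{2\alpha}(\xi+i)^{1-s}(\xi-i)^{-1}$ (using $(-i\xi)^{2\alpha-1}(-i\xi)=(-i\xi)^{2\alpha}$); since $u_s=e_+r_+u_s$ by Lemma \ref{lemma:us}, this equals $(r_+C(D)e_+)(r_+u_s)$. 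The $\delta$-contribution is $iu(0)\,r_+\mathcal{F}^{-1}(-i\xi)^{2\alpha-1}(\xi-i)^{-1}\mathcal{F}\delta=i\,\dfrac{u(0)}{\sqrt{2\pi}}\,r_+\mathcal{F}^{-1}(-i\xi)^{2\alpha-1}(\xi-i)^{-1}$, because $\mathcal{F}\delta=(2\pi)^{-1/2}$; note $(-i\xi)^{2\alpha-1}(\xi-i)^{-1}\in L_1(\mathbb{R})$, so this last inverse transform is a genuine bounded function. Adding the two pieces gives exactly the asserted identity.

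The one genuinely delicate point, and the main obstacle, is the appearance of the $u(0)$-term. It arises precisely because $(D-i)^{-1}$, whose symbol is holomorphic in the \emph{lower} half-plane, does not preserve the class of distributions supported in $\overline{\mathbb{R}_+}$; consequently $e_+u\neq(D-i)^{-1}(D+i)^{1-s}u_s$, the correction being exactly the term $iu(0)(D-i)^{-1}\delta$ produced by Lemma \ref{lemma:Deu}. Everything else is careful bookkeeping: keeping the signs straight in $(\xi-i)^{-1}-i=-i\xi(\xi-i)^{-1}$, and distinguishing consistently between the extension-by-zero $e_+$, the restriction $r_+$, and the equality $e_+r_+u_s=u_s$, which holds only by virtue of the support property in Lemma \ref{lemma:us}.
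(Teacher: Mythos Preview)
Your proof is correct and follows essentially the same route as the paper's: both express $h$ via the Caputo representation $I^{1-2\alpha}_{0^+}$ applied to the first derivative, pass to the full line via $e_+$, use Lemma~\ref{lemma:Deu} to split off the $u(0)\delta$-term, and collapse the symbols with the identity $1+i(\xi-i)^{-1}=\xi(\xi-i)^{-1}$ (equivalently, your $(\xi-i)^{-1}-i=-i\xi(\xi-i)^{-1}$). The only cosmetic difference is that the paper applies $\mathcal{F}$ immediately and works at the symbol level throughout, whereas you stay at the operator level a bit longer and add an explicit density reduction; neither changes the substance of the argument.
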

\begin{proof} From Lemma \ref{lemma:Deu} and the definition of $u_s$, we have
\begin{align*} 
\mathcal{F}( e_+ u) &= \mathcal{F}( (D-i)^{-1} (D-i) e_+ u) \\
&= \mathcal{F}( (D-i)^{-1} e_+ (D-i)  u) + \mathcal{F}((D-i)^{-1} \, i \, u(0) \, \delta) \\
&= \mathcal{F}( (D-i)^{-1} (D+i)^{1-s} u_s) + i \, u(0) \, (\xi-i)^{-1} \cdot \tfrac{1}{\sqrt{2\pi}} \\
&= (\xi-i)^{-1} (\xi+i)^{1-s} \mathcal{F}(u_s) + i \, u(0) \, (\xi-i)^{-1} \cdot \tfrac{1}{\sqrt{2\pi}}.  
\end{align*}
Moreover,
\begin{align*}
i \, e_+ h(x) &= i \, e_+ (C^{2\alpha}_{0^+} u)(x) \\
&= e_+ I^{1-2\alpha}_{0^+} D u \qquad \text{(see Appendix \ref{Appendix FC})} \\
&= I^{1-2\alpha}_{+} (e_+ D u) \\
&= I^{1-2\alpha}_{+} (e_+ (D-i) u + i \, e_+ u)\\
&= I^{1-2\alpha}_{+} ((D+i)^{1-s}u_s + i \, e_+ u). 
\end{align*}

Applying the Fourier transform, see Appendix \ref{Appendix FC}, 
\begin{align*}
i \mathcal{F}(e_+ h) & = (-i\xi)^{2\alpha-1} \big \{ \mathcal{F}((D+i)^{1-s} u_s) + i \mathcal{F}(e_+ u) \big \} \\
&= (-i\xi)^{2\alpha-1} \bigg \{  (\xi+i)^{1-s} \mathcal{F}(u_s) + i\, (\xi-i)^{-1} (\xi+i)^{1-s} \mathcal{F}(u_s) + i^2 \dfrac{u(0)}{\sqrt{2\pi}}(\xi-i)^{-1} \bigg \} \\
&= (-i\xi)^{2\alpha-1} (\xi+i)^{1-s} \mathcal{F}(u_s) \{ 1 + i(\xi-i)^{-1} \} -  \dfrac{u(0)}{\sqrt{2\pi}} (-i\xi)^{2\alpha-1} (\xi-i)^{-1}.
\end{align*}

Noting that $1 + i(\xi-i)^{-1} = \xi (\xi-i)^{-1}$ we have
\begin{equation*}
\mathcal{F}(e_+ h) = (-i\xi)^{2\alpha} (\xi+i)^{1-s} (\xi-i)^{-1} \mathcal{F}(u_s) + i \, \dfrac{u(0)}{\sqrt{2\pi}} (-i\xi)^{2\alpha-1} (\xi-i)^{-1}.
\end{equation*}
But since supp $u_s \subseteq \overline{\mathbb{R}_+}$,
\begin{equation*}
h = (r_+ C(D) e_+) (r_+ u_s) + i \dfrac{u(0)}{\sqrt{2\pi}} \, r_+ \mathcal{F}^{-1} (-i \xi)^{2\alpha-1}(\xi-i)^{-1},
\end{equation*}
which completes the proof of the lemma. \\
\end{proof}

Lemmas \ref{htous2} and \ref{htous2supplement} are the counterparts of Lemma \ref{htous} for the case of higher regularity, namely $1+1/p < s < 2+1/p$. \\

\begin{remark} \label{InvFTalphahalf}
Let $\theta$ denote the Heaviside step function. We note, in preparation for Lemma \ref{htous2}, that if $\alpha = \tfrac{1}{2}$, then
\begin{align*}
r_+ \, \mathcal{F}^{-1} & (-i \xi)^{2 \alpha-1} (\xi - i)^{-2} \\
& =  -r_+ \, \mathcal{F}^{-1} (\xi - i)^{-2} \\
& = r_+ \, \mathcal{F}^{-1} (1 + i\xi)^{-2} \\
& = -r_+ \, \dfrac{1}{\sqrt{2 \pi}} \int^\infty_{-\infty} (1+ i \xi)^{-2} e^{-i \xi x} \, d\xi \\
& = r_+ \, \big ( \sqrt{2 \pi} \, x \, e^x  \, \theta (-x) \big ) \quad \text{(3.382 6,  p. 349 \cite{GR})} \\
& = 0. \\
\end{align*} 
\end{remark}

\begin{lemma} \label{htous2}
Suppose $\frac{1}{2} \leq \alpha < 1, \,\, 1 < p < \infty,  \,\, 1+1/p < s < 2 +1/p$ and $u \in H^s_p(\overline{\mathbb{R}_+})$ with $u'(0)=0$. Let $h(x) = (C^{2\alpha}_{0^+} u)(x)$ and $u_s=(D+i)^{s-2}e_+(D-i)^2 u$.  Then 
\begin{equation*}
h=
(r_+ C(D) e_+) (r_+ u_s) + \dfrac{u(0)}{\sqrt{2\pi}} \, r_+ \mathcal{F}^{-1} (-i \xi)^{2\alpha-1}(\xi-i)^{-2},
\end{equation*}
where $C(D)$ has the symbol $c(\xi) = (-i \xi)^{2\alpha} (\xi + i)^{2-s} (\xi-i)^{-2}$. \\
\end{lemma}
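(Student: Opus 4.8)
The plan is to mirror closely the proof of Lemma \ref{htous} (the case $0<\alpha<\tfrac12$), making the modifications forced by working with second-order objects: $(D-i)^{-2}$ in place of $(D-i)^{-1}$, the fundamental solution relation $u_s=(D+i)^{s-2}e_+(D-i)^2u$, and Lemma \ref{lemma:D2eu} in place of Lemma \ref{lemma:Deu}. The hypothesis $u'(0)=0$ is what makes the whole scheme work: it kills the $\delta$-term in Lemma \ref{lemma:D2eu} coming from the first derivative and, crucially, it is exactly the condition needed in Lemma \ref{lemma:mellinop12} (equation \eqref{TaylorCase2}) to write $u(x)-u(0)=I^{2\alpha}_{0^+}C^{2\alpha}_{0^+}u(x)$ for $\tfrac12\le\alpha<1$.

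First I would compute $\mathcal{F}(e_+u)$ in terms of $\mathcal{F}(u_s)$. Writing $e_+u=(D-i)^{-2}(D-i)^2e_+u$ and applying Lemma \ref{lemma:D2eu} together with $u'(0)=0$ gives
\begin{equation*}
(D-i)^2 e_+u = e_+(D-i)^2u - u(0)\,\delta' + 2u(0)\,\delta,
\end{equation*}
and since $e_+(D-i)^2u=(D+i)^{2-s}u_s$ by definition of $u_s$, taking Fourier transforms and using $\mathcal{F}(\delta'-\delta)=-(1+i\xi)\mathcal{F}\delta$ (as computed in the proof of Lemma \ref{lemma:rA(s-2)deltadashdelta}) yields an explicit formula
\begin{equation*}
\mathcal{F}(e_+u) = (\xi-i)^{-2}(\xi+i)^{2-s}\mathcal{F}(u_s) + \frac{u(0)}{\sqrt{2\pi}}\,(\xi-i)^{-2}\bigl(-\xi^2-\text{(lower order in }\xi)\bigr),
\end{equation*}
where I will need to track the polynomial coefficient of $u(0)(\xi-i)^{-2}$ carefully — it is $-(i\xi)^2 + $ the contribution of the $2u(0)\delta$ term, i.e.\ something like $\xi^2+2i\xi$ or $-(1+i\xi)^2+1$ depending on how the $-u(0)\delta' +2u(0)\delta$ combination simplifies; this is the first place a sign or bookkeeping slip could creep in.

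Next I would handle $i\,e_+h$. Using the identity $i\,C^{2\alpha}_{0^+}u = I^{1-2\alpha}_{0^+}Du$ from Appendix \ref{Appendix FC} and the commutation of $I^{1-2\alpha}_+$ with $e_+$, I get
\begin{equation*}
i\,e_+h = I^{1-2\alpha}_+\bigl(e_+(D-i)^2u + \text{(terms in }e_+(D-i)u\text{ and }e_+u)\bigr),
\end{equation*}
after expanding $Du=(D-i)^2u+2i(D-i)u-u$ wait — more carefully $Du = (D-i)u + iu$ and then iterating to express everything via $(D-i)^2u$ and $e_+u$; I would substitute $e_+(D-i)^2u=(D+i)^{2-s}u_s$, then feed in the formula for $\mathcal{F}(e_+u)$ from the previous step, apply the Fourier transform (so $I^{1-2\alpha}_+$ becomes multiplication by $(-i\xi)^{2\alpha-1}$), and collect terms. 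The coefficient of $\mathcal{F}(u_s)$ should assemble, via algebraic identities like $1+i(\xi-i)^{-1}=\xi(\xi-i)^{-1}$ used twice (once for each power of $(\xi-i)^{-1}$), into $(-i\xi)^{2\alpha}(\xi+i)^{2-s}(\xi-i)^{-2}$, which is exactly the claimed symbol $c(\xi)$; the leftover $u(0)$-term should collapse to $\dfrac{u(0)}{\sqrt{2\pi}}(-i\xi)^{2\alpha-1}(\xi-i)^{-2}$. Finally, since $\mathrm{supp}\,u_s\subseteq\overline{\mathbb{R}_+}$ by Lemma \ref{lemma:u2s}, I replace $\mathcal{F}(u_s)$ by $\mathcal{F}(e_+r_+u_s)$ and apply $\mathcal{F}^{-1}$, obtaining $h=(r_+C(D)e_+)(r_+u_s)+\dfrac{u(0)}{\sqrt{2\pi}}\,r_+\mathcal{F}^{-1}(-i\xi)^{2\alpha-1}(\xi-i)^{-2}$ as required.

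The main obstacle is the purely algebraic simplification of the rational-function coefficients: there are now two factors of $(\xi-i)^{-1}$ to clear instead of one, the $u(0)$-correction involves $\delta'$ as well as $\delta$, and one has to verify that the various $\xi$-polynomial factors multiplying $u(0)(\xi-i)^{-2}(-i\xi)^{2\alpha-1}$ cancel down to the single clean term stated — in particular that no residual $(\xi-i)^{-1}$ or extra polynomial survives. I would double-check this against the sanity case $\alpha=\tfrac12$: by Remark \ref{InvFTalphahalf} the $u(0)$-term vanishes because $r_+\mathcal{F}^{-1}(-i\xi)^{0}(\xi-i)^{-2}=r_+\mathcal{F}^{-1}(\xi-i)^{-2}=0$, and $C^{1}_{0^+}u=u'$ with $u'(0)=0$, so the formula reduces to $h=u'=(r_+C(D)e_+)(r_+u_s)$ with symbol $c(\xi)=(-i\xi)(\xi+i)^{2-s}(\xi-i)^{-2}$, which should be directly verifiable from $u_s=(D+i)^{s-2}e_+(D-i)^2u$ and provides a good consistency check on the general computation.
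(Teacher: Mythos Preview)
Your overall strategy---compute $\mathcal{F}(e_+u)$ via Lemma~\ref{lemma:D2eu}, then compute $\mathcal{F}(e_+h)$ via the Caputo--Riemann--Liouville link, substitute and simplify---is exactly what the paper does. But there is one genuine error that would make the argument fail as written.

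For $\tfrac{1}{2}\leq\alpha<1$ we have $1\leq 2\alpha<2$, so in the definition $C^{2\alpha}_{0^+}u=I^{n-2\alpha}_{0^+}u^{(n)}$ of Appendix~\ref{Appendix FC} one must take $n=[2\alpha]+1=2$, giving $C^{2\alpha}_{0^+}u=I^{2-2\alpha}_{0^+}u''$, not $I^{1-2\alpha}_{0^+}u'$. The identity $i\,C^{2\alpha}_{0^+}u=I^{1-2\alpha}_{0^+}Du$ you invoke is the $0<\alpha<\tfrac12$ formula from Lemma~\ref{htous}; for $\alpha\geq\tfrac12$ the exponent $1-2\alpha$ is non-positive and $I^{1-2\alpha}_+$ is not even defined in the framework of the appendix. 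The paper accordingly starts from
\[
-e_+h = e_+I^{2-2\alpha}_{0^+}D^2u = I^{2-2\alpha}_+(e_+D^2u),
\]
expands $D^2u=(D-i)^2u+2iDu+u$, substitutes $e_+(D-i)^2u=(D+i)^{2-s}u_s$, and uses the auxiliary relation $\mathcal{F}(e_+Du)=\xi\,\mathcal{F}(e_+u)-\tfrac{iu(0)}{\sqrt{2\pi}}$ (from $(e_+u)'=e_+u'+u(0)\delta$) together with the formula for $\mathcal{F}(e_+u)$ already obtained. After Fourier transforming, the $\mathcal{F}(u_s)$ coefficient collapses via $(\xi-i)^2+1+2i\xi=\xi^2$ to give $(-i\xi)^{2\alpha}(\xi+i)^{2-s}(\xi-i)^{-2}$, and the $u(0)$ terms combine via $(1+2i\xi)i(\xi-2i)+2(\xi-i)^2=i\xi$ to leave $\tfrac{u(0)}{\sqrt{2\pi}}(-i\xi)^{2\alpha-1}(\xi-i)^{-2}$. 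The case $\alpha=\tfrac12$ is handled separately at the start (your sanity check is the paper's actual argument there), since $C^1_{0^+}u=u'$ directly and Remark~\ref{InvFTalphahalf} shows the $u(0)$ term vanishes.

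So: keep your plan, but replace the $I^{1-2\alpha}_+$ step by $-e_+h=I^{2-2\alpha}_+(e_+D^2u)$ and carry the second-derivative expansion through. The algebraic simplifications you were worried about are the two polynomial identities displayed above.
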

\begin{proof}  Firstly, suppose that $\alpha = \tfrac{1}{2}$. Then, $2 \alpha =1$, and we simply have
\begin{align*}
h(x) & = (C^1_{0^+} u)(x) \\
& = u'(x) \quad \text{from equation} \, \, \eqref{alphahalfudiff} \\
& = \dfrac{d}{dx} \, (r_+ (D-i)^{-2}(D+i)^{2-s} e_+) (r_+ u_s) \quad \text{from Lemma }\ref{ustou2} \\
& = (r_+ C(D) e_+) (r_+ u_s), 
\end{align*}
since $\dfrac{d}{dx} = - i D$ and $Dr_+ = r_+D$. Noting the result in Remark \ref{InvFTalphahalf}, this completes the proof for $\alpha = \tfrac{1}{2}$. \\

We now consider the case $\tfrac{1}{2} < \alpha <1$. From Lemma \ref{lemma:D2eu}, with $u'(0)=0$, and the definition of $u_s$ we have
\begin{align*} 
\mathcal{F}( e_+ u) &= \mathcal{F}( (D-i)^{-2} (D-i)^2 e_+ u) \\
&= \mathcal{F}( (D-i)^{-2} e_+ (D-i)^2  u) - \mathcal{F}((D-i)^{-2} \, u(0) \, (\delta' - 2\delta)) \\
&= \mathcal{F}( (D-i)^{-2} (D+i)^{2-s} u_s) + i \, u(0) \, (\xi-i)^{-2} (\xi - 2i) \cdot \tfrac{1}{\sqrt{2\pi}} \\
&= (\xi-i)^{-2} (\xi+i)^{2-s} \mathcal{F}(u_s) + i \, u(0) \, (\xi-i)^{-2} (\xi - 2i) \cdot \tfrac{1}{\sqrt{2\pi}}, 
\end{align*}
since $-\mathcal{F}(\delta'-2 \delta) = i \mathcal{F} (D \delta) + 2 \mathcal{F} \delta = i(\xi- 2i) \mathcal{F} \delta$. \\

Now $(e_+u)'=e_+ u' + u(0) \delta$, and hence $D(e_+ u) = e_+Du + i u(0)\delta$. Therefore,
\begin{align*}
\mathcal{F}(e_+Du) &= \mathcal{F}(D(e_+u)) -\mathcal{F}(i u(0) \delta)\\
&= \xi \mathcal{F}(e_+u) - \dfrac{i u(0)}{\sqrt{2 \pi}}. 
\end{align*}

Moreover,
\begin{align*}
-e_+ h(x) &= -e_+ (C^{2\alpha}_{0^+} u)(x) \\
&= -e_+ I^{2-2\alpha}_{0^+}  u'' \qquad \text{(See Appendix \ref{Appendix FC})} \\
&= e_+ I^{2-2\alpha}_{0^+}  D^2 u \\
&=  I^{2-2\alpha}_{+} (e_+ D^2 u) \\
&= I^{2-2\alpha}_{+} \big ( e_+ (D-i)^2 u + 2 i \, e_+Du +  e_+u \big ) \\
&= I^{2-2\alpha}_{+} \big ( (D+i)^{2-s}u_s + 2 i \, e_+Du +  e_+u \big ). \\
\end{align*}

Applying the Fourier transform, see Appendix \ref{Appendix FC},  and using the expressions recently established for $\mathcal{F}(e_+ u)$ and $\mathcal{F}(e_+ Du)$,
\begin{align*}
\mathcal{F}(e_+ h) & = - (-i\xi)^{2\alpha-2} \bigg\{ \mathcal{F}((D+i)^{2-s} u_s) + 2i \mathcal{F}(e_+ D u) + \mathcal{F}(e_+u) \bigg \}\\
& = -(-i\xi)^{2\alpha-2} \bigg\{ (\xi+i)^{2-s}\mathcal{F}(u_s) + (1+2i \xi) \mathcal{F}(e_+ u) + \dfrac{2u(0)}{\sqrt{2\pi}}\bigg \} \\
& = -(-i\xi)^{2\alpha-2} \bigg\{ (\xi+i)^{2-s}\mathcal{F}(u_s) + (1+2i \xi) (\xi-i)^{-2} (\xi+i)^{2-s} \mathcal{F}(u_s) \bigg\} \\
& \quad -(-i\xi)^{2\alpha-2} \bigg ( \dfrac{u(0)}{\sqrt{2\pi}(\xi-i)^2}\bigg )\cdot \bigg [ (1+ 2i \xi) i  (\xi - 2i) + 2(\xi-i)^2 \bigg ] \\
\end{align*}

But $(\xi+i)^{2-s}\mathcal{F}(u_s) + (1+2i \xi) (\xi-i)^{-2} (\xi+i)^{2-s} \mathcal{F}(u_s)$
\begin{align*}
&= (\xi-i)^{-2} (\xi+i)^{2-s} \mathcal{F}(u_s) \bigg \{  (\xi-i)^2 + 1+ 2 i \xi \bigg \} \\
&= \xi^2 (\xi-i)^{-2} (\xi+i)^{2-s} \mathcal{F}(u_s),
\end{align*}
and $((1+ 2i \xi) i  (\xi - 2i) + 2(\xi-i)^2 = i \xi$. Thus, we have
\begin{equation*}
\mathcal{F}(e_+ h) = (-i\xi)^{2\alpha} (\xi+i)^{2-s} (\xi-i)^{-2} \mathcal{F}(u_s) + \dfrac{u(0)}{\sqrt{2\pi}} (-i\xi)^{2\alpha-1} (\xi-i)^{-2}.
\end{equation*}
But since supp $u_s \subseteq \overline{\mathbb{R}_+}$,
\begin{equation*}
h = (r_+ C(D) e_+) (r_+ u_s) + \dfrac{u(0)}{\sqrt{2\pi}} \, r_+ \mathcal{F}^{-1} (-i \xi)^{2\alpha-1}(\xi-i)^{-2},
\end{equation*}
which completes the proof of the lemma. \\
\end{proof}

\begin{lemma} \label{htous2supplement}
Suppose $0 <  \alpha < \frac{1}{2}, \,\, 1 < p < \infty,  \,\, 1+1/p < s < 2 +1/p$ and $u \in H^s_p(\overline{\mathbb{R}_+})$, with $u'(0)=0$.  Let $h(x) = (C^{2\alpha}_{0^+} u)(x)$ and $u_s=(D+i)^{s-2}e_+(D-i)^2 u$.  Then 
\begin{equation*}
h= (r_+ C(D) e_+) (r_+ u_s) + \dfrac{u(0)}{\sqrt{2\pi}} \, r_+ \mathcal{F}^{-1} (-i \xi)^{2\alpha-1}(\xi-i)^{-2}, 
\end{equation*}
where $C(D)$ has the symbol $c(\xi) = (-i \xi)^{2\alpha} (\xi + i)^{2-s} (\xi-i)^{-2}$. \\
\end{lemma}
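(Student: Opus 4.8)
The plan is to mirror the proof of Lemma \ref{htous2}, but without the technicalities coming from the second-order regularity that were needed there; since now $0 < \alpha < \tfrac{1}{2}$, the quantity $2-2\alpha$ is still in $(1,2)$, and $1+1/p < s < 2+1/p$ means $u_s = (D+i)^{s-2}e_+(D-i)^2u \in L_p(\mathbb{R})$ with support in $\overline{\mathbb{R}_+}$ by Lemma \ref{lemma:u2s}, so $e_+ r_+ u_s = u_s$. First I would record the Fourier-side identity for $e_+u$ exactly as in Lemma \ref{htous2}: from Lemma \ref{lemma:D2eu} with $u'(0)=0$, and using $-\mathcal{F}(\delta'-2\delta) = i(\xi-2i)\mathcal{F}\delta$,
\begin{equation*}
\mathcal{F}(e_+ u) = (\xi-i)^{-2}(\xi+i)^{2-s}\mathcal{F}(u_s) + \tfrac{i}{\sqrt{2\pi}}\, u(0)\,(\xi-i)^{-2}(\xi-2i).
\end{equation*}
Next I would record the companion identity for $\mathcal{F}(e_+ Du)$, using $D(e_+u) = e_+Du + iu(0)\delta$, namely $\mathcal{F}(e_+Du) = \xi\,\mathcal{F}(e_+u) - \tfrac{i u(0)}{\sqrt{2\pi}}$.

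Then I would turn to $h = C^{2\alpha}_{0^+}u$. Since $0 < 2\alpha < 1$, the appropriate representation from Appendix \ref{Appendix FC} is $C^{2\alpha}_{0^+}u = I^{1-2\alpha}_{0^+}Du$ rather than the second-derivative form used for $\alpha\geq\tfrac12$; but to bring in $(D-i)^2$ and hence $u_s$, I would instead write $-e_+ h = e_+ I^{2-2\alpha}_{0^+}D^2 u$ (valid because $u'(0)=0$, so the two fractional-calculus representations agree as in Lemma \ref{htous2}), extend to $I^{2-2\alpha}_+$ acting on $e_+$, split $e_+D^2u = e_+(D-i)^2u + 2i\,e_+Du + e_+u = (D+i)^{2-s}u_s + 2i\,e_+Du + e_+u$, apply the Fourier transform using $\mathcal{F} I^{2-2\alpha}_+ = (-i\xi)^{2\alpha-2}\mathcal{F}$, and then substitute the two identities above for $\mathcal{F}(e_+u)$ and $\mathcal{F}(e_+Du)$. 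At that point the computation is bookkeeping: the $u_s$-terms combine via $(\xi-i)^2 + 1 + 2i\xi = \xi^2$ to give $(-i\xi)^{2\alpha}(\xi+i)^{2-s}(\xi-i)^{-2}\mathcal{F}(u_s)$, and the $u(0)$-terms combine via $(1+2i\xi)i(\xi-2i) + 2(\xi-i)^2 = i\xi$ to give $\tfrac{u(0)}{\sqrt{2\pi}}(-i\xi)^{2\alpha-1}(\xi-i)^{-2}$, yielding
\begin{equation*}
\mathcal{F}(e_+ h) = (-i\xi)^{2\alpha}(\xi+i)^{2-s}(\xi-i)^{-2}\mathcal{F}(u_s) + \tfrac{u(0)}{\sqrt{2\pi}}(-i\xi)^{2\alpha-1}(\xi-i)^{-2}.
\end{equation*}
Finally, using $e_+ r_+ u_s = u_s$ and applying $r_+\mathcal{F}^{-1}$ gives the claimed formula with $c(\xi) = (-i\xi)^{2\alpha}(\xi+i)^{2-s}(\xi-i)^{-2}$.

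Since the algebra is literally the same as in Lemma \ref{htous2}, I expect the only genuine point requiring care — and hence the main obstacle — is justifying the fractional-calculus step $-e_+h = e_+ I^{2-2\alpha}_{0^+}D^2u$ and the passage to $I^{2-2\alpha}_+$ when $0 < 2\alpha < 1$ (so the "Caputo order" $2\alpha$ is below $1$ but we are representing it via a second derivative): this rests on $u'(0)=0$ together with the identities in Appendix \ref{Appendix FC} relating $C^{2\alpha}_{0^+}$, $I^{1-2\alpha}_{0^+}D$ and $I^{2-2\alpha}_{0^+}D^2$, and on the mapping properties ensuring everything lies in spaces where the Fourier transform of $I^{2-2\alpha}_+$ acts as multiplication by $(-i\xi)^{2\alpha-2}$. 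Everything else is a routine repetition of the argument already carried out, so I would simply say "the proof follows as in Lemma \ref{htous2}" after verifying this one point, exactly as the paper does for its other counterpart lemmas.
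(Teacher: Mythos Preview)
Your overall strategy is sound and would reach the conclusion, but it differs from the paper's actual argument in one structural choice. Because $0 < 2\alpha < 1$, the paper does \emph{not} pass to the second-derivative representation $-e_+h = e_+ I^{2-2\alpha}_{0^+}D^2u$; instead it uses the first-derivative Caputo formula directly:
\[
i\,e_+ h \;=\; e_+ I^{1-2\alpha}_{0^+} D u \;=\; I^{1-2\alpha}_{+}(e_+ Du),
\]
so that on the Fourier side one gets simply
\[
\mathcal{F}(e_+h) \;=\; (-i\xi)^{2\alpha}\,\mathcal{F}(e_+u) \;-\; (-i\xi)^{2\alpha-1}\,\tfrac{u(0)}{\sqrt{2\pi}},
\]
and then substitutes the expression for $\mathcal{F}(e_+u)$. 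The $u(0)$-terms collapse via the single identity $(-i\xi)\,i(\xi-2i) - (\xi-i)^2 = 1$, rather than the two identities $(\xi-i)^2 + 1 + 2i\xi = \xi^2$ and $(1+2i\xi)i(\xi-2i) + 2(\xi-i)^2 = i\xi$ that you invoke.

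What this buys the paper is precisely the avoidance of the obstacle you flag: the fractional-integral order $1-2\alpha$ lies in $(0,1)$, so the Fourier-multiplier identity from Appendix~\ref{Appendix FC} applies verbatim, and there is no need to justify $\mathcal{F}I^{2-2\alpha}_{+} = (-i\xi)^{2\alpha-2}\mathcal{F}$ for an order $2-2\alpha \in (1,2)$. In the paper's route, $u'(0)=0$ enters only through the formula for $\mathcal{F}(e_+u)$ (via Lemma~\ref{lemma:D2eu}), not through any fractional-calculus manoeuvre. Your route is not wrong, but it creates exactly the technical point you then have to resolve; the paper's route never encounters it.
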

\begin{proof}  As in Lemma \ref{htous2}, we have
\begin{equation*} 
\mathcal{F}( e_+ u) = (\xi-i)^{-2} (\xi+i)^{2-s} \mathcal{F}(u_s) + \dfrac{i u(0)}{\sqrt{2 \pi}} \, (\xi-i)^{-2} (\xi - 2i),
\end{equation*}
and
\begin{equation*}
\mathcal{F}(e_+Du) = \xi \mathcal{F}(e_+u) - \dfrac{i u(0)}{\sqrt{2 \pi}}.
\end{equation*}

Moreover,
\begin{align*}
i\, e_+ h(x) &= i \, e_+ (C^{2\alpha}_{0^+} u)(x) \\
&= i \, e_+ I^{1-2\alpha}_{0^+}  u' \qquad \text{(See Appendix \ref{Appendix FC})} \\
&= e_+ I^{1-2\alpha}_{0^+}  D u \\
&=  I^{1-2\alpha}_{+} (e_+ D u).
\end{align*}

Applying the Fourier transform, see Appendix \ref{Appendix FC}, and using the expression above for $\mathcal{F}(e_+ Du)$,
\begin{align*}
\mathcal{F}(e_+ h) & = -i \, (-i\xi)^{2\alpha-1} \mathcal{F}(e_+ Du) \\
&= -i \, (-i\xi)^{2\alpha-1} \bigg \{  \xi \mathcal{F}(e_+u) - \dfrac{i u(0)}{\sqrt{2 \pi}}\bigg \} \\
&= (-i\xi)^{2\alpha} \, \mathcal{F}(e_+u) - (-i \xi)^{2 \alpha-1}\dfrac{u(0)}{\sqrt{2 \pi}}.
\end{align*}
But, using the expression above for $\mathcal{F}( e_+ u)$, and collecting the terms containing $u(0)$,
\begin{align*}
(-i \xi)^{2 \alpha} & \bigg \{ i \, \dfrac{u(0)}{\sqrt{2\pi}} \, (\xi-i)^{-2} (\xi - 2i)  \bigg \}  - (-i \xi)^{2 \alpha-1}\dfrac{u(0)}{\sqrt{2 \pi}} \\
&= \dfrac{u(0)}{\sqrt{2 \pi}} \cdot \dfrac{(-i \xi)^{2 \alpha-1}}{(\xi-i)^2} \cdot \big \{ (-i\xi) i (\xi- 2i)-(\xi-i)^2 \big \} \\
&= \dfrac{u(0)}{\sqrt{2 \pi}} \cdot \dfrac{(-i \xi)^{2 \alpha-1}}{(\xi-i)^2} \cdot \big \{ \xi^2 - 2 i \xi - \xi^2 + 2 i \xi +1 \big \} \\
&= \dfrac{u(0)}{\sqrt{2 \pi}} \cdot \dfrac{(-i \xi)^{2 \alpha-1}}{(\xi-i)^2}. 
\end{align*}

Thus, we have
\begin{equation*}
\mathcal{F}(e_+ h) = (-i\xi)^{2\alpha} (\xi+i)^{2-s} (\xi-i)^{-2} \mathcal{F}(u_s) + \dfrac{u(0)}{\sqrt{2\pi}} (-i\xi)^{2\alpha-1} (\xi-i)^{-2}.
\end{equation*}
But since supp $u_s \subseteq \overline{\mathbb{R}_+}$,
\begin{equation*}
h = (r_+ C(D) e_+) (r_+ u_s) + \dfrac{u(0)}{\sqrt{2\pi}} \, r_+ \mathcal{F}^{-1} (-i \xi)^{2\alpha-1}(\xi-i)^{-2},
\end{equation*}
which completes the proof of the lemma. \\
\end{proof}

\begin{lemma} \label{a3(x)}
Suppose $0 < \alpha < \frac{1}{2}$. Then
\begin{equation*}
a(x) := (r_+ \, A^- \, \chi_{\mathbb{R}_{-}}) (x) = C_{\alpha} \int^\infty_x e^{-t} U(\alpha+1, 2\alpha+1, 2t) \, dt,
\end{equation*}
where the constant $C_\alpha$ only depends on $\alpha$, and is given by equation \eqref{defCalpha} in the statement of Lemma \ref{lemma:rA(s-1)delta}. \\
\end{lemma}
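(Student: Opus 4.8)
The plan is to realise $A^-$ as a Fourier convolution operator and to reduce $r_+A^-\chi_{\mathbb{R}_-}$ to an explicit integral of its convolution kernel, in the spirit of Lemmas \ref{LemAvchi} and \ref{LemAChim}. Put $\kappa:=\mathcal{F}^{-1}A^-(\xi)$. With the $1/\sqrt{2\pi}$ normalisation of $\mathcal{F}$, the convolution theorem gives $A^-v=\tfrac{1}{\sqrt{2\pi}}(\kappa\ast v)$, where $(\kappa\ast v)(x):=\int_{\mathbb{R}}\kappa(x-y)\,v(y)\,dy$, for $v\in C^\infty_0(\mathbb{R})$, and in particular $\kappa=\sqrt{2\pi}\,A^-\delta$. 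By Lemma \ref{lemma:rA(s-1)delta} together with Lemma \ref{eUab2x}, the restriction $r_+\kappa$ is the function $t\mapsto\sqrt{2\pi}\,C_\alpha\,e^{-t}U(\alpha+1,2\alpha+1,2t)$, which is smooth on $(0,\infty)$, is $O(t^{-2\alpha})$ as $t\searrow0$ (hence locally integrable, since $2\alpha<1$), and is $O(e^{-t})$ as $t\to+\infty$; thus $\kappa\in L_1([\delta,\infty))$ for every $\delta>0$.

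First I would approximate $\chi_{\mathbb{R}_-}$ by the cutoffs $\chi_n\in C^\infty_0(\mathbb{R})$ used in the proof of Lemma \ref{LemAvchi}, with $\operatorname{supp}\chi_n\subseteq[-(n+1),0]$, $0\le\chi_n\le1$ and $\chi_n\to\chi_{\mathbb{R}_-}$ pointwise. As there, $\chi_n\to\chi_{\mathbb{R}_-}$ in $L_1(\mathbb{R},\tfrac{dx}{1+x^2})\hookrightarrow S'(\mathbb{R})$; and since $A^-$ is the Fourier convolution operator with symbol $(1+\xi^2)^\alpha(\xi-i)^{-1}$, which is $C^\infty$ with all derivatives polynomially bounded, $A^-$ is continuous on $S'(\mathbb{R})$, so $A^-\chi_n\to A^-\chi_{\mathbb{R}_-}$ in $S'(\mathbb{R})$. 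For each fixed $n$ the convolution formula gives, for $x>0$,
\[
(r_+A^-\chi_n)(x)=\tfrac{1}{\sqrt{2\pi}}\int_{-\infty}^{0}\kappa(x-y)\,\chi_n(y)\,dy,
\]
and here only values of $\kappa$ at arguments $x-y\ge x>0$ occur, where $\kappa$ is the explicit function above.

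Then I would pass to the limit. Fix $w\in C^\infty_0(\mathbb{R}_+)$ with $\operatorname{supp}w\subseteq[\delta,\infty)$. On one hand $(r_+A^-\chi_n,w)\to(r_+A^-\chi_{\mathbb{R}_-},w)$ by the $S'$-convergence. On the other hand, for $x\in[\delta,\infty)$ the integrand above is dominated by $|\kappa(x-y)|\,\chi_{(-\infty,0]}(y)$, and $\int_{-\infty}^{0}|\kappa(x-y)|\,dy=\int_{x}^{\infty}|\kappa(t)|\,dt\le\int_{\delta}^{\infty}|\kappa(t)|\,dt<\infty$; so dominated convergence yields $(r_+A^-\chi_n)(x)\to\tfrac{1}{\sqrt{2\pi}}\int_{-\infty}^{0}\kappa(x-y)\,dy=\tfrac{1}{\sqrt{2\pi}}\int_{x}^{\infty}\kappa(t)\,dt$, uniformly for $x\in\operatorname{supp}w$, whence $(r_+A^-\chi_n,w)\to\int_{\mathbb{R}_+}w(x)\,\tfrac{1}{\sqrt{2\pi}}\int_x^\infty\kappa(t)\,dt\,dx$. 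Comparing the two limits and letting $w$ range over $C^\infty_0(\mathbb{R}_+)$ gives $(r_+A^-\chi_{\mathbb{R}_-})(x)=\tfrac{1}{\sqrt{2\pi}}\int_x^\infty\kappa(t)\,dt$ for $x>0$; substituting $\kappa(t)=\sqrt{2\pi}\,C_\alpha\,e^{-t}U(\alpha+1,2\alpha+1,2t)$ produces the claimed identity.

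The routine parts are the convolution-theorem bookkeeping (tracking the $\sqrt{2\pi}$ factors) and the dominated-convergence estimates. The point needing genuine care — the main obstacle — is the double interchange of limits: that $A^-\chi_n\to A^-\chi_{\mathbb{R}_-}$ in $S'$ (continuity of $A^-$ on $S'$) and that the kernel integrals converge, the latter resting on the precise behaviour of $\kappa$ at $0^+$ (order $t^{-2\alpha}$, integrable exactly because $\alpha<\tfrac12$) and at $+\infty$ (exponential decay), both supplied by Lemmas \ref{lemma:rA(s-1)delta} and \ref{eUab2x}. As a consistency check one may argue differently: for $x>0$, $\tfrac{d}{dx}(r_+A^-\chi_{\mathbb{R}_-})=r_+A^-(-iD\chi_{\mathbb{R}_-})=-r_+A^-\delta$, since $D\chi_{\mathbb{R}_-}=-i\delta$, which by Lemma \ref{lemma:rA(s-1)delta} equals $-C_\alpha e^{-x}U(\alpha+1,2\alpha+1,2x)$; combined with $(r_+A^-\chi_{\mathbb{R}_-})(x)\to0$ as $x\to+\infty$ (evident from the kernel representation) this recovers the formula.
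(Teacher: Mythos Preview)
Your proposal is correct and follows essentially the same convolution idea as the paper: both identify $A^-\chi_{\mathbb{R}_-}$ with $(A^-\delta)\ast\chi_{\mathbb{R}_-}$ and then invoke Lemma~\ref{lemma:rA(s-1)delta} to write the kernel explicitly, arriving at $C_\alpha\int_x^\infty e^{-t}U(\alpha+1,2\alpha+1,2t)\,dt$. The only difference is one of rigour: the paper simply writes the convolution identity and evaluates it in four lines, whereas you supply the $\chi_n$-approximation and dominated-convergence justification (in the style of Lemmas~\ref{LemAvchi} and~\ref{LemAChim}) that the paper leaves implicit, and you add the differentiation consistency check as a bonus.
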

\begin{proof}
We being by noting the following standard results:
\begin{equation*}
\mathcal{F} (\delta) = 1 / \sqrt{2 \pi} \quad \text{and} \quad \mathcal{F}(f*g) = \sqrt{2 \pi} \,\mathcal{F}(f) \, \mathcal{F}(g).
\end{equation*}
See, for example, Chapter I, Section 2, \cite{Es}, with an appropriate correction for the different constant used in the Fourier transform definitions. \\

Since
\begin{align*}
\mathcal{F}^{-1}  (1+\xi^2)^\alpha(\xi-i)^{-1}
& = \mathcal{F}^{-1} (1+\xi^2)^\alpha(\xi-i)^{-1} \cdot 1 \\
& = \mathcal{F}^{-1} (1+\xi^2)^\alpha(\xi-i)^{-1}\sqrt{2 \pi} \mathcal{F} (\delta) \\
& = \sqrt{2 \pi} \, A^{-} \delta, 
\end{align*}
we have
\begin{equation*}
(1+\xi^2)^\alpha(\xi-i)^{-1} = \sqrt{2 \pi} \, \mathcal{F} (A^{-} \delta).
\end{equation*}
Hence, for $x>0$,
\begin{align*}
(r_+ \, A^- \, \chi_{\mathbb{R}_{-}}) (x) &= r_+ \mathcal{F}^{-1} (1+\xi^2)^\alpha(\xi-i)^{-1} \mathcal{F}(\chi_{\mathbb{R}_{-}})\\
&= r_+ \mathcal{F}^{-1} \sqrt{2 \pi} \, \mathcal{F} (A^{-} \delta) \mathcal{F}(\chi_{\mathbb{R}_{-}})\\
&= \big( (A^-\delta)*\chi_{\mathbb{R}_{-}}\big) (x) \\
&= C_{\alpha} \int_{\mathbb{R}} e^{-t} U(\alpha+1, 2\alpha+1, 2t) \chi_{\mathbb{R}_{-}} (x-t)\, dt \quad \text{by Lemma \ref{lemma:rA(s-1)delta}}\\
&= C_{\alpha} \int^\infty_x e^{-t} U(\alpha+1, 2\alpha+1, 2t) \, dt.
\end{align*}
\end{proof}

The following result is the counterpart of Lemma \ref{a3(x)} for the operator $A^=$.
\begin{lemma} \label{a3(x)2}
Suppose $0 < \alpha < 1$. Then
\begin{equation*}
a(x) := (r_+ \, A^= \, \chi_{\mathbb{R}_{-}}) (x) = \tfrac{1}{2} \, i \, C_{\alpha} \int^\infty_x e^{-t} U(\alpha+1, 2\alpha, 2t) \, dt,
\end{equation*}
where the constant $C_\alpha$ only depends on $\alpha$, and is given by equation \eqref{defCalpha} in the statement of Lemma \ref{lemma:rA(s-1)delta}. 
\end{lemma}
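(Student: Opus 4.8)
The plan is to repeat the argument of Lemma \ref{a3(x)} essentially verbatim, with $A^{-}$ replaced throughout by $A^{=}$ and with Lemma \ref{lemma:rA(s-2)delta} invoked in place of Lemma \ref{lemma:rA(s-1)delta}. First I would record the two standard facts $\mathcal{F}(\delta) = 1/\sqrt{2\pi}$ and $\mathcal{F}(f*g) = \sqrt{2\pi}\,\mathcal{F}(f)\,\mathcal{F}(g)$. Since the symbol of $A^{=}$ is $(1+\xi^2)^\alpha(\xi-i)^{-2}$, one has
\begin{equation*}
\mathcal{F}^{-1}\big((1+\xi^2)^\alpha(\xi-i)^{-2}\big) = \mathcal{F}^{-1}\big((1+\xi^2)^\alpha(\xi-i)^{-2}\big)\cdot\sqrt{2\pi}\,\mathcal{F}(\delta) = \sqrt{2\pi}\,A^{=}\delta,
\end{equation*}
and hence $(1+\xi^2)^\alpha(\xi-i)^{-2} = \sqrt{2\pi}\,\mathcal{F}(A^{=}\delta)$.

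Next I would compute, for $x>0$,
\begin{align*}
(r_+\,A^{=}\,\chi_{\mathbb{R}_-})(x) &= r_+\,\mathcal{F}^{-1}(1+\xi^2)^\alpha(\xi-i)^{-2}\,\mathcal{F}(\chi_{\mathbb{R}_-}) \\
&= r_+\,\mathcal{F}^{-1}\,\sqrt{2\pi}\,\mathcal{F}(A^{=}\delta)\,\mathcal{F}(\chi_{\mathbb{R}_-}) \\
&= \big((A^{=}\delta)*\chi_{\mathbb{R}_-}\big)(x),
\end{align*}
and then substitute the explicit expression $(A^{=}\delta)(t) = \tfrac{1}{2}\,i\,C_{\alpha}\,e^{-t}\,U(\alpha+1,2\alpha,2t)$ furnished by Lemma \ref{lemma:rA(s-2)delta}. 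Because $\chi_{\mathbb{R}_-}(x-t)=1$ precisely when $t>x$, the convolution collapses to $\tfrac{1}{2}\,i\,C_{\alpha}\int_x^\infty e^{-t}U(\alpha+1,2\alpha,2t)\,dt$, which is the asserted formula.

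The only point deserving a word of care --- and it is the exact analogue of the corresponding point in Lemma \ref{a3(x)} --- is the legitimacy of the Fourier/convolution manipulations and the convergence of the resulting integral. For $x>0$ the integration variable remains in $(x,\infty)\subset\mathbb{R}_+$, where Lemma \ref{lemma:rA(s-2)delta}, combined with Lemma \ref{eUab2x} and the large-argument asymptotics of $U$, shows that $(A^{=}\delta)(t)$ is smooth and $O(t^{-\alpha-1}e^{-t})$ as $t\to\infty$, hence integrable on $(x,\infty)$; and the hypothesis $0<\alpha<1$ --- which makes $A^{=}$ of negative order $2\alpha-2$ and ensures that $e^{-t}U(\alpha+1,2\alpha,2t)=O(t^{1-2\alpha})$ is locally integrable near the origin --- is exactly what is needed for $A^{=}\delta$ to be represented by a locally integrable function and for the product with $\mathcal{F}(\chi_{\mathbb{R}_-})$ to be well defined. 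I do not anticipate a genuine obstacle: the argument is a transcription of Lemma \ref{a3(x)}, and all of the substantive analytic content has already been isolated in Lemma \ref{lemma:rA(s-2)delta}.
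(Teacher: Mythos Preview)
Your proposal is correct and follows essentially the same approach as the paper: the paper's proof simply says to repeat the argument of Lemma~\ref{a3(x)} with $A^{=}$ in place of $A^{-}$, establishing $(1+\xi^2)^\alpha(\xi-i)^{-2} = \sqrt{2\pi}\,\mathcal{F}(A^{=}\delta)$ and then invoking Lemma~\ref{lemma:rA(s-2)delta} in place of Lemma~\ref{lemma:rA(s-1)delta}. Your added remarks on integrability and the legitimacy of the convolution manipulation supply detail that the paper leaves implicit.
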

\begin{proof}
Following the method of proof of Lemma \ref{a3(x)}, it is easy to show that 
\begin{equation*}
(1+\xi^2)^\alpha(\xi-i)^{-2} = \sqrt{2 \pi} \, \mathcal{F} (A^{=} \delta).
\end{equation*}
The required result now follows, as Lemma  \ref{a3(x)}, but now using Lemma \ref{lemma:rA(s-2)delta}. \\
\end{proof}

\begin{lemma} \label{rAminuschiminus}
Suppose $0 < \alpha < \tfrac{1}{2}$. Then, for $x > 0$, we can write
\begin{equation*}
(r_+A^- \chi_{\mathbb{R}_-})(x) =  C_\alpha \, \big ( \phi_1(x) + x^{1-2 \alpha}  \phi_2(x) \big ),
\end{equation*}
where $ \phi_1, \phi_2 \in C^\infty(\mathbb{R})$, and together with their derivatives, are bounded and $O(e^{-x})$ as $x \to +\infty$. \\
\end{lemma}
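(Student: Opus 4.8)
The plan is to start from the integral representation of $(r_+ A^- \chi_{\mathbb{R}_-})(x)$ obtained in Lemma \ref{a3(x)}, namely
\[
(r_+ A^- \chi_{\mathbb{R}_-})(x) = C_\alpha \int_x^\infty e^{-t} U(\alpha+1, 2\alpha+1, 2t)\, dt,
\]
and to extract the behaviour near $x = 0$ and as $x \to +\infty$ separately. The function $e^{-t}U(\alpha+1,2\alpha+1,2t)$ is, by Lemma \ref{eUab2x} with $a = \alpha+1 > 1$ and $b = 2\alpha+1 \in (1,3)$ (and $b \neq 2$ since $0 < \alpha < \tfrac12$), of the form $t^{-2\alpha}\psi(\alpha+1,2\alpha+1,t) + \phi_0(t)$, where $\psi \in C_0^\infty(\mathbb{R})$ vanishes for $t > 2$ and $\phi_0 \in C^\infty(\mathbb{R})$ together with its derivatives is bounded and $O(e^{-t})$ as $t \to +\infty$.

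First I would split the integral as $\int_x^\infty = \int_x^1 + \int_1^\infty$ (for $0 < x < 1$; for $x \geq 1$ only the exponentially small tail remains, which is already of the required form). The tail $\int_1^\infty e^{-t}U(\alpha+1,2\alpha+1,2t)\,dt$ is a constant, and more importantly the function $x \mapsto \int_x^\infty \phi_0(t)\,dt$ is smooth on $\mathbb{R}$ and, together with its derivatives, bounded and $O(e^{-x})$ as $x \to +\infty$ (its derivative is $-\phi_0(x)$, and integration by parts handles the decay); this contributes to $\phi_1$. The remaining piece is $\int_x^1 t^{-2\alpha}\psi(\alpha+1,2\alpha+1,t)\,dt$. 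Writing $\psi(t) = \psi(0) + t\,\tilde\psi(t)$ with $\tilde\psi \in C^\infty$, the term $\int_x^1 t^{-2\alpha}\,t\,\tilde\psi(t)\,dt$ is smooth in $x$ near $0$ (the integrand is continuous since $1 - 2\alpha > -1$, indeed $> 0$), absorbed into $\phi_1$. The leading term is $\psi(0)\int_x^1 t^{-2\alpha}\,dt = \psi(0)\,\frac{1 - x^{1-2\alpha}}{1-2\alpha}$, whose constant part goes into $\phi_1$ and whose $x^{1-2\alpha}$ part, multiplied by a smooth cutoff, gives the $x^{1-2\alpha}\phi_2(x)$ term.

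To make this rigorous and globally valid on $\mathbb{R}$, I would fix a cutoff $\varphi \in C_0^\infty(\mathbb{R})$ equal to $1$ on $[-1,1]$ and supported in $[-2,2]$, write $(r_+A^-\chi_{\mathbb{R}_-})(x) = \varphi(x)\,(r_+A^-\chi_{\mathbb{R}_-})(x) + (1-\varphi(x))\,(r_+A^-\chi_{\mathbb{R}_-})(x)$, and assign the first summand's smooth-plus-$x^{1-2\alpha}$-smooth decomposition from the above local analysis (the cutoff kills any issue at $+\infty$ for this piece) and the second summand — supported in $\{x \geq 1\}$ where the integrand and all its derivatives are $O(e^{-t})$ — entirely to $\phi_1$. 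Collecting the genuinely smooth contributions into $C_\alpha\phi_1(x)$ and the coefficient of $x^{1-2\alpha}$ (a smooth cutoff times a constant, plus the smooth remainder divided through — more carefully, keeping the $x^{1-2\alpha}$ factor explicit and the rest in $\phi_2$) into $C_\alpha\,x^{1-2\alpha}\phi_2(x)$ gives the claim; the decay and boundedness of $\phi_1, \phi_2$ and all derivatives as $x \to +\infty$ follow from the corresponding properties of $\phi_0$ and $\psi$ in Lemma \ref{eUab2x} together with repeated integration by parts on the tail. The main obstacle is purely bookkeeping: carefully checking that differentiating $\int_x^\infty(\cdots)\,dt$ the required number of times does not spoil the stated $O(e^{-x})$ decay of $\phi_1$, which is handled by noting $\frac{d}{dx}\int_x^\infty\phi_0 = -\phi_0$ and iterating, and that the constant (non-decaying) pieces are harmless since $\phi_1$ is only required bounded, not decaying — it is only the derivatives that must be $O(e^{-x})$, which is automatic for a constant.
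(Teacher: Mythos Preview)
Your proof is correct and follows the same approach as the paper: invoke Lemma~\ref{a3(x)} for the integral representation, apply Lemma~\ref{eUab2x} to decompose the integrand, and integrate term by term (the paper's own proof is considerably more terse, but the content is identical). One small misreading in your final sentence: the statement requires $\phi_1$ itself, not only its derivatives, to be $O(e^{-x})$ --- but your construction already delivers this, since the ``constant'' pieces you worry about get multiplied by the compactly supported cutoff $\varphi$ and hence vanish for large $x$.
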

\begin{proof}
From Lemma \ref{a3(x)}, for $x > 0$, we have
\begin{equation*}
(r_+A^- \chi_{\mathbb{R}_-})(x) = C_\alpha \int^\infty_x e^{-t} U(\alpha+1, 2\alpha+1, 2 t) \, dt
\end{equation*}
for some constant $C_\alpha$. Noting that
\begin{equation*}
\int^\infty_x t^{-2\alpha} \psi(\alpha+1, 2\alpha+1, t) \, dt
\end{equation*}
contributes to both $\phi_1(x)$ and $x^{1-2\alpha} \phi_2(x)$, the required result now follows directly from Lemma \ref{eUab2x}.\\
\end{proof}

\begin{remark} \label{rAminus2chiminus}
Similarly, if $0 < \alpha < 1$ and $\alpha \not = \tfrac{1}{2}$, then for $x > 0$, we can write
\begin{equation*}
(r_+A^= \chi_{\mathbb{R}_-})(x) =  \tfrac{1}{2} \, i \, C_\alpha \, \big ( \phi_1(x) + x^{2-2 \alpha}  \phi_2(x) \big ),
\end{equation*}
where $ \phi_1, \phi_2 \in C^\infty(\mathbb{R})$, and together with their derivatives, are bounded and $O(e^{-x})$ as $x \to +\infty$. \\

On the other hand, if $\alpha = \tfrac{1}{2}$, then for $x > 0$, we can write
\begin{equation*}
(r_+A^= \chi_{\mathbb{R}_-})(x) =  \tfrac{1}{2} \, i \, C_\alpha \, \big ( \vartheta_1(x) x \log x+ \phi_3(x) \big ),
\end{equation*}
where $ \vartheta_1, \phi_3 \in C^\infty(\mathbb{R})$, and together with their derivatives, are bounded and $O(e^{-x})$ as $x \to +\infty$. \\

\end{remark}

\begin{lemma} \label{multiplierbcompact}
Suppose $1 < p < \infty$ and $\phi \in C^\infty_0(\mathbb{R})$. Then
\begin{equation*}
\phi I: H^t_p(\overline{\mathbb{R}_+}) \to H^{t-\epsilon}_p(\overline{\mathbb{R}_+})
\end{equation*}
is compact for all $t \in \mathbb{R}$ and all $\epsilon > 0$.
\end{lemma}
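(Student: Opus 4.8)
The plan is to reduce the assertion to a Rellich-type compact embedding on the whole line. Since $\phi$ is a pointwise multiplier, $\phi\,(r_+U)=r_+(\phi U)$ for every $U\in H^t_p(\mathbb{R})$. Hence, given a bounded sequence $(u_n)$ in $H^t_p(\overline{\mathbb{R}_+})$, I would choose extensions $U_n\in H^t_p(\mathbb{R})$, bounded in $H^t_p(\mathbb{R})$ (which is possible by the definition \eqref{Hsp+norm} of the norm), with $r_+U_n=u_n$, so that $\phi u_n=r_+(\phi U_n)$. As $r_+\colon H^{t-\epsilon}_p(\mathbb{R})\to H^{t-\epsilon}_p(\overline{\mathbb{R}_+})$ is bounded, it is then enough to prove that $\phi I\colon H^t_p(\mathbb{R})\to H^{t-\epsilon}_p(\mathbb{R})$ is compact.

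On $\mathbb{R}$, multiplication by $\phi\in C^\infty_0(\mathbb{R})\subset S(\mathbb{R})$ is a pseudo-differential operator of order $0$ (with $\xi$-independent symbol $\phi(x)$), hence bounded $H^t_p(\mathbb{R})\to H^t_p(\mathbb{R})$ for every $t\in\mathbb{R}$ and $1<p<\infty$ (see, e.g., \cite{Shar}); alternatively, boundedness for non-negative integer $t$ is the Leibniz rule, and the general case follows by duality and complex interpolation. Fixing $R>0$ with $\operatorname{supp}\phi\subset(-R,R)$, the operator $\phi I$ maps $H^t_p(\mathbb{R})$ boundedly into the closed subspace $X:=\{v\in H^t_p(\mathbb{R})\colon\operatorname{supp}v\subseteq[-R,R]\}$, so it suffices to show that the inclusion $X\hookrightarrow H^{t-\epsilon}_p(\mathbb{R})$ is compact.

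For this last step I would invoke the compact-embedding theorem for Bessel potential spaces of distributions with support in a fixed compact set (see \cite{Tr83}, \cite{Tr}); equivalently, one argues directly by mollification. Let $\varphi\in C^\infty_0(\mathbb{R})$ with $\int\varphi=1$ and put $\varphi_\delta(x)=\delta^{-1}\varphi(x/\delta)$. Writing $\langle\xi\rangle^{t-\epsilon}\bigl(1-\widehat{\varphi}(\delta\xi)\bigr)=\bigl[\langle\xi\rangle^{-\epsilon}\bigl(1-\widehat{\varphi}(\delta\xi)\bigr)\bigr]\,\langle\xi\rangle^{t}$ and noting that the symbol $\langle\xi\rangle^{-\epsilon}\bigl(1-\widehat{\varphi}(\delta\xi)\bigr)$ satisfies Mikhlin's conditions with constant $c(\delta)\to 0$ as $\delta\to 0$, one obtains $\|v-v*\varphi_\delta\|_{H^{t-\epsilon}_p(\mathbb{R})}\le c(\delta)\,\|v\|_{H^t_p(\mathbb{R})}$ uniformly over $X$; while for each fixed $\delta$ the map $v\mapsto v*\varphi_\delta$ sends the unit ball of $X$ into a set of smooth functions supported in $[-R-\delta,R+\delta]$ and bounded in every $C^k$-norm (since $|(v*\psi)(x)|\le\|v\|_{H^t_p}\,\|\psi(x-\cdot)\|_{H^{-t}_{p'}}$), hence precompact in $L_p(\mathbb{R})$ by Arzel\`a--Ascoli, and so precompact in $H^{t-\epsilon}_p(\mathbb{R})$. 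Thus $X\hookrightarrow H^{t-\epsilon}_p(\mathbb{R})$ is a uniform limit of compact operators, hence compact, and the lemma follows. The only genuine obstacle is this compact-embedding step: the cited route is cleanest, whereas the self-contained argument needs care with the uniform Mikhlin estimate and with passing from precompactness in $L_p$ to precompactness in $H^{t-\epsilon}_p$ (immediate when $t-\epsilon\le 0$, and otherwise obtained by interpolating the $L_p$-convergence against a fixed $H^N_p$-bound, $N>t-\epsilon$).
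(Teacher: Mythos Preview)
Your argument is correct and follows essentially the same route as the paper: reduce to showing $\phi I\colon H^t_p(\mathbb{R})\to H^{t-\epsilon}_p(\mathbb{R})$ is compact, then invoke a Rellich-type compact embedding for distributions with support in a fixed compact set. The paper factors $\phi I$ through $H^t_p(\Omega)\hookrightarrow H^{t-\epsilon}_p(\Omega)$ for a bounded open $\Omega\supset\operatorname{supp}\phi$ and cites the compactness of this inclusion directly from \cite{Tr83}, whereas you work with the closed subspace $X\subset H^t_p(\mathbb{R})$ and additionally sketch a self-contained mollification proof of the embedding; the latter is a nice elementary alternative, and your formulation via $X$ sidesteps any delicacy about whether zero-extension $e_\Omega$ is bounded on all of $H^{t-\epsilon}_p(\Omega)$ for arbitrary $t-\epsilon$.
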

\begin{proof}
Since $\phi I =  \phi r_+ l_ += r_+ (\phi l_+)$, it is enough to prove that 
\begin{equation*}
\phi I: H^t_p({\mathbb{R}}) \to H^{t-\epsilon}_p({\mathbb{R}})
\end{equation*}
is compact for all $t \in \mathbb{R}$ and all $\epsilon$. From Section 3.3.1, p. 195, \cite{Tr83}, the multiplication operator
\begin{equation*}
\phi I : H^t_p({\mathbb{R}}) \to H^{t}_p({\mathbb{R}})  \quad (\hookrightarrow H^{t-\epsilon}_p({\mathbb{R}}))
\end{equation*}
is bounded. \\

Suppose supp $\phi \subset \Omega$, where $\Omega \subset \mathbb{R}$ is a bounded open set.  Let $r_\Omega : H^t_p({\mathbb{R}}) \to H^{t}_p(\Omega)$ and $e_\Omega: H^{t-\epsilon}_p(\Omega) \to \widetilde{H}^{t-\epsilon}_p(\Omega) $ denote the operations of restriction to $\Omega$, and extension by zero from $\Omega$ respectively. Let $i_{\Omega, t,p, \epsilon}$ denote the inclusion map
\begin{equation*}
i_{\Omega, t,p, \epsilon}: H^t_p(\Omega) \to H^{t-\epsilon}_p(\Omega).
\end{equation*}
Then, we have the operator identity
\begin{equation*}
\phi I = e_\Omega \, i_{\Omega, t,p, \epsilon} \, r_\Omega \, \phi I,
\end{equation*}
where on the left-hand side we note that $\phi I : H^t_p({\mathbb{R}}) \to H^{t-\epsilon}_p({\mathbb{R}})$ and, on the right-hand side, we simply assume $\phi I : H^t_p({\mathbb{R}}) \to H^{t}_p({\mathbb{R}})$. \\

But from Section 2.9.1, p. 166, \cite{Tr83}, the restriction $r_\Omega : H^t_p({\mathbb{R}}) \to H^{t}_p(\Omega)$ is bounded and from Section 3.4.3, Remark 2, p. 211, \cite{Tr83}, the extension operator $e_\Omega: H^{t-\epsilon}_p(\Omega) \to \widetilde{H}^{t-\epsilon}_p(\Omega) \hookrightarrow H^{t-\epsilon}_p(\mathbb{R})$ is also bounded. Moreover, from Section 4.3.2, Remark 1, p. 233, \cite{Tr83}, the inclusion map $i_{\Omega, t,p, \epsilon}$ is compact. Hence, $\phi I: H^t_p({\mathbb{R}}) \to H^{t-\epsilon}_p({\mathbb{R}})$
is compact, as required. \\ 
\end{proof}

\begin{lemma} \label{multiplierbHcompact}
Suppose $1 < p < \infty$ and $t > 1/p$. Let $a \in H^t_p(\overline{\mathbb{R}_+})$. Then
\begin{equation*}
aI: H^t_p(\overline{\mathbb{R}_+}) \to H^{t-\epsilon}_p(\overline{\mathbb{R}_+})
\end{equation*}
is compact for any $\epsilon >0$.
\end{lemma}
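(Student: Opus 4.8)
The plan is to approximate the multiplier $a$ in the norm of $H^t_p(\overline{\mathbb{R}_+})$ by smooth, compactly supported functions, for which the desired compactness is already available from Lemma \ref{multiplierbcompact}, and then to use the Banach algebra structure of $H^t_p(\overline{\mathbb{R}_+})$ to control the approximation error uniformly in operator norm.

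First I would record the two ingredients. Since $t > 1/p$, the space $H^t_p(\overline{\mathbb{R}_+})$ is a Banach algebra under pointwise multiplication (Section 2.8.3, Remark 3, p.\,146, \cite{Tr83}); hence there is a constant $C_0 = C_0(t,p)$, independent of the multiplier, with $\| b u \, | \, H^t_p(\overline{\mathbb{R}_+}) \| \leq C_0 \, \| b \, | \, H^t_p(\overline{\mathbb{R}_+}) \| \, \| u \, | \, H^t_p(\overline{\mathbb{R}_+}) \|$ for all $b, u \in H^t_p(\overline{\mathbb{R}_+})$. Composing with the bounded embedding $H^t_p(\overline{\mathbb{R}_+}) \hookrightarrow H^{t-\epsilon}_p(\overline{\mathbb{R}_+})$ (which follows from $H^t_p(\mathbb{R}) \hookrightarrow H^{t-\epsilon}_p(\mathbb{R})$ by passing to restrictions and taking the infimum in \eqref{Hsp+norm}), the operator $bI : H^t_p(\overline{\mathbb{R}_+}) \to H^{t-\epsilon}_p(\overline{\mathbb{R}_+})$ has norm at most $C_1 \, \| b \, | \, H^t_p(\overline{\mathbb{R}_+}) \|$, with $C_1$ independent of $b$. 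Secondly, since $C^\infty_0(\mathbb{R})$ is dense in $H^t_p(\mathbb{R})$ and the restriction $r_+ : H^t_p(\mathbb{R}) \to H^t_p(\overline{\mathbb{R}_+})$ is bounded and surjective, the set $r_+ C^\infty_0(\mathbb{R})$ is dense in $H^t_p(\overline{\mathbb{R}_+})$.

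Given these, the proof is short. Fix $\epsilon > 0$ and let $\eta > 0$. Choose $\phi \in C^\infty_0(\mathbb{R})$ with $\| a - r_+ \phi \, | \, H^t_p(\overline{\mathbb{R}_+}) \| < \eta$. Writing $a I = (a - r_+\phi) I + \phi I$ (noting $r_+\phi$ and $\phi$ define the same multiplication operator on $\overline{\mathbb{R}_+}$), the first summand has norm $< C_1 \eta$ as an operator $H^t_p(\overline{\mathbb{R}_+}) \to H^{t-\epsilon}_p(\overline{\mathbb{R}_+})$ by the estimate above, while $\phi I : H^t_p(\overline{\mathbb{R}_+}) \to H^{t-\epsilon}_p(\overline{\mathbb{R}_+})$ is compact by Lemma \ref{multiplierbcompact}. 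Letting $\eta \to 0$ exhibits $aI$ as the limit, in the operator norm of $\mathcal{L}(H^t_p(\overline{\mathbb{R}_+}), H^{t-\epsilon}_p(\overline{\mathbb{R}_+}))$, of compact operators; since the compact operators form a closed subspace, $aI$ is compact.

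The argument is essentially routine, and no serious obstacle is anticipated. The only points that genuinely need care are that the Banach-algebra estimate carries a constant independent of the multiplier (so that the error term is small in \emph{operator} norm, not merely pointwise), and that $r_+ C^\infty_0(\mathbb{R})$ is dense in $H^t_p(\overline{\mathbb{R}_+})$; both are standard, the first from the definition of a Banach algebra and the second from the surjectivity of $r_+$ together with density in $H^t_p(\mathbb{R})$.
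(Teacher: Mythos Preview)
Your proof is correct and follows essentially the same approach as the paper: use the Banach algebra property of $H^t_p(\overline{\mathbb{R}_+})$ for $t>1/p$ to get an operator-norm bound linear in $\|a\|_{t,p}$, approximate $a$ by restrictions of $C^\infty_0(\mathbb{R})$ functions, invoke Lemma~\ref{multiplierbcompact} for each approximant, and conclude by closedness of the compact operators. If anything, your write-up is slightly more explicit about the uniformity of the Banach-algebra constant and the density argument via $r_+$.
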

\begin{proof}
Since, by hypothesis, $t  > 1/p, \, H^t_p(\overline{\mathbb{R}_+})$ is a Banach algebra. (See Section 2.8.3, Remark 3, p. 146, \cite{Tr83}.) Thus
\begin{equation*}
aI: H^t_p(\overline{\mathbb{R}_+}) \to H^t_p(\overline{\mathbb{R}_+}) \hookrightarrow H^{t-\epsilon}_p(\overline{\mathbb{R}_+})
\end{equation*}
is a bounded operator for all $\epsilon >0$. In particular, it has operator norm
\begin{equation*}
\|a I\|_{Op} \leq \text{ const } \|a\|_{t,p}.
\end{equation*} 
Since $C^\infty_0(\mathbb{R})$ is dense in $H^t_p({\mathbb{R}})$, we can approximate $a$ arbitrarily closely by a sequence $\{ \phi_n \}^\infty_{n=1} \subset C^\infty_0(\mathbb{R})$. Finally, from Lemma \ref{multiplierbcompact}, the operator $\phi_nI$ is compact for each $n \in \mathbb{N}$, and hence $a I$ is compact, as required. \\
\end{proof}

\chapter{Operator algebra - Part II} \label{OpAlgLp}
\section{Main result}
From Section \ref{ProblemStatement}, our (initial) problem is to investigate the solvability of the equation
\begin{equation*}
\mathcal{A} u = f,
\end{equation*}
where $u \in H^s_p(\overline{\mathbb{R}_+)}$, for a given $f \in H^{s-2\alpha}_p(\overline{\mathbb{R}_+)}$, under the assumptions $1 < p < \infty$ and lower regularity,  namely $1/p < s < 1+1/p$. Moreover, see Remark \ref{lowregnobigalpha}, we further assume that $0 < \alpha < \tfrac{1}{2}$. \\

In Lemma \ref{lemma:us} we defined
\begin{equation*}
u_s:= (D + i)^{s-1} \, e_+ (D - i) u,
\end{equation*}
so that $u_s \in L_p(\mathbb{R})$ with $\text{supp } u_s \subseteq \overline{\mathbb{R}_+}$. \\

\begin{theorem} \label{TheoremaWMplusT}
Suppose $1 < p < \infty, \, 1/p < s < 1+1/p$ and $0 < \alpha < \tfrac{1}{2}$. Then we can recast the equation $\mathcal{A} u = f$ in the form
\begin{equation*}
\big ( W(c_1)  + a_2M^0(b_2)W(c_2) +T \big ) (r_+  u_s) = g,
\end{equation*}
where
\begin{align} 
g:&= r_+ (D-i)^{s-2\alpha} l_+ f\quad (\in L_p(\mathbb{R}_+)) ; \nonumber \\
{c}_1(\xi) &= (1+\xi^2)^\alpha (\xi-i)^{s- 2\alpha-1}(\xi+i)^{1-s} ; \nonumber  \\
{a}_2(x) &=  -iC_\alpha \, \psi(\alpha+1, 2\alpha+1, x); \nonumber \\
{b}_2(\xi) &= B(s- 2\alpha + 1- 1/p + i \xi, 2 \alpha) / \Gamma(2\alpha) ; \nonumber \\
{c}_2(\xi) &=(-i \xi)^{2\alpha} (\xi-i)^{s- 2\alpha-1}(\xi+i)^{1-s},\nonumber  
\end{align} 
and the operator $T$, acting on $L_p(\mathbb{R}_+)$, is compact. \\

The constant $C_\alpha$ is given in equation \eqref{defCalpha}, and the smooth compactly supported function $\psi$ is discussed in Lemma \ref{eUab2x}.\\
\end{theorem}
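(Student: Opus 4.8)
The plan is to assemble the representation derived piecemeal in Chapter \ref{ChapterOpAlgI} and then ``push it down'' into $L_p(\mathbb{R}_+)$ by applying the order-reducing operator $r_+ (D-i)^{s-2\alpha} l_+$ to both sides. Concretely, the starting point is the interim identity from Section \ref{OpAlg1Summary}:
\begin{equation*}
\tilde{a}_0(x) u(0) + \sum^2_{j=1} \tilde{a}_j(x) \, M^0(\tilde{b}_j) \, (r_+ \tilde{C}_j e_+)(r_+ u_s) + \tilde{K} u = f,
\end{equation*}
with $\tilde{K}: H^s_p(\overline{\mathbb{R}_+}) \to H^{s-2\alpha}_p(\overline{\mathbb{R}_+})$ compact and the symbols $\tilde{a}_j, \tilde{b}_j, \tilde{c}_j$ as listed there. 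First I would apply $\Lambda^{s-2\alpha}_- = r_+ (D-i)^{s-2\alpha} l_+$, using Lemma \ref{lemma:rLambdae} to see that this is bounded from $H^{s-2\alpha}_p(\overline{\mathbb{R}_+})$ to $L_p(\mathbb{R}_+)$ and independent of the extension, so that the right-hand side becomes $g := r_+ (D-i)^{s-2\alpha} l_+ f \in L_p(\mathbb{R}_+)$, and the composition $\tilde{K}$ with this bounded operator stays compact.

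Next I would treat each summand. For the first term, $\tilde{a}_1 = 1$, $\tilde{b}_1 = 1$, $\tilde{c}_1(\xi) = (1+\xi^2)^\alpha(\xi-i)^{-1}(\xi+i)^{1-s}$, so $r_+(D-i)^{s-2\alpha} l_+ \, (r_+ \tilde{C}_1 e_+)(r_+ u_s)$; since $\operatorname{supp} u_s \subseteq \overline{\mathbb{R}_+}$ we have $e_+ r_+ u_s = u_s$ (Lemma \ref{lemma:us}), and the composition of Wiener-Hopf-type operators collapses — using $(r_+(D-i)^{s-2\alpha} l_+) r_+ = r_+ (D-i)^{s-2\alpha}$ from Lemma \ref{lemma:rLambdae} and the analyticity of the symbol $(\xi-i)^{s-2\alpha}$ in the lower half-plane (so that $r_+ (D-i)^{s-2\alpha} e_+ = r_+ (D-i)^{s-2\alpha}$ on functions supported in $\overline{\mathbb{R}_+}$) — this produces $W(c_1)(r_+ u_s)$ with $c_1(\xi) = (1+\xi^2)^\alpha (\xi-i)^{s-2\alpha-1}(\xi+i)^{1-s}$, exactly the claimed symbol. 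For the second (Mellin) term I would commute the multiplication $\Lambda^{s-2\alpha}_-$ past $\tilde a_2 M^0(\tilde b_2)$: since $\tilde a_2 = -iC_\alpha \psi(\alpha+1, 2\alpha+1, x) \in C^\infty_0(\mathbb{R})$ is smooth and compactly supported, commutators of $\tilde a_2 I$ with the order-$(s-2\alpha)$ operator are of lower order and hence compact into $L_p$, so modulo compact terms $\Lambda^{s-2\alpha}_-$ passes through onto the pseudodifferential factor; the Mellin symbol then shifts from $\tilde b_2(\xi) = B(1/p' + i\xi, 2\alpha)/\Gamma(2\alpha)$ to $b_2(\xi) = B(s - 2\alpha + 1 - 1/p + i\xi, 2\alpha)/\Gamma(2\alpha)$ because the $\rho$-parameter in Lemma \ref{lemma:mellinop2} is changed by the order reduction (here $\rho = s - 2\alpha - 1/p'$ after the shift, consistent with $\rho > 1/p - 1$ given $s > 1/p$); and the Fourier symbol becomes $c_2(\xi) = (-i\xi)^{2\alpha}(\xi-i)^{s-2\alpha-1}(\xi+i)^{1-s}$ by the same collapse of $(D-i)^{s-2\alpha}$ against $(\xi-i)^{-1}$ as before.

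Finally I would deal with the trace term $\tilde a_0(x) u(0)$. The point is that $u(0)$ is a bounded linear functional on $H^s_p(\overline{\mathbb{R}_+})$ (Sobolev embedding, $s > 1/p$), and by Lemma \ref{ustou} we have $u = (r_+ C_s(D) e_+)(r_+ u_s)$, so $u(0)$ is a bounded functional of $r_+ u_s \in L_p(\mathbb{R}_+)$; moreover $\tilde a_0$ (given by \eqref{tildea0}, a fixed element involving $\tilde a_2 M^0(\tilde b_2)$ applied to a fixed $r_+ \mathcal{F}^{-1}(-i\xi)^{2\alpha-1}(\xi-i)^{-1}$) lies in $H^{s-2\alpha}_p(\overline{\mathbb{R}_+})$, hence $\Lambda^{s-2\alpha}_-(\tilde a_0 \cdot)$ maps into $L_p(\mathbb{R}_+)$. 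Thus $r_+ u_s \mapsto \Lambda^{s-2\alpha}_-\big(\tilde a_0(x)\, u(0)\big)$ is a bounded rank-one operator on $L_p(\mathbb{R}_+)$, therefore compact, and can be absorbed into $T$. Collecting $T := $ (the transformed $\tilde K$) $+$ (the rank-one trace term) $+$ (the commutator remainders), we obtain the asserted identity $\big(W(c_1) + a_2 M^0(b_2) W(c_2) + T\big)(r_+ u_s) = g$.

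The main obstacle I expect is the bookkeeping around the order-reduction step: verifying carefully that $r_+ (D-i)^{s-2\alpha} l_+$ genuinely commutes (modulo compact operators) with the multiplication operators $a_j$ and with $M^0(b_j)$ — this relies on the smooth compact support of $\tilde a_2$ to make commutators lower-order, and on Lemma \ref{lemma:rLambdae} (the $r_+$-collapse and extension-independence) together with the half-plane analyticity of $(\xi - i)^{s-2\alpha}$ to make the Wiener-Hopf factors genuinely telescope rather than merely compose up to a compact Hankel-type error. Tracking the exact shift of the Mellin $\rho$-parameter and confirming the constraint $\rho > 1/p - 1$ survives (so Lemma \ref{lemma:mellinop2} still applies, and the symbol $b_2$ is a genuine $BV$ Fourier multiplier via Remark \ref{remark:mellinop}) is the delicate quantitative part; everything else is routine given the results already established.
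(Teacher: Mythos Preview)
Your proposal is correct and follows the paper's approach closely; the three terms are handled exactly as you describe, with the trace term absorbed as rank-one and the commutator $[\Lambda^{s-2\alpha}_-, \tilde a_2 I]$ compact by Lemma \ref{CommutatorLambphi}. The step you rightly flag as delicate---the shift of the Mellin symbol---is carried out in the paper via Lemma \ref{LambdaMreverse}: one decomposes $(\xi-i)^r = (-i)^r(i\xi)^r + (-i)^r + c_r(\xi)$ (Lemma \ref{fxi0bddlim}) and uses that the \emph{homogeneous} piece $(iD)^r$ commutes exactly with Mellin convolutions via the kernel scaling $K(t)\mapsto t^{-r}K(t)$ (Lemma \ref{AhomoMellin}), producing the shift $M_{2\alpha,0}\to M_{2\alpha,r}$ with $\rho = r = s-2\alpha$ (your value $s-2\alpha-1/p'$ is a slip), while the inhomogeneous remainders are compact by Proposition 5.3.4 of \cite{Roch}.
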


\section{Introduction}
We have seen in Section \ref{OpAlgInit} that equation \eqref{rAef} can be written as (see equation \eqref{aMCtilde} with $N=2$)
\begin{equation} \label{aMCtilde3}
\tilde{a}_0(x) u(0) + \sum^2_{j=1}  \tilde{a}_j(x) \, M^0(\tilde{b}_j) \, (r_+ \tilde{C}_j e_+)(r_+ u_s) + \tilde{K}u = f,
\end{equation}
where the given function $f \in H^{s-2\alpha}_p(\overline{\mathbb{R}_+})$ and the operator $\tilde{K}:H^s_p(\overline{\mathbb{R}_+}) \to H^{s-2\alpha}_p(\overline{\mathbb{R}_+})$ is compact. \\

In this section, we present a formulation in $L_p(\mathbb{R}_+)$ of the form
\begin{equation} \label{aMW}
\big ( {a}_1 (x) \, M^0({b}_1) \, W(c_1) + {a}_2 (x) \, M^0({b}_2) \, W(c_2) + T \big ) \, (r_+ u_s) = g,
\end{equation}
where the operator $T$, acting on $L_p(\mathbb{R}_+)$, is compact. The function $g \in L_p(\mathbb{R}_+)$ is defined by
\begin{equation} \label{defng}
g:= r_+ (D-i)^{s-2\alpha} l_+ f,
\end{equation}
where by Lemma \ref{lemma:rLambdae}, $g$ does not depend on the choice of the extension $l_+$. \\

The subsequent analysis will show that, after the application of the operator $r_+ (D-i)^{s-2\alpha} l_+$, some of the terms in equation \eqref{aMCtilde3} represent compact operators on $L_p(\mathbb{R}_+)$. \\

We now consider the action of the operator $r_+ (D-i)^{s-2\alpha} l_+$ on the individual summands on the left-hand side of equation \eqref{aMCtilde3} in turn. \\

\begin{remark}
In this chapter we will make repeated use of Proposition 5.3.4, p. 267, \cite{Roch}, concerning the compactness on $L_p(\mathbb{R}_+)$ of the operator $M^0(b) \, W(c)$ and the commutator $[M^0(b), W(c)]$. In all cases where we use this result the symbols $b$ and $c$ will be continuous on $\mathbb{R}$ and have bounded variation, thus ensuring the applicability of Proposition 5.3.4, ibid. For more details, see Lemma \ref{BVBeta} and \ref{BVm}. \\
\end{remark}

\section{Term by term analysis}
\subsection{First term}
For the first term, from equation \eqref{tildea0}, we have 
\begin{align*} 
\tilde{a}_0(x) &= \tilde{a}_2(x) M^0(\tilde{b}_2)  \dfrac{i}{\sqrt{2\pi}} \, r_+ \mathcal{F}^{-1} (-i \xi)^{2\alpha-1}(\xi-i)^{-1} \\
&= \tilde{a}_2(x) M^0(\tilde{b}_2) \, r_+ h_1(x), 
\end{align*}
where
\begin{equation*}
h_1(x) :=  \dfrac{i}{\sqrt{2\pi}}  \mathcal{F}^{-1} (-i \xi)^{2\alpha-1}(\xi-i)^{-1}. 
\end{equation*} 

Note that, from equation \eqref{tildea2},
\begin{align*} 
\tilde{a}_2(x) &= -iC_\alpha \,\psi(\alpha+1, 2\alpha+1, x) ;\nonumber \\
\tilde{b}_2(\xi) &= B(1/p' + i \xi, 2 \alpha) / \Gamma(2\alpha).
\end{align*}
Our goal is to show that
\begin{equation*}
\Lambda^{s-2\alpha}_- \tilde{a}_0(x) = \Lambda^{s-2\alpha}_- \, \tilde{a}_2(x) M^0(\tilde{b}_2) r_+ h_1(x) \in L_p(\mathbb{R}_+),
\end{equation*}
because then the operator
\begin{equation*}
r_+ u_s(x) \longmapsto \Lambda^{s-2\alpha}_- \tilde{a}_0(x) u(0)
\end{equation*}
is bounded on $L_p(\mathbb{R}_+)$ and has rank one,  and is therefore compact.\\

We note that $\tilde{a}_2 \in r_+ C^\infty_0(\mathbb{R})$ and $\tilde{a}_2(x) =0$ for $x \geq 2$. Let $\chi \in C^\infty_0(\mathbb{R})$ be such that
\begin{align*}
&\chi(t) :=
\begin{cases} 
	1 &\mbox{if }  |t| \leq 2\\
	0 & \mbox{if } |t| > 3. \\
\end{cases}
\end{align*} 

Then, see Lemmas \ref{lemma:mellinop2} and \ref{M0bchi},
\begin{equation*}
\Lambda^{s-2\alpha}_- \tilde{a}_2 M^0(\tilde{b}_2) (r_+ h_1) = \Lambda^{s-2\alpha}_- \tilde{a}_2(x) M^0(\tilde{b}_2) (r_+ \chi  h_1).
\end{equation*} 

Since $h_1$ is the inverse Fourier transform of an integrable function it is continuous and vanishes at infinity. Hence $r_+ \chi h_1 \in L_p(\mathbb{R}_+)$. Thus, if $s-2\alpha <0$, using Lemma \ref{MellinOpBdd}, the required result follows immediately. \\

It remains to consider the case $s-2\alpha \geq 0$.  Set $\mu = 1 - 2 \alpha, \,\, r=s-2\alpha$, so that $0 < \mu < 1, \,\,  r < 2 - 2\alpha = 1 + \mu$. Then, from Lemma \ref{Hrcondition}, for any $\chi_1 \in C^\infty_0(\mathbb{R})$,  
\begin{equation*}
\chi_1 (D - i)^{s-2\alpha} h_1  \in L_p(\mathbb{R}),
\end{equation*}
subject \textit{only} to the condition
\begin{equation} \label{Condpsalpha}
 s < 1 + \dfrac{1}{p}.
\end{equation}

Hence, see Lemma \ref{lemmapushchi}, $(D - i)^{s-2\alpha} \chi h_1 \in L_p(\mathbb{R})$, and so, after applying the operator $r_+ (D-i)^{2\alpha-s}$, we have
\begin{equation*}
r_+ \chi h_1 \in H^{s-2\alpha}_p(\overline{\mathbb{R}_+}).
\end{equation*}
Therefore, as $s - 2\alpha \geq 0$, again from Lemma \ref{MellinOpBdd},
\begin{equation*}
M^0(\tilde{b}_2) \, r_+ \chi h_1 \in H^{s-2\alpha}_p(\overline{\mathbb{R}_+}),
\end{equation*}
and hence, 
\begin{equation*}
\Lambda^{s-2\alpha}_- \, \tilde{a}_2(x)  \,M^0(\tilde{b}_2) r_+ h_1(x) \in L_p(\mathbb{R}_+), 
\end{equation*}
as required, since $\tilde{a}_2   \in r_+C^\infty_0(\mathbb{R})$. \\

\subsection{Second term}
Using \eqref{tildea1}, we have
\begin{equation*}
\tilde{a}_1(x) \, M^0(\tilde{b}_1) \, (r_+ \tilde{C}_1 e_+) = r_+ \tilde{C}_1 e_+
\end{equation*}
where the pseudodifferential operator $ \tilde{C}_1$ has symbol $(1+\xi^2)^\alpha (\xi-i)^{-1}(\xi+i)^{1-s}$. Hence, by Lemma \ref{lemma:rLambdae}
\begin{equation*}
r_+ (D-i)^{s-2\alpha} l_+ \, r_+ \tilde{C}_1 e_+ = r_+ (D-i)^{s-2\alpha} \tilde{C}_1 e_+.
\end{equation*}
Now $(D-i)^{s-2\alpha} \tilde{C}_1$ has symbol $(1+\xi^2)^\alpha (\xi-i)^{s- 2\alpha-1}(\xi+i)^{1-s}$, which is clearly a Fourier $L_p$ multiplier. (See Lemma \ref{mxiFm}.) Therefore, in the notation of equation \eqref{aMW}, 
\begin{align} \label{a1}
{a}_1(x) &= 1;  \nonumber \\
{b}_1(\xi) &= 1;  \\
{c}_1(\xi) &= (1+\xi^2)^\alpha (\xi-i)^{s- 2\alpha-1}(\xi+i)^{1-s}. \nonumber  
\end{align} 

\subsection{Third term}
Now from \eqref{tildea2} we have
\begin{align*}
\tilde{a}_2(x) &= -i C_\alpha \, \psi(\alpha+1, 2\alpha+1, x);\nonumber \\
\tilde{b}_2(\xi) &= B(1/p' + i \xi, 2 \alpha) / \Gamma(2\alpha) ;\\
\tilde{c}_2(\xi) &= (-i \xi)^{2\alpha} (\xi + i)^{1-s} (\xi-i)^{-1}.\nonumber  
\end{align*}

Let us define
\begin{equation*}
r:= s - 2\alpha,
\end{equation*}
so that we need to consider
\begin{equation*}
- 1 + 1/p < r < 1+1/p,
\end{equation*}
since $0 < \alpha < \tfrac{1}{2}$ and $1/p < s < 1 +1/p$. We note that the pseudodifferential operator $\tilde{C}_2$ has order $-r$. \\

If $r \geq 0$, then from Lemma \ref{MellinOpBdd}, the operator $M^0(\tilde{b}_2) \, (r_+ \tilde{C}_2 e_+) : L_p(\mathbb{R}_+) \to H^r_p(\overline{\mathbb{R}_+})$ is bounded. \\

On the other hand, if $-1 +1/p < r < 0$, we can write
\begin{equation*}
\tilde{c}_2(\xi) =  (i \xi)^{-r} \cdot \tilde{c}_0(\xi) 
\end{equation*}
where
\begin{equation*}
\tilde{c}_0(\xi) :=  (i \xi)^r (-i \xi)^{2 \alpha} (\xi + i)^{1-s} (\xi-i)^{-1}.
\end{equation*}
Since $r + 2\alpha = s > 0, \,\, \tilde{c}_0(0)=0$.  Moreover, as $r=s-2\alpha$, the operator $\tilde{C}_0$, with symbol $\tilde{c}_0$, has order $0$. \\

From Lemma \ref{AhomoMellin}, $M_{2\alpha, 0} r_+ (iD)^{-r} l_+ = r_+ (iD)^{-r}l_+ M_{2\alpha,r}$, and from Lemma \ref{lemma:rHomore}, $r_+ (iD)^{-r} l_+: L_p(\mathbb{R}_+) \to H^r_p(\overline{\mathbb{R}_+})$ is bounded. Therefore, the operator 
\begin{align*}
M^0(\tilde{b}_2) \, (r_+ \tilde{C}_2 e_+) & = M_{2\alpha, 0} \, (r_+ \tilde{C}_2 e_+) \quad \text{(see Lemma } \ref{lemma:mellinop2} \text{)} \\
& = M_{2\alpha, 0} \, (r_+ (iD)^{-r} l_+ r_+ \tilde{C}_0 e_+) \quad \text{(see Lemma } \ref{lemma:rHomore} \text{)} \\
& = r_+ (iD)^{-r}l_+ M_{2\alpha,r} (r_+ \tilde{C}_0 e_+)
\end{align*}
is bounded from $L_p(\mathbb{R}_+) \to H^r_p(\overline{\mathbb{R}_+})$. \\

So now, using Lemma \ref{multxgammaa}, each of the three operators in the identity
\begin{align*}
& \Lambda^r_- \, \tilde{a}_2(x) \, M^0(\tilde{b}_2) \, (r_+ \tilde{C}_2 e_+) \\
&= [\Lambda^r_- , \tilde{a}_2(x)] \, M^0(\tilde{b}_2) \, (r_+ \tilde{C}_2 e_+) +\tilde{a}_2(x) \, \Lambda^r_- \, M^0(\tilde{b}_2) \, (r_+ \tilde{C}_2 e_+),
\end{align*}
is bounded on $L_p(\mathbb{R}_+)$. \\

Moreover, the compactness of the operator involving the commutator term follows directly from Lemma \ref{CommutatorLambphi}. Thus, it remains to consider $\tilde{a}_2(x) \, \Lambda^r_- \, M^0(\tilde{b}_2) \, (r_+ \tilde{C}_2 e_+)$. \\

Firstly, suppose that $0 < r < 1$. Then, using Lemma \ref{LambdaMreverse},
\begin{align*}
\Lambda^r_-  & \, M^0(\tilde{b}_2) \, (r_+ \tilde{C}_2 e_+) \\
& = \Lambda^r_- \, M_{2\alpha,0} \, (r_+ \tilde{C}_2 e_+) \\
& = \big ( M_{2\alpha,r} \,  \Lambda^r_- +(-i)^r (M_{2\alpha,0} - M_{2\alpha,r}) +T \big ) \, (r_+ \tilde{C}_2 e_+) \\
& = M_{2\alpha,r} \, (r_+ C_2 e_+) + (-i)^r (M_{2\alpha,0} - M_{2\alpha,r}) \, (r_+ \tilde{C}_2 e_+)
+T \, (r_+ \tilde{C}_2 e_+).
\end{align*}

From Lemma \ref{LambdaMreverse}, $T: H^r_p(\overline{\mathbb{R}_+}) \to L_p(\mathbb{R}_+)$ is compact. Moreover, the pseudodifferential operator $\tilde{C}_2$ has order $-r$, and hence $T  \, (r_+ \tilde{C}_2 e_+)$ is compact on $L_p(\mathbb{R}_+)$. \\

By Remark \ref{remark:mellinop}, the symbols of both $M_{2\alpha,0}$ and $M_{2\alpha,r}$ take the value zero at $\pm \infty$. Hence, $(M_{2\alpha,0} - M_{2\alpha,r}) \, (r_+ \tilde{C}_2 e_+)$ is compact on $L_p(\mathbb{R}_+)$, from Proposition 5.3.4 (i), p. 267, \cite{Roch}. \\

So, in summary, if $0 < r < 1$ then
\begin{equation*}
\Lambda^r_- \, M^0(\tilde{b}_2) \, (r_+ \tilde{C}_2 e_+) = M_{2\alpha,r} \, (r_+ C_2 e_+) + K_1,
\end{equation*}
where $C_2$ has symbol
\begin{equation*}
{c}_2(\xi) = (-i \xi)^{2\alpha}  (\xi-i)^{s - 2 \alpha -1} (\xi + i)^{1-s},
\end{equation*}
and the operator $K_1$, acting on $L_p(\mathbb{R}_+)$, is compact. \\

Similarly, in the case that $-1 + 1/p < r < 0$, then we can again apply Lemma \ref{LambdaMreverse}, noting that the operator $\Lambda^{2 r}_- \, r_+ \tilde{C}_2 e_+$ has order $r < 0$. 

\begin{align*}
\Lambda^r_-  & \, M^0(\tilde{b}_2) \, (r_+ \tilde{C}_2 e_+) \\
& = \Lambda^r_- \, M_{2\alpha,0} \, (r_+ \tilde{C}_2 e_+) \\
& = \big ( M_{2\alpha,r} \,  \Lambda^r_- -(-i)^r (M_{2\alpha,0} - M_{2\alpha,r}) \, \Lambda^{2r}_-+T \big ) \, (r_+ \tilde{C}_2 e_+) \\
& = M_{2\alpha,r} \, (r_+ C_2 e_+) - (-i)^r (M_{2\alpha,0} - M_{2\alpha,r}) \,  \Lambda^{2r}_-\, (r_+ \tilde{C}_2 e_+)
+T \, (r_+ \tilde{C}_2 e_+).
\end{align*}

The compactness of $T  \, (r_+ \tilde{C}_2 e_+)$ on $L_p(\mathbb{R}_+)$ follows exactly as in the case $0 < r < 1$. Moreover, 
\begin{equation*}
(M_{2 \alpha,0} - M_{2 \alpha, r}) \Lambda^{2 r}_- \, r_+ \tilde{C}_2 e_+
\end{equation*}
is compact on $L_p(\mathbb{R}_+)$, from Proposition 5.3.4 (i), p. 267, \cite{Roch}.\\

So, in summary, if $-1 + 1/p < r < 0$ then
\begin{equation*}
\Lambda^r_- \, M^0(\tilde{b}_2) \, (r_+ \tilde{C}_2 e_+) = M_{2\alpha,r} \, (r_+ C_2 e_+) + K_2
\end{equation*}
where the operator $K_2$, acting on $L_p(\mathbb{R}_+)$, is compact. \\

The case $1 < r < 1 + 1/p$ follows similarly, except that we now apply Lemma \ref{Mlambdaextension}. In particular, we note that the operator $S_1 \, r_+ \tilde{C}_2 e_+$ has order $1 - r < 0$. Hence, as in the case $0 < r < 1$ discussed above, 
\begin{equation*}
S_1 \, r_+ \tilde{C}_2 e_+
\end{equation*}
is compact on $L_p(\mathbb{R}_+)$, from Proposition 5.3.4 (i), p. 267, \cite{Roch}. \\

Finally, the case $r=1$ follows in the same way, and for the case $r=0$, there is nothing to prove.\\

Hence, using Lemma \ref{lemma:mellinop2}, in the notation of equation \eqref{aMW} we have, 
\begin{align} \label{a2}
{a}_2(x) &=-i C_\alpha \, \psi(\alpha+1, 2\alpha+1, x);\nonumber \\
{b}_2(\xi) &= B(s- 2\alpha +1/p' + i \xi, 2 \alpha) / \Gamma(2\alpha) ;\\
{c}_2(\xi) &= (-i \xi)^{2\alpha}  (\xi-i)^{s - 2 \alpha -1} (\xi + i)^{1-s}.\nonumber  
\end{align} 
Note that a routine application of Lemma \ref{mxiFm} confirms that $c_2$ is a Fourier $L_p$ multiplier.

\subsection{Summary} \label{OpAlg2Summary}
Our base assumptions are that 

\begin{equation} \label{Finalcondalphaps}
1 < p < \infty, \,\, 1/p < s <  1+ 1/p \,\, \text{  and  } \,\, 0 < \alpha < \tfrac{1}{2}. \\
\end{equation}

So, finally, subject to condition \eqref{Finalcondalphaps},  the formulation given by equation \eqref{aMW} becomes 
\begin{equation} \label{WamW}
\big ( W(c_1) + a_2 M^0(b_2) W(c_2) + T \big ) (r_+ u_s) = g,
\end{equation}
where the operator $T$, acting on $L_p(\mathbb{R}_+)$, is compact and
\begin{align} \label{Finalabc}
g:&= r_+ (D-i)^{s-2\alpha} l_+ f; \nonumber \\
{c}_1(\xi) &= (1+\xi^2)^\alpha (\xi-i)^{s- 2\alpha-1}(\xi+i)^{1-s}. \nonumber  \\
{a}_2(x) &= -i C_\alpha \, \psi(\alpha+1, 2\alpha+1, x) \quad \text{(see Lemma } \ref{eUab2x}); \nonumber \\
{b}_2(\xi) &= B(s- 2\alpha +1/p' + i \xi, 2 \alpha) / \Gamma(2\alpha) ; \\
{c}_2(\xi) &= (-i \xi)^{2\alpha} (\xi-i)^{s- 2\alpha-1}(\xi+i)^{1-s},\nonumber  
\end{align} 
and the constant $C_\alpha$ is given by
\begin{equation*} 
C_{\alpha} = -i \, \dfrac{\alpha \, 2^{2\alpha}}{\Gamma (1- \alpha)}.
\end{equation*}

\section{Mellin operator boundedness}
We have noted previously, see Lemma \ref{lemma:mellinop2}, that the Mellin integral operator $M_{\gamma, \rho}$ with kernel 
\begin{equation*}
K_{\gamma, \rho}(t) = \dfrac{\chi_{[1, \infty)}(t)}{t^\rho \, \Gamma( \gamma) \, t^\gamma \, (t-1)^{1-\gamma}}.
\end{equation*}
is bounded on $L_p(\mathbb{R}_+)$. 

\begin{lemma} \label{MellinOpBdd}
Suppose $1 < p < \infty, \, \rho > 1/p-1$ and $\gamma >0$. If $t \geq 0$,  then
\begin{equation*}
M_{\gamma, \rho}: H^t_p(\overline{\mathbb{R}_+}) \to H^t_p(\overline{\mathbb{R}_+})
\end{equation*}
is bounded. \\
\end{lemma}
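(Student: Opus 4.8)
The plan is to prove Lemma \ref{MellinOpBdd} by interpolation, using the already-established $L_p$-boundedness of $M_{\gamma,\rho}$ (Lemma \ref{lemma:mellinop2}) as the base case $t=0$, and proving boundedness on $\widetilde{H}^t_p(\overline{\mathbb{R}_+})$ for integer $t$ by induction, exploiting the explicit integral-operator form \eqref{MellinIntOp}. First I would record that, since $H^t_p(\overline{\mathbb{R}_+})$ is a retract of $H^t_p(\mathbb{R})$ via $r_+$ and $l_+$, and since $M_{\gamma,\rho}$ acts on functions supported in $\overline{\mathbb{R}_+}$, it suffices to work with the spaces $\widetilde{H}^t_p(\overline{\mathbb{R}_+})$ (recall $\widetilde{H}^t_p(\overline{\mathbb{R}_+}) = e_+ H^t_p(\overline{\mathbb{R}_+})$ for the relevant range, and in general one passes between them using the retraction); then I would show $M_{\gamma,\rho}: \widetilde{H}^t_p(\overline{\mathbb{R}_+}) \to \widetilde{H}^t_p(\overline{\mathbb{R}_+})$ is bounded for each $t \in \mathbb{N}$.

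The inductive step: suppose boundedness holds for $t=m$; let $u \in \widetilde{H}^{m+1}_p(\overline{\mathbb{R}_+})$. Using an equivalent norm on $\widetilde{H}^{m+1}_p(\overline{\mathbb{R}_+})$ of the type $\|w\|_{m,p} + \|\tfrac{d}{dx}w\|_{m,p}$ (see Chapter 1, p.\,6, \cite{Tr92}, as already used in Lemmas \ref{x-alphaI} and \ref{multxgammaa}), I need to control $\tfrac{d}{dx}(M_{\gamma,\rho}u)$. Writing $(M_{\gamma,\rho}u)(x) = \int_0^\infty K_{\gamma,\rho}(x/y)\,u(y)\,\tfrac{dy}{y}$, the homogeneity structure of the Mellin kernel means differentiation in $x$ can be transferred: a change of variables shows $\tfrac{d}{dx}(M_{\gamma,\rho}u) = M_{\gamma,\rho}(\tfrac{d}{dx}u) + \text{(lower-order Mellin operator of the same type applied to }u)$. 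Concretely, from $(M_{\gamma,\rho}u)(x) = \int_0^\infty K_{\gamma,\rho}(t)\,u(x/t)\,t^{-1}\,dt$ (as in the proof of Lemma \ref{MboundedLp}), one gets $x\tfrac{d}{dx}(M_{\gamma,\rho}u)(x) = \int_0^\infty K_{\gamma,\rho}(t)\,(x/t)u'(x/t)\,t^{-1}\,dt = \big(M_{\gamma,\rho}(\,\cdot\,u'(\cdot))\big)(x)$, and $\,\cdot\,u'(\cdot) \in \widetilde{H}^m_p$ whenever $u \in \widetilde{H}^{m+1}_p$ together with Lemma \ref{x-alphaI}-type estimates; alternatively differentiate the kernel directly, noting $t K_{\gamma,\rho}'(t)$ is again (a combination of) kernels satisfying \eqref{kernelinteg} and the hypothesis $\rho > 1/p - 1$, so $M^0$ of the resulting symbol is bounded on $L_p$ and, by induction, on $\widetilde{H}^m_p$. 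Either route reduces $\|\tfrac{d}{dx}(M_{\gamma,\rho}u)\|_{m,p}$ to a constant times $\|u\|_{m+1,p}$, closing the induction. The result for all real $t \geq 0$ then follows by complex interpolation between $\widetilde{H}^0_p = L_p$ and $\widetilde{H}^n_p$ (see Chapter 1, \cite{Tr83}), and transferring back to $H^t_p(\overline{\mathbb{R}_+})$ via the retraction.

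The main obstacle I anticipate is making the commutation of $\tfrac{d}{dx}$ with $M_{\gamma,\rho}$ rigorous and verifying that the auxiliary operators produced — either the "weight times derivative" operators $x^{-1}\,\cdot$ composed with $M_{\gamma,\rho}$, or the operators with differentiated kernels $t K_{\gamma,\rho}'(t)$ — genuinely fall under the scope of Lemma \ref{lemma:mellinop2} (or Lemma \ref{MboundedLp}) and the inductive hypothesis. In particular one must check the integrability/decay hypotheses survive differentiation: $K_{\gamma,\rho}$ has a mild singularity of order $(t-1)^{\gamma-1}$ at $t=1$ and decays like $t^{-\rho-\gamma}$ at infinity, so $K_{\gamma,\rho}'$ picks up an extra power and a stronger endpoint singularity $(t-1)^{\gamma-2}$, which is not locally integrable when $\gamma \le 1$; hence the naive "differentiate the kernel" approach needs an integration by parts to move the derivative onto $u$, and it is cleaner to use the dilation identity $x\tfrac{d}{dx}(M_{\gamma,\rho}u) = M_{\gamma,\rho}(x\tfrac{d}{dx}u)$, which holds because the Mellin kernel depends on $x$ only through the ratio $x/y$. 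Once that identity is secured, combining it with Lemma \ref{x-alphaI} (to handle the factor $x^{-1}$ converting $x\tfrac{d}{dx}u$ to $\tfrac{d}{dx}u$ at a cost of one order) and the inductive hypothesis gives the estimate, and the rest is routine.
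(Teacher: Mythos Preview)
Your overall architecture (base case $t=0$ from Lemma \ref{lemma:mellinop2}, inductive step for $t\in\mathbb{N}$, then complex interpolation) is exactly the paper's. The detour through $\widetilde H^t_p$ and retractions is unnecessary --- the paper works directly in $H^t_p(\overline{\mathbb{R}_+})$ --- but that is a matter of taste, not correctness.

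The gap is in your inductive step. The dilation identity $x\tfrac{d}{dx}(M_{\gamma,\rho}u)=M_{\gamma,\rho}(x u')$ is true, but it is the wrong form to use: for $u\in H^{m+1}_p(\overline{\mathbb{R}_+})$ the function $x u'$ need not lie in $L_p(\mathbb{R}_+)$ (let alone $H^m_p$), because multiplication by $x$ destroys decay at infinity. So you cannot feed $xu'$ into the inductive hypothesis, and invoking Lemma \ref{x-alphaI} afterwards would require $M_{\gamma,\rho}(xu')\in \widetilde H^{m+1}_p$, which is circular. Your alternative (differentiating the kernel) fails for $\gamma\le 1$, as you yourself note.

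The fix is one line away from your own computation. From $(M_{\gamma,\rho}u)(x)=\int_0^\infty K_{\gamma,\rho}(t)\,u(x/t)\,t^{-1}\,dt$ the chain rule gives
\[
\tfrac{d}{dx}(M_{\gamma,\rho}u)(x)=\int_0^\infty \tfrac{K_{\gamma,\rho}(t)}{t}\,u'(x/t)\,t^{-1}\,dt = (M_{\gamma,\rho+1}\,u')(x),
\]
since $t^{-1}K_{\gamma,\rho}(t)=K_{\gamma,\rho+1}(t)$. This is precisely Lemma \ref{DMellin}. Iterating, $(M_{\gamma,\rho}\varphi)^{(k)}=M_{\gamma,\rho+k}\,\varphi^{(k)}$, and each $M_{\gamma,\rho+k}$ is bounded on $L_p$ by Lemma \ref{lemma:mellinop2} (the hypothesis $\rho>1/p-1$ only improves). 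Hence $\|M_{\gamma,\rho}\varphi\|_{m,p}\lesssim\sum_{k=0}^m\|M_{\gamma,\rho+k}\varphi^{(k)}\|_p\lesssim\|\varphi\|_{m,p}$, and interpolation finishes the proof. No weights, no $x^{-1}$, no auxiliary operators needed.
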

\begin{proof}
The special case $t=0$ follows directly from Lemma \ref{lemma:mellinop2}. \\

Now suppose that $t = m \in \mathbb{N}$.  Let $\varphi \in H^m_p(\overline{\mathbb{R}_+})$. Then, using an equivalent norm on $H^m_p(\overline{\mathbb{R}_+})$,
\begin{align*}
\| M_{\gamma, \rho} \varphi \|_{m,p} & \leq \text{const } \sum^m_{k=0} \| (M_{\gamma, \rho} \varphi)^{(k)}\|_p \\
& = \text{const } \sum^m_{k=0} \| M_{\gamma, \rho + k} \varphi^{(k)}\|_p  \quad \text{by Lemma \ref{DMellin}}\\
& \leq  \sum^m_{k=0}  c_{k} \| M_{\gamma, \rho + k} \| \|\varphi^{(k)}\|_p \quad \text{by Lemma \ref{lemma:mellinop2}}\\
& \leq C_{\gamma, \rho, m,p} \sum^m_{k=0} \| \varphi^{(k)}\|_p \quad \text{for some positive constant  } C_{\gamma, \rho, m,p} \\
& = C_{\gamma, \rho, m,p}  \|  \varphi\|_{m,p}. 
\end{align*}
In other words, the operator $M_{\gamma, \rho}: H^m_p(\overline{\mathbb{R}_+}) \to H^m_p(\overline{\mathbb{R}_+})$ is bounded for any $m \in \mathbb{N}$. \\

Let $t > 0$. Choose any $m \in \mathbb{N}$ such that $m >  t$. Then we have boundedness on $H^t_p(\overline{\mathbb{R}_+})$ by interpolation between $H^m_p(\overline{\mathbb{R}_+})$ and $H^0_p(\overline{\mathbb{R}_+}) = L_p(\mathbb{R}_+)$. \\
\end{proof}

\section{Multiplication operator commutator}
Suppose $1< p < \infty$ and $\sigma, \nu \in \mathbb{R}$. Then, see Lemma \ref{lemma:rLambdae},
\begin{equation*}
\Lambda^{\nu}_- := r_+ (D - i )^{\nu} l_+
\end{equation*}
is bounded from $H^\sigma_p(\overline{\mathbb{R}_+})$ to $H^{\sigma-\nu}_p(\overline{\mathbb{R}_+})$, and does not depend on the choice of extension $l_+$. \\

\begin{lemma} \label{CommutatorLambphi}
Let $1< p < \infty$ and $\phi \in C^\infty_0(\mathbb{R})$. Then
\begin{equation*}
[\Lambda^r_-, \phi I] : H^s_p(\overline{\mathbb{R}_+}) \to H^{s-r}_p(\overline{\mathbb{R}_+})
\end{equation*}
is compact for all $r,s \in \mathbb{R}$.
\end{lemma}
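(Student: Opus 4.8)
The plan is to reduce the statement to compactness of a commutator of a pseudodifferential operator with a smooth compactly supported multiplication operator on the full line $\mathbb{R}$, and then exploit the fact that such a commutator is of lower order, combined with the compact Sobolev embedding on bounded domains (as already used in Lemma \ref{multiplierbcompact}). First I would recall from Lemma \ref{lemma:rLambdae} that $\Lambda^r_- = r_+ (D-i)^r l_+$ is independent of the extension $l_+$ and satisfies $\Lambda^r_- r_+ = r_+ (D-i)^r$. Writing $\phi I = r_+ (\phi I) l_+$ (using that $\phi$ has no jump issues since it is smooth on all of $\mathbb{R}$, and $r_+ l_+ = \mathrm{id}$ on $H^\bullet_p(\overline{\mathbb{R}_+})$), I would express, for $u \in H^s_p(\overline{\mathbb{R}_+})$ with a chosen extension $u_0 \in H^s_p(\mathbb{R})$,
\begin{equation*}
[\Lambda^r_-, \phi I] u = r_+\big( (D-i)^r (\phi u_0) - \phi\, (D-i)^r u_0 \big) = r_+\, [(D-i)^r, \phi I]\, u_0,
\end{equation*}
taking care to check (via the mapping property $(r_+ (D-i)^r l_+) r_+ = r_+ (D-i)^r$ and $r_+ \phi I = \phi r_+$) that the right-hand side does not depend on the choice of $u_0$, so the identity is legitimate on $H^s_p(\overline{\mathbb{R}_+})$.

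The next step is to show that $[(D-i)^r, \phi I] : H^s_p(\mathbb{R}) \to H^{s-r+1}_p(\mathbb{R})$ is bounded. This is the standard fact that the commutator of a pseudodifferential operator of order $r$ with multiplication by a symbol of order $0$ (here $\phi \in C^\infty_0(\mathbb{R}) \subset S(\mathbb{R})$) has order $r-1$; I would either cite this directly (it is the kind of statement available in \cite{Tr83}, or can be obtained from the references in \cite{Shar} on the calculus of the operators $(D-i)^\nu$) or prove it by symbol expansion, writing $(D-i)^r (\phi u_0)$ and $\phi (D-i)^r u_0$ via the Fourier multiplier $(\xi-i)^r$ and estimating the difference using $(\xi-i)^r - (\eta-i)^r = (\xi-\eta)\int_0^1 r(\eta + t(\xi-\eta) - i)^{r-1}\,dt$ together with the rapid decay of $\widehat\phi$. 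Since $\mathrm{supp}\,\phi$ is compact, $[(D-i)^r, \phi I] u_0$ is supported in a fixed bounded set modulo the rapidly decaying tails coming from the non-local nature of the operator — more precisely, I would insert a cutoff $\chi \in C^\infty_0(\mathbb{R})$ equal to $1$ on a neighbourhood of $\mathrm{supp}\,\phi$ and split $[(D-i)^r,\phi I] = \chi [(D-i)^r,\phi I] + (1-\chi)[(D-i)^r,\phi I]$; the second piece is a smoothing operator of order $-\infty$ hence bounded $H^s_p \to H^{s-r+1}_p$ (indeed to any space), and the first piece maps into functions supported in a fixed bounded set.

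Finally I would assemble compactness. For the $\chi$-localized part, the operator factors as $H^s_p(\mathbb{R}) \to H^{s-r+1}_p(\Omega) \hookrightarrow H^{s-r}_p(\Omega) \to H^{s-r}_p(\mathbb{R})$ for a bounded open $\Omega$, where the middle inclusion is compact by Section 4.3.2, Remark 1, p.\,233, \cite{Tr83}, exactly as in the proof of Lemma \ref{multiplierbcompact}; the $(1-\chi)$-part is smoothing, hence compact (e.g. again via a gain of regularity and compact embedding, or because it is bounded $H^s_p \to H^{s-r+1}_p$ and we only need $H^{s-r}_p$, combined with localization). Composing with the bounded restriction $r_+ : H^{s-r}_p(\mathbb{R}) \to H^{s-r}_p(\overline{\mathbb{R}_+})$ and precomposing with a bounded extension from $H^s_p(\overline{\mathbb{R}_+})$ into $H^s_p(\mathbb{R})$ preserves compactness, giving the claim. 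The main obstacle I anticipate is purely bookkeeping: verifying cleanly that the half-line commutator $[\Lambda^r_-, \phi I]$ genuinely equals $r_+$ applied to the full-line commutator in a way independent of the extension — the non-locality of $(D-i)^r$ means one cannot naively commute $r_+$ past it, so the argument must go through the identities in Lemma \ref{lemma:rLambdae} (and Remark 1.11, p.\,53, \cite{Shar}) rather than by direct manipulation.
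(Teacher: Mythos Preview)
Your approach is correct and takes a genuinely different route from the paper. The paper argues as follows: (i) the commutator $[\Lambda^r_-,\phi I]$ has order $r-1$, hence is bounded $H^s_p(\overline{\mathbb{R}_+})\to H^{s-r+1}_p(\overline{\mathbb{R}_+})$; (ii) by Lemma~\ref{multiplierbcompact}, each of $\Lambda^r_-\phi I$ and $\phi\Lambda^r_-$ is compact $H^s_p(\overline{\mathbb{R}_+})\to H^{s-r-\epsilon}_p(\overline{\mathbb{R}_+})$, so the commutator is too; (iii) interpolation of compactness (Cwikel, \cite{Cw}) between the bounded endpoint $H^{s-r+1}_p$ and the compact endpoint $H^{s-r-\epsilon}_p$ yields compactness into $H^{s-r}_p$. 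Your proof instead lifts to the full line via $[\Lambda^r_-,\phi I]=r_+[(D-i)^r,\phi I]l_+$ and then localises with a cutoff $\chi$ to feed directly into the compact Sobolev embedding on a bounded domain. This is more elementary in that it avoids the interpolation-of-compactness theorem; the paper's argument, on the other hand, stays entirely on the half-line and never needs to analyse the commutator's structure beyond its order.

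One point to tighten: your treatment of the $(1-\chi)$ tail says ``smoothing, hence compact''. On $\mathbb{R}$ this inference is not automatic, since the embedding $H^{s-r+N}_p(\mathbb{R})\hookrightarrow H^{s-r}_p(\mathbb{R})$ is not compact for any $N$. What makes it work here is that the kernel of $(1-\chi)(D-i)^r\phi$ is smooth and rapidly decaying off the diagonal (cf.\ the analysis around Remark~\ref{1minuschiA}), so the operator is a norm limit of operators with compactly supported output; alternatively, insert a further cutoff $\chi_2$ with expanding support and show the remainder $(1-\chi_2)(1-\chi)(D-i)^r\phi$ has arbitrarily small norm. You should make this explicit rather than leave it at ``combined with localization''.
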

\begin{proof}
Since $\phi \in C^\infty_0(\mathbb{R})$, the commutator $[\Lambda^r_-, \phi I]$ is a pseudodifferential operator of order $r-1$, and thus
\begin{equation*}
[\Lambda^r_-, \phi I] : H^s_p(\overline{\mathbb{R}_+}) \to H^{s-r+1}_p(\overline{\mathbb{R}_+}).
\end{equation*}
From Lemma \ref{multiplierbcompact},
\begin{equation*}
\phi I: H^t_p(\overline{\mathbb{R}_+}) \to H^{t-\epsilon}_p(\overline{\mathbb{R}_+})
\end{equation*}
is compact for $- \infty < t < +\infty$ and all $\epsilon > 0$. Therefore, with $t=s$, 
\begin{equation*}
\Lambda^r_- \phi I: H^s_p(\overline{\mathbb{R}_+}) \to H^{s-r- \epsilon}_p(\overline{\mathbb{R}_+})
\end{equation*}
and then taking $t=s-r$,
\begin{equation*}
\phi \Lambda^r_- : H^s_p(\overline{\mathbb{R}_+}) \to H^{s-r-\epsilon}_p(\overline{\mathbb{R}_+})
\end{equation*}
are both compact. \\

In summary, 
\begin{align*}
[\Lambda^r_-, \phi I] & : H^s_p(\overline{\mathbb{R}_+}) \to H^{s-r+1}_p(\overline{\mathbb{R}_+}) \quad \text{is bounded, and} \\
& : H^s_p(\overline{\mathbb{R}_+}) \to H^{s-r- \epsilon}_p(\overline{\mathbb{R}_+}) \quad \text{is compact.} \\
\end{align*}
Therefore, by interpolation, see \cite{Cw},
\begin{equation*}
[\Lambda^r_-, \phi I] : H^s_p(\overline{\mathbb{R}_+}) \to H^{s-r}_p(\overline{\mathbb{R}_+})
\end{equation*}
is compact for all $r,s \in \mathbb{R}$.
\end{proof}

\newpage


\section{Pseudodifferential and Mellin operators}
\begin{remark}
Lemma \ref{LambdaMreverse} and \ref{Mlambdaextension} describe the action of the operator $\Lambda^r_-$ on the Mellin integral operator $M_{2 \alpha, 0}$, since this is sufficient for our purposes. However, it is clear that these results also hold for a wider class of Mellin operators. Indeed, we can replace $M_{2 \alpha, 0}$ by a general Mellin convolution operator, with symbol $b$, such that $b(\pm \infty)=0$, and whose kernel, $K$, satisfies the two conditions in \eqref{Ksuppinteg}. \\
\end{remark}

\begin{lemma} \label{LambdaMreverse}
Suppose $0 < r < 1$. Then
\begin{equation*}
\Lambda^r_-  \, M_{2 \alpha, 0} = M_{2 \alpha, r} \, \Lambda^r_- + (-i)^r (M_{2\alpha,0} - M_{2\alpha,r}) + T, 
\end{equation*}
where $T: H^r_p(\overline{\mathbb{R}_+}) \to L_p(\mathbb{R}_+)$ is compact. \\

On the other hand, if $-1+1/p < r < 0$ then
\begin{equation*}
\Lambda^r_-  \, M_{2 \alpha, 0} = M_{2 \alpha, r} \, \Lambda^r_- - (-i)^r  (M_{2\alpha,0} - M_{2\alpha,r}) \Lambda^{2r}_- + T, 
\end{equation*}
where $T: H^r_p(\overline{\mathbb{R}_+}) \to L_p(\mathbb{R}_+)$ is compact. \\
\end{lemma}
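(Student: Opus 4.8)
The plan is to prove the commutation identity $\Lambda^r_- M_{2\alpha,0} = M_{2\alpha,r}\Lambda^r_- + \dots$ by exploiting the explicit Mellin-symbol description of $M_{2\alpha,\rho}$ and the fact that $(D-i)^r$ agrees, modulo lower-order/compact terms, with the homogeneous operator $(iD)^r$. First I would record the key structural ingredients already available: by Lemma \ref{lemma:mellinop2} and Remark \ref{remark:mellinop}, $M_{2\alpha,\rho} = M^0(b_{2\alpha,\rho})$ is a Mellin convolution operator whose symbol $b_{2\alpha,\rho}(y) = B(\rho+1/p'+iy,2\alpha)/\Gamma(2\alpha)$ is continuous on $\dot{\mathbb{R}}$, has bounded variation, and vanishes at $\pm\infty$; moreover the kernel $K_{2\alpha,0}$ is supported in $[1,\infty)$ and satisfies the integrability condition \eqref{Kgamma0integ}. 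The elementary algebraic observation that drives everything is the identity $M_{2\alpha,0}\,x^{-r}I = x^{-r}I\,M_{2\alpha,r}$ for a monomial weight (this is the $\rho$-shift: conjugating a Mellin convolution by a power of $x$ just shifts the $\rho$-parameter), which I would first verify directly from the integral representation \eqref{MellinIntOp}, or cite the analogue of Lemma \ref{AhomoMellin} used elsewhere in this chapter.

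Next I would reduce $\Lambda^r_- = r_+(D-i)^r l_+$ to the homogeneous operator. Writing $(\xi-i)^r = (i\xi)^r\,(1 - i/\xi)^r$ (valid by Remark \ref{complexexponent} since the imaginary parts are of opposite sign, away from $\xi=0$), the factor $(1-i/\xi)^r - 1$ produces a symbol that is $O(1/\xi)$ at infinity, hence — after localisation and an application of Proposition 5.3.4 of \cite{Roch} in the guise already used repeatedly in this chapter — contributes only a compact operator on $L_p(\mathbb{R}_+)$ when composed with the Mellin operators in question. So modulo compact errors $\Lambda^r_-$ may be replaced by $r_+(iD)^r l_+$, which by the homogeneity Lemmas \ref{AhomoMellin} and \ref{lemma:rHomore} interacts with $M_{2\alpha,\rho}$ in a completely explicit way: $r_+(iD)^r l_+$ maps like multiplication by $(-i)^r$ in the Mellin-symbol picture up to a boundary correction. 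The correction is precisely what accounts for the term $(-i)^r(M_{2\alpha,0}-M_{2\alpha,r})$: on functions in $\widetilde{H}^r_p(\overline{\mathbb{R}_+})$ (i.e. with vanishing trace) $r_+(iD)^r e_+$ is genuinely the homogeneous operator and commutes up to the $\rho$-shift, whereas a general element of $H^r_p(\overline{\mathbb{R}_+})$ differs by a one-parameter family (the trace), and tracking that discrepancy through both $M_{2\alpha,0}$ and $M_{2\alpha,r}$ yields exactly the difference operator $M_{2\alpha,0}-M_{2\alpha,r}$, which is itself compact since both symbols vanish at $\pm\infty$.

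For the case $-1+1/p < r < 0$ the bookkeeping is slightly different because $\Lambda^r_-$ then has positive order $-r$ and the boundary term enters through $\Lambda^{2r}_-$ rather than through the identity; I would handle this by writing $\Lambda^r_- M_{2\alpha,0} = \Lambda^r_- M_{2\alpha,0}\Lambda^{-r}_- \Lambda^r_-$ and absorbing $\Lambda^r_- M_{2\alpha,0}\Lambda^{-r}_- - M_{2\alpha,r}$ into a sum of a genuinely compact remainder and the explicit term $-(-i)^r(M_{2\alpha,0}-M_{2\alpha,r})\Lambda^{2r}_-$; the sign flip and the extra $\Lambda^{2r}_-$ both come out of the relation $(\xi-i)^r(\xi-i)^{-r}$ expansions when the order is negative, so that the "defect" is now of order $2r$ rather than order $0$. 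In both cases the final compactness statement for $T: H^r_p(\overline{\mathbb{R}_+})\to L_p(\mathbb{R}_+)$ follows from Lemma \ref{multiplierbcompact}/\ref{multiplierbHcompact} together with Proposition 5.3.4 of \cite{Roch}, since every error term is either a commutator of a pseudodifferential operator with a compactly supported multiplier, or a composition of Mellin convolution operators whose symbols vanish at infinity with a pseudodifferential operator of order $\le 0$.

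The main obstacle I anticipate is making precise the boundary correction $(-i)^r(M_{2\alpha,0}-M_{2\alpha,r})$ — i.e. isolating exactly how $\Lambda^r_-$ fails to be the clean homogeneous operator $r_+(iD)^r e_+$ on $H^r_p(\overline{\mathbb{R}_+})$ and verifying that the discrepancy, after being passed through the two Mellin operators with their shifted $\rho$-parameters, collapses to that specific difference with that specific constant. This is where the non-commutativity of the extension $l_+$ with the Mellin operators is genuinely felt and where the restriction $-1+1/p < r < 1+1/p$ (so that traces are either absent or one-dimensional) is essential; the rest of the argument is a matter of repeated, routine application of the compactness criterion of \cite{Roch} and the symbol calculus from Section \ref{preamble}.
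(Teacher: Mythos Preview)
Your overall strategy matches the paper's: reduce $\Lambda^r_-$ to the homogeneous operator $r_+(iD)^r l_+$ plus corrections, and use the commutation relation $r_+(iD)^r l_+\, M_{2\alpha,0} = M_{2\alpha,r}\, r_+(iD)^r l_+$ (Lemma~\ref{AhomoMellin}). The negative-$r$ case is also handled in the paper essentially as you suggest, by applying the positive case with $s=-r$ to the pair $(M_{2\alpha,r},M_{2\alpha,0})$ and then composing on both sides with $\Lambda^r_-$.

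However, your identification of the source of the explicit term $(-i)^r(M_{2\alpha,0}-M_{2\alpha,r})$ is incorrect, and this is a genuine gap. You write $(\xi-i)^r = (i\xi)^r(1-i/\xi)^r$ and claim that $(1-i/\xi)^r-1$ contributes only a compact remainder; but the resulting symbol $(\xi-i)^r - (-i)^r(i\xi)^r$ does \emph{not} vanish at $\xi=0$: it equals $(-i)^r$ there. A Wiener--Hopf operator whose symbol is nonzero at the origin, composed with a Mellin operator whose symbol vanishes at $\pm\infty$, is \emph{not} compact (Proposition~5.3.4 of \cite{Roch} requires the Wiener--Hopf symbol to vanish at~$0$). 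The paper resolves this by the three-term decomposition of Lemma~\ref{fxi0bddlim},
\[
(\xi-i)^r = (-i)^r(i\xi)^r + (-i)^r + c_r(\xi),
\]
with $c_r(0)=0$ and $c_r(\pm\infty)=-(-i)^r$. Applying this to $M_{2\alpha,0}$ from the left gives the homogeneous term, the constant $(-i)^r M_{2\alpha,0}$, and a genuinely compact piece $C_r M_{2\alpha,0}$; applying it on the right of $M_{2\alpha,r}$ gives the homogeneous term, $(-i)^r M_{2\alpha,r}$, and $M_{2\alpha,r}C_r$. The difference of the two constant pieces is exactly $(-i)^r(M_{2\alpha,0}-M_{2\alpha,r})$. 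No traces or boundary corrections enter anywhere (and indeed for $0<r<1/p$ elements of $H^r_p(\overline{\mathbb{R}_+})$ have no trace), so your ``one-parameter family'' explanation is a red herring. The compactness of the remaining terms $C_r M_{2\alpha,0}$ and $M_{2\alpha,r}C_r$ from $H^r_p(\overline{\mathbb{R}_+})$ to $L_p(\mathbb{R}_+)$ then follows by interpolation from compactness on $L_p$ (Proposition~5.3.4(i), using $c_r(0)=0$) and boundedness in higher regularity.
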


\begin{proof}
\textbf{We first consider the case where $r >0$.} \\

Suppose $0< r < 1$. Then, from Lemma \ref{fxi0bddlim}
\begin{equation} \label{HomogRepresentation}
(\xi - i )^r = (-i)^r (i \xi)^r + (-i)^r + c_r(\xi),
\end{equation}
where $c_r$ is bounded for $\xi \in \mathbb{R}$ and $c_r(0)=0, c_r(\pm \infty) = -(-i)^r$. \\

Moreover, from Lemma \ref{AhomoMellin}, for $\nu >0 $
\begin{equation} \label{iDMcommute}
r_+ (iD)^\nu l_+ \, M_{2\alpha, 0} = M_{2\alpha, \nu} \, r_+ (iD)^\nu l_+. 
\end{equation}

Hence 
\begin{align*}
\Lambda^r_- \, M_{2 \alpha, 0} &=  r_+ \big \{ (-i)^r (i D)^r + (-i)^r + c_r(D) \big \} l_+ M_{2 \alpha, 0} \\
&=  M_{2 \alpha, r} (-i)^r r_+ (i D)^r l_+ + M_{2 \alpha, 0} (-i)^r + r_+ c_r(D) l_+ M_{2 \alpha, 0}  \\
&= M_{2 \alpha, r} \, \Lambda^r_- +(-i)^r \big \{ M_{2\alpha, 0} - M_{2\alpha, r}\big\} \\
& \quad + \big \{r_+ c_r(D) l_+ M_{2 \alpha, 0} - M_{2 \alpha, r} r_+ c_r(D) l_+\big \}.
\end{align*}

Now, for ease of notation, define
\begin{equation*}
C_r := r_+ c_r(D)  l_+.
\end{equation*} 
Hence, we can write
\begin{equation} \label{LamM-MLam}
\Lambda^r_- \, M_{2 \alpha, 0} - M_{2 \alpha, r} \, \Lambda^r_-  - (-i)^r (M_{2\alpha,0} - M_{2\alpha,r})=  C_r M_{2\alpha, 0} - M_{2\alpha, r}C_r ,
\end{equation}
where $C_r$ has order $0$. For the case $0 < r < 1$, it now remains to prove the compactness of the two operators on the right-hand side of equation \eqref{LamM-MLam}.\\

From Proposition 5.3.4(i), p. 267, \cite{Roch},
\begin{equation*}
\Lambda^{-r}_-C_r M_{2\alpha, 0} : L_p({\mathbb{R}_+}) \to L_p({\mathbb{R}_+})
\end{equation*}
 is compact. Therefore,
\begin{equation*}
C_r M_{2\alpha, 0} : L_p({\mathbb{R}_+}) \to H^{-r}_p(\overline{\mathbb{R}_+})
\end{equation*}
is compact. But 
\begin{equation*}
C_r M_{2\alpha, 0} : H^t_p(\overline{\mathbb{R}_+}) \to H^{t-r}_p(\overline{\mathbb{R}_+})
\end{equation*}
is bounded for all $t \geq 0$. So, taking $t > r$, we obtain by interpolation that
\begin{equation*}
C_r M_{2\alpha, 0} : H^r_p(\overline{\mathbb{R}_+}) \to L_p(\overline{\mathbb{R}_+})
\end{equation*}
is compact. \\ 

Similarly, from Proposition 5.3.4(i), p. 267, \cite{Roch}, $M_{2\alpha, r} C_r  \Lambda^{-r}_-: L_p({\mathbb{R}_+}) \to L_p({\mathbb{R}_+})$ is compact. Therefore,
\begin{equation*}
 M_{2\alpha, r} C_r: H^{r}_p(\overline{\mathbb{R}_+}) \to L_p(\overline{\mathbb{R}_+})
\end{equation*}
is compact.  This completes the proof for the case $0 < r < 1$. \\ \\

\textbf{Now suppose that $r < 0$.} \\

Suppose $-1 + 1/p < r < 0$ and write $r = -s$ where $s > 0$. Our starting point is equation \eqref{LamM-MLam}, where for the pseudodifferential terms we replace r by s, and for the Mellin operators we replace the pair $M_{2\alpha, 0}$ and $M_{2\alpha, r}$ by $M_{2\alpha, r}$ and $M_{2\alpha, 0}$ respectively. Hence
\begin{equation} \label{LamM-MLamNeg}
\Lambda^s_- \, M_{2 \alpha, r} - M_{2 \alpha, 0} \, \Lambda^s_-  - (-i)^s (M_{2\alpha,r} - M_{2\alpha,0})=  C_s M_{2\alpha, r} - M_{2\alpha, 0}C_s.
\end{equation}

From Proposition 5.3.4(i), p. 267, \cite{Roch},
\begin{equation*}
\Lambda^{r}_-C_s M_{2\alpha, r} : L_p({\mathbb{R}_+}) \to L_p({\mathbb{R}_+})
\end{equation*}
 is compact. Therefore,
\begin{equation*}
C_s M_{2\alpha, r} : L_p({\mathbb{R}_+}) \to H^{r}_p(\overline{\mathbb{R}_+})
\end{equation*}
is compact. \\

Similarly, 
\begin{equation*}
C_s M_{2\alpha, 0} : L_p({\mathbb{R}_+}) \to H^{r}_p(\overline{\mathbb{R}_+})
\end{equation*}
is compact. But, in addition, from Proposition 5.3.4 (ii)1, p. 267, \cite{Roch},
\begin{equation*}
[M_{2\alpha,0},C_s] : L_p({\mathbb{R}_+}) \to L_p({\mathbb{R}_+})
\end{equation*}
is compact and so, finally,
\begin{equation*}
M_{2\alpha, 0} C_s  : L_p({\mathbb{R}_+}) \to H^{r}_p(\overline{\mathbb{R}_+})
\end{equation*}
is compact. In summary,
\begin{equation*}
T_s:= C_s M_{2\alpha,r} - M_{2\alpha,0}C_s \quad : L_p({\mathbb{R}_+}) \to H^{r}_p(\overline{\mathbb{R}_+}) 
\end{equation*}
is compact.
From equation \eqref{LamM-MLamNeg}
\begin{equation*}
\Lambda^r_- \Lambda^s_-  \, M_{2 \alpha, r} \Lambda^r_-= \Lambda^r_- M_{2 \alpha, 0} \, \Lambda^s_-\Lambda^r_-  + \Lambda^r_- (-i)^r ( M_{2\alpha,0} - M_{2\alpha,r} ) \Lambda^r_- + \Lambda^r_- T_s \Lambda^r_-. \\
\end{equation*}
But since $r + s = 0$, we have 
\begin{equation*}
M_{2 \alpha, r} \Lambda^r_-  = \Lambda^r_- M_{2 \alpha, 0}    + \Lambda^r_- (-i)^r ( M_{2\alpha,0} - M_{2\alpha,r} ) \Lambda^r_- + \Lambda^r_- T_s \Lambda^r_-.
\end{equation*}
Rearranging
\begin{equation*}
\Lambda^r_- M_{2 \alpha, 0} = M_{2 \alpha, r} \Lambda^r_-  - \Lambda^r_- (-i)^r ( M_{2\alpha,0} - M_{2\alpha,r} ) \Lambda^r_-+ T_1,
\end{equation*}
where $T_1 = - \Lambda^r_- T_s \Lambda^r_-$. Since $T_s: L_p(\mathbb{R}_+) \to H^{r}_p(\overline{\mathbb{R}_+})$ is compact, it follows that $T_1: H^r_p(\overline{\mathbb{R}_+}) \to L_p(\mathbb{R}_+)$ is compact. \\

Finally, we note that
\begin{equation*}
 \Lambda^r_- ( M_{2\alpha,0} - M_{2\alpha,r} ) \Lambda^r_- = [ \Lambda^r_-, ( M_{2\alpha,0} - M_{2\alpha,r} )] \Lambda^r_- + ( M_{2\alpha,0} - M_{2\alpha,r} )\Lambda^{2r}_-. \\
\end{equation*}
 
But, since $r < 0$, the commutator $[ \Lambda^r_-, ( M_{2\alpha,0} - M_{2\alpha,r} )]$ is compact on $L_p(\mathbb{R}_+)$ by Proposition 5.3.4 (ii)1, p. 267, \cite{Roch}. Hence $[ \Lambda^r_-, ( M_{2\alpha,0} - M_{2\alpha,r} )] \Lambda^r_-:H^r_p(\overline{\mathbb{R}_+}) \to L_p(\mathbb{R}_+)$ is compact. \\

This completes the proof of the lemma. \\
\end{proof}

\begin{remark}
In passing, we note that there is a minor inaccuracy in the proof of Proposition 5.3.4 (i) p. 267, \cite{Roch}. \\

The sum in the display formula 9 lines below (5.7) might not, in fact, be identically zero. However, by Proposition 4.2.10, p. 204, \cite{Roch}, it can be made arbitrarily small, so that for any $\epsilon >0$, we can choose $f$ such that 
\begin{equation*}
\| W(a) M^0(f_b) - W(a) M^0(f) \| < \epsilon / 2.
\end{equation*}
On the other hand, $W(a) M^0(f)$ can be represented as the sum of a compact operator and an operator of norm $\epsilon /2$. Therefore, $W(a)M^0(f_b)$ can be represented as the sum of a compact operator and an operator of arbitrarily small norm. That is, $W(a) M^0(f_b)$ is also compact. \footnote{This correction to the proof was confirmed in a personal communication from Prof. Steffen Roch on 1st November 2016.} \\
\end{remark}

We note that for $0 < \alpha < 1$ and any $r>0$ the Mellin integral operator $M_{2 \alpha, r}$ has a symbol that vanishes at $\pm \infty$. For a fixed $\alpha$, it will be convenient to define a \textit{composite} Mellin operator to be any linear combination of operators $M_{2 \alpha, r}$, as $r >0$ varies. Clearly, by its construction, any composite Mellin operator also has a symbol that vanishes at $\pm \infty$. \\

\begin{lemma} \label{Mlambdaextension}
Suppose $r > 0$. If
\begin{enumerate}[\hspace{0.5cm}(a)]
\item $r = k \in \mathbb{N}$, then $\Lambda^k_-  \, M_{2 \alpha, 0} = M_{2 \alpha, k} \, \Lambda^k_- + S_{k-1}$;
\item $r \not \in \mathbb{N}$, then $\Lambda^r_-  \, M_{2 \alpha, 0} = M_{2 \alpha, r} \, \Lambda^r_- + S_{[r]} + T$,
\end{enumerate}
where $T:H^r_p(\overline{\mathbb{R}_+}) \to L_p(\mathbb{R}_+)$ is compact, and
\begin{equation*}
S_{\sigma} := \sum_{0 \leq \mu \leq \sigma} M_\mu \, \Lambda^\mu_- \quad (\sigma \geq 0),
\end{equation*}
for certain composite Mellin operators $M_\mu$. (The sum in the expression for $S_{\sigma}$ always has finitely many terms.)
\end{lemma}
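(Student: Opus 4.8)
The plan is to reduce everything to the already-established relation in Lemma~\ref{LambdaMreverse} for $0<r<1$, together with an induction on the integer part $[r]$ and a separate treatment of the integer powers, all built on the commutation identity \eqref{iDMcommute}. First I would dispose of the case $r=k\in\mathbb{N}$. Writing $\Lambda^k_- = r_+(D-i)^k l_+$ and expanding $(\xi-i)^k = (i\xi)^k(-i)^k + \sum_{j=0}^{k-1}\binom{k}{j}(i\xi)^j(-i)^{k-j}$ by the binomial theorem, the top-order term $(-i)^k r_+(iD)^k l_+$ commutes with $M_{2\alpha,0}$ up to the index shift \eqref{iDMcommute}, producing $M_{2\alpha,k}\Lambda^k_-$ (note that for $r\in\mathbb{N}$ there is genuinely no remainder operator $T$, because each intermediate $r_+(iD)^j l_+$ with $j\in\mathbb{N}$ commutes \emph{exactly} with the Mellin operator by \eqref{iDMcommute}, there being no fractional cut to worry about). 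The lower-order terms each contribute an operator of the form $(\text{composite Mellin})\cdot \Lambda^j_-$ with $0\le j\le k-1$, which assemble into $S_{k-1}$. This gives part (a) directly.

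For part (b), with $r\notin\mathbb{N}$, I would proceed by induction on $[r]$. The base case $[r]=0$, i.e. $0<r<1$, is exactly the first assertion of Lemma~\ref{LambdaMreverse}: there $\Lambda^r_- M_{2\alpha,0} = M_{2\alpha,r}\Lambda^r_- + (-i)^r(M_{2\alpha,0}-M_{2\alpha,r}) + T$, and the middle term $(-i)^r(M_{2\alpha,0}-M_{2\alpha,r})$ is a composite Mellin operator composed with $\Lambda^0_- = I$, hence of the form $S_0$, while $T:H^r_p(\overline{\mathbb{R}_+})\to L_p(\mathbb{R}_+)$ is compact. For the inductive step, given $r$ with $[r]=m\ge 1$, factor $\Lambda^r_- = \Lambda^{r-1}_-\Lambda^1_-$ (using that $(D-i)^r = (D-i)^{r-1}(D-i)$ as Fourier multipliers, and that $r_+(D-i)^a l_+ r_+(D-i)^b l_+ = r_+(D-i)^{a+b}l_+$ modulo the usual care with extension operators — this is where Lemma~\ref{lemma:rLambdae}, specifically $(r_+(D-i)^\nu l_+)r_+ = r_+(D-i)^\nu$, is invoked). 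Apply part (a) with $k=1$ to move $\Lambda^1_-$ past $M_{2\alpha,0}$, obtaining $M_{2\alpha,1}\Lambda^1_- + S_0$; then apply the inductive hypothesis to $\Lambda^{r-1}_-$ (which has integer part $m-1$) acting on $M_{2\alpha,1}$ — the remark following the previous lemma explicitly notes these identities hold for any Mellin operator with symbol vanishing at $\pm\infty$ and kernel satisfying \eqref{Ksuppinteg}, and a composite Mellin operator qualifies. Collecting terms, the leading piece is $M_{2\alpha,r}\Lambda^r_-$, the lower pieces have the form $(\text{composite Mellin})\cdot\Lambda^\mu_-$ with $0\le\mu\le m=[r]$, and the error terms are compositions of a compact operator with a bounded $\Lambda$, hence compact from $H^r_p(\overline{\mathbb{R}_+})$ to $L_p(\mathbb{R}_+)$.

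The one genuinely delicate point — and I expect this to be the main obstacle — is the bookkeeping of mapping properties at each stage of the induction, i.e. verifying that every ``error'' operator really does land as a compact map $H^r_p(\overline{\mathbb{R}_+})\to L_p(\mathbb{R}_+)$ and not merely between some intermediate spaces. Each application of the inductive hypothesis to $\Lambda^{r-1}_-M_{2\alpha,1}$ yields a $T$ that is compact $H^{r-1}_p\to L_p$; one then has to precompose with the order-$1$ operator $\Lambda^1_-:H^r_p\to H^{r-1}_p$ (bounded by Lemma~\ref{lemma:rLambdae}) to get compactness $H^r_p\to L_p$, and similarly postcompose and recombine the $S$-terms, making sure the indices $\mu$ never exceed $[r]$ and the sum stays finite. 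A secondary subtlety is the careful handling of the factorization $\Lambda^r_- = \Lambda^{r-1}_-\Lambda^1_-$: this is only literally true after inserting $l_+r_+$ at the right spot and appealing to Lemma~\ref{lemma:rLambdae}, and one should note that since all the Mellin operators here map $\overline{\mathbb{R}_+}$-supported functions to $\overline{\mathbb{R}_+}$-supported functions, the extension operators can be chosen consistently so that no spurious non-compact terms are generated. Once these mapping-property checks are organized — ideally by stating once and for all that ``composite Mellin operator times $\Lambda^\mu_-$'' with $0\le\mu\le[r]$ is the stable class preserved at each step — the induction closes and both (a) and (b) follow.
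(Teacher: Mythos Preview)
Your proposal is correct, but it organises the reduction differently from the paper. For part~(a) you both use the binomial expansion of $(D-i)^k$ together with the commutation identity for integer powers (the paper cites Lemma~\ref{DMellin}, you cite \eqref{iDMcommute}; these amount to the same thing for $j\in\mathbb{N}$). For part~(b), however, the paper does \emph{not} induct on $[r]$: it splits $r=\{r\}+[r]$ in one go, writing $\Lambda^r_-=\Lambda^{\{r\}}_-\Lambda^{[r]}_-$, applies part~(a) once to $\Lambda^{[r]}_- M_{2\alpha,0}$, and then applies Lemma~\ref{LambdaMreverse} once to $\Lambda^{\{r\}}_- M_{2\alpha,[r]}$ (invoking the remark that the latter lemma holds for general Mellin operators with kernel satisfying \eqref{Ksuppinteg}). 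Your induction achieves the same end result but repeats the $\Lambda^1_-$ step $[r]$ times, which is why you have to worry more about the accumulation of error terms. The paper's single-shot splitting buys a shorter argument with less bookkeeping of mapping properties, while your inductive route makes the structure of the lower-order terms $S_\sigma$ more explicit at each stage. Both are valid; neither introduces a genuine gap.
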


\begin{proof}
Firstly, suppose $r = k \in \mathbb{N}$. Since 
\begin{equation*}
(D-i)^k = \sum^k_{j=0} \binom{k}{j} (-i)^{k-j} D^j,
\end{equation*}
part $(a)$ follows directly from Lemma \ref{DMellin}. \\

Secondly, suppose $r>0$ and $r \not \in \mathbb{N}$. Then, we write
\begin{equation*}
\Lambda^r_- = \Lambda^{\{r\}}_- \, \Lambda^{[r]}_-.
\end{equation*}
Hence, from part $(a)$ and Lemma \ref{LambdaMreverse},
\begin{align*}
\Lambda^r_- \, M_{2 \alpha,0} & = \Lambda^{\{r\}}_- \, \Lambda^{[r]}_- \, M_{2 \alpha,0} \\
& = \Lambda^{\{r\}}_- \, \big \{ \, M_{2 \alpha,[r]} \,  \Lambda^{[r]}_- + S_{[r]-1} \big \} \\
& = \big( M_{2 \alpha, r} \, \Lambda^{\{r\}}_-  + S_0 + T\big) \Lambda^{[r]}_- + S_{r-1} \\
& = M_{2 \alpha, r} \, \Lambda^r_- + S_{[r]} + T \Lambda^{[r]}_-,
\end{align*}
which completes the proof of part $(b)$. \\
\end{proof}

\begin{lemma} \label{lemma:rHomore}
Suppose $1< p < \infty, \, \sigma \in \mathbb{R}$ and $\nu > 0$. Let $l_+ :H^{\sigma}_p(\overline{\mathbb{R}_+}) \to H^{\sigma}_p(\mathbb{R}_+)$ be an arbitrary extension operator. Then  $r_+ (i D)^{\nu} l_+$ is bounded from $H^\sigma_p(\overline{\mathbb{R}_+})$ to $H^{\sigma-\nu}_p(\overline{\mathbb{R}_+})$, and does not depend on the choice of the extension $l_+$. Moreover,
\begin{equation*} 
(r_+ ( i D)^{\nu} l_+)r_+ =  r_+ (i D)^{\nu}. \\
\end{equation*}
\end{lemma}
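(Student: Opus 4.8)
\textbf{Proof proposal for Lemma \ref{lemma:rHomore}.}

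The plan is to mirror the proof of Lemma \ref{lemma:rLambdae}, where the same statement was established for the shifted operator $(D-i)^\nu$, and to transport that argument to $(iD)^\nu = \xi^\nu$ by controlling the behaviour of the symbol $(i\xi)^\nu$ in the closed lower half-plane. First I would invoke the mapping property of the pseudodifferential operator $(iD)^\nu$ on the full line: for $\nu > 0$, $\operatorname{Op}((i\xi)^\nu)$ is bounded from $H^\sigma_p(\mathbb{R})$ to $H^{\sigma-\nu}_p(\mathbb{R})$, which follows from Theorem 1.12, p.\,54, \cite{Shar} (or directly, since $(i\xi)^\nu\langle\xi\rangle^{-\nu}$ is a Fourier $L_p$-multiplier, e.g. by Lemma \ref{mxiFm}). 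The core point is then to verify that the symbol $(i\xi)^\nu$ admits an analytic continuation into the open lower half-plane $\{\operatorname{Im} z < 0\}$: indeed $z = \xi + i\tau$ with $\tau < 0$ has $-\pi < \arg(iz) < 0$ away from the cut, so $(iz)^\nu := \exp(\nu\log(iz))$ is well-defined and holomorphic there, and one has the elementary bound
\begin{equation*}
|(i(\xi + i\tau))^\nu| = |\xi + i\tau|^\nu \leq (|\xi| + |\tau| + 1)^{\nu} = (|\xi| + |\tau| + 1)^{\max\{0,\nu\}}, \quad \tau \leq 0,
\end{equation*}
which is precisely the hypothesis needed to apply the extension-independence machinery. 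One subtlety to flag explicitly: $(iz)^\nu$ is not continuous up to the boundary ray $\{\tau \le 0, \xi = 0\}$ where the cut of $\arg$ sits, so I would either restrict to $\tau < 0$ and pass to the limit, or note that the single boundary point $\xi = 0$ is negligible for the $L_p$-multiplier and distributional arguments involved; this is the one place where $(iD)^\nu$ genuinely differs from the entire-symbol case and must be handled with a word of care.

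With the analytic continuation and the polynomial bound in hand, I would apply Theorem 1.10, p.\,53, \cite{Shar} verbatim to conclude that $r_+(iD)^\nu l_+ : H^\sigma_p(\overline{\mathbb{R}_+}) \to H^{\sigma-\nu}_p(\overline{\mathbb{R}_+})$ is continuous and, crucially, independent of the choice of extension operator $l_+$ --- this is exactly the statement of that theorem, whose applicability hinges on the symbol extending holomorphically to the lower half-plane with at most polynomial growth, since $\operatorname{supp}(e_+ u - l_+ u) \subseteq \overline{\mathbb{R}_+}$ combined with the Paley--Wiener/one-sided support structure forces $r_+(iD)^\nu(e_+u - l_+u) = 0$.

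Finally, for the identity $(r_+(iD)^\nu l_+)r_+ = r_+(iD)^\nu$, I would appeal to Remark 1.11, p.\,53, \cite{Shar}, exactly as in the proof of Lemma \ref{lemma:rLambdae}: given $v \in H^\sigma_p(\mathbb{R})$, write $u = r_+ v \in H^\sigma_p(\overline{\mathbb{R}_+})$, then $l_+ u - v$ is supported in $\overline{\mathbb{R}_-}$, so by the support-preservation property of the operator with lower-half-plane-analytic symbol, $r_+(iD)^\nu(l_+ u - v) = 0$, whence $(r_+(iD)^\nu l_+)(r_+ v) = r_+(iD)^\nu v$. I do not expect any genuine obstacle here beyond the bookkeeping already noted; the only thing requiring attention is making sure the cut of $(i\cdot)^\nu$ along the negative imaginary axis is acknowledged and shown not to interfere with either the boundedness estimate or the support argument, since every cited theorem from \cite{Shar} is stated for symbols analytic in the \emph{open} lower half-plane with the stated growth, which is all we have and all we need.
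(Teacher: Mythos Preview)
Your proposal is correct and follows essentially the same route as the paper's proof: both establish boundedness of $(iD)^\nu$ on the full line via the $L_p$-multiplier property of $(i\xi)^\nu\langle\xi\rangle^{-\nu}$, then verify the analytic continuation of $(i\xi)^\nu$ into the open lower half-plane with the polynomial bound $|(i\xi-\tau)^\nu|\le(|\xi|+|\tau|+1)^\nu$, and conclude by invoking Theorem~1.10 and Remark~1.11 of \cite{Shar}. Your explicit acknowledgement of the branch cut along the negative imaginary axis is more careful than the paper's treatment, but does not alter the argument.
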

\begin{proof}
We follow the approach taken in Lemma \ref{lemma:rLambdae}. Let us define the symbol
\begin{equation*}
A_v(\xi) := \dfrac{(i \xi)^\nu}{(1+ \xi^2)^{\nu/2}}.
\end{equation*}
Then, see Lemma \ref{fxi0bddlim}, with $a=\nu, b=c=-\nu/2$, it is a routine calculation to show that $A_v$ is a Fourier $H^\sigma_p$ multiplier. But, directly from the definition of Bessel potential spaces, we have
\begin{equation*}
\mathcal{F}^{-1} ( 1 + \xi^2 )^{\nu/2} \mathcal{F}: H^{\sigma}_p(\mathbb{R}) \to H^{\sigma-\nu}_p(\mathbb{R})
\end{equation*}
 is bounded. Thus, the pseudodifferential operator $(i D)^\nu$ is bounded from $H^\sigma_p(\mathbb{R})$ to $H^{\sigma-\nu}_p(\mathbb{R})$. \\
 
 In addition, its symbol $( i \xi)^{\nu}$ admits an analytic continuation with respect to $\xi$ to the lower complex half-plane ($\tau <0$) such that
\begin{equation*}
| (i \xi - \tau)^\nu | \leq ( |\xi| + |\tau| + 1 )^{\nu}, \quad \tau \leq 0. \\
\end{equation*}

Therefore, from Theorem 1.10, p. 53, \cite{Shar}, $r_+ (i D)^{\nu} l_+$ is continuous from $H^{\sigma}_p(\overline{\mathbb{R}_+})$ to $H^{\sigma-\nu}_p(\overline{\mathbb{R}_+})$, and does \textit{not} depend on the choice of extension $l_+$.\\ 

Finally, by Remark 1.11, p. 53, \cite{Shar}, we also have 
\begin{equation*}
(r_+ (i D) ^{\nu} l_+ ) r_+ = r_+ ( i D)^{\nu}.
\end{equation*} 
This completes the proof of the lemma. \\
\end{proof}

\begin{lemma} \label{ADttau}
Let $B$ be a pseudodifferential operator whose symbol satisfies the condition $|B(\xi)| \leq C(1+ |\xi|)^\nu$, for certain constants $C$ and $\nu$. Suppose $\varphi \in S(\mathbb{R})$. Then
\begin{equation*}
B(D_t) \, \varphi \bigg ( \dfrac{t}{\tau} \bigg) = \bigg [ B \bigg(  \dfrac{D_t}{\tau} \bigg )\varphi \bigg] \bigg( \dfrac{t}{\tau} \bigg).
\end{equation*}
\end{lemma}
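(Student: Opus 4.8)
The plan is to verify the identity by a direct computation with the Fourier transform, exploiting the fact that a dilation of the spatial variable corresponds to an inverse dilation of the frequency variable. First I would write out the action of the pseudodifferential operator $B(D_t)$ on an arbitrary Schwartz function $g$ using the Fourier representation, namely
\begin{equation*}
(B(D_t) g)(t) = \frac{1}{\sqrt{2\pi}} \int_{\mathbb{R}} e^{-i t \xi} \, B(\xi) \, (\mathcal{F} g)(\xi) \, d\xi,
\end{equation*}
which is well-defined since $g \in S(\mathbb{R})$ and $B$ has polynomial growth, so $B \cdot \mathcal{F}g$ is integrable. I would apply this with $g(t) = \varphi(t/\tau)$, so the only nontrivial input is the Fourier transform of a dilate.

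The key step is the elementary scaling identity for the Fourier transform: for $\tau > 0$,
\begin{equation*}
\big( \mathcal{F} \varphi(\cdot/\tau) \big)(\xi) = \tau \, (\mathcal{F}\varphi)(\tau \xi),
\end{equation*}
which follows from the substitution $x \mapsto x/\tau$ in the defining integral \eqref{FTdefinition}. (If $\tau$ can be negative one inserts $|\tau|$ and reflects; since in the intended application $\tau = t/x$ with $t, x > 0$ this case does not arise, but I would note it.) Substituting this into the Fourier representation above gives
\begin{equation*}
\Big( B(D_t) \, \varphi(\tfrac{\cdot}{\tau}) \Big)(t) = \frac{1}{\sqrt{2\pi}} \int_{\mathbb{R}} e^{-i t \xi} \, B(\xi) \, \tau \, (\mathcal{F}\varphi)(\tau \xi) \, d\xi,
\end{equation*}
and then the change of variable $\eta = \tau \xi$ converts this into
\begin{equation*}
\frac{1}{\sqrt{2\pi}} \int_{\mathbb{R}} e^{-i (t/\tau) \eta} \, B(\eta/\tau) \, (\mathcal{F}\varphi)(\eta) \, d\eta = \Big( B(\tfrac{D_t}{\tau}) \varphi \Big)\Big( \tfrac{t}{\tau} \Big),
\end{equation*}
which is exactly the claimed identity, reading off the right-hand side as the Fourier representation of the operator $B(D/\tau)$ applied to $\varphi$ and then evaluated at $t/\tau$.

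There is essentially no serious obstacle here; the statement is a bookkeeping identity about how $\mathrm{Op}(B)$ interacts with the dilation group, and the only points requiring a word of care are (i) confirming that all integrals converge absolutely so that Fubini/Tonelli and the change of variables are legitimate --- guaranteed by $\varphi \in S(\mathbb{R})$ together with the polynomial symbol bound $|B(\xi)| \leq C(1+|\xi|)^\nu$, and (ii) checking that the operator $B(D/\tau)$ with the rescaled symbol $B(\cdot/\tau)$ still satisfies a polynomial bound, which it does with the same $\nu$ and a modified constant, so $B(D/\tau)\varphi$ again lies in a space on which pointwise evaluation makes sense. I would present the argument in the three displays above and conclude.
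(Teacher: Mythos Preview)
Your proposal is correct and follows essentially the same route as the paper: write $B(D_t)$ via the Fourier representation, apply the dilation identity $(\mathcal{F}\varphi(\cdot/\tau))(\xi) = \tau\,(\mathcal{F}\varphi)(\tau\xi)$, change variables $\eta = \tau\xi$, and recognise the result as $[B(D_t/\tau)\varphi](t/\tau)$. Your added remarks on absolute convergence and the sign of $\tau$ are more careful than the paper's version, but the argument is otherwise identical.
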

\begin{proof}
\begin{align*}
B(D_t) \, \varphi \bigg ( \dfrac{t}{\tau} \bigg) &= \mathcal{F}^{-1} B(\xi) \mathcal{F} \varphi \bigg ( \dfrac{t}{\tau} \bigg) \\
&= \mathcal{F}^{-1} B(\xi) \tau \, (\mathcal{F}{\varphi})(\tau \xi) \quad \text{Proposition 2.2.11 (8), p. 100, \cite{Graf}} \\
&=  \dfrac{1}{\sqrt{2\pi}}\int^\infty_{-\infty} e^{-i \xi t} B(\xi) \tau \, (\mathcal{F}{\varphi})(\tau \xi) \, d\xi \\
&=  \dfrac{1}{\sqrt{2\pi}}\int^\infty_{-\infty} e^{-i \eta t / \tau} B(\eta / \tau) \, (\mathcal{F}{\varphi})(\eta) \, d\eta \quad \text{(where} \, \, \eta = \tau \xi) \\
&=  \dfrac{1}{\sqrt{2\pi}}\int^\infty_{-\infty} e^{-i \eta (t / \tau)} \mathcal{F} \bigg (B \bigg (\dfrac{D_t}{\tau} \bigg ) \varphi  \bigg ) \, d\eta \\
&= \bigg [ B \bigg(  \dfrac{D_t}{\tau} \bigg )\varphi \bigg] \bigg( \dfrac{t}{\tau} \bigg), \quad \text{as required.} \\
\end{align*}
\end{proof}

We have noted previously, see Remark \ref{remark:mellinop}, that the Mellin integral operator $M_{\gamma, \rho}$ with kernel 
\begin{equation*}
K_{\gamma, \rho}(t) = \dfrac{\chi_{[1, \infty)}(t)}{t^\rho \, \Gamma( \gamma) \, t^\gamma \, (t-1)^{1-\gamma}}.
\end{equation*}
is bounded on $L_p(\mathbb{R}_+)$. Moreover, the function $K_{\gamma, \rho}(t) t^{-\epsilon}$  belongs to $L_1(\mathbb{R}_+)$ for all $\epsilon > 0$. \\

\begin{lemma} \label{DMellin}
Let $\varphi \in S(\mathbb{R})$. Suppose $K:\mathbb{R}_+ \to \mathbb{R}$ satisfies the two conditions
\begin{equation} \label{Ksuppinteg}
\operatorname{supp} K \subset [1, \infty) \quad \text{and} \quad \int^\infty_0 |K(\tau)| \tau^{-\epsilon} \, d\tau < \infty, \quad \text{for all} \,\, \epsilon >0.
\end{equation}
Then, for all $t > 0$, 
\begin{equation*}
D_t \int^\infty_0 K \bigg(\dfrac{t}{\tau} \bigg ) r_+ \varphi (\tau) \, \dfrac{d \tau}{\tau} = \int^\infty_0 \dfrac{K(\tau)}{\tau} \, r_+ (D_t \varphi) \bigg( \dfrac{t}{\tau}\bigg) \, \dfrac{d \tau}{\tau}.
\end{equation*}
In other words, on applying the operator $D_t$, we have $K(s) \mapsto s^{-1} K(s)$ and $r_+ \varphi(t) \mapsto r_+ D_t \varphi(t)$.
\end{lemma}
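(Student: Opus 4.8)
The plan is to transfer the entire $t$-dependence of the left-hand integral onto the rapidly decaying Schwartz factor $\varphi$ by a change of variables, and only then differentiate under the integral sign. The reason this order matters is that $K$ is assumed merely integrable against $t^{-\epsilon}$ and need not be differentiable, so the naive identity $\partial_t K(t/\tau)=\tau^{-1}K'(t/\tau)$ is unavailable.

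First I would fix $t>0$ and substitute $\sigma=t/\tau$ in the left-hand integral. Since $\tfrac{d\tau}{\tau}=-\tfrac{d\sigma}{\sigma}$ and the limits get swapped, and since $r_+\varphi$ agrees with $\varphi$ at every positive argument, this gives
\[
\int^\infty_0 K\!\left(\tfrac{t}{\tau}\right) r_+\varphi(\tau)\,\tfrac{d\tau}{\tau}
=\int^\infty_0 K(\sigma)\,\varphi\!\left(\tfrac{t}{\sigma}\right)\,\tfrac{d\sigma}{\sigma}.
\]
Because $\operatorname{supp}K\subseteq[1,\infty)$ and $|\varphi|\le\|\varphi\|_\infty$, the integrand is bounded by $\|\varphi\|_\infty\,|K(\sigma)|\,\sigma^{-1}$, which is integrable by the hypothesis \eqref{Ksuppinteg} taken with $\epsilon=1$; hence the right-hand integral converges absolutely and defines a function of $t$ on $(0,\infty)$.

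Next I would differentiate this integral in $t$. For $t$ ranging over a compact subinterval $[a,b]\subset(0,\infty)$ and $\sigma\ge1$ one has $\partial_t\big[\varphi(t/\sigma)\big]=\sigma^{-1}\varphi'(t/\sigma)$, and since $\varphi'$ is bounded the $t$-derivative of the integrand is dominated on $\operatorname{supp}K$ by $\|\varphi'\|_\infty\,|K(\sigma)|\,\sigma^{-2}\le\|\varphi'\|_\infty\,|K(\sigma)|\,\sigma^{-1}$, uniformly in $t\in[a,b]$, and this bound is integrable by \eqref{Ksuppinteg}. The standard theorem on differentiation of a parameter integral then applies, giving
\[
\partial_t \int^\infty_0 K(\sigma)\,\varphi\!\left(\tfrac{t}{\sigma}\right)\,\tfrac{d\sigma}{\sigma}
=\int^\infty_0 \frac{K(\sigma)}{\sigma}\,\varphi'\!\left(\tfrac{t}{\sigma}\right)\,\tfrac{d\sigma}{\sigma}.
\]
Multiplying by $i$, recalling $D_t=i\,\partial/\partial t$ and $(D_t\varphi)(y)=i\varphi'(y)$, and finally renaming $\sigma$ back to $\tau$ (and using that $r_+(D_t\varphi)$ equals $D_t\varphi$ on $(0,\infty)$) yields precisely the asserted identity: applying $D_t$ sends $K(s)\mapsto s^{-1}K(s)$ in the kernel and $r_+\varphi\mapsto r_+D_t\varphi$ in the second factor.

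The only genuine obstacle is the one flagged above, namely that one cannot differentiate $K(t/\tau)$ directly, so the substitution $\sigma=t/\tau$ has to be carried out before differentiating. Once that is done, the domination estimate needed for differentiation under the integral is immediate from $\operatorname{supp}K\subseteq[1,\infty)$ together with \eqref{Ksuppinteg}, and nothing deeper is required. Informally, the identity is the Mellin-side version of moving a derivative across a convolution, here with the kernel reweighted by the factor $s^{-1}$.
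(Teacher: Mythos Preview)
Your proof is correct and follows essentially the same route as the paper: both perform the change of variables $\sigma=t/\tau$ first to move all $t$-dependence onto the Schwartz factor, then invoke a standard differentiation-under-the-integral theorem with the domination $|K(\sigma)|\sigma^{-2}$ coming from \eqref{Ksuppinteg}. Your explicit remark that the substitution must precede the differentiation (since $K$ need not be smooth) is a nice clarification that the paper leaves implicit.
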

\begin{proof} 
Firstly, we note that
\begin{equation*}
\int^\infty_0 K \bigg(\dfrac{t}{\tau} \bigg ) r_+ \varphi (\tau) \, \dfrac{d \tau}{\tau} \\
= \int^\infty_0 K(\tau) \, r_+ \varphi \bigg( \dfrac{t}{\tau}\bigg) \, \dfrac{d \tau}{\tau}. \\
\end{equation*}
Now let us define
\begin{equation*}
F(t, \tau) := \dfrac{K(\tau)}{\tau} \, r_+ \varphi \bigg( \dfrac{t}{\tau}\bigg), 
\end{equation*}
and thus
\begin{equation*}
\int^\infty_0 | F(t, \tau) |  \, d\tau =  \int^\infty_0  \bigg | \dfrac{K(\tau)}{\tau} \,  r_+ \varphi \bigg( \dfrac{t}{\tau}\bigg) \bigg | \, d\tau \leq C_{1,\varphi} \int^\infty_0 |K(\tau)| \tau^{-1} \, d\tau < \infty,
\end{equation*}
where the constant $C_{1,\varphi}$ only depend on $\varphi$. \\

Since supp $K \subset [1,\infty)$, from Theorem 16.11, p. 213, \cite{Jost}, to prove that we can differentiate through the integral sign, it remains to show that 
\begin{equation*}
\dfrac{\partial}{\partial t} F(t, \tau )
\end{equation*}
is dominated, uniformly for all $t>0$, by an integrable function over the range $1 \leq \tau < \infty$. But, clearly
\begin{equation*}
\bigg | \dfrac{\partial}{\partial t} F(t, \tau) \bigg |  = \bigg | \dfrac{K(\tau)}{\tau^2} r_+ (D_t\varphi)\bigg( \dfrac{t}{\tau} \bigg ) \bigg | \leq C_{2,\varphi} |K(\tau)|\tau^{-2},
\end{equation*}
where the constant $C_{2,\varphi}$ only depends on $\varphi$. \\

Hence, for $t>0$, 
\begin{align*}
D_t \int^\infty_0 K(\tau) \, r_+ \varphi \bigg( \dfrac{t}{\tau}\bigg) \, \dfrac{d \tau}{\tau}
&= \int^\infty_0 K(\tau) D_t \,  r_+ \varphi \bigg( \dfrac{t}{\tau}\bigg) \, \dfrac{d \tau}{\tau} \\
&= \int^\infty_0 \dfrac{K(\tau)}{\tau} \, r_+ (D_t \varphi) \bigg( \dfrac{t}{\tau}\bigg) \, \dfrac{d \tau}{\tau}. 
\end{align*}
This completes the proof of the lemma. \\
\end{proof}


Lemma \ref{DMellin} allows us to change the order of (repeated) differentiation and integration, within a certain class of Mellin integral operators. It will be useful to consider an extension of this result to include \enquote{fractional} differentiation. \\

Suppose $\varphi \in S(\mathbb{R})$ and $\nu > 0$. We write
\begin{equation*}
\nu = [\nu] + \{ \nu \}.
\end{equation*}
Then, as (5.8), \cite{Samko}, we define
\begin{align*}
\mathcal{D}^\nu_- \varphi
& := \bigg ( - \dfrac{d}{dt} \bigg )^{[\nu]+1} I^{1 - \{\nu\}}_- \varphi \\
& = (i D_t)^{[\nu]+1} I^{1 - \{\nu\}}_- \varphi,
\end{align*}
where from (5.3) ibid., 
\begin{equation*}
\big ( I^{1 - \{\nu\}}_- \varphi \big )(t) := \dfrac{1}{\Gamma(1-\{\nu\})} \int^\infty_t \dfrac{\varphi(s)}{(s-t)^{\{\nu\}}} \, ds.
\end{equation*}

But from (7.4) ibid.,
\begin{equation*}
\mathcal{F} (\mathcal{D}^\nu_- \varphi) = (i \xi)^\nu \mathcal{F} \varphi \quad (\nu \geq 0),
\end{equation*}
and thus
\begin{align} \label{DnuDnI}
(iD_t)^\nu & 
 =(i D_t)^{[\nu]+1} I^{1 - \{\nu\}}_-.  
\end{align}

In other words, we can consider the fractional operator $(iD_t)^\nu$ to be the composition of a certain Riemann-Liouville integral of order $1 - \{\nu\}$ with a (conventional) differential operator of order $[\nu]+1$. \\

Hence, we would now like to show that we can change the order of integration in the following iterated integral:
\begin{equation*}
\big( I^{1 - \{\nu\}}_- M \varphi \big ) (t) = \int^\infty_t \dfrac{1}{(s-t)^{\{\nu\}}} \, \bigg ( \int^\infty_1 K(\tau) \varphi \bigg ( \dfrac{s}{\tau}  \bigg ) \dfrac{d\tau}{\tau} \bigg ) \,ds.
\end{equation*} \\


\begin{lemma} \label{MellinFubTon}
Suppose the kernel, $K$, of a Mellin integral operator satisfies the two conditions in \eqref{Ksuppinteg}. Let $\varphi \in S(\mathbb{R})$ and $0 < \gamma < 1$. Then, for $t>0$, 
\begin{equation*}
I(t) := \int^\infty_1 \dfrac{|K(\tau)|}{\tau}  \bigg ( \int^\infty_t \dfrac{1}{(s-t)^\gamma} \bigg | r_+ \varphi  \bigg ( \frac{s}{\tau} \bigg )  \bigg | \, ds \bigg ) \, d\tau < \infty. 
\end{equation*} \\
\end{lemma}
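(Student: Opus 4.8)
The plan is to estimate the iterated integral directly by performing the inner $s$-integral first, exploiting the Schwartz decay of $\varphi$, and then the outer $\tau$-integral using the second condition in \eqref{Ksuppinteg}. Throughout, $t>0$ is fixed and $0<\gamma<1$, so the singularity $(s-t)^{-\gamma}$ at $s=t$ is integrable.

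First I would substitute $\sigma = s/\tau$ in the inner integral. Since $\varphi\in S(\mathbb{R})$, for any $N$ there is a constant $C_N$ with $|\varphi(\sigma)|\le C_N\langle\sigma\rangle^{-N}$; in particular $|r_+\varphi(\sigma)|\le C_N(1+\sigma)^{-N}$ for $\sigma\ge 0$. Writing $J(t,\tau):=\int_t^\infty (s-t)^{-\gamma}|r_+\varphi(s/\tau)|\,ds$, I would split the range of integration at $s=t+\tau$ (or any convenient point comparable to $\tau$). On $[t,t+\tau]$ the integrand is bounded by $V_0\,(s-t)^{-\gamma}$ where $V_0=\sup|\varphi|$, giving a contribution $\le V_0\,\tau^{1-\gamma}/(1-\gamma)$. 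On $[t+\tau,\infty)$ we have $(s-t)^{-\gamma}\le \tau^{-\gamma}$ and $s/\tau\ge 1 + t/\tau$, so using the decay bound with a suitably large $N$ one gets $\int_{t+\tau}^\infty (s-t)^{-\gamma}|r_+\varphi(s/\tau)|\,ds \le \tau^{-\gamma}\int_{t+\tau}^\infty C_N (s/\tau)^{-N}\,ds = C_N\,\tau^{-\gamma}\cdot\tau^N\int_{t+\tau}^\infty s^{-N}\,ds$, which for $N\ge 2$ is $\le C_N' \,\tau^{1-\gamma}$ (after bounding $(t+\tau)^{1-N}\le \tau^{1-N}$). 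Hence $J(t,\tau)\le C(t,\varphi)\,\tau^{1-\gamma}$, or even a bound uniform in $t$ on compact $t$-ranges; at worst the constant depends on $t$, which is all that is claimed.

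Then I would insert this into the outer integral:
\begin{equation*}
I(t) = \int_1^\infty \frac{|K(\tau)|}{\tau}\,J(t,\tau)\,d\tau \le C(t,\varphi)\int_1^\infty |K(\tau)|\,\tau^{-\gamma}\,d\tau.
\end{equation*}
Since $\operatorname{supp}K\subset[1,\infty)$ and $\int_0^\infty |K(\tau)|\tau^{-\epsilon}\,d\tau<\infty$ for every $\epsilon>0$ by \eqref{Ksuppinteg}, the last integral is finite (take $\epsilon=\gamma\in(0,1)$). Therefore $I(t)<\infty$, as required.

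The only mildly delicate point — and the one I would be most careful about — is the inner-integral split near $s=t$: one must keep the bound on $[t,t+\tau]$ independent of $\varphi$'s decay (using only $\sup|\varphi|$) so that the $\tau^{1-\gamma}$ factor emerges cleanly, and on the tail one must pick $N$ large enough (e.g. $N=2$) that $\int_{t+\tau}^\infty s^{-N}\,ds$ converges and produces a power of $\tau$ that, combined with the $\tau^{-\gamma}$ and the prefactor $\tau^{-1}$ outside, still pairs with $|K(\tau)|\tau^{-\gamma}\in L_1$. No other obstacle is expected; this is essentially a Tonelli-type finiteness check, and indeed once $I(t)<\infty$ is established, Tonelli's theorem justifies interchanging the order of integration in the companion computation that follows it in the text.
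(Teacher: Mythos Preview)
Your argument is correct, and the overall strategy matches the paper's: bound the inner integral $J(t,\tau)$ by a constant times $\tau^{1-\gamma}$, then invoke the integrability condition \eqref{Ksuppinteg} with $\epsilon=\gamma$ for the outer integral. The difference is only in how the bound on $J$ is obtained. The paper makes the single substitution $w=(s-t)/\tau$, which immediately gives
\[
J(t,\tau)=\tau^{1-\gamma}\int_0^\infty w^{-\gamma}\,\bigl|r_+\varphi(w+t/\tau)\bigr|\,dw\le C_{\varphi,\gamma}\,\tau^{1-\gamma},
\]
since the last integral is finite and bounded uniformly in $t/\tau\ge 0$ (integrable singularity at $w=0$, Schwartz decay at infinity). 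Your splitting at $s=t+\tau$ reaches the same bound with a little more work; the paper's substitution is simply a cleaner route. One minor remark: you announce the substitution $\sigma=s/\tau$ at the outset but never actually use it, so you could drop that sentence.
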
 
\begin{proof}
It is convenient to define
\begin{equation*}
J(\tau,t) := \int^\infty_t \dfrac{1}{(s-t)^\gamma} \bigg | r_+ \varphi  \bigg ( \frac{s}{\tau} \bigg )  \bigg | \, ds.
\end{equation*}
Let $w = (s-t)/\tau$, and thus
\begin{align*}
J(\tau,t) & = \int^\infty_0 \dfrac{1}{(\tau w)^\gamma} | r_+ \varphi ( w + t/\tau ) | \, \tau dw \\
& = \dfrac{\tau}{\tau^\gamma} \int^\infty_0 w^{-\gamma} | r_+ \varphi ( w + t/\tau ) | \, dw \\
& \leq C_{\varphi, \gamma} \, \tau^{1-\gamma},
\end{align*}
since $0 < \gamma < 1$. (The constant $C_{\varphi, \gamma}$ only depends on $\varphi$ and $\gamma$.)  \\

Hence
\begin{equation*}
I(t) \leq \int^\infty_1 \dfrac{|K(\tau)|}{\tau} \cdot C_{\varphi, \gamma} \, \tau^{1-\gamma} \, d \tau 
 = C_{\varphi, \gamma} \int^\infty_1 |K(\tau)| \, \tau^{-\gamma} \, d\tau 
< \infty.
\end{equation*}
\end{proof}

We have the following immediate Corollary of Lemma \ref{MellinFubTon}. \\

\begin{corollary} \label{DfracMellin}
Suppose the kernel, $K$, of a Mellin integral operator, $M$, satisfies the two conditions in \eqref{Ksuppinteg}. Let $\varphi \in S(\mathbb{R})$ and $0 < \gamma < 1$. Then, from Lemma \ref{MellinFubTon} and the Fubini-Tonelli Theorem, 
we can change the order of integration in the following iterated integral:
\begin{equation*}
\big( I^{1 - \{\nu\}}_- M \varphi \big ) (t) = \int^\infty_t \dfrac{1}{(s-t)^{\{\nu\}}} \, \bigg ( \int^\infty_1 K(\tau) \varphi \bigg ( \dfrac{s}{\tau}  \bigg ) \dfrac{d\tau}{\tau} \bigg ) \,ds.
\end{equation*} \\
\end{corollary}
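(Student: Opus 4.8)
The plan is to prove Corollary~\ref{DfracMellin} by verifying the hypotheses of the Fubini--Tonelli theorem and then invoking it. The iterated integral in question is
\begin{equation*}
\big( I^{1 - \{\nu\}}_- M \varphi \big ) (t) = \int^\infty_t \dfrac{1}{(s-t)^{\{\nu\}}} \, \bigg ( \int^\infty_1 K(\tau) \varphi \bigg ( \dfrac{s}{\tau}  \bigg ) \dfrac{d\tau}{\tau} \bigg ) \,ds,
\end{equation*}
which we want to rewrite with the $s$-integral on the inside and the $\tau$-integral on the outside. First I would set $\gamma := \{\nu\}$ and note that $0 < \gamma < 1$ (the case $\gamma = 0$, i.e. $\nu \in \mathbb{N}$, does not arise here since then $I^{1-\{\nu\}}_-$ is the identity and there is nothing to prove; for $0<\gamma<1$ the factor $(s-t)^{-\gamma}$ is locally integrable). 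Then I would observe that the absolute value of the full integrand, namely
\begin{equation*}
(s-t)^{-\gamma}\,\dfrac{|K(\tau)|}{\tau}\,\big| r_+\varphi(s/\tau)\big|,
\end{equation*}
is exactly the integrand appearing in the definition of $I(t)$ in Lemma~\ref{MellinFubTon}, after changing the order of integration there.

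The key step is the application of Lemma~\ref{MellinFubTon}, which already establishes that
\begin{equation*}
I(t) = \int^\infty_1 \dfrac{|K(\tau)|}{\tau}  \bigg ( \int^\infty_t \dfrac{1}{(s-t)^\gamma} \bigg | r_+ \varphi  \bigg ( \frac{s}{\tau} \bigg )  \bigg | \, ds \bigg ) \, d\tau < \infty
\end{equation*}
for every $t>0$. This is precisely the statement that the non-negative measurable function $(s,\tau)\mapsto (s-t)^{-\gamma}\tau^{-1}|K(\tau)|\,|r_+\varphi(s/\tau)|$ has finite iterated integral over $(t,\infty)\times(1,\infty)$ (using $\operatorname{supp}K\subset[1,\infty)$ to restrict the $\tau$-range to $[1,\infty)$). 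By Tonelli's theorem, finiteness of this one iterated integral forces the function to be integrable over the product space $(t,\infty)\times(1,\infty)$ with respect to the product (Lebesgue) measure. Once absolute integrability over the product is in hand, Fubini's theorem applies directly and permits interchanging the order of integration of the (signed, complex-valued) integrand $(s-t)^{-\gamma}\tau^{-1}K(\tau)\varphi(s/\tau)$, which is exactly the assertion of the corollary.

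Concretely, the write-up would be: fix $t>0$; by Lemma~\ref{MellinFubTon} the iterated integral $I(t)$ of the modulus is finite; hence by Tonelli the modulus is integrable over $(t,\infty)\times[1,\infty)$; hence by Fubini the two iterated integrals of the original integrand agree, which is the claimed identity. There is essentially no obstacle here beyond bookkeeping, since all the analytic work — the substitution $w=(s-t)/\tau$ and the estimate $J(\tau,t)\le C_{\varphi,\gamma}\,\tau^{1-\gamma}$ leading to convergence of $\int_1^\infty |K(\tau)|\tau^{-\gamma}\,d\tau$ — has already been carried out in the proof of Lemma~\ref{MellinFubTon}. The only mild point to be careful about is to state explicitly that we are applying Tonelli first (to the absolute value, to get product-integrability) and then Fubini (to the signed integrand, to get the interchange); conflating the two would be the one place a referee might object. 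Since the corollary is stated as an ``immediate Corollary,'' the proof can reasonably be a single short paragraph making this Tonelli-then-Fubini chain explicit.
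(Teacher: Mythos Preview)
Your proposal is correct and follows exactly the route the paper intends: the corollary is stated there as an immediate consequence of Lemma~\ref{MellinFubTon} and the Fubini--Tonelli theorem, with no separate proof given. Your Tonelli-then-Fubini chain is precisely the intended argument.
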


\begin{remark} \label{DfracMellinAll}
Combining Corollary \ref{DfracMellin} with Lemma \ref{DMellin}, we see that when applying the operator $r_+(iD_t)^\nu l_+$ to Mellin integral operators whose kernels satisfy the two conditions in \eqref{Ksuppinteg}, we can reverse the order of $r_+(iD_t)^\nu l_+$ and (Mellin) integration for all $\nu > 0$.
\end{remark}

With these preparations complete, we can now compute the action of $r_+(iD_t)^\nu l_+$ on our class of Mellin integral operators. \\

\begin{lemma} \label{AhomoMellin}
 Suppose the kernel, $K$, of a Mellin integral operator satisfies the two conditions in \eqref{Ksuppinteg}. Let $\varphi \in S(\mathbb{R})$. Then, for $\nu>0$ and $t>0$,
\begin{equation*}
 r_+ (i D_t)^\nu l_+ \int^\infty_0 K \bigg(\dfrac{t}{\tau} \bigg ) r_+ \varphi(\tau) \, \dfrac{d \tau}{\tau} = \int^\infty_0 \dfrac{K(\tau)}{\tau^\nu} \, r_+ ((iD_t)^\nu \varphi) \bigg( \dfrac{t}{\tau}\bigg) \, \dfrac{d \tau}{\tau}.
\end{equation*}
In other words, on applying the operator $r_+ (i D_t)^\nu l_+$, we have $K(s) \mapsto s^{-\nu} K(s)$ and $r_+ \varphi(t) \mapsto r_+ (iD_t)^\nu \varphi(t)$.
\end{lemma}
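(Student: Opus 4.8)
\textbf{Proof proposal for Lemma \ref{AhomoMellin}.}

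The plan is to reduce the statement to the case $0<\nu<1$ together with the integer case, and then assemble the general case by the factorisation \eqref{DnuDnI}. First I would handle a single integer derivative: for $\nu=1$ this is precisely Lemma \ref{DMellin}, which already gives $K(s)\mapsto s^{-1}K(s)$ and $r_+\varphi(t)\mapsto r_+(D_t\varphi)(t)$ under the standing hypotheses \eqref{Ksuppinteg} on $K$ and $\varphi\in S(\mathbb{R})$. Iterating Lemma \ref{DMellin} $k$ times gives the claim for $\nu=k\in\mathbb{N}$, since each application multiplies the kernel by $s^{-1}$ and differentiates $\varphi$ once; here I would note that $r_+(D_t\varphi)$ is again (the restriction of) a Schwartz function, so the hypotheses of Lemma \ref{DMellin} remain available at each step, and that $r_+(iD_t)^k l_+ = r_+(iD_t)^k$ on functions already supported in $\overline{\mathbb{R}_+}$ by Lemma \ref{lemma:rHomore}.

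Next I would treat the purely fractional part $0<\gamma:=\{\nu\}<1$, i.e.\ the Riemann--Liouville operator $I^{1-\gamma}_-$. The key computational input is Lemma \ref{ADttau}, or rather its elementary companion: the substitution $w=(s-t)/\tau$ shows directly that
\begin{equation*}
\big(I^{1-\gamma}_-\, [\varphi(\cdot/\tau)]\big)(t) = \tau^{\,-\gamma}\,\tau\cdot\big(I^{1-\gamma}_-\varphi\big)(t/\tau)\cdot\tau^{-1+\gamma}\cdot\tau^{\,?}
\end{equation*}
— more carefully, $I^{1-\gamma}_-$ is homogeneous of the appropriate degree under dilation, so $I^{1-\gamma}_-\big(\varphi(\cdot/\tau)\big)(t)=\tau^{1-\gamma}\,(I^{1-\gamma}_-\varphi)(t/\tau)$. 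Then, using Corollary \ref{DfracMellin} (which rests on Lemma \ref{MellinFubTon}) to interchange the $s$-integration defining $I^{1-\gamma}_-$ with the $\tau$-integration defining the Mellin operator $M$, I get
\begin{equation*}
\big(I^{1-\gamma}_- M\varphi\big)(t) = \int_0^\infty \frac{K(\tau)}{\tau}\, I^{1-\gamma}_-\big(r_+\varphi(\cdot/\tau)\big)(t)\,d\tau = \int_0^\infty \frac{K(\tau)}{\tau^{\gamma}}\, \big(I^{1-\gamma}_-\varphi\big)(t/\tau)\,\frac{d\tau}{\tau},
\end{equation*}
so $I^{1-\gamma}_-$ also sends $K(s)\mapsto s^{-\gamma}K(s)$ (equivalently $\tau^{-\gamma}$ inside) and $r_+\varphi\mapsto r_+ I^{1-\gamma}_-\varphi$. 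Since $I^{1-\gamma}_-\varphi$ is again smooth with the decay needed for Lemma \ref{DMellin} (the RL integral of a Schwartz function over $(t,\infty)$ decays rapidly), I can now compose: by \eqref{DnuDnI}, $(iD_t)^\nu = (iD_t)^{[\nu]+1}I^{1-\{\nu\}}_-$, apply the fractional step first (kernel factor $\tau^{-\{\nu\}}$, $\varphi\mapsto I^{1-\{\nu\}}_-\varphi$) and then the $([\nu]+1)$-fold integer step (kernel factor $\tau^{-([\nu]+1)}$, $\varphi\mapsto (iD_t)^{[\nu]+1}$ of it), giving total kernel factor $\tau^{-\nu}$ and total symbol $(iD_t)^\nu$, which is the assertion. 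Throughout, Remark \ref{DfracMellinAll} is exactly the statement that these interchanges are legitimate for all $\nu>0$.

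The main obstacle I anticipate is not any single identity but the bookkeeping of the interchange of integrations in the fractional step together with verifying that, after each stage, the transformed test function still lies in a class to which the next lemma (\ref{DMellin} or \ref{MellinFubTon}/Corollary \ref{DfracMellin}) applies — in particular that $I^{1-\gamma}_-\varphi$, while no longer compactly supported, still decays fast enough at $+\infty$ and is smooth enough near $0^+$ that the absolute-convergence bounds in Lemma \ref{MellinFubTon} and the dominated-differentiation bound in Lemma \ref{DMellin} go through unchanged. Once that is checked, the remainder is the dilation homogeneity of $I^{1-\gamma}_-$ and a change of variables, both routine. I would also remark explicitly that the operator $l_+$ may be dropped at intermediate stages because every function produced is supported in $\overline{\mathbb{R}_+}$, so $r_+(iD_t)^\nu l_+$ acting on $M\varphi$ is unambiguous by Lemma \ref{lemma:rHomore}.
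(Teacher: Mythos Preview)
Your proposal is correct, but the paper takes a shorter route. Both arguments first change variables to write the Mellin operator as $\int_0^\infty K(\tau)\,r_+\varphi(t/\tau)\,d\tau/\tau$ and both rely on Remark~\ref{DfracMellinAll} (built from Lemma~\ref{DMellin} and Corollary~\ref{DfracMellin}) to pass $r_+(iD_t)^\nu l_+$ inside the $\tau$-integral. The divergence is in how the scaling is handled after that. You decompose $(iD_t)^\nu=(iD_t)^{[\nu]+1}I^{1-\{\nu\}}_-$ and compute the dilation homogeneity of each factor separately, iterating Lemma~\ref{DMellin} for the integer part and using the substitution $w=(s-t)/\tau$ for the Riemann--Liouville part; the paper instead applies Lemma~\ref{ADttau} once to the full symbol, giving $(iD_t)^\nu\varphi(t/\tau)=[(iD_t/\tau)^\nu\varphi](t/\tau)=\tau^{-\nu}[(iD_t)^\nu\varphi](t/\tau)$ in a single step. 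Your approach is more hands-on and makes the homogeneity explicit, but it generates exactly the bookkeeping burden you flag---checking at each stage that $I^{1-\gamma}_-\varphi$ still satisfies the decay and smoothness hypotheses needed to re-enter Lemma~\ref{DMellin}. The paper's use of Lemma~\ref{ADttau} sidesteps this entirely, since the interchange is justified once for $(iD_t)^\nu$ as a whole and the scaling follows from the symbol calculus rather than from a staged composition.
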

\begin{proof}
For $t>0$, \\

$\displaystyle \quad r_+ (i D_t)^\nu l_+ \int^\infty_0 K \bigg(\dfrac{t}{\tau} \bigg ) r_+ \varphi(\tau) \, \dfrac{d \tau}{\tau}$ 
\begin{align*}
&=
r_+ (i D_t)^\nu l_+  \int^\infty_0 K(\tau) \, r_+ \varphi \bigg( \dfrac{t}{\tau}\bigg) \, \dfrac{d \tau}{\tau} \\
&=
  \int^\infty_0 K(\tau) \, r_+ (i D_t)^\nu l_+r_+ \varphi \bigg( \dfrac{t}{\tau}\bigg) \, \dfrac{d \tau}{\tau} \quad \text{by Remark \ref{DfracMellinAll}}\\
&=
  \int^\infty_0 K(\tau) \, r_+ (i D_t)^\nu \varphi  \bigg( \dfrac{t}{\tau}\bigg) \, \dfrac{d \tau}{\tau} \quad \text{by Lemma \ref{lemma:rHomore}}\\
&= \int^\infty_0 K(\tau) r_+ \bigg [  \bigg(  \dfrac{i D_t}{\tau} \bigg )^\nu \varphi \bigg] \bigg( \dfrac{t}{\tau} \bigg)  \, \dfrac{d \tau}{\tau} \quad \text{by Lemma \ref{ADttau}} \\
&= \int^\infty_0 \dfrac{K(\tau)}{\tau^\nu} \, r_+ ((i D_t)^\nu \varphi) \bigg( \dfrac{t}{\tau}\bigg) \, \dfrac{d \tau}{\tau}.
\end{align*}
This completes the proof of the lemma. \\
\end{proof}


\begin{lemma} \label{fxi0bddlim}
Suppose $0 <  r < 1$, and $c_r: \mathbb{R} \to \mathbb{C}$ is given by
\begin{equation*}
c_r(\xi) = (\xi-i)^r - (-i)^r (i \xi)^r - (-i)^r, \quad \xi \in \mathbb{R}.
\end{equation*}
Then $c_r(\xi)$ is bounded  for all $\xi \in \mathbb{R}$. Moreover,
\begin{equation*}
c_r(0) = 0 \quad \text{and} \quad \lim_{\xi \to \pm \infty} c_r(\xi) = -(-i)^r,
\end{equation*} 
and $c_r$ is a Fourier $L_p$-multiplier.
\end{lemma}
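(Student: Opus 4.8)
Lemma \ref{fxi0bddlim} asks us to analyse the function $c_r(\xi) = (\xi - i)^r - (-i)^r (i\xi)^r - (-i)^r$ for $0 < r < 1$, establishing that it is bounded on $\mathbb{R}$, computes the claimed values at $0$ and $\pm\infty$, and is a Fourier $L_p$-multiplier. The plan is to treat these three assertions in turn, with the multiplier claim at the end relying on the bounded-variation machinery already set up in the excerpt (Remark \ref{BVtestmethod} and the inclusion \eqref{BVinclusionMp}).

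First I would establish continuity and the pointwise values. At $\xi = 0$ we have $(\,0 - i)^r = (-i)^r$ and $(i \cdot 0)^r = 0$, so $c_r(0) = (-i)^r - 0 - (-i)^r = 0$ directly from the definitions \eqref{exponentdef} of complex powers; here one must check that the branch conventions \eqref{argsingleval} make each term well-defined, which they do since $-i \notin (-\infty,0]$. For the limit as $\xi \to \pm\infty$, I would factor out the leading power: write $(\xi - i)^r = (i\xi)^r (1 + i/\xi)^r \cdot (-i)^r \cdot \ldots$ — more carefully, using the identity $(1 + i\xi)^\nu = (i\xi)^\nu(1 - i/\xi)^\nu$ from Remark \ref{complexexponent} (valid since the two factors have imaginary parts of opposite sign for $\xi$ large), so that $(\xi - i)^r = i^{-r}(i\xi - i^2)^r = i^{-r}(i(\xi - i))^r$; it is cleaner to just write $(\xi - i)^r = (-i)^r(i\xi + 1)^r = (-i)^r (i\xi)^r (1 - i/\xi)^r$ for $\xi > 0$, where the product rule \eqref{exponentrule} applies because $\arg(i\xi) = \pi/2$ and $\arg(1 - i/\xi) \in (-\pi/2, 0)$ sum to something in $(-\pi,\pi]$. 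Then $c_r(\xi) = (-i)^r(i\xi)^r[(1-i/\xi)^r - 1] - (-i)^r$, and since $(1 - i/\xi)^r \to 1$ while $(1-i/\xi)^r - 1 = O(1/\xi)$ and $(i\xi)^r = O(|\xi|^r)$ with $r < 1$, the first term is $O(|\xi|^{r-1}) \to 0$, leaving $\lim_{\xi\to+\infty} c_r(\xi) = -(-i)^r$; the case $\xi \to -\infty$ is analogous using $\arg(i\xi) = -\pi/2$. This same factorisation shows $c_r$ is bounded on all of $\mathbb{R}$: it is continuous (being a composition of continuous functions away from the branch cut, which $\xi - i$ and $i\xi$ avoid except that $i\xi$ touches $0$, where $(i\xi)^r$ is still continuous for $r > 0$), and it has finite limits at $\pm\infty$, hence is bounded.

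For the Fourier $L_p$-multiplier claim, I would invoke Remark \ref{BVtestmethod}: it suffices to show $c_r$ is bounded (done) and that $c_r' \in L_1(\mathbb{R})$. Differentiating, $c_r'(\xi) = r(\xi - i)^{r-1} - (-i)^r r (i\xi)^{r-1} \cdot i = r(\xi-i)^{r-1} - r(-i)^r i^r \xi^{r-1} = r(\xi - i)^{r-1} - r\,\xi^{r-1}$ for $\xi > 0$ (using $(-i)^r i^r = ((-i)i)^r = 1^r = 1$, which needs $\arg(-i) + \arg(i) = 0 \in (-\pi,\pi]$, so \eqref{exponentrule} applies). Near $\xi = 0^+$ both $(\xi - i)^{r-1}$ (bounded) and $\xi^{r-1}$ (which is $O(\xi^{r-1})$, integrable near $0$ since $r - 1 > -1$) are fine; and the difference $(\xi - i)^{r-1} - \xi^{r-1}$ is $O(\xi^{r-2})$ as $\xi \to +\infty$ (by a mean-value / Taylor estimate on $z \mapsto z^{r-1}$), which is integrable at infinity since $r - 2 < -1$. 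The symmetric estimate handles $\xi < 0$. Hence $c_r' \in L_1(\mathbb{R})$, so by Remark \ref{BVtestmethod} $c_r \in BV(\overline{\mathbb{R}})$, and by \eqref{BVinclusionMp} $c_r \in \mathfrak{M}_p$ for all $1 < p < \infty$.

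The main obstacle, as usual with this paper, is bookkeeping with the branch cuts: every application of the product rule $(z_1 z_2)^r = z_1^r z_2^r$ must be justified via Remark \ref{complexexponent} by checking that the arguments sum into $(-\pi, \pi]$, and one must be careful that $(i\xi)^r$ for $\xi < 0$ means $(i\xi)^r = |\xi|^r e^{-ir\pi/2}$ (since then $\arg(i\xi) = -\pi/2$), not $e^{3ir\pi/2}$. Once the correct branch identities are pinned down, everything else is the routine asymptotic estimate $z^{r-1} - (z-i)^{r-1} = O(z^{r-2})$ and the observation $r - 1 > -1 > r - 2$ doing all the integrability work at $0$ and $\infty$ respectively. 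I would keep the argument for $\xi > 0$ and $\xi < 0$ separate throughout rather than trying to write a single formula valid on all of $\mathbb{R}$.
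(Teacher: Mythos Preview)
Your proof is correct, and the treatment of the pointwise values and limits at $\pm\infty$ is essentially identical to the paper's --- the same factorisation $(\xi-i)^r = (-i)^r(i\xi)^r(1-i/\xi)^r$ drives everything, with the same branch-cut bookkeeping via Remark \ref{complexexponent}.

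The one genuine difference is in how you establish the multiplier property. The paper invokes the Mikhlin multiplier theorem directly, showing that $\xi c_r'(\xi)$ is bounded (it computes $\xi c_r'(\xi) = (-i)^r r\{(i\xi)^r(1-i/\xi)^{r-1} - (i\xi)^r\} \to 0$ as $|\xi|\to\infty$). You instead show $c_r' \in L_1(\mathbb{R})$, hence $c_r \in BV(\overline{\mathbb{R}})$ by Remark \ref{BVtestmethod}, and conclude via the inclusion \eqref{BVinclusionMp}. Both routes work and use the same asymptotic ingredients (the difference $(\xi-i)^{r-1} - (\text{leading term})$ being $O(|\xi|^{r-2})$), so this is a matter of packaging rather than substance. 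Your route has the mild advantage of yielding $c_r \in BV(\overline{\mathbb{R}})$ as a by-product, which is exactly the hypothesis needed when $c_r$ is later fed into Proposition 5.3.4 of \cite{Roch} inside the proof of Lemma \ref{LambdaMreverse}; the paper's Mikhlin argument gives only $c_r \in \mathfrak{M}_p$ and leaves the BV property implicit. One small point: your sentence ``the symmetric estimate handles $\xi < 0$'' is doing real work --- for $\xi<0$ the second term in $c_r'$ becomes $r\,e^{i\pi(1-r)}|\xi|^{r-1}$ rather than $r\,\xi^{r-1}$, and one must check (as indeed holds) that this matches the leading behaviour of $(\xi-i)^{r-1}$ as $\xi\to-\infty$ so that the cancellation still occurs. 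It would be worth writing that line out.
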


\begin{proof}
The boundedness of $c_r(\xi)$ will follow immediately once the limits of $c_r(\xi)$ at $0$ and $\pm \infty$ are established. Of course, it is elementary to verify that $c_r(0) = 0$.\\

Now suppose $|\xi| > 1$. Then, for $0  < r <1$,
\begin{align*}
c_r(\xi) &= (-i)^r \, \big \{ (1+ i \xi)^r - (i\xi)^r -  1 \big \}  \\
&= (-i)^r \, \big \{ (i \xi)^r (1- i /\xi)^r - (i\xi)^r -  1 \big \}  \\
&= (-i)^r \, \big \{ (i \xi)^r (-i r)/\xi  -  1 +O (|\xi|^{-2+r}) \big \}  \\
& \to -(-i)^r, \quad \text{ as } |\xi| \to  \infty. 
\end{align*}

From the Mikhlin multiplier theorem, to show that $c_r$ is a Fourier $L_p$-multiplier, it remains to show that $\xi c'_r(\xi)$ is bounded. \\

From the definition of $c_r(\xi)$, we have
\begin{equation*}
c'_r(\xi) = r (\xi -i)^{r-1} - (-i)^r (ir)(i \xi)^{r-1},
\end{equation*}
and thus
\begin{equation*}
\xi c'_r(\xi) = r \big \{ \xi (\xi -i)^{r-1} - (-i)^r (i \xi)^{r} \big \}.
\end{equation*}

Hence $\xi c'_r(\xi) \big |_{\xi=0} = 0$.  \\

Now suppose that $|\xi| > 1$. Then, writing
\begin{equation*}
c_r(\xi) = (-i)^r \, \big \{ (1+ i \xi)^r - (i\xi)^r -  1 \big \}
\end{equation*}
we have
\begin{equation*}
c'_r(\xi) = (-i)^r r \, \big \{ i(1+ i \xi)^{r-1} - i(i\xi)^{r-1}\big \}
\end{equation*}

Thus, 
\begin{align*}
\xi c'_r(\xi) &= (-i)^r r \, \big \{ (i \xi) (1+ i \xi)^{r-1} - (i\xi)^{r} \big \}  \\
&= (-i)^r \, \big \{ (i \xi)^r (1- i /\xi)^{r-1} - (i\xi)^r \big \}  \\
&= (-i)^r \, \big \{ (i \xi)^r (-i (r-1))/\xi  +O (|\xi|^{-2+r}) \big \}  \\
& \to 0, \quad \text{ as } |\xi| \to  \infty. 
\end{align*}
This completes the proof of the lemma. \\
\end{proof}

\newpage
\section{Supporting lemmas}

\begin{lemma} \label{M0bchi}
Suppose that $\psi_0 \in r_+C^\infty_0(\mathbb{R})$ with $\psi_0(t) = 0$ for $t \geq 2$. Let $\chi \in C^\infty_0(\mathbb{R})$ be such that $\chi(t) = 1$ if $|t| \leq 2$. Then
\begin{equation*}
\psi_0 M_{2\alpha,0} r_+ h = \psi_0 M_{2\alpha,0} (r_+ \chi h)
\end{equation*}
for all $h \in S(\mathbb{R})$.  \\
\end{lemma}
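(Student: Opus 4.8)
The plan is to exploit the explicit form of the kernel $K_{2\alpha,0}$ of the Mellin integral operator $M_{2\alpha,0}$, namely that it is supported in $[1,\infty)$, and the fact that $\psi_0$ is supported in $[0,2]$. First I would write out the action of $M_{2\alpha,0}$ on $r_+h$ using the integral representation \eqref{MellinIntOp}: for $t>0$,
\begin{equation*}
\big(M_{2\alpha,0}\, r_+ h\big)(t) = \int_0^\infty K_{2\alpha,0}\!\left(\frac{t}{y}\right) (r_+h)(y)\,\frac{dy}{y}.
\end{equation*}
Since $\operatorname{supp} K_{2\alpha,0}\subseteq[1,\infty)$, the integrand vanishes unless $t/y\geq 1$, i.e. unless $y\leq t$. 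Therefore the value $\big(M_{2\alpha,0}\, r_+h\big)(t)$ depends only on the restriction of $h$ to the interval $(0,t]$.

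Next I would multiply by $\psi_0$. Since $\psi_0(t)=0$ for $t\geq 2$, the left-hand side $\big(\psi_0 M_{2\alpha,0}\, r_+h\big)(t)$ is zero for $t\geq 2$, and for $0<t<2$ it depends only on the values of $h$ on $(0,t]\subseteq(0,2)$. On that set $\chi\equiv 1$ (because $\chi(t)=1$ for $|t|\leq 2$, hence in particular on $(0,2)$), so $h$ and $\chi h$ agree there. Consequently, for every $t>0$,
\begin{equation*}
\big(\psi_0 M_{2\alpha,0}\, r_+h\big)(t) = \big(\psi_0 M_{2\alpha,0}\, r_+(\chi h)\big)(t),
\end{equation*}
the two sides being trivially equal (both zero) when $t\geq 2$ and equal when $0<t<2$ by the locality argument. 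This gives the claimed operator identity. A small technical point I would check is that the integrals converge so that the pointwise manipulations are legitimate: this follows from the integrability condition on $K_{2\alpha,0}$ established in Lemma \ref{lemma:mellinop2} (or the conditions \eqref{Kgamma0integ}) together with $h\in S(\mathbb{R})$, so $r_+h$ and $r_+(\chi h)$ are bounded.

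I do not anticipate a serious obstacle here; the only thing requiring slight care is making the "locality" statement precise, namely that the support condition $\operatorname{supp} K_{2\alpha,0}\subseteq[1,\infty)$ forces $\big(M_{2\alpha,0}\,v\big)(t)$ to depend only on $v|_{(0,t]}$, and then combining this with the two support conditions $\operatorname{supp}\psi_0\subseteq[0,2]$ and $\chi|_{[-2,2]}\equiv 1$ to conclude $h=\chi h$ on the relevant range. Everything else is a direct substitution.
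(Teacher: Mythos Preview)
Your proposal is correct and follows essentially the same approach as the paper. The paper writes the Mellin integral in the equivalent form $(M_{2\alpha,0}\,r_+h)(t)=\int_0^\infty K_{2\alpha,0}(\tau)\,r_+h(t/\tau)\,d\tau/\tau$ and argues that for $0\le t\le 2$ and $\tau\ge 1$ one has $0\le t/\tau\le 2$, whence $\chi(t/\tau)=1$; this is exactly your locality argument seen through the substitution $\tau=t/y$.
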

\begin{proof}
Let $K_{2 \alpha,0}$ denote the kernel of the Mellin integral operator $M_{2\alpha, 0}$. \\

Firstly, suppose $0 \leq t \leq 2$. Then
\begin{align*}
\psi_0(t) \, (M_{2\alpha,0} r_+ h)(t) & := \psi_0(t) \int^\infty_0 K_{2 \alpha,0}(\tau) r_+ h \bigg ( \dfrac{t}{\tau} \bigg ) \, \dfrac{d\tau}{\tau} \\
& = \psi_0(t) \chi(t) \int^\infty_0 K_{2 \alpha,0}(\tau) r_+ h \bigg ( \dfrac{t}{\tau} \bigg ) \, \dfrac{d\tau}{\tau} \quad (0 \leq t \leq 2)\\
& = \psi_0(t) \chi(t) \int^\infty_1 K_{2 \alpha,0}(\tau) r_+ h \bigg ( \dfrac{t}{\tau} \bigg ) \, \dfrac{d\tau}{\tau}  \quad ( \text{ supp } K_{2 \alpha,0} \subset [1,\infty) )\\
& = \psi_0(t)  \int^\infty_1 K_{2 \alpha,0}(\tau) r_+ \chi(t) h \bigg ( \dfrac{t}{\tau} \bigg ) \, \dfrac{d\tau}{\tau}  \\
& = \psi_0(t)  \int^\infty_1 K_{2 \alpha,0}(\tau) r_+ \chi\bigg ( \dfrac{t}{\tau} \bigg ) h \bigg ( \dfrac{t}{\tau} \bigg ) \, \dfrac{d\tau}{\tau}  \quad ( 0 \leq t/\tau \leq 2 )\\
& = \psi_0(t)  \int^\infty_0 K_{2 \alpha,0}(\tau) r_+ \chi\bigg ( \dfrac{t}{\tau} \bigg ) h \bigg ( \dfrac{t}{\tau} \bigg ) \, \dfrac{d\tau}{\tau}  \\
&=  \psi_0(t) \, (M_{2\alpha,0} (r_+ \chi h))(t).
\end{align*}

On the other hand, if  $t >2$ then 
\begin{equation*}
\psi_0(t) \, (M_{2\alpha,0} r_+ h)(t) = 0 = \psi_0(t) \, (M_{2\alpha,0} (r_+ \chi h))(t).
\end{equation*}

This completes the proof of the lemma. \\
\end{proof}

If $1 + 1/p < s < 2+1/p$, for any $\mu, r \in \mathbb{R}$, it will be convenient to define
\begin{equation} \label{defwpmrxbig}
v_{\pm, \mu, r}(x) := \frac1{\sqrt{2\pi}}\int_{-\infty}^\infty e^{-ix\xi} (-i\xi)^{\mu}\, (\xi \pm i)^{r-2} \, d\xi , \quad  x \in \mathbb{R}.
\end{equation} \\

\begin{lemma} \label{InverFTmubig}
Let $-1 < \mu < 1$ and $r < 2-\mu$. Then $v_{\pm, \mu, r}(x)$ is bounded away from $x=0$ for all finite $x$. Moreover, as $x \to 0$
\begin{align*}
v_{\pm, \mu, r} &=
\begin{cases}
O(1)        & \text{if  r} < 1- \mu \\
O(|x|^{1- \mu-r})        & \text{if  r} > 1- \mu.
\end{cases} 
\end{align*}
Finally, $v_{+, \mu, 1-\mu} = O(1), \,\, v_{-,0,1} = O(1)$ and, for $\mu \not = 0$, we have $v_{-,\mu, 1-\mu}=O(\log |x|)$. 
\end{lemma}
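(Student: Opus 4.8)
\textbf{Proof strategy for Lemma \ref{InverFTmubig}.}

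The plan is to analyse the Fourier integral $v_{\pm,\mu,r}(x)$ by the same contour-and-asymptotics technique that underlies Lemma \ref{FinverseKU} and Lemma \ref{eUab2x}, reducing $v_{\pm,\mu,r}$ to a combination of a modified-Bessel/confluent-hypergeometric function and a smooth correction. First I would note that the integrand $(-i\xi)^\mu(\xi\pm i)^{r-2}$ is locally integrable in $\xi$ (since $-1<\mu$, so the singularity at $\xi=0$ is integrable) and decays like $|\xi|^{\mu+r-2}$ at infinity; because $r<2-\mu$ we have $\mu+r-2<0$, so the integral converges absolutely and $v_{\pm,\mu,r}$ is a bounded continuous function on $\mathbb{R}\setminus\{0\}$, and in fact on all of $\mathbb{R}$ when the decay is fast enough. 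To pin down the behaviour as $x\to 0$, I would split the symbol as
\begin{equation*}
(-i\xi)^\mu(\xi\pm i)^{r-2} = (-i\xi)^\mu(\pm i)^{r-2}\bigl(1\mp i\xi\bigr)^{r-2} = (-i\xi)^\mu(\pm i)^{r-2} + (-i\xi)^\mu\bigl[(\xi\pm i)^{r-2}-(\pm i)^{r-2}\bigr],
\end{equation*}
where the second bracket is $O(|\xi|^{\min\{1,\,?\}})$ near $0$ and $O(|\xi|^{r-2})$ (hence integrable against $(-i\xi)^\mu$) at infinity whenever $r<1-\mu$. The homogeneous piece $(-i\xi)^\mu$ has inverse Fourier transform proportional to $x^{-\mu-1}$ (a tempered-distribution computation, cf. the formula for $\mathcal{F}e_+x^\lambda$ quoted in the introduction), but this is only the leading term; the cleaner route is to keep $(\xi\pm i)^{r-2}$ intact and recognise, exactly as in Lemma \ref{FinverseKU}, that $\mathcal{F}^{-1}(1\mp i\xi)^{r-2}$ is $x^{1-r}e^{\mp x}$ times a $C^\infty$ function supported on the appropriate half-line, so that differentiating/multiplying by $(-i\xi)^\mu = $ a fractional power of $-i\xi$ produces a fractional integral of order $1-\{\cdot\}$ of that function, giving the stated $O(|x|^{1-\mu-r})$ growth when $r>1-\mu$ and boundedness when $r<1-\mu$.

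Concretely I would proceed in the following steps. (1) Record the convergence and boundedness-away-from-zero claims from absolute integrability, as above. (2) For the regime $r<1-\mu$: use the splitting to write $v_{\pm,\mu,r}$ as (constant)$\,\cdot\,\mathcal{F}^{-1}(-i\xi)^\mu$ restricted to $\mathbb{R}_+$ or $\mathbb{R}_-$ plus a function whose symbol is $O(|\xi|^{1+\mu})$ near $0$ and integrable at infinity, so its inverse transform is bounded (indeed continuous) near $x=0$; then observe the homogeneous part contributes $O(1)$ precisely because $r<1-\mu$ forces the relevant power of $x$ to be non-negative after accounting for the one factor of $(\xi\pm i)^{-1}$ one can peel off — in fact it is cleanest to peel one unit, writing $(\xi\pm i)^{r-2}=(\xi\pm i)^{-1}(\xi\pm i)^{r-1}$, and invoke Lemma \ref{eUab2x} on the resulting $e^{\mp x}U(\cdot,\cdot,\mp 2x)$-type expression after a change of variable. (3) For $r>1-\mu$: the same reduction yields a term $x^{1-\mu-r}$ times a smooth function (the $x^{1-b}$ piece of the $U$-function expansion in Lemma \ref{eUab2x}, with $b$ related to $r+\mu$), giving $O(|x|^{1-\mu-r})$. (4) The borderline cases $r=1-\mu$: for $v_{+,\mu,1-\mu}$ and $v_{-,0,1}$ the exponent $1-\mu-r$ vanishes and the $x^{1-b}$ term degenerates into the bounded branch, giving $O(1)$; for $v_{-,\mu,1-\mu}$ with $\mu\neq0$ the exceptional (integer-$b$) case of Lemma \ref{eUab2x} applies and introduces the logarithmic term, giving $O(\log|x|)$. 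I would track the sign $\pm$ carefully, since it controls which half-line the exponentially-decaying profile lives on and hence whether the contour is closed in the upper or lower half-plane; the asymmetry between $v_+$ and $v_-$ in the borderline statement comes exactly from this, combined with the branch cut of $(-i\xi)^\mu$ along the negative imaginary axis.

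The main obstacle I anticipate is the bookkeeping at the borderline $r=1-\mu$: distinguishing why $v_{+,\mu,1-\mu}=O(1)$ while $v_{-,\mu,1-\mu}=O(\log|x|)$ for $\mu\neq0$, and why the logarithm disappears again at $\mu=0$ (the case $v_{-,0,1}$). This requires knowing precisely which parameter $b$ in the $U(a,b,z)$-expansion lands on a positive integer, and that depends on the interplay of $\mu$ (the fractional-differentiation order) with the half-line on which $\mathcal{F}^{-1}(1\mp i\xi)^{r-1}$ is supported — only the $v_-$ configuration places the singular endpoint at $x=0^+$ in a way that makes the order-$(1-\{\mu\})$ Riemann–Liouville integral of $x^{-\mu}$-type behaviour generate a $\log$. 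Handling $\mu=0$ separately (where $(-i\xi)^0=1$ and no fractional integration occurs, so $v_{-,0,1}=\mathcal{F}^{-1}(\xi-i)^{-1}$ restricted to $\mathbb{R}_+$, a bounded exponential) closes that gap cleanly. Everything else — convergence, the generic power-law rates, boundedness away from zero — is routine given Lemmas \ref{FinverseKU} and \ref{eUab2x}.
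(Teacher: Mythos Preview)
Your proposal has a concrete error and a strategic gap.

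First, the convergence claim is wrong: from $r<2-\mu$ you conclude $\mu+r-2<0$ and hence absolute convergence, but $\int|\xi|^{\mu+r-2}\,d\xi$ needs $\mu+r-2<-1$, i.e.\ $r<1-\mu$. For $1-\mu\le r<2-\mu$ the integral is only conditionally convergent, and this is precisely the regime where the interesting behaviour ($|x|^{1-\mu-r}$ and the log) appears. So boundedness away from $x=0$ is not free in that range; it requires an argument.

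Second, the plan to route everything through Lemmas~\ref{FinverseKU} and~\ref{eUab2x} does not work as written. Those lemmas compute $\mathcal F^{-1}(1+\xi^2)^{\alpha-1}$ and describe $e^{-x}U(a,b,2x)$; neither matches the symbol $(-i\xi)^{\mu}(\xi\pm i)^{r-2}$, which has a branch cut from the $(-i\xi)^{\mu}$ factor that the Bessel/$U$ machinery in the paper was not set up to handle. Your ``peel off one factor and recognise a $U$-function'' step is not justified, and the claim that the homogeneous piece $\mathcal F^{-1}(-i\xi)^{\mu}$ contributes $O(1)$ is false (it behaves like $|x|^{-\mu-1}$).

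The paper's proof is completely different and far more elementary. For $r<1-\mu$, absolute integrability gives continuity directly. For $1-\mu\le r<2-\mu$ and $x>0$, one substitutes $\eta=x\xi$ to obtain
\[
v_{\pm,\mu,r}(x)=x^{1-\mu-r}\cdot\frac{1}{\sqrt{2\pi}}\int_{-\infty}^{\infty}e^{-i\eta}(-i\eta)^{\mu}(\eta\pm ix)^{r-2}\,d\eta,
\]
which already exhibits the power $|x|^{1-\mu-r}$. The remaining integral is split as $\int_{-1}^{1}+\int_{1}^{\infty}+\int_{-\infty}^{-1}$; the two tails are shown to be $O(1)$ uniformly in small $x$ by a single integration by parts against $e^{-i\eta}$, and the middle piece $\int_{-1}^{1}$ is $O(1)$ when $r>1-\mu$ since $(\eta\pm ix)^{r-2}$ stays integrable near $\eta=0$. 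The borderline $r=1-\mu$ is isolated: there the middle piece becomes $\int_{-1}^{1}e^{-i\eta}(-i\eta)^{\mu}(\eta\pm ix)^{-\mu-1}\,d\eta$, and a separate computation (Lemma~\ref{InverFTmulog}, carried out with the hypergeometric identity for $\int_0^1\xi^{a-1}(1+b\xi)^{-\nu}\,d\xi$ and the large-argument asymptotics of ${}_2F_1(1,1;c;z)$) shows that the $+$ case is $O(1)$, the $-$ case with $\mu=0$ is $O(1)$, and the $-$ case with $\mu\neq0$ is $O(\log x)$. The asymmetry between $v_+$ and $v_-$ there comes from a contour argument: for the $+$ sign the integrand extends analytically to the upper half-plane and one can replace $[-1,1]$ by the upper semicircle, while for the $-$ sign one cannot, and the two half-integrals over $[0,1]$ and $[-1,0]$ each produce a $\log x$ term whose coefficients cancel only when $\mu=0$.
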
 

\begin{proof}
If $r < 1 - \mu$, then $v_{\pm, \mu, r}$ is the inverse Fourier transform of an integrable function, and hence is continuous and vanishes at infinity. \\

Now suppose that $1- \mu \leq r < 2 -\mu$. Initially, we assume that $\epsilon < x< N$, for some $\epsilon >0$ and $N < \infty$, noting that the case $-N < x < -\epsilon$ follows in a similar manner. Changing the variable of integration, we obtain
\begin{equation} \label{wpmrxbig}
v_{\pm, \mu, r}(x) = x^{1- \mu-r} \, \frac1{\sqrt{2\pi}}\int_{-\infty}^\infty e^{-i\eta} (-i\eta)^{\mu}\,(\eta \pm i x)^{r-2} \, d\eta .
\end{equation} 
Now,
\begin{align*}
& \int_1^\infty e^{-i\eta} (-i\eta)^{\mu}\,(\eta \pm i x)^{r-2}\, d\eta = 
i\int_1^\infty \left(\frac{d}{d\eta}e^{-i\eta}\right) (-i\eta)^{\mu}\,(\eta \pm i x)^{r-2}\, d\eta \\
& \stackrel{\mbox{\footnotesize integrating by parts}}=  -i\, e^{-i}(-i)^{\mu} (1 \pm ix)^{r-2} - 
\mu\, \int_1^\infty e^{-i\eta} (-i\eta)^{\mu - 1}\,(\eta \pm i x)^{r-2}\, d\eta\\
& + i (2-r)\, \int_1^\infty e^{-i\eta} (-i\eta)^{\mu}\,(\eta \pm i x)^{r-3}\, d\eta = O\left( 1 \right), \ \mbox{ for }  \epsilon < x < N.
\end{align*}
Moreover, it is easy to see that the above estimate also applies in the limit as $x \searrow 0$. The case $\int_{-\infty}^{-1} \cdots$ follows similarly. \\

On the other hand
\begin{equation*}
\int_{-1}^1 e^{-i\eta} (-i\eta)^{\mu}\,(\eta \pm i x)^{r-2}\, d\eta = O\left( 1 \right), \ \mbox{ for }  \epsilon < x < N.
\end{equation*}
In addition, the above estimate also holds in the limit as $x \searrow 0$, \textit{provided} we exclude the case $r=1-\mu$. See Lemma \ref{InverFTmulog}. \\

Hence, $v_{\pm, \mu, r}(x)$ is bounded away from $x=0$ for all finite $x$. Moreover, as $x \to 0$
\begin{align*}
v_{\pm, \mu, r} &=
\begin{cases}
O(1)        & \text{if  r} < 1- \mu \\
O(|x|^{1- \mu-r})        & \text{if  r} > 1- \mu. \\
\end{cases} 
\end{align*}
Finally, by Lemma \ref{InverFTmulog}, $v_{+, \mu, 1-\mu} = O(1), \,\, v_{-,0,1} = O(1)$ and, for $\mu \not = 0$, we have $v_{-,\mu,1-\mu}=O(\log |x|)$. \\
\end{proof}

\begin{lemma} \label{Hrconditionbig}
Let $-1 < \mu < 1, \, r < 2-\mu$ and $1 < p < \infty$. Define  
\begin{equation*}
v_{\pm, \mu}(x) := \frac1{\sqrt{2\pi}}\int_{-\infty}^\infty e^{-ix\xi} (-i\xi)^{\mu} \,\frac1{(\xi \pm i)^2}\, d\xi , \quad  x \in \mathbb{R} .
\end{equation*} 
Then, for any $\chi_1 \in C^\infty_0(\mathbb{R})$,
\begin{equation*}
\chi_1 (D \pm i)^r v_{\pm, \mu} \in L_p(\mathbb{R}) \quad  \text{if} \quad r < 1- \mu + 1/p. \\
\end{equation*}

\end{lemma}
\begin{proof}
Firstly, we note from equation \eqref{defwpmrxbig} and the definition of $v_{\pm, \mu}$ that
\begin{align*}
v_{\pm, \mu, r}(x) &:= \frac1{\sqrt{2\pi}}\int_{-\infty}^\infty e^{-ix\xi} (-i\xi)^{\mu}\, (\xi \pm i)^{r-2} \, d\xi \\
&= \mathcal{F}^{-1} \bigg ( (\xi \pm i)^r \dfrac{(-i\xi)^{\mu}}{(\xi \pm i)^2} \bigg ) \\
&= \mathcal{F}^{-1} \big ( (\xi \pm i)^r \widehat{v_{\pm, \mu}} \big ) \\
&= (D \pm i)^r v_{\pm, \mu}.
\end{align*}

But from Lemma \ref{InverFTmubig}, $v_{\pm, \mu, r}$ is bounded away from $x=0$ for all finite $x$, and as $x \to 0$
\begin{align*}
v_{\pm, \mu, r} &=
\begin{cases}
O(1)        & \text{if  r} < 1- \mu \\
O(|x|^{1-\mu-r})        & \text{if  r} > 1- \mu.
\end{cases} 
\end{align*}
Moreover, 
$v_{+, 1-\mu}$ is $O(1)$, and
\begin{align*}
v_{-, \mu, 1-\mu} &=
\begin{cases}
O(1)        & \text{if  }  \mu =0 \\
O(\log |x|)        & \text{if  }  \mu \not = 0,
\end{cases} 
\end{align*}
as $x \to 0$. \\

Hence, $\chi_1 v_{\pm, \mu, r} \in L_p(\mathbb{R})$ if
\begin{equation*}
p(1-\mu-r) > -1.
\end{equation*}
So, finally
\begin{equation*}
\chi_1 (D \pm i)^r v_{\pm, \mu} \in L_p(\mathbb{R}) \quad  \text{if} \quad r < 1- \mu + 1/p. \\
\end{equation*}
This completes the proof of the lemma. \\ \\
\end{proof}


We now the consider the case of lower regularity. If $1/p < s < 1+1/p$, for any $\mu, r \in \mathbb{R}$, we let
\begin{equation} \label{defwpmrx}
w_{\pm, \mu, r}(x) := \frac1{\sqrt{2\pi}}\int_{-\infty}^\infty e^{-ix\xi} (-i\xi)^{-\mu}\, (\xi \pm i)^{r-1} \, d\xi , \quad  x \in \mathbb{R}.
\end{equation} \\

Given definition \eqref{defwpmrx}, the following corollary and lemma are the counterparts, for lower regularity,  of Lemmas \ref{InverFTmubig} and \ref{Hrconditionbig} respectively. \\

\begin{corollary} \label{InverFTmu}
Let $0 < \mu < 1$ and $r < 1 + \mu$. Then $w_{\pm, \mu, r}(x)$ is bounded away from $x=0$ for all finite $x$. Moreover, as $x \to 0$
\begin{align*}
w_{\pm, \mu, r} &=
\begin{cases}
O(1)        & \text{if  r} < \mu \\
O(|x|^{\mu-r})        & \text{if  r} > \mu,
\end{cases} 
\end{align*}
and $w_{+,\mu, \mu} = O(1)$ and $w_{-,\mu, \mu} = O( \log |x|)$.
\end{corollary}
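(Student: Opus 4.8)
\textbf{Proof proposal for Corollary \ref{InverFTmu}.}

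The plan is to mirror exactly the argument used for Lemma \ref{InverFTmubig} (and its logarithmic companion Lemma \ref{InverFTmulog}), since the only change is the bookkeeping of the exponents: here the integrand carries the factor $(-i\xi)^{-\mu}$ in place of $(-i\xi)^{\mu}$, and the power of $(\xi \pm i)$ is $r-1$ rather than $r-2$. Concretely, in the definition \eqref{defwpmrx} of $w_{\pm,\mu,r}$ one replaces the pair $(\mu, r-2)$ appearing in \eqref{defwpmrxbig} by $(-\mu, r-1)$. Under this substitution the threshold $r < 2-\mu$ for integrability at infinity in Lemma \ref{InverFTmubig} becomes $(r-1) + (-\mu) < -1$, i.e. $r < \mu$, and more generally every occurrence of $1-\mu$ in the earlier statement is to be read as $\mu$ here; the hypothesis $r < 1+\mu$ is the analogue of $r < 2-\mu$, and $0 < \mu < 1$ is the analogue of $-1 < \mu < 1$ restricted so that the singular factor $(-i\xi)^{-\mu}$ is genuinely (integrably) singular at the origin.

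First I would dispose of the easy range $r < \mu$: then $(-i\xi)^{-\mu}(\xi\pm i)^{r-1}$ is an integrable function of $\xi$ on $\mathbb{R}$ (decay of order $|\xi|^{r-1-\mu} < |\xi|^{-1}$ at infinity, and local integrability at $\xi = 0$ since $-\mu > -1$), so $w_{\pm,\mu,r}$ is the inverse Fourier transform of an $L_1$ function, hence continuous and vanishing at infinity, giving $O(1)$. For the range $\mu \le r < 1+\mu$ I would rescale the variable of integration as in \eqref{wpmrxbig}: writing $\eta = x\xi$ (for, say, $0 < \epsilon < x < N$, the case of negative $x$ being symmetric) pulls out the factor $x^{\mu - r}$, leaving an integral $\int_{-\infty}^\infty e^{-i\eta}(-i\eta)^{-\mu}(\eta \pm i x)^{r-1}\,d\eta$. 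I would split this into $\int_{|\eta|\le 1}$ and $\int_{|\eta|>1}$. On $|\eta|>1$, integration by parts in $e^{-i\eta}$ (exactly the three-term identity in the proof of Lemma \ref{InverFTmubig}, now producing terms with $(-i\eta)^{-\mu-1}$ and $(\eta\pm ix)^{r-2}$) shows the tail is $O(1)$ uniformly for $0 \le x \le N$. On $|\eta|\le 1$ the integral is $O(1)$ as $x \searrow 0$ as well, \emph{except} in the borderline case $r = \mu$, which is handled separately by Lemma \ref{InverFTmulog}: there one gets the $O(\log|x|)$ behaviour for the $-$ sign and $O(1)$ for the $+$ sign. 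Collecting: $w_{\pm,\mu,r}$ is bounded away from $x=0$ for all finite $x$, is $O(1)$ for $r<\mu$, is $O(|x|^{\mu-r})$ for $r>\mu$, and at the endpoint $r=\mu$ satisfies $w_{+,\mu,\mu}=O(1)$, $w_{-,\mu,\mu}=O(\log|x|)$.

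The only genuinely delicate point — as in the parent lemma — is the uniformity of the $|\eta|>1$ estimate as $x \searrow 0$: one must check that the boundary term and the two integrals produced by integration by parts remain bounded independently of $x \in (0,N]$, in particular that no spurious $x$-dependent blow-up appears from $(\eta \pm ix)^{r-2}$ near $\eta$ small-but-$>1$ (it does not, since $|\eta \pm ix| \ge |\eta| \ge 1$ there). Everything else is routine and identical in form to Lemmas \ref{InverFTmubig} and \ref{InverFTmulog}, so I would simply cite those proofs and indicate the exponent dictionary $(\mu, r-2) \rightsquigarrow (-\mu, r-1)$, $\;1-\mu \rightsquigarrow \mu$, rather than reproduce the computation in full.
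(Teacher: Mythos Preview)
Your proposal is correct and takes essentially the same approach as the paper: the paper's proof is simply the one-line substitution $\tilde{\mu} := -\mu$, $\tilde{r} := r-1$ into Lemma~\ref{InverFTmubig}, which is exactly the exponent dictionary $(\mu, r-2) \rightsquigarrow (-\mu, r-1)$, $1-\mu \rightsquigarrow \mu$ that you identify. You have additionally unpacked the rescaling, splitting, and integration-by-parts steps explicitly, which is more than the paper does, but the underlying argument is identical.
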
 

\begin{proof}
Let us define $\tilde{\mu} := -\mu$ and $\tilde{r} := r-1$. Then, if we recast Lemma \ref{InverFTmubig} in terms of $\tilde{\mu}$ and $\tilde{r}$, we obtain Corollary \ref{InverFTmu}, with $\tilde{\mu}$ and $\tilde{r}$ in place of $\mu$ and $r$ respectively. \\
\end{proof}

\begin{lemma} \label{Hrcondition}
Let $0< \mu < 1, \, r < 1 + \mu$ and $1 < p < \infty$. Define  
\begin{equation*}
w_{\pm, \mu}(x) := \frac1{\sqrt{2\pi}}\int_{-\infty}^\infty e^{-ix\xi} (-i\xi)^{-\mu} \,\frac1{\xi \pm i}\, d\xi , \quad  x \in \mathbb{R} .
\end{equation*} 
Then, for any $\chi_1 \in C^\infty_0(\mathbb{R})$
\begin{equation*}
\chi_1 (D \pm i)^r  w_{\pm, \mu} \in L_p(\mathbb{R}) \quad  \text{if} \quad r < \mu + 1/p.  \\
\end{equation*}

\end{lemma}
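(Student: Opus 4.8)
The plan is to mimic the argument of Lemma \ref{Hrconditionbig}, transported to the lower-regularity setting via the substitution already used in Corollary \ref{InverFTmu}. First I would observe, exactly as in the first lines of the proof of Lemma \ref{Hrconditionbig}, that the symbol manipulation identifies $w_{\pm,\mu,r}$ as a fractional derivative of $w_{\pm,\mu}$: starting from definition \eqref{defwpmrx},
\begin{align*}
w_{\pm, \mu, r}(x) &= \mathcal{F}^{-1} \bigg ( (\xi \pm i)^r \, \dfrac{(-i\xi)^{-\mu}}{\xi \pm i} \bigg ) \\
&= \mathcal{F}^{-1} \big ( (\xi \pm i)^r \, \widehat{w_{\pm, \mu}} \big ) = (D \pm i)^r w_{\pm, \mu}.
\end{align*}
Here one uses that $\widehat{w_{\pm,\mu}}(\xi) = (-i\xi)^{-\mu}(\xi\pm i)^{-1}$ by the very definition of $w_{\pm,\mu}$, and that multiplication by $(\xi\pm i)^r$ on the Fourier side is the action of the pseudodifferential operator $(D\pm i)^r$ — this is legitimate because $(\xi\pm i)^r$ admits an analytic continuation to the appropriate half-plane with polynomial growth, as recorded in the proof of Lemma \ref{lemma:rLambdae}.

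Next I would invoke Corollary \ref{InverFTmu} to read off the pointwise behaviour of $w_{\pm,\mu,r}$: it is bounded away from $x=0$ for all finite $x$, and as $x\to 0$ it is $O(1)$ if $r<\mu$, $O(|x|^{\mu-r})$ if $r>\mu$, while $w_{+,\mu,\mu}=O(1)$ and $w_{-,\mu,\mu}=O(\log|x|)$. Since $\chi_1 \in C^\infty_0(\mathbb{R})$ has compact support, the global decay is irrelevant and $\chi_1 w_{\pm,\mu,r}$ lies in $L_p(\mathbb{R})$ precisely when the local singularity at the origin is $p$-integrable. The worst case is $r>\mu$, where the singularity is $|x|^{\mu-r}$; this belongs to $L_p$ near $0$ iff $p(\mu-r)>-1$, i.e. $r<\mu+1/p$. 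In the borderline subcases ($r=\mu$), the $O(1)$ bound for $w_{+,\mu,\mu}$ trivially gives $L_p$ membership, and the $O(\log|x|)$ bound for $w_{-,\mu,\mu}$ is also in $L_p$ locally, consistently with the strict inequality $\mu<\mu+1/p$. In all cases the condition $r<\mu+1/p$ suffices, so $\chi_1 w_{\pm,\mu,r}=\chi_1 (D\pm i)^r w_{\pm,\mu}\in L_p(\mathbb{R})$, which is the claim.

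I do not expect a serious obstacle here, since the heavy lifting is done by Corollary \ref{InverFTmu} (itself a relabelling of Lemma \ref{InverFTmubig}). The one point requiring a little care is the justification that $\widehat{w_{\pm,\mu}}(\xi)=(-i\xi)^{-\mu}(\xi\pm i)^{-1}$ is genuinely the Fourier transform of an $L_1+L_2$ (or at least tempered-distribution) object to which $(D\pm i)^r$ applies in the stated sense — but this is exactly the situation already handled in Lemma \ref{Hrconditionbig}, and the factor $(-i\xi)^{-\mu}$ with $0<\mu<1$ is locally integrable at the origin and decays, so $w_{\pm,\mu}$ is a well-defined continuous function vanishing at infinity. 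Hence the proof reduces to the elementary integrability bookkeeping above.
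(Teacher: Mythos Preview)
Your proposal is correct and follows essentially the same route as the paper. The only cosmetic difference is that the paper compresses the argument into a one-line reduction to Lemma \ref{Hrconditionbig} via the identity $(D\pm i)v_{\pm,\mu} = w_{\pm,-\mu}$ and the substitution $\tilde\mu=-\mu$, $\tilde r=r-1$, whereas you repeat the integrability computation directly from Corollary \ref{InverFTmu}; the underlying pointwise estimates and the threshold $r<\mu+1/p$ are identical.
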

\begin{proof}
Firstly, we note that
\begin{equation*}
(D \pm i)v_{\pm, \mu} = \mathcal{F}^{-1} (\xi \pm i)(-i \xi)^\mu (\xi \pm i)^{-2} = \mathcal{F}^{-1} (-i \xi)^\mu (\xi \pm i)^{-1} = w_{\pm, -\mu}.
\end{equation*}
Now let us define $\tilde{\mu} := -\mu$ and $\tilde{r} := r-1$. Then, if we recast Lemma \ref{Hrconditionbig} in terms of $\tilde{\mu}$ and $\tilde{r}$, we obtain Lemma \ref{Hrcondition}, with $\tilde{\mu}$ and $\tilde{r}$ in place of $\mu$ and $r$ respectively. \\
\end{proof}

\begin{lemma} \label{mxiFm}
Suppose $a, b,c \in \mathbb{R}$ are such that $a \geq 0$ and $a+b+c \leq 0$. Let $m_p$ be given by
\begin{equation*}
m_p(\xi) := (-i \xi)^a \, (\xi+i)^b \, (\xi - i)^c, \quad \xi \in \mathbb{R}.
\end{equation*}
\end{lemma}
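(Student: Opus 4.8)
The statement of Lemma \ref{mxiFm} asserts that $m_p$ is a Fourier $L_p$-multiplier. The plan is to verify the hypotheses of the Mikhlin multiplier theorem, exactly as was done for the related function $c_r$ in Lemma \ref{fxi0bddlim}. Concretely, I would first check that $m_p$ is bounded on $\mathbb{R}$, and then show that $\xi\, m_p'(\xi)$ is bounded on $\mathbb{R}$; the Mikhlin condition in one dimension (as already invoked, e.g., in the proof of Lemma \ref{fxi0bddlim}) then gives the conclusion.

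For boundedness of $m_p$ itself: each factor $|(-i\xi)^a| = |\xi|^a$, $|(\xi+i)^b| = (1+\xi^2)^{b/2}$, $|(\xi-i)^c| = (1+\xi^2)^{c/2}$, so $|m_p(\xi)| = |\xi|^a (1+\xi^2)^{(b+c)/2}$. Near $\xi = 0$ this is $O(|\xi|^a)$, which is bounded since $a \geq 0$; as $|\xi| \to \infty$ it behaves like $|\xi|^{a+b+c}$, which is bounded since $a+b+c \leq 0$. For intermediate $\xi$ the function is continuous (the only possible issue, the branch point of $(-i\xi)^a$ at $\xi = 0$, is harmless because the cut of $z \mapsto z^a$ is along the negative real axis and $-i\xi$ runs along the imaginary axis, so $(-i\xi)^a$ is continuous on $\mathbb{R}$, cf. Remark \ref{complexexponent}). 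Hence $m_p \in L_\infty(\mathbb{R})$.

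For the derivative condition: differentiating, $m_p'(\xi) = \big( \tfrac{a}{\xi} + \tfrac{b}{\xi+i} + \tfrac{c}{\xi-i}\big) m_p(\xi)$ wherever $\xi \neq 0$, so $\xi m_p'(\xi) = \big( a + \tfrac{b\xi}{\xi+i} + \tfrac{c\xi}{\xi-i}\big) m_p(\xi)$. The prefactor in parentheses is bounded on $\mathbb{R}$ (each term is bounded: $|b\xi/(\xi+i)| \leq |b|$, etc.), and $m_p$ is bounded by the previous paragraph, so $\xi m_p'(\xi)$ is bounded. One small point to handle carefully is the behaviour as $\xi \to 0$ when $0 < a < 1$: there $m_p'(\xi) \sim a\xi^{a-1}(\ldots)$ blows up, but $\xi m_p'(\xi) \sim a\xi^a(\ldots) \to 0$, so the Mikhlin quantity $\xi m_p'(\xi)$ remains bounded (indeed continuous) across the origin; when $a = 0$ the factor $(-i\xi)^a \equiv 1$ and there is no singularity at all. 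Thus both Mikhlin conditions hold and $m_p \in \mathfrak{M}_p$ for $1 < p < \infty$.

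I do not expect any serious obstacle here; the lemma is a routine application of Mikhlin's theorem and mirrors Lemma \ref{fxi0bddlim} closely. The only thing requiring a little care is bookkeeping the two asymptotic regimes ($\xi \to 0$ using $a \geq 0$, and $\xi \to \pm\infty$ using $a+b+c \leq 0$) and noting that the branch-cut convention \eqref{argsingleval} makes $(-i\xi)^a$ genuinely continuous on all of $\mathbb{R}$, so that no spurious jump is introduced at $\xi = 0$.
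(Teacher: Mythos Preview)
Your proposal is correct and follows essentially the same approach as the paper: both verify the Mikhlin multiplier conditions by showing that $m_p$ and $\xi\,m_p'(\xi)$ are bounded, using $a\geq 0$ near the origin and $a+b+c\leq 0$ at infinity. Your logarithmic-derivative factorisation $\xi m_p'(\xi) = \big(a + \tfrac{b\xi}{\xi+i} + \tfrac{c\xi}{\xi-i}\big) m_p(\xi)$ is slightly slicker than the paper's expanded form, but the substance is identical.
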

Then $m_p(\xi)$ is a Fourier multiplier.
\begin{proof}
The conditions $a \geq 0$ and $a+b+c \leq 0$ ensure that $|m_p(\xi)|$ is bounded. \\

We now assume that each of $a,b$ and $c$ is non-zero, noting that in the special cases where at least one of these exponents is zero, the same method of proof applies. A routine calculation gives
\begin{align*}
m'_p(\xi) &= (-i \xi)^{a-1} \, (\xi +i)^{b-1} \, (\xi -i)^{c-1} \, \big \{- i a - (b-c)\xi -i (a+b+c) \xi^2 \big \}. 
\end{align*}
For $|\xi| \geq 1$, it is easy to see that
\begin{align*}
|\xi m'_p(\xi)| & \leq C \, \big ( \sqrt{{1+\xi^2}} \big )^{a+(b-1)+(c-1) +2} \\
& = C \, \big (\sqrt{{1+\xi^2}} \big )^{a+b+c} \\
& \leq C, \quad \text{since }  a+b+c \leq 0. 
\end{align*}
On the other hand, suppose $|\xi| <1$. Then
\begin{equation*}
|\xi m'_p(\xi)|  \leq  C |\xi|^a \leq  C,  \quad \text{since we are assuming }  a > 0. 
\end{equation*}

Hence, $m_p(\xi)$ is a Fourier multiplier, by the Mikhlin multiplier theorem. \\
\end{proof}

Following \cite{Dudu87, Roch}, we let $C_0$ denote the algebra of all continuous and piecewise linear functions on $\dot{\mathbb{R}}$, and $PC_0$ the algebra of all piecewise constant functions on $\mathbb{R}$, with only finitely many discontinuities. Further, for $1 < p < \infty$, let $C_p$ and $PC_p$ represent the closure of $C_0$ and $PC_0$ in $\mathfrak{M}_p$ respectively. \\

\begin{remark} \label{BVCCpinclusions}
We note that a number of the results that we use from \cite{Dudu87, Roch}, require that a given Wiener-Hopf symbol belongs to either $C_p$ or $PC_p$. Fortunately, we have the following inclusions:
\begin{enumerate}[\hspace{0.5cm}(a)]
\item The set of all continuous functions on  $\dot{\mathbb{R}}$ with bounded variation is contained in $C_p$. See, for example, Proposition 5.1.2 (iii), p. 261, \cite{Roch}. 
\item The set of all (piecewise) continuous functions on $\overline{\mathbb{R}}$ with bounded variation is contained in $PC_p$. See, for example, Proposition 5.1.4 (ii), p. 261, \cite{Roch}. \\
\end{enumerate}
\end{remark}

\begin{lemma} \label{BVm}
Let the function $m_p$ be as defined in Lemma \ref{mxiFm}. Then $m_p \in BV(\overline{\mathbb{R}})$ and is continuous. 
\end{lemma}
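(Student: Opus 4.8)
The claim is that $m_p(\xi) = (-i\xi)^a(\xi+i)^b(\xi-i)^c$, with $a\geq 0$ and $a+b+c\leq 0$, lies in $BV(\overline{\mathbb{R}})$ and is continuous on $\overline{\mathbb{R}}$ (as opposed to merely $\dot{\mathbb{R}}$; so the one-sided limits at $\pm\infty$ must agree, or at least both exist as finite values making $m_p$ piecewise continuous — but here I expect them to coincide, giving genuine continuity on $\overline{\mathbb{R}}$). The plan is to apply Remark \ref{BVtestmethod}: it suffices to show that $m_p$ is bounded, continuous on $\overline{\mathbb{R}}$, differentiable a.e., and that $m'_p \in L_1(\mathbb{R})$.

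First I would record that boundedness and continuity on $\mathbb{R}$ are already essentially contained in the statement and proof of Lemma \ref{mxiFm}: the hypotheses $a\geq 0$ and $a+b+c\leq 0$ force $|m_p(\xi)|$ to be bounded, and since the cut of each factor $(-i\xi)^a$, $(\xi+i)^b$, $(\xi-i)^c$ avoids the real axis (recall the convention \eqref{argsingleval} and Remark \ref{complexexponent}), $m_p$ is continuous, indeed smooth, on $\mathbb{R}$. Next I would examine the behaviour at infinity: as $|\xi|\to\infty$ we have $(-i\xi)^a(\xi+i)^b(\xi-i)^c = (-i\xi)^{a+b+c}(1+\tfrac{i}{-i\xi}\cdot\text{stuff})\cdots$, more carefully $m_p(\xi) \sim (-i\xi)^a (\xi)^{b+c}$ up to lower-order corrections, and using $(\xi+i)^b = \xi^b(1+i/\xi)^b$ etc. one checks that $m_p(\xi)$ has a common limit as $\xi\to+\infty$ and as $\xi\to-\infty$ when $a+b+c<0$ (both limits being $0$), and when $a+b+c=0$ the limits are $(\mp i)^a \cdot 1 \cdot 1$-type constants that one computes explicitly; in either case $m_p$ extends continuously to $\overline{\mathbb{R}}$ (or at worst to $\dot{\mathbb{R}}$, which still suffices since $BV(\overline{\mathbb{R}})\supset$ piecewise-continuous BV functions, cf. Remark \ref{BVCCpinclusions}). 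I would state this limit computation but not grind through it.

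Then the heart of the argument: integrability of $m'_p$. From the proof of Lemma \ref{mxiFm} we already have the formula
\begin{equation*}
m'_p(\xi) = (-i\xi)^{a-1}(\xi+i)^{b-1}(\xi-i)^{c-1}\bigl\{-ia - (b-c)\xi - i(a+b+c)\xi^2\bigr\},
\end{equation*}
and the estimates derived there: for $|\xi|\geq 1$, $|\xi m'_p(\xi)| \leq C(1+\xi^2)^{(a+b+c)/2} \leq C$, hence $|m'_p(\xi)| \leq C|\xi|^{-1}$ — which is \emph{not} quite integrable at infinity on its own. So I would sharpen this: when $a+b+c<0$, the bound actually gives $|m'_p(\xi)| \leq C|\xi|^{a+b+c-1}$ for large $|\xi|$, which is integrable since $a+b+c-1<-1$. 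When $a+b+c=0$, the quadratic term in the brace vanishes, leaving $|{-ia-(b-c)\xi}| = O(|\xi|)$, so $|m'_p(\xi)| \leq C|\xi|\cdot(1+\xi^2)^{(a+b+c-3)/2}\cdot|\xi| \le C|\xi|^2(1+\xi^2)^{-3/2} = O(|\xi|^{-1})$ — still borderline; here I would look one order finer, expanding $(\xi+i)^{b-1}(\xi-i)^{c-1} = \xi^{b+c-2}(1+O(1/\xi^2))$ (the $O(1/\xi)$ terms from the two factors cancel since $b-1 + c-1$ pairs with $\pm i$), so that $|m'_p(\xi)| = O(|\xi|^{b+c-2}\cdot|\xi|) = O(|\xi|^{a+b+c-1}) = O(|\xi|^{-1})$ again, and then push to $O(|\xi|^{-2})$ by tracking the subleading term. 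Near $\xi = 0$, the factor $(-i\xi)^{a-1}$ gives $|m'_p(\xi)| \leq C|\xi|^{a-1}$, which is integrable on $[-1,1]$ precisely because $a > 0$ — and in the degenerate case $a=0$ the factor $(-i\xi)^a \equiv 1$ disappears from $m_p$ entirely, so $m'_p$ has no singularity at $0$ and the estimate is trivial. Finally, assembling: $m'_p \in L_1(\mathbb{R})$, so by Remark \ref{BVtestmethod}, $V(m_p) \leq \|m'_p\|_1 < \infty$ and $m_p \in BV(\overline{\mathbb{R}})$.

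The main obstacle I anticipate is the borderline case $a+b+c=0$ at infinity, where the crude $O(|\xi|^{-1})$ decay of $m'_p$ is not integrable and one genuinely needs the cancellation of the $1/\xi$ corrections between $(\xi+i)^{b-1}$ and $(\xi-i)^{c-1}$ (this is the same mechanism that makes $\xi m'_p(\xi)$ \emph{tend to a constant}, not just stay bounded, as in Lemma \ref{fxi0bddlim}); I would handle it by a careful asymptotic expansion to second order. The case $a=0$ at the origin is a minor wrinkle, cleanly dispatched by noting $m_p$ then has no $\xi=0$ singularity at all. Everything else is routine.
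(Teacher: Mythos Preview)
Your approach is exactly the paper's: invoke Remark \ref{BVtestmethod} by showing $m'_p \in L_1(\mathbb{R})$, treating the origin and infinity separately. The paper's proof is just a few lines and does not go beyond the crude estimates you set up.

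However, you have an arithmetic slip in the case $a+b+c=0$ at infinity that makes you think there is a ``borderline'' obstacle requiring a second-order expansion. There isn't. When $a+b+c=0$ the quadratic term in the brace vanishes, so the brace is $O(|\xi|)$. The three power factors contribute $|\xi|^{(a-1)+(b-1)+(c-1)} = |\xi|^{a+b+c-3} = |\xi|^{-3}$ for large $|\xi|$. Hence $|m'_p(\xi)| = O(|\xi|^{-3}\cdot|\xi|) = O(|\xi|^{-2})$, which is integrable. Your estimate ``$C|\xi|\cdot(1+\xi^2)^{(a+b+c-3)/2}\cdot|\xi|$'' has a stray extra $|\xi|$; drop it and you get the paper's $O(|\xi|^{-2})$ directly. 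No cancellation of $1/\xi$ corrections between $(\xi+i)^{b-1}$ and $(\xi-i)^{c-1}$ is needed.

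On continuity: you needn't worry about whether the limits at $+\infty$ and $-\infty$ coincide. In this paper $\overline{\mathbb{R}} = [-\infty,+\infty]$ is the two-point compactification, and ``continuous on $\overline{\mathbb{R}}$'' just means the one-sided limits at $\pm\infty$ exist separately (cf.\ Lemmas \ref{c1limits}, \ref{c2limits}, where $c_1$ and $c_2$ do have $c(+\infty)\neq c(-\infty)$). Existence of these limits is immediate from the boundedness of $m_p$ together with $m'_p\in L_1$.
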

\begin{proof}
From Lemma \ref{mxiFm}, the function $m_p$ is continuous and bounded on $\overline{\mathbb{R}}$. \\

As in Lemma \ref{mxiFm}, we assume that each of $a,b,c$ is non-zero, noting that in the remaining special cases, the same method of proof applies. From the proof of Lemma \ref{mxiFm}, we have
\begin{equation*}
m'_p(\xi) = (-i \xi)^{a-1} \, (\xi +i)^{b-1} \, (\xi -i)^{c-1} \, \big \{- i a - (b-c)\xi -i (a+b+c) \xi^2 \big \}. 
\end{equation*} \\
As $ | \xi | \to 0$, we have $m'_p(\xi) = O(|\xi|^{a-1})$. Since we are assuming that $a > 0$,  $m'_p$ is integrable in a neighbourhood of $\xi = 0$. \\

As $| \xi | \to \infty$, we have $m'_p(\xi) = O(|\xi|^{-2})$ or $m'_p(\xi) = O(|\xi|^{a+b+c-1})$, as either $a+b+c=0$ or $a+b+c < 0$. Hence, $m'_p(\xi)$ is integrable near $\xi = \pm \infty$, and thus $m'_p \in L_1(\mathbb{R})$. \\

Fiinally, from Remark \ref{BVtestmethod}, we have $m_p \in BV(\overline{\mathbb{R}})$, as required. \\
\end{proof}
\chapter{Fredholm analysis} \label{GenSymbol}
\section{Introduction}
Suppose $N \in \mathbb{N}$. Our goal is to establish the conditions under which the operator
\begin{equation*} 
\tilde{A} := \sum^N_{j=1} a_j M^0(b_j) W(c_j)
\end{equation*}
acting on $L_p(\mathbb{R}_+)$ is Fredholm. It will be sufficient, for our purposes to assume that, for $j = 1 , \dots, N$, we have $a_j$ continuous on $\overline{\mathbb{R}_+}$, and $ b_j, c_j$ continuous on $\overline{\mathbb{R}}$. In addition, see Lemmas \ref{BVBeta}, \ref{BVm} and Remark \ref{BVCCpinclusions}, we may suppose that the symbols $\{ b_j, c_j \}^N_{j=1}$ have bounded variation.  \\ 

Later in this chapter, we will apply these general results to the specific set of symbols $\{ c_1, a_2,b_2, c_2 \}$, derived earlier in Chapter \ref{OpAlgLp}, for the case of lower regularity $1/p < s < 1+1/p$. \\

We detail the method originally developed by Duduchava \cite{Dudu, Dudu87}, and later reviewed in \cite{Roch}. Indeed, our starting point is Corollary 5.5.10, p. 290 \cite{Roch}. To use this important result most effectively, we will combine it with Theorems 5.5.3, 5.5.4 and 5.5.7 as given in pp. 279 to 290, ibid. In preparation for this, the following remark addresses some important points on notation. \\

\begin{remark}
We adopt the convention that Mellin and Wiener-Hopf operators are given by $M^0(b)$ and $W(c)$, with symbols $b$ and $c$ respectively. However, in \cite{Roch}, this convention is reversed. (So that, for example, in Theorem 5.5.3, p. 279, ibid, the symbol $b$ is used to describe a Wiener-Hopf operator.) \\

Moreover, in \cite{Roch}, the Fourier transform is defined using the opposite sign convention. (C.f. Equation \eqref{FTdefinition} and  equation (4.9), p. 199, \cite{Roch}.) So, if we denote this alternative Fourier transform by $\mathcal{F}_-$, then, by a routine calculation
\begin{equation} \label{FTdefsymbolsign}
{\mathcal{F}}^{-1}_- b(\xi) {\mathcal{F}}^{}_- = \mathcal{F}^{-1} b(-\xi) \mathcal{F}.
\end{equation} 
On the other hand, both here and in \cite{Roch}, the Mellin transform is defined identically.  (C.f. Equation \eqref{MTexplicit} and  equation (4.27), p. 203, \cite{Roch}.) \\

In our case, we note the multiplication symbol $a(x)$ and the Mellin symbol $b(\xi)$ are both continuous on $\overline{\mathbb{R}_+}$ and $\dot{\mathbb{R}}$ respectively. On the other hand, whilst the Wiener-Hopf symbols, $c(\xi)$, are continuous for all finite $\xi$, we do allow $c(\infty) \not = c(-\infty)$.\\
\end{remark}

We note that Theorem 5.5.4, p. 281, \cite{Roch}, makes use of the operator $S_\mathbb{R}$, see equation (4.18), p. 201, \cite{Roch}, with the important property that  $S_\mathbb{R}S_\mathbb{R}=I$. Therefore,
\begin{equation*}
\dfrac{(I \pm S_\mathbb{R})}{2}\dfrac{(I \pm S_\mathbb{R})}{2}= \dfrac{(I \pm S_\mathbb{R})}{2} \quad \text{and} \quad \dfrac{(I \pm S_\mathbb{R})}{2}\dfrac{(I \mp S_\mathbb{R})}{2} =0.
\end{equation*}
Hence, any operator of the form
\begin{equation*}
h_- \dfrac{(I - S_\mathbb{R})}{2} + h_+ \dfrac{(I + S_\mathbb{R})}{2}, \quad h_\pm \in \mathbb{C},
\end{equation*}
is invertible if and only if
\begin{equation*}
h_- \not = 0 \quad \text{and} \quad h_+ \not = 0.
\end{equation*}

Let $\chi_\pm$ denote the characteristic functions of the positive and negative half-lines respectively. Then trivially, any operator of the form
\begin{equation*}
h_- \chi_-I + h_+ \chi_+I,  \quad h_\pm \in \mathbb{C},
\end{equation*}
is invertible if and only if
\begin{equation*}
h_- \not = 0 \quad \text{and} \quad h_+ \not = 0.
\end{equation*}

\subsection{Loop functions}
Finally, in the light of  Theorem 5.5.7, p. 286, \cite{Roch} and observation \eqref{FTdefsymbolsign}, it will be convenient, in \textbf{our notation}, to define
\begin{equation} \label{gpinftydefinition}
g_p(\infty, \xi) := g(-\infty) \dfrac{1 + d(\xi)}{2} + g(+\infty) \dfrac{1 - d(\xi)}{2}
\end{equation}
where $d(\xi) := \coth \pi(i/p + \xi)$. \\

It is easy to verify that 
\begin{equation*}
\lim_{\xi \to \pm \infty} g_p(\infty, \xi) = g(\mp \infty). \\
\end{equation*}

The function $g_p(\infty, \xi)$, defined by equation \eqref{gpinftydefinition}, traces out an arc of a circle, dependent only on $p$ and the function values $g(\pm \infty)$, in the complex plane, as $\xi$ varies from $-\infty$ to $+\infty$. Indeed, if we assume, without loss of generality, that $g(-\infty) = -1$ and $g(\infty) = +1$, then a routine calculation shows that
\begin{equation*}
| g_p(\infty, \xi) - a | = r,
\end{equation*}
where the constants $a$ and $r$ only depend on $p$, and are given by
\begin{equation*}
a = i \, \cot (2 \pi /p) \quad \text{and} \quad r= \dfrac{1}{\sin (2 \pi /p)}.
\end{equation*}
Moreover, if $g(\mp \infty) = \mp 1$ then
\begin{equation*}
\operatorname{Im} \, g_p(\infty, \xi) = \dfrac{\sin (2 \pi /p)}{\cosh (2 \pi \xi) - \cos (2 \pi /p)},
\end{equation*}
so that the sign of the imaginary part of $g_p(\infty, \xi)$ is determined simply by the sign of $\sin (2 \pi /p)$. In other words, if $1 < p < 2$ then the circular arc is below the interval $[-1,1]$ and if $2<p <\infty$ it is above. Finally, in the special case that $p=2$ the arc degenerates precisely to the interval $[-1,1]$ in the complex plane. \\

\section{The contour $\Gamma_M$ and symbol $A_{\alpha, p, s}$} \label{contourgensymbol}
We now follow Duduchava, see p. 520, \cite{Dudu87},  and using his notation we define
\begin{equation*}
\omega := (x, \xi, \lambda) \quad \text{where} \quad 0 \leq x \leq \infty, \,\, -\infty \leq \xi,\lambda \leq \infty.
\end{equation*}
Then we consider the contour $\Gamma_M$, which can be described as
\begin{equation} \label{ContourGammaM}
\Gamma_M := \Gamma_1 \cup \Gamma^+_2 \cup \Gamma^+_3 \cup \Gamma_4 \cup \Gamma^-_3 \cup \Gamma^-_2, 
\end{equation}
where the order of the six segments indicates the direction to be taken. \\

On each of the six segments of $\Gamma_M$, two of the variables in the triple $(x, \xi, \lambda)$ are fixed, whilst the third varies over its permitted range. 
The precise definition, including orientation, of each segment of the contour $\Gamma_M$ is as follows: \\

$\Gamma_M = 
\begin{cases}
\Gamma_1 = \{ (0,\xi, \infty): -\infty \leq \xi \leq \infty \} \\ \\
\Gamma_2^+ = \{ (x,\infty, \infty): 0 \leq x \leq \infty \} \\ \\
\Gamma_3^+ = \{ (\infty,\infty, \lambda): \infty \geq \lambda \geq 0 \} \\ \\
\Gamma_4 = \{ (\infty,\xi, 0): \infty \geq \xi \geq -\infty \} \\ \\
\Gamma_3^- = \{ (\infty, -\infty, \lambda): 0 \leq \lambda \leq \infty \} \\ \\
\Gamma_2^- = \{ (x, -\infty, \infty): \infty \geq x \geq 0 \}. 
\end{cases} $ \\ \\

\begin{remark}
There is a typographical error in the statement of Theorem 5.5.7, pp. 286, 287, \cite{Roch}. \footnote{This was confirmed in a personal communication from Prof. Steffen Roch on 13th October 2016.} The right hand side of the display formula in the second line on p. 287 should read
\begin{equation*}
a(0^+) \chi_- I + a(+\infty) \chi_+I,
\end{equation*}
instead of 
\begin{equation*}
a(+\infty) \chi_- I + a(0^+) \chi_+I.
\end{equation*} \\
\end{remark}

With these preparations complete, we are now in a position to restate Corollary 5.5.10, p. 290, \cite{Roch}, in a more convenient form. \\

From Theorem 5.5.3, \cite{Roch}, we require the functions
\begin{align}
S_{\Gamma^+_3}(\lambda) :=  \sum^N_{j=1} &a_j(\infty) b_j(\infty) c_j(-\lambda) \,\, (\lambda >0),  \label{SGamma3plus}\\
S_{\Gamma^-_3}(\lambda) :=  \sum^N_{j=1}  & a_j(\infty) b_j(-\infty) c_j(\lambda)  \,\, (\lambda >0), \label{SGamma3minus}
\end{align}
to be non-zero. In addition, from Theorem 5.5.7, \cite{Roch}, we require the values
\begin{equation*}
\quad I_{\Gamma^+_3 \cap \Gamma_4} :=  \sum^N_{j=1} a_j(\infty)b_j(\infty)c_j(0), \,\, I_{\Gamma^-_3 \cap \Gamma^-_2} :=  \sum^N_{j=1} a_j(\infty)b_j(-\infty)c_j(\infty),
\end{equation*}
to be non-zero. \\

Similarly, from Theorem 5.5.4, \cite{Roch} we require the functions
\begin{align}
S_{\Gamma^+_2}(x) :=  \sum^N_{j=1} & a_j(x) b_j(\infty) c_j(-\infty) \,\, (x >0), \label{SGamma2plus} \\
S_{\Gamma^-_2}(x) :=  \sum^N_{j=1} & a_j(x) b_j(-\infty) c_j(\infty)  \,\, (x >0), \label{SGamma2minus}
\end{align}
to be non-zero. In addition, from Theorem 5.5.7, \cite{Roch}, we require the values
\begin{equation*}
I_{\Gamma^+_2 \cap \Gamma^+_3} :=  \sum^N_{j=1} a_j(\infty)b_j(\infty)c_j(-\infty), \,\, I_{\Gamma^-_2 \cap \Gamma_1} :=  \sum^N_{j=1} a_j(0)b_j(-\infty)c_j(\infty), 
\end{equation*}
to be non-zero. \\

Finally, from Theorem 5.5.7, \cite{Roch}, we require the functions
\begin{align}
S_{\Gamma_1}(\xi) :=   \sum^N_{j=1} & a_j(0) b_j(\xi) c_{jp}(\infty, \xi)  \quad (-\infty < \xi < \infty), \label{SGamma1}\\
S_{\Gamma_4}(\xi) := \sum^N_{j=1} & a_j(\infty) b_j(\xi) c_j(0)  \quad (-\infty < \xi < \infty), \label{SGamma4}
\end{align}
to be non-zero. In addition, again from Theorem 5.5.7, \cite{Roch}, we require the values
\begin{equation*}
I_{\Gamma_1 \cap \Gamma^+_2} :=  \sum^N_{j=1} a_j(0)b_j(\infty)c_j(-\infty), \,\, I_{\Gamma_4 \cap \Gamma^-_3} :=  \sum^N_{j=1} a_j(\infty)b_j(-\infty)c_j(0),
\end{equation*}
to be non-zero. \\

We now define the \textit{generalised symbol} $A_{\alpha,p,s}(\omega)$ by: \\

$\qquad A_{\alpha,p,s}(\omega):= 
\begin{cases} 
S_{\Gamma_1}(\xi) &\mbox{ on } \Gamma_1 \\ 
S_{\Gamma^\pm_2}(x) &\mbox{ on } \Gamma^\pm_2 \\ 
S_{\Gamma^\pm_3}(\lambda) &\mbox{ on } \Gamma^\pm_3 \\ 
S_{\Gamma_4}(\xi) &\mbox{ on } \Gamma_4. \\
\end{cases}$ \\

Then it is easy to see, from the above results, that as the triple $\omega = (x, \xi, \lambda)$ traverses the contour $\Gamma_M, \,\, A_{\alpha,p,s}(\omega)$ forms a closed loop in the complex plane. \\

Hence, we can re-write Corollary 5.5.10, p. 290, \cite{Roch} as:
\begin{theorem} \label{AtilldeFredholm}
The operator 
\begin{equation*}
\tilde{A} = \sum^N_{j=1} a_j M^0(b_j) W(c_j)
\end{equation*}
 is Fredholm on $L_p(\mathbb{R}_+)$ if and only if 
\begin{equation*}
\inf_{\omega \in \Gamma_M} | A_{\alpha,p,s}(\omega) | >0.
\end{equation*} \\
\end{theorem}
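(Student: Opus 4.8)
The plan is to obtain Theorem \ref{AtilldeFredholm} simply as a restatement of Corollary 5.5.10, p.\,290, \cite{Roch}, once all the notational translations have been carried out. First I would invoke the fact, established in the discussion preceding the theorem (and in Lemmas \ref{BVBeta}, \ref{BVm} together with Remark \ref{BVCCpinclusions}), that each $a_j$ is continuous on $\overline{\mathbb{R}_+}$, each $b_j$ is continuous on $\dot{\mathbb{R}}$, each $c_j$ is continuous on $\mathbb{R}$ with one-sided limits at $\pm\infty$, and all the $b_j,c_j$ have bounded variation; hence the $b_j$ lie in $C_p$ and the $c_j$ lie in $PC_p$, so the hypotheses of Theorems 5.5.3, 5.5.4, 5.5.7 and Corollary 5.5.10 of \cite{Roch} are met. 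The sign reversal in the Fourier transform, recorded in \eqref{FTdefsymbolsign}, is absorbed into the definition \eqref{gpinftydefinition} of the loop function $g_p(\infty,\xi)$ and into the correction to Theorem 5.5.7 noted in the remark above; I would point to these explicitly so that the reader sees that our $c_{jp}(\infty,\xi)$ corresponds to Duduchava's and Roch's loop value.

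The substance of the argument is that the quantities appearing in Corollary 5.5.10 are exactly the values $S_{\Gamma_1}, S_{\Gamma^\pm_2}, S_{\Gamma^\pm_3}, S_{\Gamma_4}$ defined in \eqref{SGamma1}--\eqref{SGamma4} (together with the intersection values $I_{\Gamma\cap\Gamma'}$ at the six corners of $\Gamma_M$), assembled into the single function $A_{\alpha,p,s}(\omega)$ on the contour $\Gamma_M$ of \eqref{ContourGammaM}. Thus I would verify segment by segment: on $\Gamma_1$ the symbol of $\sum a_j M^0(b_j) W(c_j)$ localised at $x=0$ is $\sum a_j(0) b_j(\xi) c_{jp}(\infty,\xi)$ (Theorem 5.5.7, using that $M^0(b)$ contributes $b(\xi)$ and $W(c)$ contributes the loop value at $x=\infty$ via the operator $\tfrac{(I\pm S_\mathbb{R})}{2}$, whose invertibility criterion is recorded above); on $\Gamma^\pm_2$ and $\Gamma^\pm_3$ the relevant localisations are given by Theorems 5.5.3 and 5.5.4, yielding \eqref{SGamma3plus}--\eqref{SGamma3minus} and \eqref{SGamma2plus}--\eqref{SGamma2minus}; and on $\Gamma_4$ the localisation at $x=\infty$ gives \eqref{SGamma4}. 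The six corner values match up pairwise because they are obtained by taking the two relevant limits in either order, which is precisely the continuity statement that makes $A_{\alpha,p,s}$ a \emph{closed} curve on $\Gamma_M$. Corollary 5.5.10 then asserts that $\tilde A$ is Fredholm on $L_p(\mathbb{R}_+)$ iff none of these localised symbols vanishes, i.e. iff $\inf_{\omega\in\Gamma_M}|A_{\alpha,p,s}(\omega)|>0$.

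The main obstacle is bookkeeping rather than mathematics: one must be scrupulous about (i) the swapped roles of $b$ and $c$ between our convention and that of \cite{Roch}, (ii) the opposite Fourier sign convention \eqref{FTdefsymbolsign}, which flips $c_j(\xi)\mapsto c_j(-\xi)$ in exactly the places where it enters the loop function, (iii) the typographical correction to Theorem 5.5.7 (the $\chi_\pm$ are attached to $a(0^+)$ and $a(+\infty)$ the right way round), and (iv) checking that every one of the six intersection values $I_{\Gamma\cap\Gamma'}$ is the common limit of the two adjacent segment functions, so that the piecewise definition of $A_{\alpha,p,s}$ is genuinely continuous along $\Gamma_M$. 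Once these translations are in place the equivalence is immediate from Corollary 5.5.10, and I would close by remarking that the hypotheses on $\{a_j,b_j,c_j\}$ are satisfied in our application to the symbols $\{c_1,a_2,b_2,c_2\}$ of Chapter \ref{OpAlgLp}, so the theorem applies to the operator of interest.
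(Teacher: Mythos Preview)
Your proposal is correct and follows exactly the paper's approach: the theorem is obtained directly as a restatement of Corollary 5.5.10, p.\,290, \cite{Roch}, after the notational translations (swapped $b/c$ convention, opposite Fourier sign, the typographical correction to Theorem 5.5.7) and the segment-by-segment identification of the generalised symbol carried out in the preceding discussion. In fact the paper gives no further proof beyond declaring the theorem to be this rewriting, so your plan, which spells out the bookkeeping more explicitly, is if anything more detailed than what appears there.
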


\begin{remark} \label{WindingFred}
In the case that $\widetilde{A}$ is a Fredholm operator on $L_p(\mathbb{R}_+)$, then, see Theorem 3.2, p. 521, \cite{Dudu87}, the index of $\widetilde{A}$ is given by
\begin{equation} \label{indwindingnumber}
\operatorname{ind} \widetilde{A} = - \, (\text{winding number of  } A_{\alpha,p,s}(\omega)).
\end{equation}
In particular, if the winding number of $A_{\alpha,p,s}(\omega)$ is zero, then $\widetilde{A}$ has Fredholm index equal to zero. \\
\end{remark}

We now verify Remark \ref{WindingFred} in a simple case.  Let us define the symbol
\begin{equation*}
c_{(n)} (\xi) := (\xi + i)^n (\xi - i)^{-n}, \quad n \in \mathbb{N}.
\end{equation*}
Then, see Chapter 1, Section 8, \cite{GohFeld}, the Wiener-Hopf operator $W(c_{(n)})$, acting on $L_p(\mathbb{R}_+)$, has the following properties:
\begin{enumerate}[\hspace{18pt}(a)]
\item $W(c_{(n)})$ is right-invertible;
\item $\operatorname{Ker} W(c_{(n)})$ is spanned by the set $\{ t^{k-1} e^{-t} \}^n_{k=1}$.
\end{enumerate}
Hence, $ \dim \operatorname{CoKer} W(c_{(n)}) = 0, \,\, \dim \operatorname{Ker} W(c_{(n)}) = n$ and $W(c_{(n)})$ is a Fredholm operator with index $n$. \\

On the other hand, if we now calculate the generalised symbol of $W(c_{(n)})$, using equations \eqref{SGamma1}, \eqref{SGamma2plus},  \eqref{SGamma2minus}, \eqref{SGamma3plus},  \eqref{SGamma3minus} and \eqref{SGamma4} respectively, we obtain: \\

$\qquad A_{\alpha,p,s}(\omega):= 
\begin{cases} 
c_{(n)p}(\infty, \xi) &\mbox{ on } \Gamma_1 \\ 
c_{(n)}(\mp\infty) &\mbox{ on } \Gamma^\pm_2 \\ 
c_{(n)}(\mp \lambda) &\mbox{ on } \Gamma^\pm_3 \\ 
c_{(n)}(0) &\mbox{ on } \Gamma_4. \\
\end{cases}$ \\

But since $c_{(n)}(+\infty) = c_{(n)}(-\infty)$,  it is easy to see that as the triple $\omega = (x, \xi, \lambda)$ traverses the contour $\Gamma_M$, the function $A_{\alpha,p,s}(\omega)$ can be represented by simply $c_{(n)}(\lambda)$. \\

Moreover, a routine calculation shows that
\begin{equation*}
\text{winding number of } c_{(n)}(\lambda) = -n,
\end{equation*}
and, thus, we have validated the formula given by \eqref{indwindingnumber}, in the special case that $\widetilde{A}= W(c_{(n)})$. \\

\section{Generalised symbol - lower regularity}
Suppose that $1 < p < \infty, \, 1/p < s <  1+ 1/p$ and $0 < \alpha < \tfrac{1}{2}$. \\

We are interested in the solvability of the equation \eqref{WamW}
\begin{equation*} 
\big ( W(c_1) + a_2 M^0(b_2) W(c_2) + T \big ) (r_+ u_s) = g,
\end{equation*}
where the operator $T$, acting on $L_p(\mathbb{R}_+)$, is compact and from \eqref{Finalabc}
\begin{align*} 
g:&= r_+ (D-i)^{s-2\alpha} l_+ f; \nonumber \\
{c}_1(\xi) &= (1+\xi^2)^\alpha (\xi-i)^{s- 2\alpha-1}(\xi+i)^{1-s}. \nonumber  \\
{a}_2(x) &= -i C_\alpha \, \psi(\alpha+1, 2\alpha+1, x) \quad \text{(see Lemma } \ref{eUab2x}); \nonumber \\
{b}_2(\xi) &= B(s- 2\alpha +1/p' + i \xi, 2 \alpha) / \Gamma(2\alpha) ; \\
{c}_2(\xi) &= (-i \xi)^{2\alpha} (\xi-i)^{s- 2\alpha-1}(\xi+i)^{1-s},\nonumber  
\end{align*} 
and the constant $C_\alpha$ is given by
\begin{equation*} 
C_{\alpha} = -i \, \dfrac{\alpha \, 2^{2\alpha}}{\Gamma (1- \alpha)}.
\end{equation*}

Our immediate goal is to show that the operator
\begin{equation*} 
\tilde{A} := W(c_1) + a_2 M^0(b_2) W(c_2)
\end{equation*}
acting on $L_p(\mathbb{R}_+)$ is Fredholm. \\

Finally, purely for notational convenience, we define
\begin{equation*}
a_1(x)  = 1 \quad \text{and} \quad b_1(\xi)=1. \\
\end{equation*}

\subsection{Segment $\Gamma_1$} \label{Gamma1}
Firstly, we note that
\begin{align*}
a_2(0) & = -i C_\alpha \, \psi(\alpha+ 1, 2 \alpha + 1, 0) \\
& = - i C_\alpha \, 2^{-2 \alpha} \dfrac{\Gamma(2 \alpha)}{\Gamma(\alpha +1)} \quad \text{(see Lemma \ref{eUab2x})}.
\end{align*}

Hence,
\begin{align*}
a_2(0) \, b_2(\xi) & = - i C_\alpha \, 2^{-2 \alpha} \dfrac{\Gamma(2 \alpha)}{\Gamma(\alpha +1)} \cdot \dfrac{B(s- 2\alpha +1/p' + i \xi, 2 \alpha)}{\Gamma(2\alpha)} \\
& = - \dfrac{\alpha \, 2^{2\alpha}}{\Gamma (1- \alpha)} \cdot \dfrac{2^{-2 \alpha}}{\Gamma(\alpha +1)} \cdot B(s- 2\alpha +1/p' + i \xi, 2 \alpha) \\
& = - \dfrac{1}{\Gamma (1- \alpha)\Gamma(\alpha)} \cdot B(s- 2\alpha +1/p' + i \xi, 2 \alpha) \quad (\Gamma(\alpha +1 ) = \alpha \Gamma(\alpha))\\
&= -\dfrac{\sin \pi \alpha}{\pi} \cdot B(s- 2\alpha +1/p' + i \xi, 2 \alpha) \quad \text{(5.5.3, \cite{NIST}).}
\end{align*}

Therefore, on the segment $\Gamma_1$, for $-\infty \leq \xi \leq \infty$, we have 
\begin{align*}
A_{\alpha, p, s}(\omega) & := a_1(0) \, b_1(\xi) \, c_{1p}(\infty, \xi) + a_2(0) \, b_2(\xi) \, c_{2p}(\infty, \xi) \\
& = c_{1p}(\infty, \xi) -  \dfrac{\sin \pi \alpha}{\pi} \cdot B(s- 2\alpha +1/p' + i \xi, 2 \alpha) \, c_{2p}(\infty, \xi). \\
\end{align*}

From Lemmas \ref{c1limits} and \ref{c1beta}, we have
\begin{equation*}
c_{1p}(\infty, \xi)  = e^{ i \pi \nu} \dfrac{\sin [\pi (1/p + \nu - i \xi) ]}{\sin \pi (1/p - i \xi)},  \qquad \nu = 1 - s + \alpha.
\end{equation*}

Similarly, from Lemmas \ref{c2limits} and \ref{c1beta}, we have
\begin{equation*}
c_{2p}(\infty, \xi)  = e^{- i \pi \alpha} e^{ i \pi \nu'} \dfrac{\sin [\pi (1/p + \nu' - i \xi) ]}{\sin \pi (1/p - i \xi)},  \qquad \nu' = 1 - s + 2 \alpha.
\end{equation*} \\

But $e^{- i \pi \alpha} e^{ i \pi \nu'} = e^{i \pi (1-s+\alpha)} = e^{i \pi \nu}$, and thus $c_{1p}(\infty, \xi)$ and $c_{2p}(\infty, \xi)$ have a common factor
\begin{equation*}
\dfrac{e^{i \pi \nu}}{\sin \pi (1/p - i \xi)}.
\end{equation*} \\

So, we are interested in establishing the precise conditions under which the quadruple $(\alpha, p, s, \xi)$ is \textbf{not} a solution of the following transcendental equation
\begin{equation} \label{transcendeqn}
\dfrac{\sin (\pi (1/p + \nu - i \xi))}{\sin (\pi (1/p + \nu' - i \xi))} - \dfrac{\sin \pi \alpha}{\pi} \, B(s -2\alpha +1/p' +i\xi, 2\alpha) =0.
\end{equation}

Let us now define
\begin{equation} \label{Tsdefinition}
T_s := \dfrac{\sin (\pi (1/p + \nu - i \xi))}{\sin (\pi (1/p + \nu' - i \xi))} \qquad \nu = 1 -s + \alpha; \quad v' = 1 - s +2 \alpha,
\end{equation}
and
\begin{equation} \label{TBdefinition}
T_B := \dfrac{\sin \pi \alpha}{\pi} \, B(s -2\alpha +1/p' +i\xi, 2\alpha).
\end{equation}
Then, the transcendental equation \eqref{transcendeqn} simply becomes
\begin{equation*}
T_s = T_B.
\end{equation*}

\subsection{Segment $\Gamma^\pm_2$}
Similarly, on $\Gamma^+_2$, for $0 \leq x \leq \infty$, we have
\begin{align*}
A_{\alpha, p, s}(\omega) & :=  a_1(x) \, b_1(\infty) \, c_1(-\infty) + a_2(x) \, b_2(\infty) \, c_2(-\infty) \\
& =  c_1(-\infty) + 0 \\
& = e^{2 \pi \nu i},  
\end{align*}
and on $\Gamma^-_2$, for $\infty \geq x \geq 0$,  
\begin{align*}
A_{\alpha, p, s}(\omega) & :=  a_1(x) \, b_1(-\infty) \, c_1(+\infty) + a_2(x) \, b_2(-\infty) \, c_2(+\infty) \\
& = c_1(+\infty) + 0 \\
& = 1.  
\end{align*}
Hence, 
\begin{equation*}
\inf_{\omega \in \Gamma^+_2 \cup \Gamma^-_2} | A_{\alpha, p, s}(\omega)| =1. \\
\end{equation*}

\subsection{Segment $\Gamma^\pm_3$} \label{Gamma3pm}
On $\Gamma^+_3$  for $\infty > \lambda \geq 0$, 
\begin{align*}
A_{\alpha, p, s}(\omega) & := a_1(\infty) \, b_1(\infty) \, c_{1}(-\lambda) + a_2(\infty) \, b_2(\infty) \, c_{2}(-\lambda) \\
& = c_{1}(-\lambda) + 0 \\
& = c_{1}(-\lambda),  
\end{align*}
and on $\Gamma^-_3$, for $0 \leq \lambda < \infty$,
\begin{align*}
A_{\alpha, p, s}(\omega) & :=  a_1(\infty) \, b_1(-\infty) \, c_{1}(\lambda) + a_2(\infty) \, b_2(-\infty) \, c_{2}(\lambda) \\
& = c_{1}(\lambda) + 0 \\
& = c_{1}(\lambda).  
\end{align*}
Note that on $\Gamma^+_3 $ and $\Gamma^-_3 $ the parameter $\lambda$ varies between $0$ and $\infty$ but, of course, in an opposite sense. So, in summary,  
\begin{equation*}
\inf_{\omega \in \Gamma^{\pm}_3} | A_{\alpha, p, s}(\omega)| =1.\\
\end{equation*}

\subsection{Segment $\Gamma_4$}
Finally, on $\Gamma_4$, for $-\infty \leq \xi \leq \infty$,
\begin{align*}
A_{\alpha, p, s}(\omega) & := a_1(\infty) \, b_1(\xi) \, c_{1}(0) + a_2(\infty) \, b_2(\xi) \, c_{2}(0) \\
&=  c_{1}(0) + 0 \\
&= c_1(0). 
\end{align*}
Hence, 
\begin{equation*}
\inf_{\omega \in \Gamma_4} | A_{\alpha, p, s}(\omega)| =1,
\end{equation*}
and this completes the review of the contour $\Gamma_M$.

\subsection{Summary}
Note that the preceding analysis of the segments of the contour has shown that $A_{\alpha, p,s}(\omega)$ is constant on the segments $\Gamma^{\pm}_2$ and $\Gamma_4$. Therefore, it remains to consider $A_{\alpha, p,s}(\omega)$ on $\Gamma_1 \cup \Gamma^+_3 \cup \Gamma^-_3$. But from subsection \ref{Gamma3pm}, we can combine $\Gamma^\pm_3$ to give a new segment $\Gamma_3$ (say), where now the parameter $\lambda$ varies from $-\infty$ to $\infty$. (Note that, as expected, the symbol $A_{\alpha, p,s}(\omega)$ is continuous at $\lambda =0$ on the new segment $\Gamma_3$.) \\

By construction, we observe that $A_{\alpha, p,s}(\omega)$ is continuous on $\Gamma_1 \cup \Gamma_3$. Indeed, from subsection \ref{Gamma1}, on the segment $\Gamma_1$
\begin{equation} \label{AGamma1}
A_{\alpha, p,s}(\omega)=c_{1p}(\infty, \xi) - \dfrac{\sin \pi \alpha}{\pi} \, B(s -2\alpha +1/p' +i\xi, 2\alpha) \, c_{2p}(\infty, \xi)
\end{equation}
with limits $c_{1p}(\infty, \pm \infty) = c_1(\mp \infty)$ at $\xi = \pm \infty$ respectively. \\

The condition that $A_{\alpha, p,s}(\omega)=0$ on $\Gamma_1$ gives rise to the transcendental equation
\begin{equation*}
\dfrac{\sin (\pi (1/p + \nu - i \xi))}{\sin (\pi (1/p + \nu' - i \xi))} = \dfrac{\sin \pi \alpha}{\pi} \, B(s -2\alpha +1/p' +i\xi, 2\alpha),
\end{equation*}
where $\nu = 1 - s + \alpha$ and $\nu'=1-s + 2 \alpha$. \\

Finally, on $\Gamma_3$ we have
\begin{equation} \label{AGamma3}
A_{\alpha, p,s}(\omega)= c_1(\lambda)
\end{equation}
with limits $c_1(\pm \infty)$ for $\lambda = \pm \infty$. \\

\begin{remark}
It turns out that we are in the subalgebra described by Duduchava, Section 3.2, p. 524, \cite{Dudu87}. In other words, the generators of the algebra are simply of the form $aM^0(b)$ and $W(c)$, rather than $a$, $M^0(b)$ and $W(c)$ individually. \\
\end{remark}

\section{Supporting lemmas}
\begin{lemma} \label{c1limits}
Suppose $\nu = m - s + \alpha$, and the symbol $c_1(\xi)$ is given by
\begin{equation*}
c_1(\xi) = (1 + \xi^2)^\alpha \,(\xi - i)^{s-2\alpha-m} \,  (\xi + i)^{m-s}, \quad m=1 \,\, \text{or} \,\, 2.
\end{equation*}
Then
\begin{equation*}
\lim_{\xi \to \infty} c_1(\xi) = 1; \qquad \lim_{\xi \to -\infty} c_1(\xi) = e^{2 \pi \nu i},
\end{equation*}
and
\begin{equation*}
\lim_{\xi \to 0^+ } c_1(\xi) = e^{ \pi \nu i}; \qquad \lim_{\xi \to 0^- } c_1(\xi) = e^{ \pi  \nu i}.\end{equation*}
Thus, the symbol $c_1(\xi)$ has a (single) discontinuity at $\xi =\infty$. 
\end{lemma}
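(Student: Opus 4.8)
\textbf{Proof proposal for Lemma \ref{c1limits}.}

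The plan is to evaluate the four limits directly from the definition \eqref{exponentdef} of complex powers, being careful about the branch cut \eqref{argsingleval} along the negative real axis. First I would handle $\xi \to +\infty$. Here all three factors $(1+\xi^2)$, $(\xi-i)$ and $(\xi+i)$ have positive real part tending to $+\infty$, so their arguments all tend to $0$ and I may freely combine powers by Remark \ref{complexexponent}. Writing $(1+\xi^2)^\alpha = (\xi-i)^\alpha (\xi+i)^\alpha$ (valid since $\arg(\xi-i)+\arg(\xi+i)\to 0$), we get $c_1(\xi) = (\xi-i)^{s-\alpha-m}(\xi+i)^{m-\alpha}$. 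Factoring out $\xi$ from each, $c_1(\xi) = \xi^{s-\alpha-m}\xi^{m-\alpha}(1-i/\xi)^{s-\alpha-m}(1+i/\xi)^{m-\alpha} = \xi^{s-2\alpha}\cdot(1+o(1))$. But the exponents $s-\alpha-m$ and $m-\alpha$ sum to $s-2\alpha$, and more importantly the \emph{real} powers of $\xi$ combine to $\xi^{s-2\alpha-m}\cdot\xi^{m-s}\cdot(1+\xi^2)^\alpha$... let me recompute: $c_1(\xi)=(1+\xi^2)^\alpha(\xi-i)^{s-2\alpha-m}(\xi+i)^{m-s}$, and $(1+\xi^2)^\alpha=(\xi-i)^\alpha(\xi+i)^\alpha$, giving $c_1(\xi)=(\xi-i)^{s-\alpha-m}(\xi+i)^{\alpha+m-s}$; the two exponents sum to $0$, so $c_1(\xi)=\big(\tfrac{\xi-i}{\xi+i}\big)^{s-\alpha-m}\to 1^{s-\alpha-m}=1$ since $(\xi-i)/(\xi+i)\to 1$ with argument tending to $0$.

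Next, for $\xi\to-\infty$: now $\arg(\xi-i)\to-\pi$ and $\arg(\xi+i)\to+\pi$, so $\arg(\xi-i)+\arg(\xi+i)\to 0$ and Remark \ref{complexexponent} still applies to give $(1+\xi^2)^\alpha=(\xi-i)^\alpha(\xi+i)^\alpha$ as $\xi\to-\infty$ (the sum of arguments is in $(-\pi,\pi]$ for large negative $\xi$). Thus again $c_1(\xi)=\big(\tfrac{\xi-i}{\xi+i}\big)^{s-\alpha-m}$, but now $(\xi-i)/(\xi+i)\to 1$ with argument tending to $-2\pi$ from the numerator's $-\pi$ and denominator's $+\pi$; more carefully, $\arg((\xi-i)/(\xi+i))=\arg(\xi-i)-\arg(\xi+i)\to -\pi-\pi=-2\pi$, but since this quotient has modulus $1$ and lies just below the positive real axis, its principal argument actually tends to $0^-$, \emph{not} $-2\pi$. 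The discrepancy of $2\pi$ is exactly the branch jump: I would instead write $c_1(\xi)=(\xi-i)^{s-\alpha-m}(\xi+i)^{-(s-\alpha-m)}$ and track each factor separately. As $\xi\to-\infty$, $(\xi-i)^{s-\alpha-m}=|\xi-i|^{s-\alpha-m}e^{i(s-\alpha-m)\arg(\xi-i)}\sim|\xi|^{s-\alpha-m}e^{-i\pi(s-\alpha-m)}$ and $(\xi+i)^{-(s-\alpha-m)}\sim|\xi|^{-(s-\alpha-m)}e^{-i\pi(s-\alpha-m)}$, so the product tends to $e^{-2\pi i(s-\alpha-m)}=e^{2\pi i(m-s+\alpha)}\cdot e^{-2\pi i m}\cdot$... since $m\in\{1,2\}$, $e^{-2\pi i m}=1$, hence the limit is $e^{2\pi i(m-s+\alpha)}=e^{2\pi\nu i}$, as claimed. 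Wait — I must double check the sign: $-2\pi(s-\alpha-m)=2\pi(m+\alpha-s)=2\pi(m-s+\alpha)=2\pi\nu$. Good.

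For the one-sided limits at $\xi=0$, I would plug in directly. At $\xi=0^+$: $(\xi-i)\to -i$ with $\arg=-\pi/2$, $(\xi+i)\to i$ with $\arg=+\pi/2$, and $(1+\xi^2)\to 1$. So $c_1(0^+) = 1^\alpha\cdot(-i)^{s-2\alpha-m}\cdot i^{m-s} = e^{-i\pi(s-2\alpha-m)/2}\cdot e^{i\pi(m-s)/2} = e^{i\pi(-s+2\alpha+m+m-s)/2} = e^{i\pi(2m+2\alpha-2s)/2} = e^{i\pi(m+\alpha-s)}=e^{i\pi\nu}$. At $\xi=0^-$: approaching $0$ through negative reals, $(\xi-i)\to -i$ still with $\arg\to-\pi/2$ (continuous there), $(\xi+i)\to i$ with $\arg\to\pi/2$, $(1+\xi^2)\to 1$ — all three factors are continuous at $\xi=0$ from the left since none crosses the negative real axis near $0$ — so $c_1(0^-)=c_1(0^+)=e^{i\pi\nu}$. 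Finally, since $c_1$ is continuous on $\mathbb{R}$ (the only place a factor could cross the cut is where $\xi^2+1$, $\xi\mp i$ hit $\mathbb{R}_{\le 0}$, which does not happen for real $\xi$) and $c_1(+\infty)=1\neq e^{2\pi\nu i}=c_1(-\infty)$ unless $\nu\in\mathbb{Z}$, the symbol has its sole discontinuity (on $\dot{\mathbb{R}}$) at $\xi=\infty$. The main obstacle is bookkeeping the $2\pi$ branch ambiguity in the $\xi\to-\infty$ computation; handling each factor $(\xi\mp i)^\bullet$ by its own modulus–argument form, rather than prematurely combining, is the safe route, and the cancellation $e^{-2\pi i m}=1$ for integer $m$ is what makes the answer land cleanly on $e^{2\pi\nu i}$.
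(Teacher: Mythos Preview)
Your proof is correct and follows essentially the same route as the paper: both reduce to tracking $\arg(\xi-i)$ and $\arg(\xi+i)$ at the four limit points. The paper's version is marginally more direct in that it observes $|c_1(\xi)|=1$ at the outset and writes $c_1(\xi)=\exp\bigl[i(s-2\alpha-m)\arg(\xi-i)+i(m-s)\arg(\xi+i)\bigr]$, thereby bypassing the factorisation $(1+\xi^2)^\alpha=(\xi-i)^\alpha(\xi+i)^\alpha$ and the quotient-form branch detour you needed to back out of.
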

\begin{proof}
It is easy to see that $| c_1(\xi) | =1$ for all $\xi \in \mathbb{R}$. Therefore
\begin{equation*}
c_1(\xi) = \exp \big[ 0+ i(s-2\alpha-m) \arg(\xi-i)  + i(m-s)\arg(\xi+i) \big].
\end{equation*}
As $\xi \to \infty$, we have $\arg (\xi \pm i) \to 0$. Hence $\lim_{\xi \to \infty} c_1(\xi) = 1$. \\

On the other hand, 
\begin{align*}
\lim_{\xi \to -\infty} c_1(\xi) &= \exp \big [  i(s-2\alpha-m)(-\pi) + i(m-s) \pi \big ] \\
&= \exp \big [  i \pi(m-s - s+2\alpha+m) \big ] \\
&= \exp [ 2 \pi i \nu], \qquad (\nu = m-s+ \alpha).
\end{align*}

Moreover
\begin{align*}
\lim_{\xi \to 0^+ } c_1(\xi) &=  \exp\big[  i(s-2\alpha-m) (-\pi/2) + i(m-s) (+\pi/2) \big] \\
&= \exp[ \pi i \nu] \\
&=  \lim_{\xi \to 0^- } c_1(\xi).
\end{align*}
This completes the proof of the lemma. \\
\end{proof}

\begin{lemma} \label{c2limits}
Suppose the symbol $c_2(\xi)$ is given by
\begin{equation*}
c_2(\xi) = (-i \xi)^{2\alpha} \, (\xi +i)^{m-s} \, (\xi -i)^{-m+s-2\alpha}, \quad m=1 \,\, \text{or} \,\, 2.
\end{equation*}
Then, if $\nu' := m- s + 2 \alpha$,
\begin{equation*}
\lim_{\xi \to \infty} c_2(\xi) = \exp[-i\pi \alpha] ; \qquad \lim_{\xi \to -\infty} c_2(\xi) =  \exp[-i\pi \alpha] \, \cdot \, \exp[i \pi 2 \nu'],
\end{equation*}
and
\begin{equation*}
\lim_{\xi \to 0^+ } c_2(\xi) = 0; \qquad \lim_{\xi \to 0^- } c_2(\xi) = 0
\end{equation*}
Thus, the symbol $c_2(\xi)$ has a (single) discontinuity at $\xi =\infty$. \\
\end{lemma}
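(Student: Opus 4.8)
The plan is to compute the limits of $c_2(\xi)$ at $\xi = 0^\pm$ and $\xi = \pm\infty$ directly from the definition of complex powers given in equation \eqref{exponentdef}, exactly as was done for $c_1$ in Lemma \ref{c1limits}. The essential point is that $|(-i\xi)^{2\alpha}| = |\xi|^{2\alpha}$, $|(\xi+i)^{m-s}| = (1+\xi^2)^{(m-s)/2}$ and $|(\xi-i)^{-m+s-2\alpha}| = (1+\xi^2)^{(-m+s-2\alpha)/2}$, so that
\begin{equation*}
|c_2(\xi)| = |\xi|^{2\alpha} \, (1+\xi^2)^{(-2\alpha)/2} = \left( \frac{\xi^2}{1+\xi^2}\right)^{\alpha}.
\end{equation*}
Hence $|c_2(\xi)| \to 0$ as $\xi \to 0$, which immediately gives the two limits at $\xi = 0^{\pm}$, and $|c_2(\xi)| \to 1$ as $\xi \to \pm\infty$, so the limits at infinity are unimodular and determined purely by the arguments.

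For the behaviour at infinity, I would write $c_2(\xi) = \exp\big[ 2\alpha \log(-i\xi) + (m-s)\log(\xi+i) + (-m+s-2\alpha)\log(\xi-i)\big]$ and track the arguments. As $\xi \to +\infty$, $\arg(\xi \pm i) \to 0$ while $\arg(-i\xi) = \arg(-i) = -\pi/2$, so the exponent tends to $2\alpha(-i\pi/2) = -i\pi\alpha$, giving $\lim_{\xi\to\infty} c_2(\xi) = e^{-i\pi\alpha}$. As $\xi \to -\infty$, $\arg(\xi+i) \to \pi$, $\arg(\xi-i) \to -\pi$, and $\arg(-i\xi) = \arg(i|\xi|) = \pi/2$; substituting gives the exponent $2\alpha(i\pi/2) + (m-s)(i\pi) + (-m+s-2\alpha)(-i\pi) = i\pi\alpha + 2i\pi(m-s+2\alpha) - i\pi(2\alpha) $; collecting terms this equals $-i\pi\alpha + 2i\pi(m-s+2\alpha) = -i\pi\alpha + 2i\pi\nu'$, so $\lim_{\xi\to-\infty}c_2(\xi) = e^{-i\pi\alpha}e^{2i\pi\nu'}$, as claimed.

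One small technical point I would be careful about is the application of the exponent splitting rule from Remark \ref{complexexponent}: the identity $(z_1 z_2)^\nu = z_1^\nu z_2^\nu$ only holds when $-\pi < \arg z_1 + \arg z_2 \leq \pi$, so when manipulating $(1+\xi^2)^\alpha = (\xi-i)^\alpha(\xi+i)^\alpha$ type factorisations I should check the sign condition; since $\xi - i$ and $\xi + i$ have imaginary parts of opposite sign this is automatic, as noted in the Remark. The main (mild) obstacle is simply bookkeeping the branch of each factor consistently at $\xi \to -\infty$, where $\arg(-i\xi)$ jumps relative to its value at $\xi \to +\infty$; once that is handled correctly the computation is routine. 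Finally, since $c_2$ is continuous at every finite $\xi$ (the factors $(-i\xi)^{2\alpha}$, $(\xi\pm i)^{\text{power}}$ are continuous on $\mathbb{R}$, with $(-i\xi)^{2\alpha}$ continuous even through $\xi = 0$ because $2\alpha > 0$) and the one-sided limits at $\pm\infty$ differ (as $e^{-i\pi\alpha} \neq e^{-i\pi\alpha}e^{2i\pi\nu'}$ unless $\nu' \in \mathbb{Z}$, which is excluded by the parameter ranges), the only discontinuity is at $\xi = \infty$, completing the proof.
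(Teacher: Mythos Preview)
Your proposal is correct and follows essentially the same approach as the paper: both proofs write $c_2(\xi)$ as an exponential, separate the modulus $|c_2(\xi)| = \big(\xi^2/(1+\xi^2)\big)^{\alpha}$ from the argument, and then track $\arg(-i\xi)$, $\arg(\xi+i)$, $\arg(\xi-i)$ at $\xi \to \pm\infty$ to obtain the stated limits. Your bookkeeping of the arguments and the resulting algebra match the paper's computation exactly.
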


\begin{proof}
We write $(-i \xi)^{2\alpha} \, (\xi +i)^{m-s} \, (\xi -i)^{-m+s-2\alpha}$
\begin{align*}
&=\dfrac{\exp[2\alpha \log |\xi| +i 2\alpha \arg (- i \xi) +(-m+s-2\alpha) \log |\xi-i| + i (-m+s-2 \alpha) \arg (\xi-i)]}{\exp [(s-m) \log |\xi +i| + i(s-m)\arg(\xi +i) ]} \\
&=\dfrac{ \exp[2 \alpha \log |\xi| +(s-m-2\alpha) \log |\xi-i | ]}{\exp[(s-m) \log |\xi+i|]} \\
& \quad \cdot \exp [ i2\alpha \arg (-i\xi) +i(m-s) \arg (\xi+i) + i (s-m-2\alpha) \arg (\xi - i)].
\end{align*}
Hence
\begin{equation*}
\lim_{\xi \to \infty} c_2(\xi) = \exp[ i2\alpha (-\pi/2) + 0+0] =  \exp[-i\pi \alpha],
\end{equation*}
and, if $\nu' = m- s + 2 \alpha$,
\begin{align*}
\lim_{\xi \to -\infty} c_2(\xi) & = \exp [ i 2\alpha (\pi/2) +i(m-s) \pi + i (s-m-2\alpha) (-\pi)]\\
& = \exp[i \pi(2m -2s + 3 \alpha ]\\
& = \exp[-i\pi \alpha] \, \cdot \, \exp[i \pi 2 \nu'].
\end{align*}

Finally, we consider the behaviour of $c_2(\xi)$ near $\xi =0$, and note that
\begin{equation*}
\lim_{\xi \to 0^+ } |c_2(\xi)| = 0 = \lim_{\xi \to 0^- } |c_2(\xi)|
\end{equation*}
and the required results follow immediately. \\
\end{proof}

\begin{lemma} \label{c1beta}
Suppose $1 < p < \infty$ and $\nu \in \mathbb{R}$. Let   
\begin{equation*}
d_{p}(\infty, \theta) := \dfrac{e^{2 \pi \nu i}}{2} \bigg [ 1 + \coth \pi \bigg (\frac{i}{p} + \theta \bigg ) \bigg  ] + \dfrac{1}{2} \bigg [ 1 - \coth \pi \bigg ( \frac{i}{p}+ \theta \bigg ) \bigg ].
\end{equation*}
Then
\begin{equation*}
d_{p}(\infty, \theta) = e^{ \pi \nu i} \, \dfrac{\sin(\pi(1/p + \nu - i \theta))}{\sin(\pi(1/p - i \theta))}. \\
\end{equation*} 
\end{lemma}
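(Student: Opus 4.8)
The statement is a purely trigonometric identity, so the plan is to compute both sides directly and compare. First I would rewrite $\coth \pi\!\left(\tfrac{i}{p}+\theta\right)$ in exponential form, namely
\begin{equation*}
\coth \pi\!\left(\tfrac{i}{p}+\theta\right) = \frac{e^{\pi(i/p+\theta)} + e^{-\pi(i/p+\theta)}}{e^{\pi(i/p+\theta)} - e^{-\pi(i/p+\theta)}},
\end{equation*}
and substitute this into the definition of $d_p(\infty,\theta)$. After combining the two bracketed terms over the common denominator $e^{\pi(i/p+\theta)} - e^{-\pi(i/p+\theta)}$, the numerator becomes a sum of four exponential terms weighted by $e^{2\pi\nu i}$ and $1$, which I would regroup so that the $e^{2\pi\nu i}$ factor can be pulled out as $e^{\pi\nu i}$ times a symmetric expression.

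The key algebraic step is to recognise that, after factoring out $e^{\pi\nu i}$, the numerator takes the form $e^{\pi\nu i}\bigl(e^{\pi(i/p+\theta)+\pi\nu i} - e^{-\pi(i/p+\theta)-\pi\nu i}\bigr)\cdot(\text{something})$ — more precisely I expect, after the dust settles,
\begin{equation*}
d_p(\infty,\theta) = e^{\pi\nu i}\,\frac{e^{\pi\nu i}e^{\pi(i/p+\theta)} - e^{-\pi\nu i}e^{-\pi(i/p+\theta)}}{e^{\pi(i/p+\theta)} - e^{-\pi(i/p+\theta)}}\cdot(\text{a correcting factor})
\end{equation*}
and then use the identity $\sin z = \frac{1}{2i}(e^{iz} - e^{-iz})$ on both numerator and denominator. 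Writing $z = \pi(1/p - i\theta)$, one has $e^{\pi(i/p+\theta)} = e^{iz}$, so the denominator $e^{iz} - e^{-iz} = 2i\sin z = 2i\sin\pi(1/p - i\theta)$; similarly the numerator should reduce to $2i\sin\pi(1/p + \nu - i\theta)$ once the $\nu$-shift is absorbed. Dividing gives exactly the claimed right-hand side $e^{\pi\nu i}\sin(\pi(1/p+\nu - i\theta))/\sin(\pi(1/p - i\theta))$.

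I would carry this out by: (i) substituting the exponential form of $\coth$; (ii) clearing denominators and collecting the numerator; (iii) factoring $e^{\pi\nu i}$ out of the numerator and checking the residual terms pair up as $e^{i w} - e^{-i w}$ with $w = \pi(1/p + \nu - i\theta)$; (iv) invoking $\sin w = \frac{1}{2i}(e^{iw}-e^{-iw})$ in numerator and denominator to finish. The main obstacle — really the only one — is bookkeeping: keeping the signs in the arguments of the exponentials straight (note $\theta$ enters as $+\theta$ in the $\coth$ but the final sines have $-i\theta$, since $\coth\pi(i/p+\theta)$ has argument $\pi(\theta + i/p) = -i\pi(1/p - i\theta)$), and making sure the half-angle $e^{\pi\nu i}$ is extracted correctly rather than $e^{2\pi\nu i}$ or $1$. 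There is no analytic difficulty; it is a finite identity between meromorphic functions that one verifies by direct manipulation.
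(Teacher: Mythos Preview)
Your approach is correct and essentially identical to the paper's: set $z = \pi(i/p + \theta)$, write $\coth z$ in exponential form, factor out $e^{\pi\nu i}$ from the combined numerator so that the remaining expression is $\sinh(\pi\nu i + z)/\sinh z$, and then convert to sines via $\sin(iw) = i\sinh w$. The only difference is that the paper factors out $e^{\pi\nu i}$ \emph{before} expanding $\coth$, writing $d_p = \tfrac{e^{\pi\nu i}}{2}\bigl\{e^{\pi\nu i}[1+\coth z] + e^{-\pi\nu i}[1-\coth z]\bigr\}$, which slightly streamlines the bookkeeping you flagged as the main concern.
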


\begin{proof}
Let $z= \pi (i/p + \theta)$. Then
\begin{align*}
d_{p}(\infty, \theta) &= \dfrac{e^{ \pi \nu i}}{2}  \bigg \{ e^{ \pi \nu i} [1 + \coth z] +  e^{ -\pi \nu i} [1 - \coth z]  \bigg \} \\
&= \dfrac{e^{ \pi \nu i}}{2 (e^{z} -e^{-z})}  \bigg \{ e^{ \pi \nu i}(e^z - e^{-z} + e^z + e^{-z}) +  e^{ -\pi \nu i} (e^z - e^{-z} - e^z - e^{-z}) \bigg \} \\
& = \dfrac{e^{ \pi \nu i}}{e^{z} -e^{-z}}  \bigg \{ e^{ \pi \nu i+z} - e^{ -\pi \nu i -z} \bigg \} \\
& = e^{ \pi \nu i}  \, \dfrac{\sinh (\pi \nu i+z)}{\sinh z} \\
& = e^{ \pi \nu i}  \, \dfrac{\sin (-\pi \nu + iz)}{\sin i z} \quad \text{since } \sin(iw) = i \sinh(w) \quad \text{(4.28.8, \cite{NIST})} \\
& = e^{ \pi \nu i} \, \dfrac{\sin(\pi(1/p + \nu - i \theta))}{\sin(\pi(1/p - i \theta))}, \quad \text{as required}. 
\end{align*}
\end{proof}

\chapter{Index and invertibility} \label{ChapterIndInv}
\section{Main results}
\begin{theorem} \label{TheoremIndexZero}
For all $\alpha, p, s$ satisfying the conditions $0 < \alpha < \tfrac{1}{2}, \, 1 < p < \infty$ and $1/p  < s < 1+1/p$, the winding number of the generalised symbol $\big ( A_{\alpha, p,s}, \, \Gamma_M \big )$ in the complex plane is $0$. Hence, the operator $W(c_1)  + a_2M^0(b_2)W(c_2)$, defined on $L_p(\mathbb{R}_+)$, has Fredholm index equal to zero. \\
\end{theorem}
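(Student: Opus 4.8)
The plan is to use Theorem \ref{AtilldeFredholm} together with the index formula in Remark \ref{WindingFred}: once we know that $\tilde A = W(c_1) + a_2 M^0(b_2) W(c_2)$ is Fredholm, its index is minus the winding number of the closed curve $A_{\alpha,p,s}(\omega)$ traced out as $\omega$ runs over $\Gamma_M$. From the term-by-term analysis of the contour already carried out in Chapter \ref{GenSymbol}, the generalised symbol is \emph{constant} on $\Gamma_2^\pm$ (equal to $e^{2\pi\nu i}$ and $1$ respectively) and \emph{constant} on $\Gamma_4$ (equal to $c_1(0) = e^{\pi\nu i}$), and on the combined segment $\Gamma_3 = \Gamma_3^+ \cup \Gamma_3^-$ it equals $c_1(\lambda)$ for $\lambda$ running from $-\infty$ to $+\infty$, while on $\Gamma_1$ it equals the expression \eqref{AGamma1} built from the loop functions $c_{1p}(\infty,\xi), c_{2p}(\infty,\xi)$ and the beta-function term. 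So first I would record that the winding number of the full loop equals the winding number of the concatenation of just two non-trivial arcs: the arc $\Gamma_1$ (parametrised by $\xi$, joining $c_1(-\infty) = e^{2\pi\nu i}$ at $\xi=-\infty$ to $c_1(+\infty) = 1$ at $\xi = +\infty$, going via the transcendental expression) and the arc $\Gamma_3$ (parametrised by $\lambda$, joining $c_1(-\infty)$ to $c_1(+\infty)$ via the curve $c_1(\lambda)$, $\lambda \in \mathbb{R}$), traversed so that together they form a closed loop.

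The key computation is therefore to show this closed loop has winding number zero about the origin. I would proceed in two stages. Stage one: show that $A_{\alpha,p,s}(\omega)$ never vanishes on $\Gamma_M$, which is exactly the Fredholm criterion of Theorem \ref{AtilldeFredholm}; on $\Gamma_2^\pm, \Gamma_4, \Gamma_3$ this is immediate since $|c_1(\xi)| = 1$ everywhere (Lemma \ref{c1limits}), and on $\Gamma_1$ it reduces to the assertion that the transcendental equation \eqref{transcendeqn}, equivalently $T_s = T_B$ with $T_s, T_B$ as in \eqref{Tsdefinition}, \eqref{TBdefinition}, has no solution in the relevant parameter range $0 < \alpha < \tfrac12$, $0 < \tau := s - 1/p < 1$ — this is precisely the content announced around equation \eqref{TEatzeroFL} in the Key results section, whose detailed proof is deferred to Chapter \ref{ChapterTE}. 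Stage two: compute the winding number. Here I would use a homotopy argument. The factor $\dfrac{e^{i\pi\nu}}{\sin\pi(1/p - i\xi)}$ is common to $c_{1p}(\infty,\xi)$ and $c_{2p}(\infty,\xi)$ on $\Gamma_1$, and it is a nonvanishing continuous function of $\xi$ over $\dot{\mathbb{R}}$ returning to its starting value; I would pull it out and track the winding contributions separately. More robustly, I would homotope within the Fredholm region: since the statement is required for \emph{all} admissible $(\alpha,p,s)$ and the winding number is an integer depending continuously on parameters along which $A_{\alpha,p,s}$ stays nonvanishing on $\Gamma_M$, it suffices to compute it at one convenient point, e.g. $p = 2$ (where the loop function arcs degenerate to straight segments, as noted in the discussion after \eqref{gpinftydefinition}) and a convenient $s$ close to $1/p$, and then check the parameter region $\{0 < \alpha < \tfrac12,\ 1 < p < \infty,\ 1/p < s < 1+1/p\}$ is connected and that $A_{\alpha,p,s}$ stays zero-free throughout (again using the no-solution result for \eqref{transcendeqn}). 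At the base point the loop visibly does not enclose the origin — the $\Gamma_1$ arc and the $\Gamma_3$ arc are homotopic rel endpoints within $\mathbb{C}\setminus\{0\}$ because both lie in a half-plane or a sector avoiding $0$ — giving winding number $0$.

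The main obstacle is Stage two's winding-number bookkeeping: one must verify carefully that the $\Gamma_1$-arc and the $\Gamma_3$-arc, which share endpoints $e^{2\pi\nu i}$ and $1$ on the unit circle, are homotopic rel endpoints in $\mathbb{C}\setminus\{0\}$, i.e. that together they bound no net winding. The subtlety is that $\nu = 1 - s + \alpha$ ranges over an interval that can make $e^{2\pi\nu i}$ lie anywhere on the unit circle, so "$\Gamma_3$ is a short circular arc" is not quite enough by itself; one needs the matching behaviour of the $\Gamma_1$ expression \eqref{AGamma1}, and in particular that adding the beta-function correction term $-\tfrac{\sin\pi\alpha}{\pi}B(\cdots)c_{2p}(\infty,\xi)$ to $c_{1p}(\infty,\xi)$ does not introduce an extra loop around $0$ — which is exactly where zero-freeness of \eqref{transcendeqn} on the \emph{whole} parameter segment (not just at the endpoints) is used, via the standard fact that the winding number is a homotopy invariant under deformations keeping the curve off the origin. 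I expect the cleanest write-up to fix $p=2$ as the reference value, use the explicit degenerate form of the loop functions there, reduce to an elementary planar computation, and then invoke connectedness of the parameter set plus the Chapter \ref{ChapterTE} non-solvability result to propagate the answer.
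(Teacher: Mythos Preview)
Your proposal is correct and matches the paper's approach: reduce the contour to the two non-trivial arcs, invoke the non-solvability of the transcendental equation (Chapter~\ref{ChapterTE}) to keep the curve off the origin throughout the connected parameter region, and compute the winding number at a single convenient base point, propagating by homotopy. The one refinement worth noting is the paper's choice of base point: rather than taking $s$ near $1/p$, it fixes $\alpha=\tfrac{2}{5},\ p=2,\ s=\tfrac{7}{5}$ so that $\nu=1-s+\alpha=0$, which collapses $S_3\equiv 1$ and $S_{11}\equiv 1$ and lets a short estimate confine the entire loop to the disc $|z-1|<\tfrac{2}{5}$, making the winding-number computation immediate without any sector or rel-endpoint bookkeeping.
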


\begin{theorem}
Suppose $0 < \alpha < \tfrac{1}{2}, \, 1 < p < \infty$ and $1/p  < s < 1+1/p$. Then the operator $\mathcal{A}: H^s_p(\overline{\mathbb{R}_+}) \to H^{s-2\alpha}_p(\overline{\mathbb{R}_+})$ is invertible.
\end{theorem}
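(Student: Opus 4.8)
The plan is to combine the Fredholm index calculation from Theorem~\ref{TheoremIndexZero} with the trivial kernel result from Theorem~\ref{thmabddtrivker}, but there is a mismatch to bridge: the trivial kernel statement in Theorem~\ref{thmabddtrivker} is proved only for $p=2$, whereas here we need invertibility for all $1<p<\infty$. So the argument proceeds in several stages.

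\textbf{Step 1: Reduce to the operator algebra formulation.} By Theorem~\ref{TheoremaWMplusT}, the equation $\mathcal{A}u=f$ is equivalent, via the invertible passages $u\mapsto r_+u_s$ and $f\mapsto g=r_+(D-i)^{s-2\alpha}l_+f$, to $\big(W(c_1)+a_2M^0(b_2)W(c_2)+T\big)(r_+u_s)=g$ on $L_p(\mathbb{R}_+)$, with $T$ compact. Hence $\mathcal{A}$ is invertible between the Bessel potential spaces if and only if $\widetilde{A}:=W(c_1)+a_2M^0(b_2)W(c_2)$ is invertible on $L_p(\mathbb{R}_+)$ (the compact perturbation $T$ does not affect the Fredholm property or index, only the kernel/cokernel dimensions, which we control separately). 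One must check carefully that the map $u\mapsto r_+u_s$ is an isomorphism of $H^s_p(\overline{\mathbb{R}_+})$ onto $L_p(\mathbb{R}_+)$ and that $\mathcal{A}$ inherits invertibility; this follows from Lemmas~\ref{lemma:us}, \ref{ustou} and \ref{lemma:rLambdae}.

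\textbf{Step 2: Fredholmness and index zero.} By Theorem~\ref{TheoremIndexZero}, for all admissible $(\alpha,p,s)$ the generalised symbol $A_{\alpha,p,s}$ is non-vanishing on $\Gamma_M$ and has winding number $0$; hence by Theorem~\ref{AtilldeFredholm} and Remark~\ref{WindingFred}, $\widetilde{A}$ is Fredholm on $L_p(\mathbb{R}_+)$ with index $0$. The same therefore holds for $\mathcal{A}:H^s_p(\overline{\mathbb{R}_+})\to H^{s-2\alpha}_p(\overline{\mathbb{R}_+})$.

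\textbf{Step 3: Trivial kernel for all $p$, via $p=2$.} Since $\operatorname{ind}\mathcal{A}=0$, invertibility is equivalent to $\operatorname{Ker}\mathcal{A}=\{0\}$. First take $p=2$: for $0<\alpha<\tfrac12$ and $\tfrac12<s<1+\tfrac12$, Theorem~\ref{thmabddtrivker}(i) (via Lemma~\ref{BevIvsmall}, using $(\mathcal{A}u,u)=I(u)^2$) gives $\operatorname{Ker}\mathcal{A}=\{0\}$, so $\mathcal{A}$ is invertible on $H^s_2(\overline{\mathbb{R}_+})$. For general $p$, the plan is the standard bootstrap: suppose $u\in H^s_p(\overline{\mathbb{R}_+})$ with $\mathcal{A}u=0$. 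If $p>2$, one shows $H^s_p(\overline{\mathbb{R}_+})\hookrightarrow H^{s'}_2(\overline{\mathbb{R}_+})$ for a suitable $s'$ in the admissible range (using a local Sobolev embedding near $x=0$ together with the exponential decay structure, or by noting the kernel consists of functions supported where the embedding is available); then $u$ lies in the $p=2$ kernel, hence $u=0$. If $1<p<2$, one instead argues that any $u\in\operatorname{Ker}\mathcal{A}$ actually has higher regularity/integrability — the kernel of a Fredholm operator is finite-dimensional and spanned by fixed functions independent of $p$ (by elliptic regularity of the symbol away from the boundary and the explicit Mellin structure at the boundary, the kernel elements belong to $\bigcap_{q}H^{s}_q$ locally with exponential decay), so again membership in $\operatorname{Ker}\mathcal{A}$ on $L_p$ forces membership in $\operatorname{Ker}\mathcal{A}$ on $L_2$, giving $u=0$. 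This is precisely the "using the result for $p=2$" mechanism anticipated in the Remark following Theorem~\ref{thmabddtrivker}.

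\textbf{Main obstacle.} The delicate point is Step~3 for $p\neq 2$: one must show the kernel is $p$-independent, i.e. that a solution $u\in H^s_p(\overline{\mathbb{R}_+})$ of $\mathcal{A}u=0$ automatically lies in $H^{s'}_2(\overline{\mathbb{R}_+})$ for some $s'$ in the range covered by the $p=2$ trivial-kernel theorem. This requires a regularity argument: away from $x=0$ the operator $A=\mathcal{F}^{-1}(1+\xi^2)^\alpha\mathcal{F}$ is elliptic of order $2\alpha$ with smoothing of all orders, and the added potential $r_+A(\chi_{\mathbb{R}_-})$ decays exponentially (Remark~\ref{Remmy}, Lemma~\ref{rAminuschiminus}), so interior and far-field regularity are automatic; the only genuine constraint is the boundary behaviour at $x=0$, which is governed by the Mellin symbol $b_2$ and the transcendental equation — and Theorem~\ref{TheoremIndexZero} together with the absence of solutions of \eqref{transcendeqn} for $0<\alpha<\tfrac12$, $0<\tau<1$ guarantees no boundary singularity obstructs lifting the integrability index. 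I would make this rigorous by applying $r_+(D-i)^{s-2\alpha}l_+$ as in Chapter~\ref{OpAlgLp} to land in $L_p(\mathbb{R}_+)$, then using that $\operatorname{Ker}\widetilde{A}\subseteq\operatorname{Ker}(\text{the purely boundary model operator})$ whose solutions are explicit homogeneous functions $x^{\lambda}$ times smooth exponentially decaying factors with $\operatorname{Re}\lambda$ pinned by the (non-)solutions of the transcendental equation, and these lie in every $L_q$ locally. Once $p$-independence of the kernel is established, the conclusion is immediate.
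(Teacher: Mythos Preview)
Your Steps~1 and~2 are correct and match the paper's argument closely. The gap is entirely in Step~3, and it is a real one.

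First, your embedding direction for $p>2$ is wrong: on the unbounded domain $\overline{\mathbb{R}_+}$ there is no embedding $H^s_p(\overline{\mathbb{R}_+})\hookrightarrow H^{s'}_2(\overline{\mathbb{R}_+})$ when $p>2$ (slowly decaying functions lie in $L_p$ but not in $L_2$). The Sobolev embedding goes the other way, $H^{1/2+\delta}_2(\overline{\mathbb{R}_+})\hookrightarrow H^{1/p+\delta}_p(\overline{\mathbb{R}_+})$, which by itself gives only $\operatorname{Ker}_{p=2}\mathcal{A}\subseteq\operatorname{Ker}_{p}\mathcal{A}$ --- the useless inclusion. Your fallback (``local embedding plus exponential decay structure'') presupposes decay of kernel elements that you have not established; a priori a kernel element in $H^s_p$ need have no better decay than $L_p$ allows. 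The subsequent sketch via ``explicit homogeneous functions $x^\lambda$ times smooth exponentially decaying factors'' is circular: you are assuming the form of kernel elements in order to prove there are none.

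What the paper does instead is short and purely functional-analytic, and you should adopt it. It invokes the following abstract fact (Lemma~\ref{FredholmKer}): if $X_1\hookrightarrow X_2$ and $Y_1\hookrightarrow Y_2$ continuously and densely, and $A:X_j\to Y_j$ is Fredholm for $j=1,2$ with $\operatorname{Ind}_{X_1\to Y_1}A=\operatorname{Ind}_{X_2\to Y_2}A$, then $\operatorname{Ker}_{X_1\to Y_1}A=\operatorname{Ker}_{X_2\to Y_2}A$. The proof is a two-line dimension count using the dual inclusions for $A^*$. With this in hand one takes, for $p>2$, $X_1=H^{1/2+\delta}_2(\overline{\mathbb{R}_+})$, $X_2=H^{1/p+\delta}_p(\overline{\mathbb{R}_+})$ (and the corresponding target spaces); both indices are~$0$ by Step~2, so the kernels coincide, and the $p=2$ kernel is trivial by Theorem~\ref{thmabddtrivker}. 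For $p<2$ one simply swaps the roles of $X_1$ and $X_2$. No regularity theory, no structure of kernel elements, no Mellin asymptotics are needed --- only that the index is the \emph{same} on both scales, which you have already proved.
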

\section{Proof of first main result}
The constraints are
\begin{equation} \label{alphapsconstraints}
0 < \alpha < \tfrac{1}{2}, \,\, 1 < p < \infty  \,\, \text{and} \,\, 1/p < s < 1+1/p.
\end{equation}

Let $\alpha, p ,s$ fall within their admissible ranges and be fixed. From Chapter \ref{GenSymbol}, we know that the generalised symbol $A_{\alpha,p,s}$ can be represented by a closed contour in the complex plane given by the union of the two curves, $S_1$ and $S_3$. \\

Indeed, from Section \ref{Gamma3pm} we have,
\begin{equation} \label{LegGamma3}
S_3(\xi):= (1+\xi^2)^\alpha \, (\xi -i)^{s-2\alpha-1} \, (\xi +i)^{1-s}, \quad -\infty \leq \xi \leq \infty.
\end{equation}

Now
\begin{align*}
S_3(\xi) & = (\xi + i)^\alpha (\xi - i)^\alpha \, (\xi -i)^{s-2\alpha-1} \, (\xi +i)^{1-s} \\
&= (\xi + i)^{1-s+\alpha} (\xi - i)^{-(1-s+\alpha)}.
\end{align*}


From Section \ref{Gamma1}, for $-\infty \leq \xi \leq \infty$,
\begin{equation} \label{LegGamma1}
S_1(\xi) := S_{11}(\xi) - \dfrac{\sin \pi \alpha}{\pi} \, B(s-2\alpha + 1-1/p+i \xi, 2\alpha) S_{12} (\xi), 
\end{equation}
where 
\begin{equation*}
S_{11}(\xi):=  e^{i \pi \nu} \dfrac{\sin [\pi(1/p + \nu - i \xi) ]}{\sin [\pi(1/p - i \xi) ]}, \quad S_{12}(\xi):= e^{i \pi \nu} \dfrac{\sin [\pi(1/p + \nu + \alpha - i \xi) ]}{\sin [\pi(1/p - i \xi) ]},
\end{equation*} 
and $\nu = 1-s+\alpha$. \\

Let us now choose the values
\begin{equation} \label{modelaps}
 \alpha = \tfrac{2}{5}, \quad p =2   \quad \text{and} \quad s=\tfrac{7}{5},
\end{equation}
as the set of parameters used to define the \textit{model} contour. \\

Note that the values for $\alpha, \, p$ and $s$ given in \eqref{modelaps} satisfy the constraints described in condition \eqref{alphapsconstraints}. Moreover, $\nu = 1 - s + \alpha =0$, and thus
\begin{equation} \label{S3S11one}
S_3(\xi) =1; \quad S_{11}(\xi) = 1.
\end{equation} 

With the chosen values for $\alpha, p$ and $s$, equation \eqref{LegGamma1} becomes
\begin{equation*} 
S_1(\xi) = 1 - \dfrac{\sin \tfrac{2 \pi}{5}}{\pi} \, B(\tfrac{11}{10} + i \xi, \tfrac{4}{5}) \, \dfrac{\sin[\pi(\tfrac{9}{10} - i \xi)]}{\sin[\pi(\tfrac{1}{2} - i \xi)]}. \\
\end{equation*}

But by Lemma \ref{lemTBalphasigmaxi}, with $\sigma = \tfrac{11}{10}$ and $\alpha= \tfrac{2}{5}$, we have
\begin{equation*}
| B(\tfrac{11}{10} + i \xi, \tfrac{4}{5}) | \leq B(\tfrac{11}{10}, \tfrac{4}{5})< 1.152 < 1.2.
\end{equation*}

Moreover, by Lemma \ref{lemSabsarg}, with $a = \tfrac{9}{10}$ and $b=\tfrac{1}{2}$,
\begin{equation*}
\bigg | \dfrac{\sin[\pi(\tfrac{9}{10} - i \xi)]}{\sin[\pi(\tfrac{1}{2} - i \xi)]} \bigg | \leq \bigg ( \dfrac{\cosh 2 \pi \xi}{\cosh 2 \pi \xi + 1} \bigg)^{\tfrac{1}{2}} \leq 1,
\end{equation*}
noting that $\cos 2 \pi a >0$ and $\cos 2 \pi b = -1$. \\

Hence, we have the estimate
\begin{equation} \label{BetaRadest}
\bigg |  \dfrac{\sin \tfrac{2\pi}{5}}{\pi} \, B(\tfrac{11}{10} + i \xi, \tfrac{4}{5}) \, \dfrac{\sin[\pi(\tfrac{9}{10} - i \xi)]}{\sin[\pi(\tfrac{1}{2} - i \xi)]}\bigg | < \bigg ( \dfrac{1}{\pi} \bigg) \, 1.2 < \tfrac{2}{5}. 
\end{equation} \\

So, from \eqref{S3S11one} and \eqref{BetaRadest}, the model contour, formed by the union of the sections $S_1$ and $S_3$, is wholly contained in the disc of radius $\tfrac{2}{5}$ centred on the point $1$ in the complex plane. Hence, the winding number of the model contour must be zero. \\

But, given any set of parameters $\alpha,p,s$ satisfying the constraints $0<\alpha< \tfrac{1}{2}, \,\, 1< p < \infty$ and $1/p < s < 1+1/p$, the associated contour can be continuously deformed into the \textit{model} contour, and from Theorem \ref{TheoremTranscend}, does this without ever crossing the origin. Hence, the two contours must have the same winding number, namely, zero. \\

Therefore, see Remark \ref{WindingFred}, the operator $W(c_1)+a_2M^0(b_2)W(c_2)$, defined on $L_p(\mathbb{R}_+)$, has Fredholm index equal to zero. This completes the proof of the first theorem. \\

To complete this discussion on winding number, we now give three numerical examples of symbol plots for fixed $\alpha = 0.25$ and $p=4$, with $s$ taking the values $0.3, \, 0.7$ and $1.1$, in turn. See Figures \ref{Phoenixsmall}, \ref{Phoenixmedium} and \ref{Phoenixbig} respectively. The contour is the union of the curves $S_1$ and $S_3$, where $S_3$ forms part of the unit circle. As expected, in each case, it has winding number equal to zero.

\begin{figure}[H]
\centering
\includegraphics[width=6 cm, height=6 cm]{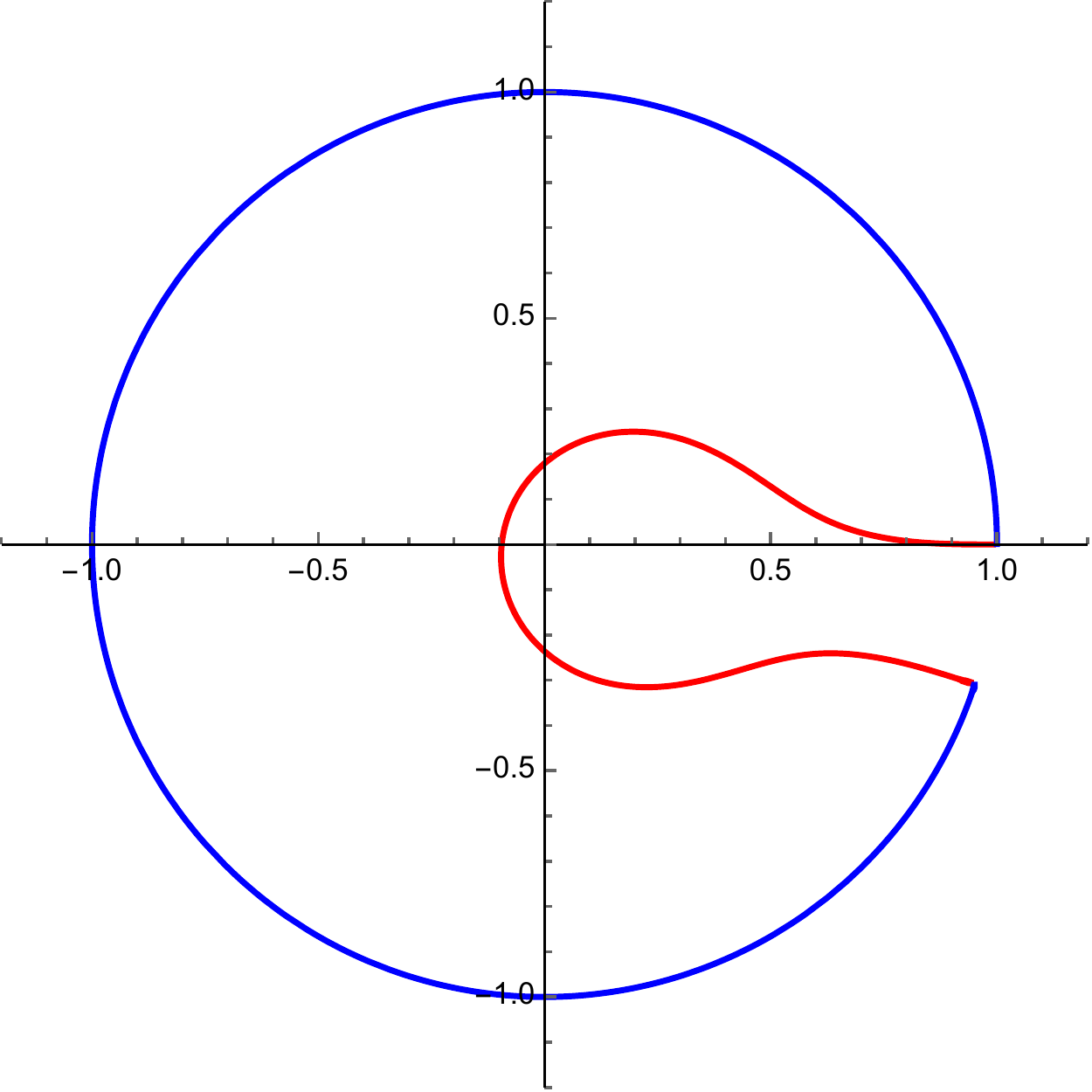} 
\caption{Symbol plot for $\alpha = 0.25, \, p =4$ and $s= 0.3$.}
\label{Phoenixsmall}
\end{figure}

Secondly, we set $s=0.7$.

\begin{figure}[H]
\centering
\includegraphics[width=6 cm, height=6 cm]{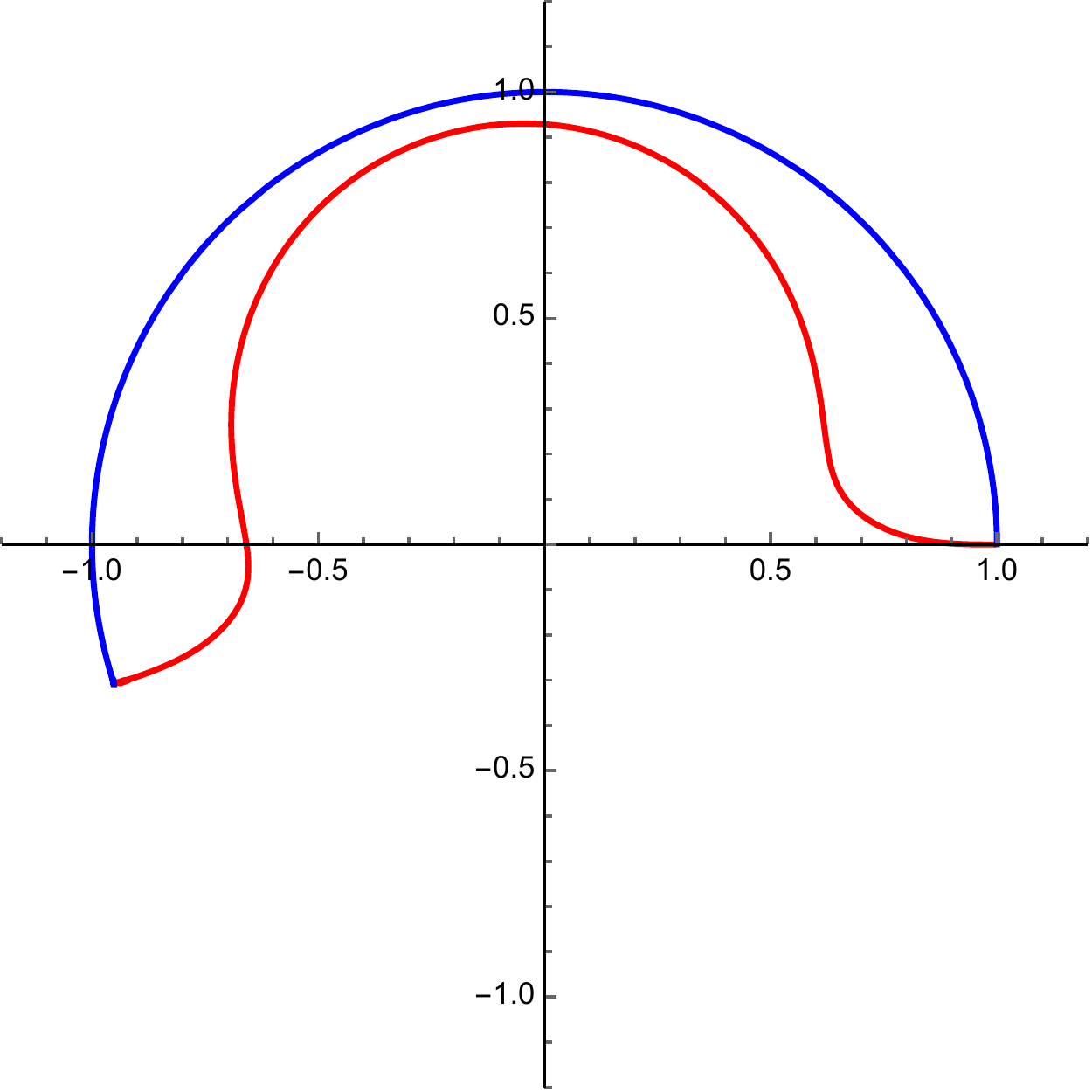} 
\caption{Symbol plot for $\alpha = 0.25, \, p =4$ and $s= 0.7$.}
\label{Phoenixmedium}
\end{figure}

Finally, we take $s=1.1$.

\begin{figure}[H]
\centering
\includegraphics[width=6 cm, height=6 cm]{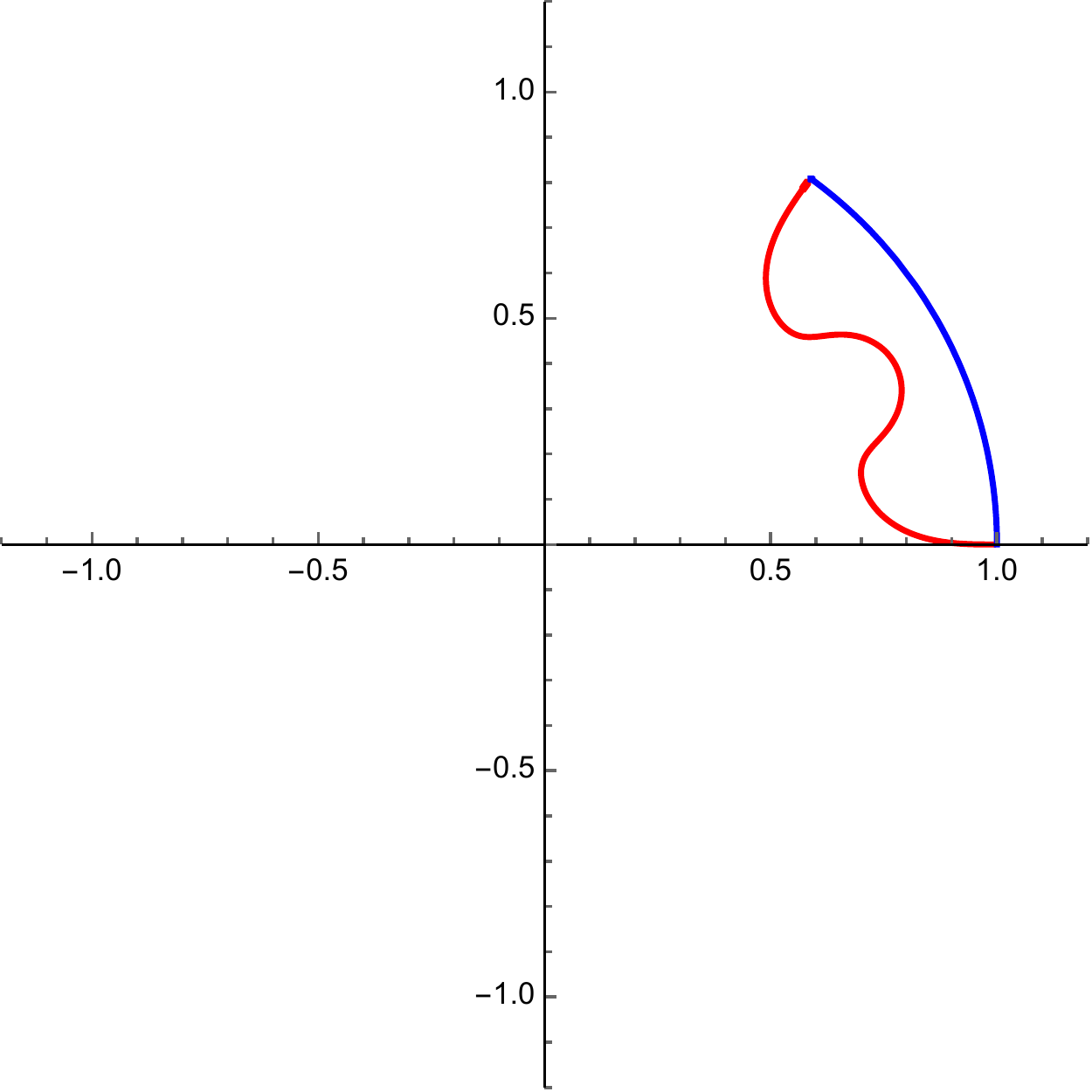}
\caption{Symbol plot for $\alpha = 0.25, \, p =4$ and $s= 1.1$.}
\label{Phoenixbig}
\end{figure}

\section{Proof of second main result} \label{SecondTheoremIndex}
Suppose $0 < \alpha < \tfrac{1}{2}, \, 1 < p < \infty$ and $1/p < s < 1+1/p$. Let $u \in H^s_p(\overline{\mathbb{R}_+})$. Then, see Lemma \ref{lemma:us}, 
\begin{equation*}
u_s := (D+i)^{s-1}e_+(D-i)u,
\end{equation*}
and $u_s \in L_p(\mathbb{R})$ with supp $u_s \in \overline{\mathbb{R}_+}$. Indeed, see Lemma \ref{ustou}, we can also write
\begin{equation*}
u = (r_+(D-i)^{-1}(D+i)^{1-s} e_+)(r_+ u_s).
\end{equation*}

Moreover, from Theorem \ref{TheoremIndexZero}, the operator 
\begin{equation}
\widetilde{A} := W(c_1) + a_2 M^0(b_2)W(c_2),
\end{equation}
defined on $L_p(\mathbb{R}_+)$, has Fredholm index equal to zero. \\

By Theorem \ref{TheoremaWMplusT}, 
\begin{equation*}
\big (r_+ (D-i)^{s-2\alpha} l_+ \big ) \,\, \mathcal{A} \,\,  \big ( r_+(D-i)^{-1}(D+i)^{1-s} e_+ \big ) = \widetilde{A} + T
\end{equation*}
where $T$, defined on $L_p(\mathbb{R}_+)$, is a compact operator. \\

The operators $r_+ (D-i)^{s-2\alpha} l_+ : H^{s-2\alpha}_p(\overline{\mathbb{R}_+}) \to L_p(\mathbb{R}_+)$ and $r_+(D-i)^{-1}(D+i)^{1-s} e_+ : L_p(\mathbb{R}_+) \to H^{s}_p(\overline{\mathbb{R}_+})$ are both invertible. Therefore,
\begin{equation}
\operatorname{ind} \mathcal{A} = \operatorname{ind} (\widetilde{A} + T)  = \operatorname{ind} \widetilde{A} =0,
\end{equation}
since the Fredholm index is stable under compact perturbations. \\ \\

On the other hand, from Theorem \ref{thmabddtrivker}, the operator $\mathcal{A}: H^s_2(\overline{\mathbb{R}_+}) \to H^{s-2\alpha}_2(\overline{\mathbb{R}_+})$ (is bounded and) has a trivial kernel. The following lemma will allow us to to generalise this result from $p=2$ to the full range $1 < p < \infty$. 

\begin{lemma} \footnote{This result was an indirect communication, via a third party, from Vladimir Pilidi to Eugene Shargorodsky.} \label{FredholmKer}
Let $X_1, X_2, Y_1, Y_2$ be Banach spaces such that $X_1$ ($Y_1$) is continuously and densely embedded into $X_2$ (into $Y_2$, respectively).
Suppose $A : X_j \to Y_j$ is Fredholm, $j = 1, 2$, and
$$
\mbox{Ind}_{X_1 \to Y_1} A = \mbox{Ind}_{X_2 \to Y_2} A   .
$$
Then
$$
\mbox{Ker}_{X_1 \to Y_1} A = \mbox{Ker}_{X_2 \to Y_2} A .
$$
\end{lemma}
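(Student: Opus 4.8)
The plan is to exploit the fact that, under the hypotheses, the two kernels are finite-dimensional spaces of the same dimension, one contained in the other. First I would observe the inclusion $\operatorname{Ker}_{X_1 \to Y_1} A \subseteq \operatorname{Ker}_{X_2 \to Y_2} A$: if $x \in X_1$ and $Ax = 0$ in $Y_1$, then since $Y_1 \hookrightarrow Y_2$ continuously, $Ax = 0$ in $Y_2$ as well, and $x \in X_2$ because $X_1 \hookrightarrow X_2$; moreover the action of $A$ on $X_1$ is the restriction of its action on $X_2$ (this is implicit in the hypothesis that ``$A$'' denotes the same operator on both scales, and should be stated explicitly). So $\operatorname{Ker}_{X_1 \to Y_1} A$ is a linear subspace of $\operatorname{Ker}_{X_2 \to Y_2} A$.

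Next I would bring in the Fredholm hypothesis to convert this inclusion into equality. Since $A$ is Fredholm on each pair, both kernels are finite-dimensional and both cokernels are finite-dimensional, and
\begin{equation*}
\dim \operatorname{Ker}_{X_j \to Y_j} A - \dim \operatorname{CoKer}_{X_j \to Y_j} A = \operatorname{Ind}_{X_j \to Y_j} A, \qquad j = 1, 2.
\end{equation*}
Because the two indices are assumed equal, it suffices to show that the cokernels have the same dimension, or equivalently — since we already have $\dim \operatorname{Ker}_{X_1 \to Y_1} A \leq \dim \operatorname{Ker}_{X_2 \to Y_2} A$ — to show $\dim \operatorname{CoKer}_{X_1 \to Y_1} A \leq \dim \operatorname{CoKer}_{X_2 \to Y_2} A$; combining the two inequalities with equality of indices then forces both kernel dimensions to coincide, and a finite-dimensional subspace of equal dimension must be the whole space, giving $\operatorname{Ker}_{X_1 \to Y_1} A = \operatorname{Ker}_{X_2 \to Y_2} A$.

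The substantive step, and the one I expect to be the main obstacle, is the cokernel inequality $\dim \operatorname{CoKer}_{X_1 \to Y_1} A \leq \dim \operatorname{CoKer}_{X_2 \to Y_2} A$. The natural route is via the dual statement: a standard duality argument identifies $\operatorname{CoKer}_{X_j \to Y_j} A$ with the annihilator of $\operatorname{Ran}_{X_j \to Y_j} A$ in $Y_j^*$, i.e.\ with $\operatorname{Ker}_{Y_j^* \to X_j^*} A^*$. Since $Y_1 \hookrightarrow Y_2$ is continuous and dense, the adjoint embedding $Y_2^* \hookrightarrow Y_1^*$ is continuous and injective; a functional on $Y_2$ that annihilates $\operatorname{Ran} A$ restricts to a functional on $Y_1$ annihilating $\operatorname{Ran} A$, and by density of $X_1$ in $X_2$ (so that $\operatorname{Ran}_{X_1\to Y_1} A$ is dense in $\operatorname{Ran}_{X_2 \to Y_2} A \cap Y_1$) this restriction is injective on $\operatorname{Ker}_{Y_2^* \to X_2^*} A^*$. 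Hence $\dim \operatorname{CoKer}_{X_2 \to Y_2} A \geq \dim \operatorname{CoKer}_{X_1 \to Y_1} A$ — but note this is the inequality in the wrong direction from what one might first guess, so I would instead run the argument to show $\dim\operatorname{CoKer}_{X_1\to Y_1}A \le \dim\operatorname{CoKer}_{X_2\to Y_2}A$ directly by the same restriction map being \emph{surjective} onto $\operatorname{Ker}_{Y_1^*\to X_1^*}A^*$, which follows because a functional annihilating the dense subspace $\operatorname{Ran}_{X_1\to Y_1}A$ of $\operatorname{Ran}_{X_2\to Y_2}A\cap Y_1$ extends (after checking continuity with respect to the $Y_2$ topology on that subspace, the delicate point) to an element of $Y_2^*$ in the kernel of $A^*$. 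Care with exactly which density and continuity facts are invoked, and making sure the finite-dimensionality lets one sidestep any genuine extension/Hahn–Banach subtlety, is where the real work lies; once the cokernel dimensions are pinned down, the conclusion is immediate from the index identity and the kernel inclusion.
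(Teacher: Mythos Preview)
Your overall strategy matches the paper's, but you have the direction of the cokernel inequality backwards, and this sends you off on an unnecessary detour.

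Write $\alpha_j = \dim\operatorname{Ker}_{X_j\to Y_j}A$ and $\beta_j = \dim\operatorname{CoKer}_{X_j\to Y_j}A = \dim\operatorname{Ker}_{Y_j^*\to X_j^*}A^*$. You correctly obtain $\alpha_1 \le \alpha_2$. You then say it suffices to show $\beta_1 \le \beta_2$; but from $\alpha_1-\beta_1=\alpha_2-\beta_2$ together with $\alpha_1\le\alpha_2$ and $\beta_1\le\beta_2$ you only get $\alpha_2-\alpha_1 = \beta_2-\beta_1 \ge 0$, which does \emph{not} force equality. What you actually need is the \emph{reverse} inequality $\beta_1 \ge \beta_2$: then $\alpha_2-\alpha_1 = \beta_2-\beta_1$ has nonnegative left side and nonpositive right side, so both vanish.

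And $\beta_1 \ge \beta_2$ is exactly what the easy dual argument gives. Since $Y_1\hookrightarrow Y_2$ densely, the adjoint map $Y_2^*\hookrightarrow Y_1^*$ is injective, and it carries $\operatorname{Ker}_{Y_2^*\to X_2^*}A^*$ into $\operatorname{Ker}_{Y_1^*\to X_1^*}A^*$ (if $\ell\in Y_2^*$ annihilates $A(X_2)$, its restriction to $Y_1$ annihilates $A(X_1)\subseteq A(X_2)$). Hence $\beta_2\le\beta_1$. This is precisely the paper's argument; no Hahn--Banach extension or surjectivity is needed. Your sentence ``Hence $\dim\operatorname{CoKer}_{X_2\to Y_2}A \ge \dim\operatorname{CoKer}_{X_1\to Y_1}A$'' has the inequality the wrong way round relative to what injectivity yields, and your subsequent attempt to prove the (wrong) direction by a surjectivity/extension argument should simply be deleted.
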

\begin{proof}
Since the above embeddings are dense,  $X^*_2 \hookrightarrow X^*_1$, $Y^*_2 \hookrightarrow Y^*_1$. The operator $A^* : Y^*_j \to X^*_j$
 is Fredholm, $j = 1, 2$. Let
$$
\alpha_j :=  \mbox{dim Ker}_{X_j \to Y_j} A , \ \ \beta_j :=  \mbox{dim Ker}_{Y^*_j \to X^*_j} A^* , \ \ \  j = 1, 2. 
$$
 Since
\begin{equation}\label{sbs}
\mbox{Ker}_{X_1 \to Y_1} A \subseteq \mbox{Ker}_{X_2 \to Y_2} A , \  \
\mbox{Ker}_{Y^*_2 \to X^*_2} A \subseteq \mbox{Ker}_{Y^*_1 \to X^*_1} A^* ,  
\end{equation}
we have
$$
\alpha_1 \le \alpha_2 , \ \ \beta_1 \ge \beta_2 .
$$
Since
$$
\alpha_1 - \beta_1 = \mbox{Ind}_{X_1 \to Y_1} A = \mbox{Ind}_{X_2 \to Y_2} A = \alpha_2  - \beta_2 ,
$$
we conclude that $\alpha_1 =  \alpha_2$,  $\beta_1 = \beta_2$. Hence the inclusions \eqref{sbs} are in fact equalities. \\
\end{proof}
To complete the proof of the second main result, we now consider (the dimension of) $\operatorname{Ker} \widetilde{A}$, for the cases $p>2$ and $p<2$ respectively. \\

\textbf{Firstly, suppose $p > 2$.} Then, for $0 < \delta <  1$, we define
\begin{equation*}
X_1:= H^{\tfrac{1}{2} + \delta}_{2}(\overline{\mathbb{R}_+}), \quad Y_1:= H^{\tfrac{1}{2} + \delta - 2 \alpha}_{2}(\overline{\mathbb{R}_+})
\end{equation*}
and
\begin{equation*}
X_2:= H^{\tfrac{1}{p} + \delta}_{p}(\overline{\mathbb{R}_+}), \quad Y_2:= H^{\tfrac{1}{p} + \delta - 2 \alpha}_{p}(\overline{\mathbb{R}_+}).
\end{equation*}
Then $X_1$ ($Y_1$) is continuously and densely embedded into $X_2$ (into $Y_2$, respectively). Moreover, $\mathcal{A} : X_j \to Y_j$ is Fredholm, $j = 1, 2$, and
\begin{equation*}
\operatorname{Ind}_{X_1 \to Y_1} \mathcal{A} =\operatorname{Ind}_{X_2 \to Y_2} \mathcal{A} \quad (=0).
\end{equation*}
Therefore, by Lemma \ref{FredholmKer}, 
\begin{equation*}
\operatorname{Ker}_{X_1 \to Y_1} \mathcal{A} =\operatorname{Ker}_{X_2 \to Y_2} \mathcal{A} .
\end{equation*}
That is,
\begin{equation*}
\operatorname{Ker}_{X_2 \to Y_2} \mathcal{A} = \{0\}.
\end{equation*} \\

\textbf{Secondly, suppose $p < 2$.} Then, for $0 < \delta <  1$, we define
\begin{equation*}
X_2:= H^{\tfrac{1}{2} + \delta}_{2}(\overline{\mathbb{R}_+}), \quad Y_2:= H^{\tfrac{1}{2} + \delta - 2 \alpha}_{2}(\overline{\mathbb{R}_+})
\end{equation*}
and
\begin{equation*}
X_1:= H^{\tfrac{1}{p} + \delta}_{p}(\overline{\mathbb{R}_+}), \quad Y_1:= H^{\tfrac{1}{p} + \delta - 2 \alpha}_{p}(\overline{\mathbb{R}_+}).
\end{equation*}
We can now repeat the argument made above, for the case $p>2$, to show that
\begin{equation*}
\operatorname{Ker}_{X_1 \to Y_1} \mathcal{A} = \{0\}.
\end{equation*} \\

So, finally,  the operator $\mathcal{A}: H^s_p(\overline{\mathbb{R}_+}) \to H^{s-2\alpha}_p(\overline{\mathbb{R}_+})$ is invertible.     
\chapter{Higher regularity} \label{ChapterHigherReg}
\section{Problem definition}
Suppose $1 < p < \infty$ and $0 < \alpha < 1$. We now assume $1 + 1/p <  s < 2 + 1/p$. Let $A$ denote the pseudodifferential operator of order $2 \alpha$, with symbol,  see \eqref{symboldef},
\begin{equation*}
A(\xi) = (1+ \xi^2)^\alpha. 
\end{equation*}
Our problem is to investigate the solvability of equation \eqref{rAef}
\begin{equation*}
r_+ \, A \, e_+ u + u \, r_+ A( \chi_{\mathbb{R}_{-}}) = f, 
\end{equation*}
where 
 $u \in H^s_p(\overline{\mathbb{R}_+})$ for a given $f \in H^{s-2\alpha}_p(\overline{\mathbb{R}_+})$, subject to the boundary condition
\begin{equation} \label{bczero}
u'(0)=0. \\
\end{equation}

\section{Reformulation}
As a first step in reformulating equation \eqref{rAef}, it will be convenient to define
\begin{equation} \label{Aminus2}
A^{=}(D) := A(D) (D-i)^{-2}, 
\end{equation}
where $D = i \frac{\partial}{\partial x}$. \\

Let $\delta$ denote the \textit{Dirac delta} function and let $\chi_G$ denote the characteristic function of $G$. Now $\chi'_{\mathbb{R}_{+}}(x) = \delta(x)$, see Example 1.3, p. 10, \cite{Es}, and $\chi_{\mathbb{R}_{+}}(x) + \chi_{\mathbb{R}_{-}}(x) =1$. Therefore, $\chi'_{\mathbb{R}_{-}}(x) = - \delta(x)$, and we can write
\begin{align*}
r_+ \, A( \chi_{\mathbb{R}_{-}}) &= r_+ \, A \, (D - i)^{-2} (D - i)^2  \chi_{\mathbb{R}_{-}} \\
&= r_+ \, A^{=} (D - i)  ( - i \, \delta  - i \chi_{\mathbb{R}_{-}} ) \\
&= r_+ \, A^{=} ( \delta' - 2 \delta  - \chi_{\mathbb{R}_{-}} ). 
\end{align*}

Moreover, from Lemma \ref{lemma:D2eu}, we have the identity, 
\begin{equation*}
(D-i)^2 e_+ u = e_+ (D-i)^2 u -u(0) \, \delta' + 2u(0) \, \delta - u'(0)\, \delta.
\end{equation*}
Using the boundary condition \eqref{bczero} and equation \eqref{Aminus2} 
\begin{align*}
r_+ A e_+ u &=  r_+ A^{=}(D-i)^2 e_+ u \\
&= r_+ A^{=} e_+ (D-i)^2 u - u(0)r_+ A^{=} \, (\delta' - 2\delta).
\end{align*}
Hence, we can rewrite equation \eqref{rAef} as
\begin{equation} \label{rA(s-2)eg}
r_+ A^{=} e_+ (D-i)^2 u  + (u(x)-u(0)) \, r_+ A^{=}  \, (\delta' - 2 \delta) - u(x) r_+ \, A^{=} ( \chi_{\mathbb{R}_{-}} ) = f.  
\end{equation} 

We now define
\begin{equation} \label{A2minuss}
A_{s}(D) := A(D) (D-i)^{-2} (D+i)^{2-s} = A^{=}(D+i)^{2-s}, 
\end{equation}
and hence
\begin{align*}
r_+ A^{=} e_+ (D-i)^2 u & = r_+ A_{s} (D+i)^{s-2}e_+ (D-i)^2 u \\
& = r_+ A_{s} u_s, 
\end{align*}
where we set
\begin{equation} \label{us2defn}
u_s := (D+i)^{s-2}e_+ (D-i)^2u.
\end{equation} 
Moreover, from Lemma \ref{lemma:u2s}, we have $u_s \in L_p(\mathbb{R})$ with supp $u_s \subseteq \overline{\mathbb{R}_+}$. \\

Then, it follows directly from \eqref{rA(s-2)eg}, that equation \eqref{rAef} becomes
\begin{equation} \label{rA=s}
r_+ \, A_{s} \, u_s + (u(x)-u(0)) \, r_+ A^{=}  \, (\delta' - 2 \delta) - u(x) r_+ \, A^{=} ( \chi_{\mathbb{R}_{-}} ) = f. 
\end{equation} 


\section{Operator algebra - initial step} \label{OpAlgInit2}
Our starting point for this section is equation \eqref{rA=s}. We remark that the given function $f \in H^{s-2\alpha}_p(\overline{\mathbb{R}_+})$. In a subsequent section, we shall apply the operator $r_+  (D-i)^{s-2\alpha}l_+$ to each side of equation \eqref{rA=s}, since our ultimate goal is a formulation in $L_p(\mathbb{R}_+)$. \\

For the time being however, we recast equation \eqref{rA=s} in the form
\begin{equation} \label{aMCtilde2}
\tilde{a}_{0}(x) u(0)  + \sum^N_{j=1}  \tilde{a}_j(x) \, M^0(\tilde{b}_j) \, (r_+ \tilde{C}_j e_+)(r_+ u_s) + \tilde{K} u = f,
\end{equation}
where $ \tilde{K}: H^{s}_{p,0}(\overline{\mathbb{R}_+}) \to H^{s-2\alpha}_p(\overline{\mathbb{R}_+})$ is compact. (The definition of the space $H^{s}_{p,0}(\overline{\mathbb{R}_+})$, for $s > 1 +1/p$, is given in \eqref{Hsp0definition}.) \\

In equation \eqref{aMCtilde2}, for $k=0,1, \dots, N$ the functions $\tilde{a}_k(x)$ are known. Moreover, for $j=1,2, \dots , N, \, \tilde{b}_j$ is the symbol of a Mellin convolution operator and $\tilde{C}_j$ is a pseudodifferential operator. We shall denote the symbol of $\tilde{C}_j(D)$ by $\tilde{c}_j(\xi)$. We now examine the individual summands in the left-hand side of equation \eqref{rA=s}. 

\subsection{First term}
Consider the term $r_+ \, A_{s} \, u_s$. From equation \eqref{A2minuss}, $A_{s}(D) := A(D) (D-i)^{-2} (D+i)^{2-s}$ and hence, we can write
\begin{equation*}
r_+ \, A_{s} \, u_s = (r_+ A(D) (D-i)^{-2} (D+i)^{2-s} e_+)(r_+ u_s),
\end{equation*}
since, by Lemma \ref{lemma:u2s}, $u_s \in L_p(\mathbb{R})$ and supp $u_s \subseteq \overline{\mathbb{R}_+}$. \\

Thus, in the notation of equation \eqref{aMCtilde2}, 
\begin{align} \label{2tildea1}
\tilde{a}_1(x) &= 1;  \nonumber \\
\tilde{b}_1(\xi) &= 1;  \\
\tilde{c}_1(\xi) &= (1+\xi^2)^\alpha (\xi-i)^{-2}(\xi+i)^{2-s}. \nonumber  
\end{align} 

\subsection{Middle term}
Now consider the middle term, $(u(x)-u(0)) \, r_+ A^{=}  \, (\delta' - 2 \delta)$. It will be convenient to write 
\begin{equation*}
(u(x)-u(0)) \, r_+ A^{=}  \, (\delta' - 2 \delta) = (u(x)-u(0)) \, r_+ A^{=}  \, (\delta' - \delta) - (u(x)-u(0)) \, r_+ A^{=}  \,  \delta.
\end{equation*}

\subsubsection{Middle term - first part} \label{BigalphaMiddleFirst}
From Lemma \ref{lemma:rA(s-2)deltadashdelta},
\begin{equation*}
(r_+ \, A^{=} (\delta'- \delta)) (x) = - iC_{\alpha}  \, e^{-x} \, U(\alpha+1, 2 \alpha+1, 2x),
\end{equation*}
where the constant $C_\alpha$ only depends on $\alpha$, and is given by equation \eqref{defCalpha} in the statement of Lemma \ref{lemma:rA(s-1)delta} as 
\begin{equation*}
C_{\alpha} = -i \dfrac{\alpha \, 2^{2\alpha}}{\Gamma (1- \alpha)}. \\
\end{equation*}
From Lemma \ref{eUab2x},
\begin{equation*}
(r_+ \, A^{=} (\delta'- \delta)) (x) = -iC_\alpha \big ( \phi(x) + \vartheta(x) \log x + x^{-2 \alpha}  \psi(\alpha + 1, 2\alpha+1, x) \big ),
\end{equation*}
where $ \phi, \vartheta \in C^\infty(\mathbb{R})$ and, together with their derivatives, are bounded and $O(e^{-x})$ as $x \to +\infty$. (We can set $\vartheta$ to be identically zero unless $\alpha = \tfrac{1}{2}$.) Moreover, $ \psi \in C^\infty_0(\mathbb{R})$ with $ \psi(\alpha + 1, 2\alpha+1, x) =0$ for $x >2$. \\

Hence, we can write
\begin{equation*}
(u(x)-u(0)) r_+ \, A^{=} (\delta'- \delta) = -i C_\alpha \, (T_{11} + T_{12} + T_{13}) u
\end{equation*}
where
\begin{align*}
T_{11}u(x) &:= \phi(x) (u(x)- u(0)) \\
T_{12}u(x) &:=  \vartheta(x) \log x \, (u(x)- u(0)) \\
T_{13}u(x) &:= x^{-2\alpha} \psi(\alpha + 1, 2\alpha+1, x) (u(x) - u(0)).
\end{align*}
Firstly, we will show that $T_{11} : H^s_{p,0}(\overline{\mathbb{R}_+}) \to H^{s-\epsilon}_p(\overline{\mathbb{R}_+})$, is compact. Now
\begin{equation*}
\phi(x) (u(x) - u(0)) = \phi(x)  e^{x/2} \cdot e^{-x/2}(u(x)-u(0)).
\end{equation*}
Since $u'(0)=0$, by Remark \ref{phiuu0big}, the map $u \mapsto e^{-x/2}(u(x)-u(0))$ defines a bounded operator from $H^s_{p,0}(\overline{\mathbb{R}_+})$ to $\widetilde{H}^s_p(\overline{\mathbb{R}_+})$. 
Moreover, $\phi(x) e^{x/2} \in H^s_p(\overline{\mathbb{R}_+})$, since it and its derivatives are bounded, smooth and $O(e^{-x/2})$ as $ x \to + \infty$. The compactness of the operator $T_{11}: H^s_{p,0}(\overline{\mathbb{R}_+}) \to H^{s-\epsilon}_p(\overline{\mathbb{R}_+})$ now follows immediately from Lemma \ref{multiplierbHcompact}. \\

Secondly, we will show that $T_{12} : H^s_{p,0}(\overline{\mathbb{R}_+}) \to H^{s-\epsilon}_p(\overline{\mathbb{R}_+})$, is compact. Now
\begin{equation*}
\vartheta(x) \log x (u(x) - u(0)) = \vartheta(x) e^{x/2} \cdot e^{-x/4} \log x \cdot e^{-x/4}(u(x)-u(0)).
\end{equation*}
Since $u'(0)=0$, by Remark \ref{phiuu0big}, the map $u \mapsto e^{-x/4}(u(x)-u(0))$ defines a bounded operator from $H^s_{p,0}(\overline{\mathbb{R}_+})$ to $\widetilde{H}^s_p(\overline{\mathbb{R}_+})$. 
Further, from Corollary \ref{LogMultiplier}, $e^{-x/4} \log x I$ defines a bounded operator from $\widetilde{H}^s_p(\overline{\mathbb{R}_+})$ to $\widetilde{H}^{s-\tfrac{\epsilon}{2}}_p(\overline{\mathbb{R}_+})$. 
Finally, $\vartheta(x) e^{x/2} \in H^{s-\tfrac{\epsilon}{2}}_p(\overline{\mathbb{R}_+})$, since it and its derivatives are bounded, smooth and $O(e^{-x/2})$ as $ x \to + \infty$. The compactness of the operator $T_{12}: H^s_{p,0}(\overline{\mathbb{R}_+}) \to H^{s-\epsilon}_p(\overline{\mathbb{R}_+})$ now follows immediately from Lemma \ref{multiplierbHcompact}. \\

It remains to consider $-i C_\alpha \,T_{13}u(x) = -i C_\alpha x^{-2\alpha} \psi(\alpha + 1, 2\alpha+1, x) (u(x) - u(0))$, and it is convenient to write
\begin{align*}
-i C_\alpha & x^{-2\alpha} \psi(\alpha + 1, 2\alpha+1, x) (u(x) - u(0)) \\
&= -i C_\alpha \, \psi(\alpha + 1, 2\alpha+1, x) \cdot \big \{ x^{-2\alpha} (u(x) - u(0)) \big \} ,
\end{align*}
noting that $ \psi \in C^\infty_0(\mathbb{R})$ with $\psi(\alpha + 1, 2\alpha+1, x) =0$ for $x >2$. \\

On the other hand, from Lemmas \ref{lemma:mellinop1} and \ref{lemma:mellinop12},
\begin{equation*}
x^{-2 \alpha} (u(x) - u(0)) = \int^\infty_0 K_{2\alpha} \bigg( \dfrac{x}{y} \bigg ) h(y) \, \dfrac{dy}{y} \quad := {M}_{2\alpha} h,
\end{equation*}
where $h(x) = (C^{2\alpha}_{0^+}u) (x)$. Moreover, from Lemmas \ref{htous2} and \ref{htous2supplement} 
\[
h=
\begin{cases}
(r_+ C(D) e_+) (r_+ u_s), & \alpha = \tfrac{1}{2}; \\
(r_+ C(D) e_+) (r_+ u_s) + \dfrac{u(0)}{\sqrt{2\pi}} \, r_+ \mathcal{F}^{-1} (-i \xi)^{2\alpha-1}(\xi-i)^{-2}, & \alpha \not = \tfrac{1}{2},
\end{cases}
\]
where $C(D)$ has the symbol $c(\xi) = (-i \xi)^{2\alpha} (\xi + i)^{2-s} (\xi-i)^{-2}$.
From Lemma \ref{lemma:mellinop2}, ${M}_{2\alpha}$ is a Mellin convolution operator with symbol $b(\xi) = B(1/p' + i \xi, 2 \alpha) / \Gamma(2\alpha) $.\\

Thus, in the notation of equation \eqref{aMCtilde2}, we have
\begin{align} \label{2tildea21}
\tilde{a}_{2}(x) &= -i C_\alpha \psi(\alpha + 1, 2\alpha+1, x);\nonumber \\
\tilde{b}_{2}(\xi) &= B(1/p' + i \xi, 2 \alpha) / \Gamma(2\alpha) ;\\
\tilde{c}_{2}(\xi) &= (-i \xi)^{2\alpha} (\xi + i)^{2-s} (\xi-i)^{-2}.\nonumber  
\end{align}
and for $\alpha \not = \tfrac{1}{2}$ 
\begin{equation} \label{2tildea0}
\tilde{a}_{0}(x) = \tilde{a}_{2}(x) M^0(\tilde{b}_{2})  \dfrac{1}{\sqrt{2\pi}} \, r_+ \mathcal{F}^{-1} (-i \xi)^{2\alpha-1}(\xi-i)^{-2}. 
\end{equation}

\subsubsection{Middle term - second part}
From Lemma \ref{lemma:rA(s-2)delta}
\begin{equation*}
(r_+ \, A^{=}  \delta) (x) = \tfrac{1}{2} \, i C_{\alpha}  \, e^{-x} \, U(\alpha+1, 2 \alpha, 2x),
\end{equation*}
where the constant $C_\alpha$ only depends on $\alpha$, and is given by equation \eqref{defCalpha} in the statement of Lemma \ref{lemma:rA(s-1)delta} as 
\begin{equation*}
C_{\alpha} = -i \dfrac{\alpha \, 2^{2\alpha}}{\Gamma (1- \alpha)}. \\
\end{equation*}

From Lemma \ref{eUab2x},
\begin{equation*}
(r_+ \, A^{=} \delta) (x) = \tfrac{1}{2} \, i C_{\alpha} \big ( \phi(x) + \vartheta(x) \log x + x^{1-2 \alpha}  \psi(\alpha + 1, 2\alpha, x) \big ),
\end{equation*}
where $ \phi, \vartheta \in C^\infty(\mathbb{R})$ and, together with their derivatives, are bounded and $O(e^{-x})$ as $x \to +\infty$. (We can set $\vartheta$ to be identically zero unless $\alpha = \tfrac{1}{2}$.) Moreover, $ \psi \in C^\infty_0(\mathbb{R})$ with $ \psi(\alpha + 1, 2\alpha, x) =0$ for $x >2$. \\

Hence, we can write
\begin{equation*}
-(u(x)-u(0)) r_+ \, A^{=} \delta) = -\tfrac{1}{2} \, i C_{\alpha} \, (T_{21} + T_{22})u
\end{equation*}
where
\begin{align*}
T_{21}u(x) &:= (\phi(x) + \vartheta(x) \log x) (u(x)- u(0)) \\
T_{22}u(x) &:= x^{1-2\alpha} \psi(\alpha + 1, 2\alpha, x) (u(x) - u(0)).
\end{align*}

From the earlier part of this section,  $T_{21}: H^s_{p,0}(\overline{\mathbb{R}_+}) \to H^{s-\epsilon}_p(\overline{\mathbb{R}_+})$ is compact. \\

It remains to consider the operator $T_{22}$. \\

Firstly, suppose that $0 < \alpha  \leq \tfrac{1}{2}$. Then we can write
\begin{align*}
x^{1-2\alpha} & \psi(\alpha + 1, 2\alpha, x) (u(x) - u(0)) \\
 & = \psi(\alpha + 1, 2\alpha, x) e^{x/2} \cdot  x^{1-2\alpha} e^{-x/4} \cdot e^{-x/4} (u(x)-u(0)).
\end{align*}
Since $u'(0)=0$, by Remark \ref{phiuu0big}, the map $u \mapsto e^{-x/4}(u(x)-u(0))$ defines a bounded operator from $H^s_{p,0}(\overline{\mathbb{R}_+})$ to $\widetilde{H}^s_p(\overline{\mathbb{R}_+})$. 
Further, from Lemma \ref{multxgammaa}, the operator $x^{1-2\alpha} e^{-x/4} I$ is bounded on $\widetilde{H}^s_p(\overline{\mathbb{R}_+})$. 
Now $\psi(\alpha + 1, 2\alpha, x) e^{x/2}$ is bounded, smooth and has compact support. Finally, for $0 < \alpha  \leq \tfrac{1}{2}$, the compactness of the operator $T_{22}: H^s_{p,0}(\overline{\mathbb{R}_+}) \to H^{s-\epsilon}_p(\overline{\mathbb{R}_+})$ now follows immediately from Lemma \ref{multiplierbcompact}. \\

Secondly, suppose that $\tfrac{1}{2} < \alpha < 1$. Then we can write
\begin{align*}
x^{1-2\alpha} & \psi(\alpha + 1, 2\alpha, x) (u(x) - u(0))\\
& = \psi(\alpha + 1, 2\alpha, x) e^{x/2} \cdot  x^{1-2\alpha} \cdot e^{-x/2} (u(x)-u(0)).
\end{align*}
Since $u'(0)=0$, by Remark \ref{phiuu0big}, the map $u \mapsto e^{-x/2}(u(x)-u(0))$ defines a bounded operator from $H^s_{p,0}(\overline{\mathbb{R}_+})$ to $\widetilde{H}^s_p(\overline{\mathbb{R}_+})$. 
Further, from Lemma \ref{x-alphaI}, the operator $x^{1-2\alpha}I$ is bounded from $\widetilde{H}^s_p(\overline{\mathbb{R}_+})$ to $\widetilde{H}^{s - 2\alpha + 1}_p(\overline{\mathbb{R}_+})$. 
Now $\psi(\alpha + 1, 2\alpha, x) e^{x/2}$ is bounded, smooth and has compact support. Finally, for $\tfrac{1}{2} < \alpha < 1$, the compactness of the operator $T_{22}: H^s_{p,0}(\overline{\mathbb{R}_+}) \to H^{s-2\alpha + 1- \epsilon}_p(\overline{\mathbb{R}_+})$ now follows immediately from Lemma \ref{multiplierbcompact}. \\

In other words, for $0 < \alpha < 1$, the operator $T_{22}: H^s_{p,0}(\overline{\mathbb{R}_+}) \to H^{s - 2\alpha}_p(\overline{\mathbb{R}_+})$ is also compact. \\

\subsection{Final term}
It remains to consider the last term, $- u(x) r_+ \, A^{=} ( \chi_{\mathbb{R}_{-}} )$. From Lemma \ref{a3(x)2}
\begin{equation*}
(r_+ \, A^= \, \chi_{\mathbb{R}_{-}}) (x) = \tfrac{1}{2} \, i C_{\alpha} \int^\infty_x e^{-t} U(\alpha+1, 2\alpha, 2t) \, dt,
\end{equation*}
where the constant $C_\alpha$ only depends on $\alpha$. \\

Suppose $0 < \alpha < 1$ and $\alpha \not = \tfrac{1}{2}$. Then, from Remark \ref{rAminus2chiminus},
\begin{equation*}
(r_+ \, A^= \, \chi_{\mathbb{R}_{-}}) (x) = \tfrac{1}{2} \, i C_{\alpha} \, (\phi_1(x) + x^{2-2\alpha} \phi_2(x)),
\end{equation*}
where $\phi_1, \phi_2 \in C^\infty(\mathbb{R})$ and, together with their derivatives, are bounded and $O(e^{-x})$ as $x \to +\infty$. \\

Hence, we can write
\begin{equation*}
- u r_+ A^=(\chi_{\mathbb{R}_-}) = - \tfrac{1}{2} i C_\alpha \, (T_{31} + T_{32} + T_{33}) u
\end{equation*}
where
\begin{align*}
T_{31} u(x) &:= \phi_1(x) u(x) \\
T_{32} u(x) &:= x^{2-2\alpha} \phi_2(x) (u(x) - u(0))\\
T_{33} u(x) &:= x^{2-2\alpha} \phi_2(x) u(0).
\end{align*}
Since, by assumption, $u'(0)=0$, the compactness of the operators  $T_{31},T_{32}$ and $T_{33}$ now follows in the same manner as Section \ref{Smallalphafinalterm}. \\

It remains to consider the case $\alpha = \tfrac{1}{2}$. \\

The analysis proceeds as for the case $\alpha \not = \tfrac{1}{2}$, but, see Remark \ref{rAminus2chiminus}, we need also to consider a term of the form $\vartheta_1 (x) \, x \log x $, where  $\vartheta_1 \in C^\infty(\mathbb{R})$ and, together with its derivatives, is bounded and $O(e^{-x})$ as $x \to +\infty$. \\

For $u \in H^s_{p,0}(\overline{\mathbb{R}_+})$, let us consider the map
\begin{equation*}
u(x)  \mapsto \vartheta_1(x) \, x \log x \cdot u(x).
\end{equation*}
We write
\begin{equation*}
\vartheta_1 (x) \, x \log x \cdot u(x) = T_{34} u(x) + T_{35} u(x),
\end{equation*}
where
\begin{align*}
T_{34} u(x) &= \vartheta_1 (x) e^{x/2} \, \cdot \, e^{-x/4} x \log x   \, \cdot \, e^{-x/4} (u(x) - u(0)) \\
T_{35} u(x) &= \vartheta_1(x) e^{x/2} \, \cdot \, e^{-x/4} \log x \, \cdot \, x^{-1} \cdot x^2 e^{-x/4} u(0).
\end{align*}

We will show that $T_{34}$ and $T_{35}$ represent compact operators. \\

Firstly, consider $T_{34}$. From Remark \ref{phiuu0big}, $u \mapsto e^{-x/4} (u(x) - u(0))$ defines a bounded operator from $H^s_{p,0}(\overline{\mathbb{R}_+})$ to $\widetilde{H}^s_{p}(\overline{\mathbb{R}_+})$. By Lemma \ref{eepslogbounded}, the operator $e^{-x/4} x \log x I$ is bounded on $\widetilde{H}^s_{p}(\overline{\mathbb{R}_+})$. Moreover, $\vartheta_1 (x) e^{x/2} \in {H}^s_{p}(\overline{\mathbb{R}_+})$, since it and its derivatives are bounded, smooth and $O(e^{-x/2})$ as $x \to \infty$. Finally, the compactness of the operator $T_{34}:H^s_{p,0}(\overline{\mathbb{R}_+}) \to H^{s-\epsilon}_{p}(\overline{\mathbb{R}_+})$, now follows directly from Lemma \ref{multiplierbHcompact}. \\

Secondly, consider $T_{35}$.  Since $1 + 1/p < s < 2 +1/p$, we can choose $\epsilon > 0$ such that $s+ \epsilon < 2 + 1/p$. We note that $ x^2 e^{-x/4} \in \widetilde{H}^{s+ \epsilon}_p (\overline{\mathbb{R}_+})$ because it, and its first derivative, take the value $0$ at $x=0$, and it is smooth with exponential decay. By Lemma \ref{x-alphaI}, the operator $ x^{-1} I$ is bounded from $\widetilde{H}^{s+ \epsilon}_p (\overline{\mathbb{R}_+})$ to $\widetilde{H}^{s+ \epsilon-1}_p (\overline{\mathbb{R}_+})$. Further, by Corollary \ref{LogMultiplier}, the operator $e^{-x/4} \log x I$ is bounded from $\widetilde{H}^{s+\epsilon -1}_p (\overline{\mathbb{R}_+}) \to \widetilde{H}^{s -1}_p (\overline{\mathbb{R}_+})$. Moreover, $\vartheta_1 (x) e^{x/2}$ and its derivatives are bounded, smooth and $O(e^{-x/2})$ as $x \to \infty$, and thus the operator $\vartheta_1 (x) e^{x/2} I$ is bounded on $\widetilde{H}^{s -1}_p (\overline{\mathbb{R}_+})$ by Lemma \ref{multxgammaa}. Finally, $T_{35} : H^s_{p,0}(\overline{\mathbb{R}_+}) \to H^{s-1}_{p}(\overline{\mathbb{R}_+})$ is bounded and rank one, and is therefore compact.

\subsection{Summary}
So, in summary, taking $N=2$, we have the required representation
\begin{equation*} 
\tilde{a}_{0}(x) u(0) + \sum^2_{j=1}  \tilde{a}_j(x) \, M^0(\tilde{b}_j) \, (r_+ \tilde{C}_j e_+)(r_+ u_s) + \tilde{K} u = f,
\end{equation*}
where the symbols $\tilde{a}_0$ and $(\tilde{a}_j, \tilde{b}_j, \tilde{c}_j)$ for $j=1$ and $j=2$ are given by equations \eqref{2tildea0} and \eqref{2tildea1}, \eqref{2tildea21} respectively, and the operator $\tilde{K}: H^s_{p,0}(\overline{\mathbb{R}_+}) \to H^{s-2\alpha}_{p}(\overline{\mathbb{R}_+})$ is compact.


\section{Operator algebra - final step} \label{OpAlgLp2}
We have seen in Section \ref{OpAlgInit2} that equation \eqref{rAef} can be written as (see equation \eqref{aMCtilde2} with $N=2$)
\begin{equation} \label{aMCtilde32}
\tilde{a}_{0}(x) u(0)+ \sum^2_{j=1}  \tilde{a}_j(x) \, M^0(\tilde{b}_j) \, (r_+ \tilde{C}_j e_+)(r_+ u_s) + \tilde{K} u = f,
\end{equation}
where the given function $f \in H^{s-2\alpha}_p(\overline{\mathbb{R}_+})$, and the operator $\tilde{K}:H^s_{p,0}(\overline{\mathbb{R}_+}) \to H^{s-2\alpha}_{p}(\overline{\mathbb{R}_+})$ is compact. \\

In this section, we present a formulation in $L_p(\mathbb{R}_+)$ of the form
\begin{equation} \label{aMW2}
\big ( {a}_1 (x) \, M^0({b}_1) \, W(c_1) + {a}_2 (x) \, M^0({b}_2) \, W(c_2) + T \big ) \, (r_+ u_s) = g,
\end{equation}
where the operator $T$, acting on $L_p(\mathbb{R}_+)$, is compact. The function $g \in L_p(\mathbb{R}_+)$ is defined by
\begin{equation} \label{defng2}
g:= r_+ (D-i)^{s-2\alpha} l_+ f,
\end{equation}
where by Lemma \ref{lemma:rLambdae}, $g$ does not depend on the choice of the extension $l_+$. \\

The subsequent analysis will show that, after the application of the operator $r_+ (D-i)^{s-2\alpha} l_+$, some of the terms in equation \eqref{aMCtilde32} represent compact operators on $L_p(\mathbb{R}_+)$. \\

We now consider the action of the operator $r_+ (D-i)^{s-2\alpha} l_+$ on the individual summands on the left-hand side of equation \eqref{aMCtilde32} in turn. 

\subsection{First term}
We assume $0 < \alpha < 1$ and note, from equation \eqref{2tildea0}, that the first term is only present if  $\alpha \not =\tfrac{1}{2}$. Indeed, in this case, we have 
\begin{align*}
\tilde{a}_{0}(x) & = \tilde{a}_{2}(x) M^0(\tilde{b}_{2})  \dfrac{1}{\sqrt{2\pi}} \, r_+ \mathcal{F}^{-1} (-i \xi)^{2\alpha-1}(\xi-i)^{-2} \\
&= \tilde{a}_2(x) M^0(\tilde{b}_2) \, r_+ \, h_1(x), 
\end{align*}
where
\begin{equation*}
h_1(x):= \dfrac{1}{\sqrt{2\pi}} \, r_+ \mathcal{F}^{-1} (-i \xi)^{2\alpha-1}(\xi-i)^{-2}.\\
\end{equation*}

Note that, from equation \eqref{2tildea21},
\begin{align*} 
\tilde{a}_{2}(x) &= -iC_\alpha \, \psi(\alpha+1, 2\alpha+1,x) ;\nonumber \\
\tilde{b}_{2}(\xi) &= B(1/p' + i \xi, 2 \alpha) / \Gamma(2\alpha).
\end{align*}

Our goal is to show that
\begin{equation*}
\Lambda^{s-2\alpha}_- \tilde{a}_{0}(x) = \Lambda^{s-2\alpha}_- \, \tilde{a}_2(x) M^0(\tilde{b}_2) r_+ h_1(x) \in L_p(\mathbb{R}_+),
\end{equation*}
because then the operator
\begin{equation*}
r_+ u_s(x) \longmapsto \Lambda^{s-2\alpha}_- \tilde{a}_{0}(x) u(0)
\end{equation*}
is bounded on $L_p(\mathbb{R}_+)$. Moreover, it has rank one and is therefore compact.\\\

We note that $\tilde{a}_2 \in r_+ C^\infty_0(\mathbb{R})$ and $\tilde{a}_2(x) = 0$ for $x \geq 2$. Let $\chi \in C^\infty_0(\mathbb{R})$ be such that
\begin{align*}
&\chi(t) :=
\begin{cases} 
	1 &\mbox{if }  |t| \leq 2\\
	0 & \mbox{if } |t| > 3. \\
\end{cases}
\end{align*} 

Then, see Lemmas \ref{lemma:mellinop2} and \ref{M0bchi},
\begin{equation*}
\Lambda^{s-2\alpha}_- \tilde{a}_2 M^0(\tilde{b}_2) r_+ h_1 = \Lambda^{s-2\alpha}_- \tilde{a}_2 M^0(\tilde{b}_2) (r_+ \chi  h_1).
\end{equation*} 

Since $h_1$ is the inverse Fourier transform of an integrable function it is continuous and vanishes at infinity. Hence $r_+ \chi h_1 \in L_p(\mathbb{R}_+)$. Thus, if $s-2\alpha <0$, using Lemma \ref{MellinOpBdd}, the required result follows immediately. \\

It remains to consider the case $s-2\alpha \geq 0$. Set $\mu = 2 \alpha-1, \,\, r=s-2\alpha$, so that $-1 < \mu < 1, \,\,  r < 3 - 2\alpha = 2-(2\alpha-1) = 2 - \mu$. Then, from Lemma \ref{Hrconditionbig}, for any $\chi_1 \in C^\infty_0(\mathbb{R})$, 
\begin{equation*}
\chi_1 (D- i)^{s-2\alpha} h_1 \in L_p(\mathbb{R}), 
\end{equation*}
subject \textit{only} to the condition
\begin{equation} \label{Condpsalpha2}
 s < 2 + \dfrac{1}{p}.
\end{equation}

Hence, using the method of proof in Lemma \ref{lemmapushchi}, $(D-i)^{s-2\alpha} \chi h_1 \in L_p(\mathbb{R})$, and so, after applying the operator $r_+ (D-i)^{2\alpha-s}$, we have
\begin{equation*}
r_+ \chi h_1 \in H^{s-2\alpha}_p(\overline{\mathbb{R}_+}).
\end{equation*}
Therefore, as $s-2\alpha \geq 0$,  again from Lemma \ref{MellinOpBdd},
\begin{equation*}
M^0(\tilde{b}_2) \, r_+ \chi  h_1 \in H^{s-2\alpha}_p(\overline{\mathbb{R}_+}),
\end{equation*}
and hence, 
\begin{equation*}
\Lambda^{s-2\alpha}_- \, \tilde{a}_2(x)  \,M^0(\tilde{b}_2) r_+ h_1(x) \in L_p(\mathbb{R}_+), 
\end{equation*}
as required, since $\tilde{a}_2   \in r_+C^\infty_0(\mathbb{R})$. \\

\subsection{Second term}
We assume $0 < \alpha < 1$. Using \eqref{2tildea1}, we have
\begin{equation*}
\tilde{a}_1(x) \, M^0(\tilde{b}_1) \, (r_+ \tilde{C}_1 e_+) = r_+ \tilde{C}_1 e_+
\end{equation*}
where the pseudodifferential operator $ \tilde{C}_1$ has symbol $(1+\xi^2)^\alpha (\xi-i)^{-2}(\xi+i)^{2-s}$. Hence, by Lemma \ref{lemma:rLambdae}
\begin{equation*}
r_+ (D-i)^{s-2\alpha} l_+ \, r_+ \tilde{C}_1 e_+ = r_+ (D-i)^{s-2\alpha} \tilde{C}_1 e_+.
\end{equation*}
Now $(D-i)^{s-2\alpha} \tilde{C}_1$ has symbol $(1+\xi^2)^\alpha (\xi-i)^{s- 2\alpha-2}(\xi+i)^{2-s}$, which is clearly a Fourier $L_p$ multiplier. (See Lemma \ref{mxiFm}.) Therefore, in the notation of equation \eqref{aMW2}, 
\begin{align} \label{a1final}
{a}_1(x) &= 1;  \nonumber \\
{b}_1(\xi) &= 1;  \\
{c}_1(\xi) &= (1+\xi^2)^\alpha (\xi-i)^{s- 2\alpha-2}(\xi+i)^{2-s}. \nonumber  
\end{align}

\subsection{Third term}
Now from \eqref{2tildea21} we have
\begin{align*}
\tilde{a}_2(x) &= -i C_\alpha \, \psi(\alpha+1, 2\alpha+1, x);\nonumber \\
\tilde{b}_2(\xi) &= B(1/p' + i \xi, 2 \alpha) / \Gamma(2\alpha) ;\\
\tilde{c}_2(\xi) &= (-i \xi)^{2\alpha} (\xi + i)^{2-s} (\xi-i)^{-2}.\nonumber  
\end{align*}

Let us define
\begin{equation*}
r:= s - 2\alpha,
\end{equation*}
so that we need to consider
\begin{equation*}
- 1 + 1/p < r < 2+1/p,
\end{equation*}
since $0 < \alpha < 1$ and $1+ 1/p < s < 2 +1/p$. We note that the pseudodifferential operator $\tilde{C}_2$ has order $-r$. \\

If $r \geq 0$, then from Lemma \ref{MellinOpBdd}, the operator $M^0(\tilde{b}_2) \, (r_+ \tilde{C}_2 e_+) : L_p(\mathbb{R}_+) \to H^r_p(\overline{\mathbb{R}_+})$ is bounded. \\

On the other hand, if $-1 +1/p < r < 0$, we can write
\begin{equation*}
\tilde{c}_2(\xi) =  (i \xi)^{-r} \cdot \tilde{c}_0(\xi) 
\end{equation*}
where
\begin{equation*}
\tilde{c}_0(\xi) :=  (i \xi)^r (-i \xi)^{2 \alpha} (\xi + i)^{2-s} (\xi-i)^{-2}.
\end{equation*}
Since $r + 2\alpha = s > 0, \,\, \tilde{c}_0(0)=0$. Moreover, as $r=s-2\alpha$, the operator $\tilde{C}_0$, with symbol $\tilde{c}_0$, has order $0$. \\

From Lemma \ref{AhomoMellin}, $M_{2\alpha, 0} r_+ (iD)^{-r} l_+ = r_+ (iD)^{-r}l_+ M_{2\alpha,r}$, and from Lemma \ref{lemma:rHomore}, $r_+ (iD)^{-r} l_+: L_p(\mathbb{R}_+) \to H^r_p(\overline{\mathbb{R}_+})$ is bounded. Therefore, the operator 
\begin{align*}
M^0(\tilde{b}_2) \, (r_+ \tilde{C}_2 e_+) & = M_{2\alpha, 0} \, (r_+ \tilde{C}_2 e_+) \quad \text{(see Lemma } \ref{lemma:mellinop2} \text{)} \\
& = M_{2\alpha, 0} \, (r_+ (iD)^{-r} l_+ r_+ \tilde{C}_0 e_+) \quad \text{(see Lemma } \ref{lemma:rHomore} \text{)} \\
& = r_+ (iD)^{-r}l_+ M_{2\alpha,r} (r_+ \tilde{C}_0 e_+)
\end{align*}
is bounded from $L_p(\mathbb{R}_+) \to H^r_p(\overline{\mathbb{R}_+})$. \\

So now, using Lemma \ref{multxgammaa}, each of the three operators in the identity
\begin{align*}
& \Lambda^r_- \, \tilde{a}_2(x) \, M^0(\tilde{b}_2) \, (r_+ \tilde{C}_2 e_+) \\
&= [\Lambda^r_- , \tilde{a}_2(x)] \, M^0(\tilde{b}_2) \, (r_+ \tilde{C}_2 e_+) +\tilde{a}_2(x) \, \Lambda^r_- \, M^0(\tilde{b}_2) \, (r_+ \tilde{C}_2 e_+),
\end{align*}
is bounded on $L_p(\mathbb{R}_+)$. \\

Moreover, the compactness of the operator involving the commutator term follows directly from Lemma \ref{CommutatorLambphi}. Thus, it remains to consider $\tilde{a}_2(x) \, \Lambda^r_- \, M^0(\tilde{b}_2) \, (r_+ \tilde{C}_2 e_+)$. \\

Firstly, suppose that $0 < r < 1$. Then, using Lemma \ref{LambdaMreverse},
\begin{align*}
\Lambda^r_-  & \, M^0(\tilde{b}_2) \, (r_+ \tilde{C}_2 e_+) \\
& = \Lambda^r_- \, M_{2\alpha,0} \, (r_+ \tilde{C}_2 e_+) \\
& = \big ( M_{2\alpha,r} \,  \Lambda^r_- +(-i)^r (M_{2\alpha,0} - M_{2\alpha,r}) +T \big ) \, (r_+ \tilde{C}_2 e_+) \\
& = M_{2\alpha,r} \, (r_+ C_2 e_+) + (-i)^r (M_{2\alpha,0} - M_{2\alpha,r}) \, (r_+ \tilde{C}_2 e_+)
+T \, (r_+ \tilde{C}_2 e_+).
\end{align*}

From Lemma \ref{LambdaMreverse}, $T: H^r_p(\overline{\mathbb{R}_+}) \to L_p(\mathbb{R}_+)$ is compact. Moreover, the pseudodifferential operator $\tilde{C}_2$ has order $-r$, and hence $T  \, (r_+ \tilde{C}_2 e_+)$ is compact on $L_p(\mathbb{R}_+)$. \\

By Remark \ref{remark:mellinop}, the symbols of both $M_{2\alpha,0}$ and $M_{2\alpha,r}$ take the value zero at $\pm \infty$. Hence, $(M_{2\alpha,0} - M_{2\alpha,r}) \, (r_+ \tilde{C}_2 e_+)$ is compact on $L_p(\mathbb{R}_+)$, from Proposition 5.3.4 (i), p. 267, \cite{Roch}. \\

So, in summary, if $0 < r < 1$ then
\begin{equation*}
\Lambda^r_- \, M^0(\tilde{b}_2) \, (r_+ \tilde{C}_2 e_+) = M_{2\alpha,r} \, (r_+ C_2 e_+) + K_1,
\end{equation*}
where $C_2$ has symbol
\begin{equation*}
{c}_2(\xi) = (-i \xi)^{2\alpha}  (\xi-i)^{s - 2 \alpha -2} (\xi + i)^{2-s},
\end{equation*}
and the operator $K_1$, acting on $L_p(\mathbb{R}_+)$, is compact. \\

Similarly, in the case that $-1 + 1/p < r < 0$, then we can again apply Lemma \ref{LambdaMreverse}, noting that the operator $\Lambda^{2 r}_- \, r_+ \tilde{C}_2 e_+$ has order $r < 0$. 

\begin{align*}
\Lambda^r_-  & \, M^0(\tilde{b}_2) \, (r_+ \tilde{C}_2 e_+) \\
& = \Lambda^r_- \, M_{2\alpha,0} \, (r_+ \tilde{C}_2 e_+) \\
& = \big ( M_{2\alpha,r} \,  \Lambda^r_- -(-i)^r (M_{2\alpha,0} - M_{2\alpha,r}) \, \Lambda^{2r}_-+T \big ) \, (r_+ \tilde{C}_2 e_+) \\
& = M_{2\alpha,r} \, (r_+ C_2 e_+) - (-i)^r (M_{2\alpha,0} - M_{2\alpha,r}) \,  \Lambda^{2r}_-\, (r_+ \tilde{C}_2 e_+)
+T \, (r_+ \tilde{C}_2 e_+).
\end{align*}

The compactness of $T  \, (r_+ \tilde{C}_2 e_+)$ on $L_p(\mathbb{R}_+)$ follows exactly as in the case $0 < r < 1$. Moreover, 
\begin{equation*}
(M_{2 \alpha,0} - M_{2 \alpha, r}) \Lambda^{2 r}_- \, r_+ \tilde{C}_2 e_+
\end{equation*}
is compact on $L_p(\mathbb{R}_+)$, from Proposition 5.3.4 (i), p. 267, \cite{Roch}.\\

So, in summary, if $-1 + 1/p < r < 0$ then
\begin{equation*}
\Lambda^r_- \, M^0(\tilde{b}_2) \, (r_+ \tilde{C}_2 e_+) = M_{2\alpha,r} \, (r_+ C_2 e_+) + K_2
\end{equation*}
where the operator $K_2$, acting on $L_p(\mathbb{R}_+)$, is compact. \\

The case $1 < r < 2$ follows similarly, except that we now apply Lemma \ref{Mlambdaextension}. 
In particular, we note that the operator $S_1 \, r_+ \tilde{C}_2 e_+$ has order $1 - r < 0$. Hence, as in the case $0 < r < 1$ discussed above, 
\begin{equation*}
S_1 \, r_+ \tilde{C}_2 e_+
\end{equation*}
is compact on $L_p(\mathbb{R}_+)$, from Proposition 5.3.4 (i), p. 267, \cite{Roch}. \\

For the case $2 < r < 2 +1/p$ we observe that $S_2 \, r_+ \tilde{C}_2 e_+$ has order $2 - r < 0$, and the analysis proceeds as for $1 < r < 2$. \\

Finally, the cases $r=1$ and $r=2$ follow in the same way, and for the case $r=0$, there is nothing to prove.\\

Hence, using Lemma \ref{lemma:mellinop2}, in the notation of equation \eqref{aMW} we have, 
\begin{align} \label{a2final}
{a}_2(x) &=-i C_\alpha \, \psi(\alpha+1, 2\alpha+1, x);\nonumber \\
{b}_2(\xi) &= B(s- 2\alpha +1/p' + i \xi, 2 \alpha) / \Gamma(2\alpha) ;\\
{c}_2(\xi) &= (-i \xi)^{2\alpha}  (\xi-i)^{s - 2 \alpha -2} (\xi + i)^{2-s}.\nonumber  
\end{align} 
Note that a routine application of Lemma \ref{mxiFm} confirms that $c_2$ is a Fourier $L_p$ multiplier.

\subsection{Summary}
Our base assumptions are that 

\begin{equation} \label{Finalcondalphaps2}
1 < p < \infty, \, 1+1/p < s <  2+ 1/p \,\, \text{and} \,\, 0 < \alpha < 1. \\
\end{equation}

So, finally, subject to condition \eqref{Finalcondalphaps2},  the formulation given by equation \eqref{aMW2} becomes 
\begin{equation} \label{WamW2}
\big ( W(c_1) + a_2 M^0(b_2) W(c_2) + T \big ) (r_+ u_s) = g,
\end{equation}
where the operator $T$, acting on $L_p(\mathbb{R}_+)$, is compact and
\begin{align} \label{Finalabc2}
g:&= r_+ (D-i)^{s-2\alpha} l_+ f; \nonumber \\
{c}_1(\xi) &= (1+\xi^2)^\alpha (\xi-i)^{s- 2\alpha-2}(\xi+i)^{2-s}. \nonumber  \\
{a}_2(x) &= -i C_\alpha \, \psi(\alpha+1, 2\alpha+1, x) \quad \text{(see Lemma } \ref{eUab2x}); \nonumber \\
{b}_2(\xi) &= B(s- 2\alpha +1/p' + i \xi, 2 \alpha) / \Gamma(2\alpha) ; \\
{c}_2(\xi) &= (-i \xi)^{2\alpha} (\xi-i)^{s- 2\alpha-2}(\xi+i)^{2-s},\nonumber  
\end{align} 
and the constant $C_\alpha$ is given by
\begin{equation*} 
C_{\alpha} = -i \, \dfrac{\alpha \, 2^{2\alpha}}{\Gamma (1- \alpha)}.
\end{equation*}

\section{Generalised symbol} \label{GenSymbolbig}
We now follow the approach taken in Chapter \ref{GenSymbol} and examine the generalised symbol $A_{\alpha, p, s}(\omega)$ defined on the contour $\Gamma_M$. \\

\subsection{Segment $\Gamma_1$} \label{Gamma1big}
Firstly, we note that
\begin{align*}
a_2(0) & = -i C_\alpha \, \psi(\alpha+ 1, 2 \alpha + 1, 0) \\
& = - i C_\alpha \, 2^{-2 \alpha} \dfrac{\Gamma(2 \alpha)}{\Gamma(\alpha +1)} \quad \text{(see Lemma \ref{eUab2x})}.
\end{align*}

Hence,
\begin{align*}
a_2(0) \, b_2(\xi) & = - i C_\alpha \, 2^{-2 \alpha} \dfrac{\Gamma(2 \alpha)}{\Gamma(\alpha +1)} \cdot \dfrac{B(s- 2\alpha +1/p' + i \xi, 2 \alpha)}{\Gamma(2\alpha)} \\
& = - \dfrac{\alpha \, 2^{2\alpha}}{\Gamma (1- \alpha)} \cdot \dfrac{2^{-2 \alpha}}{\Gamma(\alpha +1)} \cdot B(s- 2\alpha +1/p' + i \xi, 2 \alpha) \\
& = - \dfrac{1}{\Gamma (1- \alpha)\Gamma(\alpha)} \cdot B(s- 2\alpha +1/p' + i \xi, 2 \alpha) \quad (\Gamma(\alpha +1 ) = \alpha \Gamma(\alpha))\\
&= -\dfrac{\sin \pi \alpha}{\pi} \cdot B(s- 2\alpha +1/p' + i \xi, 2 \alpha) \quad \text{(5.5.3, \cite{NIST}).}
\end{align*}

Therefore, on the segment $\Gamma_1$, for $-\infty \leq \xi \leq \infty$, we have 
\begin{align*}
A_{\alpha, p, s}(\omega) & := a_1(0) \, b_1(\xi) \, c_{1p}(\infty, \xi) + a_2(0) \, b_2(\xi) \, c_{2p}(\infty, \xi) \\
& = c_{1p}(\infty, \xi) -  \dfrac{\sin \pi \alpha}{\pi} \cdot B(s- 2\alpha +1/p' + i \xi, 2 \alpha) \, c_{2p}(\infty, \xi). \\
\end{align*}

From Lemmas \ref{c1limits} and \ref{c1beta}, we have
\begin{equation*}
c_{1p}(\infty, \xi)  = e^{ i \pi \nu} \dfrac{\sin [\pi (1/p + \nu - i \xi) ]}{\sin \pi (1/p - i \xi)},  \qquad \nu = 2 - s + \alpha.
\end{equation*}

Similarly, from Lemmas \ref{c2limits} and \ref{c1beta}, we have
\begin{equation*}
c_{2p}(\infty, \xi)  = e^{- i \pi \alpha} e^{ i \pi \nu'} \dfrac{\sin [\pi (1/p + \nu' - i \xi) ]}{\sin \pi (1/p - i \xi)},  \qquad \nu' = 2 - s + 2 \alpha.
\end{equation*} \\

But $e^{- i \pi \alpha} e^{ i \pi \nu'} = e^{i \pi (2-s+\alpha)} = e^{i \pi \nu}$, and thus $c_{1p}(\infty, \xi)$ and $c_{2p}(\infty, \xi)$ have a common factor
\begin{equation*}
\dfrac{e^{i \pi \nu}}{\sin \pi (1/p - i \xi)}.
\end{equation*} \\

So, we are interested in establishing the precise conditions under which the quadruple $(\alpha, p, s, \xi)$ is \textbf{not} a solution of the following transcendental equation
\begin{equation} \label{transcendeqnbig}
\dfrac{\sin (\pi (1/p + \nu - i \xi))}{\sin (\pi (1/p + \nu' - i \xi))} - \dfrac{\sin \pi \alpha}{\pi} \, B(s -2\alpha +1/p' +i\xi, 2\alpha) =0.
\end{equation}

Let us now define
\begin{equation} \label{TBdefinitionbig}
T_B := \dfrac{\sin \pi \alpha}{\pi} B(s- 2 \alpha + 1 - 1/p + i \xi, 2 \alpha),
\end{equation}
and
\begin{equation} \label{Tsdefinitionbig}
T_s := \dfrac{\sin \pi (1/p + m - s + \alpha - i \xi)}{\sin \pi (1/p + m - s + 2 \alpha - i \xi)},
\end{equation}
where $m=2$ and $\xi \in \mathbb{R}$. \\

Then, the transcendental equation \eqref{transcendeqnbig} simply becomes
\begin{equation*}
T_s = T_B.
\end{equation*}

\subsection{Segment $\Gamma^\pm_2$}
Similarly, on $\Gamma^+_2$, for $0 \leq x \leq \infty$, we have
\begin{align*}
A_{\alpha, p, s}(\omega) & :=  a_1(x) \, b_1(\infty) \, c_1(-\infty) + a_2(x) \, b_2(\infty) \, c_2(-\infty) \\
& =  c_1(-\infty) + 0 \\
& = e^{2 \pi \nu i},  
\end{align*}
and on $\Gamma^-_2$, for $\infty \geq x \geq 0$,  
\begin{align*}
A_{\alpha, p, s}(\omega) & :=  a_1(x) \, b_1(-\infty) \, c_1(+\infty) + a_2(x) \, b_2(-\infty) \, c_2(+\infty) \\
& = c_1(+\infty) + 0 \\
& = 1.  
\end{align*}
Hence, 
\begin{equation*}
\inf_{\omega \in \Gamma^+_2 \cup \Gamma^-_2} | A_{\alpha, p, s}(\omega)| =1. \\
\end{equation*}

\subsection{Segment $\Gamma^\pm_3$} \label{Gamma3pmbig}
On $\Gamma^+_3$  for $\infty > \lambda \geq 0$, 
\begin{align*}
A_{\alpha, p, s}(\omega) & := a_1(\infty) \, b_1(\infty) \, c_{1}(-\lambda) + a_2(\infty) \, b_2(\infty) \, c_{2}(-\lambda) \\
& = c_{1}(-\lambda) + 0 \\
& = c_{1}(-\lambda),  
\end{align*}
and on $\Gamma^-_3$, for $0 \leq \lambda < \infty$,
\begin{align*}
A_{\alpha, p, s}(\omega) & :=  a_1(\infty) \, b_1(-\infty) \, c_{1}(\lambda) + a_2(\infty) \, b_2(-\infty) \, c_{2}(\lambda) \\
& = c_{1}(\lambda) + 0 \\
& = c_{1}(\lambda).  
\end{align*}
Note that on $\Gamma^+_3 $ and $\Gamma^-_3 $ the parameter $\lambda$ varies between $0$ and $\infty$ but, of course, in an opposite sense. So, in summary,  
\begin{equation*}
\inf_{\omega \in \Gamma^{\pm}_3} | A_{\alpha, p, s}(\omega)| =1.\\
\end{equation*}

\subsection{Segment $\Gamma_4$}
Finally, on $\Gamma_4$, for $-\infty \leq \xi \leq \infty$,
\begin{align*}
A_{\alpha, p, s}(\omega) & := a_1(\infty) \, b_1(\xi) \, c_{1}(0) + a_2(\infty) \, b_2(\xi) \, c_{2}(0) \\
&=  c_{1}(0) + 0 \\
&= c_1(0). 
\end{align*}
Hence, 
\begin{equation*}
\inf_{\omega \in \Gamma_4} | A_{\alpha, p, s}(\omega)| =1,
\end{equation*}
and this completes the review of the contour $\Gamma_M$.

\subsection{Summary}
Note that the preceding analysis of the segments of the contour has shown that $A_{\alpha, p,s}(\omega)$ is constant on the segments $\Gamma^{\pm}_2$ and $\Gamma_4$. Therefore, it remains to consider $A_{\alpha, p,s}(\omega)$ on $\Gamma_1 \cup \Gamma^+_3 \cup \Gamma^-_3$. But from subsection \ref{Gamma3pmbig}, we can combine $\Gamma^\pm_3$ to give a new segment $\Gamma_3$ (say), where now the parameter $\lambda$ varies from $-\infty$ to $\infty$. (Note that, as expected, the symbol $A_{\alpha, p,s}(\omega)$ is continuous at $\lambda =0$ on the new segment $\Gamma_3$.) \\

By construction, we observe that $A_{\alpha, p,s}(\omega)$ is continuous on $\Gamma_1 \cup \Gamma_3$. Indeed, from subsection \ref{Gamma1big}, on the segment $\Gamma_1$
\begin{equation} \label{AGamma1big}
A_{\alpha, p,s}(\omega)=c_{1p}(\infty, \xi) - \dfrac{\sin \pi \alpha}{\pi} \, B(s -2\alpha +1/p' +i\xi, 2\alpha) \, c_{2p}(\infty, \xi)
\end{equation}
with limits $c_{1p}(\infty, \pm \infty) = c_1(\mp \infty)$ at $\xi = \pm \infty$ respectively. \\

The condition that $A_{\alpha, p,s}(\omega)=0$ on $\Gamma_1$ gives rise to the transcendental equation
\begin{equation*}
\dfrac{\sin (\pi (1/p + \nu - i \xi))}{\sin (\pi (1/p + \nu' - i \xi))} = \dfrac{\sin \pi \alpha}{\pi} \, B(s -2\alpha +1/p' +i\xi, 2\alpha),
\end{equation*}
where $\nu = 2 - s + \alpha$ and $\nu'=2-s + 2 \alpha$. \\

Finally, on $\Gamma_3$ we have
\begin{equation} \label{AGamma3big}
A_{\alpha, p,s}(\omega)= c_1(\lambda)
\end{equation}
with limits $c_1(\pm \infty)$ for $\lambda = \pm \infty$. \\


\section{Index and invertibility}
As previously, we set
\begin{equation*}
\tau := s - 1/p.
\end{equation*}
Then, see Chapter \ref{ChapterTE}, we note the critical importance of the the following transcendental equation, see \eqref{TEatzero}, which, for convenience, we repeat here:
\begin{equation*}
\Gamma(2 \alpha - \tau) \Gamma(\tau+1) \sin \pi(\alpha-\tau) = \Gamma(2 \alpha) \sin \pi \alpha.
\end{equation*} 

From Lemma \ref{lemmaTE5}, if $0 < \alpha < 1$ is fixed, and $1 < \tau < 2$ is considered to vary, then the above equation has a unique solution for $\tau$. Moreover, this solution can be expressed in the form $s = 1 + 1/p + \alpha_c$, where $\alpha_c$ only depends on $\alpha$ and satisfies $0 < \alpha_c < \alpha$. \\

For example, if $\alpha = 0.75, \, p=2$, Figure \ref{Qhoenixmedium} shows that when $s \approx 2.226$, or equivalently $\alpha_c \approx 0.726$, our operator is not Fredholm. \begin{figure}[H]
\centering
\includegraphics[width=6 cm, height=6 cm]{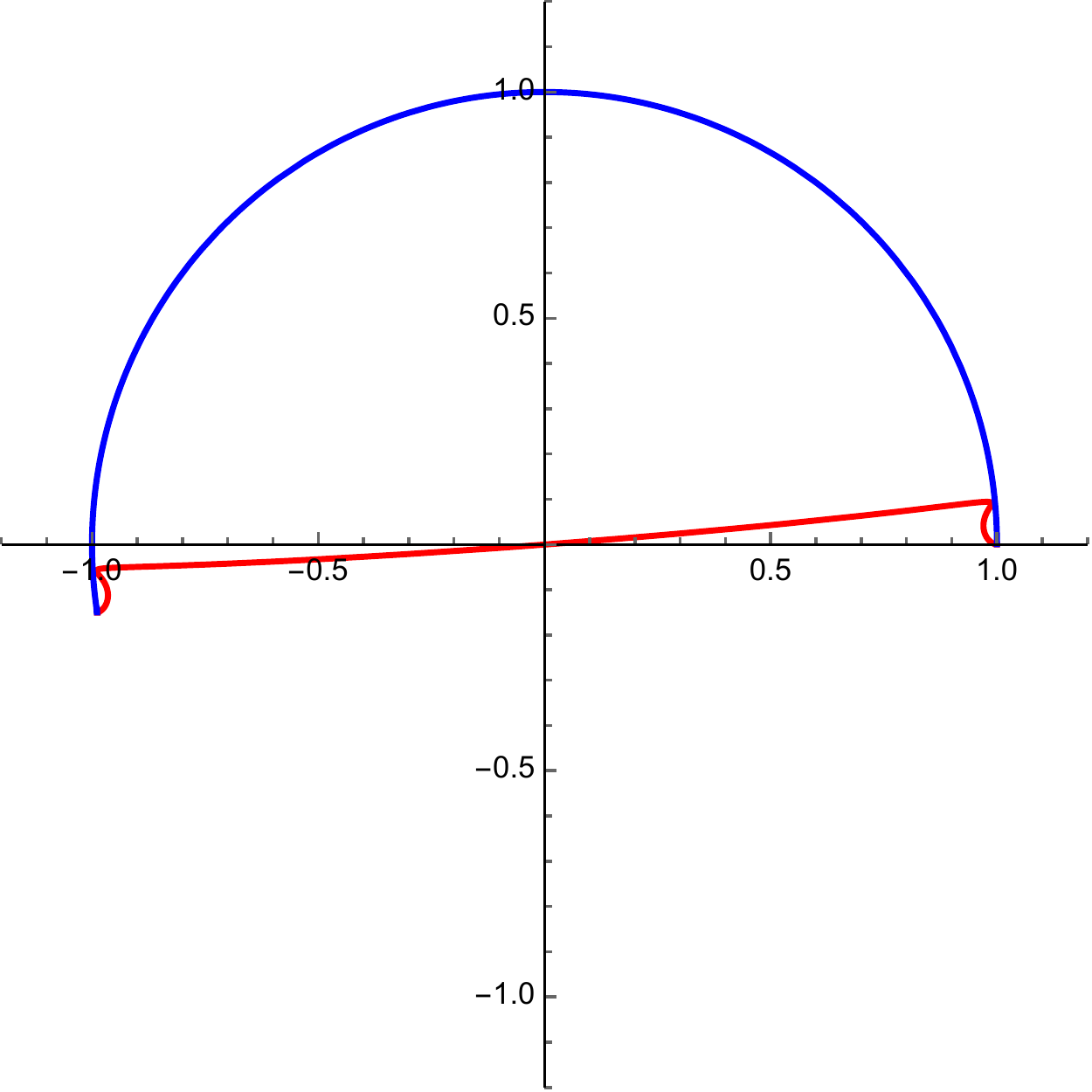}  
\caption{Symbol plot for $\alpha = 0.7500, \, p =2$ and $s=2.2260$.}
\label{Qhoenixmedium}
\end{figure}


Given $\alpha$, we can readily determine a good estimate for $\alpha_c$  using ${\textit{Mathematica}}\textsuperscript{\textregistered}$. Indeed, Figure \ref{AlphacGraph} shows the graph of this  estimate for $\alpha_c$, as $\alpha$ varies over the range $(0 , 1)$. The straight line shown on the plot is simply to highlight the fact that $0 < \alpha < \alpha_c$, and as $\alpha$ tends to $1$, the difference $\alpha - \alpha_c$ tends to $0$. \\

\begin{figure}[H]
\centering
\includegraphics[width=6 cm, height=6 cm]{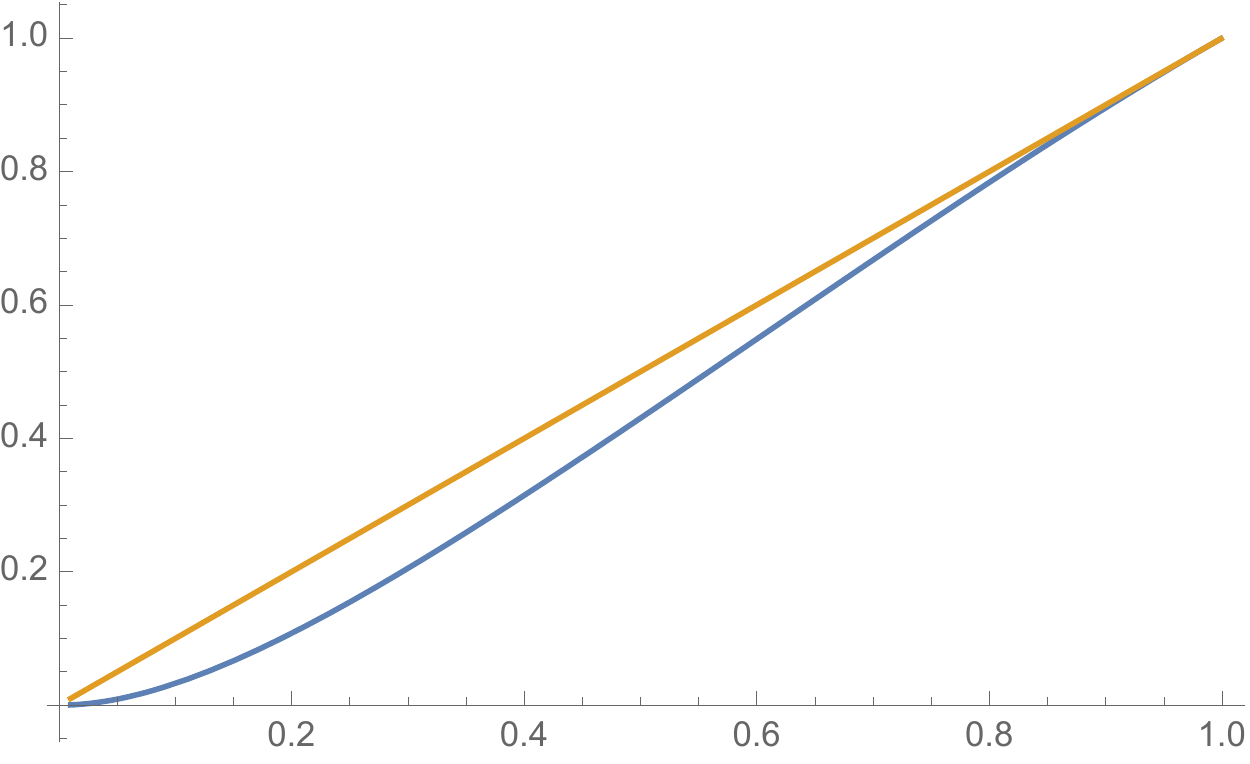} 
\caption{Plot of $\alpha_c$ versus $\alpha$.}
\label{AlphacGraph}
\end{figure}

In the special case that $\alpha = \tfrac{1}{2}$, equation \eqref{TEatzero} reduces to
\begin{equation*}
\tan (\pi \tau ) = \pi \tau,
\end{equation*}
and, in this case, we obtain $\alpha_c \approx 0.4303$. \\

\subsection{Main results}
\begin{theorem} \label{TheoremIndexZerobig}
For all $\alpha, p, s$ satisfying the conditions $0 < \alpha < 1, \,\, 1 < p < \infty$ and $1 + 1/p < s < 1 + 1/p + \alpha_c$, the winding number of the generalised symbol $\big ( A_{\alpha, p,s}, \, \Gamma_M \big )$ in the complex plane is $-1$. Hence, the operator $W(c_1)  + a_2M^0(b_2)W(c_2)$, defined on $L_p(\mathbb{R}_+)$, has Fredholm index equal to $1$. \\

On the other hand, if $1 + 1/p + \alpha_c < s < 2+1/p$, the operator $W(c_1)  + a_2M^0(b_2)W(c_2)$ has Fredholm index equal to $0$. \\
\end{theorem}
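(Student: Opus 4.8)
\textbf{Proof proposal for Theorem \ref{TheoremIndexZerobig}.}

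The plan is to compute the winding number of the closed loop $A_{\alpha,p,s}(\omega)$ as $\omega$ traverses $\Gamma_M$, using the reduction carried out in Section \ref{GenSymbolbig}. There the loop was shown to consist of only two non-constant arcs: the arc $S_3(\xi)$ on $\Gamma_3$, namely
\begin{equation*}
S_3(\xi) = (\xi+i)^{\nu}(\xi-i)^{-\nu}, \qquad \nu = 2-s+\alpha,
\end{equation*}
(after the cancellation $(1+\xi^2)^\alpha = (\xi+i)^\alpha(\xi-i)^\alpha$ exactly as in equations \eqref{LegGamma3}--\eqref{S3S11one}, but now with $m=2$), and the arc $S_1(\xi)$ on $\Gamma_1$ given by \eqref{AGamma1big}. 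On the remaining segments $\Gamma_2^\pm$ and $\Gamma_4$ the symbol is constant, so they contribute nothing to the winding number; the two non-trivial arcs join up into a single closed curve because of the matching of limits at $\xi = \pm\infty$ recorded after \eqref{AGamma1big}. So the whole problem reduces to counting the net rotation of $S_3$ followed by $S_1$ about the origin.

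First I would deal with $S_3$. Writing $\xi - i = \rho e^{i\theta_-}$, $\xi+i = \rho e^{i\theta_+}$ with $\theta_\pm = \theta_\pm(\xi)$, we have $|S_3(\xi)| \equiv 1$ and $\arg S_3(\xi) = \nu(\theta_+ - \theta_-)$. As $\xi$ runs from $-\infty$ to $+\infty$, $\theta_+ - \theta_-$ decreases monotonically from $2\pi$ (well, from the appropriate branch values $\theta_-(-\infty) = -\pi$, $\theta_+(-\infty)=\pi$) down to $0$, i.e. $\theta_+ - \theta_-$ sweeps the interval $(0, 2\pi)$ monotonically, so $\arg S_3$ changes by $-2\pi\nu$. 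For the parameter range in question, $\alpha_c < \alpha$ gives $1+1/p < s < 1+1/p+\alpha_c < 1+1/p+\alpha$, hence $\nu = 2-s+\alpha \in (1-1/p, 1-1/p+\alpha_c)\subset (1-1/p, 1)$ when $s < 1+1/p+\alpha_c$, and $\nu \in (1-1/p-(\alpha-\alpha_c), 1-1/p)\subset(0,1)$ when $s > 1+1/p+\alpha_c$. In both sub-cases $\nu\in(0,1)$, so $S_3$ alone winds a fractional amount $-\nu$ about $0$ — the full turn is only completed together with $S_1$. The second arc $S_1$ is a perturbation of $S_{11}(\xi) = e^{i\pi\nu}\sin(\pi(1/p+\nu-i\xi))/\sin(\pi(1/p-i\xi))$ by the term $-(\sin\pi\alpha/\pi)B(\dots)S_{12}(\xi)$; by Theorem \ref{AtilldeFredholm} the loop misses the origin precisely when the transcendental equation \eqref{transcendeqnbig} has no solution, and the detailed analysis of Chapter \ref{ChapterTE} (in particular Lemma \ref{lemmaTE5} and Theorem \ref{TheoremTranscend}) guarantees this for $s\neq 1+1/p+\alpha_c$.

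The computation of the winding number of the full loop I would then carry out by a homotopy argument, exactly parallel to the proof of Theorem \ref{TheoremIndexZero}: fix $\alpha$, and deform $(p,s)$ continuously within the region $\{1<p<\infty,\ 1+1/p+\alpha_c<s<2+1/p\}$ (respectively $\{1+1/p<s<1+1/p+\alpha_c\}$) towards a conveniently chosen model point. Since by Chapter \ref{ChapterTE} the transcendental equation \eqref{transcendeqnbig} has no solution anywhere in each such region, the loop $A_{\alpha,p,s}(\omega)$ never crosses the origin during the deformation, so the winding number is constant on each connected region. It then suffices to evaluate it at one model point of each region. For the region $1+1/p+\alpha_c<s<2+1/p$ I would push $s\uparrow$ towards $2+1/p$, i.e. $\tau\uparrow 2$, so that $\nu = 2-s+\alpha \to \alpha - 1/p$ and $\nu' = 2-s+2\alpha$; there, as in the lower-regularity case, the estimates of Lemmas \ref{lemTBalphasigmaxi} and \ref{lemSabsarg} (applied with $\sigma = s-2\alpha+1-1/p$ and the relevant $a,b$) show the $B$-term is small and $S_1$ stays in a half-plane not containing $0$, forcing winding number $0$; combined with the $-\nu$ from $S_3$ this must round to the integer $0$. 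For the region $1+1/p<s<1+1/p+\alpha_c$, the analogous model computation (or a direct comparison with the $n=1$ example $W(c_{(1)})$ worked out after Remark \ref{WindingFred}, whose symbol $(\xi+i)(\xi-i)^{-1}$ has winding number $-1$) yields winding number $-1$, hence Fredholm index $+1$. Finally, Remark \ref{WindingFred} converts winding number to index: $\operatorname{ind}\widetilde A = -(\text{winding number})$, giving index $1$ in the first range and $0$ in the second.

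The main obstacle I expect is rigorously pinning down the integer value of the winding number, since $S_3$ contributes the non-integer quantity $-\nu\in(0,1)$ and one must show that $S_1$ supplies exactly the complementary fractional rotation plus (in the lower range) one extra full negative turn. The safe way to do this is \emph{not} to track $S_1$ directly but to exploit that the winding number is a locally constant integer-valued function on the Fredholm region and is therefore determined by evaluation at any single model point where the geometry is transparent — precisely the strategy of Theorem \ref{TheoremIndexZero}. The only genuinely new input needed beyond that chapter is the non-vanishing of the loop throughout each of the two regions $1+1/p<s<1+1/p+\alpha_c$ and $1+1/p+\alpha_c<s<2+1/p$ separately, which is exactly what Chapter \ref{ChapterTE} (Lemma \ref{lemmaTE5}, showing \eqref{TEatzero} has the unique root $s = 1+1/p+\alpha_c$ in the relevant $\tau$-interval) provides; the crossing at $s = 1+1/p+\alpha_c$ is where the index jumps from $1$ to $0$, consistent with a single simple zero of the symbol passing through the origin.
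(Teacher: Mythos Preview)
Your overall strategy---reduce the loop to $S_1\cup S_3$, invoke homotopy invariance on each connected component of the Fredholm region, and evaluate the winding number at a model point---is exactly what the paper does. The substantive gap in your proposal is the model-point computation itself, which is where essentially all the work lies and which you only sketch.

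The paper deforms \emph{all three} parameters $(\alpha,p,s)$ to a single explicit model point in each region: $(\tfrac12,2,\tfrac52-\epsilon)$ for the upper range and $(\tfrac12,2,\tfrac32+\epsilon)$ for the lower. At these points $\nu=2-s+\alpha$ equals $\epsilon$ or $1-\epsilon$ respectively, so $S_3$ is either a tiny arc near $1$ (upper range) or traverses almost the full unit circle clockwise (lower range). The arc $S_1$ is then shown, by explicit first-order expansion in $\epsilon$ together with Lemma~\ref{lemTBalphasigmaxi}, to lie inside the disc $|z-1|<\tfrac25$. This makes the winding number transparent: $0$ in the first case, $-1$ in the second. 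Your proposal to fix $\alpha$ and deform only $(p,s)$ would force you to carry out a model computation valid for \emph{every} $\alpha\in(0,1)$, which your ``push $s$ toward the boundary'' argument does not supply uniformly. The paper's decision to deform $\alpha$ as well is precisely what lets a single concrete estimate close the argument.

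Two smaller points: your interval for $\nu$ in the upper range is miscomputed (for $s>1+1/p+\alpha_c$ one has $\nu\in(\alpha-1/p,\,1-1/p+\alpha-\alpha_c)$, which can be negative when $\alpha<1/p$, so ``$\nu\in(0,1)$'' is false in general); and you invoke Theorem~\ref{TheoremTranscend} (the $0<\tau<1$ result) where Theorem~\ref{TheoremTranscendbig} (the $1<\tau<2$ result) is what is actually needed here.
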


\begin{theorem}
Suppose $0 < \alpha < 1, \,\, 1 < p < \infty$ and $1 + 1/p < s < 1+1/p + \alpha_c$. Then the operator $\mathcal{A}: H^s_{p,0}(\overline{\mathbb{R}_+}) \to H^{s-2\alpha}_p(\overline{\mathbb{R}_+})$ is invertible. \\

On the other hand, if $0 < \alpha <1, \,\, 1 < p < \infty$ and $1 + 1/p + \alpha_c < s < 2 +1/p$, then $\mathcal{A}$ has a trivial kernel and is Fredholm with index equal to $-1$. \\
\end{theorem}

\section{Proof of first main result} \label{Windingnumberanalysis}
The constraints are
\begin{equation} \label{alphapsconstraintsbig}
0 < \alpha < 1, \,\, 1 < p < \infty  \,\, \text{and} \,\, 1+1/p < s < 2+1/p.
\end{equation}

Let $\alpha, p ,s$ fall within their admissible ranges and be fixed. From Section \ref{GenSymbolbig}, it is easy to show that the generalised symbol $A_{\alpha,p,s}$ can be represented by a closed contour in the complex plane given by the union of the two curves, $S_1$ and $S_3$. \\

Indeed, from Section \ref{Gamma3pmbig} we have,
\begin{equation*} 
S_3(\xi):= (1+\xi^2)^\alpha \, (\xi -i)^{s-2\alpha-2} \, (\xi +i)^{2-s}, \quad  -\infty \leq \xi \leq \infty.
\end{equation*}

Now
\begin{align*}
S_3(\xi) & = (\xi + i)^\alpha (\xi - i)^\alpha \, (\xi -i)^{s-2\alpha-2} \, (\xi +i)^{2-s} \\
&= (\xi + i)^{2-s+\alpha} (\xi - i)^{-(2-s+\alpha)} .
\end{align*}


From Section \ref{Gamma1big}, for $-\infty \leq \xi \leq \infty$,
\begin{equation*} 
S_1(\xi) := S_{11}(\xi) - \dfrac{\sin \pi \alpha}{\pi} \, B(s-2\alpha + 1-1/p+i \xi, 2\alpha) S_{12} (\xi), 
\end{equation*}
where 
\begin{equation*}
S_{11}(\xi):=  e^{i \pi \nu} \dfrac{\sin [\pi(1/p + \nu - i \xi) ]}{\sin [\pi(1/p - i \xi) ]}, \quad S_{12}(\xi):= e^{i \pi \nu} \dfrac{\sin [\pi(1/p + \nu + \alpha - i \xi) ]}{\sin [\pi(1/p - i \xi) ]},
\end{equation*}
and $\nu = 2-s+\alpha$. \\

\subsection{The case $s > 1 + 1/p + \alpha_c$}
Firstly, we consider the case where $s > 1 + 1/p + \alpha_c$, and we will show that the winding number of the model contour is $0$.\\

Assume $0 < \epsilon <<1$, and let us choose
\begin{equation*}
 \alpha = \tfrac{1}{2}, \quad p =2   \quad \text{and} \quad s=\tfrac{5}{2} - \epsilon,
\end{equation*}
as the set of parameters used to define the first \textit{model} contour. Note that these values lie within the set of admissible constraints given in condition \eqref{alphapsconstraintsbig}. \\

Now $s-2\alpha + 1-1/p = 2 - \epsilon$, and hence by Lemma \ref{lemTBalphasigmaxi}, with $\sigma = 2 - \epsilon$ and $\alpha= \tfrac{1}{2}$,
\begin{align*}
\bigg |\dfrac{\sin \pi \alpha}{\pi} \, B(s-2\alpha + 1-1/p+i \xi, 2\alpha)  \bigg | & \leq \dfrac{1}{\pi} \cdot B(2-\epsilon,1) \\
& = \dfrac{1}{\pi} \cdot \dfrac{\Gamma(2-\epsilon)\Gamma(1)}{\Gamma(3-\epsilon)} \\
& = \dfrac{1}{\pi} \cdot \dfrac{1}{2-\epsilon} \\
& < \dfrac{1}{\pi},  \quad \text{if} \quad 0 < \epsilon << 1.
\end{align*}

Moreover, $\nu = 2 - s + \alpha = \epsilon$, and thus
\begin{equation*}
S_{11}(\xi) =  e^{i \pi \epsilon} \dfrac{\sin \pi(\tfrac{1}{2} + \epsilon - i \xi) }{\sin \pi(\tfrac{1}{2} - i \xi) }.
\end{equation*}
Since 
$\sin \pi(\tfrac{1}{2} + \epsilon - i \xi)  = \sin \pi \epsilon \, \cos \pi (\tfrac{1}{2} - i \xi) + \cos \pi \epsilon \, \sin \pi (\tfrac{1}{2} - i \xi)$ and 
$\cot \pi (\tfrac{1}{2} - i \xi) = i \tanh \pi \xi$, we can write
\begin{equation*}
S_{11}(\xi) =  e^{i \pi \epsilon} \big ( \cos \pi \epsilon + i \tanh \pi \xi \, \sin \pi \epsilon \big ),
\end{equation*}
and similarly
\begin{equation*}
S_{12}(\xi) =  e^{i \pi \epsilon} \big ( -\sin \pi \epsilon + i \tanh \pi \xi \, \cos \pi \epsilon \big ).
\end{equation*}
\\
Hence, we have the following elementary expansions for $0 < \epsilon <<1$
\begin{equation*}
S_{11}(\xi) = 1 + \{ i \pi(1+ \tanh \pi \xi) \} \, \epsilon +O(\epsilon^2),
\end{equation*}
and
\begin{equation*}
S_{12}(\xi) = i \tanh(\pi \xi) - \{ \pi (1 + \tanh(\pi \xi) )\}  \, \epsilon +O(\epsilon^2).
\end{equation*}

Combining these estimates
\begin{align*}
| S_1 (\xi) - 1 | & \leq \{ \pi (1+|\tanh(\pi \xi)|) \} \, \epsilon + |S_{12}|/\pi + O(\epsilon^2) \\
& \leq 1/\pi + 4 \pi \, \epsilon + O(\epsilon^2) \\
& < \tfrac{2}{5} \quad \text{for sufficiently small} \,\, \epsilon >0.
\end{align*}

On the other hand, for any $\xi \in \mathbb{R}$, we have
\begin{equation*}
\xi \pm i = \sqrt{1+ \xi^2} \exp( \pm i \theta), \quad \text{where} \quad 0 < \theta < \pi.
\end{equation*}
Hence, since $2- s + \alpha = \epsilon$, 
\begin{align*}
| S_3 (\xi) - 1 | &= |\exp(i 2 \epsilon \theta) - 1| \\
& \leq |\cos 2 \epsilon \theta + i \sin 2 \epsilon \theta -1| \\
& \leq 2 \pi \epsilon + O(\epsilon^2) \\
& < \tfrac{2}{5} \quad \text{for sufficiently small} \,\, \epsilon >0.
\end{align*}
So, for sufficiently small $\epsilon$,  the model contour, formed by the union of the curves $S_1$ and $S_3$, is wholly contained in the disc of radius $\tfrac{2}{5}$ centred on the point $1$ in the complex plane. Hence, for $s > 1 + 1/p + \alpha_c$, the winding number of the model contour, given by $\alpha = \tfrac{1}{2}, \, p =2$ and $s=\tfrac{5}{2} - \epsilon$, must be $0$. \\

We now give a plot of the model contour, see Figure \ref{Qhoenix52}, where we take $\epsilon = \tfrac{1}{1000}$. As discussed, the contour is contained in a circle with centre $1$ with radius $\tfrac{2}{5}$. \\

\begin{figure}[H]
\centering
\includegraphics[width=6 cm, height=6 cm]{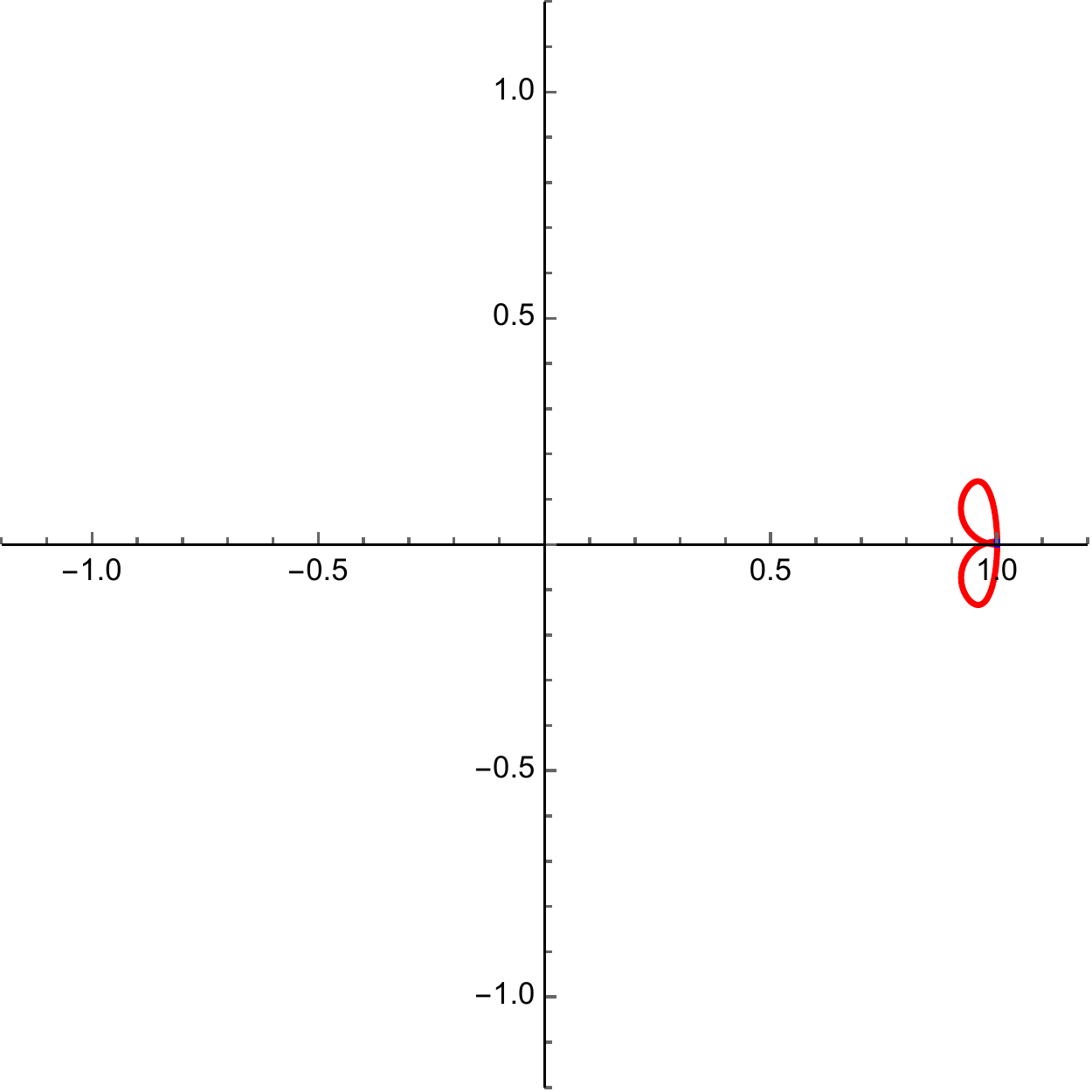}  
\caption{Symbol plot for $\alpha = 0.5000, \, p =2$ and $s=2.5000-\epsilon$.}
\label{Qhoenix52}
\end{figure}

The following four plots, see Figures \ref{Qhoenix54}, \ref{Qhoenix56}, \ref{Qhoenix58} and \ref{Qhoenixlarge} show $\alpha$ increasing in steps of $\tfrac{1}{16}$ and, at the same time, $s$ decreasing by $\tfrac{1}{16}$. In each case, the plot confirms that the winding number of the contour is zero. 
\begin{figure}[H]
\centering
\includegraphics[width=6 cm, height=6 cm]{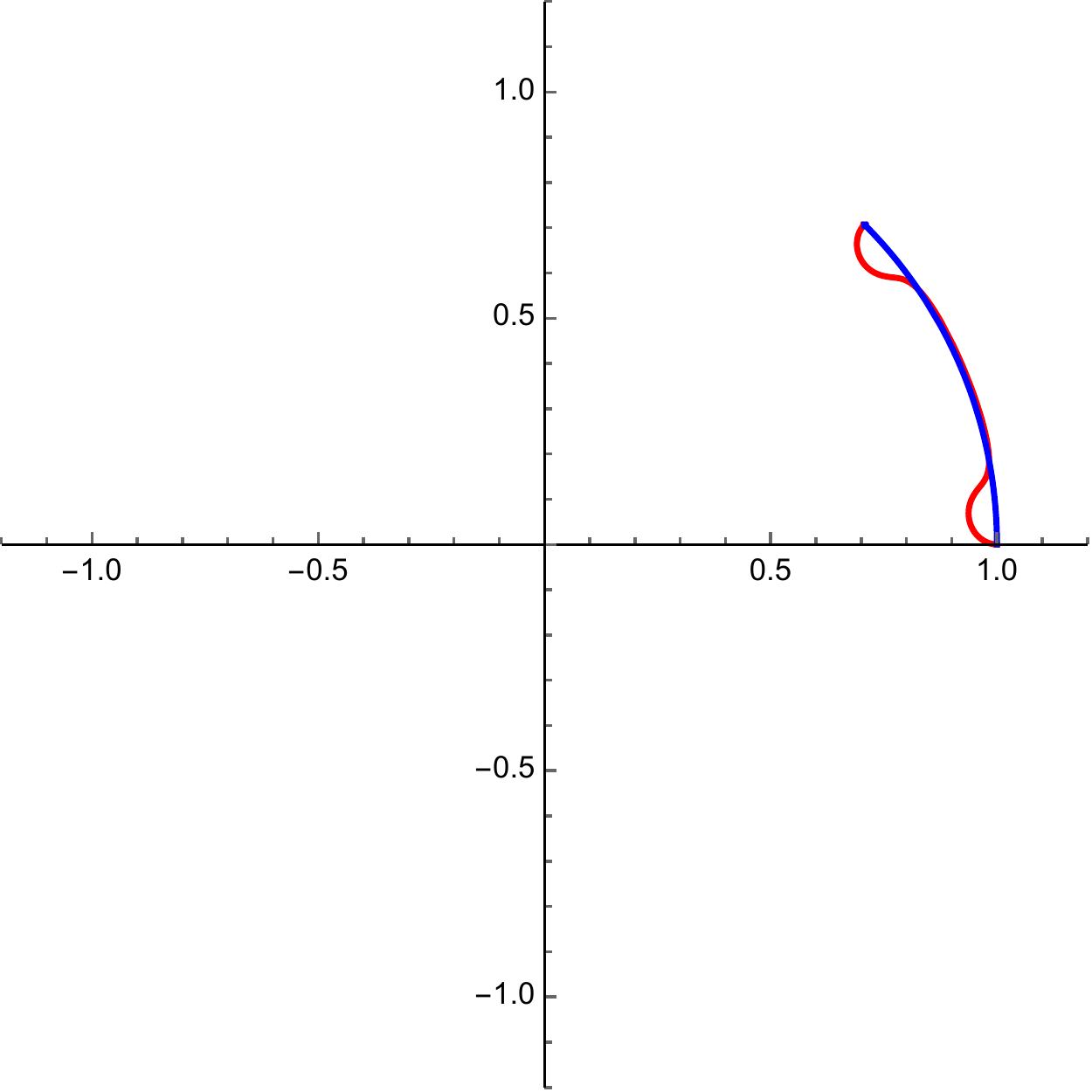}  
\caption{Symbol plot for $\alpha = 0.5625, \, p =2$ and $s=2.4375$.}
\label{Qhoenix54}
\end{figure}

\begin{figure}[H]
\centering
\includegraphics[width=6 cm, height=6 cm]{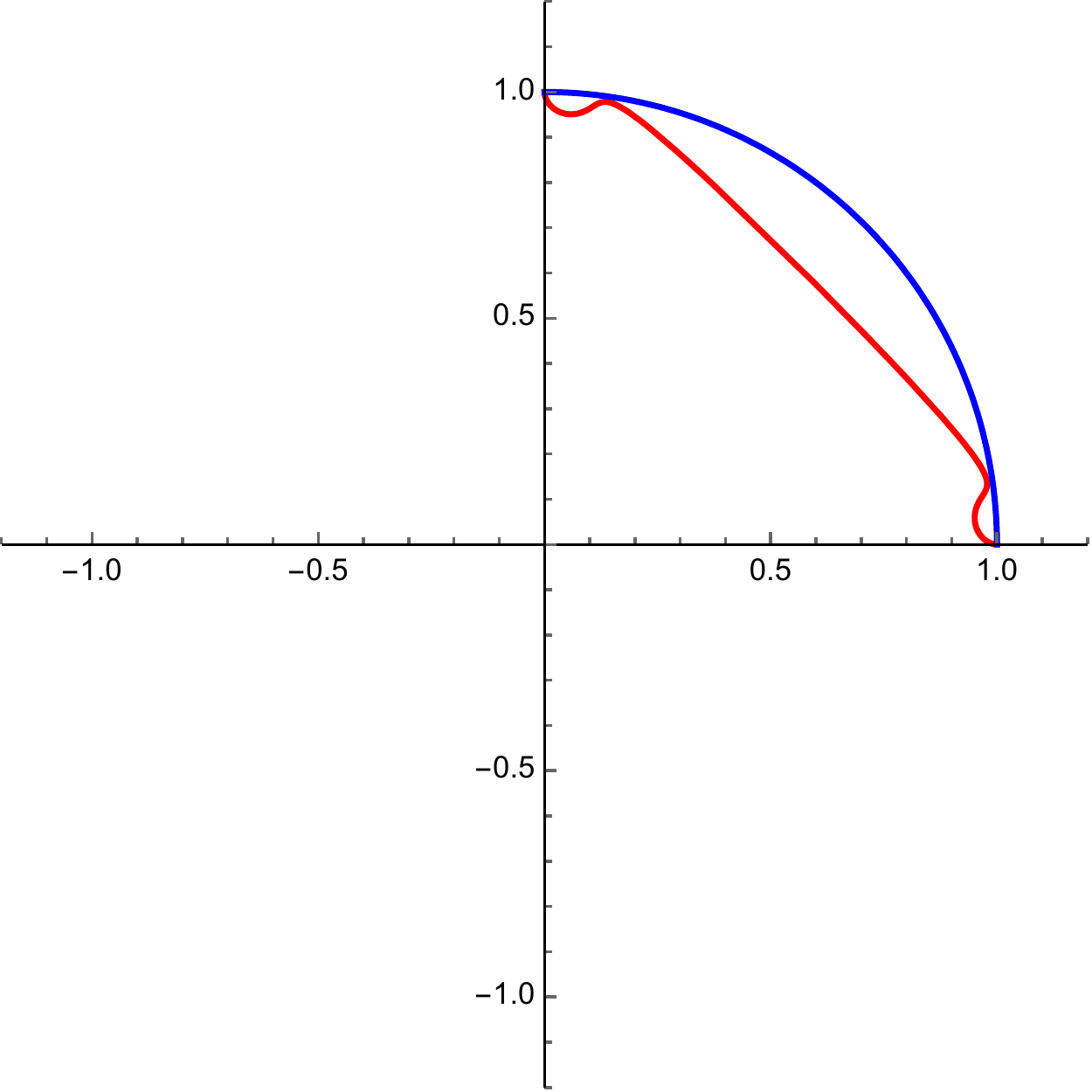}  
\caption{Symbol plot for $\alpha = 0.6250, \, p =2$ and $s=2.3750$.}
\label{Qhoenix56}
\end{figure}

\begin{figure}[H]
\centering
\includegraphics[width=6 cm, height=6 cm]{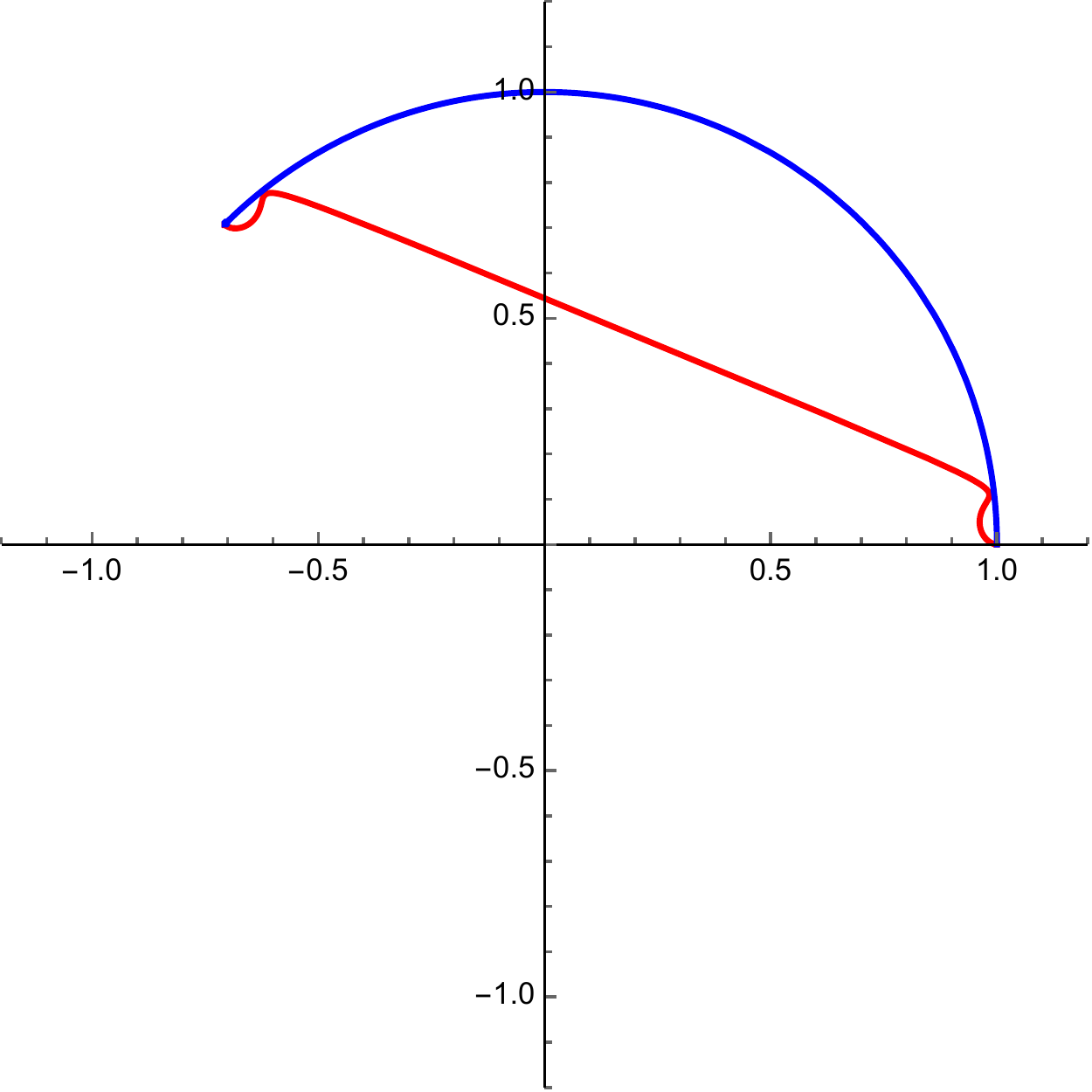}  
\caption{Symbol plot for $\alpha = 0.6875, \, p =2$ and $s=2.3125$.}
\label{Qhoenix58}
\end{figure}

\begin{figure}[H]
\centering
\includegraphics[width=6 cm, height=6 cm]{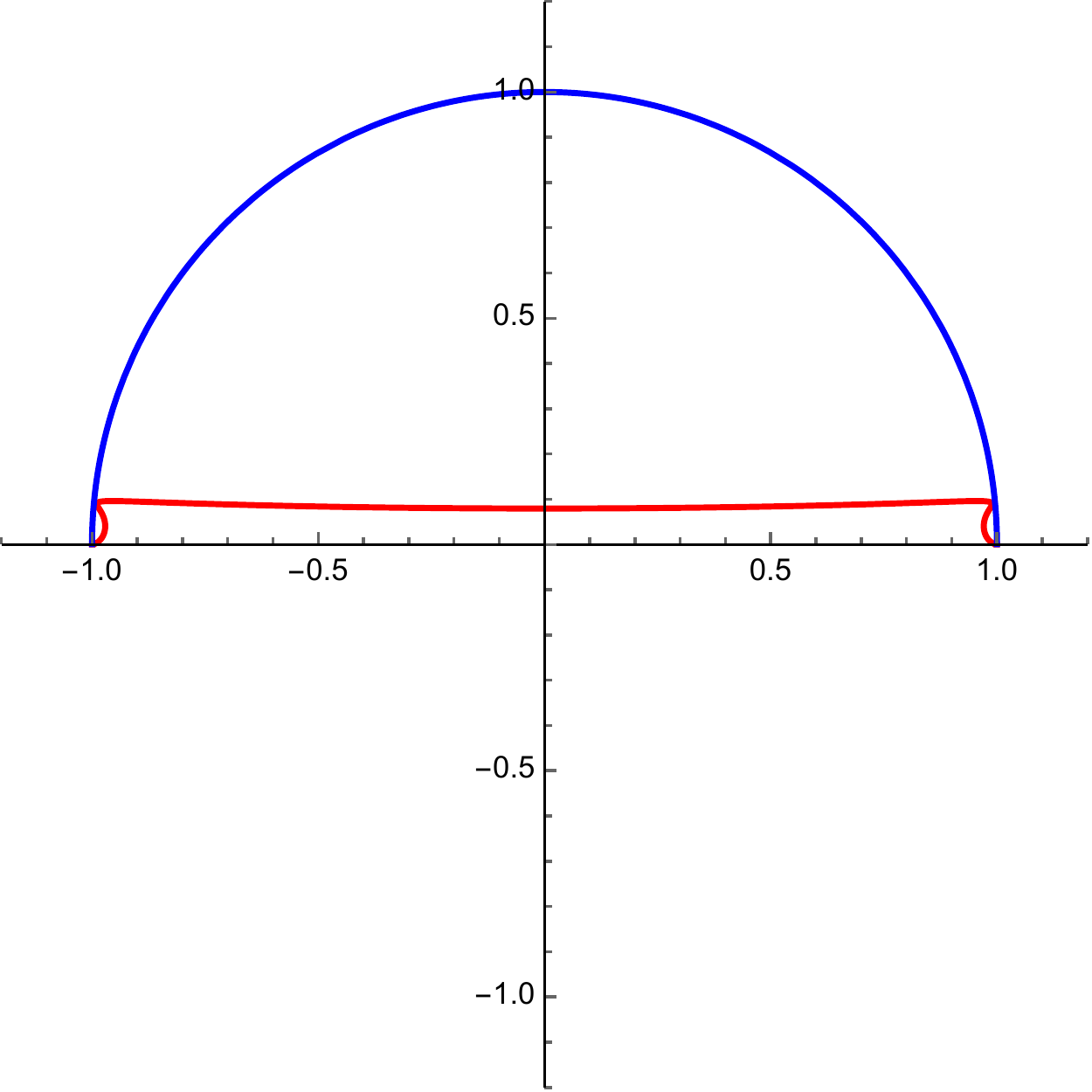}  
\caption{Symbol plot for $\alpha = 0.7500, \, p =2$ and $s=2.2500$.}
\label{Qhoenixlarge}
\end{figure}

Finally, we recall that, for $\alpha = 0.75$, we have $\alpha_c \approx 0.726$. Hence, if $p=2$ and $s=2.25$ we have $s > 1 + 1/p + \alpha_c \approx 2.226$. As expected, the winding number of the contour in Figure \ref{Qhoenixlarge} is $0$.

\subsection{The case $s < 1 + 1/p + \alpha_c$}
Let us first return to our example with $\alpha=0.75$ and $p=2$. We now take $s = 2.2$, so that $s < 1 + \tfrac{1}{2} + \alpha_c \approx 2.226$. 
\begin{figure}[H]
\centering
\includegraphics[width=6 cm, height=6 cm]{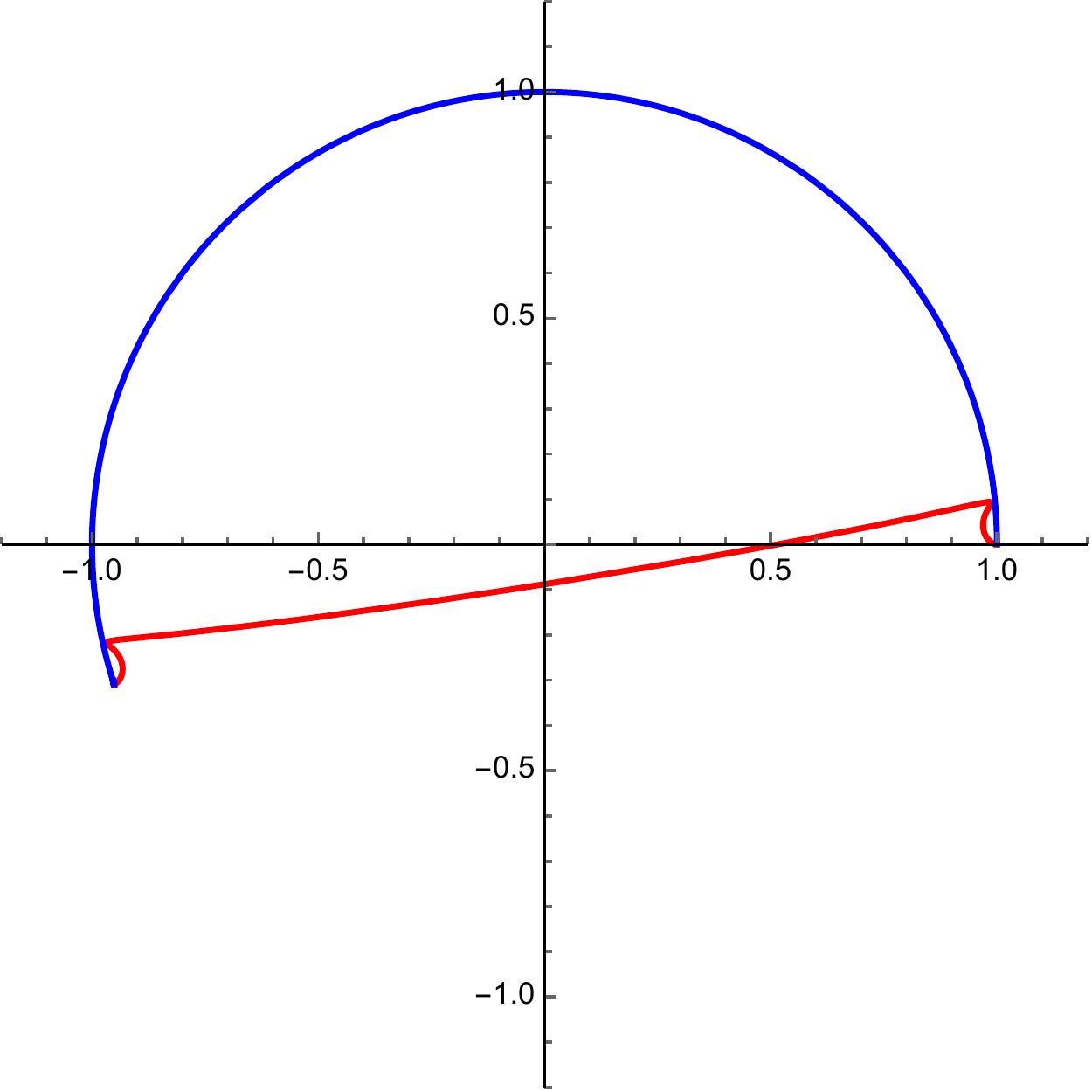}  
\caption{Symbol plot for $\alpha = 0.7500, \, p =2$ and $s=2.2000$.}
\label{Qhoenixsmall}
\end{figure}
Clearly, the winding number of the contour in Figure \ref{Qhoenixsmall} is not zero and, moreover, must take the value $\pm 1$. \\

We now consider the general case where $s < 1 + 1/p + \alpha_c$, and we will show that the winding number is, in fact, $-1$.\\

Again assume $0 < \epsilon <<1$, and let us now choose
\begin{equation*}
 \alpha = \tfrac{1}{2}, \quad p =2   \quad \text{and} \quad s=\tfrac{3}{2} + \epsilon,
\end{equation*}
as the set of parameters used to define the second \textit{model} contour. Note that these values lie within the set of admissible constraints given in condition \eqref{alphapsconstraintsbig}. \\

Now $s-2\alpha + 1-1/p = 1 + \epsilon$, and hence by Lemma \ref{lemTBalphasigmaxi}, with $\sigma = 1+ \epsilon$ and $\alpha= \tfrac{1}{2}$,
\begin{align*}
\bigg |\dfrac{\sin \pi \alpha}{\pi} \, B(s-2\alpha + 1-1/p+i \xi, 2\alpha)  \bigg | & \leq \dfrac{1}{\pi} \cdot B(1+\epsilon,1) \\
& = \dfrac{1}{\pi} \cdot \dfrac{\Gamma(1+\epsilon)\Gamma(1)}{\Gamma(2+\epsilon)} \\
& = \dfrac{1}{\pi} \cdot \dfrac{1}{1+\epsilon} \\
& < \dfrac{1}{\pi},  \quad \text{if} \quad \epsilon >0.
\end{align*}

Moreover, $\nu = 2 - s + \alpha = 1- \epsilon$, and we have the following elementary expansions for $0 < \epsilon <<1$
\begin{equation*}
S_{11}(\xi) = 1 - \{ i \pi(1+ \tanh \pi \xi) \} \, \epsilon +O(\epsilon^2),
\end{equation*}
and
\begin{equation*}
S_{12}(\xi) = i \tanh(\pi \xi) + \{ \pi (1 + \tanh(\pi \xi) )\}  \, \epsilon +O(\epsilon^2).
\end{equation*}

Combining these estimates
\begin{align*}
| S_1 (\xi) - 1 | & \leq \{ \pi (1+|\tanh(\pi \xi)|) \} \, \epsilon + |S_{12}|/\pi + O(\epsilon^2) \\
& \leq 1/\pi + 4 \pi \, \epsilon + O(\epsilon^2) \\
& < \tfrac{2}{5} \quad \text{for sufficiently small} \,\, \epsilon >0.
\end{align*}

On the other hand, since $2 - s +\alpha = 1 - \epsilon$, the curve $S_3$ traverses, in a clockwise direction, the complete unit circle apart from a small neighbourhood near the point $1$ in the complex plane. By choosing $\epsilon$ sufficiently small, we can ensure that the omitted portion lies wholly within the disk of radius $\tfrac{2}{5}$ centred on $1$. \\

Since the model contour, formed by the union of the curves $S_1$ and $S_3$, forms a closed loop, for sufficiently small $\epsilon$, it encircles the origin once, in a clockwise direction, plus an additional component that is wholly contained in the disc of radius $\tfrac{2}{5}$ centred on the point $1$ in the complex plane. Hence, the winding number of the model contour, for $s < 1 + 1/p + \alpha_c$, is equal to $-1$. \\

\subsection{Conclusion}
Given any set of parameters $\alpha,p,s$ satisfying the constraints $0<\alpha<1, \,\, 1 < p < \infty$ and $1+ 1/p < s < 1+1/p + \alpha_c$, the associated contour can be continuously deformed into the \textit{model} contour, and from Theorem \ref{TheoremTranscendbig}, does this without ever crossing the origin. Hence, the two contours must have the same winding number, namely, $-1$. \\

By a similar argument, the winding number is constant, and equal to $0$ in the case that 
$1+ 1/p + \alpha_c < s < 2+1/p $. \\

Therefore, see Remark \ref{WindingFred}, the operator $W(c_1)+a_2M^0(b_2)W(c_2)$, defined on $L_p(\mathbb{R}_+)$, has Fredholm index equal to $1$ if $s < 1 + 1/p + \alpha_c$, and index $0$ if $s > 1 + 1/p + \alpha_c$. This completes the proof of the first theorem. \\

\section{Proof of second main result}
Suppose $0 < \alpha < 1, \,\, 1 < p < \infty$ and $1 + 1/p < s < 2+1/p$. Then, from Theorem \ref{thmabddtrivker}, the operator $\mathcal{A}: H^s_{p,0}(\overline{\mathbb{R}_+}) \to H^{s-2\alpha}_p(\overline{\mathbb{R}_+})$ is bounded, where
\begin{equation*}
H^s_{p,0}(\overline{\mathbb{R}_+}) := \{ u \in H^s_p(\overline{\mathbb{R}_+}) : u'(0)=0\}.
\end{equation*}

Now let $u \in H^s_p(\overline{\mathbb{R}_+})$. Further choose an \textit{arbitrary} $u_0 \in H^s_p(\overline{\mathbb{R}_+})$, with $u'(0)=1$. Then, we can write
\begin{align*}
u(x) &= (u(x) - u'(0)u_0(x)) + u'(0)u_0(x) \\
&:= v(x) + u_0(x),
\end{align*}
where $v(x):= u(x) - u'(0)u_0(x)$ and, clearly, $v'(0)=0$. That is, $v \in H^s_{p,0}(\overline{\mathbb{R}_+})$.  In other words, $H^s_{p,0}(\overline{\mathbb{R}_+})$ has co-dimension $1$ in $H^s_p(\overline{\mathbb{R}_+})$. \\

We now define
\begin{equation*}
u_s:= (D+i)^{s-2} e_+(D-i)^2 u,
\end{equation*}
so that $u_s \in L_p(\mathbb{R}_+)$ with supp $u_s \subseteq \mathbb{R}_+$. Indeed, the operator
\begin{equation*}
I_s:= r_+ (D+i)^{s-2} e_+(D-i)^2 : H^s_p(\overline{\mathbb{R}_+}) \to L_p(\mathbb{R}_+)
\end{equation*}
is an isomorphism for $1+ 1/p < s < 2 +1/p$. (See Lemma \ref{ustou2} and the proof of Lemma \ref{ustou}.) Let $L_{p,0}(\mathbb{R}_+)$ be the image of $H^s_{p,0}(\overline{\mathbb{R}_+})$ under $I_s$. Then $L_{p,0}(\mathbb{R}_+)$ has co-dimension $1$ in $L_p(\mathbb{R}_+)$. Hence, we can write
\begin{equation} \label{Lpdirectsum}
L_p(\mathbb{R}_+) = L_{p,0}(\mathbb{R}_+) \oplus M_{p,0}(\mathbb{R}_+),
\end{equation}
where $M_{p,0}(\mathbb{R}_+)$ has dimension $1$. \\

Let $\widetilde{A}_{ext}: L_p(\mathbb{R}_+) \to L_p(\mathbb{R}_+)$
be the (bounded) operator defined by
\begin{equation*}
\widetilde{A}_{ext} := W(c_1) + a_2 M(b_2)W(c_2).
\end{equation*}

We have shown, in Section \ref{Windingnumberanalysis}, that $\widetilde{A}_{ext}$ is Fredholm,  and 
\begin{enumerate}[\hspace{18pt}(i)]
\item if $s < 1 + 1/p + \alpha_c$ then $\operatorname{ind} \widetilde{A}_{ext} = 1$;
\item if $s > 1 + 1/p + \alpha_c$ then $\operatorname{ind} \widetilde{A}_{ext} = 0$.
\end{enumerate}

Of course, our interest is in the operator
\begin{equation}
\widetilde{A}= W(c_1) + a_2 M(b_2)W(c_2): L_{p,0}(\mathbb{R}_+) \to L_p(\mathbb{R}_+),
\end{equation}
which can usefully be considered as the \textit{restriction} of $\widetilde{A}_{ext}$ to $L_{p,0}(\mathbb{R}_+)$.  \\

Let $\widetilde{A}_0 : L_p(\mathbb{R}_+) \to L_p(\mathbb{R}_+)$ be the linear operator defined by
\[
\widetilde{A}_0 w = 
  \begin{cases} 
   \widetilde{A}_{ext} w, & \text{if } w \in L_{p,0}(\mathbb{R}_+) \\
   0,       & \text{if } w \in M_{p,0}(\mathbb{R}_+). 
  \end{cases}
\]
\\
Then $\widetilde{A}_0 -  \widetilde{A}_{ext}$ has rank one, and is therefore compact. In particular, $\operatorname{ind} \widetilde{A}_0 = \operatorname{ind} \widetilde{A}_{ext}$. On the other hand, it is clear that  $\operatorname{ind} \widetilde{A}_0 = \operatorname{ind} \widetilde{A} + 1$. Hence,
\begin{equation} \label{indexdiff}
\operatorname{ind} \widetilde{A} = \operatorname{ind} \widetilde{A}_{ext} - 1.
\end{equation}
Thus, if $s < 1 + 1/p + \alpha_c$ then $\operatorname{ind} \widetilde{A} =0$, and if $s > 1 + 1/p + \alpha_c$ then $\operatorname{ind} \widetilde{A} = -1$.\\

Repeating the argument from Section \ref{SecondTheoremIndex},
\begin{equation}
\operatorname{ind} \mathcal{A} = \operatorname{ind} \widetilde{A}.
\end{equation} \\

To complete the proof of the second main result, we now consider (the dimension of) $\operatorname{Ker}  \mathcal{A}$, for the cases $p=2 , \, p>2$ and $p<2$ respectively. \\

\textbf{Firstly, suppose $p=2$}. Then, from Theorem \ref{thmabddtrivker}, $\operatorname{dim} \operatorname{Ker}  \mathcal{A} = 0$, for  $1 + \tfrac{1}{2} < s < 2 +\tfrac{1}{2}$.\\

\textbf{Secondly, suppose $p > 2$.} Then, for $0 < \delta <  \alpha_c$ or $\alpha_c < \delta <1$, we define
\begin{equation*}
X_1:= H^{1 + \tfrac{1}{2} + \delta}_{2,0}(\overline{\mathbb{R}_+}), \quad Y_1:= H^{1 + \tfrac{1}{2} + \delta - 2 \alpha}_{2}(\overline{\mathbb{R}_+})
\end{equation*}
and
\begin{equation*}
X_2:= H^{1 + \tfrac{1}{p} + \delta}_{p,0}(\overline{\mathbb{R}_+}), \quad Y_2:= H^{1 + \tfrac{1}{p} + \delta - 2 \alpha}_{p}(\overline{\mathbb{R}_+}).
\end{equation*}
Then $X_1$ ($Y_1$) is continuously and densely embedded into $X_2$ (into $Y_2$, respectively). Moreover, $ \mathcal{A} : X_j \to Y_j$ is Fredholm, $j = 1, 2$, and
\begin{equation*}
\operatorname{Ind}_{X_1 \to Y_1}  \mathcal{A} =\operatorname{Ind}_{X_2 \to Y_2}  \mathcal{A} .
\end{equation*}
Therefore, by Lemma \ref{FredholmKer}, 
\begin{equation*}
\operatorname{Ker}_{X_1 \to Y_1}  \mathcal{A} =\operatorname{Ker}_{X_2 \to Y_2}  \mathcal{A} .
\end{equation*}
That is,
\begin{equation*}
\operatorname{Ker}_{X_2 \to Y_2}  \mathcal{A} = \{0\}.
\end{equation*} 

\textbf{Thirdly, suppose $p < 2$.} We now define
\begin{equation*}
X_2:= H^{1 + \tfrac{1}{2} + \delta}_{2,0}(\overline{\mathbb{R}_+}), \quad Y_2:= H^{1 + \tfrac{1}{2} + \delta - 2 \alpha}_{2}(\overline{\mathbb{R}_+})
\end{equation*}
and
\begin{equation*}
X_1:= H^{1 + \tfrac{1}{p} + \delta}_{p,0}(\overline{\mathbb{R}_+}), \quad Y_1:= H^{1 + \tfrac{1}{p} + \delta - 2 \alpha}_{p}(\overline{\mathbb{R}_+}).
\end{equation*}
We can now repeat the argument made above, for the case $p>2$, to show that
\begin{equation*}
\operatorname{Ker}_{X_1 \to Y_1}  \mathcal{A} = \{0\}.
\end{equation*}

So, finally, if $0 < \alpha <1, \,\, 1 < p < \infty$ and $1 + 1/p < s < 1 + 1/p + \alpha_c$, then the operator $\mathcal{A}: H^s_{p,0}(\overline{\mathbb{R}_+}) \to H^{s-2\alpha}_p(\overline{\mathbb{R}_+})$ is invertible. \\

On the other hand, if $0 < \alpha <1, \,\, 1 < p < \infty$ and $1 + 1/p + \alpha_c < s < 2 +1/p$, then $\mathcal{A}$ has a trivial kernel and is Fredholm with index equal to $-1$. \\

\chapter{Transcendental equation} \label{ChapterTE}
\section{Main results}
Following Chapter \ref{GenSymbol}, let $\big ( A_{\alpha, p,s}, \, \Gamma_M \big )$ denote the generalised symbol and associated contour of the operator 
\begin{equation*}
\widetilde{A}:= W(c_1)  + a_2M^0(b_2)W(c_2),
\end{equation*}
defined on $L_p(\mathbb{R}_+)$. Then, see Theorem \ref{AtilldeFredholm}, $\widetilde{A}$ is Fredholm if and only if
\begin{equation*}
\inf_{\omega \in \Gamma_M} |A_{\alpha, p,s}(\omega)| > 0.
\end{equation*} 
In the case $1/p < s < 1+1/p$, from Chapter \ref{GenSymbol}, we have the following constraints on the values of $\alpha, p$ and $s$:  
\begin{equation}  \label{SimpleConstrSmall}
0 < \alpha < \tfrac{1}{2}, \quad 1< p < \infty \quad \text{and} \quad 1/p < s < 1+1/p.
\end{equation}

Similarly, from Section \ref{GenSymbolbig}, for higher regularity, we will assume
\begin{equation}  \label{SimpleConstrBig}
0 < \alpha < 1, \quad 1< p < \infty \quad \text{and} \quad 1+ 1/p < s < 2+1/p.
\end{equation}

\begin{theorem} \label{TheoremTranscend}
For all $\alpha, p, s$ satisfying the conditions $0 < \alpha < \tfrac{1}{2}, \, 1< p < \infty \, $ and $\, 1/p < s < 1+ 1/p$, we have
\begin{equation*}
\inf_{\omega \in \Gamma_M} |A_{\alpha, p,s}(\omega)| > 0.
\end{equation*} 
\end{theorem}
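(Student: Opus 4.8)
The plan is to reduce the claim $\inf_{\omega\in\Gamma_M}|A_{\alpha,p,s}(\omega)|>0$ to a statement purely about the transcendental equation on the segment $\Gamma_1$. From Chapter \ref{GenSymbol}, the analysis of the segments $\Gamma^\pm_2$, $\Gamma^\pm_3$ and $\Gamma_4$ already shows that $|A_{\alpha,p,s}(\omega)|=1$ there, so the infimum over those segments is $1>0$; moreover on $\Gamma_1$ the symbol $A_{\alpha,p,s}$ is continuous and has finite limits $c_1(\mp\infty)$ at $\xi=\pm\infty$ (both of modulus $1$). Hence it suffices to show that $A_{\alpha,p,s}(\omega)\neq 0$ for every $\xi\in\mathbb{R}$ on $\Gamma_1$; since $A_{\alpha,p,s}|_{\Gamma_1}$ extends continuously to the compact set $\overline{\mathbb{R}}$ and is nonvanishing at the endpoints, nonvanishing on all of $\overline{\mathbb{R}}$ forces a positive infimum by compactness. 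So the whole theorem comes down to: the transcendental equation \eqref{transcendeqn} (equivalently $T_s=T_B$, with $T_s,T_B$ defined in \eqref{Tsdefinition}, \eqref{TBdefinition}) has \textbf{no} solution for the admissible quadruples $(\alpha,p,s,\xi)$ with $0<\alpha<\tfrac12$, $1<p<\infty$, $1/p<s<1+1/p$, $\xi\in\mathbb{R}$.

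To prove nonexistence of solutions I would compare moduli. First I would record, using Lemma \ref{c1limits} and Lemma \ref{c1beta} as done in Section \ref{Gamma1}, that after extracting the common factor $e^{i\pi\nu}/\sin\pi(1/p-i\xi)$, the vanishing of $A_{\alpha,p,s}$ on $\Gamma_1$ is equivalent to $T_s=T_B$ where $T_s=\dfrac{\sin\pi(1/p+\nu-i\xi)}{\sin\pi(1/p+\nu'-i\xi)}$ with $\nu=1-s+\alpha$, $\nu'=1-s+2\alpha=\nu+\alpha$, and $T_B=\dfrac{\sin\pi\alpha}{\pi}\,B(s-2\alpha+1/p'+i\xi,2\alpha)$. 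Then I would establish two bounds: an \emph{upper} bound $|T_B|\le \dfrac{\sin\pi\alpha}{\pi}\,B(s-2\alpha+1/p',2\alpha)$ coming from the fact that $|B(\sigma+i\xi,2\alpha)|$ is maximised at $\xi=0$ (this is the content of the beta-function estimate used later, e.g. Lemma \ref{lemTBalphasigmaxi} / Lemma \ref{BVBeta}); and a \emph{lower} bound for $|T_s|$ using $\left|\dfrac{\sin\pi(z+\alpha)}{\sin\pi z}\right|$ with $z=1/p+\nu-i\xi$ of the form $|T_s|^2=\dfrac{\cosh^2\pi\xi-\cos^2\pi(1/p+\nu')}{\cosh^2\pi\xi-\cos^2\pi(1/p+\nu)}$ (via Lemma \ref{lemSabsarg}-type identities), which I would analyse as a function of $\cosh\pi\xi\ge1$. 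The key point is that for the admissible parameter range the worst case again occurs at $\xi=0$ (or the bound is monotone in $\cosh\pi\xi$), so it is enough to rule out equality of the real quantities at $\xi=0$, i.e. to show $T_s\neq T_B$ when $\xi=0$. At $\xi=0$, writing $\tau=s-1/p$ and using $1/p+\nu=1-\tau+\alpha$, $1/p+\nu'=1-\tau+2\alpha$, $s-2\alpha+1/p'=1+\tau-2\alpha$, together with $B(1+\tau-2\alpha,2\alpha)=\Gamma(1+\tau-2\alpha)\Gamma(2\alpha)/\Gamma(1+\tau)$ and the reflection formula, the equation $T_s=T_B$ becomes exactly \eqref{TEatzeroFL} with $\tau$ in place of $\lambda$: $\Gamma(2\alpha-\tau)\Gamma(\tau+1)\sin\pi(\alpha-\tau)=\Gamma(2\alpha)\sin\pi\alpha$.

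The main obstacle — and the technical heart of the chapter — is therefore proving that \eqref{TEatzeroFL} has \textbf{no solution} for $0<\alpha<\tfrac12$ and $0<\tau<1$, together with the monotonicity/worst-case-at-$\xi=0$ reductions that let me pass from the complex equation on $\Gamma_1$ to this real equation. For the no-solution claim I would study $g(\tau):=\Gamma(2\alpha-\tau)\Gamma(\tau+1)\sin\pi(\alpha-\tau)-\Gamma(2\alpha)\sin\pi\alpha$ on $(0,1)$: it is finite and smooth there since $0<2\alpha-\tau<1$ is false in general — note $2\alpha-\tau$ can be negative, so I would instead use the equivalent form obtained from $\Gamma(2\alpha-\tau)\sin\pi(\alpha-\tau)$ rewritten via $\Gamma$-reflection as $\pi/[\Gamma(1-2\alpha+\tau)\,\Gamma(1-\alpha+\tau)/\Gamma(\,\cdot\,)]$ to make it manifestly regular, and then show $g$ is strictly monotone (or strictly one-signed) on $(0,1)$ by differentiating and using log-convexity of $\Gamma$ and sign properties of $\psi$ (digamma) differences, checking the boundary values $g(0^+)$ and $g(1^-)$ have the same sign. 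I expect this to require several careful but elementary estimates (sign of $\sin\pi(\alpha-\tau)$ flips at $\tau=\alpha$, so the argument likely splits into $0<\tau<\alpha$ and $\alpha<\tau<1$); the detailed execution is exactly what the later lemmas in Chapter \ref{ChapterTE} are set up to deliver, and I would invoke those (together with the beta-modulus bound Lemma \ref{lemTBalphasigmaxi} and the sine-ratio bound Lemma \ref{lemSabsarg}) to close the argument, then conclude by the compactness argument above that $\inf_{\omega\in\Gamma_M}|A_{\alpha,p,s}(\omega)|>0$.
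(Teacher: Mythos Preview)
Your reduction to the segment $\Gamma_1$ and to the transcendental equation $T_s=T_B$ is correct, and your treatment of the case $\xi=0$ (leading to \eqref{TEatzero}) is essentially the content of the paper's Lemma \ref{lemmaTE1}. The genuine gap is your claim that for $\xi\neq 0$ the problem reduces by modulus comparison, with ``worst case at $\xi=0$''. This is false. Take $\tau:=s-1/p=\alpha$; then $1/p+\nu=1-\tau+\alpha=1$, so $T_s(0)=\sin\pi/\sin\pi\alpha=0$, while $T_B(0)=\tfrac{\sin\pi\alpha}{\pi}B(1-\alpha,2\alpha)>0$. Thus $|T_s(0)|<|T_B(0)|$ at $\tau=\alpha$, whereas $|T_s(\xi)|\to 1$ and $|T_B(\xi)|\to 0$ as $\xi\to\infty$. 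By continuity there is $\xi_0>0$ with $|T_s(\xi_0)|=|T_B(\xi_0)|$, and modulus alone cannot exclude $T_s(\xi_0)=T_B(\xi_0)$. More generally, from Lemma \ref{lemSabsarg} one has $|T_s|\ge\tanh\pi\xi$, which is useless for small $\xi$; your monotonicity-in-$\cosh\pi\xi$ heuristic does not give a usable lower bound there.

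The paper closes this gap by mixing two methods. For $\xi\ge\tfrac14$ and $\alpha\le\tau<1$ it proves the uniform modulus separation $|T_s|>\tfrac{2}{\pi}>|T_B|$ (Lemma \ref{lemmaTE2}). For $0<\xi<\tfrac14$ with $\alpha\le\tau<1$ it switches to an \emph{argument} comparison, showing $\arg T_B>-4\alpha\xi>\arg T_s$ (Lemma \ref{lemmaTE3}), via the estimate of Lemma \ref{lemBarg} on $\arg B(\sigma+i\xi,2\alpha)$ and an explicit bound on $\arg T_s$ using Lemma \ref{sinpialphatanh}. Finally, for $0<\tau<\alpha$ and all $\xi>0$ it again compares arguments, writing $\arg T_B-\arg T_s$ as a difference of beta-function arguments via the reflection formula and showing this lies strictly in $(0,\pi/2)$ (Lemma \ref{lemmaTE4}). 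The argument comparison for small $\xi$ is the essential idea your plan is missing.
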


\begin{theorem} \label{TheoremTranscendbig}
For all $\alpha, p, s$ satisfying the conditions $0 < \alpha < 1, \, 1< p < \infty \, $ and $\, 1+ 1/p < s <  2+ 1/p$, we have
\begin{equation*}
\inf_{\omega \in \Gamma_M} |A_{\alpha, p,s}(\omega)| > 0,
\end{equation*} 
unless $\xi=0$ and $s = 1 + 1/p + \alpha_c$, where $\alpha_c$ only depends on $\alpha$ and satisfies $0 < \alpha_c < \alpha$. 
\end{theorem}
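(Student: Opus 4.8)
\textbf{Proof proposal for Theorem \ref{TheoremTranscendbig}.}

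The plan is to reduce the statement $\inf_{\omega\in\Gamma_M}|A_{\alpha,p,s}(\omega)|>0$ to a statement purely about a transcendental equation in one complex variable, and then analyse that equation. From Section \ref{GenSymbolbig} the generalised symbol is constant of modulus $1$ on $\Gamma_2^\pm$ and on $\Gamma_4$, and on $\Gamma_3$ it equals $c_1(\lambda)=(\lambda+i)^{2-s+\alpha}(\lambda-i)^{-(2-s+\alpha)}$, which has modulus $1$ everywhere. Hence the infimum can only be zero on $\Gamma_1$, where, by \eqref{AGamma1big},
\begin{equation*}
A_{\alpha,p,s}(\omega)=\frac{e^{i\pi\nu}}{\sin\pi(1/p-i\xi)}\Bigl(\sin\pi(1/p+\nu-i\xi)-\frac{\sin\pi\alpha}{\pi}B(s-2\alpha+1/p'+i\xi,2\alpha)\,\sin\pi(1/p+\nu'-i\xi)\Bigr),
\end{equation*}
with $\nu=2-s+\alpha$, $\nu'=2-s+2\alpha$. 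Since $1/p\in(0,1)$, the factor $1/\sin\pi(1/p-i\xi)$ is finite and nonzero for all real $\xi$, so $A_{\alpha,p,s}$ vanishes on $\Gamma_1$ precisely when $T_s=T_B$ (notation \eqref{Tsdefinitionbig}, \eqref{TBdefinitionbig}). The first step is therefore to record that $A_{\alpha,p,s}$ is a continuous nowhere-vanishing function on each closed segment except possibly $\Gamma_1$, and on $\Gamma_1$ it is continuous with finite nonzero limits at $\xi=\pm\infty$ (Lemma \ref{c1limits}); consequently the infimum is attained, and it is zero iff the equation $T_s(\xi)=T_B(\xi)$ has a real solution $\xi$.

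Next I would analyse $T_s=T_B$. The key observation is that along the imaginary-$\xi$ direction the left side $T_s(\xi)=\sin\pi(1/p+\nu-i\xi)/\sin\pi(1/p+\nu'-i\xi)$ and the Beta-function term $T_B(\xi)$ behave quite differently: I would show (via Lemma \ref{lemTBalphasigmaxi} and Lemma \ref{lemSabsarg}, the modulus/argument estimates quoted in Chapter \ref{ChapterIndInv}) that for $\xi\neq 0$ one has a strict inequality $|T_s(\xi)-T_B(\xi)|>0$, so that no real solution with $\xi\neq 0$ can exist. This is the place where the sign of $\cos 2\pi\cdot(\text{phase})$ and the monotonicity of $B(\sigma+i\xi,2\alpha)$ in $|\xi|$ (its maximum at $\xi=0$) are used, exactly as in the winding-number estimates of Chapter \ref{ChapterIndInv}. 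That reduces everything to $\xi=0$. At $\xi=0$, writing $\tau:=s-1/p\in(1,2)$, one checks by the reflection formula $B(\sigma,2\alpha)=\Gamma(\sigma)\Gamma(2\alpha)/\Gamma(\sigma+2\alpha)$ and $\Gamma(z)\Gamma(1-z)=\pi/\sin\pi z$ that the equation $T_s(0)=T_B(0)$ is equivalent, after clearing trigonometric factors, to
\begin{equation*}
\Gamma(2\alpha-\tau)\,\Gamma(\tau+1)\,\sin\pi(\alpha-\tau)=\Gamma(2\alpha)\,\sin\pi\alpha,
\end{equation*}
which is \eqref{TEatzeroFL}/\eqref{TEatzero}. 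So the statement reduces to: for fixed $\alpha\in(0,1)$ and $\tau\in(1,2)$, this equation has a unique solution, and it occurs at $\tau=1+\alpha_c$ with $0<\alpha_c<\alpha$. The detailed study of this one-variable transcendental equation — continuity, strict monotonicity of the appropriate ratio on $(1,2)$, and the sign checks at the endpoints $\tau\to1^+$ and $\tau\to2^-$ that pin $\alpha_c$ into $(0,\alpha)$ — is exactly what Lemma \ref{lemmaTE5} (and the surrounding lemmas of Chapter \ref{ChapterTE}) provides; I would invoke it here.

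Finally I would assemble: off $\Gamma_1$ the symbol has modulus $1$; on $\Gamma_1$ with $\xi\neq0$ the strict inequality gives a positive lower bound; on $\Gamma_1$ at $\xi=0$ the symbol vanishes iff $\tau=s-1/p$ solves \eqref{TEatzero}, i.e. iff $s=1+1/p+\alpha_c$. Hence for all $(\alpha,p,s)$ in the admissible range with $s\neq1+1/p+\alpha_c$ we get $\inf_{\omega\in\Gamma_M}|A_{\alpha,p,s}(\omega)|>0$, which is the claim. I expect the main obstacle to be the $\xi\neq0$ step: one must rule out real solutions of $T_s(\xi)=T_B(\xi)$ for \emph{all} admissible $(\alpha,p,s)$ simultaneously, and the cleanest route is to compare $|T_s(\xi)|$ (or a suitable combination involving its imaginary part) with $|T_B(\xi)|$ using the sharp bounds $|B(\sigma+i\xi,2\alpha)|\le B(\sigma,2\alpha)$ and the explicit formula for $|\sin\pi(a-i\xi)/\sin\pi(b-i\xi)|^2=(\cosh2\pi\xi-\cos2\pi a)/(\cosh2\pi\xi-\cos2\pi b)$; handling the boundary behaviour of these ratios as $p$, $s$ approach the edges of their ranges is where care is needed, and this is precisely the content carried over from Lemmas \ref{lemTBalphasigmaxi}, \ref{lemSabsarg} and the winding-number argument of Chapter \ref{ChapterIndInv}.
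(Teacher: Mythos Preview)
Your overall reduction is correct and matches the paper: the symbol has modulus $1$ off $\Gamma_1$, on $\Gamma_1$ it vanishes iff $T_s=T_B$, the $\xi=0$ case reduces to \eqref{TEatzero}, and Lemma~\ref{lemmaTE5} handles that equation for $\tau\in(1,2)$.

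The gap is in the $\xi\neq0$ step, which you yourself flag as the main obstacle. A uniform modulus inequality $|T_s(\xi)|>|T_B(\xi)|$ does \emph{not} hold across the full admissible range; the paper's proof requires a four-way case split and mixes modulus and argument comparisons. Concretely: (i) for $\xi\ge\tfrac14$ and $\tau\ge1+\alpha$ the modulus bound you describe does work (Lemma~\ref{lemmaTE6}, using $|T_s|\ge\tanh(\pi/4)$ versus $|T_B|\le\tfrac12$); (ii) for $0<\xi<\tfrac14$ and $\tau\ge1+\alpha$ modulus fails and one instead shows $\arg T_s\neq\arg T_B$, with separate treatments for $\alpha<\tfrac12$ (Lemma~\ref{lemmaTE3}, where $\arg T_B>-4\alpha\xi>\arg T_s$) and $\alpha\ge\tfrac12$ (Lemma~\ref{lemmaTE7}, where $\operatorname{Re}T_s\le0$ forces $\arg T_s\le-\pi/2<\arg T_B$); (iii) for $1<\tau<1+\alpha$ and \emph{any} $\xi>0$ neither of the above works directly, and the paper uses a representation of $\arg T_B-\arg T_s$ in terms of $\arg B(\cdot+i\xi,\alpha)$ plus an explicit rational correction (Lemma~\ref{lemmaTE8}) to show the difference lies strictly in $(0,\tfrac{3\pi}{2})$, hence is never a multiple of $2\pi$. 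The ``suitable combination involving its imaginary part'' that you mention is not a refinement of the modulus estimate but an entirely separate argument-based route, and it is the bulk of the work. Your proposal would need to incorporate this case structure to be complete.
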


\section{Background} \label{TEbckground}
We have seen in Chapter \ref{GenSymbol} that as $\omega$ varies over $\Gamma_M$, the symbol $A_{\alpha, p,s}(\omega)$ forms a closed loop in the complex plane. This loop comprises two components. The first lies on the unit circle, and the second is given in terms of certain transcendental functions.  \\

Let us  define
\begin{equation} \label{TBrecap}
T_B := \dfrac{\sin \pi \alpha}{\pi} B(s- 2 \alpha + 1 - 1/p + i \xi, 2 \alpha),
\end{equation}
and
\begin{equation} \label{Tsrecap}
T_s := \dfrac{\sin \pi (1/p + m - s + \alpha - i \xi)}{\sin \pi (1/p + m - s + 2 \alpha - i \xi)},
\end{equation}
where $m=1$ or $2$, as $1/p < s < 1 + 1/p$ or $1+ 1/p <  s < 2 + 1/p$ respectively, and $\xi \in \mathbb{R}$. It is easy to see that $T_s$ is independent of the choice of $m$, and it will be convenient for us to assume that $m=1$.\\

From Chapter \ref{GenSymbol} and Section \ref{GenSymbolbig}, for the cases $1/p < s < 1 + 1/p$ and $1+ 1/p <  s < 2 + 1/p$ respectively, in order to show that
\begin{equation*} 
\inf_{\omega \in \Gamma_M} |A_{\alpha, p,s}(\omega)| > 0,
\end{equation*} 
it is enough to show that the transcendental equation
\begin{equation}
T_s = T_B,
\end{equation}
has no solutions for $\alpha, p$ and $s$ varying subject to either the constraints \eqref{SimpleConstrSmall} or \eqref{SimpleConstrBig}.\\

It will be convenient to begin by considering the transcendental equation when $\xi = 0$. Moreover, from the simple relationships 
\begin{equation*}
\overline{\sin (z)} = \sin(\overline{z}) \quad \text{and} \quad \overline{\Gamma(z)} = \Gamma(\overline{z}) \quad \text{(6.1.23, p. 256, \cite{AandS})} ,
\end{equation*}
it is easy to see that if $(\alpha_0, p_0, s_0, \xi_0)$ is a solution quadruple, then so is $(\alpha_0, p_0, s_0, -\xi_0)$. Therefore, having the required result for $\xi =0$, it remains to consider the case $\xi >0$. \\

Finally, we note that $T_s$ and $T_B$ depend only the difference $(s-1/p)$, rather than $s$ and $p$ independently. Accordingly, we define
\begin{equation} \label{taudefinition}
\tau := s - 1/p.
\end{equation}
Of course, we are interested in either $0 < \tau < 1$ or $1< \tau < 2$. \\

From equation \eqref{TBrecap} with $\xi=0$,
\begin{equation*} 
T_B = \dfrac{\sin \pi \alpha}{\pi} B(\tau - 2 \alpha + 1, 2 \alpha) = \dfrac{\sin \pi \alpha}{\pi} \, \dfrac{\Gamma(\tau - 2\alpha +1)\Gamma(2 \alpha)}{\Gamma(\tau+1)}.
\end{equation*}

From equation \eqref{Tsrecap} with $\xi=0$,
\begin{equation*} 
T_s = \dfrac{\sin \pi ( 1 - \tau + \alpha)}{\sin \pi ( 1 - \tau + 2 \alpha)} = \dfrac{\sin \pi ( \alpha - \tau)}{\sin \pi ( \tau - 2 \alpha +1)}.
\end{equation*}
Now $\Gamma(z) \Gamma(1-z) = \pi /\sin \pi z$, see 5.5.3, \cite{NIST}. Hence, taking $z= \tau - 2\alpha +1$, we can re-write the equation $T_s = T_B$ as
\begin{equation} \label{TEatzero}
\Gamma(2 \alpha - \tau) \Gamma(\tau+1) \sin \pi(\alpha-\tau) = \Gamma(2 \alpha) \sin \pi \alpha.
\end{equation} 

If $0 < \alpha < 1$ and $1 < \tau <2$, it turns out, see Lemma \ref{lemmaTE5}, that equation \eqref{TEatzero} has a unique solution of the form $\tau = 1 + \alpha_c$, where $\alpha_c$ only depends on $\alpha$ and satisfies $0 < \alpha_c < \alpha$. \\

\begin{remark}
In this chapter we will do several computations involving the function $\arg(\cdot)$. For $z \in \mathbb{C} \setminus \{ 0 \}$ and $c \in \mathbb{R}$, we shall write
\begin{equation*}
\arg z  \equiv c,
\end{equation*}
to indicate that 
\begin{equation*}
\arg z = c + 2 \pi k,
\end{equation*}
for some $k \in \mathbb{Z}$. (Of course, if $-\pi < c \leq \pi$, then $k=0$. See \eqref{argsingleval}.)\\
\end{remark}

\section{Proof of main results}
\begin{theorem}
Suppose $0<\alpha < \tfrac{1}{2}, \, 0 < \tau < 1$ and $\xi \in \mathbb{R}$. Define
\begin{equation} \label{Tsrecaptau}
T_s := \dfrac{\sin \pi ( 1 - \tau + \alpha - i \xi)}{\sin \pi ( 1 - \tau + 2 \alpha - i \xi)},
\end{equation}
and
\begin{equation} \label{TBrecaptau}
T_B := \dfrac{\sin \pi \alpha}{\pi} B(\tau - 2 \alpha + 1 + i \xi, 2 \alpha).
\end{equation}
\begin{enumerate} [\hspace{18pt}(a)]
\item If $\xi =0$ and $0 < \tau <1$, then $T_s \not = T_B$. \\
\item If $\xi \geq \tfrac{1}{4}$ and $\alpha \leq \tau <1$, then $|T_s| > |T_B|$. \\
\item If $0 < \xi <  \tfrac{1}{4}$ and $\alpha \leq \tau <1$, then $\arg T_s < \arg T_B$. \\
\item If $ \xi >0$ and $0 < \tau < \alpha$, then $\arg T_s \not = \arg T_B$. 
\end{enumerate}
In other words, the transcendental equation $T_s=T_B$ has no solutions for $0< \alpha < \tfrac{1}{2}, \, 0 < \tau < 1$ and $\xi \in \mathbb{R}$.
\end{theorem}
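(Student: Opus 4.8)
The statement is a four-part case analysis showing the transcendental equation $T_s = T_B$ has no solution for $0 < \alpha < \tfrac12$, $0 < \tau < 1$. My plan is to prove each part separately, exploiting the fact that $T_s$ and $T_B$ depend on $s,p$ only through $\tau = s - 1/p$, and that solutions come in conjugate pairs so it suffices to treat $\xi \geq 0$.

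\textbf{Part (a): $\xi = 0$.} Here I would pass to the rearranged form \eqref{TEatzero}, namely $\Gamma(2\alpha-\tau)\Gamma(\tau+1)\sin\pi(\alpha-\tau) = \Gamma(2\alpha)\sin\pi\alpha$. For $0 < \alpha < \tfrac12$ and $0 < \tau < 1$ I expect the argument to be a sign/monotonicity analysis: the right-hand side $\Gamma(2\alpha)\sin\pi\alpha$ is strictly positive, so I must show the left-hand side cannot equal it. I would split on the sign of $2\alpha - \tau$. When $\tau < 2\alpha$, both $\Gamma(2\alpha-\tau) > 0$ and $\sin\pi(\alpha-\tau)$ — here $\alpha - \tau \in (\alpha-2\alpha,\alpha) = (-\alpha,\alpha) \subset (-\tfrac12,\tfrac12)$, so its sign is that of $\alpha - \tau$; I'd compare using the functional equation $\Gamma(2\alpha)\Gamma(1-2\alpha) = \pi/\sin 2\pi\alpha$ reversing the manipulation in Section~\ref{TEbckground} and showing $|T_s| < |T_B|$ or $T_s < 0 < T_B$ as appropriate. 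When $\tau \geq 2\alpha$ (possible only if $2\alpha < 1$), $\Gamma(2\alpha-\tau)$ has a definite sign governed by the number of poles crossed, and $\sin\pi(\alpha-\tau)$ likewise changes sign, so I'd track the product's sign against the positive RHS. This is essentially the content flagged by Lemma~\ref{lemmaTE5}-type reasoning but in the complementary range $0 < \tau < 1$; I expect it to reduce to elementary inequalities for Gamma on $(0,2)$.

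\textbf{Parts (b), (c): $\alpha \leq \tau < 1$.} For these I separate by the size of $\xi$. For $\xi \geq \tfrac14$ (part (b)) the plan is a modulus estimate: bound $|T_B|$ above using a lemma of the type $|B(\sigma + i\xi, 2\alpha)| \leq B(\sigma, 2\alpha)$ (monotone decay of $|\Gamma|$ along vertical lines — cf. the $\texttt{lemTBalphasigmaxi}$ used elsewhere), hence $|T_B| \leq \tfrac{\sin\pi\alpha}{\pi} B(\tau - 2\alpha + 1, 2\alpha)$, and bound $|T_s|$ below using $|\sin\pi(c - i\xi)|^2 = \sinh^2\pi\xi + \sin^2\pi c$, which is increasing in the "frequency offset"; with $\xi \geq \tfrac14$ fixed this forces $|T_s| > |T_B|$ for all $\alpha \leq \tau < 1$, $0 < \alpha < \tfrac12$. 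For $0 < \xi < \tfrac14$ (part (c)) modulus alone is too delicate, so I switch to arguments: $\arg T_s = \arg\sin\pi(1-\tau+\alpha - i\xi) - \arg\sin\pi(1-\tau+2\alpha-i\xi)$, and $\arg T_B = \arg\Gamma(\tau-2\alpha+1+i\xi) - \arg\Gamma(\tau+1+i\xi)$ (the constant $\sin\pi\alpha/\pi > 0$ contributes nothing). I'd compute $\arg\sin\pi(c - i\xi)$ as $-\arctan(\cot\pi c \cdot \tanh\pi\xi)$ (adjusted by $\pi$ when needed) and $\arg\Gamma(x+i\xi) = \xi\psi(x) + O(\xi^3)$ for small $\xi$, then show $\arg T_s$ is strictly more negative than $\arg T_B$ throughout the range — an inequality of the form $\cot\pi(1-\tau+\alpha) - \cot\pi(1-\tau+2\alpha) > $ (something controlled by digamma differences), using convexity/monotonicity of $\cot$ on the relevant interval.

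\textbf{Part (d): $0 < \tau < \alpha$.} Here $T_s$ and $T_B$ land in different half-planes or sectors, so an argument comparison is decisive: I'd show for $\xi > 0$ and $0 < \tau < \alpha$ that $\arg T_B$ lies in a range (say $(-\pi, 0)$ coming from the two digamma contributions with $\tau - 2\alpha + 1 < 1 - \alpha < \tau + 1$) disjoint from the range of $\arg T_s$. The key observation is that $1 - \tau + \alpha$ and $1 - \tau + 2\alpha$ straddle $1$ when $\tau < \alpha$ combined with $\alpha < \tfrac12$, which pins down the signs of the real parts of the two sines and hence constrains $\arg T_s$. I then invoke Part (a) for the boundary $\xi = 0$ and conclude that the full strip $0 < \tau < 1$, $\xi \in \mathbb{R}$ carries no solution, since the four overlapping regions (a)--(d) cover it.

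\textbf{Main obstacle.} The genuinely hard case is part (c), $0 < \xi < \tfrac14$ with $\alpha \leq \tau < 1$: neither modulus nor a crude argument bound suffices, and one needs a sharp comparison $\arg T_s < \arg T_B$ that holds uniformly. This will require careful estimates on $\arg\Gamma(x+i\xi)$ (monotonicity of the digamma function and control of the error in the small-$\xi$ expansion) balanced against the explicit $\arctan$ expression for $\arg\sin\pi(c-i\xi)$; establishing the relevant monotonicity in $\tau$ of the difference $\arg T_B - \arg T_s$ — reducing it to a one-variable inequality in $\tau$ for each fixed $(\alpha,\xi)$ — is where the real work lies.
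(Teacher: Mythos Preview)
Your outline for parts (a) and (b) matches the paper's approach closely: for (a) the paper uses exactly the rearranged form \eqref{TEatzero} with a sign/monotonicity split (a pole at $\tau = 2\alpha$, a sign argument on $[\alpha,2\alpha]$, and monotonicity via the digamma-based derivative on $(0,\alpha)$ and $(2\alpha,1]$); for (b) the paper proves $|T_B| < 2/\pi < \tanh(\pi/4) \leq |T_s|$ via the same modulus bounds you describe.

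For part (c) your toolkit is right but the execution differs. The paper does not do a small-$\xi$ expansion with $O(\xi^3)$ remainder, nor does it reduce to monotonicity in $\tau$. Instead it finds a single \emph{intermediate value} $-4\alpha\xi$ and proves the two-sided bound $\arg T_B > -4\alpha\xi > \arg T_s$ directly for all $0 < \xi < \tfrac14$. The lower bound on $\arg T_B$ comes from $\arg B(\sigma+i\xi,2\alpha) > -\xi(\psi(\sigma+2\alpha)-\psi(\sigma)) \geq -\xi\,h(\alpha)$ with $h(\alpha) = \psi(1+\alpha)-\psi(1-\alpha)$, and then convexity of $h$ gives $h(\alpha) \leq 4\alpha$. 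The upper bound on $\arg T_s$ comes from computing $\operatorname{Im} T_s$ and $\operatorname{Re} T_s$ explicitly to get $\arg T_s \leq -\arctan(\sin\pi\alpha\,\tanh\pi\xi)$, followed by three elementary concavity inequalities (Jordan, concavity of $\tanh$, concavity of $\arctan$) that yield $\arctan(\sin\pi\alpha\,\tanh\pi\xi) > 4\alpha\xi$. Your approach could work, but the $O(\xi^3)$ control you would need uniformly in $\alpha,\tau$ is exactly the delicate point the paper sidesteps.

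Part (d) is where your proposal has a genuine gap. Your ``straddle $1$'' claim is false: for $0 < \tau < \alpha < \tfrac12$ both $1-\tau+\alpha$ and $1-\tau+2\alpha$ lie strictly above $1$, so no sector separation arises from this. The paper's approach is entirely different and algebraic: it rewrites $T_s$ via the reflection formula $\sin\pi z = \pi/(\Gamma(z)\Gamma(1-z))$ to obtain
\[
\arg T_B - \arg T_s \equiv \arg B(\tau+1-\alpha+i\xi,\alpha) - \arg B(\alpha-\tau+i\xi,\alpha),
\]
then uses the arctan series for $\arg\Gamma$ to show each beta argument lies in $(-\pi/2,0)$ (so the difference $T_\Delta$ is in $(-\pi/2,\pi/2)$), and finally uses $0 < \alpha-\tau < \tau+1-\alpha$ to get $T_\Delta > 0$. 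The arguments of $T_s$ and $T_B$ are \emph{not} in disjoint sectors; what is shown is that their difference is strictly positive mod $2\pi$.
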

\begin{proof}
See Lemmas \ref{lemmaTE1}, \ref{lemmaTE2}, \ref{lemmaTE3} and \ref{lemmaTE4} for the proof of cases $(a), (b), (c)$ and $(d)$ respectively. \\
\end{proof}

\begin{theorem}
Suppose $0 < \alpha < 1$ and $\xi \in \mathbb{R}$. Let $T_s$ and $T_B$ be as defined previously in \eqref{Tsrecaptau} and \eqref{TBrecaptau} respectively.

\begin{enumerate} [\hspace{18pt}(a)]
\item If $\xi =0$ and $1 < \tau < 2$, then there exists a unique $\alpha_c$ such that $T_s  = T_B$, with $\tau = 1 + \alpha_c$. \\
\item If $\xi \geq \tfrac{1}{4}$ and $1 + \alpha \leq \tau < 2$, then $|T_s| > |T_B|$. \\
\item If $0 < \xi <  \tfrac{1}{4}$ and $1 + \alpha \leq \tau < 2$, then $\arg T_s \not = \arg T_B$. \\
\item If $\xi >0$ and $1 < \tau < 1+ \alpha$ then $\arg T_s \not = \arg T_B$. 
\end{enumerate}
In other words, for a given $0 < \alpha < 1$, the transcendental equation $T_s=T_B$ has a unique solution in the range $1 < \tau < 2$ and $\xi \in \mathbb{R}$, which occurs when $\xi=0$ and $\tau = 1 + \alpha_c$. (In particular, if $\xi \not =0$, then the equation $T_s=T_B$ has no solutions for $0 < \alpha < 1$ and $1 < \tau < 2$.)
\end{theorem}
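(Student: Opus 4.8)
The plan is to prove the four cases (a)--(d) separately, mirroring the structure already used for the range $0 < \tau < 1$, and then combine them to conclude that on the contour $\Gamma_M$ the generalised symbol $A_{\alpha,p,s}(\omega)$ vanishes precisely when $\xi = 0$ and $\tau = 1 + \alpha_c$. Recall that on $\Gamma_1$ the symbol is, up to a nowhere-vanishing common factor $e^{i\pi\nu}/\sin\pi(1/p - i\xi)$, the difference $T_s - T_B$, while on $\Gamma_2^{\pm}$, $\Gamma_3^{\pm}$ and $\Gamma_4$ the symbol lies on the unit circle (and hence never vanishes). So the whole theorem reduces to studying the single transcendental equation $T_s = T_B$ with $1 < \tau < 2$, $\xi \in \mathbb{R}$.

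First I would dispatch case (a), the value $\xi = 0$. Using $\Gamma(z)\Gamma(1-z) = \pi/\sin\pi z$ exactly as in Section \ref{TEbckground}, the equation $T_s = T_B$ becomes \eqref{TEatzero}, namely $\Gamma(2\alpha-\tau)\Gamma(\tau+1)\sin\pi(\alpha-\tau) = \Gamma(2\alpha)\sin\pi\alpha$. The claim is that for fixed $\alpha \in (0,1)$ this has a unique root $\tau = 1 + \alpha_c$ with $0 < \alpha_c < \alpha$; this is exactly the content of Lemma \ref{lemmaTE5}, which I would invoke (its proof studies monotonicity/sign of the relevant function of $\tau$ on $(1,2)$ together with endpoint values, using $\Gamma(2\alpha - \tau) < 0$ on part of the range and the reflection formula). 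For case (b), $\xi \ge \tfrac14$ and $1 + \alpha \le \tau < 2$, I would use the same modulus estimates as in the $0<\tau<1$ analysis: bound $|T_B|$ above by $|T_B|_{\xi=0}$-type inequalities (Lemma \ref{lemTBalphasigmaxi}) and bound $|T_s|$ below using the $\sinh$ growth of $|\sin\pi(c - i\xi)|$ in $\xi$ (Lemma \ref{lemSabsarg}-type estimates), so that for $\xi$ not too small the two moduli simply cannot coincide. Cases (c) and (d) are argument (phase) comparisons: for $0 < \xi < \tfrac14$ with $\tau \ge 1+\alpha$, and for $0 < \tau < 1+\alpha$ with $\xi > 0$, one shows $\arg T_s \ne \arg T_B$ by tracking $\arg T_s$ and $\arg T_B$ as functions of $\xi$ (computing limits at $\xi \to 0^+$ and $\xi \to \infty$, and controlling monotonicity), analogously to Lemmas \ref{lemmaTE3} and \ref{lemmaTE4}.

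Concretely, I would state and prove four lemmas, one per case, each parallel to its $0<\tau<1$ counterpart. The reduction in Section \ref{TEbckground} (conjugation symmetry $\overline{\sin z} = \sin\bar z$, $\overline{\Gamma(z)} = \Gamma(\bar z)$) lets me assume $\xi \ge 0$ throughout, and the substitution $\tau = s - 1/p$ collapses the $(s,p)$-dependence to a single parameter, so the lemmas are genuinely one-variable (in $\tau$) statements with $\xi$ a second parameter. Assembling: if $\xi = 0$ we get the unique solution $\tau = 1+\alpha_c$ from (a); if $\xi > 0$, then either $\tau \ge 1+\alpha$ and cases (b), (c) rule out a solution (the $\xi$-axis split at $\tfrac14$ is exhaustive), or $1 < \tau < 1+\alpha$ and case (d) rules it out. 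Hence on $\Gamma_M$ the only zero of $A_{\alpha,p,s}$ occurs at $\omega$ with $\xi = 0$, $s = 1 + 1/p + \alpha_c$, giving $\inf_{\omega\in\Gamma_M}|A_{\alpha,p,s}(\omega)| > 0$ otherwise, which is Theorem \ref{TheoremTranscendbig}.

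I expect case (d) to be the main obstacle. When $1 < \tau < 1+\alpha$ the factor $\Gamma(2\alpha - \tau + 1 + i\xi)$ inside $T_B$ has real part of its first argument in $(2\alpha - \alpha, 2\alpha) \cap (0,\infty)$ but the competing range is tight, and $T_s$ has both its numerator and denominator sine-arguments with real parts that can straddle integers, so naive modulus bounds are too weak and one is forced into a careful winding/argument computation — tracking $\arg T_s(\xi)$ and $\arg T_B(\xi)$ via the digamma asymptotics $\psi(z) \sim \log z$ and the behaviour of $\arg\sin\pi(c - i\xi)$ as $\xi$ ranges over $(0,\infty)$, then showing the two arg-curves never meet. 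The delicate point is getting uniform control in $\alpha$ near the endpoints $\alpha \to 0$ and $\alpha \to 1$, and near $\tau \to 1^+$ and $\tau \to (1+\alpha)^-$, where $T_s$ or $T_B$ degenerates; I would handle these by separate limiting arguments (as is done for the analogous small-$\tau$ lemmas) rather than a single uniform estimate.
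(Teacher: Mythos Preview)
Your overall architecture --- four lemmas mirroring the $0<\tau<1$ analysis, reduction to $\xi \ge 0$ by conjugation symmetry, then assembling --- is exactly the paper's approach, and cases (a) and (b) are correctly identified with Lemmas \ref{lemmaTE5} and \ref{lemmaTE6}. However, there is a genuine gap in your plan for case (c), and you overcomplicate case (d).

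For case (c), Lemma \ref{lemmaTE3} is stated and proved only for $0 < \alpha < \tfrac12$: its lower bound $\arg T_B > -4\alpha\xi$ comes from Lemma \ref{lemBarg}, whose proof relies on the convexity estimate $\psi(1+\alpha)-\psi(1-\alpha) \le 4\alpha$, and that argument genuinely needs $\alpha < \tfrac12$ (and $\gamma = 2\alpha < 1$ for Corollary \ref{Corrbetasigmagammaest}). For $\tfrac12 \le \alpha < 1$ the paper uses a separate and rather different argument (Lemma \ref{lemmaTE7}): since $1+\alpha \le \tau < 2$ forces $\cos\pi\alpha \le 0$ and (via Lemma \ref{coscosest}) $\operatorname{Re} T_s \le 0$, one obtains $-\pi < \arg T_s \le -\pi/2$; on the other side, $\sigma = \tau+1-2\alpha \ge 2-\alpha > \tfrac14 > \xi$ lets Remark \ref{betasigmagammaest} deliver $-\pi/2 < \arg T_B < 0$ even though now $\gamma = 2\alpha \ge 1$. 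The two ranges are disjoint. You will not recover this from a monotonicity-in-$\xi$ argument modelled on Lemma \ref{lemmaTE3} alone; the split at $\alpha = \tfrac12$ is essential and your proposal does not mention it.

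For case (d), you anticipate digamma asymptotics and delicate endpoint limits, but the paper's Lemma \ref{lemmaTE8} is much closer to Lemma \ref{lemmaTE4} than you fear. The only new wrinkle is that $\alpha - \tau < 0$, so before invoking the arctan series one shifts via $\Gamma(z+1)=z\Gamma(z)$ to restore positive real parts. This produces
\[
\arg T_B - \arg T_s \equiv T_\Delta \; - \; \arg\!\left(\frac{2\alpha-\tau+i\xi}{\alpha-\tau+i\xi}\right),
\]
where $T_\Delta = \arg B(\tau+1-\alpha+i\xi,\alpha) - \arg B(1+\alpha-\tau+i\xi,\alpha)$ is the same Beta-argument difference as in Lemma \ref{lemmaTE4}, with $0 < T_\Delta < \pi/2$. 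The extra rational factor has imaginary part $-\alpha\xi/((\alpha-\tau)^2+\xi^2) < 0$, so its argument lies in $(-\pi,0)$ and its negative in $(0,\pi)$. Hence $\arg T_B - \arg T_s$ lies in $(0,3\pi/2)$ modulo $2\pi$, which excludes $0$. No uniform-in-$\alpha$ control near $\alpha \to 0,1$ or $\tau \to 1^+, (1+\alpha)^-$ is needed.
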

\begin{proof}
See Lemmas \ref{lemmaTE5} and \ref{lemmaTE6} for the proof of cases $(a)$ and $(b)$ respectively.  For case $(c)$ we use Lemma \ref{lemmaTE3} if $0 < \alpha < \tfrac{1}{2}$, and Lemma \ref{lemmaTE7} if $\tfrac{1}{2} \leq \alpha < 1$. Finally, for case $(d)$, see Lemma \ref{lemmaTE8}.\\
\end{proof}

\begin{lemma} \label{lemmaTE1}
If $0<\alpha < \tfrac{1}{2}, \, \xi =0$ and $0 < \tau <1$, then $T_s \not = T_B$.
\end{lemma}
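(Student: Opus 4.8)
The plan is to use the reduction already carried out in Section~\ref{TEbckground}: for $\xi=0$ the identity $T_s = T_B$ is equivalent to the transcendental equation \eqref{TEatzero},
\begin{equation*}
\Phi(\tau) := \Gamma(2\alpha - \tau)\,\Gamma(\tau+1)\,\sin\pi(\alpha-\tau) = \Gamma(2\alpha)\sin\pi\alpha =: c(\alpha) > 0 ,
\end{equation*}
(obtained by applying $\Gamma(z)\Gamma(1-z)=\pi/\sin\pi z$ with $z = \tau+1-2\alpha\in(0,2)$, so that the cancellation of the two $\Gamma(\tau+1-2\alpha)$ factors is legitimate). Since $\Phi(0) = c(\alpha)$, the value $\tau = 0$ is the expected boundary solution, and the task is to show $\Phi(\tau) \ne c(\alpha)$ for every $\tau \in (0,1)$. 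I would use $\tau = \alpha$ and $\tau = 2\alpha$ as breakpoints; both lie in $(0,1)$ because $0<\alpha<\tfrac{1}{2}$. It is also handy to record the consequence of applying the reflection formula to $\sin\pi(\alpha-\tau)$ as well,
\begin{equation*}
\Phi(\tau) = \pi\, P(\alpha - \tau)\, P(\tau + 1 - \alpha), \qquad P(x) := \frac{\Gamma(x+\alpha)}{\Gamma(x)} ,
\end{equation*}
valid wherever the factors are finite, and to note the linear constraint $(\alpha-\tau) + (\tau + 1 - \alpha) = 1$ on the two arguments.

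On the middle interval $(\alpha,2\alpha)$ the gamma factors $\Gamma(2\alpha-\tau)$ and $\Gamma(\tau+1)$ are positive while $\sin\pi(\alpha-\tau) < 0$ (since $\alpha-\tau\in(-\tfrac{1}{2},0)$), so $\Phi(\tau) < 0 < c(\alpha)$; and at $\tau = 2\alpha$ the denominator $\sin\pi(\tau-2\alpha+1)$ of $T_s$ vanishes, so $T_s$ is undefined (equivalently $\Phi$ has a pole) and no solution occurs. On the two remaining intervals $(0,\alpha)$ and $(2\alpha,1)$ the function $\Phi$ is strictly positive, and the plan is to prove it is strictly decreasing on each of them, by controlling the logarithmic derivative
\begin{equation*}
\frac{\Phi'(\tau)}{\Phi(\tau)} = \psi(\tau+1) - \psi(2\alpha - \tau) - \pi\cot\pi(\alpha - \tau) ,
\end{equation*}
where $\psi$ denotes the digamma function. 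Granting monotonicity: on $(0,\alpha)$ one then has $\Phi(\tau) < \Phi(0) = c(\alpha)$, while on $(2\alpha,1)$, computing $\lim_{\tau\to1^-}\Phi(\tau) = c(\alpha)/(1-2\alpha) > c(\alpha)$ from $\Gamma(2\alpha) = (2\alpha-1)\Gamma(2\alpha-1)$, one has $\Phi(\tau) > c(\alpha)$; either way $\Phi(\tau) \ne c(\alpha)$, which together with the middle interval proves $T_s \ne T_B$ on all of $(0,1)$.

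The main obstacle is exactly this monotonicity on the outer intervals: the digamma difference $\psi(\tau+1) - \psi(2\alpha-\tau)$ is positive, but the term $-\pi\cot\pi(\alpha-\tau)$ changes sign and becomes unbounded as $\tau$ approaches $\alpha$ or $2\alpha$, so a naive sign count is not enough. I would handle it in one of two ways. First, working with $\Phi(\tau) = \pi P(\alpha-\tau)P(\tau+1-\alpha)$ and using that $\log P$ is increasing and concave on $(0,\infty)$ — its second derivative is $\psi'(x+\alpha) - \psi'(x) < 0$ because the trigamma $\psi'$ is decreasing — together with the constraint that the two arguments of $P$ sum to $1$, one can track the trade-off between the growing and the shrinking factor. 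Alternatively, using the integral representation $B(\tau+1-2\alpha,2\alpha) = \int_0^\infty w^{\tau-2\alpha}(1+w)^{-\tau-1}\,dw$ for $T_B$ and a comparable representation for $T_s$, the ratio $c(\alpha)/\Phi(\tau) = T_B/T_s$ becomes a quotient of integrals whose behaviour in $\tau$ is transparent. If strict monotonicity on the whole intervals turns out to be delicate, a fallback is to prove directly the weaker inequalities $\Phi(\tau) < c(\alpha)$ on $(0,\alpha)$ and $\Phi(\tau) > c(\alpha)$ on $(2\alpha,1)$, estimating term by term in the standard series expansions of $\psi$ and $\pi\cot$.
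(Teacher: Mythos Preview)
Your proposal is essentially the paper's own argument: the same three-interval split, the sign observation on $[\alpha,2\alpha]$, strict monotonicity of $\Phi$ on the two outer intervals, and the endpoint evaluations $\Phi(0)=c(\alpha)$ and $\Phi(1)=c(\alpha)/(1-2\alpha)$. Your factorization $\Phi(\tau)=\pi\,P(\alpha-\tau)\,P(\tau+1-\alpha)$ is just the paper's use of the digamma reflection formula $\psi(z)-\psi(1-z)=-\pi\cot\pi z$ in Lemma~\ref{ftaualphaderiv}, rewritten multiplicatively; differentiating it gives exactly the comparison $\psi(\tau+1)-\psi(2\alpha-\tau)<\psi(\tau+1-\alpha)-\psi(\alpha-\tau)=\pi\cot\pi(\alpha-\tau)$ that the paper proves.

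There is, however, one genuine gap. On $(2\alpha,1)$ you have $\alpha-\tau\in(\alpha-1,-\alpha)\subset(-1,0)$, so your hypothesis ``$\log P$ is concave on $(0,\infty)$'' does not apply to the factor $P(\alpha-\tau)$; and indeed $(\log P)''(x)=\psi'(x+\alpha)-\psi'(x)$ need not be negative on $(-1,0)$, since the trigamma is not monotone there. The paper closes this gap in case~(b) of Lemma~\ref{ftaualphaderiv} by shifting the negative arguments via the recurrence $\psi(z)=\psi(z+1)-1/z$: this produces the extra term $1/(\tau-\alpha)-1/(\tau-2\alpha)<0$, after which both shifted arguments $1+\alpha-\tau$ and $1+2\alpha-\tau$ lie in $(0,1)$ and your concavity argument (comparing $(\log P)'$ at $1+\alpha-\tau<\tau+1-\alpha$) goes through verbatim. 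Once you insert this one-line shift your proof is complete and coincides with the paper's.
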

\begin{proof}
Suppose that $\alpha \in (0, \tfrac{1}{2})$ is fixed and $\xi =0$. We note that if $\tau = 2\alpha$  then the left-hand side of equation \eqref{TEatzero} becomes infinite, whilst the right-hand side is finite. Moreover, if $\alpha \leq \tau < 2 \alpha$, then the left-hand side is bounded above by zero, whereas the right-hand side is strictly positive. In other words, equation \eqref{TEatzero} can have no solutions for $\tau$ in the range $\alpha \leq \tau \leq 2 \alpha$. \\

If $\tau \in (0, \alpha) \cup (2\alpha, 1)$, it easy to see that
\begin{equation*}
0 < \Gamma(2 \alpha - \tau) \Gamma(\tau+1)\sin \pi (\alpha- \tau) < \infty.
\end{equation*}

Now suppose that $0 \leq \tau < \alpha$. It is trivially obvious that $\tau =0$ is an (inadmissible) solution of equation \eqref{TEatzero}. But since, from Lemma \ref{ftaualphaderiv}, the derivative of the left-hand side, with respect to $\tau$, is strictly negative we see immediately that
\begin{equation*}
\Gamma(2 \alpha - \tau) \Gamma(\tau+1) \sin \pi(\alpha-\tau) < \Gamma(2 \alpha) \sin \pi \alpha.
\end{equation*}

On the other hand, suppose that $2 \alpha < \tau \leq 1$. Then, by Lemma \ref{ftaualphaderiv}, the left-hand side of equation \eqref{TEatzero} is a strictly decreasing function of $\tau$ over this range. But if $\tau = 1$, then 
\begin{align*}
\Gamma(2 \alpha - \tau) \Gamma(\tau+1) \sin \pi(\alpha-\tau) & = \Gamma(2 \alpha - 1) \Gamma(2) \sin \pi(\alpha-1) \\
&= \dfrac{\Gamma(2 \alpha)}{1-2\alpha} \sin \pi \alpha  \quad (\Gamma(z+1) = z \Gamma(z))\\
& >\Gamma(2 \alpha) \sin \pi \alpha.
\end{align*}
Hence, if $\xi =0$ and $0 < \tau <1$, then $T_s \not = T_B$. \\
\end{proof}

Given that we have proved there are no solutions to the transcendental equation for $\xi=0$, for the case $0 < \tau < 1$, it remains to consider the case $\xi >0$.  (See Section \ref{TEbckground}.) \\

From equation \eqref{TBrecap} and Remark \ref{remark:mellinop}, we have
\begin{equation*}
0 < |T_B| < \infty \text{  for all finite  } \xi >0, \text{  and  } |T_B| \to 0 \text{  as  } \xi \to \infty.
\end{equation*}
On the other hand, from Lemma \ref{lemSabsarg}
\begin{equation*}
0 < |T_s| < \infty \text{  for all finite  } \xi >0, \text{  and } |T_s| \to 1 \text{  as  } \xi \to \infty.
\end{equation*}
\\

\begin{lemma} \label{lemmaTE2}
If $0<\alpha < \tfrac{1}{2}, \, \, \xi \geq \tfrac{1}{4}$ and $\alpha \leq \tau <1$, then 
\begin{equation*}
|T_s| > \dfrac{2}{\pi} >  |T_B|.
\end{equation*}
\end{lemma}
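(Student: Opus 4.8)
\textbf{Proof proposal for Lemma \ref{lemmaTE2}.}

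The plan is to split the claimed double inequality into two separate estimates: a lower bound $|T_s| > 2/\pi$ valid for $\xi \geq \tfrac14$ and $\alpha \leq \tau < 1$, and an upper bound $|T_B| < 2/\pi$ valid on the same range. Neither estimate involves the other quantity, so the two can be handled independently.

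For the lower bound on $|T_s|$, I would start from the explicit modulus formula for a ratio of sines with complex argument, which is supplied by Lemma \ref{lemSabsarg}: writing $T_s = \sin\pi(1-\tau+\alpha - i\xi)/\sin\pi(1-\tau+2\alpha - i\xi)$, one has
\begin{equation*}
|T_s|^2 = \frac{\cosh 2\pi\xi - \cos 2\pi(1-\tau+\alpha)}{\cosh 2\pi\xi - \cos 2\pi(1-\tau+2\alpha)}.
\end{equation*}
The idea is that, since $\alpha \leq \tau < 1$ and $0 < \alpha < \tfrac12$, the argument $1-\tau+\alpha$ lies in a controlled interval where $\cos 2\pi(1-\tau+\alpha)$ stays away from $+1$ in a way that makes the numerator comparatively large; one then bounds the denominator trivially by $\cosh 2\pi\xi + 1$. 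Using $\cosh 2\pi\xi \geq \cosh(\pi/2)$ when $\xi \geq \tfrac14$, this reduces to a one-variable inequality in $\xi$ (and the angle), which I would verify by monotonicity in $\xi$ and a worst-case check over the relevant angle range. The target constant $2/\pi$ is not tight, which gives room.

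For the upper bound on $|T_B|$, recall $T_B = \tfrac{\sin\pi\alpha}{\pi} B(\tau - 2\alpha + 1 + i\xi,\, 2\alpha)$. Here $|\sin\pi\alpha| \leq 1$, so it suffices to show $|B(\tau-2\alpha+1+i\xi, 2\alpha)| < 2$ on the range, and in fact the monotonicity-type bound $|B(\sigma + i\xi, 2\alpha)| \leq B(\sigma, 2\alpha)$ for real $\sigma = \tau - 2\alpha + 1 > 0$ should be invoked (this is the kind of estimate proved in Lemma \ref{lemTBalphasigmaxi}/Lemma \ref{BVBeta} — a Beta function with positive real parts attains its maximal modulus on the real slice). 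Then I need $B(\tau - 2\alpha+1, 2\alpha) < 2$ for $\alpha \leq \tau < 1$, $0 < \alpha < \tfrac12$. Since $\tau - 2\alpha + 1 \geq 1 - \alpha > \tfrac12$ and $2\alpha < 1$, one can use $B(x,y) = \Gamma(x)\Gamma(y)/\Gamma(x+y)$ together with standard bounds on $\Gamma$ near small arguments (the only source of blow-up is $\Gamma(2\alpha)$ as $\alpha \to 0$, but it is multiplied by $\sin\pi\alpha \asymp \pi\alpha$, which is exactly why the combination $\tfrac{\sin\pi\alpha}{\pi}\Gamma(2\alpha)$ stays bounded — indeed $\tfrac{\sin\pi\alpha}{\pi}\Gamma(2\alpha)\Gamma(1) = \tfrac{1}{\Gamma(1-\alpha)\Gamma(\alpha)}\Gamma(2\alpha)\cdot\tfrac{1}{?}$, a bounded expression). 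Making this cancellation explicit before estimating is the cleanest route.

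The main obstacle I anticipate is the $|T_B|$ bound near $\alpha \to 0^+$ and near $\tau \to 1^-$ simultaneously, where one must be careful that the Beta-function argument $\tau - 2\alpha + 1$ can approach $2$ while $2\alpha \to 0$; one should double-check that $B(\sigma, 2\alpha) = \tfrac{\Gamma(\sigma)\Gamma(2\alpha)}{\Gamma(\sigma + 2\alpha)}$ behaves like $\tfrac{1}{2\alpha}$ there, so that $\tfrac{\sin\pi\alpha}{\pi}B \to \tfrac{\sin\pi\alpha}{2\pi\alpha} \to \tfrac12 < \tfrac2\pi$, confirming the bound survives the limit. The rest is routine monotonicity and a handful of explicit numerical comparisons, all with comfortable slack since $2/\pi \approx 0.6366$.
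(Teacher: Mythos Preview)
Your plan matches the paper's approach: split into separate bounds for $|T_s|$ and $|T_B|$, use Lemma~\ref{lemSabsarg} for the former and Lemma~\ref{lemTBalphasigmaxi} (reduction to the real slice of the Beta function) for the latter, with the $\sin\pi\alpha \cdot \Gamma(2\alpha)$ cancellation made explicit. Two points to sharpen.

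For $|T_s|$: your claim that ``$\cos 2\pi(1-\tau+\alpha)$ stays away from $+1$'' is false at the endpoint $\tau=\alpha$, where $1-\tau+\alpha=1$ and the cosine equals $1$. Fortunately no such control is needed. The paper simply uses the crudest possible bounds $-1\le\cos\le 1$ in both numerator and denominator to obtain
\[
|T_s| \ge \left(\frac{\cosh 2\pi\xi - 1}{\cosh 2\pi\xi + 1}\right)^{1/2} = \tanh(\pi\xi) \ge \tanh(\pi/4) > 0.655 > \tfrac{2}{\pi}.
\]

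For $|T_B|$: your first reduction ``$|\sin\pi\alpha|\le 1$, so it suffices that $|B|<2$'' fails, as you note, since $B(\sigma,2\alpha)\sim(2\alpha)^{-1}$ as $\alpha\to 0$. The paper makes the cancellation explicit and then uses two clean monotonicity steps rather than ad hoc numerics: with $\sigma=\tau-2\alpha+1\ge 1-\alpha$, Lemma~\ref{lemgammasigma} gives $\Gamma(\sigma)/\Gamma(\sigma+2\alpha)\le \Gamma(1-\alpha)/\Gamma(1+\alpha)$, whence
\[
|T_B| \le \frac{\sin\pi\alpha}{\pi}\,\Gamma(2\alpha)\,\frac{\Gamma(1-\alpha)}{\Gamma(1+\alpha)} = \frac{\Gamma(2\alpha)}{\Gamma(\alpha)\Gamma(1+\alpha)} = \frac{\Gamma(2\alpha+1)}{2\,\Gamma(1+\alpha)^2}.
\]
By Lemma~\ref{lemfincr} this last expression is strictly increasing in $\alpha$, so it is bounded by its value at $\alpha=\tfrac12$, namely $\Gamma(2)/(2\Gamma(\tfrac32)^2)=2/\pi$. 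This replaces your ``handful of explicit numerical comparisons'' by a single monotonicity argument.
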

\begin{proof}
Firstly, we find an upper bound for $|T_B|$. Define
\begin{equation*}
\sigma := \tau - 2\alpha +1,
\end{equation*}
so that $\sigma \geq 1- \alpha$. Now
\begin{align*}
|T_B| & = \dfrac{\sin \pi \alpha}{\pi} \, | B(\sigma+i \xi, 2\alpha) | \\
& = \dfrac{\sin \pi \alpha}{\pi} \, \Gamma(2\alpha) \bigg | \dfrac{\Gamma(\sigma +i \xi)}{\Gamma(\sigma + 2 \alpha +i \xi)} \bigg |.\\
& \leq \dfrac{\sin \pi \alpha}{\pi} \, \Gamma(2\alpha) \cdot \dfrac{\Gamma(\sigma)}{\Gamma( \sigma + 2 \alpha)}  \quad \text{by Lemma } \ref{lemTBalphasigmaxi}\\
& \leq \dfrac{\sin \pi \alpha}{\pi} \, \Gamma(2\alpha) \cdot \dfrac{\Gamma(1 - \alpha)}{\Gamma( 1 + \alpha)} \quad  \text{by Lemma } \ref{lemgammasigma}.
\end{align*}
Since $(\sin \pi \alpha)/\pi = 1/(\Gamma(1- \alpha)\Gamma(\alpha))$, we have
\begin{align*}
|T_B| & \leq \dfrac{\Gamma(2\alpha)}{\Gamma(1- \alpha)\Gamma(\alpha)} \,  \cdot \dfrac{\Gamma(1 - \alpha)}{\Gamma( 1 + \alpha)} \\
& = \dfrac{\Gamma(2\alpha)}{\Gamma(1+ \alpha)\Gamma(\alpha)} \\
& = \dfrac{2 \alpha \, \Gamma(2\alpha)}{2 \alpha \, \Gamma(1+ \alpha)\Gamma(\alpha)} \\
& = \dfrac{\Gamma(2\alpha+1)}{2 \, (\Gamma(1+ \alpha))^2} \\
& < \dfrac{\Gamma(2)}{2( \Gamma(\tfrac{3}{2}))^2}  \quad \text{by Lemma } \ref{lemfincr}\\
& = \dfrac{2}{\pi}, \quad \text{since  }  \Gamma(\tfrac{3}{2}) = \sqrt{\pi}/2.
\end{align*}
In other words, we can find a uniform upper bound for $|T_B|$ by taking $\alpha = \tfrac{1}{2}, \, \tau = \alpha$ and $\xi =0$. \\

Secondly, we determine a lower bound for $|T_s|$. By Lemma \ref{lemSabsarg}
\begin{align*}
|T_s| &= \bigg ( \dfrac{\cosh 2 \pi \xi - \cos 2 \pi (1-\tau + \alpha)}{\cosh 2 \pi \xi - \cos 2 \pi (1-\tau + 2\alpha)} \bigg)^{\frac1{2}} \\
& \geq \bigg ( \dfrac{\cosh 2 \pi \xi - 1}{\cosh 2 \pi \xi +1} \bigg)^{\frac1{2}}  \\
& = \tanh (\pi \xi) \\
& \geq \tanh(\pi/4) \quad \text{for } \xi \geq \tfrac{1}{4}.
\end{align*}

So finally, 
\begin{equation*}
|T_s| \geq \tanh (\pi/4) > 0.655 > \dfrac{2}{\pi} > |T_B|.
\end{equation*} 
That is, for $\xi \geq \tfrac{1}{4}$ and $\alpha \leq \tau < 1$, we have $|T_s| > \dfrac{2}{\pi} > |T_B|$, as required. \\
\end{proof}

\begin{lemma} \label{lemmaTE3}
Let $0<\alpha < \tfrac{1}{2}$ and  $0 < \xi <  \tfrac{1}{4}$. If $\alpha \leq \tau <1$ or $1+ \alpha \leq \tau <2$, then 
\begin{equation*}
\arg T_B > - 4\alpha \xi > \arg T_s.
\end{equation*}
\end{lemma}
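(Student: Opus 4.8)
\textbf{Proof proposal for Lemma \ref{lemmaTE3}.}

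The plan is to estimate $\arg T_B$ from below and $\arg T_s$ from above, and show that both are controlled by the quantity $4\alpha\xi$ with the correct signs, on the small strip $0 < \xi < \tfrac14$. Write $\sigma := \tau - 2\alpha + 1$, so that under either hypothesis $\sigma \geq 1-\alpha$ in the first case and $\sigma \geq 2-\alpha$ in the second; in both cases $\sigma > 0$ and indeed $\sigma$ is bounded below by a positive constant. Then
\begin{equation*}
T_B = \frac{\sin\pi\alpha}{\pi}\,\Gamma(2\alpha)\,\frac{\Gamma(\sigma + i\xi)}{\Gamma(\sigma + 2\alpha + i\xi)},
\end{equation*}
so $\arg T_B = \arg\Gamma(\sigma + i\xi) - \arg\Gamma(\sigma + 2\alpha + i\xi)$, since the prefactor is a positive real. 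First I would use the integral representation of the digamma function, $\arg\Gamma(x+i\xi) = \int_0^\xi \operatorname{Im}\psi(x+it)\,dt$ (valid since $\arg\Gamma(x) = 0$ for real $x > 0$), together with the standard series $\psi(z) = -\gamma + \sum_{n\ge0}\big(\tfrac1{n+1} - \tfrac1{n+z}\big)$, which gives $\operatorname{Im}\psi(x+it) = t\sum_{n\ge0}\frac{1}{(n+x)^2 + t^2}$. Hence
\begin{equation*}
\arg\Gamma(\sigma+i\xi) - \arg\Gamma(\sigma+2\alpha+i\xi) = \int_0^\xi t\sum_{n\ge0}\left(\frac{1}{(n+\sigma)^2+t^2} - \frac{1}{(n+\sigma+2\alpha)^2+t^2}\right)dt,
\end{equation*}
and the inner sum is positive, so $\arg T_B > 0 > \arg T_s$ would already follow if $\arg T_s < 0$; but we need the sharper bound $\arg T_B > -4\alpha\xi$, which is automatic if $\arg T_B \geq 0$. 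The real content is therefore the upper bound $\arg T_s < -4\alpha\xi$.

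For $T_s$, recall from Lemma \ref{lemSabsarg} (or by direct computation of $\sin\pi(a-i\xi) = \sin\pi a\cosh\pi\xi - i\cos\pi a\sinh\pi\xi$) that
\begin{equation*}
\arg T_s = \arg\sin\pi(1-\tau+\alpha - i\xi) - \arg\sin\pi(1-\tau+2\alpha - i\xi).
\end{equation*}
Writing $a := 1-\tau+\alpha$ and $a' := a+\alpha = 1-\tau+2\alpha$, one has $\arg\sin\pi(a-i\xi) = -\arctan\!\big(\cot(\pi a)\tanh(\pi\xi)\big)$ for $\sin\pi a \neq 0$ (with appropriate branch), so $\arg T_s = \arctan\!\big(\cot(\pi a')\tanh\pi\xi\big) - \arctan\!\big(\cot(\pi a)\tanh\pi\xi\big)$. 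Under $\alpha \le \tau < 1$ we get $a \in (\alpha, 1]$ with $a - a' = -\alpha$, and one checks that $\cot\pi a > \cot\pi a'$ on the relevant range (since $\cot$ is decreasing on $(0,\pi)$ and $a < a'$ modulo the period — this needs a short case check because $a$ or $a'$ may straddle an integer; the case $1+\alpha \le \tau < 2$ is handled by the analogous shift). Consequently $\arg T_s < 0$. To get the quantitative bound, I would differentiate $g(\xi) := \arg T_s$ in $\xi$: $g'(\xi) = \pi\operatorname{sech}^2(\pi\xi)\big(\tfrac{\cot\pi a'}{1 + \cot^2\pi a'\tanh^2\pi\xi} - \tfrac{\cot\pi a}{1+\cot^2\pi a\tanh^2\pi\xi}\big)$, and since $g(0)=0$ it suffices to show $g'(t) \le -4\alpha$ for $0 < t < \tfrac14$, or more crudely $g'(t) \le -4\alpha$ at $t=0$ combined with convexity/monotonicity control. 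At $t = 0$, $g'(0) = \pi(\cot\pi a' - \cot\pi a)$, and a mean-value estimate gives $\cot\pi a' - \cot\pi a = -\pi\alpha\csc^2(\pi c)$ for some $c$ between $a$ and $a'$, so $g'(0) \le -\pi^2\alpha < -4\alpha$ — wait, one must be careful about sign conventions and about the branch of $\arg\sin$ near zeros of $\sin$; this is where the case analysis on whether $a, a'$ lie in $(0,1)$, at $1$, or in $(1,2)$ enters.

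The main obstacle I anticipate is precisely this branch/case bookkeeping for $\arg T_s$: the quantities $1-\tau+\alpha$ and $1-\tau+2\alpha$ range over intervals that can contain integers (e.g. when $\tau$ is near $\alpha$ or near $1$, or near $1+\alpha$ or $2$), where $\sin\pi(\cdot)$ vanishes and $\arg\sin\pi(\cdot - i\xi)$ jumps by $\pm\pi$ as $\xi \to 0^+$. One must split $[\alpha,1) \cup [1+\alpha, 2)$ into the finitely many subintervals on which the branch behavior is uniform and verify the inequality $\arg T_s < -4\alpha\xi$ on each; in the boundary subcases where $\sin\pi a \to 0$ the $\arctan$ argument blows up and the estimate becomes $|\arg T_s| \to \pi/2$, which is much larger than $4\alpha\xi < \alpha\pi < \pi/2$, so those cases are in fact favorable. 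After reducing to the generic case via this splitting, the estimate $\arg T_s \le g'(0)\,\xi \le -\pi^2\alpha\,\xi < -4\alpha\xi$ for $0 < \xi < \tfrac14$ (using that $g'$ is, after a monotonicity check, bounded above by $g'(0)$ on this interval, or alternatively using $\tanh\pi\xi \ge (\text{const})\,\pi\xi$ and a direct lower bound on the bracket) closes the argument, and combined with $\arg T_B \ge 0 > -4\alpha\xi$ from the digamma integral, we conclude $\arg T_B > -4\alpha\xi > \arg T_s$, so in particular $T_s \neq T_B$ on this region.
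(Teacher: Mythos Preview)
Your treatment of $\arg T_B$ rests on a sign error. The identity you invoke, $\arg\Gamma(x+i\xi) = \int_0^\xi \operatorname{Im}\psi(x+it)\,dt$, is incorrect: differentiating $\log\Gamma(x+iy)$ in $y$ gives $i\psi(x+iy)$, whose imaginary part is $\operatorname{Re}\psi$, so in fact
\[
\tfrac{d}{d\xi}\arg\Gamma(x+i\xi)=\operatorname{Re}\psi(x+i\xi),\qquad \arg\Gamma(x+i\xi)=\int_0^\xi\operatorname{Re}\psi(x+it)\,dt.
\]
(You can also see this at first order: $\Gamma(x+i\xi)\approx\Gamma(x)(1+i\xi\psi(x))$, giving $\arg\Gamma(x+i\xi)\approx\xi\psi(x)$, which would vanish to second order under your formula.) With the correct formula the leading contribution to $\arg T_B$ for small $\xi$ is $\xi\big(\psi(\sigma)-\psi(\sigma+2\alpha)\big)<0$, since $\psi$ is increasing; indeed Corollary~\ref{Corrbetasigmagammaest} gives $\arg T_B<0$ throughout. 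So the inequality $\arg T_B>-4\alpha\xi$ is \emph{not} automatic, and this is precisely the content of Lemma~\ref{lemBarg}: one bounds $\psi(\sigma+2\alpha)-\psi(\sigma)\le\psi(1+\alpha)-\psi(1-\alpha)$ using $\sigma\ge 1-\alpha$ and monotonicity of $\psi'$, and then shows by convexity (with endpoint values $0$ and $2$) that $\psi(1+\alpha)-\psi(1-\alpha)\le 4\alpha$. Your digamma integral, had it been set up with $\operatorname{Re}\psi$, would lead to the same estimate but does not give positivity for free.

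For $\arg T_s$ your programme via $\arctan(\cot\pi a\,\tanh\pi\xi)$ and a mean-value bound on $\cot$ is in principle workable but, as you note yourself, it is hostage to branch bookkeeping (for $\alpha\le\tau<2\alpha$ one has $a'=1-\tau+2\alpha\in(1,1+\alpha]$, so $\sin\pi a'<0$ and the $\arctan$ formula needs a $\pm\pi$ correction; the MVT step for $\cot$ also breaks if an integer lies in $[a,a']$), and to an unproven monotonicity of $g'$. The paper sidesteps all of this by computing $\operatorname{Re}T_s$ and $\operatorname{Im}T_s$ directly as a single quotient: with $b=1-\tau+2\alpha$,
\[
\operatorname{Im}T_s=-\frac{\sin\pi\alpha\,\sinh 2\pi\xi}{\cosh 2\pi\xi-\cos 2\pi b}<0,\qquad
\operatorname{Re}T_s=\frac{\cos\pi\alpha\,\cosh 2\pi\xi-\cos\pi(\alpha-2b)}{\cosh 2\pi\xi-\cos 2\pi b},
\]
from which the crude bound $\operatorname{Re}T_s\le\frac{1+\cosh 2\pi\xi}{\cosh 2\pi\xi-\cos 2\pi b}$ yields $\arg T_s\le-\arctan(\sin\pi\alpha\,\tanh\pi\xi)$, and Lemma~\ref{sinpialphatanh} (three elementary concavity estimates) gives $\arctan(\sin\pi\alpha\,\tanh\pi\xi)>4\alpha\xi$ on $0<\xi<\tfrac14$. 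This avoids any case split on the location of $a,a'$.
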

\begin{proof}
Let us define
\begin{equation*}
\sigma := \tau - 2\alpha + 1,
\end{equation*}
so that $\sigma \geq 1- \alpha$, if $\alpha \leq \tau <1$ or $1+ \alpha \leq \tau <2$. Hence, from Lemma \ref{lemBarg}, 
\begin{equation*}
\arg T_B > - 4\alpha \xi. 
\end{equation*} \\

We now find an upper bound for $\arg T_s$. Let $b:=1 - \tau + 2\alpha$, so that
\begin{equation*}
T_s = \dfrac{\sin \pi (b- \alpha - i \xi)}{\sin \pi (b -i \xi)}.
\end{equation*}

Then, since $\sin (x + i y) = \sin x\cosh y  + i \cos x \sinh y$, see, for example, 4.21.37, \cite{NIST}, a routine calculation gives
\begin{equation*}
\operatorname{Im } T_s = -\dfrac{\sin \pi \alpha \sinh 2 \pi \xi}{\cosh 2\pi \xi - \cos 2 \pi b} \quad < 0,
\end{equation*}
and

\begin{equation*}
\operatorname{Re} \, T_s = \dfrac{-\cos \pi(\alpha-2 b) + \cos \pi \alpha \cosh 2 \pi \xi}{\cosh 2 \pi \xi - \cos 2 \pi b}.
\end{equation*} \\

Since $\operatorname{Im} \,  T_s < 0$, we must have $- \pi < \arg T_s < 0$. As $\xi > 0$, $\cosh 2 \pi \xi - \cos 2 \pi b > 0$, and we can determine an upper bound for $\arg T_s$ by finding an upper bound for $-\cos \pi(\alpha-2 b) + \cos \pi \alpha \cosh 2 \pi \xi $. \\

But $-\cos \pi(\alpha-2 b) + \cos \pi \alpha \cosh 2 \pi \xi \leq 1 + \cosh 2 \pi \xi$, and thus 
\begin{align*}
\arg T_s &\leq - \arctan \bigg (\dfrac{\sin \pi \alpha  \sinh 2 \pi \xi}{1 + \cosh 2 \pi \xi} \bigg) \\
& = - \arctan ( \sin \pi \alpha \tanh \pi \xi) \\
& < - 4 \alpha \xi \quad \text{  by Lemma  } \ref{sinpialphatanh}.
\end{align*}

So, finally
\begin{equation*}
\arg T_B > - 4\alpha \xi > \arg T_s.
\end{equation*}
\end{proof}

\begin{lemma} \label{lemmaTE4}
If $0<\alpha < \tfrac{1}{2}, \,\, \xi >0$ and $0 < \tau < \alpha$, then 
\begin{equation*}
 \arg T_s \not = \arg T_B.
\end{equation*}
\end{lemma}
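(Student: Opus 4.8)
The claim is that for $0<\alpha<\tfrac12$, $\xi>0$ and $0<\tau<\alpha$ the transcendental equation $T_s = T_B$ cannot hold, and the strategy is to separate the two sides by an argument (phase) comparison rather than a modulus comparison, since in this regime the moduli need not be ordered. First I would record the two quantities from \eqref{Tsrecaptau} and \eqref{TBrecaptau}: write $b := 1-\tau+2\alpha$ so that $T_s = \sin\pi(b-\alpha-i\xi)/\sin\pi(b-i\xi)$, and set $\sigma := \tau - 2\alpha + 1$, which now satisfies $1-2\alpha < \sigma < 1-\alpha$ (a \emph{different} range from Lemma \ref{lemmaTE3}, where $\sigma \geq 1-\alpha$ — this is exactly why that lemma does not apply and a separate argument is needed). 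The plan is to compute $\arg T_s$ and $\arg T_B$ separately and show they lie in disjoint intervals, or at least that their difference cannot vanish.

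The key computational steps are as follows. For $T_s$: using $\sin(x+iy) = \sin x \cosh y + i\cos x\sinh y$, I would reuse the identities from the proof of Lemma \ref{lemmaTE3}, namely $\operatorname{Im} T_s = -\sin\pi\alpha\,\sinh 2\pi\xi/(\cosh 2\pi\xi - \cos 2\pi b) < 0$ and the corresponding real part, to pin down the sign and range of $\arg T_s$; since $0 < \tau < \alpha$ forces $1+\alpha < b < 1+2\alpha$, I would track $\cos 2\pi b$ and $\cos\pi(\alpha - 2b)$ over this interval to get a usable bound (for small $\alpha$, $\arg T_s$ stays close to $0$ from below, or is pushed away, depending on $b$). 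For $T_B = \tfrac{\sin\pi\alpha}{\pi} B(\sigma + i\xi, 2\alpha) = \tfrac{\sin\pi\alpha}{\pi}\Gamma(2\alpha)\,\Gamma(\sigma+i\xi)/\Gamma(\sigma+2\alpha+i\xi)$, I would analyse $\arg T_B = \operatorname{Im}\log\Gamma(\sigma+i\xi) - \operatorname{Im}\log\Gamma(\sigma+2\alpha+i\xi)$ via the digamma/integral representation $\operatorname{Im}\log\Gamma(x+i\xi) = \int_0^\infty \left(\frac{\xi}{t} - \frac{\sinh \xi t}{\ldots}\right)$-type formulas, or more simply via $\frac{d}{dx}\arg\Gamma(x+i\xi) = \operatorname{Im}\psi(x+i\xi) > 0$ for $\xi>0$, together with monotonicity in $\sigma$. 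The decisive point will be to show that in the range $1-2\alpha < \sigma < 1-\alpha$ the quantity $\arg T_B$ is controlled (e.g.\ bounded below by something like $-2\alpha\xi$ or confined near $0$) in a way that is incompatible with the range just established for $\arg T_s$.

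The main obstacle I anticipate is that, unlike in Lemma \ref{lemmaTE3}, here $\sigma$ is \emph{below} $1-\alpha$, so the clean bound $\arg B(\sigma+i\xi,2\alpha) > -4\alpha\xi$ from Lemma \ref{lemBarg} is not directly available and the behaviour of $\Gamma(\sigma+i\xi)$ as $\sigma \to 1-2\alpha$ (where $2\alpha - \tau \to 0$, i.e.\ $\Gamma(2\alpha-\tau)$ blows up, matching the pole structure already flagged for $\xi=0$) must be handled carefully. I expect one must either (i) split into subcases on the size of $\tau$ relative to $\alpha$ and use different estimates in each, or (ii) exploit the reflection formula $\Gamma(z)\Gamma(1-z) = \pi/\sin\pi z$ to rewrite $T_B$ in a form whose phase is manifestly computable (this is the same trick that produced \eqref{TEatzero}), and then compare phases. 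I would also keep in mind, as a sanity check, the boundary case $\tau \to \alpha^-$, which must glue continuously onto Lemma \ref{lemmaTE3}'s regime, and $\tau \to 0^+$, which must be consistent with the $\xi = 0$ nonexistence already proved in Lemma \ref{lemmaTE1}; if the phase estimate degenerates near either endpoint, a supplementary modulus estimate (in the spirit of Lemma \ref{lemmaTE2}) may be needed to close the gap. Once both phase ranges are established and shown disjoint, the conclusion $\arg T_s \neq \arg T_B$, hence $T_s \neq T_B$, follows immediately.
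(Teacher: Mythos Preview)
Your plan has a genuine gap: the main strategy of bounding $\arg T_s$ and $\arg T_B$ \emph{separately} and showing they lie in disjoint intervals will not close here. You correctly identify the obstacle --- that $\sigma = \tau - 2\alpha + 1 < 1-\alpha$, so Lemma~\ref{lemBarg} no longer gives the clean lower bound $\arg B(\sigma+i\xi,2\alpha) > -4\alpha\xi$ --- but neither of your proposed remedies (subcasing on $\tau$, or a supplementary modulus estimate) actually resolves it. As $\tau \nearrow \alpha$ the two arguments genuinely approach each other, and no uniform separation of the two phase ranges exists across all of $(0,\alpha)\times(0,\infty)$.

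The key idea you are missing is to analyse the \emph{difference} $\arg T_B - \arg T_s$ directly, after first rewriting $T_s$ via the reflection formula $\sin\pi z = \pi/(\Gamma(z)\Gamma(1-z))$ (your option (ii), which you flag but do not pursue). Applying this to both numerator and denominator of $T_s$ gives
\[
T_s = \frac{\Gamma(\tau+1-2\alpha+i\xi)\,\Gamma(2\alpha-\tau-i\xi)}{\Gamma(\tau+1-\alpha+i\xi)\,\Gamma(\alpha-\tau-i\xi)}.
\]
Now the factor $\Gamma(\tau+1-2\alpha+i\xi)$ \emph{cancels} against the corresponding factor in $T_B = \tfrac{\sin\pi\alpha}{\pi}\Gamma(2\alpha)\,\Gamma(\tau+1-2\alpha+i\xi)/\Gamma(\tau+1+i\xi)$ when you form $\arg T_B - \arg T_s$, and using $\arg\Gamma(\bar z) = -\arg\Gamma(z)$ the difference collapses (modulo $2\pi$) to
\[
\arg B(\tau+1-\alpha+i\xi,\alpha) - \arg B(\alpha-\tau+i\xi,\alpha),
\]
a difference of two Beta arguments with the \emph{same} second parameter $\alpha$ (not $2\alpha$) at first arguments $\sigma_B = \tau+1-\alpha$ and $\sigma_s = \alpha-\tau$, both positive since $0<\tau<\alpha<\tfrac12$. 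Since $0<\alpha<1$, Corollary~\ref{Corrbetasigmagammaest} puts each Beta argument in $(-\pi/2,0)$, so the difference lies in $(-\pi/2,\pi/2)$; and since $\sigma_s < \sigma_B$, monotonicity of the arctan series in Lemma~\ref{argBarctan} shows the difference is strictly positive. Hence $\arg T_B - \arg T_s \in (0,\pi/2) + 2\pi\mathbb{Z}$, which excludes zero.
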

\begin{proof} We have
\begin{equation*}
T_B = \dfrac{\sin \pi \alpha}{\pi} \, \dfrac{\Gamma(\tau+1 - 2\alpha + i \xi) \,\Gamma(2\alpha)}{\Gamma(\tau +1+ i \xi)}.
\end{equation*}
On the other hand,
\begin{align*}
T_s & = \dfrac{\sin \pi( 1 - \tau + \alpha - i \xi)}{\sin \pi(1 - \tau + 2\alpha - i \xi)} \\
& = \dfrac{\sin \pi(\tau + 1 - \alpha + i \xi)}{\sin \pi(\tau + 1 - 2 \alpha + i \xi)}.
\end{align*}
Using the identity $\sin \pi z = \pi / (\Gamma(z) \Gamma(1-z))$, \text{(see 5.5.3, \cite{NIST})},
\begin{equation*}
T_s = \dfrac{\Gamma(\tau + 1 - 2 \alpha + i \xi)\Gamma(2 \alpha - \tau - i \xi)}{\Gamma(\tau + 1 - \alpha + i \xi)\Gamma(\alpha - \tau - i \xi)}.
\end{equation*}
Hence, noting that $\overline{\Gamma(z)} = \Gamma(\overline{z})$, we have 
\begin{align*}
& \arg T_B - \arg T_s \\
& = \arg \bigg ( \dfrac{\Gamma(\tau+1 - 2\alpha + i \xi)}{\Gamma(\tau +1+ i \xi)}\bigg ) -\arg \bigg (   \dfrac{\Gamma(\tau + 1 - 2 \alpha + i \xi)\Gamma(2 \alpha - \tau - i \xi)}{\Gamma(\tau + 1 - \alpha + i \xi)\Gamma(\alpha - \tau - i \xi)}\bigg )  \\
& \equiv \arg \bigg ( \dfrac{\Gamma(\tau + 1 - \alpha + i \xi)\, \Gamma(\alpha - \tau - i \xi)}{\Gamma(\tau +1+ i \xi)\, \Gamma(2 \alpha - \tau - i \xi)} \bigg )  \\
& \equiv \arg \bigg ( \dfrac{\Gamma(\tau + 1 - \alpha + i \xi)}{\Gamma(\tau + 1 + i \xi)} \bigg ) + \arg \bigg ( \dfrac{\Gamma(\alpha - \tau - i \xi)}{\Gamma(2\alpha - \tau - i \xi)} \bigg )  \\
& \equiv \arg \bigg ( \dfrac{\Gamma(\tau + 1 - \alpha + i \xi)}{\Gamma(\tau + 1 + i \xi)} \bigg ) -\arg \bigg ( \dfrac{\Gamma(\alpha - \tau + i \xi)}{\Gamma(2\alpha - \tau + i \xi)} \bigg )  \quad (\arg \overline{z} = - \arg z)\\
& \equiv \arg B(\tau+1-\alpha + i \xi, \alpha) -  \arg B(\alpha - \tau + i \xi, \alpha). 
\end{align*}

In other words, $\arg T_B - \arg T_s = T_\Delta + 2 \pi k$, for some $k \in \mathbb{Z}$, where
\begin{equation*}
T_\Delta := \arg B(\tau+1-\alpha + i \xi, \alpha) - \arg B(\alpha - \tau+ i \xi, \alpha).
\end{equation*}
To find an upper bound for $T_\Delta$ we note that since $0 < \alpha <  \tfrac{1}{2} < 1$, from Corollary \ref{Corrbetasigmagammaest}, 
$- \pi/2 < \arg B(\tau+1-\alpha + i \xi, \alpha), \, \arg B(\alpha - \tau+ i \xi, \alpha) <0$ and therefore,
\begin{equation*}
T_\Delta < \dfrac{\pi}{2}.
\end{equation*}

We now use the identity (1.625 9, p. 59, \cite{GR})
\begin{equation*}
\arctan(x) - \arctan(y) = \arctan \bigg ( \dfrac{x-y}{1+ xy}\bigg ) \quad \text{if} \quad xy > -1,\\
\end{equation*}
in conjunction with the result from Lemma \ref{argBarctan}, to compute $\arg B(\tau+1-\alpha + i \xi, \alpha)$ and $\arg B(\alpha - \tau + i \xi, \alpha)$ in turn.  \\

Since $0 < \alpha <  \tfrac{1}{2} < 1$, from Corollary \ref{Corrbetasigmagammaest}, we can write
\begin{equation*}
T_\Delta = \sum^\infty_{n=0} \bigg ( \arctan \dfrac{\xi \alpha}{(\sigma_s + n)(\sigma_s + n + \alpha)+ \xi^2} \,  - \,  \arctan \dfrac{\xi \alpha}{(\sigma_B + n)(\sigma_B + n + \alpha)+ \xi^2} \bigg ),
\end{equation*}
where $\sigma_s := \alpha - \tau$ and $\sigma_B := \tau + 1 - \alpha$. \\

But $0 < \alpha - \tau < \alpha < 1- \alpha < \tau + 1 - \alpha $ and hence
\begin{equation*}
0 < \sigma_s < \sigma_B,
\end{equation*}
and, thus, $T_\Delta >0$. \\

In summary, since $\arg T_B - \arg T_s = T + 2 \pi k$, with $0 < T_\Delta < \pi/2$, we have 
 \begin{equation*}
 \arg T_s \not = \arg T_B,
 \end{equation*}
 as required. \\
\end{proof}

In the case that $1 < \tau < 2$, the transcendental equation \eqref{TEatzero} always has a unique root $\tau_c$, where $1 < \tau_c < 1+\alpha$. (Lemmas \ref{ftaualpharoot} and \ref{ftaualpharootbig} provide the details for $0 < \alpha < \tfrac{1}{2}$ and $\tfrac{1}{2} \leq \alpha < 1$ respectively.) \\

\begin{lemma} \label{lemmaTE5}
Suppose $0 < \alpha < 1$. If $\xi =0$ and $1 < \tau < 2$, then $T_s \not = T_B$, unless $\tau = 1 + \alpha_c$.
\end{lemma}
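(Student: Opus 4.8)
The plan is to reduce the statement, via the computation already carried out in Section~\ref{TEbckground}, to the purely real transcendental equation \eqref{TEatzero}, and then to read off the conclusion from the root‑counting results already isolated for that equation.

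First I would recall the reduction in Section~\ref{TEbckground}. Setting $\tau := s - 1/p \in (1,2)$ and putting $\xi = 0$, one has
\[
T_B = \frac{\sin\pi\alpha}{\pi}\,\frac{\Gamma(\tau-2\alpha+1)\,\Gamma(2\alpha)}{\Gamma(\tau+1)}, \qquad T_s = \frac{\sin\pi(\alpha-\tau)}{\sin\pi(\tau-2\alpha+1)},
\]
and, applying $\Gamma(z)\Gamma(1-z) = \pi/\sin\pi z$ with $z = \tau - 2\alpha + 1$, the equation $T_s = T_B$ is equivalent to \eqref{TEatzero},
\[
\Gamma(2\alpha-\tau)\,\Gamma(\tau+1)\,\sin\pi(\alpha-\tau) = \Gamma(2\alpha)\sin\pi\alpha.
\]
So it suffices to locate the solutions $\tau \in (1,2)$ of \eqref{TEatzero}.

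Next I would note that the right–hand side $\Gamma(2\alpha)\sin\pi\alpha$ is strictly positive for $0 < \alpha < 1$, so any solution forces the left–hand side to be positive. A short sign count on the three factors $\Gamma(2\alpha-\tau)$, $\Gamma(\tau+1)$, $\sin\pi(\alpha-\tau)$ on $(1,2)$ — tracking the single pole of $\Gamma(2\alpha-\tau)$ and the single zero of $\sin\pi(\alpha-\tau)$ in that interval — shows that the left–hand side is positive exactly on $(1,1+\alpha)\cup(1+2\alpha,2)$ when $0<\alpha<\tfrac12$, and exactly on $(2\alpha,1+\alpha)$ when $\tfrac12\le\alpha<1$, and that it vanishes at $\tau = 1+\alpha$ in both cases. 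On the interval $(1+2\alpha,2)$ (which occurs only for $\alpha<\tfrac12$) the left–hand side stays strictly above $\Gamma(2\alpha)\sin\pi\alpha$ — its value at $\tau=2$ is $\Gamma(2\alpha)\sin\pi\alpha\cdot\frac{2}{(1-2\alpha)(2-2\alpha)} > \Gamma(2\alpha)\sin\pi\alpha$ and it tends to $+\infty$ at the left end — so \eqref{TEatzero} has no solution there.

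The decisive step is then the behaviour of the left–hand side on the sub‑interval whose right endpoint is $1+\alpha$, and this is precisely what Lemmas~\ref{ftaualpharoot} (for $0<\alpha<\tfrac12$) and~\ref{ftaualpharootbig} (for $\tfrac12\le\alpha<1$) supply: the left–hand side is strictly monotone there and decreases to $0$ at $\tau = 1+\alpha$ from a value which at the left end of that sub‑interval exceeds $\Gamma(2\alpha)\sin\pi\alpha$, so \eqref{TEatzero} has exactly one root $\tau_c$, with $1 < \tau_c < 1+\alpha$. Finally I would set $\alpha_c := \tau_c - 1$: it depends only on $\alpha$, since \eqref{TEatzero} involves only $\alpha$ and $\tau$; it satisfies $0 < \alpha_c < \alpha$ by $1 < \tau_c < 1+\alpha$; and for $\xi=0$, $1<\tau<2$ we have $T_s = T_B$ if and only if $\tau = \tau_c = 1 + \alpha_c$, which is the assertion. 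The only genuinely delicate ingredient is the strict monotonicity of the left–hand side of \eqref{TEatzero} on the relevant sub‑interval (which forces uniqueness of $\tau_c$); this has already been isolated in Lemmas~\ref{ftaualpharoot} and~\ref{ftaualpharootbig}, so modulo those the present lemma reduces to the sign bookkeeping and the reflection‑formula manipulations sketched above.
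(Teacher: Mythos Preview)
Your approach is essentially the same as the paper's: reduce to \eqref{TEatzero}, rule out solutions by a sign count on the sub-intervals where the left-hand side is nonpositive, and then invoke Lemmas~\ref{ftaualpharoot} and~\ref{ftaualpharootbig} on the sub-interval ending at $1+\alpha$ to locate the unique root $\tau_c = 1+\alpha_c$.

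There is, however, a genuine gap in your treatment of the interval $(1+2\alpha,2)$ for $0<\alpha<\tfrac12$. You assert that the left-hand side of \eqref{TEatzero} ``stays strictly above $\Gamma(2\alpha)\sin\pi\alpha$'' there, and justify this only by checking the two endpoint behaviours: the value at $\tau=2$ exceeds the right-hand side, and the left-hand side tends to $+\infty$ as $\tau\searrow 1+2\alpha$. But a continuous function that is large at both ends of an interval can perfectly well dip below a given level in between; your endpoint check is consistent with \eqref{TEatzero} having an even, nonzero number of solutions on $(1+2\alpha,2)$. What is missing is precisely the strict monotonicity of $f(\tau;\alpha)$ on this interval, which is supplied by case~(c) of Lemma~\ref{ftaualphaderiv}: since $f$ is strictly decreasing on $(1+2\alpha,2)$ and $f(2;\alpha) > \Gamma(2\alpha)\sin\pi\alpha$, one gets $f(\tau;\alpha) > f(2;\alpha) > \Gamma(2\alpha)\sin\pi\alpha$ throughout. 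This is exactly how the paper closes that case, and you should cite Lemma~\ref{ftaualphaderiv} here just as you do (implicitly, via Lemmas~\ref{ftaualpharoot} and~\ref{ftaualpharootbig}) on the other sub-interval.
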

\begin{proof}
From equation \eqref{TEatzero}, we can re-write equation $T_s = T_B$ as
\begin{equation*}
\Gamma(2 \alpha - \tau) \Gamma(\tau+1) \sin \pi(\alpha-\tau) = \Gamma(2 \alpha) \sin \pi \alpha.
\end{equation*}

We define 
\begin{equation*}
f(\tau ; \alpha) := \Gamma (2 \alpha - \tau) \Gamma(\tau +1)  \sin  \pi(\alpha - \tau). 
\end{equation*}
\\
\textbf{Firstly, suppose $0 < \alpha < \tfrac{1}{2}$.} \\
If $1 < \tau \leq 1 + \alpha$ then, by Lemma \ref{ftaualpharoot}, $T_s \not = T_B$ unless $\tau = 1 + \alpha_c$. \\

If $1 + \alpha < \tau < 1 + 2 \alpha$ then, by a routine calculation, 
\begin{equation*}
f(\tau ; \alpha) < 0.
\end{equation*}
On the other hand, $\Gamma(2 \alpha) \sin \pi \alpha > 0$ and hence, $T_s \not = T_B$. \\

If $\tau = 1 + 2\alpha$ the the left-hand side of equation \eqref{TEatzero} is infinite, the right-hand side is finite and, again, the required result follows. \\

Finally, if $1 + 2\alpha < \tau < 2$ then $f(\tau ; \alpha) > 0$. We now apply Lemma \ref{ftaualphaderiv}, and the required result follows if we can show that
\begin{equation*}
f(2;\alpha) > \Gamma(2 \alpha) \sin \pi \alpha.
\end{equation*}
But, for $0 < \alpha < \tfrac{1}{2}$,
\begin{align*}
f(2;\alpha) & = \Gamma(2\alpha-2)\Gamma(3)\sin\pi (\alpha -2) \\
& = \dfrac{2}{(2\alpha- 2 )(2\alpha -1)} \cdot \Gamma(2 \alpha) \sin \pi \alpha \\
& > \Gamma(2 \alpha) \sin \pi \alpha. \\
\end{align*}

\begin{figure}[H]
\centering
\includegraphics[width=10 cm, height=6 cm]{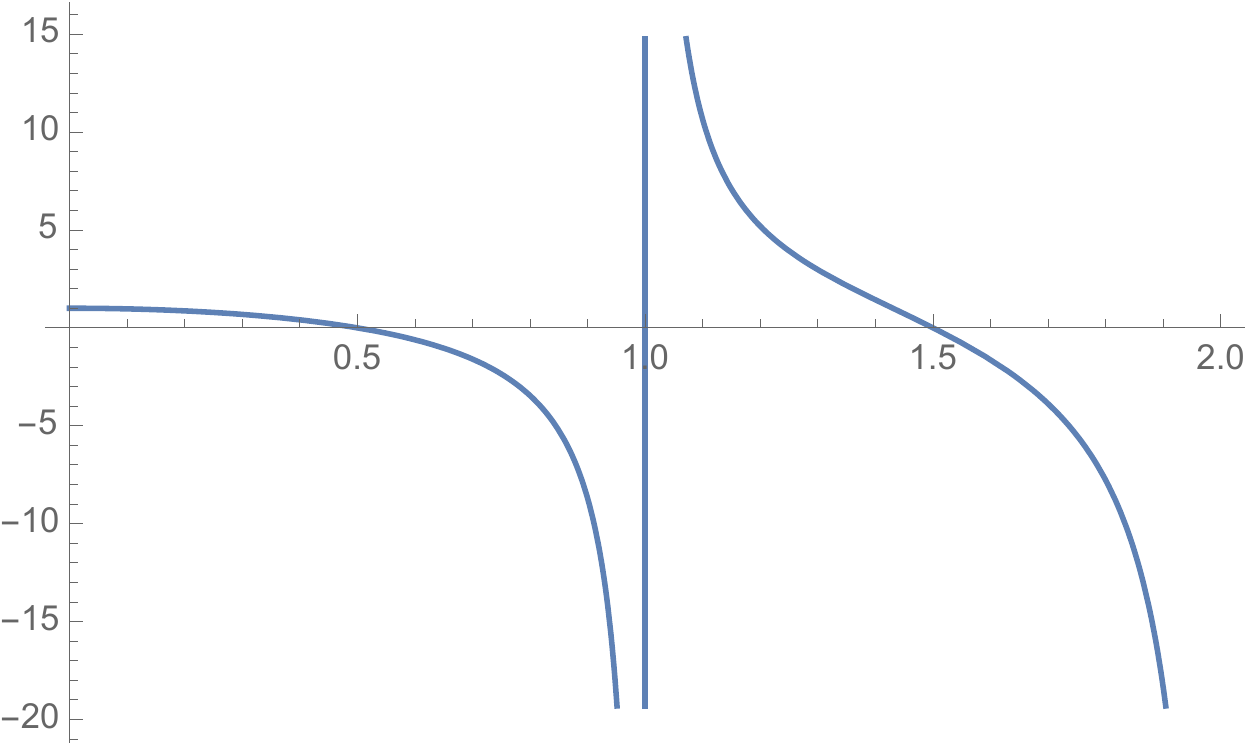} 
\caption{Graph of $f(\tau,\alpha)$ for $\alpha = 0.5$ and $0< \tau < 2$.}
\label{TEGraph}
\end{figure}

\textbf{Now suppose $\tfrac{1}{2} \leq \alpha < 1$.} \\
If $1 < \tau <  2\alpha$ or $1+ \alpha < \tau < 2$ then, by a routine calculation, 
\begin{equation*}
f(\tau ; \alpha) < 0.
\end{equation*}
On the other hand, $\Gamma(2 \alpha) \sin \pi \alpha > 0$ and hence, $T_s \not = T_B$. \\

If $\tau = 2\alpha$ the the left-hand side of equation \eqref{TEatzero} is infinite, the right-hand side is finite and, again, the required result follows. \\

Finally, if $2 \alpha < \tau \leq 1 + \alpha$ then, by Lemma \ref{ftaualpharootbig}, $T_s \not = T_B$ unless $\tau = 1 + \alpha_c$. \\
\end{proof}

\begin{lemma} \label{lemmaTE6}
Suppose $0 < \alpha < 1$. If $\xi \geq \tfrac{1}{4}$ and $1 + \alpha \leq \tau < 2$, then $|T_s| > |T_B|$.
\end{lemma}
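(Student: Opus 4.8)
The plan is to adapt the argument of Lemma~\ref{lemmaTE2} to the wider parameter range: I will bound $|T_B|$ above by a constant strictly below $\tanh(\pi/4)$, bound $|T_s|$ below by $\tanh(\pi/4)$, and conclude. First set $\sigma := \tau - 2\alpha + 1$; since $1 + \alpha \le \tau < 2$ we have $\sigma \ge 2 - \alpha > 1$. Writing $T_B = \dfrac{\sin\pi\alpha}{\pi}\,\Gamma(2\alpha)\,\dfrac{\Gamma(\sigma + i\xi)}{\Gamma(\sigma + 2\alpha + i\xi)}$, Lemma~\ref{lemTBalphasigmaxi} gives $|B(\sigma + i\xi,2\alpha)| \le B(\sigma,2\alpha)$, and the monotonicity established in Lemma~\ref{lemgammasigma} allows us to replace $\sigma$ by $2-\alpha$, so that
\[
|T_B| \le \frac{\sin\pi\alpha}{\pi}\,\Gamma(2\alpha)\,\frac{\Gamma(2-\alpha)}{\Gamma(2+\alpha)}.
\]
Using $\sin\pi\alpha/\pi = 1/(\Gamma(\alpha)\Gamma(1-\alpha))$, $\Gamma(2-\alpha) = (1-\alpha)\Gamma(1-\alpha)$, $\Gamma(2+\alpha) = \alpha(1+\alpha)\Gamma(\alpha)$ and $\alpha\Gamma(\alpha) = \Gamma(\alpha+1)$, this collapses to
\[
|T_B| \le \frac{1-\alpha}{2(1+\alpha)}\cdot\frac{\Gamma(2\alpha+1)}{\bigl(\Gamma(\alpha+1)\bigr)^2}.
\]

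Next I estimate the right-hand side uniformly over $0 < \alpha < 1$. The factor $\dfrac{1-\alpha}{1+\alpha}$ is decreasing, while $\alpha \mapsto \Gamma(2\alpha+1)/\bigl(\Gamma(\alpha+1)\bigr)^2$ is increasing by Lemma~\ref{lemfincr}. Hence for $0 < \alpha \le \tfrac{1}{2}$ one has $\dfrac{1-\alpha}{2(1+\alpha)} \le \tfrac{1}{2}$ and $\Gamma(2\alpha+1)/\bigl(\Gamma(\alpha+1)\bigr)^2 \le \Gamma(2)/\bigl(\Gamma(\tfrac{3}{2})\bigr)^2 = 4/\pi$, giving $|T_B| \le 2/\pi$; and for $\tfrac{1}{2} \le \alpha < 1$ one has $\dfrac{1-\alpha}{2(1+\alpha)} \le \tfrac{1}{6}$ and $\Gamma(2\alpha+1)/\bigl(\Gamma(\alpha+1)\bigr)^2 \le \Gamma(3)/\bigl(\Gamma(2)\bigr)^2 = 2$, giving $|T_B| \le \tfrac{1}{3}$. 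In either case $|T_B| \le 2/\pi < 0.655 < \tanh(\pi/4)$.

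Finally, by Lemma~\ref{lemSabsarg},
\[
|T_s| = \left(\frac{\cosh 2\pi\xi - \cos 2\pi(1-\tau+\alpha)}{\cosh 2\pi\xi - \cos 2\pi(1-\tau+2\alpha)}\right)^{1/2} \ge \left(\frac{\cosh 2\pi\xi - 1}{\cosh 2\pi\xi + 1}\right)^{1/2} = \tanh(\pi\xi) \ge \tanh(\pi/4)
\]
whenever $\xi \ge \tfrac{1}{4}$, so $|T_s| \ge \tanh(\pi/4) > |T_B|$, which is the assertion. The only point where this differs from Lemma~\ref{lemmaTE2} is that $\alpha$ now ranges over all of $(0,1)$: the crude bound $\Gamma(2\alpha+1)/\bigl(\Gamma(\alpha+1)\bigr)^2 \le 1$ that was available for $\alpha \le \tfrac{1}{2}$ fails here, and the remedy is the two-case split above, in which the vanishing prefactor $(1-\alpha)/(1+\alpha)$ offsets the growth of the gamma quotient as $\alpha \to 1$. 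I expect this bookkeeping — checking that the hypotheses of Lemmas~\ref{lemTBalphasigmaxi}, \ref{lemgammasigma} and \ref{lemfincr} are met with $\sigma = 2-\alpha$ and $\alpha$ up to $1$ — to be the only mildly delicate part.
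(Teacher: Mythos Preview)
Your proof is correct and follows the same overall architecture as the paper: bound $|T_B|$ above via Lemmas~\ref{lemTBalphasigmaxi} and~\ref{lemgammasigma} (with $\sigma_{\min}=2-\alpha$), bound $|T_s|$ below by $\tanh(\pi/4)$ via Lemma~\ref{lemSabsarg}, and compare. The only substantive difference is in how the $\alpha$-dependent upper bound for $|T_B|$ is then controlled. The paper rewrites the bound as $-\Gamma(2\alpha)/(\Gamma(\alpha-1)\Gamma(2+\alpha))$ and invokes a dedicated monotonicity lemma (Lemma~\ref{lemgdecr}) to show this expression is strictly decreasing in $\alpha$, yielding the uniform bound $|T_B|\le\tfrac12$. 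You instead factor the bound as $\dfrac{1-\alpha}{2(1+\alpha)}\cdot\dfrac{\Gamma(2\alpha+1)}{(\Gamma(\alpha+1))^2}$ and split at $\alpha=\tfrac12$, reusing Lemma~\ref{lemfincr} on each half to obtain $|T_B|\le 2/\pi$. Your route avoids introducing a new monotonicity lemma at the cost of a slightly weaker numerical constant ($2/\pi\approx0.637$ versus $\tfrac12$), but since $\tanh(\pi/4)\approx0.656$ exceeds both, either suffices.
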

\begin{proof}
Firstly, we find an upper bound for $|T_B|$. Define
\begin{equation*}
\sigma := \tau - 2\alpha +1,
\end{equation*}
so that $\sigma \geq 2- \alpha >0$. Now
\begin{align*}
|T_B| & = \dfrac{\sin \pi \alpha}{\pi} \, | B(\sigma+i \xi, 2\alpha) | \\
& = \dfrac{\sin \pi \alpha}{\pi} \, \Gamma(2\alpha) \bigg | \dfrac{\Gamma(\sigma +i \xi)}{\Gamma(\sigma + 2 \alpha +i \xi)} \bigg |.\\
& \leq \dfrac{\sin \pi \alpha}{\pi} \, \Gamma(2\alpha) \cdot \dfrac{\Gamma(\sigma)}{\Gamma( \sigma + 2 \alpha )}  \quad \text{by Lemma } \ref{lemTBalphasigmaxi}\\
& \leq \dfrac{\sin \pi \alpha}{\pi} \, \Gamma(2\alpha) \cdot \dfrac{\Gamma(2 - \alpha)}{\Gamma( 2 + \alpha)} \quad  \text{by Lemma } \ref{lemgammasigma}.
\end{align*}
Therefore, using the identity $\sin \pi z = \pi / (\Gamma(z) \Gamma(1-z))$, \text{see 5.5.3, \cite{NIST}},
\begin{align*}
|T_B| & \leq \dfrac{\Gamma(2\alpha)}{\Gamma(1- \alpha)\Gamma(\alpha)} \,  \cdot \dfrac{\Gamma(2 - \alpha)}{\Gamma( 2 + \alpha)} \\
& = \dfrac{\Gamma(2\alpha)}{\Gamma(1- \alpha)\Gamma(\alpha)} \,  \cdot \dfrac{(1- \alpha)\Gamma(1 - \alpha)}{\Gamma( 2 + \alpha)} \quad (\Gamma(z+1) = z \Gamma(z))\\
& = \dfrac{\Gamma(2\alpha) \cdot (1-\alpha)}{\Gamma(\alpha)\Gamma(2+\alpha)} \\
& = \dfrac{\Gamma(2\alpha) \cdot (1-\alpha)}{(\alpha-1) \Gamma(\alpha-1)\Gamma(2+\alpha)} \\
& = -\dfrac{\Gamma(2\alpha)}{ \Gamma(\alpha-1)\Gamma(2+\alpha)}.
\end{align*}

From Lemma \ref{lemgdecr}, 
\begin{align*}
|T_B|  \leq & \lim_{\alpha \searrow 0^+} \bigg ( -\dfrac{\Gamma(2\alpha)}{ \Gamma(\alpha-1)\Gamma(2+\alpha)}\bigg) \\
=  & \lim_{\alpha \searrow 0^+} \bigg ( -\dfrac{\Gamma(2\alpha+1)}{2 \alpha} \cdot \dfrac{(\alpha-1)\alpha}{ \Gamma(\alpha+1)\Gamma(2+\alpha)}\bigg) \\
= & \lim_{\alpha \searrow 0^+} \bigg ( \dfrac{(1-\alpha) \, \Gamma(2\alpha+1)}{2 \,\Gamma(\alpha+1)\Gamma(2+\alpha)} \bigg) = \dfrac{1}{2}.
\end{align*}

In other words, we can find a uniform upper bound for $|T_B|$ by taking $\alpha = 0, \, \tau = 1+ \alpha$ and $\xi =0$. \\

Secondly, we determine a lower bound for $|T_s|$. As in the proof of Lemma \ref{lemmaTE2},
\begin{equation*}
|T_s|  \geq \tanh(\pi/4) \quad \text{for } \xi \geq \tfrac{1}{4}.
\end{equation*}

So finally, 
\begin{equation*}
|T_s| \geq \tanh (\pi/4) > 0.655 > 0.5 \geq |T_B|.
\end{equation*} 
That is, for $\xi \geq \tfrac{1}{4}$ and $1 + \alpha \leq \tau < 2$, we have $|T_s| > |T_B|$, as required. \\
\end{proof}

\begin{lemma} \label{lemmaTE7}
Suppose $\tfrac{1}{2} \leq \alpha < 1$. If $0 < \xi <  \tfrac{1}{4}$ and $1 + \alpha \leq \tau < 2$, then 
\begin{equation*}
-\pi < \arg T_s \leq -\pi/2 < \arg T_B < 0.
\end{equation*}
\end{lemma}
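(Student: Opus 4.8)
The plan is to estimate $\arg T_s$ and $\arg T_B$ on disjoint arcs and then read off the chain of inequalities. For $T_B$ I would set $\sigma := \tau - 2\alpha + 1$, so that the hypotheses $1+\alpha \le \tau < 2$ and $\tfrac12 \le \alpha < 1$ give $\sigma \ge 2-\alpha > 1 > 0$; since $\sin\pi\alpha/\pi$ is a positive real number, $\arg T_B = \arg B(\sigma + i\xi, 2\alpha)$. I would then invoke Lemma \ref{lemBarg} exactly as in the proof of Lemma \ref{lemmaTE3} (its hypothesis $\sigma \ge 1-\alpha$ holds here a fortiori) to get $\arg T_B > -4\alpha\xi$, and since $4\alpha\xi < 4\cdot 1\cdot\tfrac14 = 1 < \tfrac{\pi}{2}$ this already yields $\arg T_B > -\tfrac{\pi}{2}$. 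For the bound $\arg T_B < 0$ I would use the Weierstrass product for $\Gamma$ precisely as in Corollary \ref{Corrbetasigmagammaest}, the computation being insensitive to the value $2\alpha \in [1,2)$ of the second argument, to write
\begin{equation*}
\arg B(\sigma + i\xi, 2\alpha) = \sum_{n=0}^{\infty}\left(\arctan\frac{\xi}{n+\sigma+2\alpha} - \arctan\frac{\xi}{n+\sigma}\right),
\end{equation*}
every summand of which is strictly negative because $\xi,\alpha > 0$ and $\arctan$ is increasing. This establishes $-\tfrac{\pi}{2} < \arg T_B < 0$.

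For $T_s$ I would repeat the algebraic computation from the proof of Lemma \ref{lemmaTE3}, writing $b := 1 - \tau + 2\alpha$, to obtain
\begin{equation*}
T_s = \frac{\cos\pi\alpha\,\cosh 2\pi\xi - \cos\pi(\alpha - 2b) - i\,\sin\pi\alpha\,\sinh 2\pi\xi}{\cosh 2\pi\xi - \cos 2\pi b},
\end{equation*}
whose denominator is strictly positive since $\xi > 0$. Because $\tfrac12 \le \alpha < 1$ we have $\sin\pi\alpha > 0$, hence $\operatorname{Im} T_s < 0$, so $\arg T_s \in (-\pi,0)$; in particular $\arg T_s > -\pi$. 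To obtain $\arg T_s \le -\tfrac{\pi}{2}$ it suffices to show $\operatorname{Re} T_s \le 0$, i.e. $\cos\pi\alpha\,\cosh 2\pi\xi \le \cos\pi(\alpha - 2b)$. Here $\cos\pi\alpha \le 0$ (since $\pi\alpha \in [\tfrac{\pi}{2},\pi)$) and $\cosh 2\pi\xi \ge 1$, so $\cos\pi\alpha\,\cosh 2\pi\xi \le \cos\pi\alpha$; moreover $\alpha - 2b = 2\tau - 3\alpha - 2$ ranges over $[-\alpha,\,2-3\alpha)$, which is contained in $[-\alpha,\alpha)$ precisely because $\alpha \ge \tfrac12$, so $|\alpha - 2b| \le \alpha$ and therefore $\cos\pi(\alpha-2b) \ge \cos\pi\alpha$ (cosine being even and decreasing on $[0,\pi]$). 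Combining these, $\operatorname{Re} T_s \le 0$, and together with $\operatorname{Im} T_s < 0$ this forces $\arg T_s \in (-\pi,-\tfrac{\pi}{2}]$.

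Assembling the two parts gives $-\pi < \arg T_s \le -\tfrac{\pi}{2} < \arg T_B < 0$, which is the assertion. I expect the only genuinely delicate point to be the inequality $\operatorname{Re} T_s \le 0$: it requires controlling simultaneously the sign of $\cos\pi\alpha$ and the magnitude $|\alpha - 2b|$, and it is exactly here that the hypothesis $\alpha \ge \tfrac12$ enters (the complementary range $0<\alpha<\tfrac12$ having been treated by the different argument of Lemma \ref{lemmaTE3}). The $T_B$ bounds, by contrast, are routine consequences of Lemma \ref{lemBarg} and the arctangent series already used for Corollary \ref{Corrbetasigmagammaest}.
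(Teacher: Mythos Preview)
Your treatment of $T_s$ is fine and matches the paper's approach; your direct verification that $|\alpha-2b|\le\alpha$ when $\alpha\ge\tfrac12$ is equivalent to (and arguably cleaner than) the paper's Lemma \ref{coscosest}.

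The gap is in the $T_B$ bound. You invoke Lemma \ref{lemBarg}, but that lemma carries the hypothesis $0<\alpha<\tfrac12$, whereas here $\tfrac12\le\alpha<1$. This is not a cosmetic restriction: the proof of Lemma \ref{lemBarg} obtains $\arg T_B>-4\alpha\xi$ from the inequality $h(\alpha):=\psi(1+\alpha)-\psi(1-\alpha)\le 4\alpha$, established by convexity on $[0,\tfrac12]$ with endpoint values $h(0)=0$, $h(\tfrac12)=2$. For $\alpha$ close to $1$, $h(\alpha)\to+\infty$, so the bound $h(\alpha)\le 4\alpha$ fails and the argument collapses. Likewise, your appeal to Corollary \ref{Corrbetasigmagammaest} for the upper bound is unjustified: that corollary (via Lemma \ref{Sgamsigxi}) uses $0<\gamma<1$ in the telescoping, and without first pinning down $k=0$ in Lemma \ref{argBarctan} the termwise negativity of the series tells you nothing about $\arg T_B$.

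The paper instead uses Remark \ref{betasigmagammaest}, which treats $\gamma=2\alpha\in[1,2)$ directly and yields $-\pi/2<\arg B(\sigma+i\xi,2\alpha)<0$ under the extra condition $\sigma\ge\xi$. That condition is available here because $\sigma=\tau+1-2\alpha\ge 2-\alpha>1>\tfrac14>\xi$; this is exactly the step you are missing.
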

\begin{proof}
Let $\sigma:= \tau + 1 - 2 \alpha$. Then
\begin{equation*}
\sigma \geq (1+ \alpha)+ 1 - 2\alpha = 2 - \alpha > \tfrac{1}{4} > \xi.
\end{equation*}
Hence, from Remark \ref{betasigmagammaest} with $\gamma = 2 \alpha \geq 1$,
\begin{equation*}
-\pi/2 < \arg B(\sigma + i \xi, 2\alpha) < 0.
\end{equation*}
Therefore, $-\pi/2 < \arg T_B < 0$. \\

We now determine bounds for $\arg T_s$. Let $b:= 1 - \tau + 2 \alpha$.
Then
\begin{equation*}
T_s = \dfrac{\sin \pi(b - \alpha - i \xi)}{\sin \pi (b - i \xi)}.
\end{equation*}

Then, as in the proof of Lemma \ref{lemmaTE3}, a routine calculation gives
\begin{equation*}
\operatorname{Im } T_s = -\dfrac{\sin \pi \alpha \sinh 2 \pi \xi}{\cosh 2\pi \xi - \cos 2 \pi b} \quad < 0,
\end{equation*}
and
\begin{equation*}
\operatorname{Re } T_s = \dfrac{(\cos \pi \alpha - \cos \pi (\alpha - 2b)) + (\cosh 2 \pi\xi - 1) \cos \pi \alpha}{\cosh 2\pi \xi -  \cos 2 \pi b}.
\end{equation*}

Of course, for $\xi >0$ we have $\cosh 2 \pi \xi -1 >0$. Moreover, if $\tfrac{1}{2} \leq \alpha < 1$ then $\cos \pi \alpha \leq 0$. Hence, $\operatorname{Re } T_s \leq 0$ if $\cos \pi \alpha - \cos \pi (\alpha - 2b) \leq 0$, which follows directly from Lemma \ref{coscosest}. \\

So, finally
\begin{equation*}
- \pi < \arg T_s \leq -\pi /2,
\end{equation*}
which completes the proof of the lemma.
\end{proof}

\begin{lemma} \label{lemmaTE8}
If $0<\alpha < 1, \,\, \xi >0$ and $1 < \tau < 1+ \alpha$, then 
\begin{equation*}
\arg T_s \not= \arg T_B.
\end{equation*}
\end{lemma}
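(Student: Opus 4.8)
The statement to prove is Lemma \ref{lemmaTE8}: for $0<\alpha<1$, $\xi>0$ and $1<\tau<1+\alpha$, we have $\arg T_s\not=\arg T_B$, where $T_s$ and $T_B$ are as in \eqref{Tsrecaptau} and \eqref{TBrecaptau} with the substitution $\tau=s-1/p$. My plan is to mimic very closely the argument already used in Lemma \ref{lemmaTE4}, which handled the analogous range $0<\tau<\alpha$ for $0<\alpha<\tfrac12$. The key algebraic identity there was obtained by rewriting both $T_s$ and $T_B$ purely in terms of Gamma functions (using $\sin\pi z=\pi/(\Gamma(z)\Gamma(1-z))$ and $\overline{\Gamma(z)}=\Gamma(\overline z)$), which collapsed $\arg T_B-\arg T_s$ to a difference of arguments of Beta functions; the same manipulation works verbatim here, since the only thing that changed is the numerical range of $\tau$.

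\textbf{Key steps.} First I would write
\begin{equation*}
T_B=\dfrac{\sin\pi\alpha}{\pi}\,\dfrac{\Gamma(\tau+1-2\alpha+i\xi)\,\Gamma(2\alpha)}{\Gamma(\tau+1+i\xi)},\qquad T_s=\dfrac{\sin\pi(\tau+1-\alpha+i\xi)}{\sin\pi(\tau+1-2\alpha+i\xi)},
\end{equation*}
and then apply $\sin\pi z=\pi/(\Gamma(z)\Gamma(1-z))$ to $T_s$ exactly as in Lemma \ref{lemmaTE4}, obtaining
\begin{equation*}
T_s=\dfrac{\Gamma(\tau+1-2\alpha+i\xi)\,\Gamma(2\alpha-\tau-i\xi)}{\Gamma(\tau+1-\alpha+i\xi)\,\Gamma(\alpha-\tau-i\xi)}.
\end{equation*}
The same chain of $\arg$ computations then gives $\arg T_B-\arg T_s\equiv T_\Delta$ with
\begin{equation*}
T_\Delta:=\arg B(\tau+1-\alpha+i\xi,\alpha)-\arg B(\alpha-\tau+i\xi,\alpha).
\end{equation*}
Now the difference with Lemma \ref{lemmaTE4} is that here $1<\tau<1+\alpha$, so $\alpha-\tau<0$; the Beta function $B(\alpha-\tau+i\xi,\alpha)$ has a first argument with negative real part, and I must be careful about which supporting lemma applies. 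The next step is therefore to show $0<T_\Delta<\pi$ (or, more comfortably, to locate $T_\Delta$ strictly inside an interval of length $<2\pi$ not containing $0$). For the term $\arg B(\tau+1-\alpha+i\xi,\alpha)$ I can still invoke Corollary \ref{Corrbetasigmagammaest}, since $\tau+1-\alpha>1>0$, giving a value in $(-\pi/2,0)$; for $\arg B(\alpha-\tau+i\xi,\alpha)$ I would use the reflection/shift identity $B(z,\alpha)=\frac{z+\alpha}{z}B(z+1,\alpha)$ (equivalently $\Gamma(z)=\Gamma(z+1)/z$) to move the first argument into $(0,1)$ — note $1<\tau<1+\alpha$ forces $0<\alpha-\tau+1<\alpha<1$ — and then apply Corollary \ref{Corrbetasigmagammaest} together with a bookkeeping of the $\arg$ of the elementary prefactor $\frac{z+\alpha}{z}$ with $z=\alpha-\tau+i\xi$. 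Finally I would use the arctangent-series representation from Corollary \ref{Corrbetasigmagammaest} (exactly the display in Lemma \ref{lemmaTE4}) with shifted parameters $\sigma_s=\alpha-\tau+1$, $\sigma_B=\tau+1-\alpha$, observe that $0<\sigma_s<\sigma_B$ (since $\alpha-\tau+1<\alpha+1-1=\ldots$; precisely $\sigma_s<1<\sigma_B$ because $\tau>1$), conclude each term of the telescoped series is positive, hence $T_\Delta>0$, and combine with the upper bound $T_\Delta<\pi$ to get $\arg T_s\not=\arg T_B$.

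\textbf{Main obstacle.} The genuinely delicate point is the treatment of $B(\alpha-\tau+i\xi,\alpha)$ when $\alpha-\tau<0$: Corollary \ref{Corrbetasigmagammaest} and Lemma \ref{argBarctan} are presumably stated for a positive real part of the first argument, so the argument of this Beta function is not directly covered, and a careless use of the identity $\Gamma(z)=\Gamma(z+1)/z$ can introduce an unwanted $2\pi k$ ambiguity in $\arg$. I expect the bulk of the work to be verifying that, after shifting $\alpha-\tau+i\xi\mapsto\alpha-\tau+1+i\xi$, the correction to the argument coming from the factor $1/(\alpha-\tau+i\xi)$ (whose real part is negative, so its argument lies in a half-plane one must track modulo $2\pi$) is accounted for consistently, so that the final inequality $0<T_\Delta<\pi$ — rather than merely $T_\Delta\equiv$ something modulo $2\pi$ — is actually established. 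Once that sign/branch bookkeeping is under control, the positivity of the arctangent series and the $\pi/2$-type upper bounds are routine, just as in Lemmas \ref{lemmaTE4} and \ref{lemmaTE7}.
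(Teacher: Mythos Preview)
Your proposal is essentially the paper's own proof: the paper also reduces $\arg T_B-\arg T_s$ to the Beta-function expression from Lemma \ref{lemmaTE4}, then shifts the troublesome argument via $\Gamma(z)=\Gamma(z+1)/z$ (equivalently your identity $B(z,\alpha)=\tfrac{z+\alpha}{z}B(z+1,\alpha)$) to obtain
\[
\arg T_B-\arg T_s\equiv\bigl(\arg B(\tau+1-\alpha+i\xi,\alpha)-\arg B(1+\alpha-\tau+i\xi,\alpha)\bigr)-\arg\!\Bigl(\tfrac{2\alpha-\tau+i\xi}{\alpha-\tau+i\xi}\Bigr),
\]
bounds the Beta-difference in $(0,\pi/2)$ via Corollary \ref{Corrbetasigmagammaest} and the arctangent series with $\sigma_s=1+\alpha-\tau<\sigma_B=\tau+1-\alpha$, and shows the prefactor has negative imaginary part so that $-\arg(\cdot)\in(0,\pi)$. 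One small correction: the total lands in $(0,\tfrac{3\pi}{2})$, not $(0,\pi)$ as you target---but as you yourself note, any interval of length $<2\pi$ avoiding $0$ suffices, so this changes nothing.
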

\begin{proof} We follow the method taken in Lemma \ref{lemmaTE4}, and repeat the result
\begin{equation*}
\arg T_B - \arg T_s = \arg \bigg ( \dfrac{\Gamma(\tau + 1 - \alpha + i \xi)}{\Gamma(\tau + 1 + i \xi)} \bigg ) -\arg \bigg ( \dfrac{\Gamma(\alpha - \tau + i \xi)}{\Gamma(2\alpha - \tau + i \xi)} \bigg ),
\end{equation*}
but now $\alpha - \tau < 0$. However, we can write\\
\begin{align*}
\arg \bigg ( \dfrac{\Gamma(\alpha - \tau + i \xi)}{\Gamma(2\alpha - \tau + i \xi)} \bigg ) &= \arg \bigg ( \dfrac{(\alpha - \tau + i \xi) \Gamma(\alpha - \tau + i \xi)}{(2\alpha - \tau + i \xi)\Gamma(2\alpha - \tau + i \xi)}  \cdot \dfrac{(2\alpha - \tau + i \xi)}{(\alpha - \tau + i \xi)} \bigg ) \\
&\equiv \arg \bigg ( \dfrac{\Gamma(1+ \alpha - \tau + i \xi)}{\Gamma(1+2\alpha - \tau + i \xi)} \bigg )  + \arg \bigg ( \dfrac{2\alpha - \tau + i \xi}{\alpha - \tau + i \xi} \bigg ). 
\end{align*}

Therefore,
\begin{align*}
\arg T_B - \arg T_s  \equiv \bigg ( & \arg B(\tau + 1 - \alpha  + i \xi ,\alpha)- \arg B(1+ \alpha - \tau + i \xi , \alpha) \bigg )\\
& - \arg \bigg ( \dfrac{2\alpha - \tau + i \xi}{\alpha - \tau + i \xi} \bigg ).
\end{align*}

Noting that $0 < 1+\alpha- \tau < \tau + 1 - \alpha$,  and using the approach of Lemma \ref{lemmaTE4},
\begin{equation*}
\arg B(\tau + 1 - \alpha  + i \xi ,\alpha)- \arg B(1+ \alpha - \tau + i \xi , \alpha) = T_\Delta,
\end{equation*}
where $0 < T_\Delta < \pi/2$. \\

On the other hand, by a routine calculation, 
\begin{equation*}
\operatorname{Im} \bigg ( \dfrac{2\alpha - \tau + i \xi}{\alpha - \tau + i \xi} \bigg ) = \dfrac{-\alpha \, \xi}{(\alpha-\tau)^2 + \xi^2} < 0,
\end{equation*}
and thus
\begin{equation*}
0 < -\arg \bigg ( \dfrac{2\alpha - \tau + i \xi}{\alpha - \tau + i \xi} \bigg ) < \pi.
\end{equation*}

Therefore,
\begin{equation*}
\arg T_s \not= \arg T_B .
\end{equation*}
This completes the proof of the lemma. \\
\end{proof}

\section{Supporting lemmas}
\begin{remark} \label{remgammapsi}
In the lemmas that follow we will be considering various derivatives of combinations of gamma functions of real and complex arguments.  Suppose $z \in \mathbb{C}$. Then, see   6.1.23, p. 256 and 6.3.1, p. 258, \cite{AandS},
\begin{equation*}
\overline{\Gamma (z)} = \Gamma(\overline{z}); \qquad \Gamma'(z)= \Gamma(z) \psi(z),
\end{equation*}
where $\psi$ denotes the \textit{digamma} function. \\

Suppose $x>0$. Then from 6.4.1, p. 260, \cite{AandS}, we have
\begin{equation*}
\psi^{'}(x) > 0, \,\, \psi^{''}(x) < 0 \,\, \text{and} \,\, \psi^{'''}(x)>0.
\end{equation*}
In particular, the function $\psi(x)$ is concave and strictly increasing. \\
\end{remark}

For the purposes of Lemmas \ref{ftaualphaderiv}, \ref{ftaualpharoot} and \ref{ftaualpharootbig} we define 
\begin{equation*}
f(\tau ; \alpha) := \Gamma (2 \alpha - \tau) \Gamma(\tau +1)  \sin  \pi(\alpha - \tau).
\end{equation*}

\begin{lemma} \label{ftaualphaderiv}
Suppose that one of the following three conditions hold:
\begin{enumerate}[\hspace{18pt}(a)]
\item $0 < \alpha < \tfrac{1}{2} \,\, \text{and} \,\, 0 < \tau < \alpha$;
\item $0 < \alpha < 1 \,\, \text{and} \,\, 2\alpha < \tau < 1+ \alpha$;
\item $0 < \alpha < \tfrac{1}{2} \,\, \text{and} \,\, 1 + 2 \alpha < \tau < 2$.
\end{enumerate}
Then
\begin{equation*}
f(\tau ; \alpha) > 0 \quad \text{and} \quad \dfrac{\partial}{\partial \tau} f(\tau ; \alpha) < 0.
\end{equation*}
\end{lemma}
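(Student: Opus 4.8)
\textbf{Proof plan for Lemma \ref{ftaualphaderiv}.}

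The plan is to treat the three cases as variations on a single theme: in each, first check the sign of $f(\tau;\alpha)=\Gamma(2\alpha-\tau)\Gamma(\tau+1)\sin\pi(\alpha-\tau)$ by tracking the signs of the three factors, and then show that the logarithmic derivative $\frac{\partial}{\partial\tau}\log f(\tau;\alpha)$ is strictly negative on the relevant interval, which (given $f>0$) forces $\frac{\partial}{\partial\tau}f<0$. For the sign of $f$: in case (a), $0<\tau<\alpha<\tfrac12$ gives $2\alpha-\tau\in(\alpha,2\alpha)\subset(0,1)$ so $\Gamma(2\alpha-\tau)>0$, $\Gamma(\tau+1)>0$, and $\alpha-\tau\in(0,\alpha)\subset(0,1)$ so $\sin\pi(\alpha-\tau)>0$. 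In case (b), $2\alpha<\tau<1+\alpha$ gives $2\alpha-\tau\in(-1+\alpha,0)$; on this interval $\Gamma$ is negative, while $\alpha-\tau\in(-1+\alpha-\alpha,-\alpha)=(-1,-\alpha)\subset(-1,0)$ so $\sin\pi(\alpha-\tau)<0$, and the product of the two negatives with $\Gamma(\tau+1)>0$ is positive. In case (c), $1+2\alpha<\tau<2$ with $\alpha<\tfrac12$ gives $2\alpha-\tau\in(-2,-1)$, where $\Gamma>0$, and $\alpha-\tau\in(-2+\alpha,-1-\alpha)\subset(-2,-1)$, where $\sin\pi(\cdot)>0$, so again $f>0$. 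These are the ``routine calculation'' verifications the lemma alludes to elsewhere.

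For the derivative, I would compute
\begin{equation*}
\frac{\partial}{\partial\tau}\log f(\tau;\alpha) = -\psi(2\alpha-\tau) + \psi(\tau+1) - \pi\cot\pi(\alpha-\tau),
\end{equation*}
using $\Gamma'(z)=\Gamma(z)\psi(z)$ from Remark \ref{remgammapsi}, and show this is negative on each interval. The term $\psi(\tau+1)$ is handled by monotonicity of $\psi$ on $(0,\infty)$ (Remark \ref{remgammapsi}). The genuinely delicate combination is $-\psi(2\alpha-\tau)-\pi\cot\pi(\alpha-\tau)$ when the argument $2\alpha-\tau$ is negative or small, since both $\psi$ and $\cot$ have poles there; the strategy is to use the reflection formula $\psi(1-w)-\psi(w)=\pi\cot\pi w$ to rewrite $\psi$ at a negative or near-zero argument in terms of $\psi$ at a positive argument plus an explicit cotangent, so that the singular cotangent contributions telescope or combine into a manifestly negative expression. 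In case (a) one writes $2\alpha-\tau = 1-(1-2\alpha+\tau)$ and pushes the argument into $(0,1)$; in cases (b) and (c) one similarly shifts by integers using $\psi(z+1)=\psi(z)+1/z$ to land in a range where $\psi$ is controlled, while keeping track of the rational corrections $\sum 1/(z+k)$, which are negative for the negative arguments arising here.

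I expect the main obstacle to be case (b), $0<\alpha<1$ and $2\alpha<\tau<1+\alpha$, because here the full range $0<\alpha<1$ is allowed (not just $\alpha<\tfrac12$), the argument $2\alpha-\tau$ ranges over all of $(-1,0)$, and near its endpoints the two singular terms $-\psi(2\alpha-\tau)$ and $-\pi\cot\pi(\alpha-\tau)$ both blow up and one must confirm that their sum, together with $\psi(\tau+1)$, stays strictly negative after the singularities cancel. The clean way to finish is: combine the two singular terms via reflection into $\pi\cot\pi(\tau-\alpha)-\psi(1-2\alpha+\tau)-\frac{1}{2\alpha-\tau}$ (or a similar explicit rearrangement), observe that on the interval in question $\tau-\alpha\in(\alpha,1)$ keeps $\cot\pi(\tau-\alpha)$ finite and of controlled sign, bound $\psi(1-2\alpha+\tau)$ below and $\psi(\tau+1)$ above using concavity, and check the resulting elementary inequality. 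Once $\frac{\partial}{\partial\tau}\log f<0$ is established in all three cases and $f>0$ has been verified, the conclusion $\frac{\partial}{\partial\tau}f<0$ is immediate, completing the proof.
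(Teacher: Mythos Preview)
Your structure matches the paper's: check $f>0$ by sign-chasing the three factors (your case analysis is correct), then show the logarithmic derivative $\psi(\tau+1)-\psi(2\alpha-\tau)-\pi\cot\pi(\alpha-\tau)<0$.

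The difference is which term gets reflected. The paper applies reflection to the cotangent, not to $\psi(2\alpha-\tau)$: writing $\pi\cot\pi(\alpha-\tau)=\psi(1-\alpha+\tau)-\psi(\alpha-\tau)$ reduces the goal to
\[
\psi(\tau+1)-\psi(2\alpha-\tau)\;<\;\psi(\tau+1-\alpha)-\psi(\alpha-\tau).
\]
In case (a) all four arguments are positive and this is immediate from the concavity of $\psi$ on $(0,\infty)$. In cases (b) and (c) one shifts the negative arguments on \emph{both} sides via $\psi(z+1)=\psi(z)+1/z$ into $(0,\infty)$, applies concavity there, and checks that the leftover rational corrections (e.g.\ $\frac{1}{\tau-\alpha}-\frac{1}{\tau-2\alpha}$ in case (b)) are negative. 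This is uniform and avoids any direct estimate on $\cot$.

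Your plan---reflect $\psi(2\alpha-\tau)$, keep $\pi\cot\pi(\tau-\alpha)$ explicit, then bound the pieces separately---does not close as written: the cotangent is not of fixed sign across the interval (for $\alpha<\tfrac12$ it changes sign at $\tau=\alpha+\tfrac12$), and your explicit rearrangement has slips (the integer shift of $-\psi(2\alpha-\tau)$ gives $-\psi(1+2\alpha-\tau)+\tfrac{1}{2\alpha-\tau}$, not $-\psi(1-2\alpha+\tau)-\tfrac{1}{2\alpha-\tau}$). Pushed through correctly you would be forced to reflect the remaining cotangent as well and recover the paper's inequality; doing that step first is what makes the argument short.
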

\begin{proof}
The assertion that $f(\tau ; \alpha) > 0$ follows immediately from the observation that if
$x \in (-2,-1) \cup (0, \infty)$ then $\Gamma(x) > 0$, and if $x \in (-1,0)$ then $\Gamma(x) < 0$. \\

Since $\Gamma'(z) = \Gamma(z) \psi(z)$, we have
\begin{align*}
\dfrac{\partial}{\partial \tau} &  f(\tau ; \alpha)\\
&= \Gamma(2 \alpha - \tau) \Gamma(\tau+1) \big \{(\psi(\tau+1) - \psi(2 \alpha- \tau)) \sin \pi (\alpha- \tau) - \pi \cos \pi (\alpha - \tau) \big \} \\
&= f(\tau ; \alpha)(\psi(\tau+1) - \psi(2 \alpha- \tau)) - \Gamma(2 \alpha - \tau) \Gamma(\tau+1)\pi \cos \pi (\alpha - \tau).
\end{align*}

We will now prove that if one of the conditions (a), (b) or (c) holds then
\begin{equation} \label{conditionpsiabc}
\psi(\tau+1) - \psi(2 \alpha- \tau)  < \psi(\tau+1- \alpha) - \psi(\alpha- \tau ). 
\end{equation} 
\\
Firstly, suppose $0 < \alpha < \tfrac{1}{2} \,\, \text{and} \,\, 0 < \tau < \alpha$.
Then
\begin{equation*}
\psi(\tau + 1) - \psi(2 \alpha - \tau) < \psi(\tau+1-\alpha) - \psi(\alpha- \tau),
\end{equation*}
follows directly from the concavity of $\psi(x)$ for $x>0$, since $\tau+1-\alpha > 1- \alpha > \alpha > \alpha - \tau >0$. \\

For the remaining two cases we note that
\begin{equation*}
\psi(z+1) = \psi(z) + 1/z, \quad z \in \mathbb{C}\setminus\{0\},
\end{equation*}
see 6.3.5, p. 258, \cite{AandS}. \\

Hence, for $0 < \alpha < 1 \,\, \text{and} \,\, 2\alpha < \tau < 1+ \alpha$,
\begin{align*}
\psi(\tau + 1) - \psi(2 \alpha - \tau) & = \psi(\tau + 1) - \psi(1+ 2 \alpha - \tau) + 1 /(2 \alpha - \tau) \\
& < \psi(\tau + 1-\alpha) - \psi(1+  \alpha - \tau) + 1 /(2 \alpha - \tau) \\
& = \psi(\tau + 1-\alpha) - \psi( \alpha - \tau) -1/(\alpha- \tau) + 1 /(2 \alpha - \tau) \\
& = \psi(\tau + 1-\alpha) - \psi( \alpha - \tau) + \{ 1/(\tau - \alpha) - 1 /(\tau -2 \alpha) \}  \\
& < \psi(\tau + 1-\alpha) - \psi( \alpha - \tau),
\end{align*}
noting that $\tau + 1-\alpha > 1+ \alpha > 1+  \alpha - \tau >0$. \\

Finally, for $0 < \alpha < \tfrac{1}{2} \,\, \text{and} \,\, 1 + 2 \alpha < \tau < 2$,
\begin{align*}
& \psi(\tau + 1) - \psi(2 \alpha - \tau) \\
& = \psi(\tau + 1) - \psi(1+ 2 \alpha - \tau) + 1 /(2 \alpha - \tau) \\
& = \psi(\tau + 1) - \psi(2+ 2 \alpha - \tau) + 1 /(2 \alpha - \tau) + 1 /(1+ 2 \alpha - \tau) \\
& < \psi(\tau + 1- \alpha) - \psi(2+  \alpha - \tau) + 1 /(2 \alpha - \tau) + 1 /(1+ 2 \alpha - \tau) \\
& = \psi(\tau + 1- \alpha) - \psi(\alpha - \tau) \\
& \qquad -1/(1+\alpha-\tau) - 1/(\alpha - \tau) + 1 /(2 \alpha - \tau)   + 1 /(1+ 2 \alpha - \tau) \\
& = \psi(\tau + 1- \alpha) - \psi(\alpha - \tau)  \\
& \qquad + \{ 1/(\tau - \alpha) - 1 /(\tau -2 \alpha) \}   + \{1/(\tau -1-\alpha)- 1 /(\tau- 1- 2 \alpha) \}\\
& < \psi(\tau + 1-\alpha) - \psi( \alpha - \tau),
\end{align*}
noting that $\tau + 1-\alpha > 2 + \alpha > 2+  \alpha - \tau >0$. \\

Hence, assuming that one of the conditions (a), (b) or (c) holds, then from \eqref{conditionpsiabc},
\begin{align*}
\psi(\tau+1) - \psi(2 \alpha- \tau) & < \psi(\tau+1- \alpha) - \psi( \alpha- \tau) \\
& = - \big ( \psi( \alpha- \tau)  -  \psi(1- \alpha + \tau) \big )\\
& = \dfrac{\pi}{\tan \pi(\alpha-\tau)},
\end{align*}
since $\psi(z) - \psi(1-z) = -\pi/ \tan \pi z,  \text{(see 6.3.7, p. 259, \cite{AandS})}$. \\

Since $f(\tau ; \alpha) >0$,
\begin{align*}
\dfrac{\partial}{\partial \tau} f(\tau ; \alpha) & < f(\tau ; \alpha)\dfrac{\pi}{\tan \pi(\alpha-\tau)} - \Gamma(2 \alpha - \tau) \Gamma(\tau+1)\pi \cos \pi (\alpha - \tau) \\
& = \Gamma(2 \alpha - \tau) \Gamma(\tau+1) \bigg \{ \dfrac{\pi}{\tan \pi(\alpha-\tau)} \cdot \sin \pi (\alpha- \tau) - \pi \cos \pi (\alpha- \tau)\bigg \} \\
& =0.
\end{align*}
This completes the proof of the lemma. \\
\end{proof}

\begin{lemma} \label{ftaualpharoot}
Suppose $0 < \alpha < \tfrac{1}{2}$. Then, for any given $\alpha$, there exists a unique $\tau_\alpha$, depending only on $\alpha$, such that
\begin{equation*}
f(\tau_\alpha ; \alpha) - \Gamma(2 \alpha) \sin \pi \alpha =0  \quad \text{and} \quad 1 < \tau_\alpha < 1 + \alpha.
\end{equation*}
\end{lemma}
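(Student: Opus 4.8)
The plan is to study the real-valued function
\[
g(\tau) := f(\tau;\alpha) - \Gamma(2\alpha)\sin\pi\alpha = \Gamma(2\alpha-\tau)\,\Gamma(\tau+1)\,\sin\pi(\alpha-\tau) - \Gamma(2\alpha)\sin\pi\alpha
\]
on the closed interval $1 \le \tau \le 1+\alpha$, for a fixed $\alpha \in (0,\tfrac12)$, and to show it has exactly one zero in the open interval $(1,1+\alpha)$. The two ingredients are a sign change at the endpoints (giving existence by the intermediate value theorem, since $g$ is continuous there — note $2\alpha-\tau$ ranges over $(-1,2\alpha-1)\subset(-1,0)$ on this interval when $\alpha<\tfrac12$, so $\Gamma(2\alpha-\tau)$ is finite and negative, and $\sin\pi(\alpha-\tau)$ is also negative, so $f$ is positive and continuous) and strict monotonicity of $g$ on the interval (giving uniqueness). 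The monotonicity is already essentially handed to us: Lemma \ref{ftaualphaderiv}(a) covers $0<\tau<\alpha$, which is not what we need, but case (b) of that lemma, with $0<\alpha<1$ and $2\alpha<\tau<1+\alpha$, is close; I would need the sub-range $1<\tau<1+\alpha$ with $\alpha<\tfrac12$, i.e. $\max\{1,2\alpha\}=1<\tau$, so the hypothesis $2\alpha<\tau$ of case (b) is automatically satisfied because $2\alpha<1<\tau$. Hence Lemma \ref{ftaualphaderiv}(b) applies verbatim on all of $(1,1+\alpha)$ and gives $\partial_\tau f(\tau;\alpha)<0$ there, so $g$ is strictly decreasing and can vanish at most once.

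So the steps, in order, are: (i) observe that on $1\le\tau\le1+\alpha$ with $\alpha<\tfrac12$ the quantity $2\alpha-\tau$ lies in $(-1,0)$, so all gamma factors are finite and $f(\cdot;\alpha)$ is continuous and positive there; (ii) evaluate (or bound) $g$ at $\tau=1$: here
\[
f(1;\alpha) = \Gamma(2\alpha-1)\,\Gamma(2)\,\sin\pi(\alpha-1) = \frac{\Gamma(2\alpha)}{2\alpha-1}\cdot(-\sin\pi\alpha) = \frac{\Gamma(2\alpha)\sin\pi\alpha}{1-2\alpha},
\]
using $\Gamma(2\alpha)=(2\alpha-1)\Gamma(2\alpha-1)$ and $\sin\pi(\alpha-1)=-\sin\pi\alpha$; since $0<1-2\alpha<1$ this is strictly larger than $\Gamma(2\alpha)\sin\pi\alpha$, so $g(1)>0$; (iii) evaluate or estimate $g$ at $\tau=1+\alpha$: here $\sin\pi(\alpha-\tau)=\sin(-\pi)=0$, so $f(1+\alpha;\alpha)=0$ and $g(1+\alpha)=-\Gamma(2\alpha)\sin\pi\alpha<0$; (iv) invoke continuity and the intermediate value theorem to get a zero $\tau_\alpha\in(1,1+\alpha)$, and invoke Lemma \ref{ftaualphaderiv}(b) to conclude $g$ is strictly decreasing on this interval, hence $\tau_\alpha$ is unique; (v) note $\tau_\alpha$ depends only on $\alpha$ since $g$ does.

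I expect essentially no serious obstacle: the endpoint computations are elementary reflection-formula manipulations, and the monotonicity is supplied ready-made by the preceding lemma. The only point that needs a moment's care is checking that the hypotheses of Lemma \ref{ftaualphaderiv}(b) genuinely cover the whole open interval $(1,1+\alpha)$ — which they do precisely because $\alpha<\tfrac12$ forces $2\alpha<1<\tau$, so the lower constraint $2\alpha<\tau$ is free — and confirming that the gamma factors stay in the region where the sign bookkeeping in the proof of that lemma (namely $\Gamma(x)>0$ for $x\in(-2,-1)\cup(0,\infty)$ and $\Gamma(x)<0$ for $x\in(-1,0)$) is valid; on $1<\tau<1+\alpha$ we have $2\alpha-\tau\in(-1,0)$ and $\tau+1\in(2,2+\alpha)$, so $f(\tau;\alpha)>0$ and everything is consistent. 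A remark could record that the analogous statement for $\tfrac12\le\alpha<1$ is handled separately in Lemma \ref{ftaualpharootbig}, since there the admissible interval is $2\alpha<\tau\le1+\alpha$ rather than $1<\tau\le1+\alpha$, and the endpoint at $\tau=2\alpha$ requires the pole of $\Gamma(2\alpha-\tau)$ to be discussed instead of a finite value.
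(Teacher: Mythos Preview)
Your proof is correct and follows essentially the same approach as the paper: continuity on $[1,1+\alpha]$ via $2\alpha-\tau\in(-1,0)$, the endpoint evaluations $g(1)>0$ and $g(1+\alpha)<0$, existence by the intermediate value theorem, and uniqueness via Lemma~\ref{ftaualphaderiv}. Your explicit verification that case (b) of Lemma~\ref{ftaualphaderiv} applies on all of $(1,1+\alpha)$ because $2\alpha<1<\tau$ when $\alpha<\tfrac12$ is a useful clarification that the paper leaves implicit.
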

\begin{proof}
Suppose $1 \leq \tau \leq 1+ \alpha$. Hence, $ -1-\alpha \leq - \tau \leq -1$, and thus 
$ -1 < -1+\alpha \leq 2\alpha - \tau \leq 2 \alpha -1 < 0$. Therefore, 
\begin{equation*}
-1 < 2\alpha - \tau < 0.
\end{equation*}
Hence, for any given $\alpha \in (0, \tfrac{1}{2})$, the function $f(\tau;\alpha)$ is continuous for all $\tau \in [1, 1+ \alpha]$. Moreover, 
\begin{align*}
f(1; \alpha) - \Gamma(2 \alpha) \sin \pi \alpha &= \Gamma (2\alpha-1) \cdot (-\sin \pi \alpha) - \Gamma(2 \alpha) \sin \pi \alpha\\
&= \sin \pi \alpha \cdot \Gamma(2 \alpha-1) \{ -1 - (2\alpha-1)\}\\
&= 2 \alpha \cdot \sin \pi \alpha \cdot (-\Gamma(2 \alpha-1))\\
&> 0.
\end{align*}
On the other hand,
\begin{equation*}
f(1+\alpha; \alpha) - \Gamma(2 \alpha) \sin \pi \alpha = - \Gamma(2 \alpha) \sin \pi \alpha < 0.
\end{equation*}
The existence of $\tau_\alpha$ now follows directly from the Intermediate Value Theorem, and Lemma \ref{ftaualphaderiv} guarantees its uniqueness. \\
\end{proof}

\begin{lemma} \label{ftaualpharootbig}
Suppose $\tfrac{1}{2} \leq \alpha < 1$. Then, for any given $\alpha$, there exists a unique $\tau_\alpha$, depending only on $\alpha$, such that
\begin{equation*}
f(\tau_\alpha ; \alpha) - \Gamma(2 \alpha) \sin \pi \alpha =0  \quad \text{and} \quad 2\alpha  < \tau_\alpha < 1 + \alpha.
\end{equation*}
\end{lemma}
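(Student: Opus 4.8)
The plan is to mirror the proof of Lemma \ref{ftaualpharoot}, but since $2\alpha \geq 1$ now forces the value $\tau = 1$ used there out of the admissible range, I would work instead on the half-open interval $(2\alpha, 1+\alpha]$ and exploit the fact that $f(\,\cdot\,;\alpha)$ blows up at the left endpoint and vanishes at the right endpoint. First I would record the sign pattern on the open interval $(2\alpha, 1+\alpha)$: there $-\tau \in (-1-\alpha, -2\alpha)$, hence $2\alpha - \tau \in (\alpha-1, 0) \subset (-1,0)$, so $\Gamma(2\alpha-\tau)$ is finite and strictly negative; meanwhile $\tau+1 \in (2\alpha+1, 2+\alpha)$ gives $\Gamma(\tau+1) > 0$, and $\alpha - \tau \in (-1, -\alpha)$ gives $\sin\pi(\alpha-\tau) < 0$. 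Consequently $f(\tau;\alpha) > 0$ and $f(\,\cdot\,;\alpha)$ is continuous throughout $(2\alpha, 1+\alpha)$.

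Next I would analyse the two endpoints. As $\tau \searrow 2\alpha$ we have $\Gamma(2\alpha-\tau) \to -\infty$, while $\Gamma(\tau+1) \to \Gamma(2\alpha+1) > 0$ and $\sin\pi(\alpha-\tau) \to -\sin\pi\alpha < 0$, so $f(\tau;\alpha) \to +\infty$. In particular there is a point $\tau_1 \in (2\alpha, 1+\alpha)$ with $f(\tau_1;\alpha) > \Gamma(2\alpha)\sin\pi\alpha$, the right-hand side being a fixed positive number. At the other end, $f(1+\alpha;\alpha) = \Gamma(\alpha-1)\Gamma(\alpha+2)\sin(-\pi) = 0 < \Gamma(2\alpha)\sin\pi\alpha$. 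Applying the Intermediate Value Theorem to the continuous function $\tau \mapsto f(\tau;\alpha) - \Gamma(2\alpha)\sin\pi\alpha$ on $[\tau_1, 1+\alpha]$ yields $\tau_\alpha \in (\tau_1, 1+\alpha) \subset (2\alpha, 1+\alpha)$ with $f(\tau_\alpha;\alpha) = \Gamma(2\alpha)\sin\pi\alpha$, i.e. $T_s = T_B$ at $\xi = 0$, $\tau = \tau_\alpha$; this $\tau_\alpha$ depends only on $\alpha$ since both $f(\,\cdot\,;\alpha)$ and $\Gamma(2\alpha)\sin\pi\alpha$ do.

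Uniqueness then follows immediately from Lemma \ref{ftaualphaderiv}(b): since $0 < \alpha < 1$ and $2\alpha < \tau < 1+\alpha$, we have $\partial_\tau f(\tau;\alpha) < 0$, so $f(\,\cdot\,;\alpha)$ is strictly decreasing on $(2\alpha, 1+\alpha)$ and meets the level $\Gamma(2\alpha)\sin\pi\alpha$ at most once. Writing $\alpha_c := \tau_\alpha - 1$, the bounds $2\alpha < \tau_\alpha < 1+\alpha$ together with $2\alpha \geq 1$ give $0 < \alpha_c < \alpha$, which is the form of the solution used in Chapters \ref{ChapterHigherReg} and \ref{GenSymbolbig}.

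I expect no serious obstacle: the only slightly delicate point is justifying the blow-up of $f$ at $\tau = 2\alpha$, which is a one-line use of $\Gamma(x) \to -\infty$ as $x \to 0^-$ combined with the signs of the remaining (bounded, nonzero) factors, and confirming that the hypothesis of Lemma \ref{ftaualphaderiv}(b) covers the whole open interval $(2\alpha, 1+\alpha)$ for every $\alpha \in [\tfrac12, 1)$, which it plainly does. Everything else is the same elementary continuity-and-monotonicity argument as in Lemma \ref{ftaualpharoot}.
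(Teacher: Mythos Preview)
Your proposal is correct and follows essentially the same route as the paper: both arguments establish continuity and positivity of $f(\,\cdot\,;\alpha)$ on $(2\alpha,1+\alpha)$, use the blow-up at $\tau\searrow 2\alpha$ to find a point where $f$ exceeds $\Gamma(2\alpha)\sin\pi\alpha$ (the paper does this by evaluating at $\tau=2\alpha+\delta$ for small $\delta$, you do it via the limit and an intermediate point $\tau_1$), compute $f(1+\alpha;\alpha)-\Gamma(2\alpha)\sin\pi\alpha<0$, apply the Intermediate Value Theorem, and then invoke Lemma~\ref{ftaualphaderiv}(b) for uniqueness. The only cosmetic difference is your explicit observation that $f(1+\alpha;\alpha)=0$ rather than simply $f(1+\alpha;\alpha)-\Gamma(2\alpha)\sin\pi\alpha<0$, and your added remark translating $\tau_\alpha$ into $\alpha_c$.
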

\begin{proof}
Choose any $\delta$ such that $0 < \delta < 1- \alpha$. \\

Further, assume that $\tau$ satisfies $2\alpha + \delta \leq \tau \leq 1+ \alpha$. Then, 
$ -1-\alpha \leq - \tau \leq -2\alpha - \delta$ and 
$-1 <  -1+\alpha \leq 2\alpha - \tau \leq -\delta <0$. Therefore,
\begin{equation*}
-1 < 2\alpha - \tau < 0.
\end{equation*}
Hence, for any given $\alpha \in [\tfrac{1}{2},1)$, the function $f(\tau;\alpha)$ is continuous for all $\tau \in [2\alpha + \delta, 1+ \alpha]$. Moreover, 
\begin{align*}
f(2\alpha & + \delta; \alpha)  - \Gamma(2 \alpha) \sin \pi \alpha \\
&= \Gamma (-\delta) \cdot \Gamma(2\alpha + \delta + 1) \cdot \sin \pi (-\alpha - \delta) - \Gamma(2 \alpha) \sin \pi \alpha\\
&= (-\Gamma (-\delta)) \cdot \Gamma(2\alpha + \delta + 1) \cdot \sin \pi (\alpha + \delta) - \Gamma(2 \alpha) \sin \pi \alpha\\
&> 0, \quad \text{for sufficiently small } \delta.
\end{align*}

On the other hand,
\begin{equation*}
f(1+ \alpha; \alpha) - \Gamma(2 \alpha) \sin \pi \alpha = - \Gamma(2 \alpha) \sin \pi \alpha < 0.
\end{equation*}
The existence of $\tau_\alpha$ now follows directly from the Intermediate Value Theorem, and Lemma \ref{ftaualphaderiv} guarantees its uniqueness. \\
\end{proof}

\begin{lemma} \label{lemDxGammaMod2}
Suppose $z_0 = x + i y, \, z_1 = x + a + iy$ where $a,x,y \in \mathbb{R}$. Then
\begin{equation*}
\dfrac{\partial}{\partial x}  \bigg | \dfrac{\Gamma(z_0)}{\Gamma(z_1)}\bigg |^2  = \bigg | \dfrac{\Gamma(z_0)}{\Gamma(z_1)}\bigg |^2 \big \{ \psi(z_0) - \psi(z_1) + \psi(\overline{z_0}) - \psi(\overline{z_1})\big \}.
\end{equation*}
\end{lemma}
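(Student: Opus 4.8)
The statement to prove, Lemma \ref{lemDxGammaMod2}, is a routine differentiation identity, so the plan is short and computational rather than structural.

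First I would write $\left|\Gamma(z_0)/\Gamma(z_1)\right|^2$ as the product $\frac{\Gamma(z_0)}{\Gamma(z_1)}\cdot\overline{\left(\frac{\Gamma(z_0)}{\Gamma(z_1)}\right)}$, and then use the relation $\overline{\Gamma(z)} = \Gamma(\overline{z})$ recorded in Remark \ref{remgammapsi} (citing 6.1.23, \cite{AandS}) to rewrite the conjugate factor as $\frac{\Gamma(\overline{z_0})}{\Gamma(\overline{z_1})}$. Thus
\begin{equation*}
\left|\frac{\Gamma(z_0)}{\Gamma(z_1)}\right|^2 = \frac{\Gamma(z_0)\,\Gamma(\overline{z_0})}{\Gamma(z_1)\,\Gamma(\overline{z_1})}.
\end{equation*}

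Next I would differentiate with respect to $x$, noting that since $z_0 = x+iy$, $z_1 = x+a+iy$ with $a,y$ fixed, we have $\frac{\partial z_0}{\partial x} = \frac{\partial z_1}{\partial x} = \frac{\partial \overline{z_0}}{\partial x} = \frac{\partial \overline{z_1}}{\partial x} = 1$; here one uses that $\Gamma$ is holomorphic and $\Gamma(\overline z)$ is the composition with the antiholomorphic map $z\mapsto\overline z$, but because we are differentiating along the real direction the chain rule gives a clean factor of $1$ in every case. Then, applying $\Gamma'(z) = \Gamma(z)\psi(z)$ (again from Remark \ref{remgammapsi}, citing 6.3.1, \cite{AandS}) together with the product and quotient rules to the four gamma factors, the logarithmic derivative of the product/quotient yields exactly
\begin{equation*}
\frac{\partial}{\partial x}\log\left(\frac{\Gamma(z_0)\,\Gamma(\overline{z_0})}{\Gamma(z_1)\,\Gamma(\overline{z_1})}\right) = \psi(z_0) + \psi(\overline{z_0}) - \psi(z_1) - \psi(\overline{z_1}),
\end{equation*}
and multiplying through by $\left|\Gamma(z_0)/\Gamma(z_1)\right|^2$ gives the claimed formula. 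One should check that $z_0, z_1, \overline{z_0}, \overline{z_1}$ avoid the poles of $\Gamma$ (and of $\psi$) so that the expressions are well defined; this is implicit in the lemmas where it is applied, where the relevant arguments have positive or otherwise admissible real parts, so I would simply note that the identity holds wherever all four gamma values are finite and non-zero.

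There is no real obstacle here: the only mild subtlety is being careful that differentiating $\Gamma(\overline z)$ along the real axis is legitimate and produces $\psi(\overline z)$ rather than its conjugate — which it does, because moving $x$ moves $\overline{z_0} = x - iy$ by the same real increment. I would state this explicitly to forestall confusion, but otherwise the proof is two or three lines of bookkeeping with the digamma function.
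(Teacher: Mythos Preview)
Your proposal is correct and follows essentially the same approach as the paper: both use $\overline{\Gamma(z)}=\Gamma(\overline{z})$ to write $|\Gamma(z_0)/\Gamma(z_1)|^2 = \Gamma(z_0)\Gamma(\overline{z_0})/(\Gamma(z_1)\Gamma(\overline{z_1}))$ and then differentiate via $\Gamma'=\Gamma\psi$. The paper first records $\partial_x(\Gamma(z_0)/\Gamma(z_1)) = (\Gamma(z_0)/\Gamma(z_1))(\psi(z_0)-\psi(z_1))$ and then multiplies by the conjugate factor, whereas you take the logarithmic derivative of all four factors at once, but this is a cosmetic difference.
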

\begin{proof}
Since $\Gamma'(z)= \Gamma(z) \psi(z)$, it is easy to show that
\begin{equation*}
\dfrac{\partial}{\partial x}  \bigg ( \dfrac{\Gamma(z_0)}{\Gamma(z_1)} \bigg ) =  \dfrac{\Gamma(z_0)}{\Gamma(z_1)} \big \{ \psi(z_0) - \psi(z_1) \big \}.
\end{equation*}
Finally, since $\overline{\Gamma (z)} = \Gamma(\overline{z})$, we have
\begin{equation*}
\bigg | \dfrac{\Gamma(z_0)}{\Gamma(z_1)}\bigg |^2 = \dfrac{\Gamma(z_0)\Gamma(\overline{z_0})}{\Gamma(z_1)\Gamma(\overline{z_1})}
\end{equation*}
and the required result follows immediately. \\
\end{proof}

\begin{remark} \label{remDyGammaMod2}
Under the same hypotheses as Lemma \ref{lemDxGammaMod2}, we can similarly show that
\begin{equation*}
\dfrac{\partial}{\partial y}  \bigg | \dfrac{\Gamma(z_0)}{\Gamma(z_1)}\bigg |^2  = i \, \bigg | \dfrac{\Gamma(z_0)}{\Gamma(z_1)}\bigg |^2 \big \{ \psi(z_0) - \psi(z_1) - \psi(\overline{z_0}) + \psi(\overline{z_1})\big \}.
\end{equation*}
\end{remark}

\begin{lemma} \label{lemTBalphasigmaxi}
Suppose $0 < \alpha < 1, \, \sigma >0$ and $\xi > 0$. Then
\begin{equation*}
\bigg | \dfrac{\Gamma(\sigma + i \xi)}{\Gamma (\sigma + 2 \alpha + i \xi)}  \bigg | \leq  \dfrac{\Gamma(\sigma)}{\Gamma (\sigma + 2 \alpha)} .
\end{equation*}
\end{lemma}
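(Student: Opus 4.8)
The plan is to reduce the inequality to its value at $\xi = 0$ by a monotonicity argument. Set
\begin{equation*}
\phi(\xi) := \left| \frac{\Gamma(\sigma + i\xi)}{\Gamma(\sigma + 2\alpha + i\xi)} \right|^2 , \qquad \xi \geq 0 ,
\end{equation*}
which is a smooth, strictly positive function since $\sigma, \sigma + 2\alpha > 0$. It suffices to show that $\phi$ is (strictly) decreasing on $(0, \infty)$; then continuity on $[0,\infty)$ gives $\phi(\xi) \leq \phi(0) = \bigl(\Gamma(\sigma)/\Gamma(\sigma + 2\alpha)\bigr)^2$ for all $\xi > 0$, and taking square roots yields the claim.

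First I would differentiate $\phi$. Applying Remark \ref{remDyGammaMod2} with $z_0 = \sigma + i\xi$ and $z_1 = \sigma + 2\alpha + i\xi$ (i.e.\ $x = \sigma$, $a = 2\alpha$, $y = \xi$) gives
\begin{equation*}
\phi'(\xi) = i\, \phi(\xi)\,\bigl\{ \psi(z_0) - \psi(z_1) - \psi(\overline{z_0}) + \psi(\overline{z_1})\bigr\} .
\end{equation*}
Since $\overline{\psi(z)} = \psi(\overline{z})$ (Remark \ref{remgammapsi}), we have $\psi(z_0) - \psi(\overline{z_0}) = 2 i\, \operatorname{Im}\psi(\sigma + i\xi)$ and $\psi(\overline{z_1}) - \psi(z_1) = -2 i\, \operatorname{Im}\psi(\sigma + 2\alpha + i\xi)$, so that
\begin{equation*}
\phi'(\xi) = -2\, \phi(\xi)\,\bigl( \operatorname{Im}\psi(\sigma + i\xi) - \operatorname{Im}\psi(\sigma + 2\alpha + i\xi)\bigr) .
\end{equation*}
Thus everything comes down to the sign of $\operatorname{Im}\psi(\sigma + i\xi) - \operatorname{Im}\psi(\sigma + 2\alpha + i\xi)$.

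Next I would use the standard series $\psi(z) = -\gamma + \sum_{n=0}^{\infty}\bigl( \tfrac{1}{n+1} - \tfrac{1}{n+z}\bigr)$ (6.3.16, \cite{AandS}), valid for $z \notin \{0, -1, -2, \dots\}$, from which one reads off, for $x > 0$ and $\xi > 0$,
\begin{equation*}
\operatorname{Im}\psi(x + i\xi) = \sum_{n=0}^{\infty} \operatorname{Im}\left( -\frac{1}{n + x + i\xi}\right) = \sum_{n=0}^{\infty} \frac{\xi}{(n+x)^2 + \xi^2} .
\end{equation*}
For fixed $\xi > 0$ each summand $\xi/((n+x)^2 + \xi^2)$ is strictly decreasing in $x$, hence so is the sum; since $\sigma < \sigma + 2\alpha$ this gives $\operatorname{Im}\psi(\sigma + i\xi) > \operatorname{Im}\psi(\sigma + 2\alpha + i\xi)$, and therefore $\phi'(\xi) < 0$ for every $\xi > 0$. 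This is exactly the monotonicity needed, and the lemma follows.

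This argument is essentially routine: there is no substantial obstacle, since the analytic content has already been packaged into Remark \ref{remDyGammaMod2}. The only points requiring care are the bookkeeping in passing from the $\psi$-values in Remark \ref{remDyGammaMod2} to the real expression for $\phi'$ (using $\overline{\psi(z)} = \psi(\overline{z})$), and the termwise reading of $\operatorname{Im}\psi$ from its absolutely convergent series together with the elementary monotonicity in $x$ — both entirely straightforward. (If one prefers to avoid the series, the same conclusion follows from $\operatorname{Im}\psi(x+i\xi) = \int_0^\infty \tfrac{e^{-xt}}{1 - e^{-t}}\sin(\xi t)\,dt$ for $x>0$, which is manifestly decreasing in $x$ when $\xi > 0$.)
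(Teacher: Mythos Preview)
Your proof is correct and follows essentially the same approach as the paper: both differentiate $|\,\Gamma(\sigma+i\xi)/\Gamma(\sigma+2\alpha+i\xi)\,|^2$ in $\xi$ via Remark~\ref{remDyGammaMod2}, rewrite the result as $2\phi(\xi)\bigl(\operatorname{Im}\psi(z_1)-\operatorname{Im}\psi(z_0)\bigr)$, and conclude negativity from the fact that $\operatorname{Im}\psi(x+i\xi)$ is decreasing in $x$. The only cosmetic difference is that the paper packages this last monotonicity as Lemma~\ref{lemImpsi} (citing the same series from \cite{Oldham}), whereas you derive it inline from the series expansion of $\psi$.
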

\begin{proof}
To simplify the exposition, let us define
\begin{equation*}
R(\sigma + i \xi, 2 \alpha) := \dfrac{\Gamma(\sigma + i \xi)}{\Gamma (\sigma + 2 \alpha + i \xi)} .
\end{equation*}
Then, since $\overline{\Gamma(z)}=\Gamma(\overline{z})$ we have 
\begin{align*}
| R(\sigma + i \xi, 2 \alpha) |^2 :=  \dfrac{\Gamma(\sigma + i \xi)\Gamma(\sigma - i \xi)}{\Gamma (\sigma + 2 \alpha + i \xi) \Gamma (\sigma + 2 \alpha - i \xi)}.
\end{align*}
Further let
\begin{equation*}
z_0 = \sigma + i \xi; \quad z_1 = \sigma + 2 \alpha + i \xi.
\end{equation*} 

Then, since $\overline{\psi(z)}=\psi(\overline{z})$ (6.3.9, p.  259, \cite{AandS}), by Remark \ref{remDyGammaMod2},  
\begin{align*}
\frac{\partial}{\partial \xi} \, | R(\sigma + i \xi, 2 \alpha) |^2 &= i \, | R(\sigma + i \xi, 2 \alpha) |^2 \big \{ \psi(z_0) - \psi(z_1) - \psi(\overline{z_0}) + \psi(\overline{z_1})\big \} \\
&= i \, | R(\sigma + i \xi, 2 \alpha) |^2 \big \{ 2i \, \operatorname{Im} \psi(z_0) - 2i \,\operatorname{Im} \psi(z_1)\big \} \\
&=  2 \, | R(\sigma + i \xi, 2 \alpha) |^2 \big \{ \operatorname{Im} \psi(z_1) -  \,\operatorname{Im} \psi(z_0)\big \} \\
& < 0, \quad \text{by Lemma } \ref{lemImpsi}.
\end{align*}
This completes the proof of the lemma. \\
\end{proof}

\begin{lemma} \label{lemgammasigma}
Suppose $\alpha >0$ and $\sigma \geq \sigma_{min} >0$. Then 
\begin{equation*}
\dfrac{\Gamma(\sigma)}{\Gamma(\sigma + 2 \alpha)} \leq \dfrac{\Gamma(\sigma_{min})}{\Gamma(\sigma_{min} + 2 \alpha)}.
\end{equation*}

\end{lemma}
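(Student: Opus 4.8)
The plan is to show that the function $g(\sigma) := \Gamma(\sigma)/\Gamma(\sigma+2\alpha)$ is (strictly) decreasing on $(0,\infty)$; the claimed inequality is then immediate from $\sigma \ge \sigma_{min}$. First I would note that, since $\alpha > 0$ and $\Gamma(x) > 0$ for all $x > 0$, the quotient $g(\sigma)$ is well-defined and strictly positive for every $\sigma > 0$.

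Next, using $\Gamma'(z) = \Gamma(z)\psi(z)$ (see Remark \ref{remgammapsi}), a one-line computation of the logarithmic derivative gives
\begin{equation*}
\frac{g'(\sigma)}{g(\sigma)} = \psi(\sigma) - \psi(\sigma + 2\alpha), \qquad \sigma > 0.
\end{equation*}
By Remark \ref{remgammapsi}, $\psi'(x) > 0$ for $x > 0$, so $\psi$ is strictly increasing on $(0,\infty)$; since $2\alpha > 0$ this forces $\psi(\sigma) - \psi(\sigma+2\alpha) < 0$. Combined with $g(\sigma) > 0$, we conclude $g'(\sigma) < 0$ for all $\sigma > 0$, i.e. $g$ is strictly decreasing on $(0,\infty)$.

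Finally, applying monotonicity with $\sigma \ge \sigma_{min} > 0$ yields $g(\sigma) \le g(\sigma_{min})$, which is exactly the assertion of the lemma. There is no real obstacle here — the only points requiring (trivial) care are that all arguments of $\Gamma$ stay in $(0,\infty)$, so that positivity of $\Gamma$ and the monotonicity of $\psi$ quoted in Remark \ref{remgammapsi} apply directly, and no analytic continuation or complex-argument subtleties enter.
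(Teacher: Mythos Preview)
Your proof is correct and follows exactly the same approach as the paper: compute the derivative of $\Gamma(\sigma)/\Gamma(\sigma+2\alpha)$ using $\Gamma'=\Gamma\psi$, observe it equals the quotient times $\psi(\sigma)-\psi(\sigma+2\alpha)<0$ by the strict monotonicity of $\psi$ on $(0,\infty)$, and conclude the function is decreasing. Your write-up is, if anything, slightly more detailed than the paper's, which dispatches this in a single displayed line.
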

\begin{proof}
It is easy to see that 
\begin{equation*}
\dfrac{\partial}{\partial \sigma}\dfrac{\Gamma(\sigma)}{\Gamma(\sigma + 2 \alpha)} = \dfrac{\Gamma(\sigma)}{\Gamma(\sigma + 2 \alpha)} \big ( \psi(\sigma) - \psi(\sigma+ 2\alpha) \big ) < 0.
\end{equation*}
This completes the proof of the lemma. \\
\end{proof}

\begin{lemma} \label{lemfincr}
Suppose $\alpha >0$. Then the function
\begin{equation*}
f(\alpha):= \dfrac{\Gamma(2 \alpha + 1)}{(\Gamma(1 + \alpha ))^2}
\end{equation*}
strictly increases as $\alpha$ increases.
\end{lemma}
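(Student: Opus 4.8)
The plan is to prove monotonicity by logarithmic differentiation, reducing everything to the already-recorded fact (Remark \ref{remgammapsi}) that the digamma function $\psi$ is strictly increasing on $(0,\infty)$. Since $f(\alpha) = \Gamma(2\alpha+1)/(\Gamma(1+\alpha))^2$ is a smooth, strictly positive function of $\alpha$ on $(0,\infty)$, it suffices to show that $\log f$ is strictly increasing, i.e. that $(\log f)'(\alpha) > 0$.

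First I would compute the derivative using $\Gamma'(z) = \Gamma(z)\psi(z)$ together with the chain rule. For the numerator, $\frac{d}{d\alpha}\Gamma(2\alpha+1) = 2\Gamma(2\alpha+1)\psi(2\alpha+1)$, and for the denominator, $\frac{d}{d\alpha}(\Gamma(1+\alpha))^2 = 2(\Gamma(1+\alpha))^2\psi(1+\alpha)$. Hence
\begin{equation*}
f'(\alpha) = 2 f(\alpha)\big(\psi(2\alpha+1) - \psi(1+\alpha)\big),
\end{equation*}
or equivalently $(\log f)'(\alpha) = 2\big(\psi(2\alpha+1) - \psi(1+\alpha)\big)$.

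Next I would observe that for $\alpha > 0$ we have $2\alpha + 1 > \alpha + 1 > 0$, so that, by the strict monotonicity of $\psi$ on the positive axis recorded in Remark \ref{remgammapsi}, $\psi(2\alpha+1) > \psi(1+\alpha)$. Combined with $f(\alpha) > 0$, this gives $f'(\alpha) > 0$ for every $\alpha > 0$, which is exactly the assertion of the lemma.

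There is no serious obstacle here; the only points to be careful about are the correct application of the chain rule (the factor $2$ coming from differentiating $2\alpha+1$) and citing the monotonicity of $\psi$ rather than re-deriving it. If desired, one can also note in passing that $f(\alpha)$ is the analytic continuation of the central binomial coefficient $\binom{2\alpha}{\alpha}$, which makes the monotonicity statement intuitively clear, but this remark is not needed for the proof.
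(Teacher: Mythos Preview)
Your proof is correct and essentially identical to the paper's: both compute $f'(\alpha) = 2f(\alpha)\big(\psi(2\alpha+1) - \psi(1+\alpha)\big)$ via the identity $\Gamma'(z)=\Gamma(z)\psi(z)$ and then invoke the strict monotonicity of $\psi$ on $(0,\infty)$ from Remark~\ref{remgammapsi}.
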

\begin{proof}
It is easy to see that 
\begin{equation*}
\dfrac{d}{d\alpha} \dfrac{\Gamma(2 \alpha + 1)}{(\Gamma(1 + \alpha ))^2} = \dfrac{2 \, \Gamma(2 \alpha + 1)}{(\Gamma(1 + \alpha ))^2} \big ( \psi(1+2 \alpha) - \psi(1+\alpha) \big ) > 0.
\end{equation*}
This completes the proof of the lemma. \\
\end{proof}

\begin{lemma} \label{lemgdecr}
Suppose $0 < \alpha < 1$. Then the function
\begin{equation*}
g(\alpha):= -\dfrac{\Gamma(2\alpha)}{ \Gamma(\alpha-1)\Gamma(2+\alpha)}
\end{equation*}
strictly decreases as $\alpha$ increases.
\end{lemma}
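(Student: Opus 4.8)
The plan is to reduce the monotonicity of $g$ to a single sign computation for its logarithmic derivative. First I would simplify the expression for $g$ using the functional equation $\Gamma(z+1) = z\Gamma(z)$: writing $\Gamma(\alpha-1) = \Gamma(\alpha+1)/(\alpha(\alpha-1))$ and $\Gamma(2\alpha+1) = 2\alpha\,\Gamma(2\alpha)$ gives
\begin{equation*}
g(\alpha) = \frac{(1-\alpha)\,\Gamma(2\alpha+1)}{2\,\Gamma(\alpha+1)\,\Gamma(\alpha+2)},
\end{equation*}
which, together with the fact that $\Gamma$ is positive on $(0,2)\cup(2,3)$, shows in particular that $g(\alpha) > 0$ for all $\alpha \in (0,1)$. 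It therefore suffices to prove that $g'(\alpha)/g(\alpha) < 0$ on $(0,1)$.

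Differentiating $\log g$ and using $\Gamma'(z) = \Gamma(z)\psi(z)$ (Remark \ref{remgammapsi}), I obtain
\begin{equation*}
\frac{g'(\alpha)}{g(\alpha)} = -\frac{1}{1-\alpha} + 2\psi(2\alpha+1) - \psi(\alpha+1) - \psi(\alpha+2).
\end{equation*}
The key step is to control $\psi(2\alpha+1)$ from above. Since $\psi$ is concave on $(0,\infty)$ (Remark \ref{remgammapsi}), its graph lies below every tangent line; taking the tangent at the point $\alpha+2$ and evaluating at $2\alpha+1$, so that $2\alpha+1-(\alpha+2) = \alpha-1$, gives $\psi(2\alpha+1) \le \psi(\alpha+2) + (\alpha-1)\psi'(\alpha+2)$. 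Substituting this bound and using the recurrence $\psi(\alpha+2) = \psi(\alpha+1) + \tfrac{1}{\alpha+1}$ (6.3.5, p.\,258, \cite{AandS}) collapses the digamma terms to a telescoping difference and yields
\begin{equation*}
\frac{g'(\alpha)}{g(\alpha)} \le -\frac{1}{1-\alpha} + \frac{1}{\alpha+1} - 2(1-\alpha)\,\psi'(\alpha+2).
\end{equation*}

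Finally I would close the argument with elementary estimates valid for $\alpha \in (0,1)$: $\tfrac{1}{1-\alpha} \ge 1$, $\tfrac{1}{\alpha+1} \le 1$, and $2(1-\alpha)\,\psi'(\alpha+2) > 0$ (again by Remark \ref{remgammapsi}, since $\psi' > 0$). Hence $g'(\alpha)/g(\alpha) < -1 + 1 - 0 = 0$, so $g'(\alpha) < 0$ and $g$ is strictly decreasing. The only mildly delicate point is choosing the base point of the tangent-line inequality correctly so that the resulting bound telescopes to $\psi(\alpha+2) - \psi(\alpha+1) = \tfrac{1}{\alpha+1}$; everything else is routine. (If one prefers to avoid the concavity trick, the Legendre duplication formula for the digamma function gives $2\psi(2\alpha+1) = \psi(\alpha+\tfrac12) + \psi(\alpha+1) + 2\log 2$, whence $g'/g = \psi(\alpha+\tfrac12) - \psi(\alpha+1) - \tfrac{1}{\alpha+1} + 2\log 2 - \tfrac{1}{1-\alpha}$, and one concludes from $\psi(\alpha+\tfrac12) < \psi(\alpha+1)$, $\tfrac{1}{\alpha+1} \ge \tfrac12$, $\tfrac{1}{1-\alpha} \ge 1$ and $2\log 2 < \tfrac32$.)
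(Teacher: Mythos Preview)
Your proof is correct, and your main argument via the concavity tangent-line bound is genuinely different from the paper's. The paper differentiates $g$ in its original form and is left with showing $\psi(2+\alpha) - 2\psi(2\alpha) + \psi(\alpha-1) > 0$; it then applies the duplication formula $2\psi(2\alpha) = \psi(\alpha) + \psi(\alpha+\tfrac12) + \log 4$, uses $\psi(z+1) = \psi(z) + 1/z$ to absorb $\psi(\alpha-1)$, bounds the remaining digamma terms by monotonicity ($\psi(2+\alpha) > \psi(2)$, $\psi(\alpha+\tfrac12) < \psi(\tfrac32)$), and finally evaluates $\psi(2) - \psi(\tfrac32) - \log 4 = -1$ explicitly to get $> \alpha/(1-\alpha) > 0$. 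Your concavity trick sidesteps both the duplication formula and the exact value $\psi(2) - \psi(\tfrac32)$, trading them for the single inequality $\psi(2\alpha+1) \le \psi(\alpha+2) + (\alpha-1)\psi'(\alpha+2)$, which telescopes cleanly; this is arguably more self-contained. Your parenthetical alternative, by contrast, is essentially the paper's route (just started from the rewritten $g$), so you have in fact recovered both arguments.
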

\begin{proof}
It is easy to see that
\begin{equation*}
\dfrac{dg}{d\alpha} = \dfrac{\Gamma(2\alpha)}{ \Gamma(\alpha-1)\Gamma(2+\alpha)} \cdot \big \{ \psi(2+\alpha) - 2 \psi(2\alpha) + \psi(\alpha-1) \big \}. \\
\end{equation*}

Since $\Gamma(\alpha-1) < 0$ for $0 < \alpha < 1$, it is enough to show that 
$ \psi(2+\alpha) - 2 \psi(2\alpha) + \psi(\alpha-1) > 0$. We note the identity
\begin{equation*}
\psi(z+1) = \psi(z) +\dfrac{1}{z} \quad \text{(6.3.5, p. 258, \cite{AandS})},
\end{equation*}
and that $\psi(x)$ is increasing for $x > 0$. 
Since $2 \, \psi(2\alpha) = \psi(\alpha) + \psi(\alpha + \tfrac{1}{2}) + \log 4$, see 6.3.8, p. 259, \cite{AandS}, we have
\begin{align*}
\psi(2+\alpha) - 2 \psi(2\alpha) + \psi(\alpha-1) & = \psi(2+\alpha) -  [\psi(\alpha) + \psi(\alpha + \tfrac{1}{2}) + \log 4 ] + \psi(\alpha -1) \\
& = \psi(2+\alpha) + \dfrac{1}{1-\alpha} - \psi(\alpha + \tfrac{1}{2}) - \log 4 \\
& > \psi(2) + \dfrac{1}{1-\alpha} - \psi(\tfrac{3}{2}) - \log 4.
\end{align*}

But, see  6.3.2, 6.3.3, p. 258, \cite{AandS}, 
\begin{align*}
\psi(2) - \psi(\tfrac{3}{2}) - \log 4 & = [ \psi(1) +1 ] - [\psi(\tfrac{1}{2}) +2] - \log 4 \\
& = [ \psi(1) +1] - [ \psi(1) - \log 4 +2] - \log 4 \\
& = -1.
\end{align*}

Hence, finally,
\begin{align*}
\psi(2+\alpha) - 2 \psi(2\alpha) + \psi(\alpha-1) & > \dfrac{1}{1-\alpha} -1 \\
& = \dfrac{\alpha}{1 - \alpha} \\
& > 0, 
\end{align*}
for $0 < \alpha < 1$. This completes the proof of the lemma. \\
\end{proof}

\begin{lemma} \label{lemSabsarg}
Suppose $a,b \in \mathbb{R}$. Define
\begin{equation*}
S(a,b ; \xi) := \dfrac{\sin[\pi (a-i\xi)]}{\sin[\pi (b-i\xi)]}.
\end{equation*}

Then
\begin{equation*}
|S(a,b ; \xi)| = \bigg ( \dfrac{\cosh 2 \pi \xi - \cos 2 \pi a}{\cosh 2 \pi \xi - \cos 2 \pi b}\bigg)^{\tfrac{1}{2}}.
\end{equation*}
\end{lemma}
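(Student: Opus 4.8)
The plan is to compute $|\sin[\pi(a-i\xi)]|^2$ explicitly and then form the quotient, the whole argument being a short chain of standard identities. First I would expand the sine of a complex argument via the addition formula $\sin(x+iy) = \sin x \cosh y + i\cos x \sinh y$ (see, e.g., 4.21.37, \cite{NIST}), applied with $x = \pi a$ and $y = -\pi\xi$. Since $\cosh$ is even and $\sinh$ is odd, this gives
\[
\sin[\pi(a-i\xi)] = \sin(\pi a)\cosh(\pi\xi) - i\cos(\pi a)\sinh(\pi\xi),
\]
and hence
\[
|\sin[\pi(a-i\xi)]|^2 = \sin^2(\pi a)\cosh^2(\pi\xi) + \cos^2(\pi a)\sinh^2(\pi\xi).
\]

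Next I would simplify the right-hand side using $\cosh^2(\pi\xi) = 1 + \sinh^2(\pi\xi)$ and $\sin^2(\pi a) + \cos^2(\pi a) = 1$, which reduces it to $\sinh^2(\pi\xi) + \sin^2(\pi a)$. Then the half-angle identities $\sinh^2(\pi\xi) = \tfrac12(\cosh 2\pi\xi - 1)$ and $\sin^2(\pi a) = \tfrac12(1 - \cos 2\pi a)$ collapse this to $\tfrac12(\cosh 2\pi\xi - \cos 2\pi a)$. The identical computation with $a$ replaced by $b$ yields $|\sin[\pi(b-i\xi)]|^2 = \tfrac12(\cosh 2\pi\xi - \cos 2\pi b)$. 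Forming $|S(a,b;\xi)|^2 = |\sin[\pi(a-i\xi)]|^2 / |\sin[\pi(b-i\xi)]|^2$, the common factors of $\tfrac12$ cancel, and taking the positive square root gives the claimed formula.

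There is no genuine obstacle here; the only point deserving a word of care is that the quotient be well-defined, i.e. that $\cosh 2\pi\xi - \cos 2\pi b \neq 0$, equivalently $\sin[\pi(b-i\xi)] \neq 0$. This fails only when $b \in \mathbb{Z}$ and $\xi = 0$, a configuration excluded in all the settings where $S(a,b;\xi)$ is actually invoked (where the relevant arguments are kept away from the integers, or $\xi \neq 0$), so the statement stands as given.
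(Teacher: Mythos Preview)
Your proof is correct and follows essentially the same route as the paper: expand $\sin[\pi(a-i\xi)]$ via the addition formula from 4.21.37, \cite{NIST}, compute the modulus squared, and reduce to $\tfrac12(\cosh 2\pi\xi - \cos 2\pi a)$ using double-angle identities. The only cosmetic difference is that the paper substitutes $\cosh^2(\pi\xi) = \tfrac12(\cosh 2\pi\xi + 1)$ and $\sinh^2(\pi\xi) = \tfrac12(\cosh 2\pi\xi - 1)$ directly rather than first passing through $\sinh^2(\pi\xi) + \sin^2(\pi a)$, and it does not comment on the well-definedness of the quotient; your added remark on that point is a reasonable precaution.
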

\begin{proof}
From 4.21.37, \cite{NIST},
\begin{equation*}
\sin[\pi (a-i\xi)] = \cosh \pi \xi \sin \pi a - i \cos \pi a \sinh \pi \xi.
\end{equation*}
Therefore $|\sin[\pi (a-i\xi)]|^2$
\begin{align*}
&= \cosh^2 \pi \xi \, \sin^2 \pi a+ \cos^2 \pi a \, \sinh^2 \pi \xi \\
& = \tfrac{1}{2} \, (\cosh 2 \pi \xi +1) \, \sin^2 \pi a + \tfrac{1}{2} \, (\cosh 2 \pi \xi -1) \, \cos^2 \pi a \\
& = \tfrac{1}{2} \, ( \cosh 2 \pi \xi - \cos 2 \pi a).
\end{align*}
Hence
\begin{equation*}
|S(a,b ; \xi)| = \bigg ( \dfrac{\cosh 2 \pi \xi - \cos 2 \pi a}{\cosh 2 \pi \xi - \cos 2 \pi b}\bigg)^{\tfrac{1}{2}}.
\end{equation*} \\

\end{proof}

\begin{lemma} \label{lemImpsi}
Suppose  $\xi > 0$ and $\sigma > 0$. Then, for fixed $\xi$,
\begin{align*}
\operatorname{Im} \, \psi (\sigma + i \xi) \quad \text{decreases as } \sigma \text{ increases}. \end{align*}
\end{lemma}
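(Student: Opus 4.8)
The statement is equivalent to showing that $\frac{\partial}{\partial\sigma}\operatorname{Im}\psi(\sigma+i\xi)<0$ for all $\sigma,\xi>0$. Since $\psi$ is holomorphic on the right half-plane, $\frac{\partial}{\partial\sigma}\psi(\sigma+i\xi)=\psi'(\sigma+i\xi)$, and $\operatorname{Im}$ commutes with the real partial derivative, so it suffices to prove
\begin{equation*}
\operatorname{Im}\psi'(\sigma+i\xi)<0\qquad(\sigma>0,\ \xi>0).
\end{equation*}
First I would invoke the standard series representation (see 5.15.1, \cite{NIST})
\begin{equation*}
\psi'(z)=\sum_{n=0}^{\infty}\frac{1}{(z+n)^2},
\end{equation*}
valid for $z\notin\{0,-1,-2,\dots\}$, with the sum converging locally uniformly on $\mathbb{C}\setminus(-\mathbb{N}_0)$, which legitimises applying it at $z=\sigma+i\xi$ and taking imaginary parts term by term.

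The key step is then an elementary computation of the imaginary part of each summand. Writing $t_n:=\sigma+n>0$, one has
\begin{equation*}
\frac{1}{(t_n+i\xi)^2}=\frac{(t_n-i\xi)^2}{(t_n^2+\xi^2)^2}=\frac{t_n^2-\xi^2-2it_n\xi}{(t_n^2+\xi^2)^2},
\end{equation*}
so that $\operatorname{Im}\dfrac{1}{(t_n+i\xi)^2}=\dfrac{-2t_n\xi}{(t_n^2+\xi^2)^2}$. Since $t_n>0$ and $\xi>0$, every term is strictly negative, whence
\begin{equation*}
\operatorname{Im}\psi'(\sigma+i\xi)=\sum_{n=0}^{\infty}\frac{-2(\sigma+n)\xi}{\big((\sigma+n)^2+\xi^2\big)^2}<0.
\end{equation*}
This gives $\frac{\partial}{\partial\sigma}\operatorname{Im}\psi(\sigma+i\xi)<0$, so $\operatorname{Im}\psi(\sigma+i\xi)$ is strictly decreasing in $\sigma$ for each fixed $\xi>0$, as claimed.

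There is essentially no serious obstacle here: the only point requiring a word of care is the justification of the term-by-term passage to the imaginary part, which follows at once from the locally uniform convergence of the series for $\psi'$ away from its poles (alternatively one could quote the integral representation $\psi'(z)=\int_0^\infty \frac{t e^{-zt}}{1-e^{-t}}\,dt$ for $\operatorname{Re}z>0$ and differentiate under the integral, but the series argument is cleaner). If one prefers to avoid citing the trigamma series, an equivalent route is to use $\psi(z)=-\gamma+\sum_{n=0}^{\infty}\big(\tfrac1{n+1}-\tfrac1{z+n}\big)$, differentiate, and proceed identically; I would use whichever representation matches the conventions already fixed via \cite{NIST} elsewhere in the chapter.
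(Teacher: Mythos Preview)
Your proof is correct and essentially the same as the paper's. The only cosmetic difference is the order of operations: the paper starts from the series $\operatorname{Im}\psi(\sigma+i\xi)=\sum_{j\ge 0}\xi/((j+\sigma)^2+\xi^2)$ (cited from \cite{Oldham}) and differentiates term by term in $\sigma$, whereas you start from the trigamma series for $\psi'$ and take imaginary parts term by term; both routes produce the identical summand $-2(\sigma+n)\xi/((\sigma+n)^2+\xi^2)^2$.
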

\begin{proof}
From  Section 44.11, p. 455, \cite{Oldham},
\begin{equation*}
\operatorname{Im} \, \psi (\sigma + i \xi) = \sum^\infty_{j=0} \dfrac{\xi}{( j+\sigma)^2+\xi^2}.
\end{equation*}
Since
\begin{equation*}
\dfrac{\partial}{\partial \sigma} \bigg ( \dfrac{\xi}{( j+\sigma)^2+\xi^2} \bigg) = \dfrac{-2(j+\sigma) \xi}{[( j+\sigma)^2+\xi^2]^2} <0, \quad j=0,1,2, \dots,
\end{equation*}
the required result follows immediately. \\
\end{proof}


\begin{lemma} \label{argBarctan}
Suppose $\gamma >0, \sigma >0$ and $\xi >0$. Then
\begin{equation} \label{argbetasigma}
\arg B(\sigma + i \xi, \gamma) = \sum^\infty_{n=0} \bigg ( \arctan \dfrac{\xi}{\sigma + \gamma +n} -\arctan \dfrac{\xi}{\sigma + n} \bigg) + 2 \pi k,
\end{equation}
for some $k \in \mathbb{Z}$.
\end{lemma}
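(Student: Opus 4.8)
The plan is to start from the Weierstrass product for the Gamma function and compute the argument of $B(\sigma+i\xi,\gamma) = \Gamma(\sigma+i\xi)\Gamma(\gamma)/\Gamma(\sigma+\gamma+i\xi)$ termwise. Recall the standard infinite product (see e.g. 5.9.1--5.9.2, \cite{NIST} or 6.1.3, \cite{AandS}):
\begin{equation*}
\frac{1}{\Gamma(z)} = z e^{\gamma_E z} \prod_{n=1}^\infty \left(1 + \frac{z}{n}\right) e^{-z/n},
\end{equation*}
where $\gamma_E$ is the Euler--Mascheroni constant. Since $\gamma>0$ is a fixed real number, $\Gamma(\gamma)$ is a positive real number and contributes nothing to the argument. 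Writing $z_0 = \sigma+i\xi$ and $z_1 = \sigma+\gamma+i\xi$, I would form the ratio of the two Weierstrass products; the exponential prefactors $e^{\gamma_E z}$ and the linear correction terms $e^{-z/n}$ combine into a factor whose exponent is a \emph{real} multiple of $(z_1 - z_0) = \gamma$, hence real, and so again contributes nothing to $\arg$. What survives is
\begin{equation*}
B(\sigma+i\xi,\gamma) = (\text{positive real}) \cdot \frac{z_1}{z_0} \prod_{n=1}^\infty \frac{1 + z_1/n}{1 + z_0/n} = (\text{positive real}) \cdot \prod_{n=0}^\infty \frac{\sigma+\gamma+n+i\xi}{\sigma+n+i\xi}.
\end{equation*}

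The next step is to take the argument of this product. Since each factor $\frac{\sigma+\gamma+n+i\xi}{\sigma+n+i\xi}$ has positive real part (both numerator and denominator lie in the right half-plane because $\sigma,\gamma,n\geq 0$ and $\sigma>0$), each term's argument lies in $(-\pi/2,\pi/2)$, and
\begin{equation*}
\arg\left(\frac{\sigma+\gamma+n+i\xi}{\sigma+n+i\xi}\right) = \arctan\frac{\xi}{\sigma+\gamma+n} - \arctan\frac{\xi}{\sigma+n}.
\end{equation*}
Summing over $n$ gives the stated formula up to the additive ambiguity $2\pi k$, $k\in\mathbb{Z}$, which is exactly what the $+2\pi k$ in \eqref{argbetasigma} records. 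I would note that the series on the right converges: for large $n$ the general term is $O(\xi\gamma/n^2)$, so absolute convergence is immediate, which also justifies interchanging $\arg$ with the infinite product (the partial products converge to a nonzero limit, and $\arg$ is continuous away from the negative real axis, where none of the partial products lie since they all have positive real part).

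The only mild obstacle is bookkeeping with the branch of $\arg$: the identity $\arg(wz) \equiv \arg w + \arg z \pmod{2\pi}$ holds unconditionally, so passing from the product to the sum is painless provided one is content with the $2\pi k$ indeterminacy, which the statement already builds in. A second small point is confirming that the non-product factors (the $z\mapsto e^{\gamma_E z}$ pieces, the $e^{-z/n}$ pieces, and $\Gamma(\gamma)$) really do collapse to a positive real constant; this follows because their combined contribution to $\log B$ is $\gamma_E\cdot\gamma + \sum_n(\text{real})\cdot\gamma + \log\Gamma(\gamma)$, all real. Hence the argument of $B(\sigma+i\xi,\gamma)$ is carried entirely by the rational product, and the lemma follows. (An alternative, essentially equivalent, route is to differentiate $\log B$ in $\xi$ using $\partial_\xi \arg\Gamma(\sigma+i\xi) = \operatorname{Im}\psi(\sigma+i\xi) = \sum_{n\geq0}\xi/((\sigma+n)^2+\xi^2)$ from Lemma \ref{lemImpsi}'s proof, integrate from $\xi=0$ where $\arg B = 0$, and recognize the antiderivative of each summand as $\arctan\frac{\xi}{\sigma+n}$; I would mention this as a cross-check but would present the Weierstrass-product argument as the main proof since it is cleanest.)
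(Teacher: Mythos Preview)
Your proof is correct and reaches the same conclusion, but by a somewhat different route than the paper. The paper quotes the handbook formula (6.1.27 in \cite{AandS})
\[
\arg\Gamma(\sigma+i\xi)\equiv \xi\,\psi(\sigma)+\sum_{n\geq0}\Bigl(\tfrac{\xi}{\sigma+n}-\arctan\tfrac{\xi}{\sigma+n}\Bigr),
\]
applies it to $\Gamma(\sigma+i\xi)$ and $\Gamma(\sigma+\gamma+i\xi)$ separately, and then invokes the series identity $\psi(x)-\psi(y)=\sum_{n\geq0}\bigl(\tfrac{1}{y+n}-\tfrac{1}{x+n}\bigr)$ to cancel the $\xi\psi(\cdot)$ terms against the $\xi/(\cdot+n)$ terms, leaving only the $\arctan$ differences. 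You instead go one level deeper, to the Weierstrass product itself, and observe that because $z_1-z_0=\gamma$ is real the exponential corrections $e^{\gamma_E z}$ and $e^{-z/n}$ contribute a positive real factor to the ratio, so no separate cancellation identity is needed. Your approach is more self-contained and makes the reason for the cancellation transparent; the paper's is shorter once one is willing to cite the A\&S formula. One tiny remark: your parenthetical justification that each factor $\frac{\sigma+\gamma+n+i\xi}{\sigma+n+i\xi}$ has positive real part ``because both numerator and denominator lie in the right half-plane'' is not a valid implication in general; here it is true because both arguments lie in $(0,\pi/2)$ so their difference lies in $(-\pi/2,\pi/2)$, or equivalently by direct computation of the real part. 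This does not affect the argument.
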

\begin{proof}
From  6.1.27, p. 256, \cite{AandS},
\begin{equation*}
\arg \Gamma (\sigma + i \xi) \equiv \xi \, \psi(\sigma) +\sum^\infty_{n=0} \bigg ( \dfrac{\xi}{\sigma + n}- \arctan  \dfrac{\xi}{\sigma + n}\bigg).
\end{equation*}
Now
\begin{align} 
\arg B(\sigma + i \xi, \gamma) &= \arg \bigg ( \dfrac{\Gamma(\sigma + i \xi) \, \Gamma (\gamma)}{\Gamma(\sigma + \gamma + i \xi) } \bigg ) \nonumber \\
&\equiv \arg \Gamma(\sigma + \gamma - i \xi) + \arg \Gamma(\sigma +  i \xi) \nonumber \\
&\equiv -\xi \, \psi(\sigma+ \gamma ) - \sum^\infty_{n=0} \bigg ( \dfrac{\xi}{\sigma + \gamma +n}- \arctan  \dfrac{\xi}{\sigma + \gamma + n}\bigg)  \label{Bsigmagamma}\\
& \quad +\xi \, \psi(\sigma) +\sum^\infty_{n=0} \bigg ( \dfrac{\xi}{\sigma + n}- \arctan  \dfrac{\xi}{\sigma + n}\bigg).  \nonumber 
\end{align}

We note from  8.363 3, p. 903, \cite{GR}, that
\begin{equation*}
\psi(x) - \psi(y) = \sum^\infty_{n=0} \bigg ( \dfrac{1}{y+n} - \dfrac{1}{x+n}\bigg ).
\end{equation*}
Using this result, with equation \eqref{Bsigmagamma}, we can write
\begin{equation*} 
\arg B(\sigma + i \xi, \gamma) = \sum^\infty_{n=0} \bigg ( \arctan \dfrac{\xi}{\sigma + \gamma +n} -\arctan \dfrac{\xi}{\sigma + n} \bigg) + 2 \pi k,
\end{equation*}
as required. \\
\end{proof}

\begin{lemma} \label{Sgamsigxi}
Suppose  $ 0 < \gamma < 1, \, \sigma > 0$ and $\xi >0$. Define
\begin{equation*}
S(\gamma, \sigma, \xi) := \sum^\infty_{n=0} \bigg ( \arctan \dfrac{\xi}{\sigma+\gamma+n} - \arctan \dfrac{\xi}{\sigma+n}\bigg).
\end{equation*}
Then
\begin{equation*}
-\dfrac{\pi}{2} < - \arctan \dfrac{\xi}{\sigma} < S(\gamma, \sigma, \xi) < \arctan \dfrac{\xi}{\sigma+\gamma} - \arctan \dfrac{\xi}{\sigma} < 0.
\end{equation*}
\end{lemma}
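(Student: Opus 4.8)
The plan is to prove the four inequalities in the chain one at a time, working inward from the extremes. Write the general summand as
\[
a_n := \arctan\frac{\xi}{\sigma+\gamma+n} - \arctan\frac{\xi}{\sigma+n}, \qquad n = 0,1,2,\dots
\]
Since $\arctan$ is strictly increasing and $\frac{\xi}{\sigma+\gamma+n} < \frac{\xi}{\sigma+n}$ (because $\gamma,\xi>0$), every $a_n$ is strictly negative; and by the mean value theorem applied to $\arctan$ (or, equivalently, the subtraction formula $\arctan x - \arctan y = \arctan\frac{x-y}{1+xy}$, which applies here as $xy>0$) one gets $|a_n| = O(n^{-2})$, so $S(\gamma,\sigma,\xi)=\sum_{n\ge0}a_n$ converges absolutely. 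The partial sums are therefore strictly decreasing, and since $a_n<0$ for all $n\ge1$ we obtain $S < a_0 = \arctan\frac{\xi}{\sigma+\gamma} - \arctan\frac{\xi}{\sigma}$; moreover $a_0<0$ by the same monotonicity observation. This settles the right-hand half of the chain.

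For the left-hand half I would introduce the auxiliary function $g(t) := \arctan\frac{\xi}{\sigma+t}$ on $[0,\infty)$. It is smooth and strictly decreasing with $g(0)=\arctan\frac{\xi}{\sigma}$, $\lim_{t\to\infty}g(t)=0$, and derivative $g'(t) = -\xi/\big((\sigma+t)^2+\xi^2\big) < 0$, so $g' \in L^1(0,\infty)$. By the fundamental theorem of calculus $a_n = g(n+\gamma)-g(n)=\int_n^{n+\gamma}g'(t)\,dt$, hence, splitting $\int_n^{n+1}=\int_n^{n+\gamma}+\int_{n+\gamma}^{n+1}$ (legitimate since $0<\gamma<1$ and $g'\in L^1$),
\[
S = \sum_{n\ge0}\int_n^{n+\gamma} g'(t)\,dt = \int_0^\infty g'(t)\,dt \;-\; \sum_{n\ge0}\int_{n+\gamma}^{n+1} g'(t)\,dt .
\]
Each interval $[n+\gamma,n+1]$ is nonempty and $g'<0$ there, so the subtracted sum is strictly negative, giving $S > \int_0^\infty g'(t)\,dt = g(\infty)-g(0) = -\arctan\frac{\xi}{\sigma}$. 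Finally $-\arctan\frac{\xi}{\sigma} > -\frac{\pi}{2}$ because $\arctan$ takes values in $\big(-\frac{\pi}{2},\frac{\pi}{2}\big)$, which closes the chain.

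The argument is entirely elementary and I do not anticipate a genuine obstacle; the only places needing a word of care are the uniform $O(n^{-2})$ decay that justifies treating $S$ as an absolutely convergent series and the rearrangement/splitting of the integrals, both of which are immediate consequences of $g'\in L^1(0,\infty)$. I would dispatch these in a line or two rather than expanding them.
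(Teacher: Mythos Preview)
Your proof is correct. The upper bound argument is identical to the paper's: each summand $a_n$ is strictly negative, so $S<a_0$.

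For the lower bound the paper takes a slightly more direct route than your integral decomposition. Since $0<\gamma<1$ and $\arctan$ is increasing, one has termwise
\[
\arctan\frac{\xi}{\sigma+\gamma+n} - \arctan\frac{\xi}{\sigma+n} \;>\; \arctan\frac{\xi}{\sigma+1+n} - \arctan\frac{\xi}{\sigma+n},
\]
and summing the right-hand side telescopes immediately to $-\arctan\frac{\xi}{\sigma}$. Your argument via $g(t)=\arctan\frac{\xi}{\sigma+t}$ and the splitting $\int_n^{n+1}=\int_n^{n+\gamma}+\int_{n+\gamma}^{n+1}$ is really the same comparison in integral disguise: the ``missing'' pieces $\int_{n+\gamma}^{n+1}g'(t)\,dt$ are exactly the differences $\arctan\frac{\xi}{\sigma+1+n}-\arctan\frac{\xi}{\sigma+\gamma+n}$. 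The telescoping version avoids the need to discuss absolute convergence or integrability of $g'$, so it is a line shorter, but there is no substantive difference.
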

\begin{proof}
Since $0< \gamma < 1$, we can determine a lower bound for $S(\gamma, \sigma, \xi)$ by writing
\begin{align*}
S(\gamma, \sigma, \xi) & = \sum^\infty_{n=0} \bigg ( \arctan \dfrac{\xi}{\sigma+\gamma+n} - \arctan \dfrac{\xi}{\sigma+n}\bigg) \\
& > \sum^\infty_{n=0} \bigg ( \arctan \dfrac{\xi}{\sigma+ 1+n } -\arctan \dfrac{\xi}{\sigma+n}\bigg) \\
& = - \arctan \dfrac{\xi}{\sigma}.
\end{align*}

On the other hand, to determine an upper bound we note that
\begin{align*}
S(\gamma, \sigma, \xi) & = \sum^\infty_{n=0} \bigg ( \arctan \dfrac{\xi}{\sigma+\gamma+n} - \arctan \dfrac{\xi}{\sigma+n}\bigg) \\
& = \arctan \dfrac{\xi}{\sigma+\gamma} - \arctan \dfrac{\xi}{\sigma} + \sum^\infty_{n=1} \bigg ( \arctan \dfrac{\xi}{\sigma+\gamma+n} - \arctan \dfrac{\xi}{\sigma+n}\bigg) \\
& < \arctan \dfrac{\xi}{\sigma+\gamma} - \arctan \dfrac{\xi}{\sigma}.
\end{align*}
\end{proof}

We now give a corollary of Lemma \ref{argBarctan}, in the case $0< \gamma < 1$. 

\begin{corollary} \label{Corrbetasigmagammaest}
If we assume $0 < \gamma < 1$ in Lemma \ref{argBarctan}, then the integer $k=0$, and we have the estimate
\begin{equation*}
-\pi/2 <  \arg B(\sigma + i \xi, \gamma) < 0. 
\end{equation*}
\end{corollary}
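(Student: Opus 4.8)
The statement to prove is Corollary \ref{Corrbetasigmagammaest}: assuming $0 < \gamma < 1$ in Lemma \ref{argBarctan}, the integer $k$ appearing there is in fact $0$, and consequently $-\pi/2 < \arg B(\sigma + i\xi, \gamma) < 0$.

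The plan is to combine Lemma \ref{argBarctan} with the sharp bounds from Lemma \ref{Sgamsigxi}. Lemma \ref{argBarctan} gives
\[
\arg B(\sigma + i\xi, \gamma) = S(\gamma, \sigma, \xi) + 2\pi k
\]
for some $k \in \mathbb{Z}$, where $S(\gamma,\sigma,\xi) = \sum_{n=0}^\infty \big(\arctan\frac{\xi}{\sigma+\gamma+n} - \arctan\frac{\xi}{\sigma+n}\big)$. Since we are assuming $0 < \gamma < 1$, Lemma \ref{Sgamsigxi} applies directly and yields
\[
-\frac{\pi}{2} < -\arctan\frac{\xi}{\sigma} < S(\gamma,\sigma,\xi) < \arctan\frac{\xi}{\sigma+\gamma} - \arctan\frac{\xi}{\sigma} < 0.
\]
So $S(\gamma,\sigma,\xi) \in (-\pi/2, 0)$ unconditionally. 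Now I need to fix the branch: the single-valued convention \eqref{argsingleval} forces $\arg B(\sigma+i\xi,\gamma) \in (-\pi, \pi]$. Since $S(\gamma,\sigma,\xi) \in (-\pi/2,0)$, the only value of $k$ for which $S + 2\pi k$ lies in $(-\pi,\pi]$ is $k = 0$: indeed $k = 1$ would give a value in $(3\pi/2, 2\pi) \not\subset (-\pi,\pi]$, $k = -1$ would give a value in $(-5\pi/2, -2\pi) \not\subset (-\pi,\pi]$, and $|k| \ge 2$ is even further out. Hence $\arg B(\sigma+i\xi,\gamma) = S(\gamma,\sigma,\xi)$, and the estimate $-\pi/2 < \arg B(\sigma+i\xi,\gamma) < 0$ follows immediately from the bounds on $S$.

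The one subtlety worth being careful about is whether $\arg B(\sigma+i\xi,\gamma)$ is genuinely the principal value satisfying \eqref{argsingleval}, as opposed to some accumulated (multivalued) expression; the "$\equiv$" notation and the "for some $k \in \mathbb{Z}$" clause in Lemma \ref{argBarctan} are precisely there to flag that the formula holds only up to a multiple of $2\pi$. Since $B(\sigma+i\xi,\gamma) = \Gamma(\sigma+i\xi)\Gamma(\gamma)/\Gamma(\sigma+\gamma+i\xi)$ is a well-defined nonzero complex number (both Gamma factors in the numerator and denominator are finite and nonzero for $\sigma > 0$, $\gamma > 0$), its principal argument is well-defined and lies in $(-\pi,\pi]$, so the pinning argument above is legitimate. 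This is really the only step requiring thought; everything else is a direct quotation of the two preceding lemmas. I do not anticipate any serious obstacle — the corollary is essentially a bookkeeping consequence of Lemmas \ref{argBarctan} and \ref{Sgamsigxi}, with the branch-fixing being the only genuine (and minor) content.

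I would write it as: apply Lemma \ref{argBarctan} to get $\arg B(\sigma+i\xi,\gamma) = S(\gamma,\sigma,\xi) + 2\pi k$; invoke Lemma \ref{Sgamsigxi} (valid since $0 < \gamma < 1$) to get $S(\gamma,\sigma,\xi) \in (-\pi/2, 0)$; observe that the principal-value convention \eqref{argsingleval} forces $k = 0$; conclude $-\pi/2 < \arg B(\sigma+i\xi,\gamma) < 0$.
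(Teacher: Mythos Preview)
Your proposal is correct and follows exactly the approach of the paper, which simply cites equation \eqref{argbetasigma} (i.e.\ Lemma \ref{argBarctan}) and Lemma \ref{Sgamsigxi}. You have supplied the branch-fixing detail that the paper leaves implicit, but the logic is identical.
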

\begin{proof}
See equation \eqref{argbetasigma} and Lemma \ref{Sgamsigxi}. \\
\end{proof}

On the other hand if $1 \leq \gamma < 2$ we need to add to an extra condition. 

\begin{remark} \label{betasigmagammaest}
Similarly, if $1 \leq \gamma < 2$ then
\begin{equation*}
- \arctan \dfrac{\xi}{\sigma} -\arctan \dfrac{\xi}{\sigma+1} < \arg B(\sigma + i \xi, \gamma)  <  \arctan \dfrac{\xi}{\sigma + \gamma} - \arctan \dfrac{\xi}{\sigma}.
\end{equation*}
Moreover, if $\sigma \geq \xi$, then
\begin{equation*}
- \arctan \dfrac{\xi}{\sigma} -\arctan \dfrac{\xi}{\sigma+1} = - \arctan \bigg (  \dfrac{\xi(2 \sigma+1)}{\sigma(\sigma +1) - \xi^2}\bigg), 
\end{equation*}
so that, in particular, if $1 \leq \gamma < 2$ and $\sigma \geq \xi$ then 
\begin{equation*}
-\pi/2 <  \arg B(\sigma + i \xi, \gamma) < 0, 
\end{equation*}
as previously. \\
\end{remark}

\begin{lemma} \label{lemBarg}
Suppose $0 < \alpha < \tfrac{1}{2}, \, \sigma \geq 1-  \alpha,$ and $0 < \xi < \tfrac{1}{4}$. Then
\begin{equation*}
-\pi/2 < -4 \alpha \xi <  \arg B(\sigma + i \xi, 2 \alpha) < 0.
\end{equation*}
\end{lemma}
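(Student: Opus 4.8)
The plan is to use the series representation of $\arg B(\sigma + i\xi, 2\alpha)$ and to reduce the required lower bound to a single, sharp convexity inequality for the digamma function. Since $0 < 2\alpha < 1$, Corollary \ref{Corrbetasigmagammaest} (together with Lemma \ref{argBarctan} and the notation $S(\gamma,\sigma,\xi)$ of Lemma \ref{Sgamsigxi}) gives $\arg B(\sigma + i\xi, 2\alpha) = S(2\alpha,\sigma,\xi) = \sum_{n=0}^\infty \bigl(\arctan\tfrac{\xi}{\sigma+2\alpha+n} - \arctan\tfrac{\xi}{\sigma+n}\bigr)$ with no added multiple of $2\pi$, and in particular $-\pi/2 < \arg B(\sigma+i\xi,2\alpha) < 0$, which already supplies the two outer inequalities of the statement except for the comparison with $-4\alpha\xi$. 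Moreover $-4\alpha\xi < 0$ trivially, and $-4\alpha\xi > -\tfrac12 > -\pi/2$ because $4\alpha\xi < 4\cdot\tfrac12\cdot\tfrac14 = \tfrac12$. Hence the whole lemma reduces to proving the single inequality $S(2\alpha,\sigma,\xi) > -4\alpha\xi$.

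For that, I would first rewrite each summand of $S(2\alpha,\sigma,\xi)$ by the identity $\arctan x - \arctan y = \arctan\tfrac{x-y}{1+xy}$ (applicable here since the product of the two positive arguments exceeds $-1$); a direct simplification gives $S(2\alpha,\sigma,\xi) = -\sum_{n=0}^\infty \arctan\tfrac{2\alpha\xi}{(\sigma+2\alpha+n)(\sigma+n)+\xi^2}$. Since $\arctan t < t$ for $t>0$ and $\xi^2 > 0$, this yields $-S(2\alpha,\sigma,\xi) < \sum_{n=0}^\infty \tfrac{2\alpha\xi}{(\sigma+2\alpha+n)(\sigma+n)} = \xi\,(\psi(\sigma+2\alpha) - \psi(\sigma))$, the last equality coming from the partial-fraction decomposition $\tfrac{2\alpha}{(\sigma+2\alpha+n)(\sigma+n)} = \tfrac1{\sigma+n} - \tfrac1{\sigma+2\alpha+n}$ summed via 8.363.3, \cite{GR} (all series having positive terms and being dominated by $\sum(\sigma+n)^{-2}$). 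Because $\psi'$ is strictly decreasing on $(0,\infty)$ by Remark \ref{remgammapsi}, the quantity $\psi(\sigma+2\alpha)-\psi(\sigma)$ is decreasing in $\sigma$, so for $\sigma \ge 1-\alpha$ we get $\psi(\sigma+2\alpha)-\psi(\sigma) \le \psi(1+\alpha)-\psi(1-\alpha)$.

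Thus everything reduces to the claim $\psi(1+\alpha)-\psi(1-\alpha) < 4\alpha$ for $0<\alpha<\tfrac12$, and this is the step I expect to be the main obstacle: the inequality is sharp, with equality in the limit $\alpha\to\tfrac12$ (there $\psi(\tfrac32)-\psi(\tfrac12)=2=4\cdot\tfrac12$), which is exactly why the cruder estimates — e.g. $\arctan t<t$ applied to the bound $S>-\arctan(\xi/\sigma)$ from Lemma \ref{Sgamsigxi} — are not good enough and one must use the convexity structure of $\psi$. I would handle it by putting $g(\alpha) := 4\alpha - \psi(1+\alpha) + \psi(1-\alpha)$ and showing $g>0$ on $(0,\tfrac12)$: one has $g(0)=0$; $g(\tfrac12) = 2 - \psi(\tfrac32) + \psi(\tfrac12) = 0$ using the recurrence $\psi(z+1)=\psi(z)+1/z$; and $g''(\alpha) = \psi''(1-\alpha) - \psi''(1+\alpha) < 0$, since $\psi'''>0$ on $(0,\infty)$ (Remark \ref{remgammapsi}) makes $\psi''$ strictly increasing. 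A strictly concave function vanishing at the two endpoints $0$ and $\tfrac12$ is strictly positive in between, so $g>0$ on $(0,\tfrac12)$, i.e. $\psi(1+\alpha)-\psi(1-\alpha) < 4\alpha$. Chaining the estimates, $-S(2\alpha,\sigma,\xi) < \xi\,(\psi(1+\alpha)-\psi(1-\alpha)) < 4\alpha\xi$, that is $\arg B(\sigma+i\xi,2\alpha) = S(2\alpha,\sigma,\xi) > -4\alpha\xi$, which completes the proof.
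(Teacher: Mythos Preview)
Your proof is correct and follows essentially the same approach as the paper: both reduce to the estimate $\arg B(\sigma+i\xi,2\alpha) > -\xi\bigl(\psi(\sigma+2\alpha)-\psi(\sigma)\bigr)$, then use monotonicity in $\sigma$ and the convexity inequality $\psi(1+\alpha)-\psi(1-\alpha)\le 4\alpha$ (the paper phrases this as convexity of $h(\alpha)=\psi(1+\alpha)-\psi(1-\alpha)$ below its chord, you as concavity of $g=4\alpha-h$, which is the same thing). The only cosmetic difference is that the paper reads off the $\psi$-difference bound directly from the expanded representation \eqref{Bsigmagamma} and separately checks $k=0$, whereas you invoke Corollary~\ref{Corrbetasigmagammaest} for $k=0$ and recover the $\psi$-difference via the $\arctan$ subtraction formula and partial fractions.
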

\begin{proof} 
From equation \eqref{Bsigmagamma},  
\begin{align*} \label{Bsigmaalpha}
 \arg B(\sigma + i \xi, 2 \alpha) &= -\xi \, \psi(\sigma+ 2 \alpha ) - \sum^\infty_{n=0} \bigg ( \dfrac{\xi}{\sigma + 2\alpha +n}- \arctan  \dfrac{\xi}{\sigma + 2\alpha + n}\bigg)  \nonumber \\
& \quad +\xi \, \psi(\sigma) +\sum^\infty_{n=0} \bigg ( \dfrac{\xi}{\sigma + n}- \arctan  \dfrac{\xi}{\sigma + n}\bigg) + 2 \pi k, \nonumber 
\end{align*}
for some $k \in \mathbb{Z}$. We now use the fact that $ \arg B(\sigma + i \xi, 2 \alpha) \in (-\pi, \pi]$ to show that $k=0$. \\

Let us define
\begin{equation*}
t_{\alpha, n} := \dfrac{\xi}{\sigma + 2\alpha +n}- \arctan  \dfrac{\xi}{\sigma + 2\alpha + n}.
\end{equation*}
Then, it is easy to see that
\begin{equation*}
0 < \sum^\infty_{n=0} \big ( t_{0,n} - t_{\alpha,n} \big ) < t_{0,0},
\end{equation*}
since $0 < 2 \alpha < 1$, and the function $x - \arctan x$ is strictly increasing for $x>0$. But
\begin{equation*}
t_{0,0} = \dfrac{\xi}{\sigma}- \arctan  \dfrac{\xi}{\sigma},
\end{equation*}
and because $\xi/\sigma < \tfrac{1}{2}$, we have
\begin{equation*}
t_{0,0} < \tfrac{1}{2} - \arctan \tfrac{1}{2} < \tfrac{1}{25}.
\end{equation*}
Using the relationship
\begin{equation*}
\psi(z+1) = \psi(z) + 1/z, \quad z \in \mathbb{C}\setminus\{0\},
\end{equation*}
see 6.3.5, p. 258, \cite{AandS}, we have the estimate
\begin{equation*}
0 < \psi(\sigma+ 2\alpha) - \psi(\sigma) < \psi(\sigma+1) - \psi(\sigma) = 1/\sigma < 2.
\end{equation*}
Therefore
\begin{equation*}
-\tfrac{1}{2} < \xi \big ( \psi(\sigma) - \psi(\sigma + 2\alpha) ) < 0.
\end{equation*}
Hence, $k=0$. \\

Noting that $\psi(x)$ is increasing and $\psi'(x)$ is decreasing,  
\begin{align*}
\arg B(\sigma + i \xi, 2 \alpha) & > - \xi  \big \{ \psi(\sigma+ 2\alpha) - \psi(\sigma)\big \}   \\
& \geq - \xi  \big \{ \psi(1+\alpha) - \psi(1- \alpha)\big \}   \quad \text{since } \sigma \geq 1 -\alpha \\
& = - \xi \, h(\alpha),
\end{align*}

where the function 
\begin{equation*}
h(\alpha) := \psi(1+\alpha) - \psi(1- \alpha) \quad \text{for } 0 < \alpha < \tfrac{1}{2}.
\end{equation*}
Clearly $h(0) = 0$. On the other hand, from  6.3.5, p. 258, \cite{AandS},
\begin{equation*}
h(\tfrac{1}{2}) = \psi(\tfrac{3}{2}) - \psi(\tfrac{1}{2}) =2.
\end{equation*}
Noting that $(-\psi(1-\alpha))^{''} = - \psi^{''}(1-\alpha)$, we have
\begin{equation*}
h^{''}(\alpha) = \psi^{''}(1+\alpha) - \psi^{''}(1- \alpha) > 0,
\end{equation*}
because $\psi^{'''} >0$ by 6.4.1, p.260, \cite{AandS}. Hence $h$ is convex and, therefore,
\begin{equation*}
h(\alpha) \leq \bigg ( \dfrac{2 - 0}{\tfrac{1}{2}-0} \bigg ) \, \alpha = 4 \alpha.
\end{equation*}
So, now we have the lower bound
\begin{equation*}
\arg B(\sigma + i \xi, 2 \alpha) > -4 \alpha \xi. \\
\end{equation*}

Finally, combining this result and Corollary \ref{Corrbetasigmagammaest}, with $0< \gamma = 2\alpha < 1$, we have the final estimate
\begin{equation*}
-\pi/2 < -4 \alpha \xi <  \arg B(\sigma + i \xi, 2 \alpha) < 0.
\end{equation*}
\end{proof}

\begin{lemma} \label{sinpialphatanh}
Suppose $0 < \alpha < \tfrac{1}{2}$ and $0 < \xi < \tfrac{1}{4}$. Then
\begin{equation*}
\arctan ( \sin \pi \alpha \tanh \pi \xi) > 4 \alpha \xi.
\end{equation*}
\end{lemma}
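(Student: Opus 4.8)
The plan is to fix $\alpha\in(0,\tfrac12)$ and study $g(\xi):=\arctan\big(\sin\pi\alpha\,\tanh\pi\xi\big)-4\alpha\xi$ as a function of $\xi\in[0,\tfrac14]$. First I would compute
\[
g'(\xi)=\frac{\pi\sin\pi\alpha\,\big(1-\tanh^2\pi\xi\big)}{1+\sin^2\pi\alpha\,\tanh^2\pi\xi}-4\alpha ,
\]
and observe that, with $t=\tanh\pi\xi\in(0,1)$, the map $t\mapsto\dfrac{1-t^2}{1+\sin^2\pi\alpha\,t^2}$ has derivative $\dfrac{-2t(1+\sin^2\pi\alpha)}{(1+\sin^2\pi\alpha\,t^2)^2}<0$, hence is strictly decreasing; since $t=\tanh\pi\xi$ is strictly increasing in $\xi$, it follows that $g'$ is strictly decreasing, so $g$ is strictly concave on $[0,\tfrac14]$. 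As $g(0)=0$, strict concavity forces $g(\xi)>4\xi\,g(\tfrac14)$ for every $\xi\in(0,\tfrac14)$ (write $\xi=(1-4\xi)\cdot0+4\xi\cdot\tfrac14$). Thus the two-variable problem collapses to the single inequality $g(\tfrac14)\ge0$, i.e.
\[
h(\alpha):=\arctan\big(\sin\pi\alpha\,\tanh(\pi/4)\big)-\alpha\ \ge\ 0,\qquad 0<\alpha<\tfrac12 .
\]

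Next I would repeat the concavity argument in $\alpha$. Here $h'(\alpha)=\pi\tanh(\pi/4)\dfrac{\cos\pi\alpha}{1+\tanh^2(\pi/4)\sin^2\pi\alpha}-1$; substituting $w=\cos\pi\alpha\in(0,1)$ rewrites the fraction as $\dfrac{w}{(1+c)-cw^2}$ with $c=\tanh^2(\pi/4)$, which is increasing in $w$ and hence decreasing in $\alpha$. So $h'$ is decreasing, $h$ is strictly concave on $[0,\tfrac12]$, and $h(0)=0$ gives $h(\alpha)>2\alpha\,h(\tfrac12)$ for $\alpha\in(0,\tfrac12)$. It therefore suffices to establish the single numerical fact $h(\tfrac12)\ge0$, i.e. $\tanh(\pi/4)\ge\tan(\tfrac12)$.

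For that I would argue elementarily: since $\pi/4>\tfrac34$ and $x\mapsto\tfrac{x-1}{x+1}$ is increasing, $\tanh(\pi/4)>\tanh(\tfrac34)=\tfrac{e^{3/2}-1}{e^{3/2}+1}>\tfrac{4-1}{4+1}=\tfrac35$ (using $e^{3/2}>4$); on the other hand $\sin(\tfrac12)<\tfrac12$ and, because $\tfrac12<\pi/6$, $\cos(\tfrac12)>\cos(\pi/6)=\tfrac{\sqrt3}{2}$, so $\tan(\tfrac12)<\tfrac1{\sqrt3}<\tfrac35$ (the last step since $25<27$). Hence $\tanh(\pi/4)>\tan(\tfrac12)$, and applying the increasing function $\arctan$ gives $h(\tfrac12)=\arctan(\tanh(\pi/4))-\tfrac12>\arctan(\tan(\tfrac12))-\tfrac12=0$. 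Reassembling the two concavity reductions yields $g(\xi)>4\xi\,h(\alpha)>0$ for all $\alpha\in(0,\tfrac12)$ and $\xi\in(0,\tfrac14)$, which is the assertion.

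The step I expect to be the real obstacle is recognizing that the naive approach fails: bounding $\arctan y$ from below by $y$ or by the Pad\'e expression $\tfrac{3y}{3+y^2}$, together with $\sin\pi\alpha>2\alpha$ and a linear lower bound for $\tanh\pi\xi$, loses just enough to break down near the corner $(\alpha,\xi)=(\tfrac12,\tfrac14)$, where the inequality is essentially tight (roughly $0.58$ versus $0.5$). The concavity-in-each-variable device is what avoids this, reducing everything to the single evaluation at $\alpha=\tfrac12$, after which only the elementary comparison $\tanh(\pi/4)>\tan(\tfrac12)$ remains.
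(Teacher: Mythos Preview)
Your proof is correct, but the route differs from the paper's. The paper chains together three linear lower bounds: Jordan's inequality $\sin\pi\alpha\ge 2\alpha$, then $\tanh\pi\xi>\tfrac{5}{2}\xi$ on $(0,\tfrac14)$ (from concavity and $\tanh(\pi/4)>5/8$), and finally $\arctan y>\tfrac{4}{5}y$ on $(0,\tfrac58)$ (from concavity and $\arctan(5/8)>\tfrac12$); the product $2\cdot\tfrac52\cdot\tfrac45=4$ gives the result directly. So your closing paragraph is slightly off: the ``naive'' chained-linear approach does succeed, provided one picks the slopes carefully rather than using $\arctan y\approx y$. Your two-stage concavity reduction to the single corner $(\alpha,\xi)=(\tfrac12,\tfrac14)$ is a perfectly good alternative; it trades the search for workable constants against one explicit numerical comparison $\tanh(\pi/4)>\tan(\tfrac12)$, which you dispatch cleanly. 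Both arguments ultimately rest on concavity, but the paper exploits it function-by-function while you exploit it to collapse the parameter domain.
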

\begin{proof}
Firstly, by Jordan's inequality
\begin{equation*}
\dfrac{\sin x}{x} \geq \dfrac{2}{\pi} \qquad  \text{for  } 0 < x < \pi/2.
\end{equation*}
So, setting $x = \pi \alpha$ gives
\begin{equation} \label{sinpialphaest}
\sin \pi \alpha \geq 2 \alpha \qquad  \text{for  } 0 < \alpha < \tfrac{1}{2}. \\
\end{equation}

Secondly, $\tanh \pi \xi$ is concave function for $0 < \xi < \tfrac{1}{4}$, with $\tanh (0) =0$ and $\tanh (\pi/4) > 0.655 > \tfrac{5}{8}$. Hence
\begin{equation} \label{tanhpixiest}
\tanh \pi \xi > \tfrac{5}{2} \xi \qquad  \text{for  } 0 < \xi < \tfrac{1}{4}. \\
\end{equation}

Thirdly, $\arctan y$ is concave function for $0 < y < \tfrac{5}{8}$, with $\arctan (0) =0$ and $\arctan (\tfrac{5}{8}) > 0.558 > \tfrac{1}{2}$. Hence
\begin{equation} \label{arctanhyest}
\arctan y > \tfrac{4}{5} y \qquad  \text{for  } 0 < y < \tfrac{5}{8}. \\
\end{equation}

Finally, using estimates \eqref{sinpialphaest}, \eqref{tanhpixiest} and \eqref{arctanhyest} in turn,
\begin{align*}
 \arctan ( \sin \pi \alpha \tanh \pi \xi)
& \geq \arctan (2 \alpha \tanh \pi \xi)  \\
& > \arctan (2 \alpha \tfrac{5}{2} \xi)  \\
& = \arctan (  5 \alpha  \xi) \\
& > 4 \alpha \xi.
\end{align*}
\end{proof}

\begin{lemma} \label{coscosest}
Suppose $\tfrac{1}{2} \leq \alpha < 1$ and $1 + \alpha \leq \tau < 2$. Define $b:= 1- \tau + 2 \alpha$. Then
\begin{equation*}
\cos \pi \alpha - \cos \pi (\alpha - 2 b) \leq 0.
\end{equation*}
\end{lemma}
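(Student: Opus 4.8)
The plan is to reduce the inequality to a statement about a product of two sines via the sum-to-product identity, and then read off the sign directly from the given ranges of $\alpha$ and $\tau$.

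First I would express $b$ in terms of $\tau$: since $b = 1 - \tau + 2\alpha = 1 + 2\alpha - \tau$, the hypothesis $1 + \alpha \le \tau < 2$ translates at once into
\[
2\alpha - 1 < b \le \alpha .
\]
Combined with $\tfrac12 \le \alpha < 1$ (which gives $0 \le 2\alpha - 1 < 1$) this yields the two elementary bounds I will need: namely $0 < b < 1$, and $0 \le \alpha - b < 1 - \alpha \le \tfrac12$ (the lower bound $\alpha - b \ge 0$ from $b \le \alpha$, the upper bound from $b > 2\alpha - 1$).

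Next I would apply the identity $\cos A - \cos B = -2 \sin\tfrac{A+B}{2}\,\sin\tfrac{A-B}{2}$ with $A = \pi\alpha$ and $B = \pi(\alpha - 2b)$. Since $\tfrac{A+B}{2} = \pi(\alpha - b)$ and $\tfrac{A-B}{2} = \pi b$, this gives
\[
\cos \pi\alpha - \cos \pi(\alpha - 2b) = -2 \sin\!\big(\pi(\alpha - b)\big)\,\sin(\pi b) .
\]
From $0 < b < 1$ we get $\sin(\pi b) > 0$, and from $0 \le \alpha - b < \tfrac12$ we get $\sin\!\big(\pi(\alpha - b)\big) \ge 0$; hence the right-hand side is $\le 0$, which is exactly the claim.

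There is essentially no obstacle here, the computation being elementary; the only point to watch is the book-keeping at the endpoints. One must check that $b$ stays strictly positive — which holds because $b > 2\alpha - 1 \ge 0$ when $\alpha \ge \tfrac12$ — and that $\alpha - b$ stays below $\tfrac12$ — which holds because $b > 2\alpha - 1$ forces $\alpha - b < 1 - \alpha$ and $1 - \alpha \le \tfrac12$. The degenerate case $\tau = 1 + \alpha$ gives $b = \alpha$, so $\alpha - b = 0$ and the expression equals $0$, consistent with the non-strict inequality in the statement.
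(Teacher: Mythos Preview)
Your proof is correct and follows essentially the same approach as the paper: apply the sum-to-product identity to write the difference of cosines as a product of two sines, then determine the sign of each factor from the given ranges. The only cosmetic difference is that the paper first substitutes $b = 1 - \tau + 2\alpha$ back and works with the factors $\sin \pi(\tau - \alpha)$ and $\sin \pi(\tau - 2\alpha)$, whereas you keep $b$ and work with $\sin(\pi b)$ and $\sin(\pi(\alpha - b))$; these are the same factors up to sign and a shift by $\pi$.
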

\begin{proof} \begin{align*}
\cos \pi \alpha - \cos \pi (\alpha - 2 b) &= \cos \pi \alpha - \cos \pi (\alpha - 2 + 2\tau - 4 \alpha) \\
&= \cos \pi \alpha - \cos \pi (2\tau - 3\alpha) \\
&= -2 \sin \pi (\tau - \alpha) \sin \pi (2 \alpha - \tau) \\
&= 2 \sin \pi (\tau - \alpha) \sin \pi (\tau - 2 \alpha).
\end{align*}
But since $1 + \alpha \leq \tau < 2$, we have $1 \leq \tau - \alpha <2$ and thus, $\sin \pi (\tau - \alpha) \leq 0$. \\

Similarly given $1 + \alpha \leq \tau < 2$, we have $0 < 1-\alpha \leq \tau - 2 \alpha < 2 - 2 \alpha \leq 1$ and thus $\sin \pi (\tau - 2 \alpha) > 0$. Therefore,
\begin{equation*}
\cos \pi \alpha - \cos \pi (\alpha - 2 b) \leq 0.
\end{equation*}
\end{proof}     
\chapter{Future research} \label{ChapterFutureResearch}
The backdrop for this research is the goal of developing a theory of boundary value problems for operators which are sums of pseudodifferential operators and ``fine-tuned potentials", which are less singular than the original (unperturbed) pseudodifferential operators. As we have seen, the generators of L\'evy processes in domains are a rich source of interesting models. Indeed, in this thesis, we have taken a first step by studying, in detail, a particular one-dimensional operator on the half-line. In passing, we note again the usefulness of Mellin operators in this work. \\

Even in one spatial dimensional, there are further significant opportunities for research.
These include both consideration of a wider class of elliptic operators and the analysis of  the related problem on a bounded domain. It is worth remarking that even one-dimensional models have important applications in various fields, including non-Gaussian market models in financial mathematics. \\

The second major future phase of this research is to extend the work done in one dimension to the $n$-dimensional case, where the region of interest becomes a half-space. In addition, it will be interesting to consider the natural generalisation to systems of equations. \\

In summary, this thesis is simply a (promising) beginning... Many useful techniques have been developed which, no doubt, will be reusable in a wider context. However, there remains much to do, and many challenges lie ahead!
 
\appendix
\chapter{Simple example} \label{AppendixYZmapping} 
We define the homogeneous fractional Laplacian $(-\Delta)^\alpha$ by
\begin{equation*}
 (-\Delta)^{\alpha} := \mathcal{F}^{-1} |\xi|^{2\alpha} \mathcal{F}, \quad \text{where  } 0 < \alpha < 1.
\end{equation*}
Our goal in this appendix is to consider $(-\Delta)^\alpha$, for the range $\tfrac{1}{2} < \alpha < 1$, acting in one spatial dimension, in some detail. Indeed, we will critically examine both the difficulties caused by truncation, see Section \ref{sectiontruncation}, and, on the other hand, the improvements offered by adding a potential term, as described in Section \ref{sectionpotential}. \\

Let $u \in C^\infty_0(\mathbb{R})$, so that $\mathcal{F} u \in S(\mathbb{R})$. Since $2 \alpha-2 > -1$, we have $(|\xi|^{2\alpha} \mathcal{F}u)'' \in L_1(\mathbb{R})$, and hence 
$\mathcal{F}^{-1} |\xi|^{2\alpha} \mathcal{F}u $ is continuous and $O(|x|^{-2})$  as $|x| \to \infty$. Therefore,  
\begin{equation*}
(-\Delta)^\alpha: C^\infty_0(\mathbb{R}) \to L_1(\mathbb{R}) \cap C^\infty(\mathbb{R}),
\end{equation*}
since $(-\Delta)^\alpha: C^\infty_0(\mathbb{R}) \to C^\infty(\mathbb{R})$. (See, for example, Theorem 3.1, p. 47, \cite{XSR}.) \\

Let $G = \mathbb{R}_+$.  Our first goal is to construct $u \in r_+ C^\infty_0(\mathbb{R})$ such that $r_+ (-\Delta)^\alpha e_+ u \not \in L_{1, loc}(\mathbb{R}_+)$. Indeed, let $U \in C^\infty_0(\mathbb{R})$ be such that
\begin{equation*}
U= 1 \text{ if  } |x| \leq 1 \text{ and } U = 0 \text{ if } |x| \geq 2, 
\end{equation*}
and set
\begin{equation*}
u = r_+ U.
\end{equation*}

Since $|\xi|^{2\alpha} = \xi |\xi|^{2(\alpha -1)} \xi$, we have
\begin{equation*}
r_+ (-\Delta)^\alpha e_+ u = r_+ \, \mathcal{F}^{-1} |\xi|^{2\alpha} \mathcal{F} (e_+ u) = - r_+ \, \dfrac{d}{dx} \mathcal{F}^{-1} |\xi|^{2(\alpha -1)} \mathcal{F} (e_+ u)^\prime.  
\end{equation*}
But, see Lemma \ref{lemma:Deu}, 
\begin{equation*}
(e_+ u)^\prime = u(0)\delta + e_+ u' = \delta + e_+u'.
\end{equation*}
Of course,  $e_+u' \in C^\infty_0(\mathbb{R})$, and thus $\mathcal{F} (e_+u') \in S(\mathbb{R})$. Hence, $P(\xi) |\xi|^{2\alpha-2} \mathcal{F} (e_+u') \in L_1(\mathbb{R})$, for any polynomial $P$. Therefore, $\mathcal{F}^{-1} P(\xi) |\xi|^{2\alpha-2} \mathcal{F} (e_+u')$ is continuous and vanishes at infinity. So,
\begin{equation*}
w(x) := -r_+ \, \dfrac{d}{dx} \mathcal{F}^{-1} |\xi|^{2\alpha-2} \mathcal{F} (e_+u'),
\end{equation*}
and all its derivatives, are continuous and vanish at infinity. \\

It remains to consider
$- r_+ \, \dfrac{d}{dx} \mathcal{F}^{-1} |\xi|^{2(\alpha -1)} \mathcal{F} \delta$. \\

Now $\mathcal{F} \delta = 1/\sqrt{2 \pi}$, see Appendix \ref{Appendix FT}, and from Section 17.23, p. 1119, \cite{GR} or Example 3.1, equation (3.14), p. 38, \cite{Es},
\begin{equation} \label{FTmodxpowera}
\mathcal{F} ( |x|^{-a} ) = \sqrt{\frac{2}{\pi} }\, \Gamma(1-a) \sin \bigg (\frac{\pi a}{2} \bigg ) \, |\xi|^{a-1}, \quad 0 < a <1.
\end{equation}
Applying the inverse Fourier transform, and taking $a = 2 \alpha - 1$,
\begin{align*}
\mathcal{F}^{-1} |\xi|^{2(\alpha -1)} \mathcal{F} \delta & =
\dfrac{\sqrt{\tfrac{\pi}{2}} \, \, |x|^{1-2\alpha}}{\Gamma(2-2\alpha)\sin \big ( \tfrac{\pi(2\alpha-1)}{2} \big )} \cdot \dfrac{1}{\sqrt{2 \pi}} \\
& = \dfrac{\tfrac{1}{2} \, \, |x|^{1-2\alpha}}{\Gamma(2-2\alpha)} \cdot \dfrac{\Gamma(\tfrac{2\alpha-1}{2}) \Gamma(1- \tfrac{2\alpha-1}{2})}{\pi} \quad 5.5.3, \cite{NIST} \\
&= \dfrac{\tfrac{1}{2} \,\,|x|^{1-2\alpha} \,\, \Gamma(\alpha - \tfrac{1}{2}) \Gamma(\tfrac{3}{2} - \alpha) }{\tfrac{1}{\sqrt{2 \pi}} \,\, 2^{2-2\alpha- \frac{1}{2}} \,\, \Gamma(1- \alpha) \Gamma(\tfrac{3}{2} - \alpha) \,\, \pi} \quad 5.5.5, \cite{NIST} \\
&= - \dfrac{2^{2\alpha-1} \,\, \Gamma(\tfrac{1}{2} + \alpha)}{\pi^{\frac{1}{2}} \,\, \Gamma(1-\alpha)}  \cdot \dfrac{|x|^{1-2\alpha}}{1-2\alpha}
\end{align*}
since $(\alpha - \tfrac{1}{2}) \Gamma(\alpha - \tfrac{1}{2})= \Gamma(\alpha + \tfrac{1}{2})$, see 5.5.1, \cite{NIST}. \\

Hence
\begin{equation} \label{leadingsingularterm}
r_+ (-\Delta)^\alpha e_+ u =  \dfrac{2^{2\alpha-1} \,\, \Gamma(\tfrac{1}{2} + \alpha)}{\pi^{\frac{1}{2}} \,\, \Gamma(1-\alpha)} \,\, x^{-2\alpha} + w.
\end{equation}

But since $\tfrac{1}{2} < \alpha < 1$, we have $r_+ (-\Delta)^\alpha e_+ u \not \in L_{1,loc}(\mathbb{R}_+)$, as required. \\ \\

As before, we assume $G = \mathbb{R}_+$. Our second goal is to show that the leading singular terms
of $r_+ (-\Delta)^\alpha e_+ u$ and $\kappa^\alpha_G u$, near $x=0$, cancel each other, and thus, see equation \eqref{regFLalt},
\begin{equation*}
(-\Delta)_G^\alpha u = r_+ (-\Delta)^\alpha e_+ u + \kappa^\alpha_G u, 
\end{equation*}
is less singular than either of $r_+ (-\Delta)^\alpha e_+ u$ and $\kappa^\alpha_G u$, when considered separately. \\

Now, from equation \eqref{killingpot}, for $x>0$, 
\begin{align*}
\kappa^\alpha_G(x) & = - c_{1, \alpha} \int^0_{-\infty} \dfrac{1}{|x-y|^{1+2\alpha}} \, dy \\
&=  - c_{1, \alpha} \int^\infty_0 \dfrac{1}{(x+\tau)^{1+2\alpha}} \, d\tau \\
&= - c_{1, \alpha} \Bigg [ \dfrac{(x+\tau)^{-2\alpha}}{(-2\alpha)} \Bigg ]^\infty_0\\
&= - c_{1, \alpha} \dfrac{x^{-2\alpha}}{2\alpha}.
\end{align*}
But, see for example \cite{Kwas}, the positive constant $c_{n, \alpha}$ is given by
\begin{equation*}
c_{n,\alpha} = - \dfrac{2^{2\alpha} \Gamma(\alpha + \frac{n}{2})}{\pi^\frac{n}{2} \Gamma(- \alpha)}.
\end{equation*}
Therefore, noting that $(-\alpha) \Gamma(-\alpha) = \Gamma (1 - \alpha)$, for $x>0$,
\begin{equation} \label{leadingKappa}
\kappa^\alpha_G(x) = - \dfrac{2^{2\alpha -1} \Gamma (\alpha + \frac{1}{2})}{{\pi}^\frac{1}{2} \Gamma(1-\alpha)} x^{-2\alpha}.
\end{equation}
So, comparing equations \eqref{leadingsingularterm} and \eqref{leadingKappa}, we see that the leading singular terms of $r_+ (-\Delta)^\alpha e_+ u$ and $\kappa^\alpha_G u$, near $x=0$, cancel each other, as required.

\chapter{Feller and L\'{e}vy processes} \label{FellerLevy} 

Useful background on the material in this appendix can be found in \cite{Ja, Sh,Ta}. Our starting point is a \textit{probability space}. Let $\Omega$ be a non-empty set, and let $\mathcal{A}$ denote a $\sigma$-$\it{field}$ on $\Omega$. Further, suppose that $P$ is a \textit{probability measure} defined on $\mathcal{A}$. Then the triple $(\Omega, \mathcal{A}, P)$ is called a probability space. \\

Suppose $G \subset \mathbb{R}^n$ is a Borel set. Then we can define a \textit{stochastic process} by the quadruple $(\Omega, \mathcal{A}, P, (X_t)_{t \geq 0})$, where $X_t:\Omega \rightarrow G, \, t \geq 0$ is a random variable. We call $G$ the \textit{state space}. The mappings $t \rightarrow X_t(\omega)$, for $\omega \in \Omega$, are called the \textit{paths} of the process. We will be interested in families of stochastic processes indexed by the state space, sometimes called \textit{universal processes}. More precisely, we will consider for each $x \in G$, the stochastic process $(\Omega, \mathcal{A}, P^x, (X_t)_{t \geq 0})_{x \in G}$, where $P^x \{ X_0 = x \} =1$. \\

Let $B_b(G)$ denote the set of bounded Borel functions on $G$. Then, given a universal process,  we can define a family $(T_t)_{t\geq0}$ of operators acting on $B_b(G)$ by
\begin{equation*}
(T_t u)(x) = \mathbb{E}^x(u(X_t)).
\end{equation*}
In particular, given a Borel set $A \subset G$, we can define the \textit{transition function} $p_t(x,A)$, for $x \in G$, to be
\begin{equation*}
p_t(x,A) := (T_t \chi_A)(x) = \mathbb{E}^x(\chi_A(X_t)) = P^x \{ X_t \in A \}.
\end{equation*}
Intuitively, the transition function $p_t(x,A)$ is the probability of being in the set $A$ at time $t$, starting at time 0 from a point $x \in G$. \\

Let $\mathcal{B}(G)$ denote the Borel $\sigma$-field over $G$. Then it is easy to see that, for fixed $t \geq 0$, the mapping $A \rightarrow p_t(x,A)$ is a probability measure on $\mathcal{B}(G)$. Hence, the operator $T_t, \, t\geq 0$ can be represented as
\begin{equation*}
(T_t u)(x) = \displaystyle \int_G u(y) p_t(x,dy).
\end{equation*}
Similarly, let us define
\begin{equation*}
p_{s,t}(x,A) = \displaystyle \int_G p_s(y,A) p_t(x,dy).
\end{equation*}
We call the family $(p_t(x,A))_{t \geq 0, x \in G}$ a \textit{semigroup of Markovian kernels} if for all $s,t \geq 0, x \in G$, and any Borel set $A \subset G$ we have
\begin{equation*}
p_{s,t}(x,A) = p_{s+t}(x,A). 
\end{equation*}
In other words, the \textit{Chapman-Kolmogorov} equations
\begin{equation} \label{ChapKol}
p_{s+t}(x,A) = \displaystyle \int_G p_s(y,A) p_t(x,dy)
\end{equation}
hold. In this case, it follows that $(T_t)_{t \geq 0}$ is a \textit{semigroup} of linear operators on $B_b(G)$ with
\begin{equation*}
T_{s+t} = T_s \circ T_t
\end{equation*}
valid for all $s,t \geq 0$. Since $p_0(\cdot, \{ x \}) = \delta_x$, we always have $T_0$ is the identity map. A universal process is called a \textit{Markov Process} when its transition function satisfies equation \eqref{ChapKol}. \\

Suppose we have a Markov process $((X_t)_{t \geq 0}, P^x)_{x \in G}$, and the associated semigroup of linear operators $T_t:B_b(G) \rightarrow B_b(G)$. The it is easy to show that the operator $T_t$ is a contraction on $B_b(G)$. It is also useful to consider the action of $T_t$ on the Banach space $C_\infty(G)$, equipped with the supremum norm, consisting of all continuous functions on $G$ that vanish at infinity. We say that a semigroup $(T_t)_{t \geq 0}$ of linear operators on $C_\infty(G)$ is a \textit{Feller} semigroup if it meets the following three conditions:
\begin{enumerate}[(i)]
\item $T_t:C_\infty(G) \rightarrow C_\infty(G)$  is a linear contraction; 
\item $\lim_{t \rightarrow 0} \| T_tu - u \|_\infty = 0$,   i.e. the semigroup is strongly continuous; 
\item $0 \leq u \leq 1 \,\,$ implies $\,\, 0 \leq T_tu \leq 1$.
\end{enumerate}
A Markov process is called a \textit{Feller process} when its corresponding semigroup is a Feller semigroup. \\

Let $X = (X_t)_{t \geq 0}$ be a stochastic process defined on a probability space $(\Omega, \mathcal{A}, P)$. We say that $X$ has \textit{independent increments} if for each $n \in \mathbb{N}$, and $0 \leq t_1 < t_2 < \dots < t_{n+1} < \infty$, the random variables
$\{ X_{t_{j+1}} - X_{t_j} \}^n_{j=1}$ are independent. Moreover, we say that it has \textit{stationary increments} if, for each $1 \leq j \leq n$, the random variables
$X_{t_{j+1}} - X_{t_j}$ and $X_{t_{j+1} - t_j} - X_0$ are equal in distribution. \\

A \textit{L\'{e}vy process} is a Feller process with independent and stationary increments which is continuous in probability. That is, for all $a > 0$ and all $s \geq 0$,
\begin{equation*}
\lim_{t \to s} P( |X_t - X_s | > a) =0.
\end{equation*}
It is sometimes helpful to think of a L\'{e}vy process as a continuous analogue of a random walk \cite{Va}. The most well known examples of L\'{e}vy processes are Brownian motion and the Poisson process. L\'{e}vy processes whose paths are almost surely non-decreasing are called \textit{subordinators}. See, for example, \cite{Kyp}. \\

The generator of the symmetric $\alpha$-stable subordinator is the fractional Laplacian, which may be defined as
\begin{equation*}
(-\Delta)^\alpha := \mathcal{F}^{-1} |\xi|^{2 \alpha} \mathcal{F}, \quad 0 < \alpha <1.
\end{equation*}
For more details, see Example 5.8, p. 96 \cite{GS}. \\

Similarly, see Example 5.9, p. 97, \cite{GS}, the generator of the (killed) relativistic $\alpha$-stable subordinator is the pseudodifferential operator
\begin{equation*}
\mathcal{F}^{-1} (1+ |\xi|^2)^{\alpha} \mathcal{F}, \quad 0 < \alpha <1.
\end{equation*}
This class of L\'{e}vy processes is also discussed in Remark \ref{relativiststabsubs}.

\chapter{H\"ormander space} \label{GGrubb}
In Section \ref{sectiontransmission}, we briefly reviewed the approach taken in \cite{Grubb2014,Grubb2015}, using the $\mu$-transmission condition and H\"ormander spaces. The goal of this appendix is to consider a particular example to explore these ideas in more detail. \\

The following material is taken from a presentation entitled \enquote{Boundary problems for fractional Laplacians and other fractional-order operators}, given by Gerd Grubb in March 2016. The main changes shown here are due to notational differences, 
differing Fourier transform conventions and, where appropriate, further explanation. \\

We suppose that $0 < \alpha < 1$, and let $A$ be a pseudo-differential operator of fractional order.  In general, given an open set $\Omega \subset \mathbb{R}^n, \, n \geq 2$, with a suitably smooth boundary, we are interested in the \textit{Dirichlet problem}
\begin{equation} \label{Dirichletprobdef}
A u = f \quad \text{in } \Omega, \quad \operatorname{supp} u \subset \overline{\Omega},
\end{equation}
where $f \in H^t(\overline{\Omega})$ is given. The novel aspect here, in our terms, is that  the solution $u$, if it exists, is sought in the so-called \textit{H\"ormander space} $H^{\alpha(t + 2\alpha)} (\overline{\mathbb{R}^n_+})$. (See Section  \ref{sectiontransmission} for the definition of $H^{\alpha(t + 2\alpha)} (\overline{\mathbb{R}^n_+}), \,\, t \geq 0$.)\\

\begin{theorem}
Let $A = (I - \Delta)^\alpha$ on $\Omega = \mathbb{R}^n_+$ and suppose $t \geq 0$. Then the Dirichlet problem \eqref{Dirichletprobdef} has a unique solution $u \in H^{\alpha(t + 2\alpha)} (\overline{\mathbb{R}^n_+})$. \\
\end{theorem}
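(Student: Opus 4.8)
\emph{Plan of proof.} The strategy is a Wiener--Hopf factorization of the symbol in the normal variable, with the tangential frequency $\xi'$ carried along as a parameter, exactly as in Eskin's theory and in the one-dimensional computations of Chapters \ref{ChapterOpAlgI}--\ref{OpAlgLp}. Write $\langle \xi' \rangle = (1 + |\xi'|^2)^{1/2} \geq 1$ and $\xi = (\xi', \xi_n)$. For real $\xi_n$ the numbers $\langle \xi' \rangle - i\xi_n$ and $\langle \xi' \rangle + i\xi_n$ are complex conjugates, hence have imaginary parts of opposite sign when $\xi_n \neq 0$, so by Remark \ref{complexexponent},
\begin{equation*}
(1 + |\xi|^2)^\alpha = \big( \langle \xi' \rangle^2 + \xi_n^2 \big)^\alpha = (\langle \xi' \rangle - i\xi_n)^\alpha \, (\langle \xi' \rangle + i\xi_n)^\alpha .
\end{equation*}
Because $\langle \xi' \rangle \geq 1$, the factor $(\langle \xi' \rangle - i\xi_n)^{\pm \alpha}$ extends analytically and with polynomial bounds into $\operatorname{Im} \xi_n > -\langle \xi' \rangle$ (``plus'' type), so the operator $\Lambda^{(-)}_{\pm\alpha} := \mathcal{F}^{-1}(\langle \xi' \rangle - i\xi_n)^{\pm\alpha} \mathcal{F}$ preserves support in $\overline{\mathbb{R}^n_+}$; symmetrically $(\langle \xi' \rangle + i\xi_n)^{\pm\alpha}$ is ``minus'' type. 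In particular, by the very definition of the H\"ormander space recalled in Section \ref{sectiontransmission}, $H^{\alpha(t + 2\alpha)}(\overline{\mathbb{R}^n_+}) = \Lambda^{(-)}_{-\alpha} \big( e_+ H^{t+\alpha}(\overline{\mathbb{R}^n_+}) \big)$ consists of distributions supported in $\overline{\mathbb{R}^n_+}$, and the map $v \mapsto u := \Lambda^{(-)}_{-\alpha} v$ is a Banach-space isomorphism from $e_+ H^{t+\alpha}(\overline{\mathbb{R}^n_+})$ onto $H^{\alpha(t+2\alpha)}(\overline{\mathbb{R}^n_+})$.

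Next I would substitute $u = \Lambda^{(-)}_{-\alpha} v$ with $v \in e_+ H^{t+\alpha}(\overline{\mathbb{R}^n_+})$ into the equation. Using the factorization and $A = (I - \Delta)^\alpha = \mathcal{F}^{-1}(1+|\xi|^2)^\alpha \mathcal{F}$ one gets $A u = \mathcal{F}^{-1}(\langle \xi' \rangle + i\xi_n)^\alpha \mathcal{F} v = \Lambda^{(+)}_{\alpha} v$. Since $\operatorname{supp} v \subseteq \overline{\mathbb{R}^n_+}$ we have $e_+ r_+ v = v$, so the Dirichlet problem $r_+ A e_+ u = f$ becomes the Wiener--Hopf equation
\begin{equation*}
W_+ \, w = f, \qquad W_+ := r_+ \Lambda^{(+)}_{\alpha} e_+ , \quad w := r_+ v \in H^{t+\alpha}(\overline{\mathbb{R}^n_+}).
\end{equation*}
The symbol $(\langle \xi' \rangle + i\xi_n)^\alpha$ is analytic, non-vanishing and polynomially bounded in the closed lower half-plane $\operatorname{Im} \xi_n \leq 0$, uniformly in $\xi'$ (again because $\langle \xi' \rangle \geq 1$), so it admits a trivial ``minus'' factorization; hence, by the parametrized version of the one-dimensional Wiener--Hopf inversion (the analogue of Lemmas \ref{lemma:rLambdae} and \ref{ustou}, i.e. Theorems 1.9--1.12 of \cite{Shar} applied with $\xi'$ as parameter), $W_+ : H^{t+\alpha}(\overline{\mathbb{R}^n_+}) \to H^t(\overline{\mathbb{R}^n_+})$ is invertible with inverse $r_+ \Lambda^{(+)}_{-\alpha} l_+$. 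This yields the unique solution $w = r_+ \Lambda^{(+)}_{-\alpha} l_+ f \in H^{t+\alpha}(\overline{\mathbb{R}^n_+})$, and then $v = e_+ w$, $u = \Lambda^{(-)}_{-\alpha} v \in H^{\alpha(t+2\alpha)}(\overline{\mathbb{R}^n_+})$ gives a solution of the Dirichlet problem. Uniqueness follows from the injectivity of $W_+$ together with the isomorphism $v \mapsto u$ above: if $Au = 0$ in $\Omega$ with $u \in H^{\alpha(t+2\alpha)}(\overline{\mathbb{R}^n_+})$, set $v := \Lambda^{(-)}_{\alpha} u$, which is supported in $\overline{\mathbb{R}^n_+}$, so $W_+(r_+ v) = r_+ A u = 0$, whence $v = 0$ and $u = 0$.

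The main obstacle is the uniformity in the tangential variable $\xi'$: one must check the symbol estimates $|(\langle \xi' \rangle + i\xi_n - i\tau)^{\pm\alpha}| \leq C \big(\langle \xi' \rangle + |\xi_n| + |\tau|\big)^{\pm\alpha}$ for $\tau \leq 0$, and then verify that the $\xi'$-parametrized family of Wiener--Hopf operators assembles into a bounded operator on the anisotropic Bessel-potential scale $H^{t}(\overline{\mathbb{R}^n_+})$ and its inverse into a bounded operator onto $H^{\alpha(t+2\alpha)}(\overline{\mathbb{R}^n_+})$ — this is precisely where the non-standard solution space enters, since for $t + \alpha > \tfrac12$ the space $e_+ H^{t+\alpha}(\overline{\mathbb{R}^n_+})$ is strictly larger than $\widetilde{H}^{t+\alpha}(\overline{\mathbb{R}^n_+})$ (functions may jump at $x_n = 0$) and $\Lambda^{(-)}_{-\alpha}$ of such a jump is exactly the boundary singularity absorbed by the H\"ormander space. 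Aside from this bookkeeping, every step reduces to the one-variable facts already established in the body of the thesis, applied with a parameter.
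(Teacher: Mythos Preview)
Your proposal is correct and follows essentially the same Wiener--Hopf factorization argument as the paper's proof: factor $(1+|\xi|^2)^\alpha = (\langle\xi'\rangle - i\xi_n)^\alpha(\langle\xi'\rangle + i\xi_n)^\alpha$, use that the first factor gives a support-preserving (plus-type) operator while the second gives an invertible Wiener--Hopf operator on $H^s(\overline{\mathbb{R}^n_+})$, and identify the H\"ormander space as the image of $\hat{\Lambda}^{-\alpha}_+$ applied to $e_+ H^{t+\alpha}(\overline{\mathbb{R}^n_+})$. The only cosmetic difference is that you substitute $u = \Lambda^{(-)}_{-\alpha} v$ at the outset and reduce to a Wiener--Hopf equation for $w = r_+ v$, whereas the paper first inverts the minus-type factor $r_+\hat{\Lambda}^\alpha_- l_+$ on the right-hand side to obtain $r_+\hat{\Lambda}^\alpha_+ u = g$ and then writes down $u = \hat{\Lambda}^{-\alpha}_+ e_+ g$; the two orderings are equivalent, and both cite the same results (Theorems 1.9--1.10 of \cite{Shar}) for the required mapping properties with the tangential parameter $\xi'$.
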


\begin{proof}
The operator $A = (I - \Delta)^\alpha$ has symbol $(1 + |\xi|^2)^\alpha = \langle \xi \rangle^{2\alpha}$. We have the factorisation
\begin{equation*}
(1 + |\xi|^2)^\alpha = (\langle  \xi' \rangle + i \xi_n)^\alpha \, (\langle \xi' \rangle - i \xi_n)^\alpha.
\end{equation*}
where $\xi = (\xi', \xi_n)$ and $\xi' = (\xi_1, \dots , \xi_{n-1})$. \\

We set $\mu^t_\pm := ( \langle \xi' \rangle \mp  i \xi_n)^t$. (The reason for this counter-intuitive definition will become clear later. See \eqref{hatLambdaminus}.)  Then we define $\hat{\Lambda}^t_\pm := \mathcal{F}^{-1} (\mu^t_\pm) \mathcal{F}$, and we can write
\begin{equation*}
(I - \Delta)^\alpha = \hat{\Lambda}^\alpha_- \, \hat{\Lambda}^\alpha_+.
\end{equation*}

Since $(\big \langle \xi' \rangle - i ( \xi_n + i \tau)  \big)^t = (\big \langle \xi' \rangle + \tau - i \xi_n  \big)^t$ is analytic for $\tau > 0$, from Theorem 1.9, p. 52, \cite{Shar}, for any $s, \, t \in \mathbb{R}$,
\begin{equation} \label{hatLambdaminus}
\hat{\Lambda}^t_+: \widetilde{H}^{s}(\overline{\mathbb{R}^n_+}) \to \widetilde{H}^{s-t}(\overline{\mathbb{R}^n_+}),
\end{equation}
is bounded and, moreover, has inverse $\hat{\Lambda}^{-t}_+$. In particular, we note that $\hat{\Lambda}^t_+$ preserves support in ${\overline{\mathbb{R}^n_+}}$. \\

Similarly, as $(\big \langle \xi' \rangle + i ( \xi_n - i \tau)  \big)^t = (\big \langle \xi' \rangle + \tau + i \xi_n  \big)^t$ is analytic for $\tau > 0$, from Theorem 1.10, p. 53, \cite{Shar}, for any $s, \, t \in \mathbb{R}$,
\begin{equation} \label{hatLambdaplus}
r_+ \, \hat{\Lambda}^t_- l_+ : H^s (\overline{\mathbb{R}^n_+}) \to H^{s-t}(\overline{\mathbb{R}^n_+}),
\end{equation}
is bounded, where $l_+$ is the extension operator. In particular, we note from Remark 1.11, p. 53, \cite{Shar}, that $(r_+ \, \hat{\Lambda}^t_- l_+ ) \, r_+ = r_+ \, \hat{\Lambda}^t_-$.\\

The model Dirichlet problem is
\begin{equation} \label{modelDirichlet}
r_+ \, (I - \Delta)^\alpha u = f  \quad \text{on } \mathbb{R}^n_+, \quad \text{supp } u \subset \overline{\mathbb{R}^n_+},
\end{equation}
where, by hypothesis, $f \in H^t(\overline{\mathbb{R}^n_+})$ for some $t \geq 0$, and we seek a solution $u \in H^{\alpha (t + 2 \alpha)}(\overline{\mathbb{R}^n_+})$. Now
\begin{equation*}
r_+ \, (I - \Delta)^\alpha u  = r_+ \, \hat{\Lambda}^\alpha_- \, \hat{\Lambda}^\alpha_+ u 
= r_+ \, \hat{\Lambda}^\alpha_- \, l_+r_+ \, \hat{\Lambda}^\alpha_+ u.
\end{equation*}

But, it is easy to see from Remark 1.11, p. 53, \cite{Shar}, that $(r_+ \, \hat{\Lambda}^t_- l_+)^{-1} = r_+ \, \hat{\Lambda}^{-t}_- l_+$ and thus \eqref{modelDirichlet} can be reduced to 
\begin{equation} \label{modelDirichletreduced}
r_+ \,   \hat{\Lambda}^\alpha_+ u =  g  \quad \text{on } \mathbb{R}^n_+, \quad \text{supp } u \subset \overline{\mathbb{R}^n_+},
\end{equation}
where 
\begin{equation*}
g := r_+ \, \hat{\Lambda}^{-\alpha}_- l_+ f \in H^{t+\alpha}(\overline{\mathbb{R}^n_+}).
\end{equation*}
We note that, by Theorem 1.10, p. 53, \cite{Shar}, $g$ is independent of the choice of the extension $l_+$.  \\

Now suppose that \eqref{modelDirichletreduced} has two solutions $u_1$ and $u_2$ with $\operatorname{supp} u_1, \,\, \operatorname{supp} u_2 \subset \overline{\mathbb{R}^n_+}$. Let $v = u_1 - u_2$. Then 
\begin{equation*} 
r_+ \,   \hat{\Lambda}^\alpha_+ v =  0  \quad \text{on } \mathbb{R}^n_+, \,\, \text{ and  } \operatorname{supp} v \subset \overline{\mathbb{R}^n_+}.
\end{equation*}
Hence, see \eqref{hatLambdaminus}, $\hat{\Lambda}^\alpha_+ v =  0$ on $\mathbb{R}^n$, and thus $v=0$. In other words, if a solution to \eqref{modelDirichletreduced} does exist, then it is unique. \\

But now it is easy to see, by direct substitution, that \eqref{modelDirichletreduced} has the solution
\begin{equation*}
u = \hat{\Lambda}^{-\alpha}_+ e_+ g.
\end{equation*}

Thus, \eqref{modelDirichlet} has a unique solution $u$, and it lies in 
\begin{equation} \label{Hormanderspace}
H^{\alpha (t + 2 \alpha)}(\overline{\mathbb{R}^n_+}) := \hat{\Lambda}^{-\alpha}_+ (e_+ H^{t+\alpha}(\overline{\mathbb{R}^n_+})) ,
\end{equation}
which is known as \textit{H\"ormander's space}. In particular, we note that if $t + \alpha > \tfrac{1}{2}$ then functions in the space $e_+ \,  H^{t+\alpha} (\overline{\mathbb{R}^n_+})$ may have a jump at $x_n=0$. This gives rise to a singularity when the operator $\hat{\Lambda}^{-\alpha}_+$ is applied.\\

\end{proof}

\begin{remark} \label{HormanderspaceRemark}
The following relationships, see \cite{Grubb2014,Grubb2015}, provide a useful characterisation of $H^{\alpha(t + 2\alpha)} (\overline{\mathbb{R}^n_+})$:
\[
H^{\alpha(t + 2\alpha)} (\overline{\mathbb{R}^n_+})
\begin{cases} 
= \widetilde{H}^{t+2\alpha}(\overline{\mathbb{R}^n_+})  & \mbox{if } -\tfrac{1}{2} < t + \alpha < \tfrac{1}{2}; \\ 
\subset  e_+ \, x^\alpha_n \, H^{t+\alpha} (\overline{\mathbb{R}^n_+}) + \widetilde{H}^{t+2\alpha - \epsilon}(\overline{\mathbb{R}^n_+})  & \mbox{if }  t + \alpha > \tfrac{1}{2}, \\ 
\end{cases} 
\]
where the term $-\epsilon$ only applies if $t + \alpha - \tfrac{1}{2} \in \mathbb{N}$. \\
\end{remark}

\chapter{Fractional Calculus} \label{Appendix FC}
The goal of this appendix is summarise some useful components of the Fractional Calculus. The authoritative text on this subject is Samko et al., \cite{Samko}. However, given that this book is out of print, supplementary technical references are taken from a more recent work by Diethelm \cite{Diethelm}. \\

Suppose $n \in \mathbb{N}$. Then, from equation (2.16), p. 33, \cite{Samko}, we have the following formula for the $n$-fold integral
\begin{equation*}
\int^x_a \int^{\sigma_1}_a \cdots \int^{\sigma_{n-1}}_a f(\sigma_n) \, d\sigma_n  \cdots d\sigma_1
= \dfrac{1}{(n-1)!} \int^x_a (x-t)^{n-1} \varphi(t) \, dt,
\end{equation*} 
where, of course, $(n-1)! = \Gamma(n)$. This provides the motivation for the following definition. \\

Let $-\infty \leq a < x < b \leq \infty$. The \textit{Riemann-Liouville} fractional integral of order $\alpha > 0$ with lower limit $a$ is defined for locally integrable functions $f:[a,b] \to \mathbb{R}$ as
\begin{equation} \label{defineIalpha}
(I^\alpha_{a^+} f)(x) := \dfrac{1}{\Gamma(\alpha)} \int^x_a \dfrac{f(y)}{(x-y)^{1-\alpha}} \, dy, \quad x>a.
\end{equation}
See Definition 2.1, equation (2.17), p. 33, \cite{Samko}. (Also, Definition 2.1, p. 13,  \cite{Diethelm}.) \\

In particular, as expected, we have
\begin{equation} \label{I1andI2}
(I^1_{a^+} f)(x) = \int^x_a f(y) \, dy \quad \text{and} \quad (I^2_{a^+} f)(x) = \int^x_a \bigg (\int^y_a f(t) \, dt \bigg ) \, dy.
\end{equation}

Moreover, fractional integration has the property that
\begin{equation} \label{IaIb}
I^\alpha_{a^+} I^\beta_{a^+} f = I^{\alpha+\beta}_{a^+} f,  \quad \text{for} \,\,  \alpha, \beta >0,
\end{equation}
see equation (2.21), p. 34, \cite{Samko}. (See also Corollary 2.3, p. 14, \cite{Diethelm}.) \\


Suppose $\alpha > 0$ and $n = [\alpha]+1$. We now define the \textit{Caputo} fractional derivative of order $\alpha$ with lower limit $a$ as
\begin{equation} \label{Caputo2}
(C^\alpha_{a^+}f)(x) := (I^{n-\alpha}_{a^+} f^{(n)})(x),
\end{equation}
for sufficiently smooth functions $f$. (See Definition 3.1, p. 49, \cite{Diethelm}.) In the special case that $\alpha =1$, we have 
\begin{equation} \label{C1af}
(C^1_{a^+}f)(x) = I^{1}_{a^+} f''(x) = \int^x_a f''(t) \, dt = f'(x) - f'(a). 
\end{equation}

Suppose $0 < \alpha < 1$. Then
\begin{equation*}
I^\alpha_{a^+} C^\alpha_{a^+} f = I^\alpha_{a^+} \big (  I^{1-\alpha}_{a^+} f'  \big )
= I^1_{a^+} f' 
= \int^x_a f'(t) \, dt 
= f(x) -f(a).
\end{equation*}

Similarly, if $1 \leq \alpha < 2$, then 
\begin{equation*}
I^\alpha_{a^+} C^\alpha_{a^+} f 
= I^2_{a^+} f'' \\
= \int^x_a \bigg (\int^y_a f''(t) \, dt \bigg ) \, dy
= f(x) - f(a) - (x-a) f'(a).
\end{equation*} \\

In summary, 
\begin{equation} \label{TaylorCase1}
f(x)-f(a) = I^\alpha_{a^+}C^\alpha_{a^+}f(x) \qquad (0 < \alpha < 1),
\end{equation}
and 
\begin{equation} \label{TaylorCase2}
f(x)-f(a)-f'(a)(x-a) = I^\alpha_{a^+}C^\alpha_{a^+}f(x) \qquad (1 \leq \alpha < 2). 
\end{equation} \\

Finally, let $I^\alpha_+ := I^\alpha_{(-\infty)^+}$. Then, for $0 < \alpha < 1$,
\begin{equation} \label{FTI+alpha2}
\mathcal{F} (I^\alpha_+ f) = \dfrac{ (\mathcal{F}f)(\xi)} {{(-i\xi)}^\alpha},
\end{equation}
as given in equation (7.1), p. 137, \cite{Samko}.


\chapter{Fourier transform results} \label{Appendix FT}
As previously, we define the \textit{Fourier Transform} on the Schwartz space, $S(\mathbb{R})$, of rapidly decaying infinitely differentiable functions $\varphi$ by
\begin{equation*}
(\mathcal{F} \varphi)(\xi) = \dfrac{1}{\sqrt{2\pi}} \int_{\mathbb{R}^n} e^{+i\xi \cdot x} \varphi(x) \, dx, \quad \xi \in \mathbb{R}. 
\end{equation*}

Let $S'(\mathbb{R})$ denote the corresponding space of tempered distributions. Then, see equation (2.28), p. 22, \cite{Es}, the Fourier transform of $f \in S'(\mathbb{R})$ is the tempered distribution $\widehat{f} \in S'(\mathbb{R})$, such that
\begin{equation*}
\langle \, \widehat{f}, \, \widehat{\varphi} \, \rangle = \langle f, \varphi \rangle \quad \text{for all  } \varphi \in S(\mathbb{R}).
\end{equation*}
As \cite{Es}, for $f \in S'(\mathbb{R})$, we adopt the convention that  $\mathcal{F} f := \widehat{f}$. \\

From 17.23, p. 1118, \cite{GR}, we have
\begin{align}
\mathcal{F} (1) & = \sqrt{2 \pi} \delta(\xi) \\
\mathcal{F} (\delta(x)) & = 1/ \sqrt{2 \pi} \\
\mathcal{F} (\chi_{\mathbb{R}_+}(x)) & = \dfrac{i}{\sqrt{2 \pi}} \, \dfrac{1}{\xi} + \sqrt{\dfrac{\pi}{2}} \, \delta(\xi).
\end{align}

Hence, given the identities $\chi_{\mathbb{R}_-} = 1 - \chi_{\mathbb{R}_+}$ and $\sgn = 2 \chi_{\mathbb{R}_+} - 1$, we can easily deduce
\begin{align}
\mathcal{F}(\chi_{\mathbb{R}_-}(x)) & =  - \dfrac{i}{\sqrt{2 \pi}} \, \dfrac{1}{\xi} + \sqrt{\dfrac{\pi}{2}} \, \delta(\xi) \\
\mathcal{F}(\sgn(x)) & =  i \sqrt{\dfrac{2}{\pi}} \, \dfrac{1}{\xi}.
\end{align}

\chapter{Technical lemma} \label{AppendixTechLemm} 
Suppose $0 < \mu < 1$. It will be convenient to define
\begin{equation} \label{defwminus}
w_{-, \mu}(x) := \mathcal{F}^{-1} (- i \xi)^{-\mu} (\xi - i)^{-1}, \quad  x \in \mathbb{R}.
\end{equation} 

Further, let $\chi \in C^\infty_0(\mathbb{R})$ be such that
\begin{align*}
&\chi(t) :=
\begin{cases} 
	1 &\mbox{if }  |t| \leq 2\\
	0 & \mbox{if } |t| > 3. \\
\end{cases}
\end{align*}
Further take $\chi_1, \chi_2 \in C_0^\infty(\mathbb{R})$ such that $\chi_1\chi = \chi$, $\chi_2\chi_1 = \chi_1$. (That is, $\chi_1 = 1$ on supp $\chi$, and $\chi_2 = 1$ on supp $\chi_1$.) \\

Given these definitions, the key result of this Appendix is:
\begin{lemma} \label{lemmapushchi}
Suppose $0 < \mu < 1, \,\,1 < p < \infty$ and $r < \mu + 1/p$. Then
\begin{equation*}
(D - i)^r \chi w_{-, \mu} \in L_p(\mathbb{R}).
\end{equation*}
\end{lemma}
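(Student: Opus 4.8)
The plan is to peel the cutoff $\chi$ off from under the operator $(D-i)^r$ by a combination of pseudolocality and the already-established Lemma~\ref{Hrcondition}, which handles the same operator with the cutoff on the \emph{outside}. Concretely, I would write
\begin{equation*}
(D-i)^r \chi w_{-, \mu} = \chi_2 (D-i)^r (\chi w_{-, \mu}) + (1-\chi_2)(D-i)^r(\chi w_{-, \mu}),
\end{equation*}
and treat the two terms separately. For the second term the point is that $\chi w_{-,\mu}$ is bounded (the symbol $(-i\xi)^{-\mu}(\xi-i)^{-1}$ lies in $L_1(\mathbb{R})$ since $0<\mu<1$) and compactly supported, while $1-\chi_2$ vanishes on a neighbourhood of $\operatorname{supp}\chi$ (this is what the conditions $\chi_1\chi=\chi$, $\chi_2\chi_1=\chi_1$ buy us, read as equalities on neighbourhoods). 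The convolution kernel $K_r=\mathcal{F}^{-1}(\xi-i)^r$ of $(D-i)^r$ is supported in $(-\infty,0]$ and is, away from the origin, a $C^\infty$ function with $|\partial^j_x K_r(x)|\le C_j|x|^{-r-1-j}e^{x}$ for $x<0$; hence $(1-\chi_2)(D-i)^r(\chi w_{-,\mu})$ is a smooth function, supported in a half-line and decaying exponentially, so it lies in $L_p(\mathbb{R})$. Establishing these kernel estimates (support on $(-\infty,0]$ from a Paley--Wiener argument using the analytic continuation of $(\xi-i)^r$ to the lower half-plane, exponential decay from the branch point at $\xi=i$) is the technical step I would carry out first.

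For the first term I would split $\chi w_{-,\mu}=w_{-,\mu}-(1-\chi)w_{-,\mu}$, so that
\begin{equation*}
\chi_2(D-i)^r(\chi w_{-,\mu}) = \chi_2(D-i)^r w_{-,\mu} - \chi_2(D-i)^r\big((1-\chi)w_{-,\mu}\big).
\end{equation*}
The first summand is $\chi_2$ times $(D-i)^r w_{-,\mu}$, which is in $L_p(\mathbb{R})$ by Lemma~\ref{Hrcondition} applied with the test cutoff $\chi_2$, precisely under the hypothesis $r<\mu+1/p$. For the second summand, the key observation is that $g:=(1-\chi)w_{-,\mu}$, whose support avoids a neighbourhood of $x=0$, is a genuine $C^\infty$ function on $\mathbb{R}$ behaving like a symbol of negative order in $x$: splitting $a(\xi)=(-i\xi)^{-\mu}(\xi-i)^{-1}$ as $\phi a+(1-\phi)a$ with $\phi\in C_0^\infty$, $\phi\equiv 1$ near $\xi=0$, one gets $\mathcal{F}^{-1}(\phi a)\in C^\infty(\mathbb{R})$ with $|\partial^k\mathcal{F}^{-1}(\phi a)(x)|\le C_k\langle x\rangle^{\mu-1-k}$ (the $|\xi|^{-\mu}$ singularity of $a$ at $\xi=0$ only governs the decay as $x\to\infty$), while $\mathcal{F}^{-1}((1-\phi)a)$ is $C^\infty$ away from $x=0$ with rapid decay. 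Consequently $|\partial^k g(x)|\le C_k\langle x\rangle^{\mu-1-k}$ for all $k\ge0$, so $\widehat g$ decays rapidly at $\infty$ and is $O(|\xi|^{-\mu})$ near $\xi=0$; then $(\xi-i)^r\widehat g\in L_1(\mathbb{R})$, whence $(D-i)^r g=\mathcal{F}^{-1}\big((\xi-i)^r\widehat g\big)$ is bounded and continuous, and multiplying by the compactly supported $\chi_2$ puts it in $L_p(\mathbb{R})$.

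Adding the three contributions gives $(D-i)^r\chi w_{-,\mu}\in L_p(\mathbb{R})$. Equivalently, one can commute: $(D-i)^r\chi w_{-,\mu}=\chi(D-i)^r w_{-,\mu}+[(D-i)^r,\chi I]w_{-,\mu}$, the first term being $\chi w_{-,\mu,r}$ in the notation of Corollary~\ref{InverFTmu} and hence in $L_p$ since $r<\mu+1/p$, while $[(D-i)^r,\chi I]$ is an order-$(r-1)$ pseudodifferential operator bounded $H^{r-1}_p\to L_p$ and applied to $\chi_1 w_{-,\mu}\in H^{r-1}_p$ (plus a pseudolocal tail), using the same structural facts about $w_{-,\mu}$. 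I expect the main obstacle to be precisely the analysis of $(1-\chi)w_{-,\mu}$ and of $K_r$ away from the origin: one must make precise that the cutoff isolates the unique singularity of $\chi w_{-,\mu}$, located at $x=0$, whose $L_p$-admissibility is exactly the condition $r<\mu+1/p$ furnished by Corollary~\ref{InverFTmu}, while everything else is smooth with controlled decay. The behaviour at $x=0$ itself requires no new work, being already encoded in Lemma~\ref{Hrcondition} and Corollary~\ref{InverFTmu}.
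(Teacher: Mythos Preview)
Your approach is correct and follows the same overall strategy as the paper: split using the nested cutoffs, handle disjoint-support terms by pseudolocality (kernel decay of $(D-i)^r$), and use Lemma~\ref{Hrcondition} for the main singular term $\chi_j(D-i)^r w_{-,\mu}$. The difference lies in how the residual term is arranged. You decompose $\chi=1-(1-\chi)$, leaving $g=(1-\chi)w_{-,\mu}$, which is smooth but \emph{not} compactly supported; this forces you to establish the symbol-like estimate $|\partial^k g(x)|\le C_k\langle x\rangle^{\mu-1-k}$ for large $|x|$ (correct, via the explicit convolution representation $w_{-,\mu}(x)=C\int_0^\infty(z+x)^{\mu-1}e^{-z}dz$ for $x>0$ and exponential decay for $x<0$). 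The paper instead splits $\chi=(\chi-\chi_2)+(\chi_2-1)+1$ under a $\chi_1$ outside, so the residual becomes $(\chi-\chi_2)w_{-,\mu}$, which is $C^\infty_0$ since $\chi-\chi_2$ vanishes near $0$ and is compactly supported; this requires only \emph{local} smoothness of $w_{-,\mu}$ on an annulus (Lemma~\ref{chichi2w}), avoiding any analysis at infinity. Both routes close; the paper's three-layer cutoff trick is more economical, while yours trades that for a direct (and reusable) description of the large-$x$ behaviour of $w_{-,\mu}$.
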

\begin{proof}
We have
\begin{align*}
(D - i)^r \chi w_{-, \mu}  =  \,\, (1 - & \chi_1)  (D - i)^r \chi w_{-, \mu} + \chi_1(D - i)^r \chi w_{-, \mu} \\
=  \,\, (1 - & \chi_1)  (D - i)^r \chi w_{-, \mu} + \chi_1(D - i)^r (\chi - \chi_2) w_{-, \mu} \\
&+ \chi_1(D - i)^r (\chi_2 - 1) w_{-, \mu} + \chi_1(D - i)^r  w_{-, \mu} .
\end{align*}
From Lemma \ref{1mchiDw},
\begin{equation*}
(1 - \chi_1) (D - i)^r \chi w_{-, \mu} \,\, \text{and} \,\, \chi_1(D - i)^r (\chi_2 - 1) w_{-, \mu} \in L_p(\mathbb{R}).
\end{equation*} 
Moreover, from Lemma \ref{chiDrchi},
\begin{equation*}
 \chi_1(D - i)^r (\chi - \chi_2) w_{-, \mu} \in L_p(\mathbb{R}).
\end{equation*}
But, from Lemma \ref{Hrcondition},
\begin{equation*}
\chi_1 (D - i)^r w_{-, \mu} \in L_p(\mathbb{R}),
\end{equation*}
and the required result follows immediately. \\
\end{proof}

Our first task is to prove Lemma \ref{1mchiDw}. We begin with a definition.
\begin{definition}
Let $m \in \mathbb{R}$. Then, we say that $a \in S^m$, if $a= a(\xi)$ is smooth on $\mathbb{R}$ and if
\begin{equation*}
|\partial^\beta a(\xi)| \leq C_\beta (1+ |\xi|)^{m-\beta} \quad \text{for all} \quad \beta \in \mathbb{N}\cup\{0\}, \,\, \xi \in \mathbb{R},
\end{equation*}
for certain constants $C_\beta$, that only depend on $\beta$. \\
\end{definition}

\begin{remark} \label{kernelinverseFT}
Let $A$ be a pseudodifferential operator with symbol $a \in S^m$, for some $m \in \mathbb{R}$. Then, for $u \in S(\mathbb{R})$,
\begin{equation*}
A u  =  \mathcal{F}^{-1} a \mathcal{F} u
 = \mathcal{F}^{-1} \big ( \mathcal{F}(\mathcal{F}^{-1}a )\mathcal{F} u \big )
=  k * u
\end{equation*}
where 
\begin{equation} \label{Kernelkdefn}
k := \dfrac{1}{\sqrt{2 \pi}} \mathcal{F}^{-1}a.
\end{equation} \\
\end{remark}

\begin{lemma} \label{Kerneldecay}
Suppose $a \in S^m$ for some $m \in \mathbb{R}$.  Then the kernel,  $k(z)$, satisfies
\begin{equation*}
|\partial^\beta k(z) | \leq \text{ const } |z|^{-N}
\end{equation*}
for $N > m+1+ \beta$ and $z \not =0 $. Thus, for $z \not = 0$, the kernel $k(z)$ is a smooth function which is rapidly decaying as $|z| \to \infty$. \\
\end{lemma}

\begin{proof}
Let $N \in \mathbb{N} \cup \{ 0 \}$. Then, from \eqref{Kernelkdefn},
\begin{equation*}
(i z)^N \partial^\beta k(z) = \tfrac{1}{\sqrt{2 \pi}}{\mathcal{F}}^{-1} \partial^N [ (-i \xi)^\beta a(\xi) ]. \\
\end{equation*}

Since $(-i \xi)^\beta a(\xi) \in S^{m+\beta}$, we have  the upper bound
\begin{equation*}
|\partial^N [ (-i \xi)^\beta a(\xi) ]| \leq C_{N, \beta} (1 + |\xi|)^{m+\beta - N}.
\end{equation*}
and hence, $\partial^N [ (-i \xi)^\beta a(\xi) ] \in L_1(\mathbb{R})$, provided $N > m + 1 + \beta$. \\

Therefore, its inverse Fourier transform is bounded. In other words,
\begin{equation*}
(i z)^N \partial^\beta k(z) \in L_\infty(\mathbb{R}) \quad \text{for} \quad N > m + 1 + \beta.
\end{equation*}
\end{proof}

\begin{remark} \label{1minuschiA}
For $u \in S(\mathbb{R})$,
\begin{equation*}
((1-\chi_1) A \chi u)(x)
=  \int_\mathbb{R} (1- \chi_1(x)) \chi(y) \, k(x-y) u(y) \, dy. 
\end{equation*}
By the definition of $\chi$ and $\chi_1$, if $x=y$ then $(1- \chi_1(x)) \chi(y) \, k(x-y) = 0$. Therefore, from Lemma \ref{Kerneldecay},  the integral kernel $(1- \chi_1(x)) \chi(y) k(x-y)$ is smooth, bounded and rapidly decaying as $|x-y| \to \infty$. \\

Similarly, 
\begin{equation*}
(\chi_1 A (\chi_2-1) u)(x)
=  \int_\mathbb{R} \chi_1(x) (\chi_2(y)-1)  \, k(x-y) u(y) \, dy, 
\end{equation*}
and the integral kernel $\chi_1(x) (\chi_2(y)-1) k(x-y)$ is smooth, bounded and rapidly decaying as $|x-y| \to \infty$. \\
\end{remark}

\begin{lemma} \label{1mchiDw}
Suppose $r \in \mathbb{R}$ and $1 < p < \infty$. Then
\begin{equation*}
(1-\chi_1)(D-i)^r \chi w_{-, \mu} \,\, \text{and} \,\,  \chi_1(D - i)^r (\chi_2 - 1) w_{-, \mu} \in L_p(\mathbb{R}).
\end{equation*}
\end{lemma}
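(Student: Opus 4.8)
The plan is to represent each of the two operators by an honest integral kernel which, thanks to the disjoint-support conditions on the cut-offs, carries none of the diagonal singularity of the pseudodifferential kernel, and then to read off $L_p$-membership from the rapid decay of that kernel.

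I would begin by collecting the ingredients. The symbol $(\xi-i)^r$ satisfies $|\partial^\beta(\xi-i)^r|\leq C_\beta(1+|\xi|)^{r-\beta}$ for every $\beta$, so $(\xi-i)^r\in S^r$; hence $(D-i)^r$ is a pseudodifferential operator with symbol in $S^r$ and, by Remark \ref{kernelinverseFT}, its convolution kernel is $k:=\tfrac{1}{\sqrt{2\pi}}\mathcal{F}^{-1}(\xi-i)^r$, which by Lemma \ref{Kerneldecay} is smooth away from the origin and, together with all its derivatives, rapidly decaying as $|z|\to\infty$. Moreover $(D-i)^r$, being a Fourier multiplier with polynomially bounded smooth symbol, is continuous on $S'(\mathbb{R})$. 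On the other side, since $0<\mu<1$ the symbol $(-i\xi)^{-\mu}(\xi-i)^{-1}$ lies in $L_1(\mathbb{R})$ — it is $O(|\xi|^{-\mu})$ near $0$ and $O(|\xi|^{-\mu-1})$ at infinity — so $w_{-,\mu}$ is a bounded continuous function, hence a locally bounded tempered distribution; this is also contained in Corollary \ref{InverFTmu} applied with $r=0$.

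For the first operator I would use that $\chi_1\equiv1$ on $\operatorname{supp}\chi$, so whenever $\chi_1(x)\neq1$ the point $x$ lies at positive distance from $\operatorname{supp}\chi$ and $y\mapsto(1-\chi_1(x))\chi(y)k(x-y)$ is a $C_0^\infty$ function of $y$ — the singularity of $k$ at the origin is never attained, and its Schwartz seminorms decay like $|x|^{-N}$ for every $N$ as $|x|\to\infty$. Remark \ref{1minuschiA} records this kernel representation for inputs in $S(\mathbb{R})$, and I would extend it to $u=w_{-,\mu}$ by approximating $w_{-,\mu}$ in $S'(\mathbb{R})$ by Schwartz functions and using the above seminorm decay together with dominated convergence, obtaining
\begin{equation*}
\big((1-\chi_1)(D-i)^r\chi\,w_{-,\mu}\big)(x)=(1-\chi_1(x))\int_{\mathbb{R}}\chi(y)\,k(x-y)\,w_{-,\mu}(y)\,dy .
\end{equation*}
Where $1-\chi_1(x)\neq0$ one has $|x-y|\geq|x|-3$ for $y\in\operatorname{supp}\chi\subseteq[-3,3]$, on which $w_{-,\mu}$ is bounded; the rapid decay of $k$ then makes the right-hand side $O(|x|^{-N})$ for every $N$ as $|x|\to\infty$ and bounded on the compact transition set, hence it lies in $L_p(\mathbb{R})$ for every $p\in[1,\infty]$. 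The second operator is entirely analogous: $\chi_2\equiv1$ on $\operatorname{supp}\chi_1$, so for $x\in\operatorname{supp}\chi_1$ the map $y\mapsto(\chi_2(y)-1)k(x-y)$ is smooth (again avoiding the singularity of $k$) and, being a bounded smooth function with bounded derivatives times a rapidly decaying one, belongs to $S(\mathbb{R})$; pairing against $w_{-,\mu}$ — after the same cut-off-and-limit justification, now replacing $w_{-,\mu}$ by $\psi_N w_{-,\mu}$ so that the input becomes compactly supported — gives
\begin{equation*}
\big(\chi_1(D-i)^r(\chi_2-1)\,w_{-,\mu}\big)(x)=\chi_1(x)\int_{\mathbb{R}}(\chi_2(y)-1)\,k(x-y)\,w_{-,\mu}(y)\,dy ,
\end{equation*}
the integral converging absolutely because $w_{-,\mu}$ is bounded and $\|k(x-\cdot)\|_{L_1}$ is uniformly bounded for $x$ in the compact set $\operatorname{supp}\chi_1$; the right-hand side is supported in $\operatorname{supp}\chi_1$ and bounded there, hence in $L_p(\mathbb{R})$ for every $p$.

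The step I expect to require the most care is making these two kernel representations rigorous for the merely bounded — and, in the second case, non-compactly supported — input $w_{-,\mu}$: one must check that the diagonal singularity of $k$ is genuinely annihilated by the disjoint supports, and justify interchanging the Fourier-multiplier action of $(D-i)^r$ with the resulting convergent integral, which is where the uniform seminorm/decay estimates and the cut-off approximation do the work. Everything else is routine bookkeeping with symbol classes and rapid decay.
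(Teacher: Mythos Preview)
Your proposal is correct and follows essentially the same route as the paper: both use that $w_{-,\mu}$ is bounded and continuous (as the inverse Fourier transform of an $L_1$ function) together with the fact, recorded in Remark~\ref{1minuschiA}, that the integral kernels of $(1-\chi_1)(D-i)^r\chi$ and $\chi_1(D-i)^r(\chi_2-1)$ are smooth, bounded and rapidly decaying because the disjoint-support cut-offs kill the diagonal singularity of $k$. The paper's proof is terser --- it simply cites Remark~\ref{1minuschiA} and says ``Hence result'' --- whereas you are more explicit about extending the kernel representation from Schwartz inputs to the merely bounded $w_{-,\mu}$; this extra care is appropriate but does not represent a different strategy.
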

\begin{proof}
Since $w_{-, \mu}(x)$ is the inverse Fourier transform of an integrable function, it is bounded, continuous and tends to zero as $|x| \to \infty$. \\

From Remark \ref{1minuschiA}, the kernels of the integral operators $(1-\chi_1)(D-i)^r \chi$ and $ \chi_1(D - i)^r (\chi_2 - 1)$ are smooth, bounded and rapidly decaying as $|x-y| \to \infty$. Hence result. \\
\end{proof}

Finally, we prove Lemma \ref{chiDrchi}. We begin with a simple result.

\begin{lemma} \label{chichi2w}
\begin{equation*}
(\chi - \chi_2)w_{-, \mu} \in C^\infty_0(\mathbb{R}_+).
\end{equation*}
\end{lemma}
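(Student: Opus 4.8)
The claim is that $(\chi - \chi_2)w_{-,\mu} \in C^\infty_0(\mathbb{R}_+)$, where $w_{-,\mu}(x) = \mathcal{F}^{-1}(-i\xi)^{-\mu}(\xi-i)^{-1}$, and $\chi, \chi_1, \chi_2$ are the cut-offs introduced just above the statement, satisfying $\chi_1\chi = \chi$, $\chi_2\chi_1 = \chi_1$, with $\chi(t) = 1$ for $|t| \le 2$, $\chi(t) = 0$ for $|t| > 3$. The first thing I would pin down is the support: since $\chi_2 = 1$ on $\mathrm{supp}\,\chi_1 \supseteq \mathrm{supp}\,\chi$, the function $\chi - \chi_2$ vanishes on $\mathrm{supp}\,\chi$, in particular on a neighbourhood of the origin (it vanishes wherever $\chi = 1$, i.e. for $|t| \le 2$), and it is compactly supported because both $\chi$ and $\chi_2$ are. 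So $\chi - \chi_2$ is a $C^\infty_0(\mathbb{R})$ function vanishing identically near $x = 0$; hence its support is a compact subset of $\mathbb{R}\setminus\{0\}$. To land in $C^\infty_0(\mathbb{R}_+)$ I would additionally need the support to sit in $\mathbb{R}_+$ — this requires knowing that $w_{-,\mu}$ (or at least the relevant product) is supported in $\overline{\mathbb{R}_+}$, which follows because $(-i\xi)^{-\mu}(\xi-i)^{-1}$ extends analytically into the lower half-plane (both factors do), so by the Paley–Wiener type argument used repeatedly in the paper (cf. the discussion around Lemma \ref{lemma:rLambdae} and Theorem 1.9 of \cite{Shar}), $\mathrm{supp}\, w_{-,\mu} \subseteq \overline{\mathbb{R}_+}$. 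Intersecting a compact subset of $\mathbb{R}\setminus\{0\}$ with $\overline{\mathbb{R}_+}$ gives a compact subset of $\mathbb{R}_+$.

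The remaining point is smoothness: I must show $(\chi - \chi_2)w_{-,\mu}$ is $C^\infty$. Away from $x = 0$ the function $w_{-,\mu}$ is smooth. The cleanest way to see this is the standard pseudodifferential-operator argument already deployed in Lemma \ref{Kerneldecay} and Remark \ref{1minuschiA}: the symbol $(-i\xi)^{-\mu}(\xi-i)^{-1}$ differs from an element of $S^{-1-\mu}$ only by its behaviour near $\xi = 0$; more precisely, writing $(-i\xi)^{-\mu}(\xi-i)^{-1} = \eta(\xi)(-i\xi)^{-\mu}(\xi-i)^{-1} + (1-\eta(\xi))(-i\xi)^{-\mu}(\xi-i)^{-1}$ for a cut-off $\eta$ equal to $1$ near $0$, the second piece is a classical symbol of order $-1-\mu$, whose inverse Fourier transform is smooth and rapidly decaying away from the origin (Lemma \ref{Kerneldecay}), while the first piece is compactly supported in $\xi$ and integrable (since $\mu < 1$), so its inverse Fourier transform is an entire function of $x$, in particular $C^\infty$. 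Hence $w_{-,\mu} \in C^\infty(\mathbb{R}\setminus\{0\})$. Multiplying by $\chi - \chi_2 \in C^\infty_0(\mathbb{R})$, which vanishes on a neighbourhood of $0$, kills any singularity that $w_{-,\mu}$ might have at $x = 0$, so the product is genuinely $C^\infty$ on all of $\mathbb{R}$, with compact support in $\mathbb{R}_+$ by the preceding paragraph. This yields $(\chi - \chi_2)w_{-,\mu} \in C^\infty_0(\mathbb{R}_+)$.

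I do not expect any serious obstacle here; the lemma is a routine packaging of facts already established in the paper. The only mild care needed is the interplay of two separate reasons for the singular behaviour of $w_{-,\mu}$ — the factor $(\xi-i)^{-1}$ contributes decay but is smooth, whereas $(-i\xi)^{-\mu}$ is the source of the low-regularity behaviour at $x = 0$ (cf. Corollary \ref{InverFTmu} and Lemma \ref{Hrcondition}, which quantify exactly that $w_{-,\mu}$ is bounded away from $0$ and behaves like $O(1)$ or $O(\log|x|)$ near $0$ depending on the branch). Since $\chi - \chi_2$ annihilates everything near $x = 0$, none of this delicate near-zero behaviour survives, and the argument closes cleanly. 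If one wanted to avoid the symbol decomposition entirely, an alternative is to invoke Corollary \ref{InverFTmu} directly to get $w_{-,\mu} \in C^\infty(\mathbb{R}\setminus\{0\})$ together with the support statement, then multiply by the cut-off; I would present whichever is shorter in context.
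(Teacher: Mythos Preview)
Your proof is correct, but it takes a different route from the paper. The paper argues by an explicit inductive computation of the derivatives: it shows via the Fourier-side recurrence
\[
\mathcal{F} w^{(m+1)}_{-,\mu} = -i(-i\xi)^{m-\mu} + \mathcal{F} w^{(m)}_{-,\mu}
\]
that each $w^{(m)}_{-,\mu}$ is, on $\mathbb{R}_+$, equal to $w_{-,\mu}$ plus a finite sum of explicit powers $x^{k-\mu}$, and hence $(\chi-\chi_2)w^{(m)}_{-,\mu}$ is continuous with compact support for every $m$. You instead split the symbol as a compactly supported $L^1$ piece near $\xi=0$ (inverse transform entire in $x$) plus a genuine $S^{-1-\mu}$ piece (inverse transform smooth off the origin by Lemma \ref{Kerneldecay}), obtaining $w_{-,\mu}\in C^\infty(\mathbb{R}\setminus\{0\})$ in one stroke. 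Your approach is more conceptual and reuses the kernel machinery already set up in the appendix; the paper's is more elementary and self-contained. You also make the support-in-$\overline{\mathbb{R}_+}$ step explicit via analyticity of the symbol in the lower half-plane, which the paper's proof leaves implicit. Either argument is perfectly adequate here.
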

\begin{proof}
By definition,
\begin{equation*}
\mathcal{F} w_{-, \mu} = \dfrac{(-i \xi)^{-\mu}}{\xi - i},
\end{equation*}
where $0<\mu <1$. Since $w_{-, \mu}(x)$ is the inverse Fourier transform of an integrable function, it is continuous for all $x \in \mathbb{R}$. Now
\begin{equation*}
\mathcal{F} w'_{-, \mu} = \dfrac{(-i \xi) (-i \xi)^{-\mu}}{\xi - i} = \dfrac{(-i (\xi -i) + 1) (-i \xi)^{-\mu}}{\xi - i}= -i (-i\xi)^{-\mu} +  \mathcal{F} w_{-, \mu},
\end{equation*}
so that, from equation \eqref{FTmodxpowera}, for $x>0$,
\begin{equation*}
w'_{-, \mu}(x) = C x^{\mu-1} + w_{-, \mu}(x),
\end{equation*}
But since $\chi - \chi_2$ equals zero in a neighbourhood of $x=0$, we see immediately that 
$(\chi - \chi_2)w'_{-, \mu}$ is continuous with compact support in $\mathbb{R}_+$. \\

Finally, since for any $m \in \mathbb{N}$,
\begin{equation*}
\mathcal{F} w^{(m+1)}_{-, \mu} = \dfrac{(-i \xi)^{m+1} (-i \xi)^{-\mu}}{\xi - i} = \dfrac{ (-i (\xi- i) +1) (-i \xi)^{m-\mu}}{\xi - i}= -i (-i\xi)^{m-\mu} + \mathcal{F} w^{(m)}_{-, \mu},
\end{equation*}
the required result follows directly by induction on $m$. \\
\end{proof}

\begin{lemma} \label{chiDrchi}
Suppose $r \in \mathbb{R}$. Then
\begin{equation*}
\chi_1(D-i)^r (\chi - \chi_2) w_{-, \mu} \in C^\infty_0(\mathbb{R}).
\end{equation*}
\end{lemma}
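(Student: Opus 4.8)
The plan is to exploit the fact established in Lemma \ref{chichi2w} that $(\chi - \chi_2) w_{-, \mu} \in C^\infty_0(\mathbb{R}_+)$; in particular it is a smooth, compactly supported function. First I would observe that, since $(D-i)^r$ differs from $(iD)^r$ and from integer powers of $D$ only by operators of the same type, it suffices to understand how a pseudodifferential operator with symbol in $S^r$ (or, for $r < 0$, a smoothing-type operator whose symbol decays) acts on a smooth compactly supported function. Writing $v := (\chi - \chi_2) w_{-, \mu} \in C^\infty_0(\mathbb{R}_+)$, I would use Remark \ref{kernelinverseFT} to express $(D-i)^r v = k_r * v$, where $k_r = \tfrac{1}{\sqrt{2\pi}} \mathcal{F}^{-1}(\xi - i)^r$. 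For $z \neq 0$, Lemma \ref{Kerneldecay} shows $k_r$ is smooth and rapidly decaying.

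The key step is then a decomposition of the convolution analogous to Remark \ref{1minuschiA}. Since $\chi_1$ vanishes outside a compact set, and $v$ is supported in a fixed compact subset of $\mathbb{R}_+$, I would split
\begin{equation*}
\chi_1 (D-i)^r v = \chi_1 \big( k_r * v \big),
\end{equation*}
and argue that away from the support of $v$ the convolution $k_r * v$ is smooth (being the convolution of a distribution, singular only at the origin, with a smooth compactly supported function), while near the support of $v$ one must handle the singularity of $k_r$ at $z=0$. The cleanest route is to note that $v \in C^\infty_0(\mathbb{R})$ implies $\mathcal{F} v \in S(\mathbb{R})$, so $(\xi - i)^r \mathcal{F} v \in S(\mathbb{R})$ as well (since $(\xi-i)^r$ and all its derivatives grow at most polynomially), whence $(D-i)^r v = \mathcal{F}^{-1}\big( (\xi-i)^r \mathcal{F} v \big) \in S(\mathbb{R})$. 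Multiplying by $\chi_1 \in C^\infty_0(\mathbb{R})$ then gives an element of $C^\infty_0(\mathbb{R})$, which is the claim.

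The main obstacle — really the only subtle point — is verifying that $(\xi-i)^r \mathcal{F} v$ is genuinely Schwartz when $r$ is a negative non-integer, since then $(\xi-i)^r$ is not smooth as a function on all of $\mathbb{R}$ in the naive sense; however, with the branch cut convention \eqref{argsingleval} the function $(\xi - i)^r$ is analytic in a neighbourhood of the real axis (the point $\xi = i$ lies off $\mathbb{R}$), so it is in fact $C^\infty$ on $\mathbb{R}$ with derivatives bounded by $C_\beta(1+|\xi|)^{r-\beta}$, i.e. it is a symbol in $S^{\max\{0,r\}}$. This is exactly the kind of estimate recorded in the proof of Lemma \ref{lemma:rLambdae} (the bound $|(\xi + i\tau - i)^\nu| \le (|\xi|+|\tau|+1)^{\max\{0,\nu\}}$), so multiplication by it preserves $S(\mathbb{R})$. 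Once that is in hand, the conclusion $\chi_1 (D-i)^r (\chi - \chi_2) w_{-,\mu} \in C^\infty_0(\mathbb{R})$ is immediate, and in particular it lies in $L_p(\mathbb{R})$ for every $p$, which is what Lemma \ref{lemmapushchi} needs.
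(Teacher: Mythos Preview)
Your proposal is correct and follows essentially the same approach as the paper: invoke Lemma \ref{chichi2w} to get $v := (\chi-\chi_2)w_{-,\mu} \in C^\infty_0(\mathbb{R})$, observe that $(D-i)^r$ maps this to a smooth function, and then cut off with $\chi_1$. The only difference is that the paper cites an external reference (Theorem 3.1 of \cite{XSR}) for the fact that a pseudodifferential operator sends $C^\infty_0(\mathbb{R})$ into $C^\infty(\mathbb{R})$, whereas you supply a direct Fourier-side argument (and in fact obtain the slightly stronger conclusion $(D-i)^r v \in S(\mathbb{R})$). One small correction: $(\xi-i)^r$ lies in $S^r$ for every real $r$, not $S^{\max\{0,r\}}$, but this does not affect your argument since all you need is that it is smooth with polynomially bounded derivatives.
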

\begin{proof}
From Lemma \ref{chichi2w}, $(\chi - \chi_2) w_{-, \mu} \in C^\infty_0(\mathbb{R})$. Hence, see Theorem 3.1, p. 47, \cite{XSR},
\begin{equation*}
(D-i)^r (\chi - \chi_2) w_{-, \mu} \in C^\infty(\mathbb{R}).
\end{equation*}
Finally, $\chi_1(D-i)^r (\chi - \chi_2) w_{-, \mu} \in C^\infty_0(\mathbb{R})$, as required. \\
\end{proof}

\chapter{A certain integral} \label{AppendixIntLemm} 
Suppose $-1 < \mu < 1$ and $x>0$. Then we define
\begin{equation*}
I^\pm_{[a,b]} (\mu; x) := \int^b_a e^{-i \xi} (-i \xi)^\mu (\xi \pm i x)^{-\mu-1} \, d\xi.
\end{equation*} 

\begin{lemma} \label{InverFTmulog}
Suppose $-1 < \mu < 1$ and $x>0$. Then 
\begin{equation*}
I^+_{[-1,1]} (\mu; x) = O(1), \quad x \searrow 0^+,
\end{equation*}
and 
\[
I^-_{[-1,1]} (\mu; x) = 
\begin{cases} 
O(1)  & \mbox{if }  \mu =0 \\
O(\log x) &\mbox{if } \mu \not = 0 
\end{cases}
\quad \mbox{as  } x \searrow 0^+.
\] \\
\end{lemma}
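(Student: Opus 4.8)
\textbf{Proof proposal for Lemma~\ref{InverFTmulog}.}
The plan is to estimate the two integrals $I^\pm_{[-1,1]}(\mu;x)$ directly, splitting the range $[-1,1]$ at $\xi=0$ in order to isolate the only source of trouble, which is the interplay between the factor $(-i\xi)^\mu$ (singular at $\xi=0$ when $-1<\mu<0$) and the factor $(\xi\pm ix)^{-\mu-1}$ (which, as $x\searrow 0$, develops a singularity at $\xi=0$ when $0<\mu<1$, and on the negative axis the pole at $\xi=-ix$ pinches the contour). Since $e^{-i\xi}$ is bounded and smooth on $[-1,1]$, it plays no role in the size estimates and can be replaced by $O(1)$ throughout; the essential object is therefore $\int_{-1}^{1}(-i\xi)^\mu(\xi\pm ix)^{-\mu-1}\,d\xi$. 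First I would record the elementary bound $|(-i\xi)^\mu|=|\xi|^\mu$ and $|(\xi\pm ix)^{-\mu-1}|=(\xi^2+x^2)^{-(\mu+1)/2}$, valid for $x>0$.

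For the $+$ case, I would show $I^+_{[-1,1]}(\mu;x)=O(1)$ as $x\searrow0^+$ by a dominated-convergence / uniform-integrability argument: the integrand is bounded in modulus by $|\xi|^\mu(\xi^2+x^2)^{-(\mu+1)/2}\le |\xi|^{\mu}|\xi|^{-\mu-1}=|\xi|^{-1}$ when $\mu\ge0$ — which is \emph{not} integrable, so this crude bound is insufficient and one must exploit cancellation. The right move is to change variables $\xi=x\eta$, giving $I^+_{[-1,1]}(\mu;x)=\int_{-1/x}^{1/x}e^{-ix\eta}(-i\eta)^\mu(\eta+i)^{-\mu-1}\,d\eta$, where the homogeneity in $x$ has been completely extracted (the prefactor $x^{\mu}\cdot x^{-\mu-1}\cdot x=1$). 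Now the integrand $(-i\eta)^\mu(\eta+i)^{-\mu-1}$ is $O(|\eta|^{-1})$ at infinity but with an oscillatory factor $e^{-ix\eta}$; integrating by parts once in $\eta$ (differentiating $(-i\eta)^\mu(\eta+i)^{-\mu-1}$ lowers the decay to $|\eta|^{-2}$, which is integrable) converts the boundary terms at $\eta=\pm1/x$ into $O(x)$ contributions and leaves an absolutely convergent integral bounded uniformly in $x$. Near $\eta=0$ the factor $(-i\eta)^\mu$ is integrable since $\mu>-1$, and the pole of $(\eta+i)^{-\mu-1}$ sits at $\eta=-i$, a uniform distance away from the real axis, so no pinching occurs. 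This yields $I^+_{[-1,1]}(\mu;x)=O(1)$.

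For the $-$ case the pole of $(\xi-ix)^{-\mu-1}$ lies at $\xi=ix$, which approaches the contour from the upper half-plane as $x\searrow0$, so the same change of variables $\xi=x\eta$ gives $\int_{-1/x}^{1/x}e^{-ix\eta}(-i\eta)^\mu(\eta-i)^{-\mu-1}\,d\eta$ with the pole now at $\eta=i$, again at unit distance — so again there is no pinching, but the behaviour near $\eta=0$ is now the delicate point, because $(-i\eta)^\mu(\eta-i)^{-\mu-1}$ near $\eta=0$ behaves like $(-i\eta)^\mu\cdot(-i)^{-\mu-1}$, and when combined with the contributions coming back from the rescaled endpoints the logarithm emerges. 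Concretely I would split $\int_{-1/x}^{1/x}=\int_{|\eta|\le1}+\int_{1\le|\eta|\le1/x}$: the first piece is $O(1)$ uniformly (integrable singularity at $0$, smooth elsewhere), while in the second piece one writes $(-i\eta)^\mu(\eta-i)^{-\mu-1}=(-i)^{-1}\eta^{-1}+\big[(-i\eta)^\mu(\eta-i)^{-\mu-1}-(-i)^{-1}\eta^{-1}\big]$, where the bracketed remainder is $O(|\eta|^{-2})$ hence contributes $O(1)$, and the leading term $(-i)^{-1}\int_{1\le|\eta|\le1/x}e^{-ix\eta}\eta^{-1}\,d\eta$ is, up to an $O(1)$ oscillatory error, equal to $(-i)^{-1}\int_{1\le|\eta|\le1/x}\eta^{-1}\,d\eta$. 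When $\mu\neq0$ this principal-value-type integral over $[1,1/x]\cup[-1/x,-1]$ does \emph{not} cancel against the corresponding structure from the $(-i\eta)^\mu$ factor (the asymmetry is precisely measured by $\mu$), producing a term of size $\log(1/x)=O(\log x)$; when $\mu=0$ the integrand $(\eta-i)^{-1}$ is odd up to the harmless pole displacement and the logarithmic divergences from $\pm$ sides cancel, leaving $O(1)$. I would make the $\mu=0$ cancellation explicit by computing $I^-_{[-1,1]}(0;x)=\int_{-1}^1 e^{-i\xi}(\xi-ix)^{-1}\,d\xi$ directly and checking it stays bounded as $x\searrow0$ (e.g.\ by symmetrising $\xi\mapsto-\xi$ and using $(\xi-ix)^{-1}+(-\xi-ix)^{-1}=-2ix/(\xi^2+x^2)$, whose integral against a bounded function is $O(1)$).

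The main obstacle is the bookkeeping in the $-$ case: one must show that \emph{all} of the genuinely $O(1)$ contributions — the near-zero piece, the integration-by-parts remainders, the oscillatory corrections from replacing $e^{-ix\eta}$ by $1$ on $[1,1/x]$, and the subtracted-remainder piece — are uniformly bounded, so that the surviving $\log x$ (respectively the cancellation at $\mu=0$) is correctly attributed to the single explicit term $(-i)^{-1}\int\eta^{-1}\,d\eta$. Keeping the estimates uniform in $x$ while the domain of integration expands to all of $\mathbb{R}$ is where the care is needed; the pole positions, being at fixed distance $1$ from the real axis after rescaling, are not actually a problem, which is the one simplification I would lean on.
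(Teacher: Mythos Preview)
Your rescaling $\xi=x\eta$ is the right move, and the intuition that the logarithm in the $-$ case comes from the $|\eta|^{-1}$ tail over $1\le|\eta|\le 1/x$ is correct. But the execution has a real gap. You subtract a \emph{single} leading term $(-i)^{-1}\eta^{-1}$; taken literally this gives $\int_{1\le|\eta|\le1/x}\eta^{-1}\,d\eta=0$ by oddness, and your argument would yield $I^-=O(1)$ for every $\mu$. What actually happens is that the fractional powers carry \emph{different} phases on the two half-lines: for $\eta\to+\infty$ one has $(-i\eta)^\mu(\eta-i)^{-\mu-1}\sim e^{-i\pi\mu/2}\,\eta^{-1}$, whereas for $\eta\to-\infty$ the argument of $\eta-i$ tends to $-\pi$ and one gets $e^{i\pi\cdot 3\mu/2}\,\eta^{-1}$. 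Integrating each tail then produces $\bigl(-e^{-i\pi\mu/2}+e^{i\pi\cdot 3\mu/2}\bigr)\log x$, which vanishes precisely when $\mu=0$. The analogous computation with $\eta+i$ gives the \emph{same} phase $e^{-i\pi\mu/2}$ on both sides (now $\arg(\eta+i)\to+\pi$ on the left), so the logarithms cancel and $I^+=O(1)$ for every $\mu$. Your phrase ``the asymmetry is precisely measured by $\mu$'' gestures at this, but the decomposition you wrote down does not capture it.

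Separately, the integration-by-parts argument for $I^+$ fails as stated: integrating $e^{-ix\eta}$ introduces a factor $1/(ix)$, so the residual term $\tfrac{1}{ix}\int e^{-ix\eta}f'(\eta)\,d\eta$ is of size $O(1/x)$, not $O(1)$ --- the oscillation at frequency $x$ is too slow on an interval of length $2/x$ to gain anything (and $f'$ is not even integrable near $0$ when $\mu\le0$). The paper sidesteps both issues: for $I^+$ and for $I^-|_{\mu=0}$ it deforms $[-1,1]$ to the upper (respectively lower) unit semicircle by Cauchy's theorem --- the branch cut of $(-i\xi)^\mu$ lies on the \emph{negative} imaginary axis, so the integrand is analytic in the relevant half-disc --- and reads off $O(1)$ in one line; for $I^-$ with $\mu\neq0$ it replaces $e^{-i\xi}$ by $1+O(\xi)$ and evaluates $\int_0^1$ and $\int_{-1}^0$ explicitly via ${}_2F_1$ and its large-argument asymptotics, obtaining exactly the two phase factors above.
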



\begin{proof}
We begin the proof with some observations that will prove useful. \\

Suppose $z_1, z_2 \in \mathbb{C}$ and $\nu \in \mathbb{R}$. Then, if $ -\pi < \arg z_1 + \arg z_2 \leq \pi$,
\begin{equation} \label{z1z2fracpower}
(z_1 z_2)^\nu = z^\nu_1 z^\nu_2, \quad \text{(see Remark \ref{exponentrule}).}
\end{equation} 

If $| \arg (1 + b) | < \pi$ and $a > 0$, then
\begin{equation} \label{int2F1}
\int^1_0 \dfrac{\xi^{a-1}}{(1+ b \xi)^\nu} = \dfrac{1}{a} \, \, {}_2 F_1(\nu, a; 1+a; -b) \quad \text{(3.194 1, p. 318, \cite{GR}).}
\end{equation}

\begin{equation} \label{transform2F1}
{}_2 F_1(a, b; c; z) =(1-z)^{c-a-b} {}_2 F_1(c-a, c-b; c; z) \quad \text{(9.131 1, p. 1018, \cite{GR}).}
\end{equation}

We note that ${}_2F_1(a,b;c;z) = \Gamma (c) \, {}_2\mathbf{F}_1(a,b;c;z)$, see 15.1.2, \cite{NIST}. Hence, if $| \arg (-z) | < \pi$, then
\begin{equation} \label{2F1infinity}
z \,\, {}_2 F_1(1, 1; c; z) = -(c-1) \log (-z) +O(1), \quad |z| \to \infty \quad \text{(15.8.8, \cite{NIST}).}
\end{equation} 

If we take $z= \pm i /x$ in \eqref{2F1infinity}, then 
\begin{equation} \label{2F1infinityx}
\dfrac{{}_2 F_1(1, 1; c; \pm i /x)}{x (c-1)} = \mp i \log x +O(1), \quad x \searrow 0^+.
\end{equation} \\

Since the integrand of $I^+_{[-1,1]} (\mu; x) $ admits an analytic continuation to the upper complex half-plane, it is easy to see, from Cauchy's theorem, that
\begin{equation*}
I^+_{[-1,1]} (\mu; x) = - \int_{\mathbb{T}_+} e^{-i\xi} (-i\xi)^{\mu}\, (\xi + i x)^{-\mu - 1}\, d\xi  = O\left(1\right) \ \mbox{ as } \ x \searrow 0^+ ,
\end{equation*}
where $\mathbb{T}_+$ is the upper unit semicircle. \\

Similarly,
\begin{equation*}
I^-_{[-1,1]} (0; x) =  \int_{-1}^1 e^{-i\xi}  (\xi - i x)^{-1}\, d\xi  = \int_{\mathbb{T}_-} e^{-i\xi}  (\xi - i x)^{-1}\, d\xi = 
O\left(1\right) \ \mbox{ as } \ x \searrow 0^+ . 
\end{equation*}

On the other hand, we will now show that $I^-_{[-1,1]} (\mu; x)$, $\mu \not= 0$  is unbounded as $x \searrow 0^+$. Indeed,
\begin{eqnarray*}
&& I^-_{[-1,1]} (\mu; x) = \int_{-1}^1 e^{-i\xi} (-i\xi)^{\mu}\, (\xi - i x)^{-\mu - 1}\, d\xi  \\
&& = \int_{-1}^1  (-i\xi)^{\mu}\, (\xi - i x)^{-\mu - 1}\, d\xi
+ \int_{-1}^1 O(\xi) (-i\xi)^{\mu}\, (\xi - i x)^{-\mu - 1}\, d\xi \\
&& = J^-_{[-1,1]} (\mu; x) + O\left(1\right) \ \mbox{ as } \ x \searrow 0^+ ,
\end{eqnarray*}
where
\begin{equation*}
J^-_{[a,b]} (\mu; x) := \int_{a}^b  (-i\xi)^{\mu}\, (\xi - i x)^{-\mu - 1}\, d\xi. 
\end{equation*} 

We can write
\begin{align*}
J^-_{[0,1]} (\mu; x) &= (-i)^\mu \int^1_0 \xi^\mu [(-ix) (1+i\xi/x)]^{-\mu-1} \, d\xi \qquad & \text{by } \eqref{z1z2fracpower}\\
&= (-i)^\mu (-ix)^{-\mu-1} \int^1_0 \xi^\mu (1+i\xi/x)^{-\mu-1} \, d\xi \qquad &\text{by } \eqref{z1z2fracpower}\\
&= (-i)^\mu \dfrac{(-ix)^{-\mu}}{(-ix)} \dfrac{{}_2F_1(1+\mu, 1+\mu; 2+ \mu; -i/x) }{1+\mu} \\
&= \dfrac{(-i)^\mu}{-i} (-ix)^{-\mu} \, (1+i/x)^{-\mu} \,  \dfrac{{}_2F_1(1, 1; 2+ \mu; -i/x) }{x(1+\mu)} \\
&= i (-i)^\mu \, (1 - ix)^{-\mu} \,  \dfrac{{}_2F_1(1, 1; 2+ \mu; -i/x) }{x(1+\mu)} \\
&= - (-i)^\mu \, \log x + O(1) \quad \text{as  } x \searrow 0^+, \quad  & \text{by } \eqref{2F1infinityx}.
\end{align*}
Since $ -(-i)^\mu = \exp (i \pi)  \exp (-i \pi \mu / 2) = \exp (i \pi (2-\mu)/2)$, we have
\begin{equation} \label{Jminus01}
J^-_{[0,1]} (\mu; x) = \exp (i \pi (2-\mu)/2) \, \log x + O(1) .
\end{equation} \\

Similarly,
\begin{align*}
J^-_{[-1,0]} (\mu; x) &= \int^0_{-1} (-i \eta)^\mu (\eta - i x)^{-\mu-1} \, d\eta \\
&= \int^1_0 (i \xi)^\mu (-\xi - i x)^{-\mu-1} \, d\xi \\
&= i^\mu \int^1_0 \xi^\mu [(-ix) (1-i\xi/x)]^{-\mu-1} \, d\xi \qquad & \text{by } \eqref{z1z2fracpower}\\
&= i^\mu \dfrac{(-ix)^{-\mu}}{(-ix)} \int^1_0 \xi^\mu (1-i\xi/x)^{-\mu-1} \, d\xi \qquad &\text{by } \eqref{z1z2fracpower}\\
&= \dfrac{i^\mu}{-i} (-ix)^{-\mu} \dfrac{{}_2F_1(1+\mu, 1+\mu; 2+ \mu; i/x) }{x(1+\mu)} \\
&=i^{1+\mu} \, (-ix)^{-\mu} \, (1-i/x)^{-\mu} \,  \dfrac{{}_2F_1(1, 1; 2+ \mu; i/x) }{x(1+\mu)} \\
&= i^{1+\mu} \, (-1 - ix)^{-\mu} \,  \dfrac{{}_2F_1(1, 1; 2+ \mu; i/x) }{x(1+\mu)}.
\end{align*}

Noting that $\lim_{x \searrow 0^+} (-1- i x)^{-\mu} = \exp (i \pi \mu)$,
\begin{align} \label{Jminusm01}
J^-_{[-1,0]} (\mu; x) &  = \exp (i \pi (1+ \mu +2\mu -1)/2) \, \log x + O(1) \quad & \text{by } \eqref{2F1infinityx} \nonumber \\
& = \exp (i \pi 3 \mu /2) \, \log x + O(1) .
\end{align}

Thus, combining equations \eqref{Jminus01} and \eqref{Jminusm01}, 
\begin{equation*}
J^-_{[-1,1]} (\mu; x) =  C_\mu \, \log x + O(1),  
\end{equation*} 
as $x \searrow 0^+$, where
\begin{equation*}
C_\mu = - \exp (- i \pi \mu/2) + \exp (i \pi 3 \mu /2).
\end{equation*} 
Of course, given $-1 < \mu < 1$, the coefficient $C_\mu \not = 0$ if and only if $\mu \not = 0$. \\
\end{proof}

\begin{remark}
On the other hand, it is easy to show that
\begin{align*}
J^+_{[0,1]} (\mu; x) & = (-i)^{1+\mu} (1 + ix)^{-\mu} \, \dfrac{ {}_2 F_1(1, 1; 2+\mu; i/x)}{x (1+\mu)} \\
&= - \exp (- i \pi \mu/2) \, \log x + O(1),
\end{align*}
and similarly,
\begin{align*}
J^+_{[-1,0]} (\mu; x) &= -i \, i^{\mu} (-1 + ix)^{-\mu} \, \dfrac{ {}_2 F_1(1, 1; 2+\mu; -i/x)}{x (1+\mu)} \\
&= \exp (- i \pi \mu/2) \, \log x + O(1).
\end{align*}
Therefore, $J^+_{[-1,1]} (\mu; x) = O(1)$, as $x \searrow 0^+$, confirming the result in Lemma \ref{InverFTmulog}. \\
\end{remark}


\end{document}